\newcommand{\preprint}[1]{}
\newcommand{\hide}[1]{}
\numberwithin{equation}{section}
\theoremstyle{plain}
\newtheorem{thm}{Theorem}[section]
\newtheorem{prop}[thm]{Proposition}
\newtheorem{claim}[thm]{Claim}
\newtheorem{conj}[thm]{Conjecture}
\newtheorem{cor}[thm]{Corollary}
\newtheorem{lem}[thm]{Lemma}
\newtheorem{assumption}[thm]{Assumption}
\theoremstyle{definition}
\newtheorem{defi}[thm]{Definition}
\theoremstyle{remark}
\newtheorem{example}[thm]{Example}
\newtheorem{question}[thm]{Question}
\newtheorem{rem}[thm]{Remark}
\newcommand{\A}{{\mathcal A}}
\newcommand{\C}{{\mathcal C}}
\newcommand{\CC}{{\mathbb C}}
\newcommand{\F}{{\mathcal F}}
\newcommand{\G}{{\mathcal G}}
\newcommand{\HH}{{\mathbb H}}
\newcommand{\K}{{\mathcal K}}
\newcommand{\LB}{{\mathcal L}}
\newcommand{\fM}{{\mathfrak M}}
\renewcommand{\P}{{\mathcal P}}
\newcommand{\PP}{{\mathbb P}}
\newcommand{\QQ}{{\mathbb Q}}
\newcommand{\RR}{{\mathbb R}}
\renewcommand{\S}{{\mathcal S}}
\newcommand{\fS}{{\mathfrak S}}
\newcommand{\U}{{\mathcal U}}
\newcommand{\X}{{\mathcal X}}
\newcommand{\Y}{{\mathcal Y}}
\newcommand{\ZZ}{{\mathbb Z}}
\newcommand{\RealNumbers}{{\mathbb R}}
\newcommand{\Integers}{{\mathbb Z}}
\newcommand{\ComplexNumbers}{{\mathbb C}}
\newcommand{\RationalNumbers}{{\mathbb Q}}
\newcommand{\LieAlg}[1]{{\mathfrak #1}}
\newcommand{\linsys}[1]{{\mid}#1{\mid}}
\newcommand{\LLV}{{LLV}}
\renewcommand{\mod}{{\rm mod}}
\newcommand{\Mon}{{\rm Mon}}
\newcommand{\monrep}{{mon}}
\newcommand{\RightArrowOf}[1]{\stackrel{#1}{\rightarrow}}
\newcommand{\LeftArrowOf}[1]{\stackrel{#1}{\leftarrow}}
\newcommand{\LongLeftArrowOf}[1]{\stackrel{#1}{\longleftarrow}}
\newcommand{\LongRightArrowOf}[1]{\stackrel{#1}{\longrightarrow}}
\newcommand{\StructureSheaf}[1]{{\mathcal O}_{#1}}
\newcommand{\restricted}[2]{#1_{\mid_{#2}}}
\newcommand{\rank}{{\rm rank}}
\newcommand{\Pic}{{\rm Pic}}
\newcommand{\Sym}{{\rm Sym}}
\newcommand{\Ext}{{\rm Ext}}
\newcommand{\Tor}{{\mathcal T}or}
\newcommand{\Hom}{{\rm Hom}}
\newcommand{\Aut}{{\rm Aut}}
\newcommand{\End}{{\rm End}}
\newcommand{\SheafHom}{{\mathcal H}om}
\newcommand{\SheafEnd}{{\mathcal E}nd}
\newcommand{\SheafExt}{{\mathcal E}xt}
\newcommand{\Spin}{{\rm Spin}}
\newcommand{\Ideal}[1]{{\mathcal I}_{#1}}
\newcommand{\Choose}[2]{\left(\!\!\begin{array}{c}#1\\#2\end{array}\!\!\right)}
\renewcommand{\span}{{\rm span}}
\newcommand{\fine}{{\rm fine}}
\renewcommand{\div}{{\rm div}}
\newcommand{\obs}{{\rm obs}}
\begin{document}
\title[Vector bundles  on a hyper-K\"{a}hler manifold with a rank $1$ obstruction map]{Stable vector bundles on a hyper-K\"{a}hler manifold with a rank $1$ obstruction map are modular
}
\author{Eyal Markman}
\address{Department of Mathematics and Statistics, 
University of Massachusetts, Amherst, MA 01003, USA}
\email{markman@math.umass.edu}

\date{\today}

\subjclass[2010]{14C25,14D15,14D20}
\keywords{$K3$ surfaces, hyperk\"{a}hler varieties, hyperholomorphic sheaves, derived categories}

\begin{abstract}
Let $X$ be an irreducible $2n$-dimensional holomorphic symplectic manifold. 
A reflexive sheaf $F$ is {\em very  modular}, if its Azumaya algebra $\SheafEnd(F)$ deforms with $X$ to every K\"{a}hler deformation of $X$. We show that if $F$ is a slope-stable reflexive sheaf of positive rank and the obstruction map $HH^2(X)\rightarrow \Ext^2(F,F)$ has rank $1$, then $F$ is very modular. We associate to such a sheaf a vector in the Looijenga-Lunts-Verbitsky lattice of rank $b_2(X)+2$. Three sources of examples of such modular sheaves emerge. 
The first source consists of slope-stable reflexive sheaves $F$ of positive rank which are isomorphic to the image
$\Phi(\StructureSheaf{X})$ of the structure sheaf via an equivalence 
$\Phi:D^b(X)\rightarrow D^b(Y)$ of the derived categories of two irreducible holomorphic symplectic manifolds.
The second source consists of such $F$, which are isomorphic to the image of a sky-scraper sheaf via a derived equivalence. 
The third source consists of images $\Phi(L)$ of torsion sheaves $L$ supported as line bundles on holomorphic lagrangian submanifolds $Z$, 
such that $Z$ deforms with $X$ in co-dimension one in moduli and $L$ is a rational power of the canonical line bundle of $Z$.

An example of the first source is constructed using a stable and rigid vector bundle $G$ on a $K3$ surface $X$ to get the very modular vector bundle $F$ on the Hilbert scheme $X^{[n]}$, associated to
the equivariant vector bundle $G\boxtimes \cdots \boxtimes G$ on $X^n$ via the BKR-correspondence. 
This builds upon and partially generalizes results of K. O'Grady for $n=2$ \cite{ogrady-modular}. A construction of the second source associates to a set $\{G_i\}_{i=1}^n$ of $n$ distinct stable vector bundles  in the same two dimensional moduli space of vector bundles on a $K3$ surface $X$ the very modular vector bundle $F$ on $X^{[n]}$ corresponding to the 
equivariant bundle $\oplus_{\sigma\in \fS_n}[G_{\sigma(1)}\boxtimes \cdots \boxtimes G_{\sigma(n)}]$ on $X^n$. 
\end{abstract}

\maketitle
\tableofcontents

%
\section{Introduction}
\label{sec-introduction}

%
\subsection{The obstruction map}
%
\subsubsection{The obstruction map of objects in the derived category}
An {\em irreducible holomorphic symplectic manifold} is a simply connected compact K\"{a}hler manifold $X$ such 
that $H^0(X,\Omega^2_X)$ is one-dimensional spanned by an everywhere non-degenerate holomorphic $2$-form.
If $X$ is $2$-dimensional, then it is a $K3$ surface. It is well known that if $E$ is a slope-stable vector bundle 
over a $K3$ surface $X$, then the projective bundle $\PP(E)$ deforms with $X$ to every complex deformation of $X$
\cite{verbitsky-hyperholomorphic}. The analogous statement fails if $\dim(X)>2$ even to first order, since $c_2(\SheafEnd(E))$
need not remain of Hodge type $(2,2)$. The starting point of this paper is the observation that an analogous statement holds for torsion free reflexive sheaves satisfying the  infinitesimal condition that the obstruction homomorphism
\begin{equation}
\label{eq-obs-introduction}
\obs_E:HH^2(X)\rightarrow \Ext^2(E,E)
\end{equation}
has rank $1$ (Theorem \ref{thm-introduction-image-via-FM-is-modular}). Above, $HH^2(X)$ is the second Hochschild cohomology of $X$, considered as the space of natural transformations from the identity functor $id:D^b(X)\rightarrow D^b(X)$ to its shift $id[2]$, and $\obs_E$ sends 
a natural transformation $\alpha$ to its value $\alpha_E$ on $E$. 
The space $HH^2(X)$ parametrizes first order deformations of the category $\mbox{Coh}(X)$ of coherent sheaves on $X$ 
and those yield also first order deformations of the derived category $D^b(X)$ \cite{toda}. The class
$\obs_E(\xi)$
is the obstruction to lifting $\xi\in HH^2(X)$ to a first order deformation of the pair $(D^b(X),E)$  (see \cite[Prop. 6.1]{toda}). 

Given an object $F\in D^b(X)$ of non-zero rank $r$, set $\kappa(F):=ch(F)\exp\left(\frac{-c_1(F)}{r}\right)$. 
If $F$ is a locally free coherent sheaf, then  $\kappa(F)$ is a characteristic class of the projective bundle $\PP(F)$. 
We first observe that if $X$ is a projective irreducible holomorphic symplectic manifold and $F$ is an object in $D^b(X)$ with $\obs_F$ of rank $1$, then $\kappa(F)$ remains a Hodge class for all K\"{a}hler deformations of $X$ (see Lemma \ref{lemma-alpha-remains-of-Hodge-type}).

Given an equivalence of derived categories $\Phi:D^b(X)\rightarrow D^b(Y)$, we have a commutative diagram
\[
\xymatrix{
HH^2(X) \ar[r]^{\Phi_*} \ar[d]_{\obs_F} & HH^2(Y) \ar[d]_{\obs_{\Phi(F)}}
\\
\Hom(F,F[2]) \ar[r]_-{\Phi} & \Hom(\Phi(F),\Phi(F)[2]),
}
\]
where the horizontal arrows are isomorphisms, and so the rank of $\obs_{\Phi(F)}$ and $\obs_F$ are equal. If $\rank(F)\neq 0$,
then the restriction of $\obs_F$ to the subspace $H^0(X,\StructureSheaf{X})$ of $HH^2(X)$ is injective, by \cite[Lemma 10.1.3]{huybrechts-lehn}. Thus, $\rank(\obs_F)\geq 1$ for every object $F$ such that  $\rank(\Phi(F))\neq 0$, for some equivalence of derived categories $\Phi:D^b(X)\rightarrow D^b(Y)$. 

%
\subsubsection{Objects with a rank $1$ obstruction map}
Three sources\footnote{The rank of $\obs_F$ is $1$ for every $\PP^n$-object $F$, since then $\dim\Hom(F,F[2])=\dim H^2(\PP^n)=1$. However, all examples of $\PP^n$-objects we are aware of arise via one of sources (1) or (3) listed via equivalences of derived categories and deformations.} of objects $F$ with $\obs_F$ of rank $1$ emerge:
\begin{enumerate}
\item
\label{item-source-1}
$F=\Phi(\StructureSheaf{X})$, where $\Phi:D^b(X)\rightarrow D^b(Y)$ is an equivalence of derived categories of two projective irreducible holomorphic symplectic manifolds. Note that $\Ext^2(\StructureSheaf{X},\StructureSheaf{X})$ is one dimensional and so $\obs_F$ has rank $1$.
\item
\label{item-source-2}
$F$ is the image of a sky-scraper sheaf of a point via an equivalence $\Phi$ as above (See Section \ref{sec-proof-of-modularity}).
\item
\label{item-source-3}
$F$ is the image $\Phi(\iota_*L)$ via an equivalence $\Phi$ as above, where $\iota:Z\hookrightarrow X$ 
is the embedding of $Z$ as a holomorphic lagrangian submanifold and $L$ is a
line bundle on $Z$, 
such that 
\begin{enumerate}
\item
$\iota^*:H^{1,1}(X)\rightarrow H^{1,1}(Z)$ has rank $1$, and  
\item
$L$ is a rational power of the canonical line bundle of $Z$.
\end{enumerate}
\end{enumerate}

Sources (\ref{item-source-1}) and (\ref{item-source-2}) above seem to be special cases of deformations of source (\ref{item-source-3}). If, for exampe, $X$ is the Hilbert scheme $S^{[n]}$ of $n$ points on a $K3$ surface $S$ and $S$ contains a smooth rational curve $C$, 
then $\StructureSheaf{S^{[n]}}$ is the image via an auto-equivalence of $D^b(S^{[n]})$ of the structure sheaf $\StructureSheaf{C^{[n]}}$ of the lagrangian projective space $C^{[n]}$ (see Example \ref{example-non-modular-P-n-objects}). Similarly, assume that $S$ admits an elliptic fibration
with a section and $C_i\subset S$, $1\leq i \leq n$, are $n$ distinct smooth fibers. The sky-scraper sheaf $\CC_p$ supported on the origin $p$ of the lagrangian abelian variety
 $A:=\prod_{i=1}^nC_i\subset S^{[n]}$ is the image of $\StructureSheaf{A}$ via an auto-equivalence of $D^b(S^{[n]})$.
 We prove the following statement.
 
 \begin{thm} (Theorem \ref{thm-support-is-either-lagrangian-or-point})
 Let $\iota:Z\rightarrow X$ be an embedding of a variety $Z$ as a proper subvariety of an irreducible holomorphic symplectic manifold $X$.  Let $F$ be a torsion free coherent sheaf on $Z$. If $\obs_{\iota_*F}$ has rank $1$, then $\iota(Z)$ is either a lagrangian subvariety or a point.
 \end{thm}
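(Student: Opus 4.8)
The plan is to read off the required information from the Hochschild--Kostant--Rosenberg decomposition
\[
HH^2(X)\ \cong\ H^2(X,\StructureSheaf{X})\ \oplus\ H^1(X,T_X)\ \oplus\ H^0(X,\wedge^2T_X),
\]
together with the compatibility of $\obs_{\iota_*F}$ with the Atiyah class of $\iota_*F$. For an irreducible holomorphic symplectic $X$ the two outer summands are one--dimensional and $H^0(X,\wedge^2T_X)=\CC\cdot\theta$ is spanned by the Poisson bivector $\theta=\sigma^{-1}$. First I would reduce to the case where $Z$ is smooth and $F$ is locally free, by restricting all sheaves and classes to the dense open $Z^{\circ}\subset Z$ where this holds: the $\SheafExt$--sheaves and Atiyah classes below are computed \'etale--locally, and every conclusion to be drawn concerns generic points of $Z$ (whether $T_zZ$ is coisotropic; whether $\iota^*\sigma$ vanishes), so the non--properness of $Z^{\circ}$ causes no harm, and ``projective'' may be replaced by ``compact K\"ahler'' (which $X$ is). On $Z^{\circ}$ one has $\SheafExt^q_X(\iota_*F,\iota_*F)\cong\iota_*(\SheafEnd(F)\otimes\wedge^qN_{Z/X})$, so the local--to--global spectral sequence puts a canonical filtration $\Ext^2_X(\iota_*F,\iota_*F)=W^0\supseteq W^1\supseteq W^2\supseteq 0$ on the target of $\obs_{\iota_*F}$, with $W^p/W^{p+1}$ a subquotient of $H^{p}(Z,\SheafEnd(F)\otimes\wedge^{2-p}N_{Z/X})$ for $p=0,1,2$.

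The second step identifies three ``leading'' contributions, using that the top normal component of $\mathrm{at}(\iota_*F)$ is the conormal extension class $N^*_{Z/X}\hookrightarrow\Omega_X|_Z$. (i) The image of $\obs_{\iota_*F}(\theta)$ in the top quotient $\Ext^2_X(\iota_*F,\iota_*F)/W^1\hookrightarrow H^0(Z,\SheafEnd(F)\otimes\wedge^2N_{Z/X})$ is $\mathrm{id}_F\otimes\bar\theta_Z$, where $\bar\theta_Z$ is the image of $\theta|_Z$ under $\wedge^2(T_X|_Z)\twoheadrightarrow\wedge^2N_{Z/X}$; a pointwise computation shows $\bar\theta_Z(z)=0$ precisely when $T_zZ$ is coisotropic for $\sigma$, so $\bar\theta_Z\equiv 0$ iff $Z$ is coisotropic, and otherwise $\obs_{\iota_*F}(\theta)\notin W^1$. (ii) For $0\neq\beta\in H^2(X,\StructureSheaf{X})$, the class $\obs_{\iota_*F}(\beta)=\beta\cdot\mathrm{id}_{\iota_*F}$ lies in $W^2$, and (restricting to $Z$ by adjunction and using the trace on $Z$) it is nonzero iff $\iota^*\sigma\in H^0(Z,\Omega^2_Z)$ is nonzero, i.e.\ iff $Z$ is not isotropic. (iii) For $\kappa\in H^1(X,T_X)$, the class $\obs_{\iota_*F}(\kappa)$ lies in $W^1$, and the identity--component of its image in $W^1/W^2$ is the image of $\kappa|_Z$ under $H^1(Z,T_X|_Z)\to H^1(Z,N_{Z/X})$, namely the first--order obstruction to deforming the subscheme $Z\subset X$ in the direction $\kappa$.

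Granting the Key Lemma below (\emph{if $\dim Z\ge 1$ then $H^1(X,T_X)\to H^1(Z,N_{Z/X})$ is nonzero}), the theorem follows by a short case analysis. Assume $\rank(\obs_{\iota_*F})=1$; we may assume $\dim Z\ge 1$, for otherwise $Z$ is a point. If $Z$ is neither isotropic nor coisotropic, then by (i) and (ii) we get $\obs_{\iota_*F}(\theta)\notin W^1$ and $0\neq\obs_{\iota_*F}(\beta)\in W^1$, two linearly independent elements of the image --- impossible. So $Z$ is isotropic or coisotropic. If $Z$ is isotropic with $\dim Z<n$, then $\obs_{\iota_*F}(\theta)\notin W^1$, while the Key Lemma and (iii) produce $\kappa$ with $0\neq\obs_{\iota_*F}(\kappa)\in W^1$, again impossible; hence an isotropic $Z$ with $\dim Z\ge 1$ has $\dim Z=n$, i.e.\ it is Lagrangian. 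Finally, if $Z$ is coisotropic but not isotropic then $\dim Z>n$, so $0\neq\obs_{\iota_*F}(\beta)\in W^2$, while the Key Lemma and (iii) give $\kappa$ with $\obs_{\iota_*F}(\kappa)\in W^1\setminus W^2$; these are independent, impossible. So a coisotropic $Z$ is Lagrangian, and $\iota(Z)$ is a point or a Lagrangian subvariety.

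It remains to prove the Key Lemma. Using $\sigma^{\flat}\colon T_X\xrightarrow{\ \sim\ }\Omega_X$ one identifies $H^1(X,T_X)\cong H^{1,1}(X)$ and, via the induced identification $N_{Z/X}\cong(T_Z^{\perp_\sigma})^{\vee}$, a diagram chase rewrites the map as $\iota^*\colon H^{1,1}(X)\to H^{1,1}(Z)$ followed by the natural map $H^{1,1}(Z)\to H^1(Z,(T_Z^{\perp_\sigma})^{\vee})$. When $Z$ is isotropic one has $T_Z\subseteq T_Z^{\perp_\sigma}$, this second map surjects onto $H^{1,1}(Z)$, the composite is $\iota^*$, and $\iota^*$ carries a K\"ahler class of $X$ to a K\"ahler class of $Z$, hence is nonzero --- this disposes of the isotropic case cleanly. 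When $Z$ is coisotropic and not Lagrangian, $(T_Z^{\perp_\sigma})^{\vee}$ is the conormal of the positive--rank null foliation $\F$ of $Z$, and one must show a K\"ahler class stays nonzero in the leafwise cotangent cohomology $H^1(Z,(T\F)^{\vee})$; equivalently, that such a $Z$ cannot be deformed along every first--order deformation of $X$. \emph{This is the step I expect to be the main obstacle.} In the projective case one can restrict to a general fibre of the null--reduction $Z\dashrightarrow Z/\F$ --- a compact positive--dimensional subvariety contained in a leaf, on which the K\"ahler class is nonzero while the conormal classes restrict to zero --- but in general it seems to need genuine input on subvarieties of irreducible holomorphic symplectic manifolds that move in moduli (for instance, that $[Z]\in H^{2c}(X,\QQ)$ with $c=\mathrm{codim}_XZ$ would then remain of Hodge type $(c,c)$ along all deformations of $X$, forcing it into the ``trivial part'' of the Looijenga--Lunts--Verbitsky module, which for $1\le c\le n-1$ is incompatible with $[Z]$ being the class of an irreducible coisotropic subvariety). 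The remaining ingredients --- the Atiyah--class computations in (i)--(iii), and the reductions to smooth $Z$, locally free $F$ and the compact K\"ahler case (including a small spectral--sequence argument in (ii) when the derived self--intersection of $Z$ in $X$ fails to split) --- I expect to be routine but not entirely formal.
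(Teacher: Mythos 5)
Your route is genuinely different from the paper's, and in its present form it has a real gap that you yourself flag: the Key Lemma in the coisotropic, non-Lagrangian case. Your case analysis needs the nonvanishing of $H^1(X,T_X)\rightarrow H^1(Z,N_{Z/X})$ precisely when $\dim Z>n$, and there you offer only a heuristic (restricting to a fibre of the null reduction, or an appeal to $[Z]$ remaining of Hodge type). Until that is proved, the possibility that $Z$ is coisotropic of dimension $>n$ is not excluded, so the theorem is not established. There are two further soft spots. First, your detection arguments in (ii) and (iii) identify $\obs_{\iota_*F}(\beta)$ and $\obs_{\iota_*F}(\kappa)$ with classes in $E_2^{2,0}=H^2(Z,\SheafEnd(F))$ and $E_2^{1,1}=H^1(Z,\SheafEnd(F)\otimes N_{Z/X})$ of the local-to-global spectral sequence; but what you actually need is nonvanishing in $\Ext^2_X(\iota_*F,\iota_*F)$, i.e.\ survival to $E_\infty$. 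For (ii) in particular, $\iota^*\beta\cdot{\rm id}$ could a priori lie in the image of $d_2:H^0(\SheafExt^1)\rightarrow H^2(\SheafExt^0)$, and you give no argument ruling this out. Second, the opening reduction to the smooth dense open $Z^{\circ}$ is not harmless for these steps: (ii), (iii) and the Key Lemma all invoke global Hodge-theoretic facts (a K\"ahler class is nonzero in $H^{1,1}(Z)$, $\iota^*\bar\sigma\neq 0$ in $H^{0,2}(Z)$) that require $Z$ compact, whereas nonvanishing of a degree $\geq 1$ cohomology class cannot be checked on a dense open subset, and for singular $Z$ the identification of $H^2(Z,\StructureSheaf{Z})$ with $\overline{H^0(Z,\Omega^2_Z)}$ fails.

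For comparison, the paper's proof is purely cohomological and sidesteps all three difficulties. It works with the Mukai vector $v(\iota_*F)$ and the LLV line $\ell(\iota_*F)=\span\{0\alpha+\lambda+s\beta\}$: since $m_{\bar\sigma}$ annihilates this line, it lies in the stabilizer $\LieAlg{g}_{v(\iota_*F)}$, and because cup product with $\bar\sigma$ is injective on $H^{p,p}(X)$ for $p<n$, one gets $ch_p(\iota_*F)=0$ for $p<n$ and $ch_n(\iota_*F)\cup\sigma=0$ (Lemma \ref{lemma-vanishing-of-ch-p-for-p-less-than-n}). The first identity forces $\mathrm{codim}\,\iota(Z)\geq n$ at once -- exactly the case your argument cannot close -- and then either $\lambda=0$, in which case $L_\omega$ annihilates $[Z]$ for every K\"ahler class and $Z$ is a point, or $L_\omega^n$ does not annihilate $\hat v(\iota_*F)$ and $\dim Z=n$, with $[Z]\cup\sigma=0$ giving the Lagrangian condition. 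Your infinitesimal analysis of the three HKR summands is attractive (and would not need the auxiliary hypothesis $\hat v(\iota_*F)\neq 0$ that the paper's body uses), but as it stands the cohomological route is the one that actually closes all cases; I would suggest either importing the paper's Lemma \ref{lemma-vanishing-of-ch-p-for-p-less-than-n} to dispose of the case $\dim Z>n$, or finding an independent proof of your Key Lemma there.
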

 
%
\subsubsection{Deformability of sheaves with a rank $1$ obstruction map}
The following theorem is a consequence of Verbitsky's results on hyperholomorphic sheaves in \cite{kaledin-verbitsky-book}  and the fact that $\kappa(F)$ remains a Hodge class under all deformations for a sheaf $F$ with a rank $1$ obstruction homomorphism $\obs_F$.

\begin{thm} 
\label{thm-introduction-image-via-FM-is-modular}
(Theorem
\ref{thm-modularity-of-a-stable-sheaf-with-a-rank-1-obstruction-map}) 
Let $F$ be a reflexive sheaf  of rank $r>0$ over an irreducible holomorphic symplectic manifold $X$. Assume that  $\obs_F$ has rank $1$ and that $F$ is $H$-slope-stable with respect to some
ample line-bundle $H$.
Then for every irreducible holomorphic symplectic manifold $Y$ deformation equivalent to $X$ there exists a possibly twisted coherent sheaf $E$ over $Y$ and a flat 
deformation of the pair $(X,F)$ to the pair (Y,E).
\end{thm}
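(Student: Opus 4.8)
The plan is to combine Lemma~\ref{lemma-alpha-remains-of-Hodge-type} with Verbitsky's theory of hyperholomorphic sheaves \cite{kaledin-verbitsky-book}, producing the deformation by transporting the Azumaya algebra $\SheafEnd(F)$ along twistor paths.

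First I would fix the hyperk\"{a}hler data. The hypothesis that $F$ be $H$-slope-stable for $H$ in an open subcone of the ample cone forces that cone to be non-empty, so $X$ is projective; choose a K\"{a}hler class $\omega$ inside such a subcone. Since the canonical bundle of $X$ is trivial, Yau's theorem provides a Ricci-flat K\"{a}hler metric $g$ in the class $\omega$, and as $X$ is holomorphic symplectic $g$ underlies a hyperk\"{a}hler structure with complex structures $I$ (the given one), $J$, $K$ and twistor family $\pi\colon\mathrm{Tw}(X,g)\to\PP^1$, whose fibre over $t\in\PP^1$ is $(X,J_t)$. Being reflexive and $\omega$-slope-stable, $F$ carries an admissible Hermitian--Einstein metric by Bando--Siu, smooth off a subset of codimension $\ge 3$, hence a Yang--Mills connection.

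Next comes the algebraic input. By Lemma~\ref{lemma-alpha-remains-of-Hodge-type}, the rank-$1$ hypothesis on $\obs_F$ implies that the rational class $\kappa(F)=ch(F)\exp(-c_1(F)/r)$ remains of Hodge type under \emph{every} K\"{a}hler deformation of $X$, in particular along every fibre of $\pi$. The Chern classes of the Azumaya algebra $\SheafEnd(F)$ are universal polynomials in the components of $\kappa(F)$ (since $ch(\SheafEnd(F))=\kappa(F)\cdot\kappa(F^\vee)$ and $\kappa(F^\vee)$ is determined by $\kappa(F)$), so these classes — equivalently the Chern classes of $\PP(F)$, among them the discriminant $\Delta(F)=2r\,c_2(F)-(r-1)c_1(F)^2$, which up to a non-zero rational scalar is the degree-four part of $\kappa(F)$ — stay of type $(p,p)$ with respect to $I$, $J$ and $K$ simultaneously, i.e.\ are $SU(2)$-invariant for $g$. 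By Verbitsky's criterion the traceless part of the Hermitian--Einstein connection is then $SU(2)$-invariant, so $\PP(F)$ (equivalently $\SheafEnd(F)$) is hyperholomorphic; using that $F$ is slope-stable for an open set of K\"{a}hler classes one gets that it is moreover \emph{stably} hyperholomorphic in the sense of \cite{kaledin-verbitsky-book}. Verbitsky's deformation theorem then gives a family over $\PP^1$ whose fibre over $t$ is a stably hyperholomorphic reflexive sheaf on $(X,J_t)$ deforming $F$. Note that $c_1(F)$ itself is \emph{not} of Hodge type on the fibres with $\Pic=0$, which is why it is the projective bundle $\PP(F)$, not $F$, whose deformation is unobstructed, and why the limiting object is in general only a \emph{twisted} sheaf.

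Finally I would propagate to all deformations. A deformed sheaf on a fibre $(X,J_t)$ is again stably hyperholomorphic for \emph{any} Ricci-flat metric on it, since Lemma~\ref{lemma-alpha-remains-of-Hodge-type}, applied to that K\"{a}hler deformation of $X$, keeps the transported class $\kappa$ of Hodge type everywhere and hence $SU(2)$-invariant for that metric too; thus $\SheafEnd(F)$ propagates along chains of twistor lines. By the global Torelli theorem and the connectivity of the Teichm\"{u}ller space of irreducible holomorphic symplectic manifolds of the deformation type of $X$, every $Y$ deformation equivalent to $X$ is joined to $X$ by such a twistor path (matching connected components by a monodromy operator if necessary); transporting $F$ along it produces a flat deformation of the pair $(X,F)$, in the category of twisted sheaves, whose restriction to $Y$ is a pair $(Y,E)$ with $E$ a coherent sheaf twisted by the Brauer class obstructing the lift of the deformed $\PP(F)$ to a vector bundle. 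The main obstacle I expect lies at the interface of the two halves of the argument: checking in detail that Verbitsky's deformation theorem applies to a \emph{reflexive}, not necessarily locally free, sheaf (admissibility and the codimension of the singular set along the whole path, polystability of the limits, the precise content of ``stably hyperholomorphic''), and tracking at each stage whether the transported object is a genuine sheaf, a twisted sheaf, or merely a $\PP^{r-1}$-bundle. The rank-$1$ hypothesis on $\obs_F$ enters at a single but decisive point: via Lemma~\ref{lemma-alpha-remains-of-Hodge-type} it is exactly what forces the $SU(2)$-invariance of the characteristic classes of $\SheafEnd(F)$ at every point of every twistor path, which is the hypothesis Verbitsky's machinery requires.
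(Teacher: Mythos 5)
Your overall architecture agrees with the paper's: show that $\kappa(F)$, and hence the characteristic classes of $\SheafEnd(F)$, stay of Hodge type on all K\"{a}hler deformations, then invoke Verbitsky's hyperholomorphic machinery (in its twisted, reflexive form) to transport $\SheafEnd(F)$ along generic twistor paths connecting $X$ to any $Y$ in its deformation class. The paper carries out the second half via the marked moduli space $\fM_\Lambda$ and disposes of the admissibility and codimension issues you flag by citing the generalization of Verbitsky's deformation theorem to twisted reflexive sheaves, so those worries are real but surmountable.

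The genuine gap is at the step where you assert that ``by Lemma \ref{lemma-alpha-remains-of-Hodge-type}, the rank-$1$ hypothesis on $\obs_F$ implies that $\kappa(F)$ remains of Hodge type under every K\"{a}hler deformation.'' Lemma \ref{lemma-alpha-remains-of-Hodge-type} only upgrades \emph{first-order} Hodge-type invariance to global invariance; its hypothesis --- that $\kappa(F)$ stays of type $(p,p)$ along every first-order deformation $\xi\in H^1(X,TX)$ --- is precisely what must be extracted from the rank-$1$ condition, and that extraction is the crux of the theorem. The missing argument (Lemma \ref{lemma--kappa-F-remains-of-Hodge-type} and Proposition \ref{prop-kappa-class-remains-of-Hodge-type} in the paper) runs as follows. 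The kernel $\Sigma(F)$ of the obstruction map is a hyperplane in $HT^2(X)=H^2(\StructureSheaf{X})\oplus H^1(TX)\oplus H^0(\wedge^2TX)$. Because $r>0$, the composition of the restriction of $\obs^{HT}_F$ to $H^2(\StructureSheaf{X})$ with the trace $\Ext^2(F,F)\rightarrow H^2(\StructureSheaf{X})$ is multiplication by $r$, so $\obs^{HT}_F$ is \emph{injective} on $H^2(\StructureSheaf{X})$; consequently the intersection $\Sigma(F)\cap\left[H^2(\StructureSheaf{X})\oplus H^1(TX)\right]$ projects \emph{onto} $H^1(TX)$. Every commutative direction $\xi$ therefore lifts to a class $(\alpha,\xi,0)\in\Sigma(F)$, and by Toda's theorem $F$ deforms to first order in that direction as an $\tilde{\alpha}$-twisted sheaf $\F$ on the corresponding first-order deformation of $X$; the untwisted object $\det(\F)^*\otimes\F^{\otimes r}$ then gives an honest first-order deformation, so $ch(\det(F)^*\otimes F^{\otimes r})$, and hence its $r$-th root $\kappa(F)$, stays of Hodge type in the direction $\xi$. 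Only at this point does Lemma \ref{lemma-alpha-remains-of-Hodge-type} apply. Without this step the $SU(2)$-invariance of the characteristic classes of $\SheafEnd(F)$, which drives the entire Verbitsky half of your argument, is never established, and the single point at which you say the rank-$1$ hypothesis enters is in fact never used correctly: it enters through the interplay between the gerby directions $H^2(\StructureSheaf{X})$ and the classical directions $H^1(TX)$ inside the codimension-one kernel, not through Lemma \ref{lemma-alpha-remains-of-Hodge-type} itself.
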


Following is a sketch of the proof of Theorem \ref{thm-introduction-image-via-FM-is-modular}. Results of Verbitsky 
in \cite{kaledin-verbitsky-book}
reduce the proof to showing that $c_2(\SheafEnd(F))$ remains of Hodge type $(2,2)$ under all deformations of $X$ as a K\"{a}hler manifold. 
The latter reduces to showing that $c_2(\SheafEnd(F))$ remains of Hodge type $(2,2)$ under all first order infinitesimal complex deformations of $X$, by Lemma \ref{lemma-alpha-remains-of-Hodge-type}. 
Set $HT^2(X):=H^0(X,\wedge^2TX)\oplus H^1(X,TX)\oplus H^2(X,\StructureSheaf{X})$. 
The Hochschild-Kostant-Rosenberg isomorphism
$I^{HKR}:HT^2(X)\rightarrow HH^2(X)$ composes with $\obs_F$ to a rank $1$ homomorphism 
\[
\obs^{HT}_F:=\obs_F\circ I^{HKR} : HT^2(X)\rightarrow \Ext^2(F,F).
\]
The class $c_2(\SheafEnd(F))$ remains of Hodge type in the direction of $\xi\in H^1(X,TX)$, if and only if there exists a class 
$\theta\in H^2(X,\StructureSheaf{X})$, such that $F$ deforms to first order in the direction of $(0,\xi,\theta)\in HT^2(X)$. Toda proved that this is the case, if and only if $(0,\xi,\theta)\in HT^2(X)$ belongs to the kernel of 
$\obs^{HT}_F$ (here we use also a result of \cite{huang} that $\obs^{HT}_F$ is the obstruction map used by Toda,
see Lemma \ref{lem-Sigma-F-is-contained-in-stabilizer-of-Mukai-vector}). Now, if the rank of $F$ does not vanish, then 
 $\obs^{HT}_F$ maps $H^2(\StructureSheaf{X})$  injectively into $\Ext^2(F,F)$.
Thus, the intersection of $\ker(\obs^{HT}_F)$ and $H^1(X,TX)\oplus H^2(\StructureSheaf{X})$ 
must project onto $H^1(X,TX)$. Hence, every $\xi\in H^1(X,TX)$ lifts to a class $(0,\xi,\theta)$ in $\ker(\obs^{HT}_F)$.

\begin{rem}
Theorem \ref{thm-introduction-image-via-FM-is-modular} holds also for abelian surfaces (Remark \ref{rem-semi-homogeneous}). If $F$ is an object in the derived category $D^b(X)$ of an abelian variety $X$ of dimension $\geq 3$, then the minimal rank of $\obs_F$ is $h^{2,0}(X):=\dim(H^2(\StructureSheaf{X}))$. Slope-stable  vector bundles $F$ with $\obs_F$ of rank $h^{2,0}(X)$ are semi-homogeneous (see Lemma \ref{lemma-semi-homogeneous}). These were extensively studied by Mukai \cite{mukai-semihomogeneous}.
\end{rem}

%
\subsection{Examples of very modular vector bundles}
Theorems \ref{thm-BKR-of-tensor-product-of-spherical-object} and \ref{thm-Fourier-Mukai-images-of-sky-scraper-sheaves} below exhibit two examples of sheaves $F$ satisfying the conditions of the above Theorem.

%
\subsubsection{Very modular $\PP^n$-objects  over IHSMs of $K3^{[n]}$-type}
Let $(S,H)$ be a generic  polarized $K3$ surface, let $G$ be an $H$-slope-stable locally free sheaf on $S$ with vanishing
$\Ext^1(G,G)$. Let the symmetric group $\fS_n$ act on $S^n$ permuting the factors. 
The Hilbert scheme $S^{[n]}$ of length $n$ zero dimensional subschemes of $S$ is isomorphic to an irreducible component of the $\fS_n$-Hilbert scheme of length $n!$ zero-dimensional $\fS_n$-invariant subschemes $Z$ of $S^n$, such that $H^0(\StructureSheaf{Z})$ is the regular representation of $\fS_n$ \cite{haiman}. There exists a universal 
$\fS_n$-invariant subscheme $\Gamma\subset S^{[n]}\times S^n$ with a $\fS_n$-linearization $\rho$, such that the  object $(\StructureSheaf{\Gamma},\rho)$ is the Fourier-Mukai kernel of an equivalence
$BKR:D^b_{\fS_n}(S^n)\rightarrow D^b(S^{[n]})$ between the
$\fS_n$-equivariant derived category of $S^n$ and the derived category of $S^{[n]}$ \cite{BKR}. 
The vector bundle $G^{\boxtimes n}:=G\boxtimes G \boxtimes \cdots \boxtimes G$ over $S^{[n]}$ comes with a canonical $\fS_n$-linearizarion $\rho_\boxtimes$. 
The object $G^{[n]}:=BKR((G^{\boxtimes n},\rho_\boxtimes))$ is isomorphic to a rank $r^n$ locally free sheaf over $S^{[n]}$. Now, 
$\Ext^{2i}_{\fS_n}((G^{\boxtimes n},\rho_\boxtimes),(G^{\boxtimes n},\rho_\boxtimes))$
is $1$-dimensional, for $0\leq i\leq n$, hence $\Ext^{2i}(G^{[n]},G^{[n]})$ is one-dimensional as well, for $0\leq i\leq n$,  and so 
$\obs_{G^{[n]}}$ has rank $1$. 

\begin{thm} 
\label{thm-BKR-of-tensor-product-of-spherical-object}
For every irreducible holomorphic symplectic manifold $X$ of $K3^{[n]}$ deformation type there exists a possibly twisted locally free sheaf $E$ and a deformation of the pair $(S^{[n]},G^{[n]})$ to $(X,E)$. $E$ is a $\PP^n$-object, i.e., $\Ext^i(E,E)\cong H^i(\PP^n,\CC)$, and in particular $E$ is
infinitesimally rigid.  $E$ is slope-stable with respect to classes in an open subcone of the K\"{a}hler cone of $X$. 
\end{thm}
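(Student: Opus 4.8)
The plan is to check that $G^{[n]}$ satisfies the hypotheses of Theorem~\ref{thm-introduction-image-via-FM-is-modular}, to apply that theorem, and then to upgrade its conclusion using the computation of $\Ext^\bullet(G^{[n]},G^{[n]})$ on $S^{[n]}$ together with Verbitsky's structural results on hyperholomorphic bundles.

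\emph{Step 1: applying Theorem~\ref{thm-introduction-image-via-FM-is-modular}.} The sheaf $G^{[n]}$ is locally free, hence reflexive, of rank $r^n>0$ with $r=\rank(G)$, and $\obs_{G^{[n]}}$ has rank $1$, as recorded above the statement. The only hypothesis requiring work is slope-stability. I would show that $G^{[n]}$ is $h$-slope-stable on $S^{[n]}$ for every polarization $h=a\theta_L-b\delta$ with $L$ ample on $S$ such that $G$ is $L$-slope-stable, $2\delta$ the class of the exceptional divisor of the Hilbert--Chow morphism, $a,b>0$ and $b/a$ sufficiently small; such classes fill an open subcone of the ample cone of $S^{[n]}$. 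Following O'Grady's treatment of the case $n=2$ in \cite{ogrady-modular}, one analyses a would-be $h$-destabilizing subsheaf of $G^{[n]}$ through its restriction to the open set $U\subset S^{[n]}$ on which the Hilbert--Chow morphism exhibits $U$ as the quotient by $\fS_n$ of the complement of the diagonals in $S^n$: the pullback of $G^{[n]}|_U$ to that complement is $G^{\boxtimes n}$ with its $\fS_n$-linearization, and $G^{\boxtimes n}$ is slope-stable with respect to $L\boxtimes\cdots\boxtimes L$ because $G$ is slope-stable (box products of slope-stable sheaves being slope-stable), so it admits no $\fS_n$-invariant subsheaf of slope $\geq\mu(G^{\boxtimes n})$; letting $b/a\to0$ and invoking openness of slope-stability in the polarization completes the verification. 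Granting this, Theorem~\ref{thm-introduction-image-via-FM-is-modular} produces, for each irreducible holomorphic symplectic $Y$ deformation equivalent to $S^{[n]}$ --- hence for every $X$ of $K3^{[n]}$ deformation type --- a possibly twisted coherent sheaf $E$ on $Y$ and a flat deformation of $(S^{[n]},G^{[n]})$ to $(Y,E)$.

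\emph{Step 2: $E$ is a rigid locally free $\PP^n$-object.} The hypotheses on $G$ make it a spherical object on $S$: $\Hom(G,G)=\CC$ by stability, $\Ext^1(G,G)=0$ by assumption, $\Ext^2(G,G)=\CC$ by Serre duality, and $\Ext^{\geq3}(G,G)=0$ for dimension reasons, so $\Ext^\bullet_S(G,G)\cong\CC[\epsilon]/(\epsilon^2)$ with $\deg\epsilon=2$. Since $BKR$ is an equivalence,
\[
\Ext^\bullet(G^{[n]},G^{[n]})\ \cong\ \Ext^\bullet_{\fS_n}\big((G^{\boxtimes n},\rho_\boxtimes),(G^{\boxtimes n},\rho_\boxtimes)\big)\ \cong\ \big((\CC[\epsilon]/(\epsilon^2))^{\otimes n}\big)^{\fS_n}\ \cong\ \CC[h]/(h^{n+1}),\qquad \deg h=2 ;
\]
the last identification holds because $\epsilon$ has even degree (no Koszul signs), the $\fS_n$-invariants of $\CC[\epsilon_1,\dots,\epsilon_n]/(\epsilon_i^2)$ are spanned by the elementary symmetric polynomials $e_0,\dots,e_n$, and $(\sum_i\epsilon_i)^k=k!\,e_k$. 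Thus $G^{[n]}$ is a $\PP^n$-object. To transfer this to $E$, use the trace splitting $\SheafEnd(E)=\StructureSheaf{Y}\oplus\SheafEnd_0(E)$ of the (untwisted) Azumaya algebra $\SheafEnd(E)$: since $H^i(Y,\StructureSheaf{Y})\cong H^i(\PP^n,\CC)$ for every $Y$ of $K3^{[n]}$ type (by G\"{o}ttsche's formula and deformation invariance of Hodge numbers), it suffices to prove $H^i(Y,\SheafEnd_0(E))=0$ for all $i$, which holds at $S^{[n]}$. Now $E$, and therefore $\SheafEnd_0(E)$, is hyperholomorphic by the proof of Theorem~\ref{thm-introduction-image-via-FM-is-modular}, and by Verbitsky's results on hyperholomorphic bundles \cite{kaledin-verbitsky-book} the dimensions $\dim H^i(Y,\SheafEnd_0(E))$ are invariant under hyper-K\"{a}hler deformation; vanishing at $S^{[n]}$, they vanish for every $Y$, so $\Ext^i(E,E)\cong H^i(\PP^n,\CC)$ as graded vector spaces and in particular $\Ext^1(E,E)=0$. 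The graded-ring structure persists as well, either by the same invariance or because on a $2n$-fold with trivial canonical bundle the Serre-duality pairing forces such a graded-vector-space isomorphism to be a ring isomorphism. Finally, $E$ is reflexive by Verbitsky's theory, and since $G^{[n]}$ is modular its Azumaya algebra deforms to one identified with $\SheafEnd(E)$; a reflexive twisted module over an Azumaya algebra on a smooth variety is locally free, so $E$ is a twisted locally free sheaf.

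\emph{Step 3: slope-stability of $E$, and the main obstacles.} Slope-stability of $E$ with respect to an open subcone of the K\"{a}hler cone of $X$ follows from the slope-stability of $G^{[n]}$ established in Step 1 together with Verbitsky's theorem that a hyperholomorphic slope-stable bundle deforms to a sheaf that remains slope-stable with respect to the deformed K\"{a}hler classes. I expect the two principal difficulties to be: (i) the slope-stability of $G^{[n]}$ on $S^{[n]}$ for $b/a$ small --- controlling destabilizing subsheaves along the exceptional divisor of the Hilbert--Chow morphism and identifying the subsheaves of $G^{\boxtimes n}$ of maximal slope --- which is the genuinely new geometric input generalizing O'Grady's analysis for $n=2$; and (ii) propagating the vanishing $H^i(Y,\SheafEnd_0(E))=0$ and the slope-stability from the single manifold $S^{[n]}$ to all of its deformations, which rests on the full strength of Verbitsky's theory of hyperholomorphic bundles and on the connectivity of the twistor lines in the period domain.
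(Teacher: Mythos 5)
Your Step 2 (the computation $\Ext^\bullet(G^{[n]},G^{[n]})\cong \CC[h]/(h^{n+1})$ via the equivariant Koszul-sign-free calculation, and the propagation of the dimensions via Verbitsky's deformation invariance of cohomology of polystable hyperholomorphic bundles) is essentially the paper's argument and is fine. The gap is in Step 1: the slope-stability of $G^{[n]}$ on $S^{[n]}$ itself, for polarizations $a\theta_L-b\delta$ with $b/a$ small, is exactly the hard analytic/geometric content of O'Grady's paper for $n=2$, and you have not supplied it — you sketch the strategy and then flag it yourself as "the genuinely new geometric input." The sketch also leans on two unestablished points: that $G^{\boxtimes n}$ is slope-stable for the product polarization (tensor/box products of slope-stable bundles are a priori only slope-polystable; stability of the box product is true but requires an argument, e.g.\ via Hermitian--Einstein metrics and $\Hom(G^{\boxtimes n},G^{\boxtimes n})=\CC$), and that a would-be destabilizer can be controlled along the exceptional divisor as $b/a\to 0$ — "openness of stability in the polarization" does not by itself convert information at the nef boundary class $\theta_L$ into stability for nearby ample classes. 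Since everything in your Step 3 is fed by Step 1, the proof as written does not close.

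The paper avoids this entirely, and the missing idea is worth internalizing. Since $G$ is $H$-slope-stable, it is projectively hyperholomorphic, so the pair $(S,G)$ deforms along a twistor line to $(S_t,G_t)$ with $\Pic(S_t)=0$ for generic $t$; there $G_t$ (now possibly twisted) has \emph{no nonzero proper saturated subsheaf at all} — a condition much stronger than stability and far easier to propagate. Proposition \ref{prop-tensor-outer-product-is-stable} then shows $G_t^{\boxtimes n}$ has no proper saturated subsheaf (the inductive step uses Lemma \ref{lemma-U-otimes-V-does-not-have-subsheaves}, i.e.\ that saturated subsheaves of $U\otimes_\CC V$ are of the form $W\otimes_\CC V$ when $\Pic$ is trivial), and the Galois-covering description of $q_t$ away from the exceptional divisor shows that a proper saturated subsheaf of $G_t^{[n]}$ would produce an $\fS_n$-invariant one of $G_t^{\boxtimes n}$ of the same rank. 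Hence $G_t^{[n]}$ is slope-stable with respect to \emph{every} Kähler class on $S_t^{[n]}$, and the deformation $(S^{[n]},G^{[n]})\rightsquigarrow(S_t^{[n]},G_t^{[n]})\rightsquigarrow(X,E)$ proceeds through the relative isospectral Hilbert scheme and then the twistor-path machinery of Theorem \ref{thm-modularity-of-a-stable-sheaf-with-a-rank-1-obstruction-map}. In particular, the paper never needs (and does not prove) slope-stability of $G^{[n]}$ on $S^{[n]}$ for a specific ample class; replacing your Step 1 by this twistor trick is what makes the theorem provable at the present state of knowledge for all $n$.
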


The theorem is proved in section \ref{sec-stability-of-images-of-structure-sheaf}.
The theorem was proved by O'Grady in case $n=2$, where it was further proven that the moduli space of vector bundles with the same Chern classes as $E$ is a single point \cite{ogrady-modular}.
O'Grady proved that the Chern class $c_2(\SheafEnd(G^{[2]}))$ remains of Hodge type under all K\"{a}hler deformations of $S^{[2]}$ by explicitly computing this class.
The current paper emerged from the author's attempt to provide a conceptual explanation to O'Grady's intriguing result.

\begin{rem}
\label{rem--is deformation-of-FM-image-of-structure-sheaf}
The pair  $(S,G)$ in Theorem \ref{thm-BKR-of-tensor-product-of-spherical-object} is related, via a finite sequence 
of deformations and derived equivalences, to the pair $(T,\StructureSheaf{T})$, for some $K3$ surface $T$. 
More precisely, there are two  sequences $(S_i,G_i)$, $0\leq i\leq N$, and $(S'_i,G'_i)$, $0\leq i\leq N-1$,
and derived equivalences $\Phi_i:D^b(S'_i)\rightarrow D^b(S_{i+1})$, $0\leq i\leq N-1$, such that
$(S_0,G_0)=(S,G)$, $(S_N,G_N)=(T,\StructureSheaf{T})$, 
$(S'_i,G'_i)$ is a deformation of $(S_i,G_i)$, and $G_{i+1}$ is a slope-stable locally free sheaf isomorphic to $\Phi_i(G'_i)$.
Now, $\StructureSheaf{T}^{[n]}=\StructureSheaf{T^{[n]}}$ and the BKR-conjugate $\Phi_i^{[n]}:D^b((S'_i)^{[n]})\rightarrow D^b(S_{i+1}^{[n]})$ of  the derived equivalence 
$\Phi_i^{\boxtimes n}:D^b_{\fS_n}((S'_i)^n)\cong D^b_{\fS_n}(S_{i+1}^n)$ is an equivalence mapping
$(G'_i)^{[n]}$ to $G_{i+1}^{[n]}$.
Hence, the pair $(S^{[n]},G^{[n]})$ is related to the pair $(T^{[n]},\StructureSheaf{T^{[n]}})$ via a sequence of deformations and derived equivalences.
\hide{
The pair  $(S,G)$ can be deformed to a pair $(S_0,G_0)$, such that $G_0$ 
is isomorphic to the image of $\StructureSheaf{T}$ via an equivalence of  derived categories $\Phi:D^b(T)\rightarrow G^b(S_0)$, for some $K3$ surface $T$.
Then $G_0^{[n]}$ is the image of $\StructureSheaf{T^{[n]}}$ via the BKR-conjugate $\Phi^{[n]}:D^b(T^{[n]})\rightarrow D^b(S_0^{[n]})$ of the derived equivalence 
$\Phi^{\boxtimes n}:D^b_{\fS_n}(T^n)\cong D^b_{\fS_n}(S_0^n)$. Hence, the pair $(S^{[n]},G^{[n]})$ is a deformation of $(S_0^{[n]},\Phi^{[n]}(\StructureSheaf{T^{[n]}}))$.
}
\end{rem}

%
\subsubsection{A very modular vector bundle on $S^{[n]}$ from $n$ vector bundles on $S$}
Let $(S,H)$ be a  polarized $K3$ surface and $M_H(v)$ a $2$-dimensional projective moduli space of
$H$-slope-stable locally free sheaves  over $S$ with Mukai vector $v$ of rank $r$. Then $M_H(v)$ is a $K3$ surface
\cite{mukai-hodge}.
Let $\U$ be a universal vector bundle over $S\times M_H(v)$, possibly twisted by a Brauer class $\theta$.
Then the Fourier-Mukai transformation $\Phi_\U:D^b(M_H(v),\theta^{-1})\rightarrow D^b(S)$ is an equivalence \cite[Theorem 1.3]{caldararu-non-fine}.
Choose $n$ distinct points $[G_i]$ in $M_H(v)$ corresponding to $n$ vector bundles $G_i$, $1\leq i\leq n$.
The vector bundle $G:=\oplus_{\sigma\in\fS_n}G_{\sigma(1)}\boxtimes G_{\sigma(2)}\boxtimes \cdots \boxtimes  G_{\sigma(n)}$
admits a $\fS_n$-linearization $\rho$ and any two linearizations yield isomorphic objects $(G,\rho)$ (Remark \ref{rem-chi-invariant-objects}).
The object $BKR(G,\rho)$ is isomorphic to a rank $(n!)r^n$ locally free sheaf $F$ over $S^{[n]}$. 
Let $z\in (M_H(v))^{[n]}$ be the point corresponding to the length $n$ subscheme of $M_H(v)$ supported on the set 
$\{[G_i] \ : \ 1\leq i\leq n\}.$ Let $\Phi_\U^{[n]}:D^b(M_H(v))^{[n]},(\theta^{-1})^{[n]})\rightarrow D^b(S^{[n]})$ be the BKR conjugate of 
$\Phi_\U^{\boxtimes n}:D^b_{\fS_n}((M_H(v))^n,(\theta^{-1})^{\boxtimes n})\rightarrow D^b_{\fS_n}(S^n)$.
$F$ is isomorphic to the image $\Phi_\U^{[n]}(\CC_z)$ of the sky-scraper sheaf $\CC_z$ (see Eq. (\ref{eq-F-z-is-image-of-sky-scpaper-via-Phi[n]}) for details). Hence,
$\obs_F$ has rank $1$. 

\begin{thm}
\label{thm-Fourier-Mukai-images-of-sky-scraper-sheaves}
For every irreducible holomorphic symplectic manifold $X$ of $K3^{[n]}$ deformation type there exists a possibly twisted locally free sheaf $E$ and a deformation of the pair $(S^{[n]},F)$ to $(X,E)$. $E$ is slope-stable with respect to classes in an open subcone of the K\"{a}hler cone of $X$. 
\end{thm}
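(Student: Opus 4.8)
The plan is to verify the three hypotheses of Theorem~\ref{thm-modularity-of-a-stable-sheaf-with-a-rank-1-obstruction-map} for the bundle $F$ and then extract the slightly sharper conclusion (local freeness and stability of $E$). Two of the hypotheses are already in hand: $F$ is locally free of rank $(n!)r^n>0$, hence a reflexive sheaf of positive rank, and $\obs_F$ has rank $1$. The latter is precisely the discussion preceding the theorem --- under the derived equivalence $\Phi_\U^{[n]}$ the commutative square of the introduction identifies $\obs_F$ with $\obs_{\CC_z}$, and the obstruction map of a sky-scraper sheaf on an irreducible holomorphic symplectic manifold has one-dimensional image, spanned by the class obtained by contracting the Hochschild class with the holomorphic symplectic form (cf.\ Section~\ref{sec-proof-of-modularity}). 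So the content of the proof is to produce an open subcone of the ample cone of $S^{[n]}$ with respect to every member of which $F$ is slope-stable. Granting this, Theorem~\ref{thm-modularity-of-a-stable-sheaf-with-a-rank-1-obstruction-map} yields a flat deformation of $(S^{[n]},F)$ to a pair $(X,E)$ with $E$ a possibly twisted coherent sheaf, and $E$ may be taken locally free since local freeness is an open condition in a flat family.

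To establish the stability, let $h\colon S^{[n]}\to S^{(n)}:=S^n/\fS_n$ be the Hilbert--Chow morphism, $q\colon S^n\to S^{(n)}$ the quotient, and $U\subset S^{(n)}$ the open dense locus of reduced length-$n$ subschemes, so $h$ is an isomorphism over $U$ and $q$ is \'etale of degree $n!$ over $U$. Over $U$ the Fourier--Mukai kernel $\StructureSheaf{\Gamma}$ of $BKR$ is the structure sheaf of the \'etale $\fS_n$-cover $h^{-1}(U)\times_{S^{(n)}}S^n$, and hence $F|_{h^{-1}(U)}$ is canonically the $\fS_n$-descent of $G|_{q^{-1}(U)}$, where $G:=\oplus_{\sigma\in\fS_n}G_\sigma$ and $G_\sigma:=G_{\sigma(1)}\boxtimes\cdots\boxtimes G_{\sigma(n)}$; equivalently $q^*(F|_{h^{-1}(U)})\cong G|_{q^{-1}(U)}$ as $\fS_n$-linearized bundles. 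Fix an ample line bundle $H$ on $S$, let $\mathcal H$ be the induced ample class on $S^{(n)}$ with $q^*\mathcal H=H^{\boxtimes n}$, and on $S^{[n]}$ set $\mathcal H_t:=h^*\mathcal H-t\delta$, with $2\delta$ the exceptional divisor of $h$. For $0<t\ll 1$ the classes $\mathcal H_t$ are ample, and as $H$ and $t$ vary they sweep out an open subcone of the ample cone of $S^{[n]}$. I claim $F$ is $\mathcal H_t$-slope-stable for all sufficiently small $t>0$.

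By Grothendieck's boundedness of the family of saturated subsheaves of $F$ of slope bounded below, only finitely many numerical types of a proper saturated subsheaf $F'\subsetneq F$ of positive rank can be relevant, so fix one, with $0<\rank F'<\rank F$. Restricting $F'$ to $h^{-1}(U)$, pulling back by $q$, and taking the reflexive hull on $S^n$ --- legitimate since $S^n\setminus q^{-1}(U)$ has codimension $2$ --- yields an $\fS_n$-invariant saturated subsheaf $G'\subseteq G$ with $\rank G'=\rank F'$, so $0<\rank G'<\rank G$. Because $h$ is an isomorphism over $U$ and contracts $\delta$, the degree $\deg_{h^*\mathcal H}(F')$ depends only on $F'|_{h^{-1}(U)}$, and pulling back by the degree-$n!$ map $q$ with $q^*\mathcal H=H^{\boxtimes n}$ gives $\mu_{h^*\mathcal H}(F')=\tfrac{1}{n!}\mu_{H^{\boxtimes n}}(G')$, and likewise for $F$ and $G$. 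If $\mu_{h^*\mathcal H}(F')<\mu_{h^*\mathcal H}(F)$ then, the $\delta$-correction to $\mu_{\mathcal H_t}$ being of order $t$, we get $\mu_{\mathcal H_t}(F')<\mu_{\mathcal H_t}(F)$ for small $t$, so $F'$ does not destabilize. Otherwise $\mu_{H^{\boxtimes n}}(G')\ge\mu_{H^{\boxtimes n}}(G)$; but each $G_\sigma$ is $H^{\boxtimes n}$-stable of slope $n\mu_H(G_1)$ --- the external tensor product of $H$-polystable sheaves is $H^{\boxtimes n}$-polystable (tensor Hermite--Einstein metrics on the factors), and $G_\sigma$ is simple since $\End_{S^n}(G_\sigma)=\bigotimes_i\End_S(G_{\sigma(i)})=\CC$, so polystable and simple forces stable --- and the $G_\sigma$ are pairwise non-isomorphic because the $G_i$ are distinct, so $G$ is $H^{\boxtimes n}$-polystable with pairwise non-isomorphic stable summands permuted simply transitively by $\fS_n$ (the stabilizer of each $G_\sigma$ is trivial). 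Hence equality holds, $G'$ is the direct sum of an $\fS_n$-invariant subset of the $G_\sigma$, and such a subset is empty or everything, so $G'=0$ or $G'=G$ --- contradicting $0<\rank G'<\rank G$. Running over the finitely many relevant types, $F$ is $\mathcal H_t$-slope-stable for all small $t>0$, which supplies the required subcone.

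Applying Theorem~\ref{thm-modularity-of-a-stable-sheaf-with-a-rank-1-obstruction-map} then gives the flat deformation of $(S^{[n]},F)$ to $(X,E)$ with $E$ locally free. The remaining assertion, that $E$ is slope-stable with respect to classes in an open subcone of the K\"{a}hler cone of $X$, is part of the output of Verbitsky's theory of hyperholomorphic sheaves already invoked in that theorem: $E$ underlies a (twisted) hyperholomorphic sheaf, hence is polystable with respect to a K\"{a}hler class on $X$, and it is stable because it is simple --- simplicity is preserved along the deformation since $\End(F)=\CC$ --- so stability persists on an open subcone by openness. The step I expect to be the main obstacle is the stability claim, and within it the two geometric inputs on which it rests: the identification of $F|_{h^{-1}(U)}$ with the $\fS_n$-descent of $G|_{q^{-1}(U)}$ (a statement about the restriction of the $BKR$-kernel to the locus of distinct points), and the $H^{\boxtimes n}$-stability of the box products $G_\sigma$. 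Once these are secured, the perturbation $-t\delta$ serves only to break ties coming from subsheaves of strictly smaller $h^*\mathcal H$-slope, and the simply transitive $\fS_n$-action on the stable summands of $G$ disposes of the borderline cases.
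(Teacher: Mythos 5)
Your overall architecture is the same as the paper's (reduce to Theorem \ref{thm-modularity-of-a-stable-sheaf-with-a-rank-1-obstruction-map} by checking reflexivity, the rank--$1$ obstruction property inherited from $\CC_z$, and stability on a subcone), but your stability argument takes a genuinely different route. The paper never proves stability on $S^{[n]}$ itself: it first moves $(S,\{G_i\})$ along a twistor line to a fiber $S_t$ with trivial Picard group, where each $G_{i,t}$ has \emph{no} nonzero saturated proper subsheaf at all; Lemma \ref{lemma-inflation-of--tensor-outer-product-is-stable} then kills all $\fS_n$-invariant saturated proper subsheaves of $\oplus_\sigma G_{\sigma(1),t}\boxtimes\cdots\boxtimes G_{\sigma(n),t}$, and the Galois-cover descent shows $F_t$ has no saturated proper subsheaf whatsoever, hence is stable for \emph{every} K\"ahler class on $S_t^{[n]}$ --- no wall analysis needed. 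You instead work on the projective $S^{[n]}$ with the polarizations $h^*\mathcal H-t\delta$, using polystability of $\oplus_\sigma G_\sigma$ with pairwise non-isomorphic, transitively permuted stable summands to rule out equal-slope destabilizers. Your route has the merit of producing an explicit subcone (near $\theta(\mathrm{Amp}(S))$) on $S^{[n]}$ itself, and your descent, slope-comparison, and ``invariant sub-sum of summands'' steps are all sound.

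The one substantive gap is the uniformity in $t$. Reducing to ``finitely many numerical types'' of saturated subsheaves via Grothendieck boundedness requires bounding the relevant destabilizers \emph{simultaneously} for all polarizations $\mathcal H_t$ with $t\in(0,t_0]$: the slope $\mu_{\mathcal H_t}(F')$ is a polynomial in $t$ whose coefficients involve the $\delta$-component of $c_1(F')/\rank F'$, and a priori that component is unbounded over the (infinite) family of saturated subsheaves, so ``$\mu_{h^*\mathcal H}(F')<\mu_{h^*\mathcal H}(F)$ implies $\mu_{\mathcal H_t}(F')<\mu_{\mathcal H_t}(F)$ for small $t$'' is not uniform as stated. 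Making this precise needs a Bogomolov-type inequality or an equivalent boundedness input (this is exactly the ``suitable polarization'' analysis O'Grady carries out in \cite{ogrady-modular}, and exactly the difficulty the paper's trivial-Picard-group detour is designed to bypass). Separately, your justifications that $E$ is locally free and simple at the far end of the deformation by ``openness'' are not quite right --- openness only controls a neighborhood of the initial fiber; the correct inputs are Verbitsky's results that a stable hyperholomorphic bundle deforms as a locally free sheaf along twistor families and that $\dim H^i(\SheafEnd)$ is constant along them (\cite[Cor.~8.1]{verbitsky-hyperholomorphic}), which is what the paper cites.
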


The theorem is proved in Section \ref{sec-modular-vector-bundles-with-isotropic-LLV-line}. 
A criterion for the existence of an untwisted sheaf $E$ in Theorems \ref{thm-BKR-of-tensor-product-of-spherical-object} and \ref{thm-Fourier-Mukai-images-of-sky-scraper-sheaves} is provided in Section \ref{sec-lift-to-an-untwisted-sheaf}.

%
\subsection{The LLV line of an object with a rank $1$ obstruction map}
%
\subsubsection{The LLV-line of an object $F$ with $\rank(\obs_F)=1$}
We do not know if the property that $\obs_F$ has rank $1$ is an open condition. A priori, the rank of $\obs_F$ may increase after a deformation of $(X,F)$. However, the following theorem lists topological consequences of the fact that the rank of $\obs_F$ is one, and those are clearly deformation invariant.

Let $X$ be a $2n$-dimensional irreducible holomorphic symplectic manifold. 
Consider the vector space
\[
\widetilde{H}(X,\QQ) := \QQ\alpha\oplus H^2(X,\QQ)\oplus\QQ\beta,
\]
endowed with the symmetric bilinear pairing with respect to which $\alpha$ and $\beta$ are isotropic, $(\alpha,\beta)=-1$, $U:=\span\{\alpha,\beta\}$
is orthogonal to $H^2(X,\QQ)$, and the pairing restricts to the Beauville-Bogomolov-Fujiki pairing on $H^2(X,\QQ)$.
Verbitsky and Looijenga-Lunts constructed a canonical (topological) representation of the 
Lie algebra $\LieAlg{so}(\widetilde{H}(X,\QQ))$ on the cohomology $H^*(X,\QQ)$ (see \cite{looijenga-lunts,verbitsky-cohomology}). Denote by $\LieAlg{g}_X$ the image of $\LieAlg{so}(\widetilde{H}(X,\QQ))$ in 
$\LieAlg{gl}(H^*(X,\QQ))$. The subring $SH^*(X,\QQ)$ of $H^*(X,\QQ)$ generated by $H^2(X,\QQ)$ is isomorphic 
as a $\LieAlg{g}_X$-subrepresentation to the irreducible subrepresentation $V(n)$ 
of $\Sym^n(\widetilde{H}(X,\QQ))$ spanned by $n$-th powers of isotropic classes and $V(n)$  appears with multiplicity $1$ in $H^*(X,\QQ)$. Hence, we get a canonical projection from $H^*(X,\QQ)$ to $SH^*(X,\QQ)$.
Given an equivalence $\Phi:D^b(X)\rightarrow D^b(Y)$ between two projective irreducible holomorphic symplectic manifolds
we get an induced isomorphism $\phi:H^*(X,\QQ)\rightarrow H^*(Y,\QQ)$. 
Taelman proved that $\phi$ conjugates $\LieAlg{g}_X$ to $\LieAlg{g}_Y$ and the restriction of $\phi$ 
to $SH^*(X)$ is induced by an isometry 
$\tilde{\phi}:\widetilde{H}(X,\QQ)\rightarrow \widetilde{H}(Y,\QQ)$.
Furthermore, there exists a canonical
$\LieAlg{g}_X$-equivariant isomorphism
\[
\Psi : SH^*(X,\QQ)\rightarrow V(n),
\]
which is $(\phi,\tilde{\phi})$-equivariant, up to sign \cite[Theorems 4.7 and 4.9]{taelman}.
Given an object $F$ of $D^b(X)$, let $v(F):=ch(F)\sqrt{td_X}$ be its Mukai vector and let $\hat{v}(F)$ be the projection of $v(F)$ to $SH^*(X,\QQ)$. Note that $\hat{v}(F)$ does not vanish, if $\rank(F)\neq 0$ or if $F$ is a sky scraper sheaf, or if $F$ is the image of such an object via an equivalence of derived categories.

\begin{thm}
\label{thm-introduction-LLV-line}
(Theorem \ref{thm-Mukai-vector})
Let $F$ be an object of $D^b(X)$ with $\obs_F$ of rank $1$. Assume that $\hat{v}(F)$ does not vanish.
\begin{enumerate}
\item
\label{thm-item-stabilizers-of-Mukai-vector-equal-to-that-of-LLV-line}
The stabilizer $\LieAlg{g}_{v(F)}$ of $v(F)$ in $\LieAlg{g}_X$ is equal to the subalgebra of $\LieAlg{g}_X$
annihilating a line $\ell(F)$ in $\widetilde{H}(X,\QQ)$.
\item
The isomorphism $\Psi$ maps $\hat{v}(F)$ into the image of $\ell(F)^n$ via the projection from $\Sym^n(\widetilde{H}(X,\QQ))$ to $V(n)$.
\item
\label{thm-item-LLV-vector-for-non-zero-rank}
If the rank $r$ of $F$ does not vanish, then 
$\ell(F)=\span\{r\alpha+c_1(F)+s\beta\}$, for some rational number $s$.
\item
Given an equivalence $\Phi:D^b(X)\rightarrow D^b(Y)$ between two irreducible holomorphic symplectic manifolds,
we have $\ell(\Phi(F))=\tilde{\phi}(\ell(F))$.
\end{enumerate}
\end{thm}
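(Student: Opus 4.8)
The plan is to deduce all four items from a single structural statement: that the stabilizer $\LieAlg{g}_{v(F)}$ of the Mukai vector inside $\LieAlg{g}_X=\LieAlg{so}(\widetilde{H}(X,\QQ))$ is an \emph{annihilator of a nonzero vector}. First I would pass to $SH^*(X,\QQ)$. Since $SH^*(X,\QQ)\cong V(n)$ occurs with multiplicity one in $H^*(X,\QQ)$, the projection $H^*(X,\QQ)\to SH^*(X,\QQ)$ is $\LieAlg{g}_X$-equivariant, so $\LieAlg{g}_{v(F)}\subseteq \LieAlg{g}_{\hat v(F)}$, and via Taelman's isomorphism $\Psi$ the latter is the stabilizer in $\LieAlg{so}(\widetilde{H}(X,\QQ))$ of the nonzero vector $w:=\Psi(\hat v(F))\in V(n)\subseteq \Sym^n\widetilde{H}(X,\QQ)$. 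The purely representation-theoretic input I would isolate is: for $0\neq w\in V(n)$ the orbit $SO(\widetilde{H})\cdot w$ has minimal dimension (equivalently, $\LieAlg{so}(\widetilde{H})_w$ is of maximal dimension $\binom{b_2(X)+1}{2}$) exactly when $w$ is a scalar multiple of the image $\pi_n(v^{\otimes n})$ of some $v\in\widetilde{H}(X,\QQ)$ under the projection $\pi_n\colon\Sym^n\widetilde{H}\to V(n)$, and in that case $\LieAlg{so}(\widetilde{H})_w=\{g:g\,v=0\}$. This is the identification of the closed orbit in $\PP(V(n))$ with an $SO(\widetilde{H})$-homogeneous Veronese-type variety; the decomposable classes with isotropic $v$ are the highest-weight vectors, and the general $v$ follows by deforming the quadratic form. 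Granting this, setting $\ell(F):=\QQ v$ proves items (1) and (2) at once, \emph{provided} $\LieAlg{g}_{v(F)}$ really attains the maximal dimension.

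Establishing that maximality is where the hypothesis $\rank(\obs_F)=1$ must be used, and I expect this to be the main obstacle. The mechanism I would run is the one sketched after Theorem~\ref{thm-introduction-image-via-FM-is-modular}, upgraded from "deforms" to "fixes a large group''. By Lemma~\ref{lem-Sigma-F-is-contained-in-stabilizer-of-Mukai-vector} and \cite{toda,huang}, $F$ deforms to first order along $(0,\xi,\theta)\in HT^2(X)$ iff this class lies in $\ker(\obs^{HT}_F)$; since $\rank(\obs_F)=1$ and $\obs^{HT}_F$ is injective on $H^2(X,\StructureSheaf{X})$, every $\xi\in H^1(X,TX)$ lifts, so $\kappa(F)$ — equivalently the whole characteristic system of the Azumaya algebra $\SheafEnd(F)$, which has trivial $c_1$ — stays of Hodge type along every first-order, hence (Lemma~\ref{lemma-alpha-remains-of-Hodge-type}) every Kähler, deformation of $X$ and along all associated twistor lines. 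Verbitsky's and Looijenga--Lunts' description of the classes that remain of Hodge type over an entire connected component of the Teichm\"uller space \cite{verbitsky-cohomology,looijenga-lunts,kaledin-verbitsky-book} then forces $\hat v(F)$ to be annihilated not merely by a small subalgebra but by the nilradical of a maximal parabolic of $\LieAlg{so}(\widetilde{H}(X,\QQ))$ (the ``$e_v$''s'' after an element of $SO(\widetilde{H})$), whence $\dim\LieAlg{g}_{v(F)}=\dim\LieAlg{g}_{\hat v(F)}=\binom{b_2(X)+1}{2}$. The delicate point is to match the deformation-theoretic assertion ``$F$ deforms in every complex direction of $X$'' with the group-theoretic assertion ``$\hat v(F)$ is fixed by a maximal-parabolic nilradical'', i.e.\ to show the two nested descriptions of the stabilizer actually coincide; this, together with the orbit classification above, is the technical core.

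For item (3) I would compute $v$ from the leading terms of the Mukai vector. Writing $v(F)=ch(F)\sqrt{td_X}$ and recalling $\kappa(F)=ch(F)\exp\!\left(\tfrac{-c_1(F)}{r}\right)$, the translate of $v(F)$ by the $SO(\widetilde{H})$-element corresponding to cup product with $\exp(c_1(F)/r)$ has rank-$r$ degree-$0$ component and vanishing degree-$2$ component; translating the decomposition $\hat v(F)=c\,\Psi^{-1}\pi_n(v^{\otimes n})$ back and matching degree-$0$ and degree-$2$ parts then forces $v\in\QQ\{r\alpha+c_1(F)+s\beta\}$, with $s$ the only coefficient into which $td_X$ feeds (so a priori $v$ need not be isotropic). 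Rationality of $v(F)$ and of $\widetilde{H}(X,\QQ)$ gives $s\in\QQ$.

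Item (4) is then formal. The commutative square relating $\obs_F$ and $\obs_{\Phi(F)}$ displayed in the introduction gives $\rank(\obs_{\Phi(F)})=1$, so $\ell(\Phi(F))$ is defined. By Taelman's theorem the induced isometry $\phi\colon H^*(X,\QQ)\to H^*(Y,\QQ)$ intertwines the projections onto $SH^*$ and restricts on $SH^*$ to the map determined by $\tilde{\phi}$ and $\Psi_X,\Psi_Y$ up to sign, while $\phi(v(F))$ and $v(\Phi(F))$ agree up to the usual sign; hence $\tilde{\phi}$ carries the vector $v$ attached to $F$ to the one attached to $\Phi(F)$, i.e.\ $\ell(\Phi(F))=\tilde{\phi}(\ell(F))$, the line being well defined by the uniqueness established in the first two steps.
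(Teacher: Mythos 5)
Your reductions of items (3) and (4) to the structural statement, and the structural statement itself, match the paper's; the problem is the step you yourself flag as the technical core, namely showing that $\LieAlg{g}_{v(F)}$ attains the maximal dimension $\binom{b_2(X)+1}{2}$. The mechanism you propose uses only the intersection of $\ker(\obs^{HT}_F)$ with the commutative summand $H^2(X,\StructureSheaf{X})\oplus H^1(X,TX)$ of $HT^2(X)$: lifting every $\xi\in H^1(X,TX)$ shows that $\kappa(F)\sqrt{td_X}$ remains of Hodge type under all K\"ahler deformations, which by Lemma \ref{lemma-alpha-remains-of-Hodge-type} is exactly $\bar{\LieAlg{g}}$-invariance. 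But $\bar{\LieAlg{g}}=\LieAlg{so}(H^2(X,\QQ))$ is the semisimple part of a Levi, not the nilradical of a maximal parabolic, and its dimension is $\binom{b_2(X)}{2}$ — one row short. A generic $\bar{\LieAlg{g}}$-invariant vector of $V(n)$ (a generic combination of the projections of $\alpha^{n-i}\beta^{i}$) has stabilizer exactly $\bar{\LieAlg{g}}$ and is not a projected $n$-th power; already for $\StructureSheaf{X}$, the fact that $\Psi(\sqrt{td_X})$ is a projected power is a nontrivial output of the theorem, not a consequence of monodromy invariance. The missing dimension comes from the $H^0(X,\wedge^2TX)$-direction of the codimension-one kernel, which your argument never uses. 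The paper extracts it by embedding all of $HT^2(X)$ into $\LieAlg{g}_\CC$ via $m$, defining $\mu(\lambda)=m_\lambda(\sigma)$ and the induced pairing, and proving (Lemma \ref{lemma-m-Sigma-and-its-conjugate-generate-the-annihilator-of-ell}) that $m(\tilde\Sigma(F))$ together with its complex conjugate generates the full annihilator of the line $\mu(\tilde\Sigma(F)^\perp)$; combined with Lemma \ref{lem-Sigma-F-is-contained-in-stabilizer-of-Mukai-vector} this is what forces the stabilizer up to $\binom{b_2(X)+1}{2}$. Your route also fails outright for rank-zero objects (structure sheaves of lagrangians, images of skyscrapers), which items (1), (2), (4) must cover: there $H^2(X,\StructureSheaf{X})\subset\Sigma(F)$, $\kappa(F)$ is undefined, and $ch(F)$ does not remain of Hodge type in all directions, so no $\bar{\LieAlg{g}}$-invariance is available.

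A secondary gap is the converse orbit statement you invoke, that a nonzero $w\in V(n)$ with stabilizer of dimension $\binom{b_2(X)+1}{2}$ must be the projection of $v^{\otimes n}$ for some $v$, with stabilizer the annihilator of $v$. The closed orbit in $\PP(V(n))$ accounts only for isotropic $v$, and "deforming the quadratic form'' does not by itself classify all vectors with large stabilizer (one must at least rule out other subalgebras of that dimension and compute their invariants in $V(n)$). The paper sidesteps the orbit classification entirely: Lemma \ref{lemma-pde} solves the system $\Delta f=0$, $m_\lambda f=0$ for $\lambda\in\Sigma$ as explicit recursions on the components $f_i\in\Sym^{n-2i}(\widetilde{H}^{0,0})$, concluding directly that $f$ is the projection of $\ell(\Sigma)^n$, that $\ell(\Sigma)$ is real, and then (via the Veronese argument of Lemma \ref{lemma-Veronese-is-defined-over-Q}) rational. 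If you want to keep your representation-theoretic formulation, you would need to supply both the classification of maximal-dimensional stabilizers in $V(n)$ and, independently, a source for the maximality itself that sees the noncommutative deformation directions.
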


Theorem \ref{thm-introduction-LLV-line} shows that 
$\widetilde{H}(X,\QQ)$ plays a role analogous to that of the Mukai lattice of a $K3$ surface in the classification of objects $F$ with $\obs_F$ of rank $1$. Note that the rank of $\obs_F$ is $1$ for every simple sheaf $F$ over a $K3$ surface, since $Ext^2(F,F)$ is $1$-dimensional. We refer to $\widetilde{H}(X,\QQ)$ as the {\em rational LLV lattice}.  When $X$ is of $K3^{[2]}$-type\footnote{
The author was notified by Thorsten Beckmann that he extended \cite[Theorem 9.8]{taelman} for every manifold of $K3^{[n]}$-type, for all $n\geq 1$. Partial results were obtained independently by the author and are included in this paper (Remark \ref{rem-integral-LLV-lattice}).
} 
there exists an integral lattice in $\widetilde{H}(X,\QQ)$, which is preserved by equivalences of derived categories and by monodromy operators \cite[Theorem 9.8]{taelman}. 

The proof of Theorem \ref{thm-introduction-LLV-line} proceeds as follows. The Hodge decomposition exhibits $H^*(X,\CC)$
as a module over the ring $HT^*(X):=\oplus_{a,b}H^a(X,\wedge^bTX)$. The action homomorphism
$m:HT^*(X)\rightarrow \LieAlg{gl}(H^*(X,\CC))$ maps $HT^2(X)$ into the LLV Lie algebra $\LieAlg{g}_\CC:=\LieAlg{g}_X\otimes_\QQ\CC,$ by \cite{verbitsky-mirror-symmetry} (see also \cite[Prop. 2.9]{taelman}).
Let  $\tilde{\Sigma}(F)$ be the image of $\ker\left(\obs^{HT}_F\right)$ via the graded Duflo automorphism of $HT^*(X)$ given by contraction with $\sqrt{td_X}$.
Then $m:HT^2(X)\rightarrow \LieAlg{g}_\CC$ maps  $\tilde{\Sigma}(F)$ into the stabilizer $\LieAlg{g}_{\CC,v(F)}$ in $\LieAlg{g}_\CC$ of the Mukai vector $v(F)$ of $F$ (see \cite{huang}, or Lemma
\ref{lem-Sigma-F-is-contained-in-stabilizer-of-Mukai-vector}). We prove that $m(\tilde{\Sigma}(F))$ and its complex conjugate
generate $\LieAlg{g}_{\CC,v(F)}$,  if the rank of $\obs_F$ is $1$  (Lemma \ref{lemma-m-Sigma-and-its-conjugate-generate-the-annihilator-of-ell}).
Composing $m$ with the action of $\LieAlg{g}_\CC$ on $\widetilde{H}(X,\CC)$ yields 
a homomorphism from $HT^2(X)$ into $\LieAlg{gl}(\widetilde{H}(X,\CC))$. Evaluation at 
a holomorphic $2$-form on $X$, considered as an element of $\widetilde{H}(X,\CC)$, gives rise to a homomorphism 
$\mu:HT^2(X)\rightarrow \widetilde{H}(X,\CC)$ which pulls back the pairing on $ \widetilde{H}(X,\CC)$ to one on $HT^2(X)$, canonical up to a scalar (Lemma \ref{lemma-formula-for-m-lambda}). We get the line $\tilde{\Sigma}(F)^\perp$ in $HT^2(X)$ orthogonal to $\tilde{\Sigma}(F)$
and we set $\ell(F):=\mu\left(\tilde{\Sigma}(F)^\perp\right)$. 
We show that 
$\ell(F)$ is defined over $\QQ$ and 
$\LieAlg{g}_{\CC,v(F)}$ is the complexification of the subalgebra of $\LieAlg{g}_X$
annihilating   $\ell(F)$  (Lemma \ref{lemma-pde}).

%
\subsubsection{Computation of LLV-lines}
The integral $\int_X\sqrt{td_X}$ for $X$ of $K3^{[n]}$ and generalized Kummer deformation types was computed in \cite[Prop. 19 and 21]{sawon-thesis}. We use Sawon's computations to show that
$\ell(\StructureSheaf{X})=\span\{4\alpha+(n+3)\beta\}$, if $X$ is of $K3^{[n]}$ deformation type, 
and $\ell(\StructureSheaf{X})=\span\{4\alpha+(n+1)\beta\}$, if $X$ is of generalized Kummer deformation type (Lemma
\ref{lemma-LLV-line-of-structure-sheaf-k3-type}). 

It is sometimes challenging to prove that $\obs_F$ has rank $1$, yet possible to prove 
the property in Theorem \ref{thm-introduction-LLV-line}(\ref{thm-item-stabilizers-of-Mukai-vector-equal-to-that-of-LLV-line}). We prove it in Lemma \ref{lemma-chern-character-of-Lagrangian-structure-sheaf-deforms-in-co-dimension-1} for 
the structure sheaf $\StructureSheaf{Z}$ of a smooth lagrangian surface $Z$ in $X$ of $K3^{[2]}$-type, when the restriction homomorphism $H^2(X,\QQ)\rightarrow H^2(Z,\QQ)$ has rank $1$ and $c_1(\omega_Z)$ belongs to its image.
If $F$ satisfies the property in Theorem \ref{thm-introduction-LLV-line}(\ref{thm-item-stabilizers-of-Mukai-vector-equal-to-that-of-LLV-line})
we say that 
$F$ {\em has a rank $1$ cohomological obstruction map}
(see Definition \ref{def-deforms-in-co-dimension-one} (\ref{def-item-remains-of-Hodge-type-in-co-dimension-1})).
The images of such objects $F$ via derived equivalences 
each has a rank $1$ cohomological obstruction map
as well. 
The modularity Theorem \ref{thm-introduction-image-via-FM-is-modular} above holds if we replace the property that $\obs_F$ has rank $1$ by the property that 
$F$ has a rank $1$ cohomological obstruction map.
The line $\ell(\StructureSheaf{Z})$ of the structure sheaf of a lagrangian submanifold $Z$ of $X$,  with 
$\StructureSheaf{Z}$  having a rank $1$ cohomological obstruction map,
is computed in Lemma \ref{lemma-Mukai-line-of-structure-sheaf-of-subcanonical-lagrangian} when $c_1(\omega_Z)$ belongs to the image of 
the pullback homomorphism $\iota^*:H^2(X,\QQ)\rightarrow H^2(Z,\QQ)$ via the embedding $\iota:Z\rightarrow X$.
The homomorphism $\iota^*$ necessarily has rank $1$ in this case and we have 
\[
\ell(\iota_*\StructureSheaf{Z})=\span\{\lambda+\frac{t(\lambda,\lambda)}{2}\beta\},
\]
where $c_1(\omega_Z)=t\iota^*(\lambda)$ and $\lambda^\perp=\ker(\iota^*)$  (Lemma
\ref{lemma-Mukai-line-of-structure-sheaf-of-subcanonical-lagrangian}).  

In Section \ref{sec-Effective-LLV-lines} we include a conjectural discussion of the following.

\begin{question}
Let $X$ be a projective irreducible holomorphic symplectic manifold. 
Which lines in $\widetilde{H}(X,\QQ)$ are of the form $\ell(F)$, for some object $F$
which has a rank $1$ cohomological obstruction map?
\end{question}

Very little is known even for $X$ of $K3^{[2]}$-deformation type. The question is related to the existence of lagrangian surfaces
in the generic polarized such $X$. When  $\iota:Z\rightarrow X$ is an embedding of a smooth lagrangian surface,
$\iota^*:H^2(X,\QQ)\rightarrow H^2(Z,\QQ)$ has rank $1$, and $c_1(\omega_Z)$ belongs to its image, then 
$\iota_*\iota^*L$ has a rank $1$ cohomological obstruction map
for every line bundle $L$ on $X$,
by Lemma \ref{lemma-chern-character-of-Lagrangian-structure-sheaf-deforms-in-co-dimension-1}.
We show that for a primitive polarization $L$, the existence of such a lagrangian surface, which locally deform with all deformations of $(X,L)$, implies that the Beauville-Bogomolov-Fujiki self-intersection $2d:=(c_1(L),c_1(L))$ of $L$ satisfies the following arithmetic constraint: $d/\gcd(3,d)$ {\em is a perfect square.}
See Lemma \ref{lemma-arithmetic-constraints} for a more detailed statement involving stronger arithmetic constraints. 
%
\subsection{The action of an auto-equivalence of a $K3$ surface $S$ on the LLV-lattice of $S^{[n]}$}
Equivalences of derived categories of irreducible holomorphic symplectic manifolds of $K3^{[n]}$ deformation type are morphisms of a groupoid $K3^{[n]}$, i.e., a category all of whose morphisms are isomorphisms. We have the functor ${}^{[n]}:K3^{[1]}\rightarrow K3^{[n]}$, sending a $K3$ surface $S$ to its Hilbert scheme $S^{[n]}$ and sending equivalences of derived categories 
$\Phi:S_1\rightarrow S_2$ to the equivalences $\Phi^{[n]}:D^b(S_1^{[n]})\rightarrow D^b(S_2^{[n]})$ as in Theorems \ref{thm-BKR-of-tensor-product-of-spherical-object} and \ref{thm-Fourier-Mukai-images-of-sky-scraper-sheaves} above  (see Section \ref{sec-BKR-equivalence} for the definition). 
Let  $\LLV$ be the groupoid, whose objects are irreducible holomorphic symplectic manifolds and whose morphisms are isometries of their rational LLV lattices. Taelman's work gives rise to a functor $\widetilde{H}:K3^{[n]}\rightarrow \LLV$.
The values of the composite functor $\widetilde{H}\circ ^{[n]}$ on morphisms are not equal to those one may naively expect. 
The composition $\widetilde{H}(\Phi^{[n]}):\widetilde{H}(S_1^{[n]},\QQ)\rightarrow \widetilde{H}(S_2^{[n]},\QQ)$ was computed  for every equivalence $\Phi:D^b(S_1)\rightarrow D^b(S_2)$ of derived categories of $K3$ surfaces and 
for $n=2$ in \cite[Theorem 9.4]{taelman}. We extend the computation for all $n$ in the following theorem as an application of Theorem \ref{thm-introduction-LLV-line} above. 

The space $H^2(S,\QQ)$ is naturally the subspace
of $H^2(S^{[n]},\QQ)$ orthogonal to the class $\delta$, which is half the class of the divisor of non-reduced subschemes.
The inclusion $H^2(S,\QQ)\subset H^2(S^{[n]},\QQ)$
extends to a natural isometric embedding of $\widetilde{H}(S,\QQ)$  onto 
the hyperplane $\delta^\perp$ in $\widetilde{H}(S^{[n]},\QQ)$ orthogonal to the class $\delta$.
Let $\eta_\Phi\in O(\widetilde{H}(S^{[n]},\QQ))$ be the extension of $\widetilde{H}(\Phi)$ leaving $\delta$ invariant. 
It turns out that $\widetilde{H}(\Phi^{[n]})$ is not
equal to $\eta_\Phi$ in general.
Associated to a class $\lambda\in H^2(S^{[n]},\QQ)$ is an isometry $B_\lambda$ of $\widetilde{H}(S^{[n]},\QQ)$,
which corresponds to the action of tensorization by a line bundle $L$ whenever $\lambda=c_1(L)$.

\begin{thm}
\label{thm-introduction-action-of-DMon-S-on-LLV-lattice}
(Theorem \ref{thm-action-of-DMon-S-on-LLV-lattice})
$\widetilde{H}(\Phi^{[n]})=\det\left(\widetilde{H}(\Phi)\right)^{n+1}(B_{-\delta/2} \circ \eta_\Phi\circ B_{\delta/2})$.
\end{thm}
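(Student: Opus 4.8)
The plan is to verify the identity on a generating set of the groupoid $K3^{[1]}$ of derived equivalences of $K3$ surfaces, exploiting that both sides behave well under composition. For the right hand side this is formal: $(\Phi'\circ\Phi)^{[n]}=(\Phi')^{[n]}\circ\Phi^{[n]}$ since $BKR$-conjugation is a functor, $\widetilde{H}(\Phi'\Phi)=\widetilde{H}(\Phi')\widetilde{H}(\Phi)$ so $\Phi\mapsto\det(\widetilde{H}(\Phi))$ and $\Phi\mapsto\eta_\Phi$ are multiplicative, and $B_\mu\circ B_\nu=B_{\mu+\nu}$ so $B_{\delta/2}\circ B_{-\delta/2}=\mathrm{id}$ and the inner $B$-factors telescope in a composite; the identity morphism and inverses are then automatic. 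By Orlov's derived Torelli theorem and the standard description of the group of derived auto-equivalences, every equivalence $D^b(S_1)\to D^b(S_2)$ is a composition of: (i) the shift $[1]$; (ii) $\otimes L$ for $L\in\Pic(S_i)$; (iii) $f^*$ for an isomorphism $f$; (iv) the spherical twist $T_{\StructureSheaf{S}}$ (every other spherical twist being a conjugate of this one by the remaining generators); and (v) a ``standard'' Fourier--Mukai equivalence $\Phi_\U\colon D^b(M_H(v))\to D^b(S)$ attached to a two-dimensional fine moduli space $M_H(v)$ with universal sheaf $\U$; so it suffices to check the formula on these.

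Cases (i)--(iii) are formal modulo one input: $BKR((G^{\boxtimes n},\rho_\boxtimes))$ has first Chern class $c_1(G)^{(n)}$, the image of $c_1(G)$ under the isometric embedding $H^2(S,\QQ)\hookrightarrow\delta^\perp\subset H^2(S^{[n]},\QQ)$. For (ii), $BKR$-naturality gives $\Phi^{[n]}=\otimes M$ with $M=BKR((L^{\boxtimes n},\rho_\boxtimes))$, hence $\widetilde{H}(\Phi^{[n]})=B_{c_1(L)^{(n)}}$; since $\det(\widetilde{H}(\otimes L))=1$ and $\eta_{\otimes L}=B_{c_1(L)^{(n)}}$, additivity of $B$ gives $B_{-\delta/2}\circ\eta_{\otimes L}\circ B_{\delta/2}=B_{c_1(L)^{(n)}}$, as required. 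For (iii), $\Phi^{[n]}=(f^{[n]})^*$, whose cohomological action fixes $\alpha,\beta,\delta$ and is $f^*$ on $H^2(S,\QQ)\subset\delta^\perp$, i.e.\ coincides with $\eta_{f^*}$, which commutes with $B_{\pm\delta/2}$; so the right hand side equals $\det(f^*|_{H^2(S)})^{n+1}\eta_{f^*}$, and equality expresses the fact that --- because $\widetilde{H}(S^{[n]},\QQ)$ has odd rank $b_2(X)+2$, so the LLV isometry of a derived equivalence is canonical only up to sign once $n$ is even --- the sign of $\widetilde{H}((f^{[n]})^*)$ selected by compatibility with $\Psi$ differs from that of the naive extension $\eta_{f^*}$ by exactly $\det(f^*|_{H^2(S)})^{n+1}$. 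The shift (i), with $\Phi^{[n]}=[n]$ and $\eta_{[1]}=\pm\mathrm{id}$, is the same sign computation.

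Cases (iv) and (v) carry the content, and here Theorem \ref{thm-introduction-LLV-line} is the tool. In both, $\Phi^{[n]}$ is an explicit Fourier--Mukai transform --- for (v) with kernel $BKR$ of $\U^{\boxtimes n}$ with its $\fS_n$-linearization, for (iv) with kernel $BKR$ of the $n$-fold external power of the twist kernel $\mathrm{Cone}(\StructureSheaf{S\times S}\to\StructureSheaf{\Delta})$ --- so $\widetilde{H}(\Phi^{[n]})$ is convolution with the Mukai vector of that kernel, which one evaluates using the description of the $BKR$ equivalence on cohomology (Nakajima, Lehn--Sorger, Li--Qin--Wang) and Sawon's computation of $\int_{S^{[n]}}\sqrt{td_{S^{[n]}}}$ \cite{sawon-thesis}. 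The calculation is organized, and independently checked, by Theorem \ref{thm-introduction-LLV-line}: its part (\ref{thm-item-LLV-vector-for-non-zero-rank}), the relation that $\Psi$ sends $\hat{v}(F)$ to the image of $\ell(F)^n$, and the non-vanishing of $\hat{v}$ pin down $\ell(\StructureSheaf{S^{[n]}})=\span\{4\alpha+(n+3)\beta\}$, $\ell(\CC_z)=\span\{\beta\}$ and $\ell(G^{[n]})$; and part (4) identifies $\widetilde{H}(\Phi^{[n]})$ on these lines with the lines of $\Phi^{[n]}(\StructureSheaf{S^{[n]}})$, $\Phi^{[n]}(\CC_z)$, $\Phi^{[n]}(G^{[n]})$, which by $BKR$-naturality are $BKR$-images of symmetrized tensor powers of $\Phi(\StructureSheaf{S})$, $\Phi(\CC_p)$, $\Phi(G)$ --- for (v) the bundles of Theorems \ref{thm-BKR-of-tensor-product-of-spherical-object} and \ref{thm-Fourier-Mukai-images-of-sky-scraper-sheaves}, and for (iv), using $T_{\StructureSheaf{S}}(\StructureSheaf{S})=\StructureSheaf{S}[-1]$ and the resulting sign character, $BKR$ applied to $(\StructureSheaf{S^n}[-n],\mathrm{sgn})$, namely $\StructureSheaf{S^{[n]}}(-\delta)[-n]$. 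From the ranks (immediate), the first Chern classes (via the projection formula for $BKR$, with the $\delta$-contribution coming from the sign linearization) and the $\beta$-coefficients (from the $\ell(F)^n$ relation) one reads off $\widetilde{H}(\Phi^{[n]})$ on a spanning set and compares with the right hand side; for (iv) this reduces to the identity $B_{-\delta/2}\circ s_{\alpha+\beta}\circ B_{\delta/2}=s_{w}$ with $w=B_{-\delta/2}(\alpha+\beta)=\alpha+\frac{5-n}{4}\beta-\frac{\delta}{2}$ (using $(\delta,\delta)=-2(n-1)$) and the verification that $(-1)^{n+1}s_{w}$ carries $\span\{4\alpha+(n+3)\beta\}$ to $\span\{4\alpha-(3n-7)\beta-4\delta\}=\ell(\StructureSheaf{S^{[n]}}(-\delta)[-n])$; for (v) it is the analogous identity for the Mukai isometry $\widetilde{H}(\Phi_\U)$.

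The main obstacle is twofold. The technically heaviest part is the Riemann--Roch bookkeeping: expressing $c_1$ and the full $SH^*$-projected Mukai vector of $BKR((G^{\boxtimes n},\rho))$ --- for the trivial and the sign linearization, and for the $\fS_n$-direct sums of Theorem \ref{thm-Fourier-Mukai-images-of-sky-scraper-sheaves} --- in terms of the data of $G$, which requires explicit control of the $BKR$ equivalence on cohomology. The more conceptual difficulty is keeping the signs consistent: the whole point of the factor $\det(\widetilde{H}(\Phi))^{n+1}$ is to record, for even $n$, the discrepancy between the $\Psi$-compatible sign of $\widetilde{H}(\Phi^{[n]})$ and that of the naive operator $\eta_\Phi$, and one must verify that the reduction to generators respects this choice --- that is, that $\widetilde{H}\circ{}^{[n]}$ is a functor on the nose and not merely up to sign --- which is ultimately pinned by Taelman's $n=2$ result \cite{taelman} and the composition law.
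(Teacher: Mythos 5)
Your strategy---reduce to equivalences whose isometries generate, exploit multiplicativity of both sides, and handle line-bundle twists and monodromy operators by the sign bookkeeping of Theorem \ref{thm-def-of-H-tilde}---overlaps with the paper's proof in its first half: the paper likewise disposes of the part of $DMon(S)=O^+(\widetilde{H}(S,\ZZ))$ fixing $\alpha$ and $\beta$ exactly as in your cases (ii)--(iii), and it proves $(B_\lambda)^{[n]}=B_{\theta(\lambda)}$ by the projection formula, just as you use. The gap is in the mechanism you propose for the cases carrying the content. Knowing $\widetilde{H}(\Phi^{[n]})$ on the lines $\ell(\StructureSheaf{S^{[n]}})$, $\ell(\CC_z)$ and $\ell(G^{[n]})$---images of a handful of effective LLV lines, each determined only up to sign---does not determine an isometry of the $25$-dimensional space $\widetilde{H}(S^{[n]},\QQ)$, and you supply no mechanism for producing the ``spanning set'' you invoke: the objects whose images under $\Phi^{[n]}$ are actually computable yield very few lines (all sky-scraper sheaves give the same one). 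The paper's substitute is a rigidity argument: Lemma \ref{lemma-isometry-of-BKR-conjugate-of-tensorization-by-sign-character} first shows, using that $\widetilde{H}(\Phi_\chi^{[n]})$ commutes with \emph{every} $\widetilde{H}(\Phi^{[n]})$ together with two geometric computations (sky-scraper sheaves and lagrangian abelian varieties in an elliptic $S^{[n]}$), that the span $V$ of the $DMon(S)$-orbit of $\QQ\beta$ is the hyperplane $(0,\delta,n-1)^\perp$; then, since every nontrivial $24$-dimensional $\LieAlg{so}(\widetilde{H}(S,\QQ))$-module is the standard one and $SO^+$ has no nontrivial characters, an intertwiner $\widetilde{H}(S,\QQ)\rightarrow V$ normalized by $\beta\mapsto\beta$ and compatible with $e_\lambda\mapsto e_{\theta(\lambda)}$ is unique, and is identified with $B_{-\delta/2}\circ\tilde{\theta}$. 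Some such uniqueness principle is exactly what your evaluation-on-lines plan is missing.

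Two further points. First, your reduction asserts that every derived equivalence of $K3$ surfaces is a composition of shifts, line-bundle twists, isomorphisms, $T_{\StructureSheaf{S}}$ and moduli-space Fourier--Mukai transforms; this is not known (it is essentially Bridgeland's conjecture). What is true is that these realize a generating set of $O^+(\widetilde{H}(S,\ZZ))$, so your reduction is legitimate only after observing that $\widetilde{H}(\Phi^{[n]})$ depends only on the cohomological action of $\Phi$ (true, since the kernel of $\Phi^{[n]}$ acts through $v(\P)^{\boxtimes n}$ convolved with the fixed BKR correspondence); the paper sidesteps this by working with the groupoid of cohomological morphisms throughout. Second, the general-$n$ statement cannot be ``pinned by Taelman's $n=2$ result and the composition law'': the functor ${}^{[n]}$ does not factor through ${}^{[2]}$, and the sign coherence must be (and in the paper is) re-established for every $n$. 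On the positive side, your computations of $T^{[n]}(\StructureSheaf{S^{[n]}})\cong\StructureSheaf{S^{[n]}}(-\delta)[-n]$ via the Koszul sign on the linearization, and of the reflection vector $B_{-\delta/2}(\alpha+\beta)$, are consistent with Lemmas \ref{lemma-shifts-of-structure-sheaf-and-delta-line-bundle-interchanged} and \ref{lemma-isometry-of-BKR-conjugate-of-tensorization-by-sign-character} and would serve as checks once the isometry is pinned down by other means.
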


Let $\chi$ be the sign character of $\fS_n$ and 
let $\Phi_\chi^{[n]}:D^b(S^{[n]})\rightarrow D^b(S^{[n]})$ be the $BKR$-conjugate of the auto-equivalence of
$D^b_{\fS_n}(S^n)$ of tensorization by the sign character. The proof of Theorem \ref{thm-introduction-action-of-DMon-S-on-LLV-lattice} is based on the observation that the isometries $\widetilde{H}(\Phi^{[n]})$ all commute with
$\widetilde{H}(\Phi_\chi^{[n]})$, and $\widetilde{H}(\Phi_\chi^{[n]})$ is, up to sign, the reflection of $\widetilde{H}(S^{[n]},\QQ)$
with respect to the hyperplane orthogonal to the class $B_{-\delta/2}(\delta)=\delta+(n-1)\beta$ 
(Lemma \ref{lemma-isometry-of-BKR-conjugate-of-tensorization-by-sign-character}).
%
\subsection{A reflexive sheaf with a positive LLV-line}
The vector bundle $E$ in Theorem \ref{thm-BKR-of-tensor-product-of-spherical-object} has a negative definite LLV line $\ell(E)$ and that in Theorem \ref{thm-Fourier-Mukai-images-of-sky-scraper-sheaves} has an isotropic LLV line. 
In Section \ref{sec-locally-free-FM-image-of-lagrangian-lb} we consider a particular example of a reflexive torsion free sheaf $E$, which 
 has a rank $1$ cohomological obstruction map
and with a positive LLV line $\ell(E)$.
Let $X$ be a cubic fourfold. The Fano variety of lines $F(X)$ on $X$ is holomorphic symplectic of $K3^{[2]}$-type, by \cite{beauville-donagi}. The Fano variety $F(C)$ of lines on a hyperplane section $C$ of $X$ is a lagrangian surface in $F(X)$
and $\StructureSheaf{F(C)}$ has a rank $1$ cohomological obstruction map,
by Lemma \ref{lemma-chern-character-of-Lagrangian-structure-sheaf-deforms-in-co-dimension-1}.
We exhibit  reflexive torsion free sheaves $E$, which are Fourier-Mukai images of 
$\StructureSheaf{F(C)}$. Each 
$E$ thus  has a rank $1$ cohomological obstruction map.
We expect $E$ to be slope-stable with respect to classes in an open subcone of the ample cone of $F(X)$. Once proven, $E$ would be very modular, by Theorem \ref{thm-introduction-image-via-FM-is-modular}. Results of  
\cite{LSV,voisin-tenfold} suggest 
that the irreducible component $M$ containing $E$ of the moduli space of slope-stable sheaves on $F(X)$  is birational to an irreducible holomorphic symplectic manifold 
of O'Grady10 deformation type. A possibly naive hope is that this would lead to a description of  irreducible holomorphic symplectic manifolds $M$ of O'Grady10 deformation type as moduli spaces of hyperholomorphic sheaves on holomorphic symplectic manifolds $Y$ of $K3^{[2]}$-deformation type. A program of constructing irreducible holomorphic symplectic manifolds as such moduli spaces is described in \cite{kaledin-verbitsky-book}. Such a modular description would enable the study of the cohomology ring of $M$ via a universal sheaf on $Y\times M$. A modular description is currently provided only for the complement of an exceptional divisor of a resolution $M$ of singularities of a moduli space of sheaves on a $K3$ surface \cite{ogrady10}. 

We refer the reader to the table of content for the organization of the paper. A summary of each section is included in the opening paragraph of the section. 

{\bf Acknowledgements:} 
This work is partially supported by  a grant  from the Simons Foundation (\#427110). I am grateful to Andrei C\u{a}ld\u{a}raru and
Shengyuan Huang for help with the proof of Lemma \ref{lem-Sigma-F-is-contained-in-stabilizer-of-Mukai-vector}. I thank Giulia Sacc\`{a} for pointing out references \cite{ferretti,iliev-manivel-fano-of-index10,logachev}. I am grateful to Justin Sawon for pointing out his results in \cite[Prop. 19 and 21]{sawon-thesis}. I thank Thorsten Beckmann for comments on an early draft of this paper. I thank Kieran O'Grady for pointing out that an assumption in the statement of Theorem \ref{thm-introduction-image-via-FM-is-modular} in an earlier version is automatically satisfied by a result in \cite{ogrady-modular}. I thank the referee for his insightful comments.

\medskip
{\bf Note:} Thorsten Beckmann independently 
assigned vectors in the LLV lattice to various objects in the derived categories of irreducible holomorphic symplectic manifolds, and he also proved Theorem \ref{thm-action-of-DMon-S-on-LLV-lattice} \cite{beckmann}. The work on objects with a rank $1$ cohomological obstruction map (Def. \ref{def-deforms-in-co-dimension-one}) was continued in \cite{beckmann-atomic}, where they 
are called {\em Atomic}.
%
\section{Real $(p,p)$-classes which remain of Hodge type to first order in all directions}
\label{sec-real-p-p-classes}
In Section \ref{sec-Mukai-lattice}
we review the action of the Looijenga-Lunts-Verbitsky Lie algebra on the cohomology of an irreducible holomorphic symplectic manifold $X$. In Section \ref{subsec-real-p-p-classes} we prove that if a real cohomology class of Hodge type $(p,p)$ on such $X$ remains a $(p,p)$ class under all first order classical deformations of $X$, then it remains a $(p,p)$ class under all K\"{a}hler deformations of $X$.
%
\subsection{The rational LLV lattice}
\label{sec-Mukai-lattice}

Let $U$ be the even unimodular rank $2$ lattice with basis $\{\alpha,\beta\}$ consisting of isotropic elements satisfying $(\alpha,\beta)=-1$. Set $U_\QQ:=U\otimes_\ZZ\QQ$.
Following Taelman \cite[Sec. 3.1]{taelman} we define 
the {\em rational LLV lattice}\footnote{Taelman calls this vector space the {\em Rational Mukai lattice}. We prefer to distinguish it from C\u{a}ld\u{a}raru's Mukai lattice defined in \cite{caldararu-I,caldararu-II}. LLV is short for Looijenga-Lunts-Verbitsky, as this vector space first appears in their work \cite{looijenga-lunts,verbitsky-cohomology}} 
of a $2n$-dimensional irreducible holomorphic symplectic manifold $X$ to be the orthogonal direct sum 
\begin{equation}
\label{eq-rational-LLV-lattice}
\tilde{H}(X,\RationalNumbers):=H^2(X,\RationalNumbers)\oplus U_\RationalNumbers,
\end{equation}
where the bilinear pairing on the first direct summand is the Beauville-Bogomolov-Fujiki (BBF)  pairing normalized so that
\[
\int_X\lambda^{2n}=c_X\frac{(2n)!}{2^nn!}(\lambda,\lambda)^n, \ \ \ \forall\lambda\in H^2(X,\RationalNumbers),
\]
where the Fujiki constant $c_X$ is a positive rational number such that the pairing $(\bullet,\bullet)$ is integral, primitive, and positive on K\"{a}hler classes \cite[Sec. 2.1]{ogrady-survey}. The Fujiki constant is calculated for all known irreducible holomorphic symplectic manifolds in \cite{rapagnetta}.
If $X$ is of $K3^{[n]}$-type, then $c_X=1$, and
if $X$ is a deformation equivalent to a generalized Kummer of dimension $2n$, then $c_X=n+1$. 
We endow $\tilde{H}(X,\RationalNumbers)$ with a weight $0$ Hodge structure extending that of $H^2(X,\QQ(1))$ so that classes in $U_\QQ$ are of type $(0,0)$.

Set $\LieAlg{g}:=\LieAlg{so}(\tilde{H}(X,\RationalNumbers))$. Let $\bar{\LieAlg{g}}$ be the subalgebra $\LieAlg{so}(H^2(X,\RationalNumbers))$ of $\LieAlg{g}$ annihilating $U_\RationalNumbers$.
Given a class $\lambda\in H^2(X,\RationalNumbers)$ denote by $e_\lambda$ the element of $\LieAlg{g}$ given by 
\begin{equation}
\label{eq-e-lambda}
e_\lambda(\alpha)=\lambda, \ \ e_\lambda(\lambda')=(\lambda,\lambda')\beta, \ \forall \lambda'\in H^2(X,\RationalNumbers), \ 
\mbox{and} \ \ e_\lambda(\beta)=0.
\end{equation}

\begin{example}
\label{example-Mukai-lattice-of-Hilbert-schemes}
Let $X$ be the Hilbert scheme $S^{[n]}$ of length $n$ subschemes of a $K3$ surface $S$. Let $\pi:S^{[n]}\rightarrow S^{(n)}$ be the Hilbert Chow morphism. The composition of the isomorphism $H^2(S,\Integers)\cong H^2(S^n,\Integers)^{\fS_n}\cong H^2(S^{(n)},\Integers)$ with $\pi^*:H^2(S^{(n)},\Integers)\rightarrow H^2(S^{[n]},\Integers)$ induces an isometric embedding
$\iota$ of $H^2(S,\Integers)$ into $H^2(S^{[n]},\Integers)$ \cite{beauville-varieties-with-zero-c-1}. 
Hence, there exists a unique extension of $\iota$ to an  isometric embedding, denoted by $\iota$ as well,  of $\tilde{H}(S,\RationalNumbers)$ into $\tilde{H}(S^{[n]},\RationalNumbers)$ satisfying 
$\iota\circ e_\lambda=e_{\iota(\lambda)}\circ\iota$, for all $\lambda\in H^2(S,\Integers)$.
It satisfies
$\iota(\alpha)=\alpha$ and $\iota(\beta)=\beta$.
\end{example}

Let $h:H^*(X,\QQ)\rightarrow H^*(X,\QQ)$ be the degree operator acting by $k-\dim_\CC(X)$ on $H^k(X,\QQ)$. Given a class $\omega\in H^2(X,\QQ)$, denote by $L_\omega:H^*(X,\QQ)\rightarrow H^*(X,\QQ)$ the operator of cup product with $\omega$. 
If $\omega$ is a K\"{a}hler class then there exists an operator $\Lambda_\omega:H^*(X,\QQ)\rightarrow H^*(X,\QQ)$,
such that the triple $(L_\omega,h,\Lambda_\omega)$ spans a Lie subalgebra of $\LieAlg{gl}(H^*(X,\QQ))$ isomorphic to $\LieAlg{sl}_2$, by the Hard-Lefschetz Theorem. Explicitly, we have
\[
[L_w,\Lambda_w]=h, \ [L_w,h]=2L_w, \ \mbox{and} \ [\Lambda_w,h]=-2\Lambda_w.
\]
We say that $(L_\omega,h,\Lambda_\omega)$ is an {\em $\LieAlg{sl}_2$-triple} with respect to $h$.
The set of classes $\omega\in H^2(X,\QQ)$, for which there exists a necessarily unique operator $\Lambda_w$ satisfying the above relations, is a Zariski dense subset of $H^2(X,\QQ)$, by the Jacobson-Morozov Theorem. 

\begin{thm}
\label{thm-LLV-Lie-algebra}
(\cite[Prop. 4.5]{looijenga-lunts} and \cite{verbitsky-cohomology})
There exists a unique isomorphism between the 
Lie algebra $\LieAlg{g}:=\LieAlg{so}(\tilde{H}(X,\QQ))$ and the subalgebra of $\LieAlg{gl}(H^*(X,\QQ))$ generated by all $\LieAlg{sl}_2$-triples $(L_\omega,h,\Lambda_\omega)$,  $\omega\in H^2(X,\QQ)$,
which sends $e_\lambda$ to $L_\lambda$. The subalgebra of $\LieAlg{g}$ preserving the grading of $H^*(X,\QQ)$ is $\bar{\LieAlg{g}}\oplus\QQ h$.
\end{thm}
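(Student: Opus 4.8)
This is the Looijenga--Lunts--Verbitsky structure theorem, and my plan is to follow the route of \cite{looijenga-lunts} supplemented by Verbitsky's quaternionic input \cite{verbitsky-cohomology}. Write $\LieAlg{g}'\subset\LieAlg{gl}(H^*(X,\QQ))$ for the Lie subalgebra generated by all $\LieAlg{sl}_2$-triples $(L_\omega,h,\Lambda_\omega)$. First I would record the elementary structural facts. The degree operator $h$ lies in $\LieAlg{g}'$ and acts semisimply on $H^*(X,\QQ)$ with integer eigenvalues, so $\operatorname{ad}(h)$ acts semisimply on $\LieAlg{g}'$ with integer eigenvalues and yields a grading $\LieAlg{g}'=\bigoplus_k\LieAlg{g}'_k$ with $L_\omega\in\LieAlg{g}'_2$ and $\Lambda_\omega\in\LieAlg{g}'_{-2}$. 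Since $\LieAlg{g}'$ is a linear subspace of $\LieAlg{gl}(H^*(X,\QQ))$ and the classes $\omega$ admitting an $\LieAlg{sl}_2$-triple are Zariski dense (Jacobson--Morozov, as noted above), $L_\lambda\in\LieAlg{g}'_2$ for every $\lambda\in H^2(X,\QQ)$; and because cup product is commutative, $[L_\lambda,L_\mu]=0$, so $L_{H^2}:=\{L_\lambda:\lambda\in H^2(X,\QQ)\}$ is an abelian subalgebra sitting in degree $2$. By the general results of \cite{looijenga-lunts} on Lie algebras generated by $\LieAlg{sl}_2$-triples sharing the neutral element $h$, the algebra $\LieAlg{g}'$ is semisimple.

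The heart of the argument is to pin down the graded pieces, and this is where the hyper-K\"ahler hypothesis is indispensable --- the analogous statement fails for a general compact K\"ahler manifold. Two things must be shown: first, \emph{purity}, i.e. $\LieAlg{g}'_k=0$ for $|k|\geq 3$, so that $\LieAlg{g}'=\LieAlg{g}'_{-2}\oplus\LieAlg{g}'_0\oplus\LieAlg{g}'_2$ with $\LieAlg{g}'_0=[\LieAlg{g}'_{-2},\LieAlg{g}'_2]+\QQ h$; and second, the identifications $\LieAlg{g}'_2=L_{H^2}$, $\dim\LieAlg{g}'_{-2}=b_2(X)$, and $\LieAlg{g}'_0=\bar{\LieAlg{g}}\oplus\QQ h$ with $\bar{\LieAlg{g}}\cong\LieAlg{so}(H^2(X,\QQ))$ taken with respect to the Beauville--Bogomolov--Fujiki form. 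For these I would invoke Verbitsky's observation that for a hyper-K\"ahler metric the three K\"ahler classes $\omega_I,\omega_J,\omega_K$ together with their $\LieAlg{sl}_2$-triples generate a copy of $\LieAlg{so}(4,1)$ inside $\LieAlg{gl}(H^*(X,\RR))$ --- an explicit low-rank computation built on the $Sp(1)$-action on $H^*(X)$ coming from the quaternionic structure --- and then let the hyper-K\"ahler metric vary: its K\"ahler classes sweep out a Zariski-dense subset of a cone in $H^2(X,\RR)$, so $\LieAlg{g}'_{-2}$ is the span of the corresponding $\Lambda_\omega$ and has dimension $b_2(X)$, while the fact that no element of degree $\geq 3$ is produced in the process is precisely the purity statement of \cite{looijenga-lunts}. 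Granting purity, $\LieAlg{g}'_0=[\LieAlg{g}'_2,\LieAlg{g}'_{-2}]+\QQ h$ is reductive (centralizer of the semisimple $h$ in the semisimple $\LieAlg{g}'$), and one identifies its semisimple part with $\LieAlg{so}$ of the unique-up-to-scalar $\LieAlg{g}'_0$-invariant symmetric form on $\LieAlg{g}'_2\cong H^2(X,\QQ)$; the twistor computation normalizes this form to the BBF form. Thus $\LieAlg{g}'$ is graded with $\LieAlg{g}'_{\pm 2}\cong H^2(X,\QQ)$ abelian, $\LieAlg{g}'_0=\bar{\LieAlg{g}}\oplus\QQ h$, and $\LieAlg{g}'_0$ acting on $\LieAlg{g}'_{\pm 2}$ through the standard representation of $\bar{\LieAlg{g}}$ twisted by the character $\tfrac12\operatorname{ad}(h)$.

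A graded Lie algebra with these pieces and bracket relations is exactly $\LieAlg{so}(H^2(X,\QQ)\oplus U_\QQ)=\LieAlg{so}(\tilde H(X,\QQ))$, under the correspondence sending the abelian nilradical of a maximal parabolic, $\{e_\lambda:\lambda\in H^2(X,\QQ)\}$, to $L_{H^2}$, the subalgebra $\bar{\LieAlg{g}}$ to its natural action on $H^*(X,\QQ)$, and $h$ to a suitable generator of $\LieAlg{so}(U_\QQ)$; this yields the desired isomorphism $\LieAlg{g}:=\LieAlg{so}(\tilde H(X,\QQ))\xrightarrow{\ \sim\ }\LieAlg{g}'$ carrying $e_\lambda$ to $L_\lambda$. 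For uniqueness one uses that $b_2(X)+2\geq 5$ for every irreducible holomorphic symplectic $X$, so $\LieAlg{g}$ is simple; any automorphism of $\LieAlg{g}$ fixing each $e_\lambda$ fixes the abelian subalgebra $\LieAlg{a}:=\operatorname{span}\{e_\lambda\}$ pointwise, hence preserves its normalizer (a maximal parabolic whose nilradical is $\LieAlg{a}$), and since $\LieAlg{a}$ is its own centralizer in $\LieAlg{g}$ a routine structure-theory argument forces the automorphism to be the identity. Finally, the subalgebra of $\LieAlg{g}$ preserving the grading of $H^*(X,\QQ)$ is the centralizer of $h$, which by purity equals $\LieAlg{g}'_0=\bar{\LieAlg{g}}\oplus\QQ h$. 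The main obstacle is the second paragraph --- establishing purity of the $\operatorname{ad}(h)$-grading and the precise shape of $\LieAlg{g}'_0$, with the BBF form emerging as the invariant form --- which is exactly the point where the hyper-K\"ahler structure enters and which constitutes the substance of the theorem of Looijenga--Lunts and Verbitsky being cited.
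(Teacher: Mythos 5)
The paper does not actually prove this statement: it is quoted from \cite[Prop. 4.5]{looijenga-lunts} and \cite{verbitsky-cohomology}, with the remark that the argument goes through over $\QQ$ by \cite[Theorem 2.7]{GKLR}. Your sketch is a broadly faithful reconstruction of that cited argument, but two points need attention. The smaller one is an attribution error: the ``purity'' statement ($\LieAlg{g}'_k=0$ for $|k|\geq 3$, with $\LieAlg{g}'_{\pm2}$ abelian and $\LieAlg{g}'_0=[\LieAlg{g}'_2,\LieAlg{g}'_{-2}]+\QQ h$) together with semisimplicity is the \emph{formal} part of the Looijenga--Lunts theory and holds for the total Lie algebra of any compact K\"ahler manifold; it is not where the hyper-K\"ahler hypothesis enters. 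What Verbitsky's $\LieAlg{so}(4,1)$'s attached to the K\"ahler classes $\omega_I,\omega_J,\omega_K$ of a varying hyper-K\"ahler metric buy is the identification of $\LieAlg{g}'_0$ with $\LieAlg{so}(H^2(X,\QQ),q)\oplus\QQ h$ for the BBF form $q$, and hence of $\LieAlg{g}'$ with $\LieAlg{so}(\tilde H(X,\QQ))$. This does not invalidate your outline, but the division of labor should be stated correctly.

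The genuine gap is in your uniqueness argument. For any $b\in H^2(X,\QQ)$ the inner automorphism $\exp(\operatorname{ad}(e_b))$ of $\LieAlg{g}$ fixes every $e_\lambda$ (because $\{e_\lambda\}$ is abelian) yet is not the identity: it sends the grading element $h$ to $h-2e_b$. So ``an automorphism fixing $\operatorname{span}\{e_\lambda\}$ pointwise is the identity'' is false, and the self-centralizing property of $\LieAlg{a}=\operatorname{span}\{e_\lambda\}$ cannot rescue it, since the whole unipotent radical of the parabolic normalizing $\LieAlg{a}$ acts trivially on $\LieAlg{a}$ by conjugation. The uniqueness one actually has --- and the one needed to make the statement true --- is of the \emph{graded} isomorphism, i.e.\ the one that in addition matches the standard generator of $\LieAlg{so}(U_\QQ)$ with the degree operator $h$. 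Granting that, the argument closes: a graded automorphism fixing each $e_\lambda$ must fix each $f_\lambda$, because the Killing form pairs $\LieAlg{g}_2$ with $\LieAlg{g}_{-2}$ perfectly and is preserved, so it fixes $\LieAlg{g}_0=[\LieAlg{g}_2,\LieAlg{g}_{-2}]+\QQ h$ and hence all of $\LieAlg{g}$. You should either add the condition $h\mapsto h$ to the characterization of the isomorphism or replace the centralizer step by this Killing-form argument.
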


The isomorphism in the above theorem is proved by Verbitsky and Looijenga-Luntz over $\RR$, but their proof goes through over $\QQ$ \cite[Theorem 2.7]{GKLR}.

%
\subsection{Real $(p,p)$-classes}
\label{subsec-real-p-p-classes}
Let $X$ be an irreducible holomorphic symplectic manifold. 

\begin{lem}
\label{lemma-alpha-remains-of-Hodge-type}
Let $\alpha\in H^{p,p}(X,\RealNumbers)$ be a real class of Hodge type $(p,p)$. Assume that $\alpha$ remains of Hodge type under all first order deformations of $X$ as a complex manifold. Then $\alpha$ remains of Hodge type under every K\"{a}hler deformation of $X$.
\end{lem}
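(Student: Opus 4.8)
The statement is precisely the sort of "Hodge-type persistence" result that the LLV Lie algebra is built to prove. I would deduce it from the representation theory of $\LieAlg{g} = \LieAlg{so}(\tilde H(X,\RR))$ acting on $H^*(X,\RR)$, using the fact that the period domain of K\"ahler-type complex structures deformation equivalent to $X$ is a single $\LieAlg{g}(\RR)$-orbit (more precisely, the relevant orbit of Weil operators / Hodge structures is homogeneous under the subgroup generated by the $\LieAlg{sl}_2$-triples $(L_\omega,h,\Lambda_\omega)$ for K\"ahler $\omega$). First I would recall that a class $\alpha\in H^{p,p}(X,\RR)$ stays of Hodge type $(p,p)$ along a K\"ahler deformation iff it is annihilated by the appropriate piece of the Hodge structure's Hodge operator; and that "remains of type $(p,p)$ to first order in every complex direction" translates, via the LLV action, into $\alpha$ being annihilated by all the operators $e_\lambda = L_\lambda$ coming from $\bar{\LieAlg g}$-type directions — equivalently $\alpha$ lies in a subspace cut out by a $\LieAlg{g}$-submodule condition.

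**Key steps.** (1) Fix a K\"ahler class $\omega_0$ on $X$ and the corresponding $\LieAlg{sl}_2$-triple, giving the Lefschetz $(L_{\omega_0},h,\Lambda_{\omega_0})\subset\LieAlg g$; fix a holomorphic symplectic form $\sigma$ and use that the pair $(\omega_0,\sigma)$ determines the Hodge structure, so the Weil/Hodge operator on $H^*$ is a specific element of $\exp(\LieAlg g_\RR)$ built from $L_\omega,\Lambda_\omega$ and the $\LieAlg{sl}_2$ attached to $\Re\sigma,\Im\sigma$. (2) Interpret "first-order complex deformation" via Kodaira--Spencer: a class $\xi\in H^1(X,TX)$ acts on $H^*(X,\CC)$ through the LLV algebra (this is exactly the content invoked in the introduction via $m:HT^2(X)\to\LieAlg g_\CC$), and "$\alpha$ stays of type $(p,p)$ to first order along $\xi$" is equivalent to the statement that the derivative of the Hodge projection vanishes, i.e.\ $\Pi^{p,p}\big(m(\xi)\alpha\big)$ has no component leaving $H^{p,p}$. (3) Because this must hold for \emph{all} first-order directions, conclude that $\alpha$ generates (under $\LieAlg g_\CC$, or at least under the subalgebra fixing $\omega_0$) only type-$(p,p)$ classes; then (4) observe that any two K\"ahler-type Hodge structures in the deformation class are related by an element of the real group generated by these $\LieAlg{sl}_2$-triples (Verbitsky's global Torelli / the transitivity of the monodromy group on the period domain component), so being $(p,p)$ for one such Hodge structure and being annihilated by the relevant nilpotent operators forces $\alpha$ to remain $(p,p)$ for all of them. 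Concretely one wants: the set of $g\in G_\RR = \langle\exp\LieAlg{sl}_2\text{-triples}\rangle$ such that $g\alpha$ is of type $(p,p)$ for the base Hodge structure is (by connectedness and the first-order condition, which says the tangent space to this set is everything) all of the relevant orbit, hence $\alpha$ is $(p,p)$ everywhere on the orbit — and the K\"ahler deformations of $X$ all land in that orbit.

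**Main obstacle.** The delicate point is step (3)--(4): upgrading the infinitesimal ("to first order in all directions") hypothesis to a global conclusion over the whole — possibly non-connected a priori, but here connected — space of K\"ahler-type complex structures. One has to be careful that the first-order condition is tested against \emph{all} of $H^1(X,TX)$ at \emph{every} point of the orbit, not just at the base point; the clean way is to note that the condition "$\alpha\in H^{p,p}$" is a closed algebraic condition on the orbit, its defining equations have everywhere-surjective differential by hypothesis (so the condition is also open along the orbit), the orbit is connected (Verbitsky), hence the condition holds identically. I also need to make sure the LLV action genuinely computes the variation of Hodge structure, i.e.\ that $m:HT^2(X)\to\LieAlg g_\CC$ has image hitting the full "horizontal" tangent directions of the period map — this is exactly Verbitsky's theorem cited in the introduction, so I may invoke it. A secondary nuisance is passing between "all first-order \emph{complex} deformations" (directions in $H^1(X,TX)$) and the full $H^2$-worth of LLV directions; but since $\alpha$ is already $(p,p)$, the $H^0(\wedge^2 TX)$ and $H^2(\ko_X)$ directions contribute only the complex-conjugate/transpose data, and the $\LieAlg g_\RR$-orbit is generated using both a direction and its conjugate, so the real span suffices.
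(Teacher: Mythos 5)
Your framework is the right one --- the LLV algebra, the fact that the Hodge/Weil operator of every K\"ahler deformation lies in $\bar{\LieAlg{g}}_\RR$ (Verbitsky), and the translation of the first-order hypothesis into annihilation of $\alpha$ by the image of $H^1(X,TX)$ in $\bar{\LieAlg{g}}^{-1,1}$. But the step that upgrades the infinitesimal hypothesis to a global one is where your argument breaks. Your open--closed argument on the orbit does not work: the set of Hodge structures for which $\alpha$ is of type $(p,p)$ is indeed closed, but for it to be open you would need the ``derivative vanishes in all directions'' condition at \emph{every} point of that set, whereas the hypothesis only supplies it at the base point. (A function on a connected manifold can vanish to second order at one point without vanishing identically.) Your phrase ``everywhere-surjective differential'' also cannot be what you mean --- surjectivity of the differential of the defining equations would make the zero locus a positive-codimension submanifold, not open.

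The missing idea is a Lie-algebra generation statement, and it is the actual content of the proof. From the hypothesis you get that $\alpha$ is annihilated by $\bar{\LieAlg{g}}^{-1,1}$ (the image of $H^1(X,TX)$), and from reality of $\alpha$ that it is annihilated by the complex conjugate $\bar{\LieAlg{g}}^{1,-1}$. Your remark that ``the real span suffices'' is not enough: the linear span of $\bar{\LieAlg{g}}^{-1,1}\oplus\bar{\LieAlg{g}}^{1,-1}$ misses the entire degree-zero piece $\bar{\LieAlg{g}}^{0,0}\cong\LieAlg{so}(H^{1,1}(X))\oplus\LieAlg{so}(H^{2,0}(X)\oplus H^{0,2}(X))$. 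What one must prove is that the Lie subalgebra \emph{generated} by $\bar{\LieAlg{g}}^{-1,1}$ and $\bar{\LieAlg{g}}^{1,-1}$ is all of $\bar{\LieAlg{g}}_\CC$; this requires an explicit bracket computation showing that $[\iota_\eta,\bar{\iota_\eta}]$ for $\iota_\eta(\sigma)$ real and non-isotropic produces the $\LieAlg{so}(H^{2,0}\oplus H^{0,2})$ summand, and that $[\iota_{\eta_1},\bar{\iota_{\eta_2}}]$ for orthogonal real classes $\iota_{\eta_i}(\sigma)$ produces a nonzero element of $\LieAlg{so}(H^{1,1})$, whence by irreducibility the image of the bracket map contains all of $\bar{\LieAlg{g}}^{0,0}$. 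Once $\alpha$ is known to be annihilated by all of $\bar{\LieAlg{g}}$, the conclusion is immediate from $f_J\in\bar{\LieAlg{g}}_\RR$ for every deformation $J$, with no orbit-transitivity argument needed.
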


\begin{proof}
Let $I$ be the complex structure of $X$ and let 
\begin{equation}
\label{f-I}
f_I:H^*(X,\RealNumbers)\rightarrow H^*(X,\RealNumbers)
\end{equation} 
be the Hodge operator acting on $H^{p,q}(X,I)$ by
$(q-p)\sqrt{-1}$. Then $f_I$ belongs to $\bar{\LieAlg{g}}_\RR:=\bar{\LieAlg{g}}\otimes_\QQ\RR$, by \cite[Theorem 8.1]{verbitsky-mirror-symmetry}. 
Every K\"{a}hler deformation of an irreducible holomorphic symplectic manifold is an irreducible holomorphic symplectic manifold
\cite{beauville-varieties-with-zero-c-1}.
Hence, if the class $\alpha$ is $\bar{\LieAlg{g}}$-invariant, then it remains of Hodge type under every K\"{a}hler deformation. 
We prove the $\bar{\LieAlg{g}}$-invariance of $\alpha$ in two steps.

\underline{Step 1:}
The class $\alpha$ belongs to the kernel of all homomorphisms in the image of the natural homomorphism
\[
H^1(X,TX)\rightarrow \Hom(H^{p,p}(X),H^{p-1,p+1}(X)),
\]
since $\alpha$ remains of Hodge type under all first order deformations. 
The above homomorphism is the restriction of the contraction homomorphism 
\[
\iota \ : \ H^1(X,TX)\rightarrow    \Hom(H^*(X,\ComplexNumbers),H^*(X,\ComplexNumbers)),
\]
where $\iota_\eta$, $\eta\in H^1(X,TX)$, maps $H^{p,q}(X)$ to $H^{p-1,q+1}(X)$.
The image of the homomorphism $\iota$ is contained in the subalgebra $\bar{\LieAlg{g}}_\CC$, by \cite[Lemma 4.5]{GKLR}, so that $\iota$ factors through a homomorphism 
\[
\tilde{\iota}\ : \ H^1(X,TX)\rightarrow  \bar{\LieAlg{g}}_\CC,
\]
which we recall next\footnote{
The homomorphism $\tilde{\iota}$ is extended in Corollary \ref{cor-image-of-m-is-in-LLV-algebra} below to a homomorphism $HT^2(X)\rightarrow \LieAlg{g}_\CC$.
}.
Fix a K\"{a}hler metric on $X$, let $I$ be the complex structure on $X$ and let $\omega_I:=g(I\bullet,\bullet)$ be the fundamental form. 
We get the twistor data consisting of complex structures $J$ and $K$, compatible with $g$ and satisfying the quaternionic relations
$IJ=K$, as well as the K\"{a}hler forms $\omega_J:=g(J(\bullet),\bullet)$ and $\omega_K:=g(K(\bullet),\bullet)$, such that
$\sigma:= \omega_J+\sqrt{-1}\omega_K$ is a global holomorphic $(2,0)$-form on $X$ with respect to the complex structure $I$ \cite{beauville-varieties-with-zero-c-1}.
Let $L_I\in \LieAlg{gl}(H^*(X,\ComplexNumbers))$ be cup product with $\omega_I$, define $L_J$ and $L_K$ similarly, and let
$\Lambda_I$, $\Lambda_J$, and $\Lambda_K$ be the adjoint Lefschetz operators. 
Set $\Lambda_\sigma:=\frac{1}{2}(\Lambda_J-\sqrt{-1}\Lambda_K)$. Given $\eta\in H^1(X,TX)$, let $L_{\iota_\eta(\sigma)}$ be cup product with the 
$(1,1)$-class $\iota_\eta(\sigma)$. 
Then $\iota_\eta=[L_{\iota_\eta(\sigma)},\Lambda_\sigma]$ and the latter belongs to $\bar{\LieAlg{g}}_\CC$, so that $\iota$ factors through the homomorphism $\tilde{\iota}$
sending $\eta\in H^1(X,TX)$ to $[L_{\iota_\eta(\sigma)},\Lambda_\sigma]$, by \cite[Lemma 4.5]{GKLR}.

The Lie algebra $\LieAlg{so}(H^{1,1}(X))$ is the subspace of 
$\Hom[H^{1,1}(X),H^{1,1}(X)]$  of anti-self-dual homomorphisms with respect to the BBF-pairing. 
The one-dimensional Lie algebra $\LieAlg{so}(H^{2,0}(X)\oplus H^{0,2}(X))$ is characterized similarly.
Elements of $\LieAlg{so}(H^{2,0}(X)\oplus H^{0,2}(X))$ have traceless diagonal matrices in the basis $\{\sigma,\bar{\sigma}\}$.
The complex structure $I$ induces a weight zero Hodge structure on $\bar{\LieAlg{g}}_\CC\cong \LieAlg{so}(H^2(X,\ComplexNumbers))$,  the Hodge decomposition is $\LieAlg{so}(H^{1,1}(X))$-invariant, and the Hodge direct summands are the following $\LieAlg{so}(H^{1,1}(X))$-modules:
\begin{eqnarray*}
\bar{\LieAlg{g}}^{-1,1}&\cong&\Hom(H^{2,0}(X),H^{1,1}(X)),
\\
\bar{\LieAlg{g}}^{0,0}&\cong& \LieAlg{so}(H^{1,1}(X)) \ \ \ \ \oplus \ \ \  \LieAlg{so}(H^{2,0}(X)\oplus H^{0,2}(X))
\\
\bar{\LieAlg{g}}^{1,-1}&\cong&\Hom(H^{0,2}(X),H^{1,1}(X)).
\end{eqnarray*}

Note that $\Hom(H^{2,0}(X),H^{1,1}(X))$ is isomorphic to $\Hom(H^{1,1}(X),H^{0,2}(X))$  via the BBF pairing and $\bar{\LieAlg{g}}^{-1,1}$ is embedded in the direct sum of the two. A similar remark holds for $\bar{\LieAlg{g}}^{1,-1}$.
The image of $\iota$ is equal to $\bar{\LieAlg{g}}^{-1,1}$ and so $\alpha$ is infinitesimally invariant with respect to $\bar{\LieAlg{g}}^{-1,1}$, i.e.,  annihilated by it.
The class $\alpha$ is assumed to be real. Hence, it is invariant with respect to the complex conjugate subalgebra 
$\bar{\LieAlg{g}}^{1,-1}$. 
It remains to prove that the two Lie subalgebras $\bar{\LieAlg{g}}^{-1,1}$ and $\bar{\LieAlg{g}}^{1,-1}$ generate $\bar{\LieAlg{g}}_\CC$.

\underline{Step 2:}
The Lie bracket induces the following homomorphism of $\LieAlg{so}(H^{1,1}(X))$-modules:
\begin{equation}
\label{eq-so-equivariant-Lie-braket}
\bar{\LieAlg{g}}^{-1,1}\otimes \bar{\LieAlg{g}}^{1,-1} \rightarrow \bar{\LieAlg{g}}^{0,0}.
\end{equation}
Normalize $\sigma$ so that $(\sigma,\bar{\sigma})=1$. 
We have 
$
(\iota_{\eta_1}(\sigma),\iota_{\eta_2}(\sigma))=(\sigma,-\iota_{\eta_1}(\iota_{\eta_2}(\sigma))),
$
since $\iota_{\eta_1}$ belongs to $\bar{\LieAlg{g}}_\CC$, and $\iota_{\eta_1}(\iota_{\eta_2}(\sigma))$ is a scalar multiple of $\bar{\sigma}$. We conclude that
$
\iota_{\eta_1}(\iota_{\eta_2}(\sigma))=-(\iota_{\eta_1}(\sigma),\iota_{\eta_2}(\sigma))\bar{\sigma}.
$
Given $\eta\in H^1(TX)$, let $\bar{\iota_\eta}(\bullet):=\overline{\iota_\eta(\bar{\bullet})}$ be the complex conjugate element. 
Note that $\iota_\eta(\bar{\sigma})=0$ and $\bar{\iota_\eta}(\sigma)=0.$
The following equalities thus hold for all $\eta_1, \eta_2, \eta_3 \in H^1(TX)$.
\begin{eqnarray*}
\hspace{0ex}
[\iota_{\eta_1},\bar{\iota_{\eta_2}}](\sigma) & = & (\overline{\iota_{\eta_2}(\sigma)},\iota_{\eta_1}(\sigma))\sigma,
\\
\hspace{0ex}
[\iota_{\eta_1},\bar{\iota_{\eta_2}}](\bar{\sigma}) & = & 
-(\iota_{\eta_1}(\sigma),\overline{\iota_{\eta_2}(\sigma)})\bar{\sigma},
\\
\hspace{0ex}
[\iota_{\eta_1},\bar{\iota_{\eta_2}}](\iota_{\eta_3}(\sigma)) & = & 
-(\overline{\iota_{\eta_2}(\sigma)},\iota_{\eta_3}(\sigma))\iota_{\eta_1}(\sigma)+
(\iota_{\eta_1}(\sigma),\iota_{\eta_3}(\sigma))\overline{\iota_{\eta_2}(\sigma)}.
\end{eqnarray*}
Choose $\eta\in H^1(TX)$ so that $\iota_\eta(\sigma)$ is a non-isotropic real $(1,1)$ class of self intersection $t$.
We see that $[\iota_\eta,\bar{\iota_\eta}](\sigma)=t\sigma$,
$[\iota_\eta,\bar{\iota_\eta}](\bar{\sigma})=-t\bar{\sigma}$, and $[\iota_\eta,\bar{\iota_\eta}]$ annihilates $H^{1,1}(X)$.
Hence, $[\iota_\eta,\bar{\iota_\eta}]$ spans the trivial $\LieAlg{so}(H^{1,1}(X))$ submodule $\LieAlg{so}(\Hom[H^{2,0}(X)\oplus H^{0,2}(X))$, and so the latter is contained in the image of (\ref{eq-so-equivariant-Lie-braket}).

Choose non-zero elements $\eta_1, \eta_2\in H^1(TX)$, so that each of $\omega_i:=\iota_{\eta_i}(\sigma)$, $i=1,2$, is real and 
$(\omega_1,\omega_2)=0.$ Then 
$[\iota_{\eta_1},\bar{\iota_{\eta_2}}]$ annihilates both $\sigma$ and $\bar{\sigma}$ and
$[\iota_{\eta_1},\bar{\iota_{\eta_2}}](\omega)=-(\omega_2,\omega)\omega_1+(\omega_1,\omega)\omega_2$, for all $\omega\in H^{1,1}(X)$. Thus,  $[\iota_{\eta_1},\bar{\iota_{\eta_2}}]$ is a non-zero element of $\LieAlg{so}(H^{1,1}(X))$.
Thus,  the image of (\ref{eq-so-equivariant-Lie-braket}) must contain the  irreducible $\LieAlg{so}(H^{1,1}(X))$-submodule 
$\LieAlg{so}(H^{1,1}(X))$ as well.
\hide{
Choose $\eta\in H^1(TX)$ so that $\iota_\eta(\sigma)=\omega_I$.
Let $\bar{\iota_\eta}(\bullet):=\overline{\iota_\eta(\bar{\bullet})}$ be the complex conjugate element. 
The operators $L_I$, $\Lambda_J$, and $\Lambda_K$ are real, so 
the equality $\iota_\eta=[L_I,\Lambda_\sigma]=\frac{1}{2}\left([L_I,\Lambda_J]-\sqrt{-1}[L_I,\Lambda_K]\right)$ yields
$\bar{\iota_\eta}=\frac{1}{2}\left([L_I,\Lambda_J]+\sqrt{-1}[L_I,\Lambda_K]\right)$.
We have the equality
\begin{equation}
\label{eq-so-W}
\span\{\iota_\eta,\bar{\iota_\eta}, [\iota_\eta,\bar{\iota_\eta}]\}=\LieAlg{so}(W),
\end{equation}
where $W:=\span\{\sigma,\omega_I,\bar{\sigma}\}$, and  $\LieAlg{so}(W)$ is embedded in $\LieAlg{so}[H^2(X,\ComplexNumbers)]$ acting trivially on the orthogonal subspace $W^\perp$. Indeed, let $f_J$ and $f_K$ be the analogues of (\ref{f-I}). Then $f_I=-[L_J,\Lambda_K]=-[L_K,\Lambda_J]$ and the equalities obtained from the latter two by cyclicly permuting $I$, $J$, and $K$ hold as well, by \cite[Theorem 8.1]{verbitsky-mirror-symmetry}  (see also \cite[Prop. 2.7]{soldatenkov}). We have $\LieAlg{so}(W)=\span\{f_I,f_J,f_K\}$. Now, 
$\iota_\eta=-\frac{1}{2}(f_K+\sqrt{-1}f_J)$, $\bar{\iota_\eta}=-\frac{1}{2}(f_K-\sqrt{-1}f_J)$, and $[f_J,f_K]=-2f_I$ (see \cite[Prop. 2.24]{GKLR} for the latter equality). Hence, 
$[\iota_\eta,\bar{\iota_\eta}]=-\sqrt{-1}f_I$ and Equation (\ref{eq-so-W}) holds.

The image of (\ref{eq-so-equivariant-Lie-braket}) contains $\LieAlg{so}(H^{2,0}(X)\oplus H^{0,2}(X))$, since
$[\iota_\eta,\bar{\iota_\eta}]=-\sqrt{-1}f_I$ and $f_I$ spans the trivial $\LieAlg{so}(H^{1,1}(X))$ submodule $\LieAlg{so}(\Hom[H^{2,0}(X)\oplus H^{0,2}(X))$. 
If $\omega'=\iota_{\eta'}(\sigma)$ is a non-zero real $(1,1)$-form
orthogonal to $\omega_I$, then $[\iota_\eta,\bar{\iota_{\eta'}}](\omega')=\iota_\eta(\bar{\iota_{\eta'}}(\omega'))-
\bar{\iota_{\eta'}}(\iota_\eta(\omega'))$, the second term vanishes as $\omega'$ is annihilated by $\LieAlg{so}(W)$, and the first term is a non-zero multiple of $\omega_I$. We conclude that the image of (\ref{eq-so-equivariant-Lie-braket}) is a $\LieAlg{so}[H^{1,1}(X)]$-submodule strictly larger than $\LieAlg{so}(H^{2,0}(X)\oplus H^{0,2}(X))$. Thus,  the image of (\ref{eq-so-equivariant-Lie-braket}) must contain the  irreducible $\LieAlg{so}(H^{1,1}(X))$-submodule 
$\LieAlg{so}(H^{1,1}(X))$ as well.
}
\end{proof}

\begin{rem}
\label{rem-p-p-classes-that-remain-of-Hodge-type-to-1st-order-on-torus}
The analogue of Lemma \ref{lemma-alpha-remains-of-Hodge-type} when $X$ is a $n$-dimensional complex torus holds as well, except that the conclusion is that the real $(p,p)$ class $\alpha$ must vanish if $0<p<n$. The proof is completely analogous.
The LLV Lie algebra of a complex torus $X$ is 
\[
\LieAlg{g}_\RR=\LieAlg{so}\left(H^1(X,\RR)\oplus H^1(X,\RR)^*\right),
\]
where the pairing on $H^1(X,\RR)\oplus H^1(X,\RR)^*$ is $((\alpha_1,\theta_1),(\alpha_2,\theta_2))=\theta_1(\alpha_2)+\theta_2(\alpha_1)$. The semi-simple direct summand of the reductive subalgebra of $\LieAlg{g}_\RR$ preserving the grading of $H^*(X,\RR)$ is $\bar{\LieAlg{g}}_\RR=\LieAlg{sl}(H^1(X,\RR))$,
by \cite[Prop. 3.2]{looijenga-lunts}. The Hodge decomposition of $\bar{\LieAlg{g}}_\CC$ is
\begin{eqnarray*}
\bar{\LieAlg{g}}^{-1,1}&=&\Hom(H^{1,0}(X),H^{0,1}(X)), 
\\
\bar{\LieAlg{g}}^{1,-1}&=&\Hom(H^{0,1}(X),H^{1,0}(X)), 
\\
\bar{\LieAlg{g}}^{0,0}&=&\ker\left[\LieAlg{gl}(H^{1,0}(X))\oplus\LieAlg{gl}(H^{0,1}(X))\right]\LongRightArrowOf{tr+tr}\CC.
\end{eqnarray*}
Again, $\bar{\LieAlg{g}}^{-1,1}$ and $\bar{\LieAlg{g}}^{1,-1}$ generate $\bar{\LieAlg{g}}_\CC$. However, in this case
$H^i(X,\CC)=\wedge^iH^1(X,\CC)$ is an irreducible representation of $\bar{\LieAlg{g}}_\CC$ and so every $\bar{\LieAlg{g}}_\CC$-invariant $(p,p)$ class vanishes, for $0<p<n$.
\end{rem}
%
\section{Stable reflexive sheaves with a rank $1$ obstruction map are very modular}
In Section \ref{sec-proof-of-modularity} we prove that stable reflexive sheaves with a rank $1$ obstruction map are very modular (Theorem \ref{thm-modularity-of-a-stable-sheaf-with-a-rank-1-obstruction-map}).
In Section \ref{sec-examples-of-sheaves-with-a-rank-1-obstruction-map} we exhibit examples of coherent sheaves with a rank $1$ obstruction map. Let $X$ and $Y$ be deformation equivalent irreducible holomorphic symplectic manifolds and $F$ a stable reflexive coherent (untwisted) sheaf on $X$ with $\obs_F$ of rank $1$.
In Section \ref{sec-lift-to-an-untwisted-sheaf} we provide a necessary and sufficient conditions for there to be a coherent (untwisted) sheaf $E$ on $Y$, such that $(Y,E)$ is related to $(X,F)$ by a sequence of deformations and tensorization by line bundles (Lemma \ref{lemma-a-necessariy-and-sufficient-condition-for-the-existence-of-a-lift-to-an-untwisted-sheaf}). We then provide an easy to check sufficient condition in the $K3^{[n]}$ and generalized kummer deformation types (Lemma \ref{lemma-easy-to-check-sufficient-condition}).

%
\subsection{Proof of modularity}
\label{sec-proof-of-modularity}
Let $F$ be a coherent, possibly twisted, sheaf of positive rank $r$.
We get the untwisted object $G:=F^{\otimes r}\otimes \det(F)^*$  of rank $r^r$ in the derived category of $X$ and we let  $\kappa(F)\in H^*(X,\RationalNumbers)$ be the $r$-th root of $ch(G)$ with value $r$ in $H^0(X,\RationalNumbers)$, see
\cite[Sec. 2.2]{markman-BBF-class-as-characteristic-class} for a detailed definition.

An equivalence 
$\Phi: D^b(X)\rightarrow D^b(Y)$ of derived categories
induces a graded isomorphism $\phi:HH^*(X)\rightarrow HH^*(Y)$ of Hochschild cohomologies, by
\cite{caldararu-I}. 
Set $HT^k(X):=\oplus_{p+q=k}H^q(X,\wedge^pTX)$. 
We have the graded Hochschild-Kostant-Rosenberg isomorphism 
$I^{HKR}:HT^*(X)\rightarrow HH^*(X)$. The space $HT^2(X)=H^2(\StructureSheaf{X})\oplus H^1(TX)\oplus H^0(\wedge^2TX)$ parametrizes first order generalized deformations of the category of coherent sheaves on $X$, and also of $D^b(X)$ \cite{toda}.

The Atiyah class of an object $G$ in $D^b(X)$ is a class $a_G$ in $\Hom(G,G\otimes \Omega^1_X[1])$ (see \cite{kapranov}).
Let $a_{G,i}\in \Hom(G,G\otimes \Omega^i_X[i])$ be the $i$ power of $a_G$. 
The exponential Atiyah class is 
\[
\exp a_G:=\oplus_{i\geq 0}a_{G,i}:G\rightarrow \oplus_{i\geq 0}(G\otimes \Omega_X^i[i]).
\]
The contraction homomorphism
\begin{equation}
\label{eq-obstruction-map}
\obs^{HT}_G
\ : \  HT^2(X) \ \rightarrow \ \Hom(G,G[2])
\end{equation}
sends a class $\xi\in HT^2(X)$ to 
the obstruction class $\xi \cdot \exp a_G$
to first order deformation of $G$ in the direction $\xi$ \cite[Prop. 6.1]{toda}.
Hence, if $\Hom(G,G[2])$ is one dimensional, 
then the kernel $\Sigma(G)\subset HT^2(X)$ has co-dimension at most one.

Given an object $G$ in the bounded derived category $D^b(X)$ of a smooth projective variety $X$, denote by $\Sigma(G)\subset
HT^2(X)$ the kernel of (\ref{eq-obstruction-map}). Note that 
$\Sigma(\StructureSheaf{X})$ contains $H^0(\wedge^2TX)\oplus H^1(TX)$, by the proof of \cite[Lemma 4.3]{toda} in which Toda constructs a first order deformation of $\StructureSheaf{X}$ for each class in $H^0(\wedge^2TX)\oplus H^1(TX)$. 
Given a closed point  $z\in X$ let $\StructureSheaf{z}$ be its sky-scraper sheaf and $I_z$ its ideal sheaf. 
Now, $\Ext^1(\StructureSheaf{z},\StructureSheaf{z}\otimes\Omega^1_X)\cong H^0(\SheafExt^1_X(\StructureSheaf{z},\StructureSheaf{z}))\otimes\Omega^1_{X,z}\cong \End(T_zX),$ the Atiyah class 
$a_{\StructureSheaf{z}}$ is the identity, and $a_{\StructureSheaf{z},2}$ is the identity endomorphism of $\wedge^2(T_zX)$.
Thus, 
\[
\Sigma(\StructureSheaf{z})=H^2(X,\StructureSheaf{X})\oplus H^1(X,TX)\oplus H^0(X,(\wedge^2TX)\otimes I_z).
\]
Hence, if $X$ is an irreducible holomorphic symplectic manifold, then both
$\Sigma(\StructureSheaf{X})$ 
and $\Sigma(\StructureSheaf{z})$ have co-dimension $1$ in $HH^2(X)$.

The co-dimension of $\Sigma(G)$ is one also 
for every object $G$ over an irreducible holomorphic symplectic manifold $Y$, which is isomorphic to the image 
$\Phi(F)$ of an object $F$ with $\Sigma(F)$ of codimension one, 
via an equivalence of derived categories $\Phi:D^b(X)\rightarrow D^b(Y)$,
by \cite[Theorem 4.7]{toda}.

\begin{lem}
\label{lemma--kappa-F-remains-of-Hodge-type}
Let $X$ be a projective irreducible holomorphic symplectic manifolds and
$F$  an object in $D^b(X)$ of positive rank. Assume that 
the projection from $HT^2(X)$ to its direct summand $H^1(TX)$ maps the intersection  
\begin{equation}
\label{transversality-condition}
\Sigma(F)
 \ \ \ \cap \ \ \ \left[H^2(\StructureSheaf{X})\oplus H^1(TX)\right]
\end{equation}
onto $H^1(TX)$. 
Then the class $\kappa(F)$ remains of Hodge type under every K\"{a}hler deformation of $X$. 
\end{lem}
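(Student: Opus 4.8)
The plan is to reduce the statement to Lemma~\ref{lemma-alpha-remains-of-Hodge-type} by showing that $\kappa(F)$ remains of Hodge type under all \emph{first order} complex deformations of $X$. Note first that $\kappa(F)$ is a rational class, and indeed a \emph{real} $(p,p)$-class in each even degree, since it is the (graded) $r$-th root of $ch(F^{\otimes r}\otimes\det(F)^*)$, which is a Chern character of an honest object of $D^b(X)$; the even Hodge-type components can be handled degree by degree. By Lemma~\ref{lemma-alpha-remains-of-Hodge-type} it then suffices to prove that each homogeneous component of $\kappa(F)$ remains of type $(p,p)$ along every $\xi\in H^1(X,TX)$, i.e.\ is annihilated by the contraction $\iota_\xi$.

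The key computation is that the contraction of $\kappa(F)$ with $\xi\in H^1(TX)$ is governed by the obstruction map. First I would recall the standard fact (via the exponential Atiyah class, as already set up in the excerpt around \eqref{eq-obstruction-map}) that the component of $\obs^{HT}_F(\xi)\in\Hom(F,F[2])$ acts on cohomology, through the Mukai pairing / the HKR description of the Chern character, so that $\xi$ kills the Chern character $ch(F)$ exactly when $\xi\in\Sigma(F)$ up to the $H^2(\ko_X)$-ambiguity: more precisely, $\iota_\xi(ch(F))$ depends only on the image of $\obs^{HT}_F(\xi)$, and if $F$ deforms to first order in the direction $(0,\xi,\theta)\in HT^2(X)$ for some $\theta\in H^2(\ko_X)$ then $ch(F)$ (hence each of its graded pieces, and hence $\kappa(F)$, which is a universal polynomial in them) stays of Hodge type along $\xi$. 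The hypothesis \eqref{transversality-condition} is precisely what supplies such a $\theta$: given any $\xi\in H^1(TX)$ there exists $\theta\in H^2(\ko_X)$ with $(0,\xi,\theta)\in\Sigma(F)$, so $F$ deforms to first order in that direction by Toda's criterion, and therefore $\kappa(F)$ remains $(p,p)$ along $\xi$.

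Concretely the steps are: (1) reduce to each graded piece of $\kappa(F)$ being a real $(p,p)$-class (using that $F$ has positive rank so $\kappa(F)$ is defined and rational); (2) by Lemma~\ref{lemma-alpha-remains-of-Hodge-type} reduce to invariance under all first order classical deformations, i.e.\ annihilation by $\iota_\xi$ for all $\xi\in H^1(TX)$; (3) use hypothesis \eqref{transversality-condition} to lift an arbitrary $\xi\in H^1(TX)$ to $(0,\xi,\theta)\in\Sigma(F)$; (4) invoke Toda's result \cite[Prop.~6.1, Thm.~4.7]{toda} that $(0,\xi,\theta)\in\Sigma(F)=\ker(\obs^{HT}_F)$ implies $F$ deforms to first order in that direction, so that $ch(F)$, and hence $\kappa(F)$, remains of Hodge type along $\xi$; (5) conclude via Lemma~\ref{lemma-alpha-remains-of-Hodge-type}.

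The main obstacle I expect is step~(4), namely making precise the link between \emph{a given class} $\theta\in H^2(\ko_X)$ lifting $\xi$ into $\ker(\obs^{HT}_F)$ and the literal statement that $\iota_\xi$ annihilates $ch(F)$ — one must be careful that deforming $F$ as an object of $D^b(X)$ to first order in the direction $(0,\xi,\theta)$ really does force all Hodge components of $ch(F)$ to stay put along the purely-$\xi$ direction (the $\theta$-component only twists by a class in $H^2(\ko_X)$, which does not affect the $(p,p)$-property of the projection of $ch(F)$ to $\bigoplus_p H^{p,p}$). A secondary technical point is to know that the obstruction map of Toda agrees with $\obs^{HT}_F$ built from the exponential Atiyah class; this is exactly the content cited from \cite{huang} in the sketch following Theorem~\ref{thm-introduction-image-via-FM-is-modular}, and I would simply quote it.
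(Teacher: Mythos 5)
Your overall architecture coincides with the paper's: use the hypothesis \eqref{transversality-condition} to lift an arbitrary $\xi\in H^1(TX)$ to a class of $\Sigma(F)$ with no $H^0(\wedge^2TX)$-component, invoke Toda's first-order deformation result, deduce that $\kappa(F)$ is annihilated by every $\iota_\xi$, and finish with Lemma~\ref{lemma-alpha-remains-of-Hodge-type}. However, your step (4) rests on a false intermediate claim. You assert that if $F$ deforms to first order in the direction $(0,\xi,\theta)$ then $ch(F)$ itself stays of Hodge type along $\xi$, and your parenthetical remark dismisses the $\theta$-component as irrelevant. This is wrong: already for $F=\StructureSheaf{X}(C)$ on a $K3$ surface the hypothesis of the lemma holds for every direction, yet $ch_1(F)=[C]$ does not remain of type $(1,1)$ along a generic $\xi$. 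The gerby direction $\theta$ is precisely what permits $F$ to deform even though $c_1(F)$ does not stay Hodge; the deformation $\F$ produced by Toda lives in the category of $\tilde{\theta}$-twisted modules over the first-order deformation $\X$, and a twisted complex has no Chern character on $\X$, so its existence gives no information about $ch(F)$. This is exactly why the lemma is stated for $\kappa(F)=ch(F)\exp(-c_1(F)/r)$ and not for $ch(F)$.

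The missing step, which is the actual content of the paper's proof, is a twist-cancellation argument: form the determinant $\det(\F)$ of the twisted perfect complex $\F$ (a rank-one $\tilde{\theta}^{\,r}$-twisted module, via Knudsen--Mumford) and tensor appropriately so as to produce an honest \emph{untwisted} deformation over $\X$ of the untwisted object $G:=F^{\otimes r}\otimes\det(F)^*$. Since $ch(G)=\kappa(F)^r$, this untwisted deformation shows that $\kappa(F)^r$, and hence its $r$-th root $\kappa(F)$, remains of Hodge type in the direction $\xi$; one then applies Lemma~\ref{lemma-alpha-remains-of-Hodge-type} to each graded piece of $\kappa(F)$, which is rational and hence a real $(p,p)$-class. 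Without this detour through the untwisted tensor power your argument does not close.
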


\begin{proof}
Let $\beta$ be an element of $H^1(TX)$.   There exists a class $\alpha \in H^2(\StructureSheaf{X})$, such that the class $(\alpha,\beta,0)\in HT^2(X)$ belongs to $\Sigma(F)$, by assumption. 
The object $F$ deforms in the direction $(\alpha,\beta,0)$, by \cite[Prop. 6.1]{toda}. More precisely, there exists a perfect complex $\F$ (i.e., locally quasi-isomorphic to a bounded complex of free modules) of $\tilde{\alpha}$-twisted  sheaves 
in the category $Mod(\StructureSheaf{X}^\beta,\tilde{\alpha})$ constructed in
\cite[Sec. 4]{toda}, where $\StructureSheaf{X}^\beta$ is the structure sheaf of the first order deformation $\X$ of $X$ in the commutative direction $\beta$ and $\tilde{\alpha}$ is the natural lift of $\alpha$ to a Brauer class in $H^2(\X,\StructureSheaf{\X}^*)$. We get the rank $1$ locally free $\tilde{\alpha}^r$-twisted $\StructureSheaf{\X}$-module $\det(\F)$ over $\X$, by
\cite[Theorem 2]{knudsen-mumford}, and 
the untwisted deformation $\det(\F)^*\otimes (\F^\vee)^{\otimes r}$  over $\X$ of the object 
$E:=\det(F)^*\otimes (F^\vee)^{\otimes r}$. 
Hence, the class $ch(E)$ remains of Hodge type in every direction $\beta\in H^1(TX)$.
Thus, so does its $r$-th root $\kappa(F)$. The statement now follows from Lemma \ref{lemma-alpha-remains-of-Hodge-type}.
\end{proof}

\begin{rem}
\label{rem-chern-character-remains-of-Hodge-type}
Let $X$ be a projective irreducible holomorphic symplectic manifold and $F$ an object of $D^b(X)$.
We include the case where the rank of $F$ is zero. 
If $H^1(TX)$ is contained in the kernel $\Sigma(F)$ of the obstruction map (\ref{eq-obstruction-map}), then $ch(F)$ remains of
Hodge type under every K\"{a}hler deformation of $X$, by the proof of Lemma \ref{lemma--kappa-F-remains-of-Hodge-type}.
\end{rem}

\begin{prop}
\label{prop-kappa-class-remains-of-Hodge-type}
Let $\Phi: D^b(X)\rightarrow D^b(Y)$ be an equivalence of derived categories of coherent sheaves of two projective irreducible holomorphic symplectic manifolds. Assume that an object $G$ in $D^b(X)$ 
satisfies the following two conditions.
\begin{enumerate}
\item
$\Sigma(G)$ has co-dimension $1$ in $HT^2(X)$.
\item
The rank of $F:=\Phi(G)$ is positive. 
\end{enumerate}
Then the class $\kappa(F)$ remains of Hodge type under every K\"{a}hler deformation of $Y.$ 
\end{prop}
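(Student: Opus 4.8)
The plan is to reduce Proposition \ref{prop-kappa-class-remains-of-Hodge-type} to Lemma \ref{lemma--kappa-F-remains-of-Hodge-type} applied to $Y$ and $F=\Phi(G)$. Since $F$ has positive rank by hypothesis, it suffices to verify the transversality condition (\ref{transversality-condition}) for $F$ on $Y$: that the projection $HT^2(Y)\to H^1(Y,TY)$ maps $\Sigma(F)\cap\left[H^2(\StructureSheaf{Y})\oplus H^1(TY)\right]$ onto $H^1(TY)$. First I would transport the information about $\Sigma(G)$ from $X$ to $Y$ using the equivalence $\Phi$. By \cite[Theorem 4.7]{toda} (as already invoked in the discussion preceding the proposition), the graded isomorphism $\phi:HH^*(X)\to HH^*(Y)$ carries $\Sigma(G)$ to $\Sigma(F)$, so $\Sigma(F)$ also has co-dimension $1$ in $HT^2(Y)$.

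The remaining point is that the co-dimension-one subspace $\Sigma(F)\subset HT^2(Y)$ cannot contain the line $H^2(\StructureSheaf{Y})$, because $H^2(\StructureSheaf{Y})$ is one-dimensional and, for an object of positive rank, the restriction of the obstruction map $\obs^{HT}_F$ to $H^2(\StructureSheaf{Y})\subset HT^2(Y)$ is injective. This injectivity is exactly the statement quoted in the introduction from \cite[Lemma 10.1.3]{huybrechts-lehn} (the restriction of $\obs_F$ to $H^0(Y,\StructureSheaf{Y})\subset HH^2(Y)$ is injective when $\rank(F)\neq 0$; under the HKR isomorphism $H^0(Y,\StructureSheaf{Y})$ corresponds to $H^2(Y,\StructureSheaf{Y})$ inside $HT^2(Y)$ — or one can simply note that $\xi\cdot\exp a_F$ for $\xi\in H^2(\StructureSheaf Y)$ is multiplication by $\xi$ composed with the trace-nonzero identity component of $\exp a_F$, hence nonzero). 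Therefore $\Sigma(F)\cap H^2(\StructureSheaf{Y})=0$, and in particular $\Sigma(F)$ does not contain $H^2(\StructureSheaf{Y})\oplus H^1(TY)$ (it would then contain $H^2(\StructureSheaf Y)$, as $\Sigma(F)$ is a linear subspace).

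Now a dimension count finishes the argument. Write $W:=H^2(\StructureSheaf{Y})\oplus H^1(TY)\subset HT^2(Y)$. Then $\Sigma(F)\cap W$ is a hyperplane in $W$: it has co-dimension at most one in $W$ since $\Sigma(F)$ has co-dimension one in $HT^2(Y)$, and it is a proper subspace of $W$ by the previous paragraph. Consider the projection $p:W\to H^1(TY)$ with kernel $H^2(\StructureSheaf{Y})$. Since $\Sigma(F)\cap W$ does not contain $\ker(p)=H^2(\StructureSheaf Y)$, the restriction $p|_{\Sigma(F)\cap W}$ is injective, hence its image has dimension $\dim(\Sigma(F)\cap W)=\dim W-1=\dim H^1(TY)$, so $p$ maps $\Sigma(F)\cap W$ onto $H^1(TY)$. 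This is precisely condition (\ref{transversality-condition}) for $F$ on $Y$, so Lemma \ref{lemma--kappa-F-remains-of-Hodge-type} applies and $\kappa(F)$ remains of Hodge type under every K\"ahler deformation of $Y$.

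I do not expect a genuine obstacle here; the proposition is essentially a packaging of earlier results. The only point requiring a little care is making precise the identification, under the Hochschild--Kostant--Rosenberg isomorphism, of the summand $H^2(\StructureSheaf{Y})\subset HT^2(Y)$ with the subspace of $HH^2(Y)$ on which \cite[Lemma 10.1.3]{huybrechts-lehn} guarantees injectivity of $\obs_F$ — but this is standard (the HKR isomorphism is the identity on the $H^q(\StructureSheaf X)$ summands up to the Todd twist, which fixes that summand), and alternatively one can bypass it entirely by the direct computation that $\xi\cdot\exp a_F\neq 0$ for $0\neq\xi\in H^2(\StructureSheaf Y)$ when $\rank(F)>0$, using that the degree-zero component of $\exp a_F$ is the identity of $F$.
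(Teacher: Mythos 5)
Your proposal is correct and follows essentially the same route as the paper's proof: transport $\Sigma(G)$ to $\Sigma(F)$ via Toda's Theorem 4.7, show $H^2(\StructureSheaf{Y})$ meets $\Sigma(F)$ trivially because the degree-zero component of $\exp a_F$ is the identity and the trace of the resulting map is multiplication by $r>0$, deduce the transversality condition (\ref{transversality-condition}) by the dimension count, and conclude with Lemma \ref{lemma--kappa-F-remains-of-Hodge-type}. The "direct computation" you offer as an alternative to the HKR bookkeeping is in fact exactly the argument the paper uses.
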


\begin{proof}
\hide{
Let $\beta$ be an element of $H^1(TY)$. By assumption (\ref{transversality-condition}) there exists a class $\alpha \in H^2(\StructureSheaf{Y})$, such that the class $(\alpha,\beta,0)\in HT^2(Y)$ belongs to $\phi(\Sigma(G))$. Now, $\Sigma(F)=\phi(\Sigma(G))$, by \cite[Theorem 4.7]{toda}.
The object $F$ deforms in the direction $(\alpha,\beta,0)$, by \cite[Prop. 6.1]{toda}. More precisely, there exists a perfect complex $\F$ of $\tilde{\alpha}$-twisted locally free sheaves 
in the category $Mod(\StructureSheaf{Y}^\beta,\tilde{\alpha})$ constructed in
\cite[Sec. 4]{toda}, where $\StructureSheaf{Y}^\beta$ is the structure sheaf of the first order deformation $\Y$ of $Y$ in the commutative direction $\beta$ and $\tilde{\alpha}$ is the natural lift of $\alpha$ to a Brauer class in $H^2(\Y,\StructureSheaf{\Y}^*)$. We get the untwisted deformation $(\wedge^r \F)^*\otimes (\F^\vee)^{\otimes r}$  over $\Y$ of the object 
$E:=(\wedge^r F)^*\otimes (F^\vee)^{\otimes r}$. 
Hence, the class $ch(E)$ remains of Hodge type in every direction $\beta\in H^1(TY)$.
Thus, so does its $r$-th root $\kappa(F)$. The statement now follows from Lemma \ref{lemma-alpha-remains-of-Hodge-type}.
}
The equality $\Sigma(F)=\phi(\Sigma(G))$ follows from \cite[Theorem 4.7]{toda}.
Hence, $\Sigma(F)$ has co-dimension $1$ in $HT^2(Y)$, since
$\Sigma(G)$ is assumed to have co-dimension $1$ in $HT^2(X)$.
We claim that the projection from $HT^2(Y)$ to its direct summand $H^1(TY)$ maps the intersection  
\[
\Sigma(F)
 \ \ \ \cap \ \ \ \left[H^2(\StructureSheaf{Y})\oplus H^1(TY)\right]
\]
onto $H^1(TY)$. 
This surjectivity statement is equivalent to the condition that either the two hyperplanes $\Sigma(F)$ and $H^2(\StructureSheaf{Y})\oplus H^1(Y,TY)$ are equal, or 
$H^2(\StructureSheaf{Y})$ is not contained in $\Sigma(F)$. 
The obstruction map (\ref{eq-obstruction-map}) is defined as 
contraction with the exponential Atiyah class 
\[
exp \ a_F\in \oplus_{i\geq1}\Hom(F,F\otimes\Omega_Y^i[i]),
\]
\cite[Def. 5.5]{toda}.
The direct summand of the latter in $\Hom(F,F\otimes\Omega_Y^0[0])=\Hom(F,F)$ is the identity.
Hence, (\ref{eq-obstruction-map}) restricts to $H^2(\StructureSheaf{Y})$ 
as the homomorphism $u:\Hom(\StructureSheaf{Y},\StructureSheaf{Y}[2])\rightarrow\Hom(F,F[2])$ induced by 
the unit natural transformation from the identity functor to the composition of the two adjoint functors of tensorization with $F$ and $F^\vee$. The composition of $u$ with the trace natural transformation $tr:\Ext^2(F,F)\rightarrow H^2(\StructureSheaf{Y})$
is multiplication of $H^2(\StructureSheaf{Y})$ by the rank $r$ of $F$ (see \cite[Lemma 10.1.3]{huybrechts-lehn}).
The obstruction map (\ref{eq-obstruction-map}) restricts to an injective homomorphism from 
$H^2(\StructureSheaf{Y})$ to $\Ext^2(F,F)$, since $r>0$, by assumption. Consequently, $H^2(\StructureSheaf{Y})$ is not contained in the kernel $\Sigma(F)$ of (\ref{eq-obstruction-map}). The statement now follows from Lemma \ref{lemma--kappa-F-remains-of-Hodge-type}.
\end{proof}

\begin{thm}
\label{thm-modularity-of-a-stable-sheaf-with-a-rank-1-obstruction-map}
Let $\Phi:D^b(X)\rightarrow D^b(Y)$ be an equivalence of the derived categories of two projective irreducible holomorphic symplectic manifolds $X$ and $Y$. 
Let $F$ be a reflexive sheaf on $Y$ of rank $r>0$, which is $H$-slope-stable with respect to some ample line-bundle $H$.
Assume that $F$ is isomorphic to $\Phi(G)$, where $\Sigma(G)$ has co-dimension one\footnote{$G$ could be, for example, 
$\StructureSheaf{X}$, or  the sky-scraper sheaf $\StructureSheaf{p}$ for some point $p$ in $X$, or $\iota_*\StructureSheaf{Z}$ for a lagrangian embedding  $\iota:Z\rightarrow X$ as in Example \ref{example-lagrangian-Z-with-modular-structure-sheaf}.} 
 in $HT^2(X)$. 
Then for every irreducible holomorphic symplectic manifold $Z$ deformation equivalent to $Y$ there exists a possibly twisted coherent sheaf $E$ over $Z$ and a flat 
deformation of the pair $(Y,F)$ to the pair (Z,E).
\end{thm}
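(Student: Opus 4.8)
The plan is to combine Verbitsky's theory of hyperholomorphic sheaves with the results established earlier in this excerpt concerning the class $\kappa(F)$. First I would observe that by Proposition \ref{prop-kappa-class-remains-of-Hodge-type}, the hypotheses on $F=\Phi(G)$ (namely $\Sigma(G)$ of co-dimension one and $\rank(F)=r>0$) guarantee that $\kappa(F)$ remains of Hodge type under every K\"ahler deformation of $Y$. Since $\kappa(F)$ determines $ch(\SheafEnd(F))$ — indeed $\SheafEnd(F)\cong \SheafEnd(F^{\otimes r}\otimes\det(F)^*)$ since twisting by a line bundle does not change the endomorphism algebra, so the Chern character of $\SheafEnd(F)$ is a universal polynomial in the components of $\kappa(F)$ — it follows that $ch(\SheafEnd(F))$, and in particular $c_2(\SheafEnd(F))$, remains of Hodge type $(2,2)$ under all K\"ahler deformations of $Y$.

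Next I would invoke Verbitsky's results from \cite{kaledin-verbitsky-book}. The slope-stability of $F$ with respect to classes in an open subcone of the ample cone of $Y$, together with the fact that $c_2(\SheafEnd(F))$ (equivalently the discriminant of $\SheafEnd(F)$) stays of Hodge type $(2,2)$ along the twistor line associated to a generic K\"ahler class in that subcone, should allow one to equip the Azumaya algebra $\SheafEnd(F)$ — or rather a reflexive sheaf with that as its endomorphism algebra — with a hyperholomorphic (autodual) connection, hence to deform it along the twistor family. The modularity statement is precisely that $\SheafEnd(F)$ deforms along every twistor line, and since the twistor deformations connect $Y$ to every K\"ahler deformation class (the period domain argument: a generic deformation of $Y$ is connected to $Y$ through a chain of twistor lines all keeping the relevant class of Hodge type), this propagates the deformation to every irreducible holomorphic symplectic $Z$ deformation equivalent to $Y$. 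Care is needed because a priori one only deforms $\SheafEnd(F)$, i.e.\ one obtains a possibly twisted sheaf $E$ on $Z$ whose endomorphism algebra is the deformed Azumaya algebra; this is why the conclusion allows $E$ to be twisted. I would cite the precise form of Verbitsky's theorem (hyperholomorphic sheaves are stable and deform over the twistor line, and a polystable sheaf whose discriminant is of Hodge type on the twistor line is hyperholomorphic) and explain that stability rules out the sheaf becoming merely polystable or acquiring extra summands.

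The main obstacle, and the step requiring the most care, is the passage from "$c_2(\SheafEnd(F))$ remains of type $(2,2)$" to "$\SheafEnd(F)$ deforms as an Azumaya algebra/hyperholomorphic sheaf along the twistor line and hence to every K\"ahler deformation." This is where one must correctly match hypotheses of Verbitsky's hyperholomorphicity criterion: one needs slope-polystability with respect to a K\"ahler class $\omega$ for which all three of $\omega, \mathrm{Re}(\sigma), \mathrm{Im}(\sigma)$ (the twistor rotations) keep $c_2(\SheafEnd(F))$ of Hodge type — which is exactly guaranteed because $\kappa(F)$ stays Hodge for all K\"ahler deformations, in particular along the whole twistor sphere — and one must check that slope-stability (not merely polystability) is preserved, using the openness of the subcone of polarizations with respect to which $F$ is stable. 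I would also need to handle the bookkeeping that what deforms is the reflexive sheaf $\SheafEnd(F)$ together with its algebra structure, and to extract from a deformation of the Azumaya algebra a (twisted) sheaf $E$ on $Z$ with $\SheafEnd(E)$ isomorphic to it; this uses that an Azumaya algebra of degree $r^2$ on a smooth projective variety is $\SheafEnd$ of an $\alpha$-twisted locally free sheaf for the corresponding Brauer class $\alpha$, and that reflexivity is preserved under the deformation. Finally I would note that flatness of the resulting family $(Y,F)\rightsquigarrow(Z,E)$ follows from the construction of the twistor deformation together with constancy of the Hilbert polynomial.
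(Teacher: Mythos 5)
Your proposal is correct and follows essentially the same route as the paper: Proposition \ref{prop-kappa-class-remains-of-hodge-type} (as you cite it) gives that $\kappa(F)$, hence $c_2(\SheafEnd(F))$, stays of Hodge type, and then a generic twistor path starting from a K\"ahler class in the stability subcone together with Verbitsky's hyperholomorphicity results (in the twisted reflexive form of \cite[Prop. 6.17]{markman-BBF-class-as-characteristic-class}) transports $F$ to a possibly twisted reflexive sheaf on $Z$. One small slip: $\SheafEnd(F)$ is \emph{not} isomorphic to $\SheafEnd(F^{\otimes r}\otimes\det(F)^*)$ (the ranks differ); the correct justification is simply that $ch(\SheafEnd(F))=\kappa(F)\kappa(F)^\vee$ is a universal expression in $\kappa(F)$, so with $\kappa_1(F)=0$ one gets $c_2(\SheafEnd(F))=-2r\,\kappa_2(F)$.
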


\begin{proof}
The class $\kappa_2(F)$ remains of Hodge type $(2,2)$ under every K\"{a}hler deformation of $Y$, by Proposition \ref{prop-kappa-class-remains-of-Hodge-type}. It  follows that $F$ is {\em  modular}\footnote{Let $SH^*(Y)$ be the subalgebra of $H^*(Y,\QQ)$ generated by $H^2(Y,\QQ)$. We have the decomposition of $H^*(Y,\QQ)$ as an orthogonal direct sum  $SH^*(Y)\oplus  SH^*(Y)^\perp$ with respect to the Poincar\'{e} paring. A torsion sheaf $F$ is {\em modular} if the orthogonal projection of $\kappa_2(F)$ to $SH^*(Y)$ is a  scalar  multiple of the class in the image of  $\Sym^2H^2(Y,\QQ)$ corresponding to the inverse of the Beauville-Bogomolov-Fujiki  pairing \cite[Def. 1.1]{ogrady-modular}. The latter class  is the  unique  class in the image of $\Sym^2H^2(Y,\QQ)$ in $H^4(Y,\QQ)$, which remains of Hodge-type $(2,2)$ under all  K\"{a}hler deformations of $Y$. Hence, if $\obs_F$ has rank $1$, then $F$ is modular.
} in the sense of \cite[Def. 1.1]{ogrady-modular}. Hence, there exists an open subcone of the ample cone, containing  $H$, such that  $F$ is $H'$-slope-stable with  respect to every class $H'$ in this subcone, by \cite[Perop. 3.4]{ogrady-modular}.

A {\em $\Lambda$-marking} for an irreducible holomorphic symplectic manifold $Y$ is an isometry $\eta:H^2(Y,\Integers)\rightarrow \Lambda$ with a lattice $\Lambda$. There exists a moduli space $\fM_\Lambda$ of $\Lambda$-marked irreducible holomorphic symplectic manifolds, which is a non-Hausdorff manifold of dimension $b_2(Y)-2$ (see \cite{huybrects-basic-results}). 
Choose an isometry $\eta:H^2(Y,\Integers)\rightarrow \Lambda$ with a fixed lattice $\Lambda$. Denote by $\fM^0_\Lambda$ the connected component of the moduli space $\fM_\Lambda$ containing $(Y,\eta)$. By assumption, there exists an isometry $\eta':H^2(Z,\Integers)\rightarrow \Lambda$, such that $(Z,\eta')$ belongs to $\fM^0_\Lambda$. 
Associated to every K\"{a}hler class $\omega$ on $Y$ is a twistor line $\PP^1_\omega$ in $\fM^0_\Lambda$ through
$(Y,\eta)$ and a twistor family $\Y_\omega\rightarrow \PP^1_\omega$, such that its fiber $\Y_t$ over $t\in\PP^1_\omega$ 
represents the isomorphism class $t$ as a point of $\fM^0_\Lambda$ \cite{huybrects-basic-results}. 
A twistor path from $(Y,\eta)$ to $(Z,\eta')$ is a connected sequence of twistor lines in $\fM^0_\Lambda$, such that every two consecutive lines intersect. A choice of an intersection point of every two consecutive lines is included in the data of the twistor path. If the chosen intersection points of every two consecutive lines are pairs $(Y_i,\eta_i)$ such that $\Pic(Y_i)$ is trivial, then the twistor path is said to be generic. 
There exists a generic twistor path $\gamma$ from $(Y,\eta)$ to $(Z,\eta')$ in $\fM_\Lambda^0$, such that its first twistor line corresponds to a K\"{a}hler class $\omega$ on $Y$ with respect to which $F$ is slope-stable, by
\cite[Theorems 3.2 and 5.2e]{verbitsky-cohomology} and the fact that $F$ is slope-stable with respect to all ample classes in an open subcone of the ample cone. 
Let $\pi:\Y\rightarrow \gamma$ be the twistor deformation of $Y$ over the twistor path $\gamma$. 
The 
first Chern class of every direct summand of $\SheafEnd(F)$ vanishes, since  $F$ is $H$-stable, and so $\SheafEnd(F)$ is $H$-semistable, with respect to every $H$ in an open subcone of the ample cone of $Y$, see \cite[Lemma 7.2]{markman-BBF-class-as-characteristic-class}. 
The class $\kappa_2(F)$ remains of Hodge type along $\gamma$, by Proposition \ref{prop-kappa-class-remains-of-Hodge-type}.
Hence, the sheaf $F$ extends to a reflexive twisted sheaf $\F$ over the twistor deformation $\Y$ over the twistor line $\gamma$, by the generalization \cite[Prop. 6.17]{markman-BBF-class-as-characteristic-class} to the case of twisted reflexive sheaves of results of Verbitsky \cite{kaledin-verbitsky-book} for reflexive sheaves. We let $E$ be the restriction of $\F$ to the fiber of $\pi$ over 
the point $(Z,\eta')$ of the twistor path $\gamma$.
\end{proof}

\begin{rem}
\label{rem-semi-homogeneous}
\begin{enumerate}
\item
The analogues of Lemma \ref{lemma--kappa-F-remains-of-Hodge-type} and Proposition \ref{prop-kappa-class-remains-of-Hodge-type}  hold when $X$ is a complex torus
if we replace the assumption that $\Sigma(G)$ has codimension $1$ by the assumption that $\Sigma(G)$ has codimension equal to
$h^{0,2}(X):=\dim(H^{2,0}(X))$. 
The analogue of Theorem \ref{thm-modularity-of-a-stable-sheaf-with-a-rank-1-obstruction-map} holds as well
if $X$ is even dimensional (and so admits a hyperk\"{a}hler structure).
The proof of Lemma \ref{lemma--kappa-F-remains-of-Hodge-type} is identical using 
Remark \ref{rem-p-p-classes-that-remain-of-Hodge-type-to-1st-order-on-torus} instead of Lemma \ref{lemma-alpha-remains-of-Hodge-type}. The proof of Proposition \ref{prop-kappa-class-remains-of-Hodge-type} shows that the restriction of $\exp a_F$  to $H^2(\StructureSheaf{Y})$ is injective, and so $H^2(\StructureSheaf{Y})$ intersect $\Sigma(F)$ trivially. Hence, the intersection 
$\Sigma(F)\cap [H^2(\StructureSheaf{Y})\oplus H^1(TY)]$ again projects onto $H^1(TY)$ and
the hypothesis of Lemma \ref{lemma--kappa-F-remains-of-Hodge-type} is satisfied. 
The rest of the proof of Proposition \ref{prop-kappa-class-remains-of-Hodge-type} is identical.
The proof of Theorem \ref{thm-modularity-of-a-stable-sheaf-with-a-rank-1-obstruction-map} is analogous, using the same result of Verbitsky and the twistor path connectivity of the moduli space of even dimensional complex tori \cite{buskin-izadi}. 
\item
\label{rem-item-codimension-h-2-0}
Following are examples of sheaves $G$ over abelian varieties $X$ for which the codimension of  $\Sigma(G)$ is $h^{0,2}(X)$.
Over an abelian surface every simple sheaf is an example. 
Examples of such sheaves over compact complex tori of dimension $>2$ include the sky scraper sheaf, a line bundle on $X$, the structure sheaf of a connected abelian subvariety, as well as their images under Fourier-Mukai transformations. When locally free sheaves, these images are semi-homogeneous vector bundles. Recall that a vector bundle $F$ on $X$ is {\em semi-homogeneous}, if every translate of $F$ by a point of $X$ is isomorphic to $F\otimes L$, for some line bundle $L\in\Pic^0(X).$
If $F$ is a simple semi-homogeneous vector bundle over an abelian variety $X$, then it is stable and $\dim\Ext^2(F,F)=h^{0,2}(X),$ by \cite[Theorem 5.8]{mukai-semihomogeneous}. 
In particular, one gets via the analogue of Proposition \ref{prop-kappa-class-remains-of-Hodge-type} (with $\Phi$ the identity endo-functor) an alternative proof of the fact that the graded summands  $\kappa_p(F)\in H^{p,p}(X)$ of the class $\kappa(F)$ of a simple semi-homogeneous vector bundle $F$ vanishes for $0<p<\dim(X)$ (this also follows from \cite[Prop. 7.3]{mukai-semihomogeneous}, which implies the vanishing of $\kappa_{\dim(X)}(F)$ as well).
\end{enumerate}
\end{rem}

\begin{lem}
\label{lemma-semi-homogeneous}
Let $X$ be an abelian variety of dimension $\geq 3$, $H$ an ample line bundle on $X$,  and $F$ an $H$-slope-stable vector bundle. Then $F$ is semi-homogeneous if and only if 
the rank of the obstruction map $HT^2(X)\RightArrowOf{(\ref{eq-obstruction-map})} Ext^2(F,F)$
is $h^{0,2}(X)$. 
\end{lem}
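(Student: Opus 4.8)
The plan is to prove the equivalence by relating the rank of the obstruction map to the dimension of $\Ext^2(F,F)$ and then invoking Mukai's characterization of semi-homogeneous bundles. First I would recall, as in Remark \ref{rem-semi-homogeneous}, that for an object of nonzero rank the obstruction map $\obs^{HT}_F$ restricts injectively to the summand $H^2(\StructureSheaf{X})$; since $\dim H^2(\StructureSheaf{X}) = h^{0,2}(X)$, the rank of $\obs^{HT}_F$ is always at least $h^{0,2}(X)$. Hence the rank equals $h^{0,2}(X)$ if and only if the summand $H^0(\wedge^2 TX)\oplus H^1(TX)$ is contained in the kernel $\Sigma(F)$, i.e. if and only if $F$ admits a first-order deformation in every such classical direction (with appropriate non-commutative/gerby correction only from the $H^2(\StructureSheaf{X})$ part). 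This reformulation turns the statement into: an $H$-slope-stable vector bundle $F$ on $X$ is semi-homogeneous if and only if it deforms to first order along every $\xi \in H^1(TX)$ together with every $\nu \in H^0(\wedge^2 TX)$.

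For the "only if" direction I would invoke Mukai's theorem directly: if $F$ is simple semi-homogeneous (and slope-stability implies simplicity), then $\dim\Ext^2(F,F) = h^{0,2}(X)$ by \cite[Theorem 5.8]{mukai-semihomogeneous}, so the obstruction map, having rank at least $h^{0,2}(X)$ and target of dimension $h^{0,2}(X)$, must have rank exactly $h^{0,2}(X)$. For the "if" direction, suppose the obstruction map has rank $h^{0,2}(X)$, so $\Sigma(F) \supseteq H^0(\wedge^2 TX)\oplus H^1(TX)$. The key geometric input is that deformations of $F$ along $H^1(TX)$ are exactly deformations of $F$ following the deformations of $X$ as a complex manifold, and for an abelian variety the generic such deformation is a non-algebraic complex torus; on the other hand, deformations along $H^0(\wedge^2 TX)$ — using the holomorphic Poisson structure coming from translation-invariant $2$-vectors — correspond, after exponentiating, to the Mukai–Fourier transform directions. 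The cleanest route is: the condition that $F$ deforms to first order along all of $H^1(TX)$ forces the class $ch(F)\exp(-c_1(F)/r) = \kappa(F)$ to remain of Hodge type under all deformations of $X$, hence (by a Hodge-theoretic argument analogous to Remark \ref{rem-p-p-classes-that-remain-of-Hodge-type-to-1st-order-on-torus}, where $\bar{\LieAlg{g}}_\CC = \LieAlg{sl}(H^1(X,\CC))$ acts irreducibly on each $H^i$) forces $\kappa_p(F) = 0$ for $0 < p < \dim X$; this is precisely the numerical condition $\Delta(F) := 2r\,c_2(F) - (r-1)c_1(F)^2 = 0$ in codimension $2$ and its higher analogues, which by \cite[Prop. 6.16 and Theorem 7.11]{mukai-semihomogeneous} characterizes semi-homogeneous bundles among stable ones.

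The main obstacle I anticipate is the last implication: deducing semi-homogeneity of $F$ from the vanishing of the reduced Chern classes $\kappa_p(F)$ for $0<p<\dim X$. One must check that Mukai's numerical criterion really is equivalent to $\kappa(F) = r\cdot[\mathrm{pt}]^0$ (i.e. $\kappa_p(F)=0$ for all $p>0$, or at least for $0<p<\dim X$ with the top one handled separately), and that stability plus this numerical condition suffices — Mukai proves that a simple bundle with $\Delta$-type invariants vanishing and appropriate slope is semi-homogeneous, but one should be careful that the vanishing extracted from the Hodge-type argument is exactly the list of invariants Mukai needs, not merely the codimension-$2$ one. An alternative, possibly cleaner, route to circumvent this is to argue entirely on the level of $\Ext$ groups and moduli: slope-stability gives that $F$ is a smooth point of its moduli space $M$, the obstruction map has rank $h^{0,2}(X)$ means $\dim\Ext^1(F,F) = \dim\Ext^1(F,F)$ is as small as possible given $\dim\Ext^2(F,F)=h^{0,2}$, and Mukai's analysis of moduli of semi-homogeneous bundles \cite[\S 6]{mukai-semihomogeneous} identifies exactly these bundles as the ones whose moduli space is (étale-locally) an abelian variety of the expected minimal dimension; pinning down that this extremal $\Ext$-dimension count forces $F$ into Mukai's list is then the crux. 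I would present the $\kappa(F)$-vanishing route as the main argument and remark on the moduli-theoretic reformulation.
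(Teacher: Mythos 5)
Your proposal follows essentially the same route as the paper's proof. The forward direction is identical: Mukai's \cite[Theorem 5.8]{mukai-semihomogeneous} gives $\dim\Ext^2(F,F)=h^{0,2}(X)$, and since the composition of the obstruction map restricted to $H^2(\StructureSheaf{X})$ with the trace is multiplication by $\rank(F)$, the map is surjective, hence of rank $h^{0,2}(X)$. For the converse the paper does exactly what you present as your main argument: the rank hypothesis forces $\kappa_p(F)=0$ for $0<p<\dim(X)$ via the torus analogue of Proposition \ref{prop-kappa-class-remains-of-Hodge-type} together with Remark \ref{rem-p-p-classes-that-remain-of-Hodge-type-to-1st-order-on-torus} (this is Remark \ref{rem-semi-homogeneous}(\ref{rem-item-codimension-h-2-0})), and it then concludes by citing \cite[Prop. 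A.2]{ogrady-modular}. That reference disposes of the obstacle you flag at the end: only the codimension-two vanishing $\kappa_2(F)=0$, i.e.\ the vanishing of the discriminant, is needed, because a slope-stable bundle with vanishing discriminant is projectively flat and a projectively flat stable bundle on an abelian variety is semi-homogeneous; no control of the higher $\kappa_p$ or of Mukai's moduli-theoretic characterization is required. One small imprecision to correct: rank equal to $h^{0,2}(X)$ does not mean that $\Sigma(F)$ literally contains $H^0(\wedge^2TX)\oplus H^1(TX)$, only that $\Sigma(F)$ is a complement to $H^2(\StructureSheaf{X})$ and hence the graph of a linear map into it; this is why the resulting deformations of $F$ are twisted and why one passes to the untwisted object $\det(F)^*\otimes F^{\otimes r}$ before concluding that $\kappa(F)$ stays of Hodge type — your parenthetical about the gerby correction indicates you have this in mind, but the ``if and only if'' as you state it is not literally true.
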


\begin{proof}
If $F$ is semi-homogeneous, then $\dim \Ext^2(F,F)=h^{0,2}(X),$ by
by \cite[Theorem 5.8]{mukai-semihomogeneous}. 
The composition 
\[
H^2(\StructureSheaf{X})\rightarrow HT^2(X)\RightArrowOf{(\ref{eq-obstruction-map})} Ext^2(F,F) \RightArrowOf{tr}H^2(\StructureSheaf{X})
\]
is multiplication by the rank of $F$, so an isomorphism, so the map (\ref{eq-obstruction-map}) is surjective. 
Conversely, if the co-dimension of $\Sigma(F)$ in $HH^2(X)$ is $h^{0,2}(X)$, then  $\kappa_2(F)=0$, by Remark \ref{rem-semi-homogeneous}(\ref{rem-item-codimension-h-2-0}), and so the statement follows from \cite[Prop. A.2]{ogrady-modular}.
\end{proof}
%
\subsection{Examples of sheaves $F$ with a rank $1$ obstruction map $HT^2(X)\rightarrow \Ext^2(F)$}
\label{sec-examples-of-sheaves-with-a-rank-1-obstruction-map}
\begin{example}
Let $E$ be a simple sheaf of positive rank $r$ over a $K3$ surface $X$, so that $\Hom(E,E)$ is one dimensional and by Serre's Duality theorem so is $\Ext^2(E,E)$. Then $\Sigma(E)\subset HT^2(X)$ satisfies the condition in Part \ref{transversality-condition}
of Proposition \ref{prop-kappa-class-remains-of-Hodge-type}. The conclusion of the Proposition, that  $\kappa(E)$ remains of Hodge type, is a tautology in the $2$-dimensional case, since $\kappa(E)=r+\kappa_2(E)$, $\kappa_2(E)\in H^{4}(X,\RationalNumbers)$, and every class in the top cohomology $H^{4}(X,\RationalNumbers)$ remains of Hodge type under every K\"{a}hler deformation.
\end{example}

\begin{example}
\label{example-lagrangian-Z-with-modular-structure-sheaf}
Let $X$ be a $2n$-dimensional irreducible holomorphic symplectic manifold. Let 
$Z$ be an $n$-dimensional compact complex manifold with Hodge numbers $h^{2,0}(Z)=0$ and $h^{1,1}(Z)=1$. 
Assume given a holomorphic embedding $\iota:Z\hookrightarrow X$. 
Then $\iota(Z)$ is a lagrangian submanifold. 
The obstruction map $HT^2(X)\rightarrow \Ext^2(\iota_*\StructureSheaf{Z},\iota_*\StructureSheaf{Z})$
has rank $1$, by Lemma \ref{lemma-lagrangian-submanifolds-with-one-dimensional-second-cohomology} below.
Every smooth curve $Z$ in a $K3$ surface is an example. 
Examples where $Z=\PP^n$ are studied in \cite{bakker,hassett-tschinkel}. 
Examples with $Z$ a Grassmannian $G(k,2k+1)$ are given in \cite[Example 3.3]{markman-brill-noether}.
Every smooth cubic fourfold $V$ that does not contain a plane
embeds as a lagrangian subvariety of the projective irreducible holomorphic symplectic manifold of $K3^{[4]}$ deformation type parametrizing equivalence classes of generalized twisted cubics in $V$ \cite[Theorem B]{LLSv}.
\end{example}

\begin{lem}
\label{lemma-lagrangian-submanifolds-with-one-dimensional-second-cohomology}
Let $\iota:Z\rightarrow X$ be as in Example \ref{example-lagrangian-Z-with-modular-structure-sheaf}. 
Then $\Ext^2(\iota_*\StructureSheaf{Z},\iota_*\StructureSheaf{Z})$ is one-dimensional and the obstruction map $HT^2(X)\rightarrow \Ext^2(\iota_*\StructureSheaf{Z},\iota_*\StructureSheaf{Z})$
has rank $1$.
\end{lem}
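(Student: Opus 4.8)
The plan is to compute $\Ext^2(\iota_*\StructureSheaf{Z},\iota_*\StructureSheaf{Z})$ via the local-to-global spectral sequence together with the identification of the $\SheafExt$-sheaves supplied by the self-intersection of a lagrangian submanifold, and then to trace through the obstruction map. First I would recall that since $\iota(Z)$ is lagrangian, the normal bundle $N_{Z/X}$ is isomorphic to the cotangent bundle $\Omega^1_Z$; hence $\SheafExt^q_X(\iota_*\StructureSheaf{Z},\iota_*\StructureSheaf{Z})\cong \iota_*(\wedge^q N_{Z/X})\cong \iota_*\Omega^q_Z$ for $0\le q\le n$ (this is the standard HKR-type computation for the diagonal-type embedding, or one can cite it from the literature on lagrangian submanifolds). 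The local-to-global spectral sequence then reads
\[
E_2^{p,q}=H^p(Z,\Omega^q_Z)\Longrightarrow \Ext^{p+q}_X(\iota_*\StructureSheaf{Z},\iota_*\StructureSheaf{Z}).
\]
Using the hypotheses $h^{2,0}(Z)=0$ and $h^{1,1}(Z)=1$, the contributions to total degree $2$ are $H^0(Z,\Omega^2_Z)$, $H^1(Z,\Omega^1_Z)$, and $H^2(Z,\StructureSheaf{Z})$; by Hodge symmetry on the compact complex manifold $Z$ (which is K\"ahler, being a submanifold of a K\"ahler manifold), $h^{0,2}(Z)=h^{2,0}(Z)=0$ and $h^{1,1}(Z)=1$, while $h^{2,0}(Z)=0$ kills the $H^0(Z,\Omega^2_Z)$ term. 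One must check that the relevant differentials do not further reduce the dimension, but degree $2$ is low enough that the only possible differential into or out of $E_2^{1,1}$ involves $E_2^{3,0}=H^3(Z,\StructureSheaf{Z})$ or $E_2^{-1,2}=0$ and $E_2^{2,0},E_2^{0,2}$ which are already accounted for; in any case the surviving piece has dimension at most $1$, and it is exactly $1$ because $\mathrm{id}\in\Hom(\StructureSheaf{X},\StructureSheaf{X}[2])$ maps to a nonzero class (alternatively, Serre duality on $X$ pairs $\Ext^2$ with $\Ext^{2n-2}$ and one checks nonvanishing). So $\dim\Ext^2(\iota_*\StructureSheaf{Z},\iota_*\StructureSheaf{Z})=1$.

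Next, for the rank of the obstruction map: once $\Ext^2$ is one-dimensional the obstruction map $\obs^{HT}_{\iota_*\StructureSheaf{Z}}:HT^2(X)\to\Ext^2(\iota_*\StructureSheaf{Z},\iota_*\StructureSheaf{Z})$ automatically has rank at most $1$, so it suffices to exhibit one class in $HT^2(X)$ on which it is nonzero. The cleanest choice is the Hochschild class corresponding to the holomorphic symplectic form, i.e. an element of $H^0(X,\wedge^2 TX)$ (the inverse of the symplectic form), or dually a class in $H^2(X,\StructureSheaf{X})$; by \cite[Lemma 10.1.3]{huybrechts-lehn} the composition of $\obs^{HT}_{\iota_*\StructureSheaf{Z}}$ restricted to $H^2(X,\StructureSheaf{X})$ with the trace map back to $H^2(X,\StructureSheaf{X})$ is multiplication by the rank of $\iota_*\StructureSheaf{Z}$ — but that rank is zero, so this argument needs adjustment. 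Instead I would use that $\obs^{HT}_{\iota_*\StructureSheaf{Z}}$ is contraction against the exponential Atiyah class, and identify its value on a Poisson bivector $\pi\in H^0(X,\wedge^2 TX)$ with the component of the Atiyah class of $\iota_*\StructureSheaf{Z}$ in $\SheafExt^2$; under the identification $\SheafExt^2\cong\iota_*\Omega^2_Z$ this component is the obstruction to the normal bundle sequence splitting, equivalently (up to scalar) the restriction of the symplectic form, which is nonzero because $Z$ is lagrangian of positive dimension and $\pi$ is nondegenerate. Hence the rank is exactly $1$.

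The main obstacle I expect is the precise identification of the local $\SheafExt$-sheaves and, relatedly, pinning down that the surviving degree-$2$ term of the spectral sequence really is one-dimensional rather than zero — i.e. that $\mathrm{id}$ or the symplectic-form class gives a genuinely nonzero element of $\Ext^2$. The degeneration/differential bookkeeping in the spectral sequence is routine in this range of degrees, and the normal-bundle-equals-cotangent-bundle fact for lagrangians is standard, so the crux is the nonvanishing, which I would nail down by the explicit Atiyah-class computation sketched above; this simultaneously gives both the lower bound $\dim\Ext^2\ge 1$ and the rank-$1$ statement for $\obs^{HT}$.
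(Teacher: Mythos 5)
The first half of your argument (the upper bound $\dim\Ext^2(\iota_*\StructureSheaf{Z},\iota_*\StructureSheaf{Z})\le 1$) is essentially the paper's: $N_{Z/X}\cong\Omega^1_Z$, so $\SheafExt^q\cong\iota_*\Omega^q_Z$, and the low-degree (seven-term) exact sequence of the local-to-global spectral sequence together with $h^{2,0}(Z)=h^{0,2}(Z)=0$ forces $\Ext^2$ to inject into $H^1(Z,\Omega^1_Z)$, which is one-dimensional. That part is fine.

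The non-vanishing/rank-$1$ step, however, has a genuine gap. Your proposed witness is the Poisson bivector $\pi\in H^0(X,\wedge^2TX)$, and you justify non-vanishing by saying the relevant component of the Atiyah class is ``(up to scalar) the restriction of the symplectic form, which is nonzero because $Z$ is lagrangian.'' This is backwards: the restriction of the symplectic form to a lagrangian submanifold is \emph{zero} by definition. Structurally the same problem appears: the natural target for the value of $\obs^{HT}$ on $H^0(X,\wedge^2TX)$ is $H^0(Z,\wedge^2N_{Z/X})\cong H^0(Z,\Omega^2_Z)$, which vanishes under the hypothesis $h^{2,0}(Z)=0$, and the one-dimensional group $\Ext^2$ sits inside $H^1(Z,\Omega^1_Z)$, not $H^0(Z,\Omega^2_Z)$. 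Indeed the paper (Remark \ref{rem-expected-rank-one}) \emph{expects} $H^0(X,\wedge^2TX)$ to lie in the kernel of the obstruction map and proves this in the fixed-locus case, so your chosen class is the worst possible candidate for detecting non-vanishing. The fallback ``$\mathrm{id}$ maps to a nonzero class'' also fails: the restriction of $\obs^{HT}$ to $H^2(X,\StructureSheaf{X})$ factors through $H^2(Z,\StructureSheaf{Z})=0$.

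The missing idea is to detect non-vanishing on the classical direction $H^1(X,TX)$ via deformation theory of the pair $(X,Z)$. The paper argues by contradiction: if $H^1(X,TX)\subset\Sigma(\iota_*\StructureSheaf{Z})$, then $ch(\iota_*\StructureSheaf{Z})$, and hence the class $[Z]\in H^{n,n}(X)$, would remain of Hodge type in every first-order direction $\xi\in H^1(X,TX)$. But by Voisin's results on lagrangian submanifolds, $[Z]$ remains of Hodge type in the direction $\xi$ if and only if $c_1(L)$ does, where $c_1(L)^\perp=\ker(H^2(X,\CC)\to H^2(Z,\CC))$; since $h^{1,1}(Z)=1$ forces $L$ to be nontrivial, $c_1(L)$ fails to stay of type $(1,1)$ in some direction, a contradiction. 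This single argument delivers both $\Ext^2\ne 0$ and the surjectivity of $\obs^{HT}$ restricted to $H^1(X,TX)$, hence rank exactly $1$. Without an input of this kind your proof does not establish the lower bound.
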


\begin{proof}
The normal bundle $N_{Z/X}$ is isomorphic to $\Omega^1_{Z}$, as $Z$ is a lagrangian subvariety,  and so 
the extension sheaf $\SheafExt^i(\iota_*\StructureSheaf{Z},\iota_*\StructureSheaf{Z})$
is isomorphic to $\Omega^i_{Z}$. 
We have the seven-terms exact sequence of the local to global spectral sequence, where $N=N_{Z/X}$:
\begin{eqnarray*}
0&\rightarrow& H^1(\wedge^0N) \rightarrow \Ext^1_X(\iota_*\StructureSheaf{Z},\iota_*\StructureSheaf{Z})
\rightarrow H^0(N)
\\
&\rightarrow & H^2(\wedge^0N)\rightarrow 
\ker\left[\Ext^2_X(\iota_*\StructureSheaf{Z},\iota_*\StructureSheaf{Z})\rightarrow H^0(\wedge^2N)
\right]\rightarrow H^1(N)\rightarrow H^3(\wedge^0N)
\end{eqnarray*}
\cite[Appendix B, page 309]{milne}.
Our assumptions imply that $H^0(\wedge^2N)$ and $H^2(\wedge^0N)$ vanish as they are isomorphic to
$H^0(\Omega^2_Z)$ and $H^2(\StructureSheaf{Z})$.
It follows that $\Ext^2(\iota_*\StructureSheaf{Z},\iota_*\StructureSheaf{Z})\rightarrow H^1(Z,N_{Z/X})\cong H^1(Z,\Omega^1_{Z})$ is injective and so $\Ext^2(\iota_*\StructureSheaf{Z},\iota_*\StructureSheaf{Z})$ is at most one-dimensional. 

We claim that $\Ext^2(\iota_*\StructureSheaf{Z},\iota_*\StructureSheaf{Z})$ does not vanish and the obstruction map (\ref{eq-obstruction-map}) restricts to a non-trivial, hence surjective, homomorphism from $H^1(TX)$ onto $\Ext^2(\iota_*\StructureSheaf{Z},\iota_*\StructureSheaf{Z})$.
Assume, otherwise, that $H^1(TX)$ is contained in the kernel of the obstruction map. 
Then $ch(\iota_*\StructureSheaf{Z})$ remains of Hodge type in the direction of every $\xi\in H^1(TX)$, by
Remark \ref{rem-chern-character-remains-of-Hodge-type}. Hence, so does   the cohomology class $[Z]\in H^{n,n}(X,\ZZ)$ of $\iota(Z)$,
since $[Z]$ is a graded direct summand of $ch(\iota_*\StructureSheaf{Z})$.
Let $T_ZX$ be the subsheaf of $TX$ of  vector fields which are tangent to $\iota(Z)$. So $T_ZX$ is the kernel in the short exact sequence 
\[
0\rightarrow T_ZX\rightarrow TX\rightarrow \iota_*(N_{Z/X})\rightarrow 0.
\]
$H^1(X,T_ZX)$ is the space of first order complex deformations of the pair $(X,\iota(Z))$.
It is known that the pair $(X,\iota(Z))$ deforms with $X$ over a co-dimension $1$ smooth locus in  the local Kuranishi deformation space of $X$  and the sequence
\[
H^1(X,T_ZX)\rightarrow H^1(X,TX)\RightArrowOf{c_1(L)}H^2(X,\StructureSheaf{X})\rightarrow 0
\]
is exact, where $L$ is the line bundle on $X$ such that the kernel of $H^2(X,\CC)\rightarrow H^2(Z,\CC)$ is orthogonal to $c_1(L)$ via the BBF-pairing  \cite[Cor. 1.2]{voisin-lagrangian}. In particular, the left homomorphism above has a one-dimensional co-kernel.
Furthermore, the cohomology class $[Z]$ remains of Hodge type in a direction $\xi\in H^1(TX)$,
if and only if $c_1(L)$ does \cite[Cor. 1.4]{voisin-lagrangian}. A contradiction.
We conclude that $\Ext^2(\iota_*\StructureSheaf{Z},\iota_*\StructureSheaf{Z})$ does not vanish and  
$\Sigma(\iota_*\StructureSheaf{Z})$ has co-dimension $1$ in $HT^2(X)$. 
\end{proof}

\begin{rem}
\label{rem-expected-rank-one}
\begin{enumerate}
\item
Let $X$ be a $2n$-dimensional irreducible holomorphic symplectic manifold,
$Z$ an $n$-dimensional compact complex manifold, and $\iota:Z\hookrightarrow X$ an embedding as a lagrangian submanifold.
Let $\K$ be a line bundle on $Z$ satisfying $\K^k\cong\omega_Z^j$ for some integers $j, k$ with $k\neq 0$. Set $F:=\iota_*\K$.
We expect the obstruction map  $\obs^{HT}_F$, given in (\ref{eq-obstruction-map}), to have rank $1$ whenever the restriction homomorphism
$H^2(X,\CC)\rightarrow H^2(Z,\CC)$ has rank $1$.
The latter condition is equivalent to the homomorphism $H^{1,1}(X)\rightarrow H^{1,1}(Z)$ having rank $1$. 
Equivalently, the homomorphism $H^1(X,TX)\rightarrow H^1(Z,N_{Z/X})$, induced by the
sheaf homomorphism $TX\rightarrow \iota_*N_{Z/X}$, has rank $1$, as the symplectic structure of $X$ induces a commutative diagram
\[
\xymatrix{
TX\ar[r] \ar[d]_{\cong} & \iota_*N_{Z/X}\ar[d]^{\cong}
\\
\Omega^1_X \ar[r] & \iota_*\Omega^1_Z.
}
\]
The domain $HT^2(X)$ of $\obs^{HT}_F$ admits a direct sum decomposition by definition and its co-domain $\Ext^2(F,F)$ decomposes  as a direct sum 
$\bigoplus_{p+q=2}H^p(X,\wedge^q N_{Z/X})$, by the degeneration of the local-to-global spectral sequence  at the $^2E$-page 
\cite[Theorem 0.1.3]{mladenov}.
The 
homomorphism 
$H^2(X,\StructureSheaf{X})\rightarrow H^2(X,\SheafExt^0_X(\iota_*\K,\iota_*\K))
\cong H^2(Z,\StructureSheaf{Z})$ vanishes, due to $Z$ being lagrangian.
The sheaf homomorphism $TX\rightarrow \iota_*N_{Z/X}$ induces the zero homomorphism
\[
H^0(X,\wedge^2TX)\rightarrow H^0(Z,\wedge^2N_{Z/X})\cong H^0(Z,\SheafExt^2_X(\iota_*\K,\iota_*\K)), 
\]
again due to $Z$ being lagrangian. However, 
the obstruction map (\ref{eq-obstruction-map}) need not factor through
the direct sum of the
above homomorphisms.
\item
In order to prove that the obstruction map has rank $1$ it suffices to prove that the image of $H^0(X,\wedge^2TX)$ via the 
obstruction map is contained in the image of $H^1(X,TX)$. 
When $\K^2\cong \omega_Z$ the deformation quantization result of \cite{DAngolo-Schapira} seems to suggest that $H^0(X,\wedge^2TX)$ is contained in the kernel of the obstruction map and so the latter has rank $1$. It follows that whenever a square root of $\omega_Z$ belongs to the image of $\iota^*\Pic(X)\rightarrow \Pic(Z)$, then the obstruction map 
$\obs^{HT}_F$  has rank $1$ for $F=\iota_*\iota^*L$, for every $L\in\Pic(X)$, since if $\iota^*(L_0)^2\cong\omega_Z$, then
tensorization by $L\otimes L_0^{-1}$ is an auto-equivalnce of $D^b(X)$ mapping $\iota_*\iota^*L_0$ to $\iota_*\iota^*L$ and so the rank of the obstruction map for $F=\iota_*\iota^*L$ is equal to that for $F=\iota_*\iota^*L_0$.
\item
\label{rem-item-lagrangian-fixed-locus}
If $Z$ is a lagrangian connected component of the fixed locus of a finite group $G$ acting on $X$, then the image of $H^0(X,\wedge^2TX)$ via the obstruction map  $\obs^{HT}_{\iota_*\StructureSheaf{Z}}$ vanishes  and the obstruction map has rank $1$. This is seen as follows. 
The {\em universal Atiyah class} $a_X$  is the extension class of the short exact sequence 
\begin{equation}
\label{eq-short-exact-sequence-of-diagonal}
0\rightarrow I_{\Delta_X}/I^2_{\Delta_X}\rightarrow \StructureSheaf{X\times X}/I^2_{\Delta_X}\rightarrow 
\StructureSheaf{X\times X}/I_{\Delta_X}\rightarrow 0
\end{equation}
over $X\times X$. It induces a natural transformation from the identity endofunctor of $D^b(X)$ to the endofunctor of tensorization by $\Omega^1_X[1]$. We denote by $a_F\in \Hom(F,F\otimes \Omega^1_X[1])$ the value of $a_X$ at an object $F$ of $D^b(X)$. 
The Atiyah class $a_{\iota_*\StructureSheaf{Z}}\in \Ext^1_X(\iota_*\StructureSheaf{Z}, \iota_*\iota^*(\Omega^1_X))$ is thus
the extension class of the short exact sequence of $\StructureSheaf{X}$-modules 
\begin{equation}
\label{eq-exact-sequence-of-O-X-times-X-modules}
0\rightarrow \iota_*\iota^*(\Omega^1_X)\rightarrow 
E
\rightarrow \iota_*\StructureSheaf{Z}\rightarrow 0,
\end{equation}
involving the Fourier-Mukai transforms of $\iota_*\StructureSheaf{Z}$ with respect to the objects in 
(\ref{eq-short-exact-sequence-of-diagonal}). Note that the scheme theoretic support of $E$ is the (non-reducded) first order neighborhood of $Z$ in $X$.
The exact sequence (\ref{eq-exact-sequence-of-O-X-times-X-modules}) 
decomposes as a direct sum of (a) the short exact sequence of the $G$-invariant direct summands, 
\[
0\rightarrow \iota_*\Omega^1_Z\rightarrow E^G
\rightarrow \iota_*\StructureSheaf{Z}\rightarrow 0,
\]
and (b)  the short exact sequence
$
0\rightarrow N^*_{Z/X} \rightarrow N^*_{Z/X} \rightarrow 0\rightarrow 0.
$
Hence, $a_{\iota_*\StructureSheaf{Z}}:\iota_*\StructureSheaf{Z}\rightarrow \iota_*\iota^*\Omega^1_X[1]$
factors through a morphism\footnote{This morphism corresponds to the extension class as $\StructureSheaf{X}$-modules and is different in general from the extension class as  $\StructureSheaf{Z}$-modules.} 
$\iota_*\StructureSheaf{Z}\rightarrow \iota_*\Omega^1_Z[1]$
via the inclusion of $\iota_*\Omega^1_Z[1]$ as the $G$-invariant  direct summand of $\iota_*\iota^*\Omega^1_X[1]$.
It follows that $a_{\iota_*\StructureSheaf{Z},2}:\iota_*\StructureSheaf{Z}\rightarrow \iota_*\iota^*\Omega^2_X[2]$ factors through 
the direct summand $\iota_*\Omega^2_Z[2]$ and so the image of $H^0(X,\wedge^2TX)$ via the obstruction map  (\ref{eq-obstruction-map}) vanishes, since the homomorphism
$\iota^*\Omega^2_X\rightarrow \StructureSheaf{Z}$ induced by an element of the non-trivial $1$-dimensional $G$-representation $H^0(X,\wedge^2TX)$ vanishes on the $G$-invariant direct summand $\Omega^2_Z$.
\item
We will verify in Lemma \ref{lemma-chern-character-of-Lagrangian-structure-sheaf-deforms-in-co-dimension-1} that the rank of the cohomological obstruction map is $1$ (a condition slightly weaker than the rank of the obstruction map being $1$) in some examples of lagrangian surfaces.
\end{enumerate}
\end{rem}

\begin{example}
\label{examples-of-maximally-deformable-Lagrangian-surfaces}
Following are examples of embeddings $\iota:Z\rightarrow X$ of holomorphic lagrangian submanifolds $Z$ with a rank $1$ restriction homomorphism $\iota^*:H^2(X,\ZZ)\rightarrow H^2(Z,\ZZ)$ even though the second Betti number of $Z$ is larger than $1$. 
\begin{enumerate}
\item
\label{example-item-lagrangian-torus}
$Z$ is a lagrangian complex torus. In this case the kernel of $H^2(X,\ZZ)\rightarrow H^2(Z,\ZZ)$ is $c_1(L)^\perp$, for a line bundle $L$ with isotropic $c_1(L)$ \cite{wieneck}. For a generic pair $(X,Z)$ some power $d$ of the line bundle $L$ is base point free and 
the morphism $\varphi_{L^d}:X\rightarrow \linsys{L^d}^*$ is a  lagrangian fibration onto its image with fiber $Z$
\cite{GLR}. Hence, $[Z]$ is a scalar multiple of $c_1(L)^n$, where $n=\dim(X)/2$. If $X$ is of $K3^{[n]}$-type, then $d=1$ and $\varphi_L$ is surjective, so $[Z]=c_1(L)^n$, by \cite{markman-isotropic}.
\item
\label{example-item-Fano-variety-of-lines-on-a-cubic-threefold}
Let $W$ be a $6$ dimensional complex vector space and 
let $V\subset \PP(W)$ be a smooth cubic fourfold. Let $X$ be the Fano variety of lines on $V$. Then $X$ is of $K3^{[2]}$-type, by \cite{beauville-donagi}. Let $Z$ be the Fano variety of lines on a smooth hyperplane section of $V$. Then $Z$ is a lagrangian surface in $X$, by \cite{voisin-lagrangian}. 
The Hodge numbers of $Z$ are $h^{1,0}=5$, $h^{2,0}=10$, and $h^{1,1}=25$, and $\chi(Z)=27$, by
\cite[Eq. (9.5), (9.12), and (10.12)]{clemens-griffiths}.
Note that $Z$ is the zero subscheme of a regular section of $\tau^*$, where $\tau$ is the restriction to $X$ of the tautological rank $2$ subbundle over $Gr(2,6)$. Hence, $[Z]=c_2(\tau^*)$.
Let $L$ be the restriction to $X$ of the ample generator of $\Pic(Gr(2,6))$. The restriction of $L$ to $Z$ is the canonical line bundle of $Z$, by \cite[Prop. 10.3]{clemens-griffiths}.
We have $(c_1(L),c_1(L))=6$. 
One can show that 
$[Z]=\frac{1}{8}(5c_1(L)^2-c_2(TX))$. 
If $\tilde{\tau}:W\rightarrow W$ is an involution with a $5$-dimensional $1$-eigenspace $W^+$ and $V$ is $\tilde{\tau}$-invariant
and we choose $Z$ to be the Fano variety of lines on $V\cap\PP(W^+)$, then $Z$ is a connected component\footnote{The fixed locus consists of two connected components, the other being a cubic surface, see \cite[Sec. 3.6]{beauville-antisymplectic-involutions}.} 
of the fixed locus of the restriction $\tau:X\rightarrow X$ of $\tilde{\tau}$ and so the obstruction map   $HT^2(X)\rightarrow \Ext^2(\iota_*\StructureSheaf{Z},\iota_*\StructureSheaf{Z})$ given in (\ref{eq-obstruction-map}) has rank $1$, by Remark \ref{rem-expected-rank-one}(\ref{rem-item-lagrangian-fixed-locus}).

An example of an immersion of a smooth surface as a lagrangian subvariety in $X$  of class $63c_2(\tau^*)$ with a finite number of self intersection points is studied in \cite{iliev-manivel}.
\item
\label{example-item-fixed-locus-of-anti-symplectic-involution}
Let $X$ be of $K3^{[2]}$-type admitting an ample line-bundle $L$ satisfying $(c_1(L),c_1(L))=2$. The generic such $X$ admits an anti-symplectic involution. O'Grady proved that its fixed locus, which is a lagrangian surface $Z$, embeds in $\linsys{L}^*$ as a smooth surface of degree $40$ and Euler characteristic $192$ \cite{ogrady-sextics-duke}. One can show that 
$[Z]=5c_1(L)^2-\frac{1}{3}c_2(TX)$. The obstruction map   $HT^2(X)\rightarrow \Ext^2(\iota_*\StructureSheaf{Z},\iota_*\StructureSheaf{Z})$ given in (\ref{eq-obstruction-map}) has rank $1$, by Remark \ref{rem-expected-rank-one}(\ref{rem-item-lagrangian-fixed-locus}).
\end{enumerate}
\end{example}

\begin{example}
\label{example-non-modular-P-n-objects}
Let $X$ be a $K3$ surface containing a smooth rational curve $C$, let $\delta:X\rightarrow X\times X$ be the diagonal embedding,
and let $\pi_i$, $i=1,2$, be the projection from $X\times X$ onto the $i$-th factor.
The shift $\Ideal{\Delta}[1]$ of the ideal sheaf  of the diagonal $\Delta:=\delta(X)$ is quasi-isomorphic to  the complex 
$\P:=[\StructureSheaf{X\times X} \rightarrow \delta_*\StructureSheaf{X}]$, with $\delta_*\StructureSheaf{X}$ in degree $0$, and the latter is the Fourier-Mukai kernel of the auto-equivalence $\Phi^\P:=R\pi_{1,*}\circ (\P\stackrel{L}{\otimes} L\pi_1^*(\bullet))$ of $D^b(X)$ known as the spherical twist by the spherical object $\StructureSheaf{X}$ \cite{seidel-thomas}. 
$\Phi^\P(\StructureSheaf{X}(C))$ is isomorphic to the co-kernel of the evaluation homomorphism
$H^0(X,\StructureSheaf{X}(C))\otimes_\ComplexNumbers \StructureSheaf{X}\rightarrow \StructureSheaf{X}(C)$ and is thus the push-forward $\iota_*N_{C/X}$ of the normal bundle of $C$ by the inclusion $\iota:C\rightarrow X$. 
Thus, the composition of $\Phi^\P$ with tensorization by $\StructureSheaf{X}(C)$ is 
an auto-equivalence $\Psi$ of the derived category of $X$ which maps $\StructureSheaf{X}$ to $\iota_*N_{C/X}$. Note the isomorphism $\iota_*N_{C/X}\cong\iota_*\omega_C$. 
Clearly, the latter sheaf deforms with $X$ only over the locus in the moduli space where the class of $C$ in $H^2(X,\CC)$ remains of Hodge type. In this case we have
\[
\Sigma(\Psi(\StructureSheaf{X}))\cap \left[H^2(\StructureSheaf{X})\oplus H^1(TX)\right]
=\Sigma(\iota_*\omega_C)\cap \left[H^2(\StructureSheaf{X})\oplus H^1(TX)\right]= H^2(\StructureSheaf{X})\oplus [C]^\perp,
\]
where $[C]^\perp$ is the hyperplane of $H^1(TX)$ along which $C$ remains of type $(1,1)$. The second equality above si due to the fact that the right hand side is the unique hyperplane of $H^2(\StructureSheaf{X})\oplus H^1(TX)$ projecting onto $[C]^\perp$.
\end{example}

\hide{
The characteristic class $\kappa(F)$ is not defined for objects of rank $0$. Examples \ref{example-lagrangian-Z-with-modular-structure-sheaf} and  \ref{example-non-modular-P-n-objects}  of  objects supported on lagrangian submanifolds in higher dimensional irreducible holomorphic symplectic manifolds suggest the following conjectural version of 
Proposition \ref{prop-kappa-class-remains-of-Hodge-type}
omitting the rank constraint but restricting to a characteristic class in the smaller subring 
$\oplus_{k\geq 0} H^{4k}(X,\RationalNumbers)$.

\begin{conj}
\label{conj-constraint-on-cohomological-action-of-derived-equivalences}
Let $X$ be a projective irreducible holomorphic symplectic manifold. Let $G$ be an object in $D^b(X)$ 
with $\Sigma(G)$ of co-dimension $1$ in $HT^2(X)$. 
Then $ch(G)ch(G)^\vee$ belongs to the subring of $H^*(X,\RationalNumbers)$ of classes that remain of 
Hodge type under every K\"{a}hler deformation of $X$.
\end{conj}

Note that the conclusion holds if $G$ is an object supported on a lagrangian subvariety, since in this case  the product $ch(G)ch(G)^\vee$ is a class in $H^{4n}(X,\QQ)$. Note also that the statement of the conjecture follows from Proposition \ref{prop-kappa-class-remains-of-Hodge-type}, if the
orbit of $v(G)$ under the derived monodromy group of $X$ (Definition \ref{def-DMon}) (??? make sure it is already defined ???) has a Zariski dense open subset consisting of classes of non-zero rank, since being annihilated by $\bar{\LieAlg{g}}$ is a closed condition for classes in $H^*(X,\QQ)$ and $ch(G)ch(G)^\vee$ depends continuously on $v(G)$. Density of the orbit holds for $X$ of $K3^{[n]}$-type, by Remark
\ref{rem-DMon-is-large-for-K3-[n]-type}.
}
%
\subsection{When can a very modular sheaf on $X$ be deformed to an untwisted sheaf on $Y$?}
\label{sec-lift-to-an-untwisted-sheaf}
Let $X$ and $Y$ be deformation equivalent irreducible holomorphic symplectic manifolds. 
Let $E$ be be a reflexive sheaf of positive rank $r$ on $X$ with a rank $1$ obstruction map $\obs^{HT}_E$, given in
(\ref{eq-obstruction-map}). Assume that $E$ is slope-stable with respect to K\"{a}hler classes in an open subcone of the K\"{a}hler cone of $X$. 
Then $E$ can be deformed to a twisted sheaf $F$ over $Y$, by Theorem \ref{thm-modularity-of-a-stable-sheaf-with-a-rank-1-obstruction-map}. When can the Azumaya algebra $\SheafEnd(E)$ be deformed to the Azumaya algebra $\SheafEnd(F)$ of 
an untwisted sheaf $F$ over $Y$? The answer depends on $c_1(E)$ and 
$\Pic(Y)$.
Lemma \ref{lemma-a-necessariy-and-sufficient-condition-for-the-existence-of-a-lift-to-an-untwisted-sheaf} provides a general necessary and sufficient condition and Lemma \ref{lemma-easy-to-check-sufficient-condition} provides a condition in a form easier to check for $Y$ of $K3^{[n]}$ or generalized Kummer deformation type.

\begin{lem}
\label{lemma-a-necessariy-and-sufficient-condition-for-the-existence-of-a-lift-to-an-untwisted-sheaf}
There exists an untwisted reflexive sheaf $F$ such that $(Y,\SheafEnd(F))$ is deformation equivalent to $(X,\SheafEnd(E))$, if and only if there exists a parallel transport operator $g:H^2(X,\Integers)\rightarrow H^2(Y,\Integers)$, such that $g(c_1(E))+rH^2(Y,\Integers)$
belongs to the image of $H^{1,1}(Y,\Integers)$ in $H^2(Y,\Integers)/rH^2(Y,\Integers)$.
\end{lem}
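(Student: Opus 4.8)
The plan is to recast the problem in terms of the Brauer class of the twisted sheaf $F$ produced by Theorem \ref{thm-modularity-of-a-stable-sheaf-with-a-rank-1-obstruction-map}, and to track how that Brauer class is constrained by $c_1$. First I would recall that along the twistor path $\gamma$ used in the proof of Theorem \ref{thm-modularity-of-a-stable-sheaf-with-a-rank-1-obstruction-map} the sheaf $E$ deforms to a reflexive twisted sheaf $\F$, whose restriction to a fiber $Z$ is an $\alpha$-twisted reflexive sheaf $F$ for some Brauer class $\alpha \in \mathrm{Br}(Z)$, and $\SheafEnd(\F)$ is an honest (untwisted) Azumaya algebra deforming $\SheafEnd(E)$. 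The Azumaya algebra $\SheafEnd(F)$ is isomorphic to that of an \emph{untwisted} reflexive sheaf on $Z$ precisely when $\alpha$ lifts to a class in the image of $H^{1,1}(Z,\Integers) \to H^2(Z,\StructureSheaf{Z}^*)$; equivalently (since $F$ has rank $r$, so $\alpha$ is $r$-torsion and is the image of $\frac{1}{r}c_1$ of the twisted sheaf under the exponential sequence) when $c_1(F) \in H^2(Z,\QQ)$ can be adjusted by an element of $H^{1,1}(Z,\Integers)$ to land in $rH^2(Z,\Integers)$. I would make this last equivalence precise using the exponential sequence $H^{1,1}(Z,\Integers) \to H^2(Z,\StructureSheaf{Z}) \to H^2(Z,\StructureSheaf{Z}^*)$ and the fact that an $r$-torsion Brauer class $\alpha$ is represented by a rank-$r$ twisted \emph{sheaf} $F$ whose twisted first Chern class $c_1(F) \in \frac{1}{r}H^2(Z,\Integers)$ has image $\alpha$ under $\exp(2\pi i\, \cdot)$, so $r\,c_1(F) \in H^2(Z,\Integers)$ descends to the obstruction in $H^2(Z,\Integers)/rH^2(Z,\Integers)$ to untwisting.

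The second step is to transport this condition back to $X$. Since $\gamma$ is a twistor path in a connected component of the marked moduli space $\fM^0_\Lambda$, parallel transport along $\gamma$ gives a parallel transport operator $g_\gamma : H^2(X,\Integers) \to H^2(Z,\Integers)$ which is a Hodge isometry onto its image only after specialising the complex structure, but is in any case a lattice isometry intertwining the monodromy; moreover by the uniqueness of the reflexive extension of a twisted sheaf along a twistor line (from \cite[Prop. 6.17]{markman-BBF-class-as-characteristic-class}) the twisted Chern class $\kappa(\F)$ is the parallel transport of $\kappa(E)$, hence $c_1(F) = g_\gamma(c_1(E))$ up to the choice of the twisting (more precisely, up to an element of $H^2(Z,\Integers)$, since $\kappa$ only sees $c_1$ modulo integral classes). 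Thus the untwisting condition on $Z$ becomes: $g_\gamma(c_1(E)) + r H^2(Z,\Integers)$ lies in the image of $H^{1,1}(Z,\Integers)$ in $H^2(Z,\Integers)/rH^2(Z,\Integers)$. Writing $Y := Z$ — any $Y$ deformation equivalent to $X$ arises as such a fiber, and conversely any parallel transport operator is realised by some twistor path by \cite[Theorems 3.2 and 5.2e]{verbitsky-cohomology} — and $g := g_\gamma$ gives the stated necessary direction; the converse direction is obtained by running the construction in reverse, starting from the untwisted sheaf $F$ on $Y$ with the prescribed $c_1$ and deforming back to $X$, the existence of such $F$ being guaranteed because its required Chern data are of Hodge type by hypothesis and $E$ exists on $X$.

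The main obstacle I expect is the bookkeeping of the twisting: Theorem \ref{thm-modularity-of-a-stable-sheaf-with-a-rank-1-obstruction-map} only asserts the existence of \emph{some} twisted sheaf $F$, and the Brauer class $\alpha$ it carries is not canonical — it can be altered by tensoring with a twisted line bundle, which changes $c_1(F)$ by an arbitrary element of $H^2(Z,\Integers)$ and changes $\alpha$ correspondingly. The content of the lemma is that the \emph{coset} $c_1(F) + r H^2(Z,\Integers)$, equivalently the image of $\frac{1}{r}c_1(F)$ in $H^2(Z,\QQ)/H^2(Z,\Integers)$ — which is exactly the class $\alpha \in \mathrm{Br}(Z)[r]$ of the Azumaya algebra — is the only deformation invariant, and that it is computed by parallel transport of $c_1(E)$ precisely because the untwisted Azumaya algebra $\SheafEnd(\F)$ deforms rigidly with $X$ while $\F$ itself does not. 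I would isolate this as a short sublemma identifying, for a reflexive twisted sheaf of rank $r$, the Brauer class of $\SheafEnd(F)$ with the image of $c_1(F)/r$ in $(H^2/r H^2)$-with-the-$H^{1,1}$-ambiguity, and then the rest is a direct translation of the existence statement of Theorem \ref{thm-modularity-of-a-stable-sheaf-with-a-rank-1-obstruction-map} together with the surjectivity of parallel transport onto monodromy orbits; the hardest purely technical point is checking that the choice of twistor path does not affect the coset, which follows from the transitivity of parallel transport and the deformation invariance of the Azumaya algebra.
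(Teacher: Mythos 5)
Your proposal follows essentially the same route as the paper: realize the parallel transport operator $g$ by a generic twistor path, deform the projective bundle (equivalently, the Azumaya algebra) along it, and observe that the resulting $r$-torsion Brauer-type obstruction is killed exactly when $g(c_1(E))$ is congruent to a class of $\Pic(Y)$ modulo $rH^2(Y,\Integers)$. The paper makes this precise by working with the class $\tilde{\theta}([\PP(E)])\in H^2(X,\mu_r)$ and the identity $\tilde{\theta}([\PP(E)])=\exp\bigl(\tfrac{2\pi i}{r}c_1(E)\bigr)$ of Huybrechts--Schr\"{o}er, which is exactly the ``sublemma'' you say you would isolate.

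Two points in your write-up need repair. First, your parenthetical claim that $c_1(F)$ is the parallel transport of $c_1(E)$ only ``up to an element of $H^2(Z,\Integers)$'' is wrong, and if it were true the coset $g(c_1(E))+rH^2(Y,\Integers)$ would be meaningless and the lemma vacuous. The deformation invariant is the $\mu_r$-characteristic class of the projective bundle, i.e.\ $c_1$ modulo $rH^2(\Integers)$ (not modulo $H^2(\Integers)$): tensoring by an honest line bundle $L$ changes $c_1$ by $r\,c_1(L)$, and re-choosing the twisting cocycle does not change $\tilde{\theta}([\PP(E)])$ at all. You do state the correct invariant in your final paragraph, but the argument as planned hinges on getting this right from the start, via the exact sequence $\mu_r\rightarrow SL_r\rightarrow PGL_r$ rather than the exponential sequence. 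Second, you never address the case where $E$ is reflexive but not locally free, so that $\PP(E)$ only exists away from a closed analytic subset $Z_0$ of codimension $\geq 3$; one must check that the restriction $H^2(X,\mu_r)\rightarrow H^2(X\setminus Z_0,\mu_r)$ is an isomorphism (Poincar\'{e}--Lefschetz duality) so that the characteristic class of $\PP(\restricted{E}{X\setminus Z_0})$ is still computed by $c_1(E)$ and still transports along the twistor path.
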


\begin{proof}
The existence of such a parallel-transport operator $g$ is clearly necessary. We prove that it is sufficient.
Assume first that $F$ is locally free. Let $\mu_r$ be the group of $r$-th roots of unity. Denote by $\mu_r$ also the associated trivial local system over $X$. 
Let
\[
\tilde{\theta}:H^1_{an}(X,PGL_r(\StructureSheaf{}))\rightarrow H^2(X,\mu_r)
\]
be the connecting homomorphism of the short exact sequence
\[
0\rightarrow \mu_r\rightarrow SL_r(\StructureSheaf{})\rightarrow PGL_r(\StructureSheaf{})\rightarrow 0.
\]
The projective bundle $\PP(E)$ determines a class $[\PP(E)]$ in $H^1(X,PGL_r(\StructureSheaf{}))$.
The exponential function $\exp:\CC\rightarrow \CC^*$ maps the subgroup $\frac{2\pi i}{r}\Integers$ of $\CC$ onto $\mu_r$ and induces the homomorphism
\[
\exp:H^2\left(X,\frac{2\pi i}{r}\Integers\right)\rightarrow H^2(X,\mu_r).
\]
The following equality is proved in \cite[Lemma 2.5]{huybrechts-schroer}
\begin{equation}
\label{eq-HS-lemma2.5}
\tilde{\theta}([\PP(E)])=
\exp\left(
\frac{2\pi i}{r}c_1(E)
\right).
\end{equation}

Choose an isometry $\eta:H^2(X,\Integers)\rightarrow \Lambda$ with a fixed lattice $\Lambda$.
Let $\fM_\Lambda$ be the moduli space  of marked irreducible holomorphic symplectic manifolds recalled in the proof of Theorem \ref{thm-modularity-of-a-stable-sheaf-with-a-rank-1-obstruction-map}.
Let $\fM_\Lambda^0$ be the connected component of $\fM_\Lambda$ containing $(X,\eta)$. Then
$(Y,g\circ \eta)$ belongs to $\fM_\Lambda^0$, since $g$ is a parallel-transport operator. Choose a generic twistor path $\gamma$ from $(X,\eta)$ to $(Y,g\circ \eta)$ with first twistor line associated to a K\"{a}hler class with respect to which $F$ is slope-stable. Then $\PP(E)$ deforms along $\gamma$ to a projective bundle $B$ with 
\begin{equation}
\label{eq-characteristic-class-of-B}
\tilde{\theta}([B])=\exp\left(\frac{2\pi i}{r}g(c_1(E))\right).
\end{equation} 
Let $\iota:\mu_r\rightarrow \StructureSheaf{X}^*$ be the inclusion. 
There exists a locally free sheaf $F$ over $Y$ with $\PP(F)=B$, if and only if the Brauer class 
$\iota(\tilde{\theta}(B))\in
H^2_{an}(Y,\StructureSheaf{}^*)$ of $B$ vanishes. 
This is the case, if and only if $\tilde{\theta}([B])$ belongs to the image of the connecting homomorphism 
$\delta: H^1_{an}(Y,\StructureSheaf{}^*)\rightarrow H^2_{an}(Y,\mu_r)$ of the short exact sequence
\[
0\rightarrow \mu_r \RightArrowOf{\iota} \StructureSheaf{Y}^*\RightArrowOf{(\bullet)^r} \StructureSheaf{Y}^*\rightarrow 0.
\]
This is the case, if and only if there exists a line bundle $L\in \Pic(Y)$, such that the class $c_1(L)-g(c_1(E))$
belong to $rH^2(Y,\Integers)$, as seen from  the commutative diagram
\[
\xymatrix{
& & [L] & \theta([B])
\\
0 \ar[r] &H^1(Y,\StructureSheaf{}^*) \ar[r]^{(\bullet)^r} \ar[d]_{c_1}^{\cong} &
H^1(Y,\StructureSheaf{}^*) \ar[r]^{\delta} \ar[d]_{c_1}^{\cong} & H^1(Y,\mu_r) \ar[r]^{\iota} \ar[d]_{\cong}& H^2(Y,\StructureSheaf{}^*)
\\
0 \ar[r] & H^2(Y,\ZZ) \ar[r]_{\cdot r} & H^2(Y,\ZZ) \ar[r] & H^2(Y,\ZZ/r\ZZ)
\\
& & c_1(L) & c_1(L)+rH^2(Y,\ZZ)
}
\] 
and the fact that the right vertical isomorphism above maps the right hand side of (\ref{eq-characteristic-class-of-B})
to $g(c_1(E))+rH^2(Y,\ZZ)$.

The above argument goes through even if $E$ is reflexive, but not locally free, since then $E$ is locally free away from a closed analytic subset $Z$ of co-dimension $\geq 3$. In that case Poincare and
Lefschetz dualities yield the commutative diagram whose vertical arrows are isomorphisms.
\[
\xymatrix{
H_{4n-2}(X,\Integers) \ar[r]^\cong \ar[d]_\cong^{P.D.} & H_{4n-2}(X,Z,\Integers)\ar[d]^\cong_{L.D.}
\\
H^2(X,\Integers) \ar[r] & H^2(X\setminus Z,\Integers),
}
\]
where $2n$ is the complex dimension of $X$. The top horizontal arrow is an isomorphism as well, since $H_{4n-k}(Z,\Integers)$ vanishes for $k<6$. Hence, the bottom vertical arrow is an isomorphism as well. Consequently, the restriction homomorphism $H^2(X,\mu_r) \rightarrow H^2(X\setminus Z,\mu_r)$ is an isomorphism as well and the class $\tilde{\sigma}(\PP(\restricted{E}{X\setminus Z}))\in H^2(X\setminus Z,\mu_r)$ is the restriction of the class 
$\exp\left(
\frac{2\pi i}{r}c_1(E)
\right)$
in $H^2(X,\mu_r)$. Similarly,
$\tilde{\sigma}(\PP(\restricted{F'}{Y\setminus Z'}))$  is the restriction of the parallel transport 
$\exp\left(
\frac{2\pi i}{r}g(c_1(E))
\right)$
in $H^2(Y,\mu_r)$. 
where $Z'$ is the singular locus of the deformed twisted reflexive sheaf $F'$ along the twistor path $\gamma$, by
\cite[Construction 6.9]{markman-BBF-class-as-characteristic-class}.
\end{proof}

\begin{defi}
\label{def-divisibility}
Let $\Lambda$ be a lattice. A non-zero class $\lambda\in \Lambda$ is {\em primitive}, if the rank $1$ lattice $\span_\ZZ\{\lambda\}$ is saturated in $\Lambda$. 
The {\em divisibility} $\div(\lambda)$ of a primitive class $\lambda$ of $\Lambda$ is the greatest common divisor of $\{(\lambda,\lambda') \ : \lambda'\in\Lambda\}$.
\end{defi}

Let $L$ be a lattice. Denote by $\widetilde{O}(L)$ the subgroup of isometries acting trivially on the discriminant group $L^*/L$. Let $O_+(L)$ be the subgroup of isometries preserving the orientation of the positive cone \cite[Sec. 4]{markman-survey}.
Let $SO(L)$ be the subgroup of isometries with determinant $1$. Denote by
$S\widetilde{O}_+(L)$ the intersection of these three subgroups.

\begin{lem}
\label{lemma-faithful-invariants-of-orbits}
Let $X$ be an irreducible holomorphic symplectic manifold of $K3^{[n]}$-type or generalized Kummer deformation type of dimension $2n$, $n\geq 2$, and let $\alpha$ be a primitive class in $H^2(X,\Integers)$ with $\div(\alpha)=1$ or $2$.
Then the $\Mon^2(X)$-orbit of $\alpha$ is determined by $(\alpha,\alpha)$ and $\div(\alpha)$.
\end{lem}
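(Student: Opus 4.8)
The plan is to reduce the statement to the known classification of $\Mon^2$-orbits via the Eichler-type criterion for lattices of the form $L \oplus U^{\oplus 2}$, and then to verify that the relevant monodromy group is large enough to realize that criterion. First I would recall the explicit lattice models: for $X$ of $K3^{[n]}$-type one has $H^2(X,\Integers) \cong \Lambda := U^{\oplus 3} \oplus E_8(-1)^{\oplus 2} \oplus \langle -2(n-1)\rangle$, and for the generalized Kummer deformation type of dimension $2n$ one has $H^2(X,\Integers) \cong U^{\oplus 3} \oplus \langle -2(n+1)\rangle$. In both cases the lattice contains at least two orthogonal copies of the hyperbolic plane $U$. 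By Eichler's criterion (see e.g.\ \cite[Prop.\ 3.3]{ghs} or the discussion in \cite[Sec.\ 3]{markman-survey}), for a lattice $L$ containing $U^{\oplus 2}$ as an orthogonal summand, the $\widetilde{O}(L)$-orbit — and more precisely the $S\widetilde{O}_+(L)$-orbit — of a primitive class $\alpha$ is determined by the pair $\big((\alpha,\alpha),\, \alpha^*\big)$, where $\alpha^* \in L^*/L$ is the image of $\alpha$ under $L \hookrightarrow L^*$. When $\div(\alpha) \in \{1,2\}$ the element $\alpha^* \in L^*/L$ is itself determined by $(\alpha,\alpha)$ together with $\div(\alpha)$: if $\div(\alpha)=1$ then $\alpha^*=0$, and if $\div(\alpha)=2$ then $\alpha/2$ generates the order-two subgroup of $L^*/L$ on which the discriminant form takes the value $(\alpha,\alpha)/4 \bmod 2\Integers$, which pins it down since the discriminant group of $\Lambda$ is cyclic of order $2(n-1)$ (resp.\ $2(n+1)$) in these two families.

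Next I would invoke the identification of the monodromy group. By Markman's results on the monodromy of $K3^{[n]}$-type manifolds \cite[Thm.\ 1.2]{markman-constraints} (and the analogous statement for generalized Kummer type due to Mongardi and others), the image $\Mon^2(X)$ in $O(H^2(X,\Integers))$ contains $S\widetilde{O}_+(H^2(X,\Integers))$ — in fact it is known exactly, but containing this subgroup is all that is needed. Combining this with the previous paragraph: two primitive classes $\alpha,\alpha'$ of divisibility $1$ or $2$ with $(\alpha,\alpha)=(\alpha',\alpha')$ have the same image in $L^*/L$ (up to the action of $O(L^*/L)$, but in the cyclic case with these small divisibilities there is essentially no ambiguity), hence lie in the same $S\widetilde{O}_+(L)$-orbit, hence in the same $\Mon^2(X)$-orbit. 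Conversely $(\alpha,\alpha)$ and $\div(\alpha)$ are manifestly $\Mon^2(X)$-invariant, being isometry invariants. This gives both directions.

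The main obstacle I anticipate is bookkeeping the discriminant-form computation cleanly enough to conclude that $\div(\alpha)$ together with $(\alpha,\alpha)$ really does determine $\alpha^* \in L^*/L$ up to the action of those isometries lying in $\Mon^2(X)$. One must be careful that when $\div(\alpha)=2$ the value $(\alpha,\alpha)$ forces a specific generator of the $2$-torsion of the cyclic discriminant group rather than merely the subgroup it generates, and that no extra orbits are introduced by orientation or spinor-norm subtleties; this is exactly where one uses that we work with $S\widetilde{O}_+$ and that $\Mon^2$ surjects onto the relevant quotient. A secondary point to check is that the two divisibility values $1$ and $2$ are the only ones compatible with the cyclic discriminant structure in a way that keeps the argument uniform — for larger divisibility the image in $L^*/L$ need not be determined by $\div$ and $(\alpha,\alpha)$ alone, which is precisely why the hypothesis restricts to $\div(\alpha) \in \{1,2\}$. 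Once these points are settled the proof is short: state the lattice model, quote Eichler's criterion, quote the monodromy computation, and combine.
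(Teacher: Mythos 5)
Your overall strategy coincides with the paper's: observe that for $\div(\alpha)\in\{1,2\}$ the invariants $(\alpha,\alpha)$ and $\div(\alpha)$ determine the image $\alpha^*$ of $\alpha/\div(\alpha)$ in the discriminant group (because that group is cyclic of even order and hence has a unique element of order two — your detour through the value of the discriminant form on $\alpha^*$ is not needed), then apply Eichler's criterion, then use that $\Mon^2(X)$ contains $S\widetilde{O}_+(H^2(X,\Integers))$. The one place where your write-up is genuinely thinner than it must be is the parenthetical claim that Eichler's criterion determines ``more precisely the $S\widetilde{O}_+(L)$-orbit.'' The statement the paper cites (\cite[Lemma 3.5]{GHS}) determines only the $\widetilde{O}(L)$-orbit, and since $S\widetilde{O}_+(L)$ has index up to four in $\widetilde{O}(L)$, a single $\widetilde{O}(L)$-orbit could a priori break into several $S\widetilde{O}_+(L)$-orbits; knowing that $\Mon^2(X)\supseteq S\widetilde{O}_+(H^2(X,\Integers))$ is then not yet enough. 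The paper closes this gap with a short argument you do not supply: $\alpha^\perp$ contains a unimodular hyperbolic plane $U$, so $O(U)$ embeds in the stabilizer of $\alpha$ (extending each isometry by the identity on $U^\perp$), and $O(U)$ together with $S\widetilde{O}_+(H^2(X,\Integers))$ generates $\widetilde{O}(H^2(X,\Integers))$; hence the $\widetilde{O}$-orbit and the $S\widetilde{O}_+$-orbit of $\alpha$ coincide. Alternatively you may invoke the strong form of Eichler's criterion — transitivity of the group generated by Eichler transvections, which already lies inside $S\widetilde{O}_+(L)$ — but then you must cite that form explicitly rather than the $\widetilde{O}(L)$-orbit statement. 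Either fix is short, but as written the passage from ``$\widetilde{O}$-orbit determined'' to ``$S\widetilde{O}_+$-orbit determined'' is unjustified, and it is precisely the step the paper spends the second half of its proof on.
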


\begin{proof}
The assumption on $\div(\alpha)$ determines the image of $(\alpha,\bullet)/\div(\alpha)$ in the discriminant group 
$H^2(X,\Integers)^*/H^2(X,\Integers)$, since the latter is cyclic of even order, so contains a unique element of order $2$.
Eichler's criterion then implies that the $\widetilde{O}(H^2(X,\Integers))$-orbit of $\alpha$ is determined by $\div(\alpha)$ and
$(\alpha,\alpha)$ \cite[Lemma 3.5]{GHS}. 
It remains to prove that the $\Mon^2(X)$-orbit of $\alpha$ is equal to the
$\widetilde{O}(H^2(X,\Integers)$-orbit of $\alpha$. 
The discriminant group $H^2(X,\ZZ)^*/H^2(X,\ZZ)$ is cyclic of even order and 
The $\Mon^2(X)$-orbit of $\alpha$ is contained in the
$\widetilde{O}(H^2(X,\Integers)$-orbit of $\alpha$, since both $(\alpha,\alpha)$ and $\div(\alpha)$ are $\Mon^2(X)$-invariant. 
Now $\Mon^2(X)$ contains $S\widetilde{O}_+(H^2(X,\Integers)),$
by \cite[Cor. 1.8]{markman-monodromy} and \cite[Theorem 1.4]{markman-generalized-kummers}.
Hence, it suffices to prove that the $\widetilde{O}(H^2(X,\Integers))$-orbit 
and $S\widetilde{O}_+(H^2(X,\Integers))$-orbit
of $\alpha$ are equal. The orthogonal complement $\alpha^\perp$ of every class $\alpha$ contains a copy of a unimodular rank $2$ even lattice $U$ of signature $(1,1)$ and $O(U)$ and $S\widetilde{O}_+(H^2(X,\Integers))$ generate
$\widetilde{O}(H^2(X,\Integers)$, where we embed $O(U)$ as a subgroup of $\widetilde{O}(H^2(X,\Integers)$ stabilizing $\alpha$ by extending each isometry via the identity on $U^\perp$. Hence, indeed, the $\widetilde{O}(H^2(X,\Integers))$-orbit 
and $S\widetilde{O}_+(H^2(X,\Integers))$-orbit
of $\alpha$ are equal. 
\end{proof}

We provide next a sufficient condition for the existence of a parallel transport operator as in Lemma 
\ref{lemma-a-necessariy-and-sufficient-condition-for-the-existence-of-a-lift-to-an-untwisted-sheaf} in a special case.
Let $X$ and $Y$ be deformation equivalent $2n$-dimensional, $n\geq 2$, irreducible holomorphic symplectic manifolds  of $K3^{[n]}$-type or of generalized Kummer type. Let $E$ be the sheaf in 
Lemma \ref{lemma-a-necessariy-and-sufficient-condition-for-the-existence-of-a-lift-to-an-untwisted-sheaf} and $r$ its rank.
Write $c_1(E)=a\alpha$, such that $\alpha$ is a primitive class in $H^2(X,\Integers)$. 
Set $\rho:=r/\gcd(a,r)$. 

\begin{lem}
\label{lemma-easy-to-check-sufficient-condition}
Assume that $\gcd(\rho,\div(\alpha))$ is $1$ or $2$.
There exists a reflexive sheaf $F$ on $Y$, such that $(Y,\SheafEnd(F))$ is deformation equivalent to $(X,\SheafEnd(E))$  
if and only if 
there exist a primitive class $\beta\in H^{1,1}(Y,\Integers)$ and an integer $k$ satisfying $\gcd(k,\rho)=1$, 
\begin{eqnarray}
\label{eq-divisibilities-are-related}
\gcd(\rho,\div(\beta))&=&\gcd(\rho,\div(\alpha)),
\\
\label{eq-congruence-relation-involving-degrees-of-alpha-beta}
k^2\frac{(\beta,\beta)}{2}&\equiv& \frac{(\alpha,\alpha)}{2} \ \ \left\{
\begin{array}{ccl}
\left(\mod \ \rho\right) & \mbox{if} & \gcd(\rho,\div(\alpha))=1,
\\
\left(\mod \ 2\rho\right) & \mbox{if} & \gcd(\rho,\div(\alpha))=2.
\end{array}
\right.
\end{eqnarray}
If such $\beta$ and $k$ exist, then $F$ may be chosen with $c_1(F)=ka\beta$.
\end{lem}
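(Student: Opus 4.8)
The plan is to combine Lemma~\ref{lemma-a-necessariy-and-sufficient-condition-for-the-existence-of-a-lift-to-an-untwisted-sheaf} with Eichler's criterion, in the form already used in Lemma~\ref{lemma-faithful-invariants-of-orbits}. Put $d:=\gcd(a,r)$ and write $a=da'$, $r=d\rho$, so $\gcd(a',\rho)=1$. By Lemma~\ref{lemma-a-necessariy-and-sufficient-condition-for-the-existence-of-a-lift-to-an-untwisted-sheaf} an untwisted reflexive $F$ with $(Y,\SheafEnd(F))$ deformation equivalent to $(X,\SheafEnd(E))$ exists if and only if some parallel transport operator $g:H^2(X,\Integers)\to H^2(Y,\Integers)$ has $a\,g(\alpha)=g(c_1(E))$ congruent modulo $rH^2(Y,\Integers)$ to a class of $H^{1,1}(Y,\Integers)$. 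Since multiplication by $d$ is injective on $H^2(Y,\Integers)/\rho H^2(Y,\Integers)$ with image $dH^2(Y,\Integers)/rH^2(Y,\Integers)$, since $H^{1,1}(Y,\Integers)$ is saturated, and since $a'$ is invertible modulo $\rho$, this reduces to: there is $g$ and a class $\mu\in H^{1,1}(Y,\Integers)$ with $\mu\equiv g(\alpha)\pmod{\rho H^2(Y,\Integers)}$, and then one may take $c_1(F)=a\mu$. Writing $\mu=k\beta$ with $\beta$ primitive, primitivity of $g(\alpha)$ forces $\gcd(k,\rho)=1$, and conversely any such $k\beta$ is admissible. So $F$ exists if and only if there are a parallel transport operator $g$, a primitive $\beta\in H^{1,1}(Y,\Integers)$ and an integer $k$ with $\gcd(k,\rho)=1$ and $g(\alpha)\equiv k\beta\pmod{\rho H^2(Y,\Integers)}$, in which case $c_1(F)=ka\beta$.

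For the necessity of (\ref{eq-divisibilities-are-related}) and (\ref{eq-congruence-relation-involving-degrees-of-alpha-beta}), write $g(\alpha)=k\beta+\rho\nu$; a parallel transport operator preserves the BBF pairing and divisibility, so $(g(\alpha),g(\alpha))=(\alpha,\alpha)$ and $\div(g(\alpha))=\div(\alpha)$. Comparing $p$-adic valuations of $(g(\alpha),x)$ and $k(\beta,x)$ for each prime $p\mid\rho$ gives $\min(v_p(\rho),v_p(\div\beta))=\min(v_p(\rho),v_p(\div\alpha))$, that is (\ref{eq-divisibilities-are-related}). Expanding the self-intersection and halving,
\[
\tfrac{(\alpha,\alpha)}{2}-k^2\tfrac{(\beta,\beta)}{2}=\rho\bigl(k(\beta,\nu)+\tfrac{\rho}{2}(\nu,\nu)\bigr)\equiv 0\pmod\rho ,
\]
since $(\nu,\nu)$ is even; and when $\gcd(\rho,\div\alpha)=2$ one has $2\mid\rho$ and $2\mid\div(\alpha)$, hence $k$ is odd and $(\beta,\nu)\equiv(g(\alpha),\nu)\equiv0\pmod2$, so the parenthesised factor is even and the left side is $\equiv0\pmod{2\rho}$; this is (\ref{eq-congruence-relation-involving-degrees-of-alpha-beta}).

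The substantive direction is the converse, which I expect to be the main obstacle. Given a primitive $\beta\in H^{1,1}(Y,\Integers)$ and $k$ with $\gcd(k,\rho)=1$ satisfying (\ref{eq-divisibilities-are-related}) and (\ref{eq-congruence-relation-involving-degrees-of-alpha-beta}), I must produce a parallel transport operator $g$ with $g(\alpha)\equiv k\beta\pmod{\rho H^2(Y,\Integers)}$. I would first construct a primitive class $\gamma\in H^2(Y,\Integers)$ with $\gamma\equiv k\beta\pmod{\rho H^2(Y,\Integers)}$, $(\gamma,\gamma)=(\alpha,\alpha)$ and $\div(\gamma)=\div(\alpha)$: writing $\gamma=k\beta+\rho\nu$, the self-intersection requirement is the equation $2k(\beta,\nu)+\rho(\nu,\nu)=\rho^{-1}\bigl((\alpha,\alpha)-k^2(\beta,\beta)\bigr)$ for $\nu$, whose right side is an even integer by (\ref{eq-congruence-relation-involving-degrees-of-alpha-beta}); I would solve it, and simultaneously arrange that $\gamma$ is primitive of divisibility $\div(\alpha)$, by taking $\nu$ of the form $\ell w+(\text{element of a hyperbolic plane orthogonal to }\beta\text{ and }w)$ with $(\beta,w)=\div(\beta)$, and choosing $\ell$ so that the residue modulo $\rho$ of the right side is absorbed, which is possible precisely because $\gcd(\rho,\div\beta)=\gcd(\rho,\div\alpha)$ divides the relevant integer (this is where the $\pmod{2\rho}$ clause of (\ref{eq-congruence-relation-involving-degrees-of-alpha-beta}) is used when $\gcd(\rho,\div\alpha)=2$). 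Here one uses that $H^2(Y,\Integers)$ contains $U^{\oplus3}$, which supplies the needed room in both the $K3^{[n]}$ and generalized Kummer cases. Finally, a parallel transport operator $g_0:H^2(X,\Integers)\to H^2(Y,\Integers)$ exists, $\Mon^2(Y)$ contains $S\widetilde{O}_+(H^2(Y,\Integers))$, and the discriminant group of $H^2(Y,\Integers)$ is cyclic; so Eichler's criterion, in the form of Lemma~\ref{lemma-faithful-invariants-of-orbits} and its proof, shows $\gamma$ and $g_0(\alpha)$ lie in the same parallel transport orbit, whence $\gamma=g(\alpha)$ for some $g$ and the first paragraph produces $F$ with $c_1(F)=ka\beta$. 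The delicate points are the divisibility bookkeeping in building $\gamma$ and verifying that the $U^{\oplus3}$ summand gives enough freedom, separately in the cases $\gcd(\rho,\div\alpha)=1$ and $=2$.
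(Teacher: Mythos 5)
Your reduction to the existence of a parallel transport operator $g$ with $g(\alpha)\equiv k\beta$ $(\mod\ \rho H^2(Y,\Integers))$, and your derivation of (\ref{eq-divisibilities-are-related}) and (\ref{eq-congruence-relation-involving-degrees-of-alpha-beta}) from such a congruence, are correct and agree with the paper's argument (your justification that $(\beta,\nu)$ is even when $\gcd(\rho,\div(\alpha))=2$ is in fact more explicit than the paper's).

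The converse direction, however, has a genuine gap. You propose to construct a primitive $\gamma$ with $\gamma\equiv k\beta$ $(\mod\ \rho H^2(Y,\Integers))$, $(\gamma,\gamma)=(\alpha,\alpha)$ and $\div(\gamma)=\div(\alpha)$, and then to conclude from Lemma \ref{lemma-faithful-invariants-of-orbits} that $\gamma$ lies in the $\Mon^2(Y)$-orbit of $g_0(\alpha)$. But Lemma \ref{lemma-faithful-invariants-of-orbits} is stated, and is true, only for classes of divisibility $1$ or $2$: its proof rests on the fact that the cyclic discriminant group contains a unique element of order $2$, so that $(\alpha,\alpha)$ together with $\div(\alpha)$ determines the image of $\alpha/\div(\alpha)$ in the discriminant group. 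The hypothesis of the present lemma is only that $\gcd(\rho,\div(\alpha))\in\{1,2\}$; the divisibility $\div(\alpha)$ itself may be any divisor of $2n-2$ (resp.\ $2n+2$), and for $\div(\alpha)\geq 3$ the pair (self-intersection, divisibility) no longer determines the monodromy orbit, since the discriminant group may contain several elements of that order which are not related by $\pm 1$. So the Eichler step, as you invoke it, does not go through. The device you are missing is the paper's Step 1: before applying Eichler, replace $\alpha$ by $\tilde\alpha=\alpha+\rho\alpha_1$ and $\beta$ by $\tilde\beta=\beta+\rho\beta_1$ --- which changes neither residue class modulo $\rho H^2$, hence not the condition to be verified --- with $\alpha_1,\beta_1$ chosen (using the $U^{\oplus 3}$ summand) so that $\div(\tilde\alpha)=\gcd(\rho,\div(\alpha))$ and $\div(\tilde\beta)=\gcd(\rho,\div(\beta))$, both now $1$ or $2$; only then is a class $\gamma\equiv k\tilde\beta$ $(\mod\ \rho H^2)$ with $(\gamma,\gamma)=(\tilde\alpha,\tilde\alpha)$ and $\div(\gamma)=\div(\tilde\alpha)$ constructed and Lemma \ref{lemma-faithful-invariants-of-orbits} applied. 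Note also that insisting on $\div(\gamma)=\div(\alpha)$ exactly, rather than on its gcd with $\rho$, is an over-constraint you have not shown to be compatible with $\gamma\equiv k\beta$ $(\mod\ \rho H^2)$; lowering the divisibility to $\gcd(\rho,\div(\alpha))$ is both achievable and what the argument actually requires.
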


Note that $\div(\alpha)$ and $\div(\beta)$ divide $2n-2$ in the $K3^{[n]}$-type case and they divide $2n+2$ in the generalized Kummer case. In the $K3^{[2]}$-type case  the condition that $\gcd(\rho,\div(\alpha))$ is $1$ or $2$ is automatically satisfied.

\begin{proof}
Such $F$ exists, if and only if 
there exists a parallel-transport operator $g:H^2(X,\Integers)\rightarrow H^2(Y,\Integers)$, such that 
$g(a\alpha)+rH^2(Y,\Integers)=b\beta+rH^2(Y,\Integers)$, for some integer $b$ and a primitive class $\beta\in H^{1,1}(Y,\Integers)$, 
by Lemma 
\ref{lemma-a-necessariy-and-sufficient-condition-for-the-existence-of-a-lift-to-an-untwisted-sheaf}. 
The order of the left hand side coset is $r/\gcd(a,r)$ and the order of the right hand side coset is $r/\gcd(b,r)$. Hence, 
two integers $a$ and $b$ satisfying the latter equality must satisfy $\gcd(a,r)=\gcd(b,r)$ and so $\frac{a}{\gcd(a,r)}$
and $\frac{b}{\gcd(a,r)}$ are both invertible modulo $\rho$. Furthermore, the equation of cosets above  is equivalent to
\[
\frac{a}{\gcd(a,r)}g(\alpha)+\rho H^2(Y,\Integers)=\frac{b}{\gcd(a,r)}\beta+\rho H^2(Y,\Integers)
\]
The latter equality is equivalent to
\begin{equation}
\label{eq-the-needed-parallel-transport-operator}
g(\alpha)+\rho H^2(Y,\Integers)=k\beta+\rho H^2(Y,\Integers),
\end{equation}
for some integer $k$ satisfying $\gcd(k,\rho)=1$, by the invertibility of $\frac{a}{\gcd(a,r)}$ modulo $\rho$.
Equations (\ref{eq-divisibilities-are-related}) and  (\ref{eq-congruence-relation-involving-degrees-of-alpha-beta}) follow from Equation (\ref{eq-the-needed-parallel-transport-operator}). The latter is clear if modulo $\rho$ and if $\gcd(\rho,\div(\alpha))=2$
and $g(\alpha)=k\beta+\rho\lambda$, then
\[
(\alpha,\alpha)=(g(\alpha),g(\alpha))=(k\beta+\rho\lambda,k\beta+\rho\lambda)=
k^2(\beta,\beta)+2k\rho (\beta,\lambda)+\rho^2(\lambda,\lambda),
\]
and so the congruence (\ref{eq-the-needed-parallel-transport-operator}) holds modulo $2\rho$, since both $\rho$ and $(\beta,\lambda)$ are even.
Hence, if a sheaf $F$ satisfying the condition of the Lemma exists, then an integer $k$ and a class $\beta\in H^{1,1}(Y,\Integers)$ satisfying the conditions of the Lemma exist as well.

Conversely, assume that an integer $k$ and a class $\beta\in H^{1,1}(Y,\Integers)$ satisfying the conditions of the Lemma exist.
We prove the existence of a sheaf $F$ by 
proving that there exists a parallel-transport operator $g:H^2(X,\Integers)\rightarrow H^2(Y,\Integers)$
satisfying Equation (\ref{eq-the-needed-parallel-transport-operator}).

Step 1: We prove first that there exists a class $\alpha_1\in H^2(X,\Integers),$ such that $\div(\alpha+\rho\alpha_1)=\gcd(\rho,\div(\alpha))$.
There exists a class $\lambda\in H^2(X,\Integers)$, such that $\gcd((\alpha,\lambda),\rho)=\gcd(\rho,\div(\alpha))$.
Choose $x,y\in \Integers$, such that $x(\alpha,\lambda)+y\rho=\gcd(\rho,\div(\alpha))$.
There exists a class $\delta\in H^2(X,\Integers)$, such that $\delta^\perp$ is unimodular isomorphic to the orthogonal direct sum $U^{\oplus 3}$ (generalized Kummer case) or
$U^{\oplus 3}\oplus E_8(-1)^{\oplus 2}$ ($K3^{[n]}$-type case), where $U$ is the even unimodular rank $2$ lattice of signature $(1,1)$ and $E_8(-1)$ is the negative definite $E_8$ lattice. Write $\lambda=\lambda'+\lambda''$ with $\lambda'\in\delta^\perp$ and $\lambda''\in\Integers\delta$. Decompose $\alpha=\alpha'+\alpha''$ similarly. 
We may and do choose $\lambda$ so that the saturation of the lattice spanned by $\lambda'$ and $\alpha'$ is isometric to $U$. 
Indeed, choose a sublattice $U_0$ of $\delta^\perp$ isometric to $U$ containing $\alpha'$. Then there exists an element $u$ of
$U_0$, of the same divisibility as $\lambda'$, such that $(u,\alpha')=(\lambda',\alpha')$ and so we can replace $\lambda$ by $u+\lambda''$.
Now $\{\alpha',\lambda',\delta\}^\perp$ contains a copy of $U$  and so it contains classes $\alpha_1$ and $\alpha_2$ with $(\alpha_1,\alpha_2)=y$. Then $\div(\alpha+\rho\alpha_1)=\gcd(\rho,\div(\alpha))$, since
\[
(\alpha+\rho\alpha_1,x\lambda+\alpha_2)=x(\alpha,\lambda)+\rho(\alpha_1,\alpha_2)=\gcd(\rho,\div(\alpha)).
\]

Step 2: There exists a class $\beta_1\in H^2(Y,\Integers)$, such that $\div(\beta+\rho\beta_1)=\gcd(\rho,\div(\beta))$, by Step 1.
Set $\tilde{\alpha}=\alpha+\rho\alpha_1$ and $\tilde{\beta}:=\beta+\rho\beta_1$. The $\Mon^2(X)$-orbit of $\tilde{\alpha}$ consists of primitive classes $\lambda$ with $(\lambda,\lambda)=(\tilde{\alpha},\tilde{\alpha})$ and $\div(\lambda)=\div(\alpha)$,
by 
Lemma \ref{lemma-faithful-invariants-of-orbits}.
Similarly, the
$\Mon^2(Y)$-orbit of $\tilde{\beta}$ consists of primitive classes $\lambda$ with $(\lambda,\lambda)=(\tilde{\beta},\tilde{\beta})$ and $\div(\lambda)=\div(\beta)$. 
Let $f:H^2(X,\Integers)\rightarrow H^2(Y,\Integers)$ be a parallel-transport operator. 
There exists a class $\delta\in H^2(Y,\Integers)$, such that $\delta^\perp$ is unimodular. 
Set $\Lambda_2:=\delta^\perp$.
Fix $k$ solving the congruence (\ref{eq-congruence-relation-involving-degrees-of-alpha-beta}).
Then there exist integers $x$, $y$ satisfying
\[
\frac{(\tilde{\alpha},\tilde{\alpha})-k^2(\tilde{\beta},\tilde{\beta})}{2\rho}=xk+y\rho,
\]
since $\gcd(k,\rho)=1$. We claim that $x$ can be chosen to be even, if $\div(\alpha)=2$. Indeed, if 
$\gcd(\rho,\div(\alpha))=2$, then $\rho$ is even, $k$ is odd, and 
the left hand side of the above displayed equation is even, by assumption (\ref{eq-congruence-relation-involving-degrees-of-alpha-beta}), and so $x$ must be even. If $\div(\alpha)=2$ and $\rho$ is odd, then a solution $(x,y)$ can be replaced by $(x-\rho,y+k)$, and so $x$ may be chosen even.

\underline{
Case  $\div(\tilde{\beta})=1$:}
In this case $\delta^\perp$ contains classes in the $\Mon^2(Y)$-orbit of $\tilde{\beta}$. So $\tilde{\beta}^\perp$ contains classes in the $\Mon^2(Y)$-orbit of $\delta$ and we may choose $\delta$ so that 
 $(\tilde{\beta},\delta)=0$. 
Let $T$ be the rank $2$ lattice with basis $\{t_1,t_2\}$ satisfying
\[
\left(
\begin{array}{cc}
(t_1,t_1) & (t_1,t_2)
\\
(t_2,t_1) & (t_2,t_2)
\end{array}
\right)=
\left(
\begin{array}{cc}
(\tilde{\beta},\tilde{\beta}) & x
\\
x & 2y
\end{array}
\right)
\]
There exists a primitive embedding $\tau:T\rightarrow \Lambda_2$ such that $\tau(t_1)=\tilde{\beta}$, by results of Nikulin (see \cite[Lemma 8.1]{markman-monodromy}).
Then $\gamma:=k\tilde{\beta}+\rho\tau(t_2)$ is a primitive class of $\Lambda_2$ satisfying $(\gamma,\gamma)=(\tilde{\alpha},\tilde{\alpha})$.
Furthermore, $\div(\gamma)=1$, since $\Lambda_2$ is unimodular.
Hence, there exists an element $h$ of $\Mon^2(Y)$, such that $h(f(\tilde{\alpha})=\gamma$ and $h(f(\delta))=f(\delta)$, since $Mon^2(Y)$ contains $SO^+(\Lambda_2)$, by \cite[Theorem 1.2]{markman-monodromy} and \cite[Theorem 1.4]{markman-generalized-kummers}. 
So $g:=h\circ f$ is a parallel-transport operator satisfying Equation
(\ref{eq-the-needed-parallel-transport-operator}).

\underline{
Case  $\div(\tilde{\beta})=2$:}
The $\Mon^2(Y)$-orbit of $\tilde{\beta}$ contains a class of the form
$2\tilde{\beta}'+\delta$, where $\tilde{\beta}'$ is a primitive class in $\Lambda_2$, 
by Lemma \ref{lemma-faithful-invariants-of-orbits}.
Hence, we may assume that $\tilde{\beta}=2\tilde{\beta}'+\delta$ is of this form, possibly after modifying the choice of $\delta$.
Recall that $x$ can be chosen even in the current case.
Let $T$ be the rank $2$ lattice with basis $\{t_1,t_2\}$ satisfying
\[
\left(
\begin{array}{cc}
(t_1,t_1) & (t_1,t_2)
\\
(t_2,t_1) & (t_2,t_2)
\end{array}
\right)=
\left(
\begin{array}{cc}
(\tilde{\beta}',\tilde{\beta}') & x/2
\\
x/2 & 2y
\end{array}
\right).
\]
There exists a primitive embedding $\tau:T\rightarrow \Lambda_2$ such that $\tau(t_1)=\tilde{\beta}'$, by results of Nikulin (see \cite[Lemma 8.1]{markman-monodromy}.
Then $\gamma:=k\tilde{\beta}+\rho\tau(t_2)=(2k\tilde{\beta}'+\rho\tau(t_2))+k\delta$ is a primitive class satisfying $(\gamma,\gamma)=(\tilde{\alpha},\tilde{\alpha})$.
We claim that $\div(\gamma)=2$. Indeed, $2$ divides $\div(\gamma)$, since $\rho$ is even, as $\gcd(\rho,\div(\beta))=2$.
In addition, $\div(\alpha)$ divides $2$, since $k\tilde{\beta}'+(\rho/2)\tau(t_2)$ is a primitive class of $\Lambda_2$ and $\Lambda_2$ is unimodular.
Hence, there exists an element $h$ of $\Mon^2(Y)$, such that $h(f(\tilde{\alpha}))=\gamma$, 
by Lemma \ref{lemma-faithful-invariants-of-orbits}. Setting
$g:=h\circ f$ we get a parallel-transport operator satisfying Equation
(\ref{eq-the-needed-parallel-transport-operator}).
\end{proof}

Given a $K3$ surface $S$, let $S^{(n)}$ be its $n$-th symmetric power and let 
\begin{equation}
\label{eq-theta}
\theta:H^2(S,\Integers)\rightarrow H^2(S^{[n]},\Integers)
\end{equation}
be the composition of the isomorphism $H^2(S,\Integers)\cong H^2(S^{(n)},\Integers)$ with the pull-back homomorphism $H^2(S^{(n)},\Integers)\rightarrow H^2(S^{[n]},\Integers)$ via the Hilbert-Chow morphism.

\begin{example}
Let $X=S^{[2]}$, $S$ a $K3$ surface, and let $E$ be the vector bundle $G^{[2]}$ over $S^{[2]}$ associated to a stable and rigid vector bundle $G$ of rank $r_0$ on $S$ as in Theorem \ref{thm-BKR-of-tensor-product-of-spherical-object}. Let $\delta\in H^2(S^{[2]},\Integers)$ be half the class of the divisor of non-reduced subschemes.
Then $c_1(E)=r_0\theta(c_1(G))-\frac{r_0(r_0-1)}{2}\delta$, by \cite[Prop. 5.8]{ogrady-modular}. 
If $v(G)=(r_0,c_1(G),s),$ then $(v(G),v(G))=-2$, and so 
$
\frac{(c_1(G),c_1(G))}{2}=sr_0-1.
$
Assume that $c_1(G)$ is a primitive class.
If $r_0$ is odd, then $c_1(E)=r_0\alpha$, where $\alpha=\theta(c_1(G))-\frac{r_0-1}{2}\delta$ is a primitive class of divisibility $1$ satisfying 
\begin{equation}
\label{eq-constraint-on-c-1-G}
\frac{(\alpha,\alpha)}{2}=\frac{(c_1(G),c_1(G))}{2}-\left(\frac{r_0-1}{2}\right)^2=sr_0-1-\left(\frac{r_0-1}{2}\right)^2.
\end{equation}
If $r_0$ is even, then $c_1(E)=(r_0/2)\alpha$, where $\alpha=2\theta(c_1(G))-(r_0-1)\delta$ is a primitive class of divisibility $2$ satisfying 
\[
\frac{(\alpha,\alpha)}{2}=2(c_1(G),c_1(G))-\left(r_0-1\right)^2=4sr_0-4-\left(r_0-1\right)^2.
\]

If $r_0=2$ and $c_1(G)$ is primitive, then
$c_1(E)=2\theta(c_1(G))-\delta$ is a primitive class of divisibility $2$. So $\rho=\rank(E)=4$. An untwisted vector bundle $F$ exists on an irreducible holomorphic symplectic manifold of $K3^{[2]}$-type $Y$, such that $(S^{[2]},\PP(E))$ is deformation equivalent to $(Y,\PP(F))$, if and only if there exists a primitive class $\beta\in H^{1,1}(Y,\Integers)$ of divisibility $2$, such that
$(\beta,\beta)/2\equiv (\alpha,\alpha)/2$ (mod $8$),
by Lemma \ref{lemma-easy-to-check-sufficient-condition}. 
The latter congruence is equivalent to (\ref{eq-congruence-relation-involving-degrees-of-alpha-beta}), since if $k$ is odd, then $k^2\equiv 1$ (mod $8$).

If $r_0=3$ and $c_1(G)$ is primitive, then $c_1(E)=3\alpha$, where $\alpha=\theta(c_1(G))-\delta$ is a primitive class of divisibility $1$ satisfying $(\alpha,\alpha)/2\equiv 1$ (mod $3$), by Equation (\ref{eq-constraint-on-c-1-G}). So $\rank(E)=9$ and $\rho=3$. An untwisted vector bundle $F$ exists on an irreducible holomorphic symplectic manifold of $K3^{[2]}$-type $Y$, such that $(S^{[2]},\PP(E))$ is deformation equivalent to $(Y,\PP(F))$, if and only if there exists a primitive class $\beta\in H^{1,1}(Y,\Integers)$ (with no constraint on its divisibility), such that $(\beta,\beta)/2\equiv 1$ (mod $3$),
by Lemma \ref{lemma-easy-to-check-sufficient-condition}. Note: 
The congruence condition $(\beta,\beta)/2\equiv 1$ (mod $3$) follows also from Remark \ref{rem--is deformation-of-FM-image-of-structure-sheaf} and Equation (\ref{r-0-odd}), which imply that  $5+2(\beta,\beta)\equiv 0$ (mod $3$). 
\end{example}

%
\section{The BKR equivalence}
\label{sec-BKR-equivalence}

Let $S$ be a smooth projective surface.
We review in this section the equivalence between the equivariant derived category of coherent sheaves on the $n$-th cartesian product $S^n$ and the derived category of coherent sheaves on the Hilbert scheme $S^{[n]}$. This equivalence relies on results of Bridgeland-King-Ried \cite{BKR} and Haiman \cite{haiman}.

Let $X$ be a smooth projective variety on which a finite group $G$ acts. A {\em $G$-linearization} of a coherent sheaf $E$ on $X$
is given by isomorphisms $\lambda_g:E\rightarrow g^*E$, for $g\in G$, satisfying $\lambda_1=id_E$ and 
$\lambda_{gh}=h^*(\lambda_g)\circ \lambda_h$. A morphism of linearized sheaves $f:(E_1,\lambda_1)\rightarrow (E_2,\lambda_2)$ is an $\StructureSheaf{X}$-module homomorphism $f:E_1\rightarrow E_2$ satisfying 
$f=\lambda_{2,g}^{-1}\circ g^*(f)\circ \lambda_{1,g}$, for all $g\in G$. We get the abelian category of $G$-linearized coherent sheaves, whose bounded derived category is denoted by $D^b_G(X)$ \cite{BKR,ploog}.

Let $(X,G)$ and $(X',G')$ be smooth projective varieties with group actions. 
An equivariant morphism from $(X,G)$ to $(X',G')$ consists of a 
morphism $f:X\rightarrow X'$ and a group homomorphism $\varphi:G\rightarrow G'$ satisfying
$f\circ g=\varphi(g)\circ f$, for all $g\in G$. We get the derived pullback $L(f,\varphi)^*:D^b_{G'}(X')\rightarrow D^b_G(X)$.
When $\varphi$ is surjective we get also the derived pushforward $R(f,\varphi)^G_*:D^b_G(X)\rightarrow D^b_{G'}(X')$ \cite[Sec. 1.3]{ploog}. 
When $G=G'$ and $\varphi$ is the identity, or when $G'=1$, we will denote $L(f,\varphi)^*$ by $Lf^*$ and
$R(f,\varphi)^G_*$ by $Rf^G_*$. When $G'=1$, then $Rf^G_*(E,\lambda)$ is the $G$-invariant sub-object of
$Rf_*E$ with respect to the $G$-action on $Rf_*E$ induced by $\lambda$.

Let $G$ act on two smooth projective varieties $X$ and $Y$. Denote by $G_\Delta$ the diagonal subgroup of $G\times G$.
Let $(P,\rho)$ be an object of $D^b_{G_\Delta}(X\times Y)$. 
Let $(\pi_X,\pi_1):(X\times Y,G_\Delta)\rightarrow (X,G)$ and $(\pi_Y,\pi_2):(X\times Y,G_\Delta)\rightarrow (Y,G)$ be the projections from $X\times Y$ to $X$ and $Y$ equivariant with respect to the projections from the subgroup $G_\Delta$ of $G\times G$ to the two factors. We get the Fourier-Mukai transformation
$\Phi_{(P,\rho)}:D^b_G(X)\rightarrow D^b_G(Y)$, given by
\begin{equation}
\label{eq-equivariant-FM-transformation}
\Phi_{(P,\rho)}(\bullet)=R(\pi_Y,\pi_2)_*^{G}\left(
L(\pi_X,\pi_1)^*(\bullet)\stackrel{L}{\otimes} (P,\rho)
\right).
\end{equation}
Tensorization of a linearization $\lambda$ by a character $\chi$ of $G$ yields the linearization $(\chi\otimes\lambda)_g=\chi(g)\lambda_g$. We get an auto-equivalence $\Phi_\chi$ of each of $D^b_G(X)$ and $D^b_G(Y)$
sending an object $(E,\lambda)$ to $(E,\chi\otimes\lambda)$ and sending a morphism $f:(E_1,\lambda_1)\rightarrow (E_2,\lambda_2)$ to itself under the equality of $\Hom((E_1,\lambda_1),(E_2,\lambda_2))$ and $\Hom((E_1,\chi\otimes\lambda_1),(E_2,\chi\otimes\lambda_2))$ as subspaces of $\Hom(E_1,E_2)$. 
Note that $\Phi_\chi$ commutes with the Fourier-Mukai transformation $\Phi_{(P,\rho)}$ above with kernel $(P,\rho)\in D^b_{G_\Delta}(X\times Y)$,
\begin{equation}
\label{eq-Phi-chi-commutes-with-FM-with-diagonally-linearized-kernel}
\Phi_\chi\circ\Phi_{(P,\rho)}\cong \Phi_{(P,\rho)}\circ\Phi_\chi.
\end{equation}

Let $S_1$ and $S_2$ be smooth projective varieties and assume that the Fourier-Mukai transformation $\Phi_\P:D^b(S_1)\rightarrow D^b(S_2)$ with kernel an object $P\in D^b(S_1\times S_2)$ is an equivalence. 
Let $X=S_1^n$, $Y=S_2^n$, and let $G=\fS_n$ act on both by permuting the factors. We get the object
$P^{\boxtimes n}:=P\boxtimes \cdots \boxtimes P$ in $D^b_{\fS_{n,\Delta}}(S_1^n\times S_2^n)$, endowed with the linearization $\rho_\boxtimes$ permuting the factors, and 
\begin{equation}
\label{eq-outer-product-of-FM}
\Phi_{(P^{\boxtimes n},\rho_\boxtimes)}:D^b_{\fS_n}(S_1^n)\rightarrow D^b_{\fS_n}(S_2^n)
\end{equation} 
is an equivalence as well. 

The definition of the functor $\Phi_{(P,\rho)}$ in (\ref{eq-equivariant-FM-transformation}) admits an obvious generalization when $\varphi:G\rightarrow G'$ is a surjective homomorphism, $G$ acts on $X$, $G'$ acts on $Y$, and the subgroup $G_\Delta$ is replaced by the subgroup $G_\varphi$ of $G\times G'$, which is the graph of $\varphi$. We will use this generalization in the next paragraph with $G'=1$. 

Let a finite group $G$ act on a smooth projective variety $X$. Let $G$-$\mbox{Hilb}(X)$ be moduli space of $G$-clusters in $X$, i.e., of $G$-invariant $0$-dimensional subschemes $Z$, such that $H^0(Z,\StructureSheaf{Z})$ is the regular representation of $G$. 
When $S$ is a surface, $X$ is the $n$-th cartesian power $S^n$ of $S$, and $G=\fS_n$ acts by permuting the factors, then the Hilbert scheme 
$S^{[n]}$ is isomorphic to the connected component of $\fS_n$-$\mbox{Hilb}(S^n)$ which contains the free orbits, by \cite{haiman}. We get a universal $\fS_n\times 1$-invariant subscheme $\Gamma\subset S^n\times S^{[n]}$, so  that 
$\StructureSheaf{\Gamma}$, with the natural linearization $\rho$, is an object of $D^b_{\fS_n\times 1}(S^n\times S^{[n]})$. 
When $S$ is a smooth projective surface, the Fourier-Mukai transformation
\begin{equation}
\label{eq-BKR}
BKR:=\Phi_{(\StructureSheaf{\Gamma},\rho)}:D^b_{\fS_n}(S^n)\rightarrow D^b(S^{[n]})
\end{equation} 
is an equivalence, by \cite[Cor. 1.3]{BKR}. 
We are using here the convention\footnote{
The Fourier-Mukai transformation from
$D^b(S^{[n]})$ to $D^b_{\fS_n}(S^n)$ 
with kernel the linearized object $(\StructureSheaf{\Gamma},\rho)$ is an equivalence as well, which is often referred to as the derived McKay correspondence.
} 
of \cite{krug} for the derived McKay correspondence.
Denote by $b:\Gamma\rightarrow S^n$ the restriction of the first projection and by
$q:\Gamma\rightarrow S^{[n]}$ that of the second projection. 
\begin{equation}
\label{eq-b-q}
S^n \LongLeftArrowOf{b} \Gamma \LongRightArrowOf{q} S^{[n]}.
\end{equation}
The morphism $q$ is $\fS_n$-invariant and flat of degree $n!$. 
Note that $q$ factors through the quotient of $\Gamma$ by the index $2$ alternating subgroup ${\mathfrak A}_n$ of $\fS_n$, and
$\bar{q}:\Gamma/{\mathfrak A}_n\rightarrow S^{[n]}$ is a double cover of $S^{[n]}$ branched over the divisor $D$ of non-reduced subschemes. 
Hence, 
\[
\bar{q}_*\StructureSheaf{\Gamma/{\mathfrak A}_n}\cong (q_*\StructureSheaf{\Gamma})^{{\mathfrak A}_n}\cong \StructureSheaf{S^{[n]}}\oplus\LB,
\]
where $\LB^2\cong \StructureSheaf{S^{[n]}}(-D)$. Let $\chi$ be the sign character of $\fS_n$. Denote by $\chi$ also the linearization of $\StructureSheaf{S^n}$ acting via the character $\chi$. We get the isomorphisms
\begin{eqnarray}
\label{eq-BKR-of-structure-sheaf}
BKR(\StructureSheaf{S^n},1)&\cong& Rq_*^{\fS_n}(\StructureSheaf{\Gamma})\cong\StructureSheaf{S^{[n]}},
\\
\nonumber
BKR(\StructureSheaf{S^n},\chi)&\cong& Rq_*^{\fS_n}(\StructureSheaf{\Gamma},\chi)\cong \LB.
\end{eqnarray}

Given two smooth projective surfaces $S_1$ and $S_2$ and an object $P\in D^b(S_1\times S_2)$, denote by
\begin{equation}
\label{eq-BKR-conjugate-of-outer-product-of-FM}
\Phi_P^{[n]}:D^b(S_1^{[n]})\rightarrow D^b(S_2^{[n]})
\end{equation}
the $BKR$-conjugate $\Phi_P^{[n]}:=BKR\circ \Phi_{(P^{\boxtimes n},\rho_\boxtimes)}\circ BKR^{-1}$ of
(\ref{eq-outer-product-of-FM}). Denote by 
\begin{equation}
\label{eq-Phi-chi-[n]}
\Phi_\chi^{[n]}:=BKR\circ\Phi_\chi\circ BKR^{-1}:D^b(S_i^{[n]})\rightarrow D^b(S_i^{[n]}),
\end{equation}
$i=1,2$,
the auto-equivalence obtained by conjugating the functor
$\Phi_\chi$ of tensorization by the sign character of $\fS_n$. Equation  (\ref{eq-Phi-chi-commutes-with-FM-with-diagonally-linearized-kernel}) yields 
\begin{equation}
\label{eq-Phi-chi-[n]-commutes-with-surface-FM}
\Phi_\chi^{[n]}\circ \Phi_P^{[n]}\cong \Phi_P^{[n]}\circ \Phi_\chi^{[n]}.
\end{equation}
Equations (\ref{eq-BKR-of-structure-sheaf}) yield
\begin{equation}
\label{eq-Phi-chi-sends-structure-sheaf-to-LB}
\Phi_\chi^{[n]}(\StructureSheaf{S^{[n]}})\cong \LB.
\end{equation}

\begin{rem}
\label{rem-chi-invariant-objects}
\begin{enumerate}
\item
Let $G$ be a finite group acting on a variety $X$, let $Y$ be a subscheme of $X$, let $\iota:Y\hookrightarrow X$ be the inclusion, and let $F$ be a sheaf on $Y$. Set $E:=\oplus_{g\in G}[g^*(\iota_*F)]$. Endow $E$ with the linearization $\lambda$, such that
$\lambda_f:E\rightarrow f^*E$ maps the direct summand $g^*(\iota_*F)$ to itself via the identity map. Let $\chi:G\rightarrow \CC^*$ be a character. Let $\eta:E\rightarrow E$ be the automorphism acting on the direct summand $g^*(\iota_*F)$ by $\chi(g^{-1})$. 
Then $\eta$ lifts to an isomorphism $\eta:(E,\lambda)\rightarrow (E,\chi\otimes\lambda)$. 
\item
Assume, furthermore, that the subschemes $Y$ and $g(Y)$ are disjoint, for every $g\in G$. Set $Z:=\cup_{g\in G} \ g(Y)$ and let $e:Z\hookrightarrow X$ be the inclusion. Let $(A,\rho)$ be an object of $D^b_G(X)$. Set $F:=\iota^*A$. 
The linearization $\rho$ induces an isomorphism between $e_*e^*A$ and the object $E=\oplus_{g\in G}[g^*(\iota_*F)]$ above. Indeed,
$e$ restricts to $g^{-1}(Y)$ as $g^{-1}\circ \iota\circ g$ and so the direct summand of $e_*e^*A$ supported on $g^{-1}(Y)$ is
\[
(g^{-1}\circ \iota\circ g)_*(g^{-1}\circ \iota\circ g)^*A=
g^*\left[\iota_*(\iota^*(g^{-1})^*A)\right]\RightArrowOf{\rho_{g^{-1}}}g^*(\iota_*\iota^*(A)),
\]
and the right hand side is the direct summand of $E$ supported on $g^{-1}(Y)$.
We claim that $e_*e^*(A,\rho)$ is isomorphic to the object $(E,\lambda)$ above. 
In particular, $e_*e^*(A,\rho)$ is isomorphic to $e_*e^*(A,\chi\otimes\rho)$.
It suffices to prove, more generally, that any two linearizations $(E,\rho)$ and $(E,\lambda)$ of $E$ are isomorphic.
We need to construct an automorphism $\phi$ of $E$ satisfying
\begin{equation}
\label{eq-isomorphism-phi-of-two-linearizations}
\xymatrix{
E\ar[r]^{\lambda_g} \ar[d]_{\phi}
& g^*E \ar[d]^{g^*(\phi)}
\\
E\ar[r]_{\rho_g} & g^*E,
}
\end{equation}
for all $g\in G$.
Let $E_{g(Y)}:=g_*\iota_*F$ be the direct summand of $E$ supported on $g(Y)$.
Let $\rho_g(f(Y))$ be the component of $\rho_g:E\rightarrow g^*E$ supported on $f(Y)$. We identify the factor of $\Aut(g^*E)$ supported on $Y$ with $\Aut(E_{g(Y)})$ and 
define $\phi(g(Y))\in\Aut(E_{g(Y)})$  by
\[
\phi(g(Y))= \rho_g(Y)(\lambda_g(Y))^{-1}.
\]
Note that $\phi(Y)=id$, since $\rho_1$ and $\lambda_1$ are both the identity.
The diagram above commutes over the component $Y$ of $Z$, by definition of $\phi$. Consider the diagram
\[
\xymatrix{
E\ar[r]^{\lambda_f} \ar[d]_{\phi} \ar@/^2pc/[rr]^{\lambda_{gf}}
& f^*E \ar[d]^{f^*(\phi)} \ar[r]^{f^*(\lambda_g)}
&  f^*g^*E\ar[d]^{(gf)^*(\phi)}
\\
E\ar[r]_{\rho_f} \ar@/_2pc/[rr]_{\rho_{gf}}
& 
f^*E \ar[r]_{f^*(\rho_g)} 
& f^*g^*E.
}
\]
The upper and lower domes commute, by definition of a linearization. 
The left and outer squares commute over the component $Y$, by definition of $\phi$. Hence, the right square commute as well over $Y$. Hence, the square (\ref{eq-isomorphism-phi-of-two-linearizations}) commutes also over $f(Y)$, for all $f\in G$.
\end{enumerate}
\end{rem}

%
\section{The derived monodromy group}
Taelman introduced a natural action of the group $\Aut(D^b(X))$ of auto-equivalences of $D^b(X)$ on the rational LLV lattice $\tilde{H}(X,\RationalNumbers)$  \cite[Theorem C]{taelman}.
We review Taelman's results, as they play a central role in Section \ref{sec-the-Mukai-line-of-a-maximally-deformable-F}.

Let $IHSM$ be the following groupoid. Objects of $IHSM$ are pairs $(X,\epsilon)$, where $X$ is a projective irreducible holomorphic symplectic manifold satisfying the condition that if $\dim_\CC(X)$ is divisible by $4$ then the dimension of $H^2(X,\QQ)$ is odd. All known examples satisfy this latter condition.
If the complex dimension of $X$ is divisible by $4$, then $X$ is enriched with the data $\epsilon$ of an orientation of the vector space $H^2(X,\QQ)$ (which determines an orientation of $\widetilde{H}(X,\QQ)$).  If the complex dimension of $X$ is not divisible by $4$, then the data $\epsilon$ is empty. 
The morphisms in 
$\Hom_{IHSM}((X,\epsilon),(Y,\epsilon'))$
are vector spaces isomorphisms in $\Hom(H^*(X,\RationalNumbers),H^*(Y,\RationalNumbers))$, which are 
compositions of parallel transport operators and isomorphisms induced by 
derived equivalences (independent of the data $\epsilon$).
Explicitly, a morphism $f\in \Hom_{IHSM}(X,Y)$ is the composition $f_n\circ \cdots \circ f_2\circ f_1$, where $X_i$ is an irreducible holomorphic symplectic manifold deformation equivalent to $X$, $X_0=X$, $X_n=Y$, and 
$f_i:H^*(X_{i-1},\RationalNumbers)\rightarrow H^*(X_i,\RationalNumbers)$ is either a parallel transport operator\footnote{A parallel transport operator $f:H^*(X,\QQ)\rightarrow H^*(Y,\QQ)$ 
is the parallel-transport in the local system $R\pi_*\QQ$ associated to a path from a point $b_0$ to a point $b_1$ in the analytic base $B$ of a smooth and proper family $\pi:\X\rightarrow B$ of irreducible holomorphic symplectic manifolds, not necessarily projective, with  fiber $X$ over $b_0$ and $Y$ over $b_1$.} 
or the isomorphism induced by the correspondence $\pi_{i-1}^*\sqrt{td_{X_{i-1}}}ch(\P)\pi_i^*\sqrt{td_{X_i}}$, where $X_{i-1}$ and $X_i$ are projective, $\P\in D^b(X_{i-1}\times X_i)$ is the kernel of the Fourier-Mukai functor $\Phi_\P:D^b(X_{i-1})\rightarrow D^b(X_i)$, and $\Phi_\P$ is an exact equivalence of triangulated categories. 

\begin{defi}
\label{def-DMon}
The {\em derived monodromy group} $DMon(X)$ is the subgroup $\Hom_{IHSM}((X,\epsilon),(X,\epsilon))$ of $GL(H^*(X,\QQ))$
(which is independent of $\epsilon$).
A {\em derived parallel transport operator} is a linear transformation $\phi:H^*(X,\QQ)\rightarrow H^*(Y,\QQ)$, which belongs to  $\Hom_{IHSM}((X,\epsilon),(Y,\epsilon'))$.
\end{defi}

Let $Lat$ be the category  of rational vector spaces with a non-degenerate inner product. Morphisms in $Lat$ are isometries.
Taelman defines the functor 
\begin{equation}
\label{eq-functor-tilde-H}
\tilde{H}:IHSM\rightarrow Lat 
\end{equation}
sending an object $X$ to its rational LLV lattice $\tilde{H}(X,\RationalNumbers)$ given in (\ref{eq-rational-LLV-lattice}). 
The functor $\tilde{H}$ sends a morphism in 
$f\in \Hom_{IHSM}((X,\epsilon),(Y,\epsilon'))$ to an isometry $\tilde{H}(f):\tilde{H}(X,\RationalNumbers)\rightarrow\tilde{H}(Y,\RationalNumbers)$.
If $f$ is induced by a parallel transport operator and $\dim_\CC(X)=2n$ for odd $n$, then $\tilde{H}(f)$ is induced by the restriction of $f$ to the second cohomology extended via the identity to the direct summand $U_\RationalNumbers$. 
If $f$ is induced by a parallel transport operator, the dimension of $X$ is divisible by $4$, 
and $H^2(X,\QQ)$ is odd dimensional, then $\tilde{H}(f)$ is defined by the above description only up to sign, and the sign is chosen so that $\tilde{H}(f)$ is orientation preserving. 
If $f$ is induced by a derived equivalence $\Phi_\P$ as above, then
$\tilde{H}(f)$ is the isometry defined
in \cite[Theorems 4.7 and 4.9]{taelman}, which we will recall in Theorem \ref{thm-def-of-H-tilde} below.  We get the homomorphism
\begin{equation}
\label{eq-action-of-DMon-on-Mukai-lattice}
\tilde{H}:DMon(X)\rightarrow O(\tilde{H}(X,\RationalNumbers)).
\end{equation}
If $\lambda\in H^2(X,\RationalNumbers)$ is $c_1(L)$, for some line bundle $L\in \Pic(X)$, then 
tensorization by $L$ corresponds to the element of $DMon(X)$ acting on $H^*(X,\QQ)$ by multiplication by $ch(L)=\exp(\lambda)$  and the latter is mapped  via $\tilde{H}$ to
$exp(e_\lambda):=id+e_\lambda+e^2_\lambda/2$ in $SO(\tilde{H}(X,\RationalNumbers))$, where $e_\lambda$ is given in Equation (\ref{eq-e-lambda}), by \cite[Theorem 3.1 and Prop. 3.2]{taelman}.

Let $K3^{[n]}$ be the full subgroupid of $IHSM$, whose objects are projective irreducible holomorphic symplectic manifolds deformation equivalent to the Hilbert scheme $S^{[n]}$ of length $n$ subschemes of a $K3$ surface $S$. We get the functor
\begin{equation}
\label{eq-functor-[n]}
{}^{[n]}:K3^{[1]}\rightarrow K3^{[n]}
\end{equation}
sending a $K3$ surface $S$ to its Hilbert scheme $S^{[n]}$, sending a parallel transport operator associated to the path in the base of a family of $K3$ surfaces to the parallel transport operator associated to the same path for the relative family of Hilbert schemes over the same base, and sending the morphism associated to an equivalence $\Phi:D^b(S_1)\rightarrow D^n(S_2)$ to the morphism associated to $\Phi^{[n]}:D^b(S_1^{[n]})\rightarrow D^b(S_2^{[n]})$ obtained by conjugating
$\Phi\boxtimes \cdots \boxtimes \Phi:D^b_{\fS_n}(S_1^n)\rightarrow D^b_{\fS_n}(S_2^n)$ with the BKR equivalence 
$D^b(S_i^{[n]})\rightarrow D^b_{\fS_n}(S_i^n)$, $i=1,2$, given in (\ref{eq-BKR}).

\hide{
Let $LLV\!\!-\!\!Mod$ be the category of pairs $(\Lambda,A)$ of a vector space $\Lambda$ over $\RationalNumbers$ endowed with a non-degenerate bilinear pairing and an algebra $A$ over $\RationalNumbers$, whose underlying vector space is endowed with an $\LieAlg{so}(\Lambda)$-module structure. Morphisms (???) 

Let $SH^*(X,\RationalNumbers)$ be the subalgebra of $H^*(X,\RationalNumbers)$ generated by $H^2(X,\RationalNumbers)$. 
We have two functors $SH^*(\bullet,\RationalNumbers)$ and $H^*(\bullet,\RationalNumbers)$ from $IHSM$ to $LLV\!\!-\!\!Mod$,
by \cite[Theorems A, B, and C]{taelman}.
Both algebras $SH^*(X,\RationalNumbers)$ and $H^*(\bullet,\RationalNumbers)$ are endowed with a 
$\LieAlg{so}\tilde{H}(X,\RationalNumbers)$-module structure.
The inclusion homomorphism 
$\eta_X:SH^*(X,\RationalNumbers)\rightarrow H^*(X,\RationalNumbers)$ is a natural transformation 
\[
\eta:SH^*(\bullet,\RationalNumbers)\rightarrow H^*(\bullet,\RationalNumbers)
\]
between the two functors. 
In particular, the inclusion $\eta_X$ is equivariant with respect to the $\Aut(D^b(X))$-action and is a homomorphism of 
$\LieAlg{so}\tilde{H}(X,\RationalNumbers)$-modules (as well as of algebras). 

Now $ch(\StructureSheaf{X})$ belongs to $SH^*(X,\RationalNumbers)$.
Furthermore, $SH^*(X,\RationalNumbers)$ is isomorphic to $\Sym^n\tilde{H}(X,\RationalNumbers)$ as a 
$\LieAlg{so}\tilde{H}(X,\RationalNumbers)$-module, where $n$ is half the complex dimension of $X$, by 
\cite[Prop. 3.5]{taelman}. 
(??? 
We need to consider $ch(\StructureSheaf{X})\sqrt{td_X}$, but the latter does not belong to $SH^*(X,\RationalNumbers)$ but rather to the translate of the latter by $\sqrt{td_X}$. Could it be that the latter is the invariant subrepresentation, not the former?
???)
}

\hide{
\begin{rem}
\label{rem-DMon-is-large-for-K3-[n]-type}
The monodromy group $\Mon(X)$ is a finite index subgroup of $O(H^2(X,\Integers))$, whenever $\dim(H^2(X,\QQ))\geq 5$, by \cite[Theorem 8.2]{bakker-lehn}, 
and it embeds as a subgroup of $DMon(X)$, by definition of the latter.
When $X$ is the Hilbert scheme $S^{[n]}$ of a $K3$ surface $S$, then $DMon(X)$ contains also the image of 
$DMon(S)$ via the functor (\ref{eq-functor-[n]}).
The group $DMon(S)$, of the $K3$ surface, is $O^+(\tilde{H}(S,\Integers))$, by \cite[Prop. 9.2]{taelman}.
The Lie algebras $\LieAlg{so}(H^2(S^{[n]},\RationalNumbers))$ and 
$\LieAlg{so}(H^2(S,\RationalNumbers)\oplus U_\RationalNumbers))$ generate\footnote{Indeed, let $\LieAlg{a}$ be the subalgebra of $\LieAlg{so}(\tilde{H}(S^{[n]},\RationalNumbers))$ generated by these two and let $\LieAlg{b}$ be the subspace of $\LieAlg{so}(\tilde{H}(S^{[n]},\RationalNumbers))$ spanned by these two. Then $\LieAlg{so}(\tilde{H}(S^{[n]},\RationalNumbers))/\LieAlg{b}$
is isomorphic to $U_\QQ$ as a $\LieAlg{so}(H^2(S,\RationalNumbers))\oplus \LieAlg{so}(U_\RationalNumbers)$-module. Hence,  $\LieAlg{a}$ has co-dimension $\leq 2$ and
 $Q:=\LieAlg{so}(\tilde{H}(S^{[n]},\RationalNumbers))/\LieAlg{a}$ is a representation of $\LieAlg{a}$ and hence of both $\LieAlg{so}(H^2(S^{[n]},\RationalNumbers))$ and 
$\LieAlg{so}(H^2(S,\RationalNumbers)\oplus U_\RationalNumbers)$. The representation $Q$ must be a trivial representation
of both $\LieAlg{so}(H^2(S^{[n]},\RationalNumbers))$ and 
$\LieAlg{so}(H^2(S,\RationalNumbers)\oplus U_\RationalNumbers)$, by dimension reasons, yet 
if $Q$ is non-zero then it is a quotient of $\LieAlg{so}(\tilde{H}(S^{[n]},\RationalNumbers))/\LieAlg{b}$ and so 
it restricts to a non-trivial representation of the subalgebra $\LieAlg{so}(U_\RationalNumbers)$, a contradiction.
} 
$\LieAlg{so}(\tilde{H}(S^{[n]},\RationalNumbers))$.
Hence, the Zariski closure of the image of $DMon(X)$ via (\ref{eq-action-of-DMon-on-Mukai-lattice}) contains $SO(\tilde{H}(X,\RationalNumbers))$, for every $X$ of $K3^{[n]}$-type.
\end{rem}
}

Let $SH^*(X,\RationalNumbers)$ be the subalgebra of $H^*(X,\RationalNumbers)$ generated by $H^2(X,\RationalNumbers)$. 
Then $SH^*(X,\RationalNumbers)$ is an irreducible $\LieAlg{g}$-sub-module of $H^*(X,\QQ)$, which is isomorphic to the irreducible submodule of the symmetric power
$\Sym^n(\tilde{H}(X,\RationalNumbers))$ containing $\alpha^n$, by \cite[Prop. 3.5 and 3.7]{taelman}. The embedding is given by
\begin{eqnarray}
\label{eq-Psi}
\Psi:SH^*(X,\RationalNumbers)&\rightarrow &\Sym^n(\tilde{H}(X,\RationalNumbers)),
\\
\nonumber
\lambda_1\cdots\lambda_n & \mapsto & e_{\lambda_1}\cdots e_{\lambda_n}(\alpha^n/n!),
\end{eqnarray}
where $\lambda_i\in H^2(X,\QQ)$, $1\leq i\leq n$, and 
$e_\lambda(x_1\cdots x_n):=\sum_ix_1\cdots e_\lambda(x_i)\cdots x_n$. Note that $\Psi(1)=\alpha^n/n!$ and
\begin{equation}
\label{eq-Psi-pt}
\Psi([pt])=\beta^n/c_X, 
\end{equation}
where $c_X$ is the Fujiki constant, 
by  \cite[Lemma 3.6]{taelman}.
The image of $\Psi$ is equal to the kernel of the $\LieAlg{g}$-module homomorphism
\begin{equation}
\label{eq-Delta}
\Delta:\Sym^n(\tilde{H}(X,\RationalNumbers))\rightarrow \Sym^{n-2}(\tilde{H}(X,\RationalNumbers)),
\end{equation}
$\Delta(x_1\cdots x_n)=\sum_{i<j}(x_i,x_j)x_1\cdots \hat{x}_i\cdots \hat{x}_j\cdots x_n$, by \cite[Lemma 3.7]{taelman}.
Hence, the image of $\Psi$ is equal to the subspace spanned by $n$-th powers of isotropic elements of $\tilde{H}(X,\RationalNumbers)$. The latter subspace is an irreducible $\LieAlg{g}$-submodule that appears with multiplicity one in $\Sym^n(\tilde{H}(X,\RationalNumbers))$.
We get a natural $\LieAlg{g}$-equivariant projection
\begin{equation}
\label{eq-projection-to-Im-Psi}
\Sym^n(\tilde{H}(X,\RationalNumbers)) \rightarrow Im(\Psi).
\end{equation}

The submodule $SH^*(X,\QQ)$ appears with multiplicity one in $H^*(X,\QQ)$, by \cite[Prop. 2.32]{GKLR}, so we get a $\LieAlg{g}$-equivariant projection 
\begin{equation}
\label{eq-projection-to-verbitsky-component}
H^*(X,\QQ)\rightarrow SH^*(X,\QQ).
\end{equation}

\begin{example}
\label{example-projection-to-Im-Psi}
Suppose that $\dim(X)=4$, so that $n=2$. 
Let $\tilde{q}\in\Sym^2(\tilde{H}(X,\QQ))$ be the unique $\LieAlg{g}$-invariant class, such that 
$\Delta(\tilde{q})=1$. If $\{x_i\}$ is an orthogonal basis of $\tilde{H}(X,\QQ)$, 
then $\tilde{q}=\frac{1}{\dim\tilde{H}(X,\RationalNumbers)}\sum_i\frac{1}{(x_i,x_i)}x_i^2$.
The projection from $\Sym^2(\tilde{H}(X,\RationalNumbers))$ to 
the image of $\Psi:SH^*(X,\RationalNumbers)\rightarrow \Sym^2(\tilde{H}(X,\RationalNumbers))$ maps
the square $\gamma^2$ of a class
$\gamma\in \tilde{H}(X,\QQ)$ to $\gamma^2-(\gamma,\gamma)\tilde{q}$.
\end{example}

Theorem A in \cite{taelman} states that the morphism $\phi$ in $\Hom_{IHSM}((X,\epsilon_1),(Y,\epsilon_2))$ induced by an equivalence of derived categories $\Phi:D^b(X)\rightarrow D^b(Y)$ conjugates the LLV algebra $\LieAlg{g}_X$ to
$\LieAlg{g}_Y$. The latter statement is clear for parallel-transport operators, and so holds for every morphism $\phi$
in $\Hom_{IHSM}((X,\epsilon_1),(Y,\epsilon_2))$.
It follows that $\phi$ maps $SH^*(X,\QQ)$ to $SH^*(Y,\QQ)$, as each is an irreducible subrepresentation of the LLV algebra, which appears with multiplicity $1$. Denote by
\[
SH(\phi): SH^*(X,\QQ)\rightarrow SH^*(Y,\QQ)
\]
the restriction of $\phi$ to $SH^*(X,\QQ)$. 

\begin{thm} 
\label{thm-def-of-H-tilde}
(\cite[Theorems 4.7 and 4.9]{taelman})
Assume that $X$ and $Y$ are of dimension $2n$ and $\phi$ is a morphism in $\Hom_{IHSM}((X,\epsilon_1),(Y,\epsilon_2))$.
There exists a unique isometry $\widetilde{H}(\phi):\widetilde{H}(X,\QQ)\rightarrow \widetilde{H}(Y,\QQ)$ making the following diagram commutative
\[\xymatrixcolsep{5pc}
\xymatrix{
SH^*(X,\QQ) \ar[r]^{\epsilon(\widetilde{H}(\phi))SH(\phi)}
\ar[d]_\Psi &
SH^*(Y,\QQ) \ar[d]^\Psi
\\
\Sym^n\widetilde{H}(X,\QQ) \ar[r]_{\Sym^n(\widetilde{H}(\phi))} &
\Sym^n\widetilde{H}(Y,\QQ),
}
\]
where 
$\epsilon(\widetilde{H}(\phi))=\left\{\begin{array}{ccl}
1 & \mbox{if} & n \ \mbox{is odd},
\\
1 & \mbox{if} & n \ \mbox{is even and} \ \widetilde{H}(\phi) \ \mbox{is orientation preserving,}
\\
-1& \mbox{if} & n \ \mbox{is even and} \ \widetilde{H}(\phi) \ \mbox{is orientation reversing.} 
\end{array}\right.$
\end{thm}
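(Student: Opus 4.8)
The plan is to realise $\widetilde{H}(\phi)$ as the isometry underlying the restriction of $\phi$ to the Looijenga--Lunts--Verbitsky representation, using Schur's lemma for existence and essential uniqueness and the Mukai pairing to fix its scale and sign. Since every morphism in $IHSM$ is a composition of parallel transport operators and morphisms induced by derived equivalences, and for a parallel transport operator the claim is immediate (take $\widetilde{H}(\phi)$ to be the induced isometry on $H^2$ extended by the identity on $U_\QQ$, corrected by a sign to be orientation preserving when $\dim_\CC X$ is divisible by $4$), it suffices to treat a single $\phi$ coming from an equivalence $\Phi:D^b(X)\to D^b(Y)$ and then observe that the construction is compatible with composition, which follows from the uniqueness statement below.

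First I would assemble the input already recorded above: conjugation by $\phi$ yields a Lie algebra isomorphism $\psi:\LieAlg{g}_X\to\LieAlg{g}_Y$, and $\phi$ restricts to a $\psi$-semilinear isomorphism $SH(\phi):SH^*(X,\QQ)\to SH^*(Y,\QQ)$ of the irreducible $\LieAlg{g}$-modules $SH^*$, which via $\Psi$ are identified with the submodules $V(n)\subset\Sym^n\widetilde{H}(X,\QQ)$ and $V(n)\subset\Sym^n\widetilde{H}(Y,\QQ)$ spanned by $n$-th powers of isotropic vectors. Then I would recognise the defining representation: the highest weight of $V(n)$ is $n$ times that of $\widetilde{H}$, so $\psi$, which matches the two copies of $V(n)$, matches their highest weights, and since $n\geq 1$ this forces $\psi$ to match the two defining representations (the only possible obstruction, the triality of $\LieAlg{so}_8$ when $b_2(X)=6$, is excluded by $n\geq 1$). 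Hence there is a $\psi$-semilinear $\QQ$-linear isomorphism $T:\widetilde{H}(X,\QQ)\to\widetilde{H}(Y,\QQ)$, unique up to a scalar in $\QQ^\times$ by Schur's lemma; as $T$ is conformal it carries isotropic vectors to isotropic vectors, so $\Sym^n(T)$ carries $V(n)$ to $V(n)$, and by Schur again there is a unique $\mu\in\QQ^\times$ with $\Psi\circ SH(\phi)\circ\Psi^{-1}=\mu\cdot\Sym^n(T)|_{V(n)}$.

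The main obstacle will be normalising $T$ and pinning the sign. Here I would bring in the Mukai pairing $\langle v,w\rangle=\int_X v^\vee w$, preserved up to an overall sign by the cohomological action of a derived equivalence and preserved outright by parallel transport; its restriction to the canonical summand $SH^*$ is non-degenerate and is intertwined by $\Psi$ with the pairing induced on $V(n)$ by that of $\widetilde{H}$ up to a universal constant depending only on $n$ and the Fujiki constant, hence the same for $X$ and $Y$ as they are deformation equivalent. Combined with the identity for $\mu$ this gives $\mu^2\kappa^n=\pm 1$, where $\kappa$ is the factor by which $T$ rescales the BBF form; the technical heart is then to show $T$ may be scaled to a genuine isometry (so $\kappa=1$) --- using that the signature of $\widetilde{H}$ is $(4,b_2-2)$ with $b_2\neq 6$ for the relevant manifolds, together with a finer analysis of the Mukai pairing on the full $H^*$ --- after which $\mu=\pm1$. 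Put $\widetilde{H}(\phi):=T$; for $n$ odd additionally fix the sign of $T$ so that $\mu=1$, which is possible since $\Sym^n(-T)=-\Sym^n(T)$ in that case, so $\epsilon=1$; for $n$ even $\Sym^n(-T)=\Sym^n(T)$, so $\mu$ is intrinsic and we set $\epsilon:=\mu$. This makes the displayed square commute. To identify $\epsilon$ with the orientation sign of $\widetilde{H}(\phi)$ when $n$ is even, I would note that $\phi\mapsto\epsilon$ and $\phi\mapsto(\text{orientation sign of }\widetilde{H}(\phi))$ are both characters of $DMon(X)$ and verify their equality on generators --- shifts, tensorisation by line bundles, $\Mon^2(X)$, spherical twists --- or, for $X$ of $K3^{[n]}$-type, by pulling back along the functor ${}^{[n]}$ to the classical $n=1$ case; this is where the hypothesis that $b_2(X)$ be odd when $\dim_\CC X\equiv 0\pmod 4$ enters, making $-\mathrm{id}_{\widetilde{H}(X,\QQ)}$ orientation reversing.

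Finally, uniqueness. If an isometry $T'$ also makes the square commute then $T'=cT$ for some $c\in\QQ^\times$, and being an isometry forces $c=\pm1$; commutativity gives $c^n\Sym^n(T)|_{V(n)}=\Sym^n(T)|_{V(n)}$, hence $c^n=1$, so $c=1$ when $n$ is odd. When $n$ is even, $-T$ leaves $\Sym^n(T)$ unchanged but, since $\widetilde{H}(X,\QQ)$ is then odd dimensional, reverses orientation, hence changes $\epsilon$, so $-T$ fails the signed square. In all cases $\widetilde{H}(\phi)$ is unique.
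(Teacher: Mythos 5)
This theorem is imported verbatim from Taelman and the paper does not reprove it; the only in-house commentary is Remark \ref{rem-functor-chi}, which records that Taelman first produces $\widetilde{H}(\phi)$ \emph{uniquely up to sign} and then uses the orientation data $\epsilon_X,\epsilon_Y$ (via the connectedness of the real cone $Isot_{X,\RR}$ of $n$-th powers of isotropic classes) to pin the sign. Your outline follows the same overall strategy as Taelman's: reduce to a single derived equivalence, use that $\phi$ conjugates $\LieAlg{g}_X$ to $\LieAlg{g}_Y$ and carries the Verbitsky component to the Verbitsky component, recognize the defining representation inside $V(n)$, apply Schur's lemma to get a conformal map $T$ unique up to scalar, and use the Mukai pairing to normalize. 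Your uniqueness argument is fine, and your worry about ``identifying $\epsilon$ with the orientation sign via characters of $DMon(X)$'' is unnecessary: once one knows the intrinsic sign $\mu\in\{\pm1\}$ and that exactly one of $\pm T$ is orientation preserving (here is where oddness of $b_2$ when $4\mid\dim_\CC X$ enters, since then $\dim\widetilde{H}$ is odd and $-\mathrm{id}$ reverses orientation), one simply \emph{chooses} $\widetilde{H}(\phi)$ to be whichever of $\pm T$ has $\epsilon(\pm T)=\mu$; no further verification on generators is needed, and for morphisms between distinct $X$ and $Y$ the character argument would not even make sense.

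The genuine gap is exactly where you flag ``the technical heart'': showing that the conformal factor $\kappa$ by which $T$ rescales the BBF pairing can be normalized to $1$ over $\QQ$. The signature comparison only yields $\kappa>0$; to rescale $T$ to an honest rational isometry you need $\kappa$ to be a \emph{square} in $\QQ^\times$. The Mukai-pairing identity you invoke gives $\mu^2\kappa^n=1$, which does force $\kappa$ to be a rational square when $n$ is odd (write $\kappa=\kappa^n/(\kappa^{(n-1)/2})^2$), but for $n$ even the relation $\kappa^n=\mu^{-2}$ carries no information about $\kappa$ beyond positivity, and the ``finer analysis of the Mukai pairing on the full $H^*$'' that would close this is precisely the nontrivial content of Taelman's Theorem 4.7 (his argument uses additional structure --- e.g.\ the images of concrete rational classes such as $\Psi(1)=\alpha^n/n!$ and $\Psi([pt])=\beta^n/c_X$ and the integrality/rationality of the cohomological Fourier--Mukai transform --- not merely the abstract module isomorphism). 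As written, your proposal establishes existence of a rational conformal similitude and an isometry over $\RR$, but not the rational isometry asserted by the theorem; that step needs to be supplied rather than deferred.
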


\begin{rem}
\label{rem-functor-chi}
Assume $n$ is even. In that case $\Sym^n(\widetilde{H}(\phi))=\Sym^n(-\widetilde{H}(\phi))$. 
Let $IHSM_n$ be the full subcategory of $IHSM$, whose objects $(X,\epsilon_X)$ consist of $X$ of dimension $2n$.
There is a functor $\chi$ from the groupoid $IHSM_n$ to the group $\mu_2:=\{\pm 1\}$, which is defined as follows. The image of $\Psi_X:SH^*(X,\QQ)\rightarrow \Sym^n\widetilde{H}(X,\QQ)$ is an irreducible $\LieAlg{g}_X:=\LieAlg{so}(\widetilde{H}(X,\QQ))$-representation spanned by the subvariety $Isot_X$ of $n$-th powers of non-zero isotropic classes in 
$\widetilde{H}(X,\QQ)$. The analogous subvariety $Isot_{X,\RR}$  of $\Sym^n\widetilde{H}(X,\RR)$ is connected (in the classical topology) being the image of the complement of the origin in the quadric cone in $\widetilde{H}(X,\RR)$.
Indeed, the signature of  $\widetilde{H}(X,\RR)$ is $(4,b_2(X)-2)$ and so the complement of the origin in the quadric cone in $\widetilde{H}(X,\RR)$ is homotopic to the product of a $3$-sphere and a $b_2(X)-3$ sphere. The image 
$-Isot_{X,\RR}$ of $Isot_{X,\RR}$ under multiplication by $-1$ is disjoint from $Isot_{X,\RR}$, since $n$ is even. 
If $\phi$ belongs to
$\Hom_{IHSM}((X,\epsilon_X),(Y,\epsilon_Y))$, then $\Psi_Y\circ SH(\phi)\circ \Psi_X^{-1}$ maps 
$Isot_{X,\RR}$ isomorphically to one of  $Isot_{Y,\RR}$ or $-Isot_{Y,\RR}$, and we set $\chi(\phi)=1$, if
the image is $Isot_{Y,\RR}$ and $-1$ if it is $-Isot_{Y,\RR}$. Note that the functor $\chi$ is independent of the data of orientations.
If $\phi$ arrises from a parallel transport operator, then $\chi(\phi)=1$. If $\phi$ corresponds to an equivalence of derived categories $\Phi:D^b(X)\rightarrow D^b(Y)$ mapping the sky-scraper sheaf of some point to an object $F$ of positive rank, then 
$\chi(\phi)=1$, since then $(\Psi_Y\circ SH(\phi)\circ \Psi_X^{-1})(\beta^n/c_X)=\Psi(v(F))$ and both $c_X$ and the coefficient $\rank(F)/n!$ of $\alpha^n$
in $\Psi(v(F))$ are positive.
Taelman first proves that there exists an isometry 
$\widetilde{H}(\phi)$, unique up to sign, such that the diagram
\[\xymatrixcolsep{5pc}
\xymatrix{
SH^*(X,\QQ) \ar[r]^{\chi(\phi)SH(\phi)}
\ar[d]_{\Psi_X} &
SH^*(Y,\QQ) \ar[d]^{\Psi_Y}
\\
\Sym^n\widetilde{H}(X,\QQ) \ar[r]_{\Sym^n(\widetilde{H}(\phi))} &
\Sym^n\widetilde{H}(Y,\QQ)
}
\]
commutes. The orientations of the objects in $IHSM_n$ are introduced in order to determine the sign of $\widetilde{H}(\phi)$ so that 
$\chi(\phi)=\epsilon(\widetilde{H}(\phi))$.
\end{rem}

\begin{rem}
\label{rem-Psi-natural-transformation}
We sketch the more functorial description of $\Psi$ as a natural transformation. We have the functor $SH:IHSM\rightarrow Lat$ sending an irreducible holomorphic symplectic manifold $X$ to the subring $SH^*(X,\QQ)$ of $H^*(X,\QQ)$, forgetting the ring structure but endowing it with the restriction of the Mukai pairing of $H^*(X,\QQ)$ \cite[Sec. 3.2]{taelman}. We have the endofunctor 
$Isot_n:Lat\rightarrow Lat$ sending a rational  vector space $V$ with a bilinear pairing to the subspace $V(n)$  of $\Sym^n(V)$ spanned by $n$-th powers of isotropic vectors. 
We have the full functor $F_n:IHSM_n\rightarrow IHSM$ from the category $IHSM_n$ of irreducible holomorphic symplectic manifolds of dimension $2n$ into that without dimension restriction.
Then the above construction produces the natural transformation 
$\Psi:SH\circ F_n \rightarrow Isot_n\circ \tilde{H}\circ F_n$.
\end{rem}

%
\section{The LLV line of an object of $D^b(X)$ which deforms in co-dimension $1$ in $HH^2(X)$}
\label{sec-the-Mukai-line-of-a-maximally-deformable-F}
Let $X$ be a projective irreducible holomorphic symplectic manifold. 
In Section \ref{sec-the-Hochschild-lie-algebra} we review results of Taelman and Verbitsky 
stating that two Lie subalgebras of $\End(H^*(X,\CC))$ coincide, one is 
 the LLV Lie algebra $\LieAlg{g}_{X,\CC}$ of $X$, defined in terms of the cohomology ring of $X$, and the other is an analogous Lie algebra defined in terms of the Hochschield cohomology of $X$. 
 The vector space $HT^2(X)$ naturally embeds in $\LieAlg{g}_{X,\CC}$. 
 
 In Section \ref{sec-subalgebras-of-LLV-generated-by-a-hyperplane} 
 we show that if a hyperplane $\Sigma$ in $HT^2(X)$ annihilates a non-zero rational class in the subspace of
 $\Sym^n\widetilde{H}(X,\QQ)$, $n=\frac{1}{2}\dim_\CC(X)$, spanned by powers of isotropic classes, then $\Sigma$ and its complex conjugate generate 
 the Lie subalgebra of $\LieAlg{g}_{X,\CC}$ annihilating some non-zero rational class in $\widetilde{H}(X,\QQ)$
 (Lemma \ref{lemma-pde}).

Let $F$ be an object  in $D^b(X)$.
In Section \ref{sec-LLV-subspace} we show that the kernel in $HT^2(X)$ of the obstruction map $\obs^{HT}_F$, given in (\ref{eq-obstruction-map}),  
is mapped via the Duflo automorphism of $HT^2(X)$ into the stabilizer of the Mukai vector $v(F)$ of $F$. We use a result of Huang relating $\obs^{HT}_F$ and $\obs_F$, given in (\ref{eq-obs-introduction})
\cite{huang}. 

In section \ref{sec-LLV-line} we associate   to every object $F$ in $D^b(X)$ with a rank $1$ obstruction map 
$HT^2(X)\rightarrow \Hom(F,F[2])$
a line $\ell(F)$ in 
$\tilde{H}(X,\RationalNumbers)$ (Theorem \ref{thm-Mukai-vector}). 
The construction is equivariant with respect to the $\Aut(D^b(X))$-action. 
We calculate the line $\ell(F)$ when $F$ is a sky-scraper sheaf (Example \ref{example-Mukai-line-of-sky-scraper-sheaf}) and when $F$ is the structure sheaf $\StructureSheaf{X}$ of $X$ of $K3^{[n]}$ and generalized Kummer deformation types (Lemma \ref{lemma-LLV-line-of-structure-sheaf-k3-type}).

In Section \ref{sec-torsion-sheaves-with-rank-1-obstruction-map}
we show that if a torsion sheaf $F$ has a rank $1$ obstruction map and is 
supported on a proper irreducible subvariety $Z$ of $X$, then $Z$ is either a lagrangian subvariety or a point (Theorem \ref{thm-support-is-either-lagrangian-or-point}). We calculate the line $\ell(F)$ for the structure sheaf of a lagrangian subvariety (Lemma \ref{lemma-Mukai-line-of-structure-sheaf-of-subcanonical-lagrangian}). 

%
\subsection{The Hochschild Lie algebra of a holomorphic symplectic manifold}
\label{sec-the-Hochschild-lie-algebra}

Let $X$ be a $2n$-dimensional irreducible holomorphic symplectic manifold. Let $h$ be the endomorphism of $H^*(X,\CC)$ acting on $H^k(X,\CC)$ by multiplication by $k-2n$.
Let $h'$ be the endomorphism of $H^*(X,\CC)$ acting on $H^{p,q}(X,\CC)$ by multiplication by $q-p$. 
Choose a holomorphic symplectic form $\sigma$ in $H^0(X,\Omega^2_X)$. 
The volume form $\sigma^n$ induces the isomorphism of vector bundles $\gamma:\wedge^iTX\rightarrow \Omega^{2n-i}_X$.
Denote by $\gamma:HT^*(X)\rightarrow H^*(X,\CC)$ the induced isomorphism. Set 
$H\Omega_*(X):=\oplus_{p,q}H^{p,q}(X)$.
We identify $H\Omega_*(X)$ with $H^*(X,\CC)$ via the Hodge decomposition, but the grading of $H\Omega_*(X)$ is determined by $h'$, i.e., 
\[
H\Omega_d(X)=\bigoplus_{q-p=d}H^{p,q}(X).
\]
Let
\begin{equation}
\label{eq-m}
m:HT^*(X)\rightarrow \End(H\Omega_*(X))
\end{equation}
be the associative algebras homomorphism corresponding to the $HT^*(X)$-module structure.
The isomorphism $\gamma$ relates the ring structure of $HT^*(X)$ with the  $HT^*(X)$-module structure of  $H\Omega_*(X)$ as follows. Given $a\in HT^{2i}(X)$ and $b\in HT^*(X)$ we have 
\begin{equation}
\label{eq-ring-structure-versus-module-structure}
m_a(\gamma(b))=\gamma(a\wedge b).
\end{equation}
For $a\in HT^k(X)$, the endomorphism $m_a$ has degree $k$ with respect to the grading $h'$, i.e. 
$m_a:H\Omega_i(X)\rightarrow H\Omega_{i+k}(X)$.

We say that $m_e$, $e\in HT^2(X)$, has the {\em hard Lefschetz property} with respect to $h'$, if there exists an element $f\in \End(H\Omega_*(X))$, such that the triple $(m_e,h',f)$ is an $\LieAlg{sl}_2$ triple, i,e.,
\[
[h',m_e]=2m_e, \ [h',f]=-2f, \ [m_e,f]=h'.
\]
The symplectic form induces the isomorphism $\sigma:\wedge^iTX\rightarrow \Omega^i_X$. On the level of cohomology it induces a graded ring isomorphism $\sigma:HT^*(X)\rightarrow H^*(X,\CC)$. 
A Zariski dense open subset of $H^2(X,\CC)$ consists of elements $e$ which have the Hard Lefschetz property with respect to $h$,
by the Hard Lefschetz Theorem.
It follows that a Zariski dense open subset of $HT^2(X)$ consists of elements $e$, such that the endomorphism of $HT^*(X)$ of multiplication by $e$ has the Hard Lefschetz property with respect to the grading of $HT^*(X)$. Consequently, a Zariski dense open subset of $HT^2(X)$ consists of elements $e$, such that the endomorphism $m_e$ of $H\Omega_*(X)$ has the Hard Lefschetz property with respect to 
the grading $h'$, since $\gamma$ is an isomorphism of graded $HT^*(X)$-modules, by Equation (\ref{eq-ring-structure-versus-module-structure}).
Let $\LieAlg{g}'$ be the subalgebra of $\End(H\Omega_*(X))$ generated by all $\LieAlg{sl}_2$ triples $(m_e,h',f)$, where $e$ is in $HT^2(X)$.

Composing the isomorphism $\gamma$ with the inverse of $\sigma$ we get the isomorphism $\eta=\gamma\circ \sigma^{-1}:\Omega^j_X\rightarrow \Omega^{2n-j}_X$ inducing the isomorphism
\[
\eta^{i,j}:H^i(\Omega^j_X)\rightarrow H^i(\Omega^{2n-j}_X).
\]
Set $\eta:=\oplus_{0\leq i,j\leq 2n}\eta^{i,j}:H^*(X,\CC)\rightarrow H^*(X,\CC)$. The following equality is evident
\[
\eta\circ h \circ \eta^{-1} = h'.
\]
Equation (\ref{eq-ring-structure-versus-module-structure}) translates to 
\begin{equation}
\label{eq-cohomology-ring-structure-versus-module-structure}
m_a(\gamma(b))=\eta(\sigma(a)\cup\sigma(b)),
\end{equation}
for $a\in HT^{2i}(X)$, 
where on the right the cup product is that of the cohomology ring structure. 
Denote by $e_{\sigma(a)}$ the endomorphism of $H^*(X,\CC)$ of multiplication by $\sigma(a)$, for $a\in HT^2(X)$. 
Equation (\ref{eq-cohomology-ring-structure-versus-module-structure}) yields 
\begin{equation}
\label{eq-eta-conjugates-m-sigma-a-to-m-a}
\eta\circ e_{\sigma(a)}\circ \eta^{-1}=m_{a} 
\end{equation}
and $(e_{\sigma(a)},h,f)$ is a $\LieAlg{sl}_2$ triple, if and only if $(m_a,h',\eta\circ f\circ \eta^{-1})$
is. Thus, $\eta$ conjugates  $\LieAlg{g}_\CC$ to $\LieAlg{g}'$.
\begin{equation}
\label{eq-eta-conjugates-Lie-algebra-g-to-g-prime}
\eta\LieAlg{g}_\CC\eta^{-1}=\LieAlg{g}'.
\end{equation}

The group $\Spin(\tilde{H}(X,\CC))$ acts on $H^*(X,\CC)$ integrating the infinitesimal action of the LLV Lie algebra $\LieAlg{g}_\CC:=\LieAlg{so}(\tilde{H}(X,\CC)).$
Denote the resulting representation by  
\begin{equation}
\label{eq-rho}
\rho: \Spin(\tilde{H}(X,\CC)) \rightarrow GL(H^*(X,\CC)).
\end{equation}
It is known that  $\rho$ is injective, if the odd cohomology of $X$ is non trivial. Furthermore, the restriction of $\rho$ to the even cohomology factors through an injective homomorphism  
\begin{equation}
\label{eq-bar-rho}
\bar{\rho}:SO(\tilde{H}(X,\CC))\rightarrow GL(H^{ev}(X,\CC)).
\end{equation}
The following is a fundamental result of Verbitsky.

\begin{thm}
\label{thm-Verbitsky}
\cite[Theorem 9.1 and 9.7(i)]{verbitsky-mirror-symmetry}
The operator $\eta$ is an involution and it belongs to the image of $\rho$. 
\end{thm}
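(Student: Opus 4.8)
\medskip
\noindent\emph{Proof strategy.}
The plan is to identify $\eta$ with a concrete operator of symplectic linear algebra and then to recognize that operator inside $\rho(\Spin(\tilde H(X,\CC)))$. First I would observe that, fibrewise, $\eta=\gamma\circ\sigma^{-1}$ is the \emph{symplectic Hodge star} $\star_\sigma$ attached to the nondegenerate holomorphic form $\sigma$: the map $\sigma^{-1}$ raises indices by $\sigma$, $\gamma$ is contraction with the (suitably normalized) volume form $\sigma^n$, and this composite is precisely $\star_\sigma$ on a symplectic vector space, interchanging $\wedge^p\Omega^1_X$ and $\wedge^{2n-p}\Omega^1_X$. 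Since $\star_\sigma$ is assembled from wedging with $\sigma$ and contracting with the dual holomorphic Poisson bivector, it commutes with $\bar\partial$ and so descends to Dolbeault cohomology $H^*(X,\CC)=\bigoplus_{p,q}H^q(X,\Omega^p_X)$, producing the operator $\eta$ of the statement. It is a standard fact of symplectic linear algebra that $\star_\sigma^2=\mathrm{id}$; complexifying and applying this fibrewise gives $\eta^2=\mathrm{id}$. The delicate point here is fixing the normalization of $\gamma$ coming from $\sigma^n$ so that one really lands on $\star_\sigma$ and not a rescaling; the statement of the theorem tells us this is the normalization for which $\eta$ is an involution.

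For the second assertion I would realize $\star_\sigma$ through the holomorphic symplectic Lefschetz $\LieAlg{sl}_2$-triple and check that this triple lies in the LLV algebra. Take $L_\sigma$ to be cup product with $[\sigma]$; by Theorem \ref{thm-LLV-Lie-algebra} this is the operator $e_{[\sigma]}$ of \eqref{eq-e-lambda} (extended $\CC$-linearly in the class), and since $[\sigma]$ is BBF-isotropic it lies in $\LieAlg{g}_\CC=\LieAlg{so}(\tilde H(X,\CC))$. Let $h''$ be the operator acting on $H^q(X,\Omega^p_X)$ by $p-n$; it equals $\tfrac12\bigl(h-f_I/\sqrt{-1}\bigr)$, with $h\in\LieAlg{g}$ by Theorem \ref{thm-LLV-Lie-algebra} and $f_I\in\bar{\LieAlg g}_\RR$ by \cite[Theorem 8.1]{verbitsky-mirror-symmetry}, hence $h''\in\LieAlg{g}_\CC$. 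Let $\Lambda_\sigma=\tfrac12(\Lambda_J-\sqrt{-1}\Lambda_K)$ be the operator already used in the proof of Lemma \ref{lemma-alpha-remains-of-Hodge-type}; up to the scalar $(\sigma,\bar\sigma)$ it is the element $f_{[\bar\sigma]}$ of $\LieAlg{g}_\CC$. A direct computation on $\tilde H(X,\CC)$ shows $(L_\sigma,h'',\Lambda_\sigma)$ is an $\LieAlg{sl}_2$-triple, so under the LLV representation it acts on $H^*(X,\CC)$ as a genuine $\LieAlg{sl}_2$-module, $h''$ being diagonalizable with integral weights symmetric about $0$; in particular $L_\sigma^{\,k}$ gives the usual Lefschetz isomorphisms and $\star_\sigma$ is the associated Lefschetz--Hodge ("star") operator, acting on a $\sigma$-primitive class of $h''$-weight $-k$ by an explicit scalar times $L_\sigma^{\,k}$. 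Writing it out — as a weight-dependent scalar correction times $\exp(L_\sigma)\exp(-\Lambda_\sigma)\exp(L_\sigma)$ — it lies in the subgroup of $GL(H^*(X,\CC))$ generated by $\exp(\LieAlg{g}_\CC)$, which is exactly $\rho(\Spin(\tilde H(X,\CC)))$, since $d\rho$ is the tautological action of $\LieAlg{so}(\tilde H(X,\CC))$ and $\Spin$ is connected. Hence $\eta\in\mathrm{image}(\rho)$. As a consistency check, this recovers $\eta\LieAlg{g}_\CC\eta^{-1}=\LieAlg{g}'$ of \eqref{eq-eta-conjugates-Lie-algebra-g-to-g-prime}, and together with the known inclusion $\LieAlg{g}'\subseteq\LieAlg{g}_\CC$ it forces $\LieAlg{g}'=\LieAlg{g}_\CC$.

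The hard part will be the quantitative matching in the last step: determining exactly which element of the $\LieAlg{sl}_2$-group and which weight-dependent scalar represent $\star_\sigma$, that is, reconciling the classical signs in the Lefschetz--Hodge formula (which depend on the primitive degree, not merely on the $h''$-weight) with $\eta^2=\mathrm{id}$ from the first step and with membership in $\rho(\Spin)$ rather than merely in the normalizer of $\LieAlg{g}_\CC$ in $GL(H^*(X,\CC))$. The structural inputs — Theorem \ref{thm-LLV-Lie-algebra} and Verbitsky's \cite[Theorem 8.1]{verbitsky-mirror-symmetry}, together with the elementary symplectic linear algebra behind $\star_\sigma^2=\mathrm{id}$ — would be used as black boxes. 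A secondary subtlety is keeping the normalizations of $\gamma$, $L_\sigma$ and $\Lambda_\sigma$ consistent throughout, so that both assertions are proved for the \emph{same} $\eta$.
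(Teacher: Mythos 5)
You should first be aware that the paper does not prove this statement at all: it is quoted directly from Verbitsky \cite[Theorem 9.1 and 9.7(i)]{verbitsky-mirror-symmetry} and used as a black box, its only role here being to yield Corollary \ref{cor-image-of-m-is-in-LLV-algebra} and the equality $\LieAlg{g}_\CC=\LieAlg{g}'$. So there is no in-paper argument to measure yours against; what you have written is a reconstruction of Verbitsky's proof. Your two structural ingredients --- the fibrewise identification of $\eta=\gamma\circ\sigma^{-1}$ with the symplectic Hodge star of $\sigma$, and the holomorphic Lefschetz triple $(L_\sigma,h'',\Lambda_\sigma)$ sitting inside $\LieAlg{g}_\CC$ --- are the right ones and do match the shape of Verbitsky's argument.

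As a proof, however, the proposal stops exactly where the content is: the two places you flag as ``delicate'' and ``hard'' are genuine gaps, not routine verifications. For the involution, $\gamma$ is a concrete bundle map, so $\eta^2=\mathrm{id}$ is a concrete identity whose combinatorial factors must be checked; writing that ``the statement of the theorem tells us this is the normalization for which $\eta$ is an involution'' is circular, and with the unnormalized contraction against $\sigma^n$ one really does pick up factors of order $(n!)^2$ on the extreme graded pieces, so the normalization is not a free parameter one may adjust after the fact. For membership in $\rho(\Spin(\tilde{H}(X,\CC)))$, the discrepancy between $\eta$ and the Weyl element $w=\exp(L_\sigma)\exp(-\Lambda_\sigma)\exp(L_\sigma)$ is a scalar on each irreducible $\LieAlg{sl}_2$-summand whose sign involves $(-1)^{p(p+1)/2}$ in the $\sigma$-primitive degree $p$, via the classical symplectic-star formula. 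Already on a $K3$ surface one has $w^2=-1$ on $H^0(\StructureSheaf{X})\oplus H^0(\Omega^2_X)$ while $\eta^2=1$ there, so the correction is nontrivial; in that case it turns out to lie in $\exp(\LieAlg{g}_\CC)$ (it is essentially $\exp(\pi f_I/2)$ with $f_I$ as in (\ref{f-I})), but in general an operator acting by a sign depending quadratically on the primitive degree is not a priori of the form $\exp(ah+bh'+ch'')$, and showing that the total product nevertheless lands in $\rho(\Spin(\tilde{H}(X,\CC)))$ rather than merely in the normalizer of $\LieAlg{g}_\CC$ in $GL(H^*(X,\CC))$ is precisely the content of Verbitsky's Theorem 9.7(i). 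Until that quantitative matching is carried out, neither assertion of the theorem has been established by your argument.
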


Consequently, we get the equality 
\begin{equation}
\label{eq-of-two-Lie-algebras}
\LieAlg{g}_\CC=\LieAlg{g}', 
\end{equation}
since the left hand side of (\ref{eq-eta-conjugates-Lie-algebra-g-to-g-prime}) is equal to $\LieAlg{g}_\CC$. An alternative proof of Equality (\ref{eq-of-two-Lie-algebras}) is provided in \cite[Prop. 2.9]{taelman}. 

\begin{cor}
\label{cor-image-of-m-is-in-LLV-algebra}
The image $m(HT^2(X)))$ of $HT^2(X)$ in $\End(H\Omega_*(X))$, via the homomorphism $m$ given in (\ref{eq-m}), is contained in $\LieAlg{g}_\CC$, yielding
\begin{equation}
\label{eq-m-from-HT-2-into-LLV-algebra}
m: HT^2(X)\rightarrow \LieAlg{g}_\CC.
\end{equation}
The operator $h'$ belongs to $\LieAlg{g}_\CC$.
\end{cor}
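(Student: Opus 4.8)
The plan is to deduce the Corollary as a formal consequence of Equality (\ref{eq-of-two-Lie-algebras}), namely $\LieAlg{g}_\CC=\LieAlg{g}'$, together with the Hard Lefschetz density statement recorded just before the definition of $\LieAlg{g}'$.

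First I would note that the assignment $e\mapsto m_e$ is $\CC$-linear, since $m$ in (\ref{eq-m}) is an algebra homomorphism and so its restriction to the graded piece $HT^2(X)$ is linear. Consequently the set
\[
W:=\{\, e\in HT^2(X) \ : \ m_e\in\LieAlg{g}_\CC \,\}
\]
is a linear subspace of $HT^2(X)$, using that $\LieAlg{g}_\CC$ is a linear subspace of $\End(H\Omega_*(X))$ (under the identification of $H\Omega_*(X)$ with $H^*(X,\CC)$). By the Hard Lefschetz theorem, a Zariski dense open subset $U\subseteq HT^2(X)$ consists of elements $e$ for which $m_e$ has the Hard Lefschetz property with respect to $h'$; for each such $e$ there is $f\in\End(H\Omega_*(X))$ with $(m_e,h',f)$ an $\LieAlg{sl}_2$ triple, so $m_e$ lies in $\LieAlg{g}'$ by the very definition of $\LieAlg{g}'$, hence in $\LieAlg{g}_\CC$ by (\ref{eq-of-two-Lie-algebras}). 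Thus $U\subseteq W$. Since a linear subspace that contains a Zariski dense subset of the ambient vector space must be the whole space, we get $W=HT^2(X)$, which is exactly the assertion $m(HT^2(X))\subseteq\LieAlg{g}_\CC$ and yields the factorization (\ref{eq-m-from-HT-2-into-LLV-algebra}).

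For the final sentence I would simply observe that, fixing any $e\in U$, the operator $h'$ appears as a member of the $\LieAlg{sl}_2$ triple $(m_e,h',f)$, hence lies in the algebra $\LieAlg{g}'$ generated by all such triples; therefore $h'\in\LieAlg{g}'=\LieAlg{g}_\CC$ (equivalently, $h'=[m_e,f]$ with $m_e,f\in\LieAlg{g}'$). Since each step is a one-line consequence of results already proved in the section, I do not expect any genuine obstacle here; the only points deserving a word of care are the linearity of $e\mapsto m_e$ and the elementary fact that a proper linear subspace of a vector space cannot contain a Zariski dense subset, which is precisely what allows the conclusion to be upgraded from the dense locus $U$ to all of $HT^2(X)$.
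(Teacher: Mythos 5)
Your proof is correct and follows essentially the same route as the paper, which simply notes that $h'$ and $m_a$ (for $a$ in a Zariski dense subset of $HT^2(X)$) lie in $\LieAlg{g}'$ by definition and then invokes the equality $\LieAlg{g}'=\LieAlg{g}_\CC$; your version merely makes explicit the linearity-plus-density step that the paper leaves implicit.
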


\begin{proof}
The operator $h'$ belongs to $\LieAlg{g}'$, by definition, hence also to $\LieAlg{g}_\CC$.
Similarly, $m_a$ belongs to $\LieAlg{g}'$, for a Zariski dense open subset of $HT^2(X)$, by definition, hence also to $\LieAlg{g}_\CC$.
\end{proof}

The $\LieAlg{g}$-equivariance of $\Psi$, given in (\ref{eq-Psi}), and the fact that $\Psi$ maps the class $1\in SH^0(X,\CC)$
to $\alpha^n/n!\in \Sym^n(\tilde{H}(X,\CC))$ and the class $[pt]\in SH^{4n}(X,\CC)$ to $\beta^n/c_X$, induces a Hodge structure on the image of $\Psi$ such that $\alpha^n$ is of type $(0,0)$, $\beta^n$ is of type $(2n,2n)$, $\Psi$ is a morphism of Hodge structures, and the Hodge direct summand of weight $(p,q)$ in $Im(\Psi)$
is the common eigenspace of $h$ and  $h'$ of weights $p+q-2n$ and $q-p$ respectively. The Lie algebra $\LieAlg{g}$ is a sub Hodge structure of $\End(H^*(X,\QQ))$ and, given a class $\lambda\in H^{p,q}(X,\CC)$ with $p+q=2$, the operator 
$e_\lambda:SH^*(X,\CC)\rightarrow SH^*(X,\CC)$ has Hodge type $(p,q)$. Hence, the same holds for the operator $e_\lambda$ acting on $Im(\Psi)$ with respect to the induced Hodge structure. Any $\LieAlg{g}$-module gets endowed with a Hodge structure and in particular so does the rational Hodge lattice $\tilde{H}(X,\QQ)$ and the latter induces one on $\Sym^n(\tilde{H}(X,\CC))$ of which the image of $\Psi$ is a sub Hodge structure with respect to the same Hodge structure induced above. 
We will denote by $\tilde{H}^{p,q}_{\fine}(X)$ the Hodge direct summand of weight $(p,q)$, in order to distinguish it from the Hodge decomposition of the pure Hodge structure of weight $0$ on $\tilde{H}(X,\QQ)$ introduced in Section \ref{sec-Mukai-lattice}.
The class $\alpha$ has weight $(0,0)$ and the class $\beta$ has weight $(2,2)$ and the direct summand $H^2(X,\QQ)$ in $\tilde{H}(X,\QQ)$ is a sub Hodge structure with its original weight $2$ Hodge structure. 

Theorem \ref{thm-Verbitsky} determines an action of $\eta$ on $\tilde{H}(X,\CC)$, 
by the inverse of $\bar{\rho}$, given in (\ref{eq-bar-rho}), of the restriction of $\eta$ to the even cohomology of $X$.
Conjugation by the operator $\eta$ interchanges $h$ and $h'$. Hence, $\eta(\tilde{H}^{p,q}_\fine(X))=\tilde{H}^{2-p,q}_\fine(X)$.
Denote by $\sigma$ the class in $\tilde{H}^{2,0}_\fine(X,\CC)$ corresponding to the chosen symplectic form under the identification 
$\tilde{H}^{2,0}_\fine(X,\CC)=H^{2,0}(X)$. 
Given $\lambda\in H^i(\wedge^jTX)$, $i+j=2$, the operator $m_\lambda$, given in (\ref{eq-m-from-HT-2-into-LLV-algebra}), has Hodge type $(-j,i).$
Hence, $m_\lambda(\sigma)$ has Hodge type $(i,i)$. We get the homomorphism
\begin{eqnarray}
\label{eq-embedding-of-HT-2-in-Mukai-lattice}
\mu:HT^2(X)&\rightarrow & \tilde{H}(X,\CC)
\\
\nonumber
\lambda & \mapsto & m_\lambda(\sigma).
\end{eqnarray}
We have $(\eta\circ e_{\sigma(\lambda)}\circ\eta^{-1})(\sigma)=m_\lambda(\sigma)$, by 
(\ref{eq-eta-conjugates-m-sigma-a-to-m-a}). Now $\eta^{-1}(\sigma)=c\alpha$, for a non-zero scalar $c$, and the homomorphism
$\lambda\mapsto e_{\sigma(\lambda)}(\alpha)$ is injective. Hence, the homomorphism 
$\mu$ is injective as well. We have the commutative diagram
\[
\xymatrix{
H^2(X,\CC) \ar[r]^e \ar@/^2pc/[rr]^{\iota} & \LieAlg{g}_\CC \ar[r]^{ev_\alpha} & \tilde{H}(X,\CC)
\\
HT^2(X) \ar[u]^\sigma \ar[r]_m \ar@/_2pc/[rr]_{\mu}&  \LieAlg{g}_\CC \ar[u]^{Ad_\eta} \ar[r]_{ev_\sigma} & 
\tilde{H}(X,\CC), \ar[u]_{\frac{1}{c}\eta}
}
\]
where $\iota$ is the natural inclusion and $ev_\alpha$ and $ev_\sigma$ are the evaluations of an endomorphism of $\tilde{H}(X,\CC)$ on the elements $\alpha$ and $\sigma$. The top dome commutes by Equation (\ref{eq-e-lambda}) and the bottom by definition of $\mu$.
The left square commutes by Equation  
(\ref{eq-eta-conjugates-m-sigma-a-to-m-a}) and the right by the equality $\eta^{-1}(\sigma)=c\alpha$ and the fact that $\eta=\eta^{-1}$
(Theorem \ref{thm-Verbitsky}). Note the equalities
\begin{equation}
\label{eq-mu-takes-Poisson-tensor-to-alpha}
\mu(H^0(\wedge^2TX))=\CC\alpha, \ \ \ \mu(H^1(TX))=H^{1,1}(X), \ \ \ \mbox{and} \ \ \ \mu(H^2(\StructureSheaf{X}))=\CC\beta.
\end{equation}

The Hodge structure of Section \ref{sec-Mukai-lattice} is related to the above by setting 
$\tilde{H}^{1,-1}(X)=\tilde{H}^{2,0}_\fine(X)$, $\tilde{H}^{-1,1}(X)=\tilde{H}^{0,2}_\fine(X)$, and
\[
\tilde{H}^{0,0}(X)=\tilde{H}^{0,0}_\fine(X)\oplus \tilde{H}^{1,1}_\fine(X)\oplus \tilde{H}^{2,2}_\fine(X).
\]
In particular, the homomorphism $\mu$, given in (\ref{eq-embedding-of-HT-2-in-Mukai-lattice}), is an isomorphism
between $HT^2(X)$ and $\tilde{H}^{0,0}(X)$. The homomorphism $\mu$ thus pulls back the Mukai pairing to a non-degenerate pairing on $HT^2(X)$ satisfying 
\begin{equation}
\label{eq-Mukai-pairing-on-HT-2}
(\lambda_1,\lambda_2):=(m_{\lambda_1}(\sigma),m_{\lambda_2}(\sigma)),
\end{equation}
for all $\lambda_1,\lambda_2\in HT^2(X)$.
The homomorphism $\mu$
depends only on the choice of $\sigma$, and is thus canonical up to a scalar multiple. Hence, so is the pairing (\ref{eq-Mukai-pairing-on-HT-2}). In particular, a hyperplane $\Sigma$ in
$HT^2(X)$ determines the orthogonal line $\Sigma^\perp$ in  $HT^2(X)$, and hence the line $\ell(\Sigma)$ in 
$\tilde{H}^{0,0}(X)$, which is the image of $\Sigma^\perp$ via (\ref{eq-embedding-of-HT-2-in-Mukai-lattice}).
\begin{equation}
\label{eq-ell-Sigma}
\ell(\Sigma) := \mu(\Sigma^\perp):=\{m_\lambda(\sigma) \ : \ \lambda\in\Sigma^\perp\}.
\end{equation}

Assume $\sigma$ is normalized so that $(\sigma,\bar{\sigma})=1$.

\begin{lem}
\label{lemma-formula-for-m-lambda}
The following equality holds for all $\lambda_1,\lambda_2\in HT^2(X)$.
\begin{equation}
\label{eq-formula-for-m-lambda}
m_{\lambda_1}(m_{\lambda_2}(\sigma))=-(\mu(\lambda_1),\mu(\lambda_2))\bar{\sigma}.
\end{equation}
\end{lem}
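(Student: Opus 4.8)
The plan is to exploit the fact that $m:HT^2(X)\to\LieAlg{g}_\CC$ lands in the LLV algebra $\LieAlg{so}(\widetilde H(X,\CC))$, so each $m_{\lambda_i}$ acts as an orthogonal (skew) endomorphism of $\widetilde H(X,\CC)$ with respect to the Mukai pairing, and to combine this with the Hodge-type bookkeeping already set up in the excerpt. First I would record the Hodge-type constraints: for $\lambda\in HT^2(X)$ the operator $m_\lambda$ has fine Hodge type $(-j,i)$ on the various summands $H^i(\wedge^jTX)$, so $m_\lambda(\sigma)\in\widetilde H^{i,i}_{\fine}(X)$ lies in $\widetilde H^{0,0}(X)$ and, crucially, $m_\lambda$ kills $\bar\sigma=$ the class in $\widetilde H^{0,2}_{\fine}(X)$ coming from a summand of $H^*$ that $m_\lambda$ would have to map out of $\widetilde H$. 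More precisely, since $m_\lambda$ raises $h'$-degree by at most $2$ and $\widetilde H(X,\CC)$ only occupies $h'$-degrees $-2,0,2$ via $\sigma,\widetilde H^{0,0},\bar\sigma$, the composition $m_{\lambda_1}m_{\lambda_2}$ sends $\sigma$ (degree $-2$) to degree $\leq 2$, i.e. to a multiple of $\bar\sigma$ together with a piece in $\widetilde H^{0,0}$; but $m_{\lambda_1}m_{\lambda_2}(\sigma)$ has fine Hodge type with $q-p=-2+i_1+i_2$ shifting, and the only way to land in $\widetilde H$ at $h'$-degree $2$ is in $\CC\bar\sigma$. So $m_{\lambda_1}(m_{\lambda_2}(\sigma))=c(\lambda_1,\lambda_2)\bar\sigma$ for a scalar $c(\lambda_1,\lambda_2)$.

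Next I would pin down the scalar using skew-symmetry of $m_{\lambda_1}$ with respect to the Mukai pairing (this is exactly the content of $m_{\lambda_1}\in\LieAlg{so}(\widetilde H(X,\CC))$, Corollary \ref{cor-image-of-m-is-in-LLV-algebra}):
\[
(m_{\lambda_1}(m_{\lambda_2}(\sigma)),\sigma) \;=\; -\,(m_{\lambda_2}(\sigma),m_{\lambda_1}(\sigma)).
\]
On the left, using $m_{\lambda_1}(m_{\lambda_2}(\sigma))=c(\lambda_1,\lambda_2)\bar\sigma$ and the normalization $(\sigma,\bar\sigma)=1$, the left side equals $c(\lambda_1,\lambda_2)$. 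Therefore $c(\lambda_1,\lambda_2)=-(m_{\lambda_2}(\sigma),m_{\lambda_1}(\sigma))=-(m_{\lambda_1}(\sigma),m_{\lambda_2}(\sigma))=-(\mu(\lambda_1),\mu(\lambda_2))$ by definition of $\mu$ in (\ref{eq-embedding-of-HT-2-in-Mukai-lattice}) and symmetry of the pairing. This gives exactly (\ref{eq-formula-for-m-lambda}). Note this also reproves symmetry of the pairing (\ref{eq-Mukai-pairing-on-HT-2}) as a consistency check, and recovers the special case computed in Step 2 of the proof of Lemma \ref{lemma-alpha-remains-of-Hodge-type} (where $\iota_{\eta_1}(\iota_{\eta_2}(\sigma))=-(\iota_{\eta_1}(\sigma),\iota_{\eta_2}(\sigma))\bar\sigma$), so the argument is consistent with what is already in the paper.

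The main obstacle is justifying rigorously that $m_{\lambda_1}(m_{\lambda_2}(\sigma))$ is a scalar multiple of $\bar\sigma$, i.e. that there is no component in $\widetilde H^{0,0}(X)$ (or, a priori, outside $\widetilde H(X,\CC)$ altogether — but the latter is impossible since $m_{\lambda_i}\in\End(\widetilde H(X,\CC))$ after evaluating, the subtlety being that $m_{\lambda_i}$ is defined on all of $H\Omega_*(X)$ and we must use that it preserves the subspace $\widetilde H$). The clean way is to decompose $\lambda_2=\lambda_2^{(0)}+\lambda_2^{(1)}+\lambda_2^{(2)}$ along $H^0(\wedge^2TX)\oplus H^1(TX)\oplus H^2(\StructureSheaf X)$ and use (\ref{eq-mu-takes-Poisson-tensor-to-alpha}): $m_{\lambda_2^{(0)}}(\sigma)\in\CC\alpha$ is annihilated by $m_{\lambda_1^{(0)}}$ and $m_{\lambda_1^{(1)}}$, while $m_{\lambda_1^{(0)}}\in\LieAlg{so}$ sends $\alpha\mapsto$ scalar $\cdot\alpha$... actually the fastest route is simply the $h'$-degree argument above: $m_{\lambda_1}m_{\lambda_2}$ raises $h'$ by exactly $4$ only on the $H^0(\wedge^2TX)$-components and by less elsewhere, but acting on $\sigma$ (at $h'=-2$, the top of $\widetilde H$) the image stays in $\widetilde H$ hence at $h'=2$, forcing it into $\CC\bar\sigma$; here one must also invoke that $m_\lambda$ preserves $\widetilde H(X,\CC)\subset H\Omega_*(X)$ because $m_\lambda\in\LieAlg{g}_\CC$ and $\widetilde H$ is the standard representation. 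I would spell this out carefully and then the scalar identification via skew-symmetry is a one-line computation.
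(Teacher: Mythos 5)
Your proof is correct and follows essentially the same route as the paper: first the $h'$-degree argument forces $m_{\lambda_1}(m_{\lambda_2}(\sigma))$ into $\CC\bar{\sigma}$, and then skew-symmetry of $m_{\lambda_1}\in\LieAlg{g}_\CC$ with respect to the Mukai pairing, together with $(\sigma,\bar{\sigma})=1$, identifies the scalar as $-(\mu(\lambda_1),\mu(\lambda_2))$. The only wobble is your remark that $m_\lambda$ raises the $h'$-degree ``by at most $2$'' (or ``by exactly $4$ only on the $H^0(\wedge^2TX)$-components''): in fact every $m_\lambda$ with $\lambda\in HT^2(X)$ has $h'$-degree exactly $2$ by Equation (\ref{eq-ring-structure-versus-module-structure}), which is what cleanly places the composition at $h'$-degree $+2$ inside $\widetilde{H}(X,\CC)$, i.e.\ in $\CC\bar{\sigma}$.
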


\begin{proof}
We have
\[
(m_{\lambda_1}(m_{\lambda_2}(\sigma)),\sigma)=-(m_{\lambda_2}(\sigma),m_{\lambda_1}(\sigma))=-(\mu(\lambda_1),\mu(\lambda_2)),
\]
where the first equality is due to the fact that $m_{\lambda_1}$ belongs to $\LieAlg{g}_\CC$, by Corollary \ref{cor-image-of-m-is-in-LLV-algebra}, and the second equality follows by the definition of $\mu$.
Now $m_\lambda$ has degree $2$ with respect to $h'$, for all $\lambda\in HT^2(X)$, 
and so $m_{\lambda_1}(m_{\lambda_2}(\sigma))$ belongs to $\tilde{H}^{0,2}_\fine(X)$ and is thus of the form $t\bar{\sigma}$, for a scalar $t$. We get that $t=(\sigma,t\bar{\sigma})=(\sigma,m_{\lambda_1}(m_{\lambda_2}(\sigma)))=-(\mu(\lambda_1),\mu(\lambda_2))$.
\end{proof}

%
\subsection{Subalgebras of the  Hochschild Lie algebra generated by a hyperplane in $HT^2(X)$}
\label{sec-subalgebras-of-LLV-generated-by-a-hyperplane}
\begin{lem}
\label{lemma-line-in-Mukai-lattice-associated-to-hyperplane-in-HT-2}
Let $\Sigma$ be a hyperplane in $HT^2(X)$. Let $\ell_0$ be a line in $\tilde{H}^{0,0}(X)=\CC\alpha\oplus H^{1,1}(X)\oplus \CC\beta$. The image  $m(\Sigma)$ of $\Sigma$ in $\LieAlg{g}_\CC$ via (\ref{eq-m-from-HT-2-into-LLV-algebra}) is contained in the subalgebra $\LieAlg{g}_{\CC,\ell_0}$ of $\LieAlg{g}_\CC$ annihilating $\ell_0$, if and only if  $\ell_0=\ell(\Sigma)$.
\end{lem}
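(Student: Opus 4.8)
The plan is to derive the statement directly from the explicit formula of Lemma~\ref{lemma-formula-for-m-lambda}. I would begin by choosing a nonzero vector $w$ spanning the line $\ell_0$. Since the homomorphism $\mu$ of (\ref{eq-embedding-of-HT-2-in-Mukai-lattice}) restricts to an isomorphism $\mu:HT^2(X)\to\tilde{H}^{0,0}(X)$ and $\ell_0\subseteq\tilde{H}^{0,0}(X)$ by hypothesis, there is a unique $\lambda_0\in HT^2(X)$ with $\mu(\lambda_0)=w$, and $\lambda_0\neq0$; thus $\ell_0=\CC\mu(\lambda_0)$. Because $m_\lambda\in\LieAlg{g}_\CC$ for every $\lambda\in HT^2(X)$ by Corollary~\ref{cor-image-of-m-is-in-LLV-algebra}, the assertion $m(\Sigma)\subseteq\LieAlg{g}_{\CC,\ell_0}$ is equivalent to the condition that $m_\lambda(w)=0$ for all $\lambda\in\Sigma$ (a one-dimensional space being annihilated as soon as one spanning vector is).

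The key step is then the evaluation of $m_\lambda(w)$ for an arbitrary $\lambda\in HT^2(X)$. Writing $w=m_{\lambda_0}(\sigma)$ and invoking Lemma~\ref{lemma-formula-for-m-lambda} together with the defining identity (\ref{eq-Mukai-pairing-on-HT-2}) for the pulled-back pairing on $HT^2(X)$, one gets $m_\lambda(w)=m_\lambda(m_{\lambda_0}(\sigma))=-(\mu(\lambda),\mu(\lambda_0))\bar\sigma=-(\lambda,\lambda_0)\bar\sigma$. Since $\bar\sigma\neq0$, this vanishes precisely when $(\lambda,\lambda_0)=0$. Consequently $m(\Sigma)\subseteq\LieAlg{g}_{\CC,\ell_0}$ holds if and only if $\lambda_0\perp\Sigma$, that is, if and only if $\lambda_0$ lies in the line $\Sigma^\perp$, which is one-dimensional because the pairing (\ref{eq-Mukai-pairing-on-HT-2}) is non-degenerate and $\Sigma$ is a hyperplane.

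To conclude I would observe that $\CC\lambda_0$ and $\Sigma^\perp$ being both lines, $\lambda_0\in\Sigma^\perp$ is equivalent to $\CC\lambda_0=\Sigma^\perp$; applying the injective map $\mu$ turns this into $\ell_0=\mu(\CC\lambda_0)=\mu(\Sigma^\perp)=\ell(\Sigma)$ by the definition (\ref{eq-ell-Sigma}), which is exactly the claimed equivalence. I do not anticipate a genuine obstacle here: once Lemma~\ref{lemma-formula-for-m-lambda} is available the argument is a short chain of equivalences, and the only points deserving a word of care are that ``annihilating $\ell_0$'' is understood in the sense of acting by zero on $\ell_0$ (so that it suffices to test a single spanning vector $w$), and that $\mu$ is an isomorphism onto $\tilde{H}^{0,0}(X)$ with non-degenerate pulled-back pairing, both of which are already recorded in the material preceding the statement.
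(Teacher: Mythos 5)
Your argument is correct and follows essentially the same route as the paper's proof: both reduce the statement to the identity $m_\lambda(m_{\lambda_0}(\sigma))=-(\mu(\lambda),\mu(\lambda_0))\bar\sigma$ from Lemma \ref{lemma-formula-for-m-lambda}, so that annihilation of $\ell_0$ by $m(\Sigma)$ becomes orthogonality of $\mu^{-1}(\ell_0)$ to $\Sigma$ with respect to the non-degenerate pairing (\ref{eq-Mukai-pairing-on-HT-2}). Your write-up is, if anything, slightly more explicit about the ``only if'' direction than the paper's.
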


\begin{proof}
Let $\lambda_0$ be a non-zero class in $\Sigma^\perp$. Then $m_{\lambda_0}(\sigma)$ spans $\ell(\Sigma)$. Given a class $\lambda\in HT^2(X)$ we have
$
m_\lambda(m_{\lambda_0}(\sigma))=-(\lambda_0,\lambda)\bar{\sigma},
$
by Equation (\ref{eq-formula-for-m-lambda}) and the definition of the pairing on $HH^2(T)$.
We conclude that $m_\lambda$ annihilates $\ell(\Sigma)$, if and only if $\lambda$ belongs to $\Sigma$.
\end{proof}

\begin{lem}
\label{lemma-conjugation-of-m-by-g}
Let $g\in O(\tilde{H}(X,\CC))$ be an isometry which leaves invariant every element of $\span\{\sigma,\bar{\sigma}\}$.
Then
\begin{equation}
\label{eq-conjugating-m-lambda-by-g}
gm_\lambda g^{-1}=m_{\mu^{-1}(g(\mu(\lambda)))},
\end{equation}
where $\mu^{-1}:\tilde{H}^{0,0}(X)\rightarrow HT^2(X)$ is the inverse of $\mu$.
\end{lem}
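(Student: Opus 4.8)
The plan is to reduce the identity $g m_\lambda g^{-1} = m_{\mu^{-1}(g(\mu(\lambda)))}$ to two facts already established in Section \ref{sec-the-Hochschild-lie-algebra}: first, that $m$ maps $HT^2(X)$ into $\LieAlg{g}_\CC=\LieAlg{so}(\tilde H(X,\CC))$ (Corollary \ref{cor-image-of-m-is-in-LLV-algebra}); and second, that the homomorphism $\mu:HT^2(X)\to\tilde H^{0,0}(X)$, $\lambda\mapsto m_\lambda(\sigma)$, is injective (shown just before Equation (\ref{eq-embedding-of-HT-2-in-Mukai-lattice})). Since $g$ lies in $O(\tilde H(X,\CC))$, conjugation by $g$ is a Lie algebra automorphism of $\LieAlg{g}_\CC$, so $g m_\lambda g^{-1}$ again lies in $\LieAlg{g}_\CC$; thus $g m_\lambda g^{-1}$ is an $\LieAlg{so}$-element of $\tilde H(X,\CC)$. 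I would then argue that it is in fact of the form $m_{\lambda'}$ for a (unique) $\lambda'\in HT^2(X)$ and identify $\lambda'$ by evaluating on $\sigma$.

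The key computation is the evaluation at $\sigma$. Because $g$ fixes $\sigma$ (and $\bar\sigma$), we have $g^{-1}(\sigma)=\sigma$, hence
\[
(g m_\lambda g^{-1})(\sigma) = g\bigl(m_\lambda(\sigma)\bigr) = g(\mu(\lambda)).
\]
Now $g(\mu(\lambda))$ lies in $\tilde H^{0,0}(X) = \CC\alpha\oplus H^{1,1}(X)\oplus\CC\beta$, since $g$ is an isometry fixing $\span\{\sigma,\bar\sigma\}$ and $\tilde H^{0,0}(X)$ is the orthogonal complement of $\span\{\sigma,\bar\sigma\}$ inside $\tilde H(X,\CC)$ (using the identification $\tilde H^{1,-1}(X)=\tilde H^{2,0}_\fine(X)$, $\tilde H^{-1,1}(X)=\tilde H^{0,2}_\fine(X)$ from the end of Section \ref{sec-the-Hochschild-lie-algebra}, so that $\{\sigma,\bar\sigma\}$ spans a non-degenerate plane). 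Since $\mu$ is an isomorphism onto $\tilde H^{0,0}(X)$, the class $\lambda':=\mu^{-1}(g(\mu(\lambda)))$ is well defined, and $m_{\lambda'}(\sigma)=g(\mu(\lambda))=(g m_\lambda g^{-1})(\sigma)$. So it remains to show that an $\LieAlg{so}$-element of $\tilde H(X,\CC)$ killing $\bar\sigma$ (note $g m_\lambda g^{-1}$ annihilates $\bar\sigma$ since $m_\lambda$ does and $g^{-1}(\bar\sigma)=\bar\sigma$) is determined by its value at $\sigma$; equivalently, $m:HT^2(X)\to\LieAlg{g}_\CC$ is injective, which is exactly the injectivity of $\mu=ev_\sigma\circ m$. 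But this only shows $g m_\lambda g^{-1}$ and $m_{\lambda'}$ agree on $\span\{\sigma,\bar\sigma\}$; I would upgrade this to full equality using the structure of $\LieAlg{so}(\tilde H(X,\CC))$ relative to the decomposition $\tilde H(X,\CC)=\span\{\sigma,\bar\sigma\}\oplus \tilde H^{0,0}(X)$.

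The cleanest way to finish is to recall the explicit shape of operators in the image of $m$: by Equation (\ref{eq-mu-takes-Poisson-tensor-to-alpha}) and the fact that $m_\lambda\in\LieAlg{so}$ has $h'$-degree $2$, for $\lambda$ with $\mu(\lambda)=a\alpha + \omega + b\beta$ the operator $m_\lambda$ is the $\LieAlg{so}$-element $e_{\mu(\lambda)}$ of the LLV algebra pulled into the $\fine$-Hodge picture, i.e. it sends $\sigma\mapsto\mu(\lambda)$, $\mu(\lambda)$-orthogonal $(1,1)$-classes to multiples of $\bar\sigma$ via the pairing, and $\bar\sigma\mapsto 0$; in particular $m_\lambda$ is completely determined by $\mu(\lambda)=m_\lambda(\sigma)$ through the formula in Lemma \ref{lemma-formula-for-m-lambda}, $m_{\lambda_1}(m_{\lambda_2}(\sigma))=-(\mu(\lambda_1),\mu(\lambda_2))\bar\sigma$. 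Applying this with $g m_\lambda g^{-1}$: for any $\tau\in HT^2(X)$, $(g m_\lambda g^{-1})(\mu(\tau)) = g\bigl(m_\lambda(g^{-1}\mu(\tau))\bigr)$, and since $g$ is an isometry one computes the pairing of this against $\sigma$ and against $(1,1)$-classes and matches it term by term with $m_{\lambda'}$ using that $g$ preserves the Mukai pairing and fixes $\sigma,\bar\sigma$. The main obstacle I expect is bookkeeping: making the identification ``$\LieAlg{so}$-element fixing the non-degenerate plane $\span\{\sigma,\bar\sigma\}$ $\leftrightarrow$ its restriction to the action on that plane plus its value at $\sigma$'' fully rigorous, i.e. checking that $g m_\lambda g^{-1}$ really does kill $\bar\sigma$ and has the same action on all of $\tilde H^{0,0}(X)$ as $m_{\lambda'}$, rather than merely agreeing on $\sigma$. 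This should follow formally from $g\in O(\tilde H)$, $g(\sigma)=\sigma$, $g(\bar\sigma)=\bar\sigma$, and the explicit description (\ref{eq-e-lambda})/(\ref{eq-formula-for-m-lambda}) of $m_\lambda$, but it is the step that needs care.
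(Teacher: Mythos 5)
Your proposal is correct and follows essentially the same route as the paper: compute $(gm_\lambda g^{-1})(\sigma)=g(\mu(\lambda))$, use Lemma \ref{lemma-formula-for-m-lambda} together with the isometry property of $g$ to match the two operators on $\tilde{H}^{0,0}(X)$, and observe that both annihilate $\bar{\sigma}$. The brief detour via ``an $\LieAlg{so}$-element killing $\bar{\sigma}$ is determined by its value at $\sigma$'' would not suffice on its own (such elements are not so determined in general), but you correctly abandon it and the argument you actually complete is the paper's.
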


\begin{proof} We have
$(gm_\lambda g^{-1})(\sigma)=g(m_\lambda(\sigma))=g(\mu(\lambda))=m_{\mu^{-1}(g(\mu(\lambda)))}(\sigma).$
Every element of $\tilde{H}^{0,0}(X)$ is of the form $\mu(\lambda')$, for some $\lambda'\in HT^2(X)$, and
\[
m_{\mu^{-1}(g(\mu(\lambda)))}(\mu(\lambda'))=-(g(\mu(\lambda)),\mu(\lambda'))\bar{\sigma}=-(\mu(\lambda),g^{-1}(\mu(\lambda')))\bar{\sigma},
\]
\[
(gm_\lambda g^{-1})(\mu(\lambda'))=-g[(\mu(\lambda),g^{-1}(\mu(\lambda'))) \bar{\sigma}]=
-(\mu(\lambda),g^{-1}(\mu(\lambda')))\bar{\sigma}.
\]
Finally, both sides of (\ref{eq-conjugating-m-lambda-by-g}) annihilate $\bar{\sigma}$.
\end{proof}

\begin{lem}
\label{lemma-on-planes-of-signature-1-1}
Let $U_0\subset HT^2(X)$ be a $2$-dimensional subspace such that the subspace $\mu(U_0)$ of $\tilde{H}(X,\CC)$ is defined over $\RR$ and the restriction of the pairing to $\mu(U_0)$ is non-degenerate.
Let $U_0^\perp$ be the subspace of $HT^2(X)$ which is the orthogonal complement of $U_0$ with respect to the pairing (\ref{eq-Mukai-pairing-on-HT-2}). Then the subalgebra of
$\LieAlg{g}_\CC:=\LieAlg{so}(\tilde{H}(X,\CC))$ generated by $m(U_0^\perp)$ and its complex conjugate
$\overline{m(U_0^\perp)}$ is equal to the subalgebra
$\LieAlg{so}[\tilde{H}^{1,-1}(X)\oplus \mu(U_0^\perp)\oplus \tilde{H}^{-1,1}(X)]$ annihilating $\mu(U_0)$.
\end{lem}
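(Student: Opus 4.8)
Set $W:=\tilde{H}^{1,-1}(X)\oplus\mu(U_0^\perp)\oplus\tilde{H}^{-1,1}(X)=\CC\sigma\oplus\mu(U_0^\perp)\oplus\CC\bar\sigma$. The strategy is to identify, via the weight-$0$ Hodge structure on $\LieAlg{g}_\CC=\LieAlg{so}(\tilde{H}(X,\CC))$, the two spaces $m(U_0^\perp)$ and $\overline{m(U_0^\perp)}$ with the two extreme graded pieces $\LieAlg{so}(W)^{-1,1}$ and $\LieAlg{so}(W)^{1,-1}$, and then to invoke the structure of the three-term grading of the orthogonal Lie algebra of a quadratic space containing a hyperbolic plane, for which the bracket of the extreme pieces surjects onto the middle one. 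This is exactly the Lie-algebra mechanism already used, in the special case $U_0=\{0\}$ up to the identification of $\LieAlg{so}(W)$ with $\bar{\LieAlg{g}}_\CC$, in Step 2 of the proof of Lemma \ref{lemma-alpha-remains-of-Hodge-type}.

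\textbf{Preliminaries.} First I would record that $\tilde{H}^{1,-1}(X)=\CC\sigma$ and $\tilde{H}^{-1,1}(X)=\CC\bar\sigma$ are one-dimensional (since $h^{2,0}(X)=1$), that $\sigma,\bar\sigma$ are isotropic with $(\sigma,\bar\sigma)=1$, and that $\CC\sigma\oplus\CC\bar\sigma$ is a hyperbolic plane orthogonal to $\tilde{H}^{0,0}(X)$. Since $\mu$ of (\ref{eq-embedding-of-HT-2-in-Mukai-lattice}) is an isomorphism onto $\tilde{H}^{0,0}(X)$ pulling the Mukai pairing back to (\ref{eq-Mukai-pairing-on-HT-2}), the hypothesis that the pairing is non-degenerate on $\mu(U_0)$ yields orthogonal decompositions into non-degenerate summands $HT^2(X)=U_0\oplus U_0^\perp$ and $\tilde{H}^{0,0}(X)=\mu(U_0)\oplus\mu(U_0^\perp)$. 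From this I get $W^\perp=\mu(U_0)$ inside $\tilde{H}(X,\CC)$, so that an element of $\LieAlg{g}_\CC$ annihilates $\mu(U_0)$ precisely when it is antisymmetric, preserves $W$, and vanishes on $W^\perp$, i.e. precisely when it lies in $\LieAlg{so}(W)$ embedded by extension by zero on $W^\perp$. This is the equality of the two descriptions in the statement and reduces the claim to $\langle m(U_0^\perp),\overline{m(U_0^\perp)}\rangle=\LieAlg{so}(W)$; the inclusion $\subseteq$ will follow once $m(U_0^\perp),\overline{m(U_0^\perp)}\subseteq\LieAlg{so}(W)$ is checked.

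\textbf{Locating the generators.} The subspace $W$ carries the induced weight-$0$ Hodge structure with $W^{1,-1}=\CC\sigma$, $W^{0,0}=\mu(U_0^\perp)$, $W^{-1,1}=\CC\bar\sigma$, and the graded pieces of $\LieAlg{so}(W)$ of type $(\pm2,\mp2)$ vanish by antisymmetry, so $\LieAlg{so}(W)=\LieAlg{so}(W)^{-1,1}\oplus\LieAlg{so}(W)^{0,0}\oplus\LieAlg{so}(W)^{1,-1}$. Using the Hodge-type computation preceding Lemma \ref{lemma-formula-for-m-lambda} together with $m_\lambda(\sigma)=\mu(\lambda)$, I would check that $m(HT^2(X))\subseteq\LieAlg{g}_\CC^{-1,1}$, and that for $\lambda\in U_0^\perp$ the operator $m_\lambda$ kills $\bar\sigma$, carries $\sigma$ to $\mu(\lambda)\in\mu(U_0^\perp)$, carries $\mu(U_0^\perp)$ into $\CC\bar\sigma$, and — by Lemma \ref{lemma-formula-for-m-lambda} and $\mu(\lambda)\perp\mu(U_0)$ — vanishes on $\mu(U_0)=W^\perp$; hence $m(U_0^\perp)\subseteq\LieAlg{so}(W)^{-1,1}$. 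Since $\mu$, and therefore $m$, is injective, $\dim m(U_0^\perp)=\dim U_0^\perp=\dim\mu(U_0^\perp)=\dim\LieAlg{so}(W)^{-1,1}$, so $m(U_0^\perp)=\LieAlg{so}(W)^{-1,1}$. Now complex conjugation maps $\LieAlg{so}(W)^{p,q}$ to $\LieAlg{so}(\overline{W})^{q,p}$, and the hypothesis that $\mu(U_0)$ is defined over $\RR$ makes $\mu(U_0^\perp)$, hence $W$, defined over $\RR$; therefore $\overline{m(U_0^\perp)}=\LieAlg{so}(W)^{1,-1}$.

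\textbf{Generation, and the main obstacle.} It then remains to prove $[\LieAlg{so}(W)^{-1,1},\LieAlg{so}(W)^{1,-1}]=\LieAlg{so}(W)^{0,0}$. Parametrizing $\LieAlg{so}(W)^{-1,1}$ by $\phi_v$ ($v\in W^{0,0}$) with $\phi_v(\bar\sigma)=v$, $\phi_v(w)=-(v,w)\sigma$ for $w\in W^{0,0}$, $\phi_v(\sigma)=0$, and $\LieAlg{so}(W)^{1,-1}$ by $\psi_u$ with $\psi_u(\sigma)=u$, $\psi_u(w)=-(u,w)\bar\sigma$, $\psi_u(\bar\sigma)=0$, a direct computation (of the same type as the bracket computations in Step 2 of the proof of Lemma \ref{lemma-alpha-remains-of-Hodge-type}, and expressible through Lemma \ref{lemma-formula-for-m-lambda}) gives that $[\phi_v,\psi_u]$ acts on $\CC\sigma\oplus\CC\bar\sigma$ as a scalar multiple of $(u,v)$ and on $W^{0,0}$ as the rotation $w\mapsto(v,w)u-(u,w)v$. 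Since the pairing on $W^{0,0}=\mu(U_0^\perp)$ is non-degenerate, these rotations span $\LieAlg{so}(W^{0,0})$; choosing $(u,v)\neq0$ one also obtains the grading element. As $\LieAlg{so}(W)^{0,0}$ is the direct sum of $\LieAlg{so}(W^{0,0})$ and the line spanned by the grading element, the bracket exhausts $\LieAlg{so}(W)^{0,0}$, and combining the three graded pieces gives $\langle m(U_0^\perp),\overline{m(U_0^\perp)}\rangle=\LieAlg{so}(W)$. I expect the bookkeeping of the previous paragraph to be the delicate part: verifying that $m(U_0^\perp)$ is \emph{exactly} the graded summand $\LieAlg{so}(W)^{-1,1}$ (containment plus the dimension count) and correctly tracking complex conjugation against the real-structure hypothesis on $\mu(U_0)$. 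Once the three-term grading of $\LieAlg{so}(W)$ is set up, the generation statement itself is routine.
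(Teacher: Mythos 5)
Your proof is correct and follows essentially the same route as the paper, whose entire proof is the remark that the argument is Step 2 of Lemma \ref{lemma-alpha-remains-of-Hodge-type} with $\iota:H^1(X,TX)\rightarrow\LieAlg{g}_\CC$ replaced by $m:U_0^\perp\rightarrow\LieAlg{g}_\CC$: identify $m(U_0^\perp)$ and its conjugate with the two extreme pieces of the three-term grading of $\LieAlg{so}(W)$ and check that their bracket fills the degree-zero piece (you span $\LieAlg{so}(W^{0,0})$ directly by rank-two rotations where the paper invokes irreducibility, a negligible difference). The only blemish is a harmless swap of the labels $(-1,1)$ and $(1,-1)$ between your second and third paragraphs (the $\phi_v$ you define sends $\bar\sigma\mapsto v$, so it has the opposite coarse degree to $m_\lambda$), which does not affect the argument.
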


\begin{proof}
The proof is identical to Step 2 in the proof of Lemma \ref{lemma-alpha-remains-of-Hodge-type} replacing $\iota:H^1(TX)\rightarrow \LieAlg{g}_\CC$ by $m:U_0^\perp\rightarrow \LieAlg{g}_\CC$.
\hide{
Let $g\in O(\tilde{H}(X,\RR))$ be an isometry which leaves invariant every element of 
$\tilde{H}(X,\RR)\cap \span\{\sigma,\bar{\sigma}\}$ (so that $\tilde{H}^{0,0}(X)$ is $g$ invariant) and maps $\mu(U_0)$ to $U_\CC=\span_\CC\{\alpha,\beta\}$. Then $g(\mu(U_0^\perp))=\mu(H^1(TX))$. 
Given $\lambda\in U_0^\perp$, we have
$gm_\lambda g^{-1}=m_{\mu^{-1}(g(\mu(\lambda))}$, by Lemma \ref{lemma-conjugation-of-m-by-g}, 
and $\mu^{-1}(g(\mu(\lambda)))$ belongs to $H^1(TX)$. 
The Lie algebra generated by $m(H^1(TX))$ and its complex conjugate 
$\overline{m(H^1(TX))}$ is $\bar{\LieAlg{g}}=\LieAlg{so}[\tilde{H}^{1,-1}(X)\oplus \mu(H^1(TX))\oplus \tilde{H}^{-1,1}(X)]$, by the proof of Lemma \ref{lemma-alpha-remains-of-Hodge-type}.
Hence, $m(U_0^\perp)$ and its complex conjugate
$\overline{m(U_0^\perp)}$ generate 
\[
g^{-1}\bar{\LieAlg{g}}g=\LieAlg{so}[\tilde{H}^{1,-1}(X)\oplus \mu(U_0^\perp)\oplus \tilde{H}^{-1,1}(X)] 
\]
as claimed.
}
\end{proof}

\begin{lem}
\label{lemma-m-Sigma-and-its-conjugate-generate-the-annihilator-of-ell}
Let $\ell\subset HT^2(X)$ be a $1$-dimensional subspace such that the line $\mu(\ell)$ in $\tilde{H}(X,\CC)$ is defined over $\RR$. Let $\Sigma$ be the subspace of $HT^2(X)$ which is the orthogonal complement of $\ell$ with respect to the pairing (\ref{eq-Mukai-pairing-on-HT-2}). Then the subalgebra of
$\LieAlg{g}_\CC$ generated by $m(\Sigma)$ and its complex conjugate
$\overline{m(\Sigma)}$ is equal to the subalgebra
$\LieAlg{g}_{\CC,\ell}$ annihilating $\mu(\ell)$.
\end{lem}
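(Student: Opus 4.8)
The plan is as follows. Let $v$ span the real line $\mu(\ell)\subseteq\tilde{H}(X,\CC)$, and let $\LieAlg{a}$ be the Lie subalgebra of $\LieAlg{g}_\CC$ generated by $m(\Sigma)$ and its complex conjugate $\overline{m(\Sigma)}$. First I would record the inclusion $\LieAlg{a}\subseteq\LieAlg{g}_{\CC,\ell}$: for $\lambda\in\Sigma$ and a generator $\lambda_v$ of $\ell$, one has $m_\lambda(v)=m_\lambda(\mu(\lambda_v))=-(\mu(\lambda),\mu(\lambda_v))\bar\sigma=0$ by Equation (\ref{eq-formula-for-m-lambda}) and $\Sigma=\ell^\perp$, and since $\mu(\ell)$ is real also $\overline{m_\lambda}(v)=\overline{m_\lambda(v)}=0$; this is Lemma \ref{lemma-line-in-Mukai-lattice-associated-to-hyperplane-in-HT-2} together with the reality hypothesis. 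So every generator of $\LieAlg{a}$ annihilates $\mu(\ell)$, and it remains to prove $\LieAlg{g}_{\CC,\ell}\subseteq\LieAlg{a}$, for which I would split into the cases $(v,v)\neq0$ and $(v,v)=0$. The common mechanism is the \emph{Step 2} argument of Lemma \ref{lemma-alpha-remains-of-Hodge-type}: the raising and lowering operators attached to an $H^2$-part generate the entire orthogonal Lie algebra.

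If $(v,v)\neq0$, then $W:=\mu(\ell)^\perp=\CC\sigma\oplus\mu(\Sigma)\oplus\CC\bar\sigma$ is non-degenerate, $\mu(\Sigma)$ is non-degenerate and defined over $\RR$, and $\LieAlg{g}_{\CC,\ell}=\LieAlg{so}(W)$, since an element of $\LieAlg{g}_\CC$ kills $v$ iff it preserves $v^\perp=W$ and restricts to $\LieAlg{so}(W)$. By Corollary \ref{cor-image-of-m-is-in-LLV-algebra} and Equation (\ref{eq-formula-for-m-lambda}), for $\lambda\in\Sigma$ the operator $m_\lambda$ preserves $W$, kills $v$, and acts on $W$ by $\sigma\mapsto\mu(\lambda)$, $y\mapsto-(\mu(\lambda),y)\bar\sigma$ for $y\in\mu(\Sigma)$, and $\bar\sigma\mapsto0$. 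Thus, as $\lambda$ runs through $\Sigma$, the operators $m_\lambda|_W$ are exactly the raising operators for the hyperbolic plane $\span\{\sigma,\bar\sigma\}\subseteq W$ and the $H^2$-part $\mu(\Sigma)$, and the $\overline{m_\lambda}|_W$ the corresponding lowering operators. The \emph{Step 2} argument of Lemma \ref{lemma-alpha-remains-of-Hodge-type} — equivalently Lemma \ref{lemma-on-planes-of-signature-1-1} applied to the $2$-planes $U_0=\ell\oplus\ell'$ with $\ell'\subseteq\Sigma$ and $\mu(\ell')$ a real non-isotropic class, whose conclusions $\LieAlg{so}(\CC\sigma\oplus\mu(U_0^\perp)\oplus\CC\bar\sigma)$ span $\LieAlg{so}(W)$ as $\ell'$ varies — then gives $\LieAlg{so}(W)\subseteq\LieAlg{a}$; here one only needs $\dim\mu(\Sigma)=b_2(X)-1\geq3$.

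The substantive case is $(v,v)=0$: now $v$ lies in the radical of $W:=\mu(\ell)^\perp$ and $\LieAlg{g}_{\CC,\ell}=\{A\in\LieAlg{g}_\CC:Av=0\}$ is not semisimple. I would choose a real $w\in\tilde{H}^{0,0}(X)$ with $(v,w)=1$, put $T_0:=\{v,w\}^\perp\cap\tilde{H}^{0,0}(X)$ and $T:=\CC\sigma\oplus T_0\oplus\CC\bar\sigma=\{v,w\}^\perp$, so that $\mu(\Sigma)=\CC v\oplus T_0$; then $\LieAlg{g}_{\CC,\ell}=\LieAlg{so}(T)\ltimes\LieAlg{n}$, with Levi factor $\LieAlg{so}(T)=\{A\in\LieAlg{g}_\CC:Av=Aw=0\}$ and abelian nilradical $\LieAlg{n}\cong T$ spanned by the operators $n_u$ ($u\in T$) with $n_u(v)=0$, $n_u(w)=u$, $n_u(t)=-(u,t)v$. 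To see $\LieAlg{a}$ contains both summands: for $t_0\in T_0$ the operator $m_{t_0}$ kills $v$ and $w$ (since $(t_0,v)=(t_0,w)=0$) and maps $\tilde{H}(X,\CC)$ into $T$, hence lies in $\LieAlg{so}(T)$, where $m_{t_0}|_T$ and $\overline{m_{t_0}}|_T$ are the raising and lowering operators for $\span\{\sigma,\bar\sigma\}$ and the $H^2$-part $T_0$; by \emph{Step 2} of Lemma \ref{lemma-alpha-remains-of-Hodge-type} (or Theorem \ref{thm-LLV-Lie-algebra}) they generate $\LieAlg{so}(T)$, so $\LieAlg{so}(T)\subseteq\LieAlg{a}$. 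On the other hand $(v,v)=0$ forces $v\in\mu(\Sigma)$, so $m_v\in m(\Sigma)\subseteq\LieAlg{a}$, and a short computation from Equation (\ref{eq-formula-for-m-lambda}) identifies $m_v$ with the nilradical element $n_{-\bar\sigma}$; hence $\LieAlg{a}\cap\LieAlg{n}\neq0$. Since $\LieAlg{n}\cong T$ is an irreducible $\LieAlg{so}(T)$-module ($\dim T=b_2(X)\geq3$) and $\LieAlg{a}\cap\LieAlg{n}$ is stable under the adjoint action of $\LieAlg{so}(T)\subseteq\LieAlg{a}$, it must be all of $\LieAlg{n}$, so $\LieAlg{a}\supseteq\LieAlg{so}(T)+\LieAlg{n}=\LieAlg{g}_{\CC,\ell}$, which completes the proof.

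I expect the degenerate case $(v,v)=0$ to be the main obstacle: there one cannot directly quote the principle that an $H^2$-part generates the orthogonal algebra, because $\mu(\Sigma)$ is degenerate. The way around it is to apply that principle to a non-degenerate hyperplane $T_0$ of $\mu(\Sigma)$, thereby capturing the Levi factor, and then to recover the one remaining radical direction $\CC v$ by noticing that $m_v$ is itself a non-zero element of the nilradical of $\LieAlg{g}_{\CC,\ell}$.
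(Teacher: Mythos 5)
Your proof is correct, and its first half coincides with the paper's: the containment $\LieAlg{a}\subseteq\LieAlg{g}_{\CC,\ell}$ via Lemma \ref{lemma-line-in-Mukai-lattice-associated-to-hyperplane-in-HT-2} and reality, and the use of the Step~2 mechanism of Lemma \ref{lemma-alpha-remains-of-Hodge-type} (packaged as Lemma \ref{lemma-on-planes-of-signature-1-1}) to manufacture large orthogonal subalgebras inside $\LieAlg{a}$. The endgame, however, is genuinely different. The paper makes no case distinction on $(v,v)$: it fixes a single real non-degenerate $2$-plane $U_0\subset\tilde{H}^{0,0}(X)$ containing $\mu(\ell)$, obtains $\LieAlg{g}_{\CC,U_0}\subseteq\LieAlg{a}$ from Lemma \ref{lemma-on-planes-of-signature-1-1}, and then classifies the possible images of $\LieAlg{a}$ in $\LieAlg{g}_\CC/\LieAlg{g}_{\CC,U_0}\cong\LieAlg{so}(U_0)\oplus\left(U_0\otimes U_0^\perp\right)$ as $\LieAlg{so}(U_0^\perp)$-modules, ruling out everything except $\ell_0\otimes U_0^\perp$ with $\ell_0=U_0\cap\mu(\ell)^\perp$; this identifies $\LieAlg{a}=\LieAlg{g}_{\CC,\ell}$ in one stroke whether or not $\mu(\ell)$ is isotropic. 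You instead split on isotropy and describe the stabilizer explicitly in each case, as $\LieAlg{so}(W)$, respectively as $\LieAlg{so}(T)\ltimes T$, which costs more bookkeeping but makes the structure of $\LieAlg{g}_{\CC,\ell}$ visible; your observation that $m_{\mu^{-1}(v)}$ is itself a non-zero element of the abelian ideal $\LieAlg{n}\cong T$, so that irreducibility of the standard representation finishes the isotropic case, is a clean substitute for the paper's quotient argument and is arguably more informative about where the isotropic degeneration actually lives. One small caution: in the non-isotropic case you should lean on your primary justification, namely the verbatim transcription of Step~2 of Lemma \ref{lemma-alpha-remains-of-Hodge-type} to $W$ with $H^{1,1}(X)$ replaced by the real non-degenerate space $\mu(\Sigma)$, rather than on the parenthetical claim that the subalgebras $\LieAlg{so}\bigl(\CC\sigma\oplus\mu(U_0^\perp)\oplus\CC\bar{\sigma}\bigr)$ \emph{span} $\LieAlg{so}(W)$ as $\ell'$ varies; they generate it, but spanning would require a separate argument and is not what you need.
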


\begin{proof}
The subspace $m(\Sigma)$ of $\LieAlg{g}_\CC$ is contained in $\LieAlg{g}_{\CC,\ell}$, 
by Lemma \ref{lemma-line-in-Mukai-lattice-associated-to-hyperplane-in-HT-2}, and  its complex conjugate $\overline{m(\Sigma)}$
is contained in  $\LieAlg{g}_{\CC,\ell}$, since $\mu(\ell)$ is defined over $\RR$. Hence, the subalgebra $\LieAlg{a}$  of $\LieAlg{g}_\CC$ generated by $m(\Sigma)$ and
$\overline{m(\Sigma)}$ is contained in $\LieAlg{g}_{\CC,\ell}$.
Furthermore, $\LieAlg{a}$  contains the subalgebra $\LieAlg{g}_{\CC,U_0}$ annihilating $U_0$, for every two dimensional subspace $U_0$ of $\tilde{H}^{0,0}(X)$, which is defined over $\RR$, contains $\mu(\ell)$,
and such that the induced pairing on $U_0$ is non-degenerate,
by Lemma \ref{lemma-on-planes-of-signature-1-1}.

Choose such a $U_0$. We get the flag $\LieAlg{g}_{\CC,U_0}\subset \LieAlg{a}\subseteq \LieAlg{g}_{\CC,\ell}$, where the first inclusion is strict.
The quotient
$\LieAlg{g}_\CC/\LieAlg{g}_{\CC,U_0}$ is isomorphic to the direct sum of $\LieAlg{so}(U_0)$ and $U_0\otimes U_0^\perp$.
If $\LieAlg{a}$ contains  $U_0\otimes U_0^\perp$ then it is the whole of $\LieAlg{g}_\CC$, since 
$\LieAlg{so}(U_0)$ is contained\footnote{
Choose and orthogonal basis $\{e_i\}$ of $\tilde{H}(X,\CC)$ with $e_1, e_2\in U_0$. Let
$\{e_i^*\}$ be the dual basis. Set $A=(e_1,e_1)e_1^*\otimes e_3-(e_3,e_3)e_3^*\otimes e_1$, 
$B=(e_2,e_2)e_2^*\otimes e_3-(e_3,e_3)e_3^*\otimes e_2$, and
$C=(e_1,e_1)e_1^*\otimes e_2-(e_2,e_2)e_2^*\otimes e_1$. Then $A$ and $B$ belong to the submodule of $\LieAlg{g}_\CC$ isomorphic to 
$U_0\otimes U_0^\perp$, $C$ spans $\LieAlg{so}(U_0)$, and 
$[A,B]=(e_3,e_3)C$.
} 
in the Lie bracket of $U_0\otimes U_0^\perp$ with itself.
Hence, $\LieAlg{a}$ intersects  $U_0\otimes U_0^\perp$ either trivially, or along 
$\ell_0\otimes U_0^\perp$, for some line $\ell_0\subset U_0$. 
If the intersection is trivial, then $\LieAlg{a}=\LieAlg{so}(U_0)\times \LieAlg{g}_{\CC,U_0}$, which is impossible, since 
$\LieAlg{so}(U_0)$ is not contained in $\LieAlg{g}_{\CC,\ell}$.  Hence, the intersection is 
$\ell_0\otimes U_0^\perp$, for some line $\ell_0\subset U_0$. 
Finally, $\ell_0$ must be orthogonal to $\mu(\ell)$, since $\LieAlg{a}$ is contained in $\LieAlg{g}_{\CC,\ell}$. 
\end{proof}

\hide{
\begin{rem}
If we replace in Lemma \ref{lemma-m-Sigma-and-its-conjugate-generate-the-annihilator-of-ell} the assumption that $\mu(\ell)$ is defined over $\RR$ by the assumption that $U_0:=\mu(\ell)+\overline{\mu(\ell)}$ is two dimensional and the induced pairing on $U_0$ is non-degenerate, then the subalgebra $\LieAlg{a}$ of
$\LieAlg{g}_\CC$ generated by $m(\Sigma)$ and its complex conjugate
$\overline{m(\Sigma)}$ is the whole of $\LieAlg{g}_\CC$. Indeed, $\LieAlg{a}$ contains $\LieAlg{g}_{\CC,U_0}$,
by Lemma \ref{lemma-on-planes-of-signature-1-1},
the proof of Lemma \ref{lemma-m-Sigma-and-its-conjugate-generate-the-annihilator-of-ell} 
shows that the subalgebra $\LieAlg{b}$ generated by $m(\Sigma)$ and $\LieAlg{g}_{\CC,U_0}$ is
$\LieAlg{g}_{\CC,\mu(\ell)}$, 
the subalgebra $\LieAlg{c}$ generated by $\overline{m(\Sigma)}$ and $\LieAlg{g}_{\CC,U_0}$ is
$\LieAlg{g}_{\CC,\overline{\mu(\ell)}}$. Hence, $\LieAlg{a}$ contains both  $\LieAlg{g}_{\CC,\mu(\ell)}$ and $\LieAlg{g}_{\CC,\overline{\mu(\ell)}}$ and so $\LieAlg{a}/\LieAlg{g}_{\CC,U_0}$ contains $U_0\otimes U_0^\perp$. It follows that $\LieAlg{a}$
is the whole of $\LieAlg{g}_\CC$, as we saw in the proof of Lemma \ref{lemma-m-Sigma-and-its-conjugate-generate-the-annihilator-of-ell}.
\end{rem}
}

\begin{lem}
\label{lemma-Veronese-is-defined-over-Q}
Let $K$ be a subfield of $\CC$,
let $\ell$ be a line in $\widetilde{H}(X,\CC)$, and let $j$ a positive integer. If the line $\ell^j$ in $\Sym^j\widetilde{H}(X,\CC)$
is defined over $K$, then so does $\ell$. 
\end{lem}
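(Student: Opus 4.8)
The plan is to reduce the statement to an elementary computation with coordinates in a $K$-basis of $\widetilde{H}(X,K):=\widetilde{H}(X,\QQ)\otimes_\QQ K$. Since formation of symmetric powers commutes with field extension (we are in characteristic $0$), we have $\Sym^j\widetilde{H}(X,\CC)=\Sym^j\widetilde{H}(X,K)\otimes_K\CC$, and a line in either space is \emph{defined over $K$} precisely when it is spanned by a vector lying in the corresponding $K$-form.

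First I would choose a generator $v\in\widetilde{H}(X,\CC)$ of $\ell$, so that $v^j$ (the $j$-th power taken in $\Sym^j$) spans $\ell^j$. By hypothesis there is a nonzero $w\in\Sym^j\widetilde{H}(X,K)$ spanning $\ell^j$, hence $w=c\,v^j$ for some $c\in\CC^\times$; replacing $v$ by $c^{1/j}v$ for a chosen $j$-th root of $c$, I may assume $v^j\in\Sym^j\widetilde{H}(X,K)$ while $v$ still spans $\ell$.

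Next I would fix a $K$-basis $e_1,\dots,e_N$ of $\widetilde{H}(X,K)$ and write $v=\sum_{i}a_ie_i$ with $a_i\in\CC$. Expanding, the coordinate of $v^j$ at the basis monomial $e_1^{k_1}\cdots e_N^{k_N}$ of $\Sym^j\widetilde{H}(X,K)$ (with $\sum_ik_i=j$) equals $\binom{j}{k_1,\dots,k_N}\prod_ia_i^{k_i}$; since this coordinate lies in $K$ and the multinomial coefficient is a nonzero rational number, every degree-$j$ monomial $\prod_ia_i^{k_i}$ in the $a_i$ lies in $K$. Now pick $i_0$ with $a_{i_0}\neq0$. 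The multi-index with $k_{i_0}=j$ gives $a_{i_0}^{\,j}\in K^\times$, and the multi-index with $k_{i_0}=j-1$ and $k_i=1$ gives $a_{i_0}^{\,j-1}a_i\in K$ for every $i$; hence $a_i/a_{i_0}=a_{i_0}^{\,j-1}a_i/a_{i_0}^{\,j}\in K$ for all $i$. Therefore $a_{i_0}^{-1}v=\sum_i(a_i/a_{i_0})e_i$ lies in $\widetilde{H}(X,K)$ and spans $\ell$, so $\ell$ is defined over $K$.

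The argument is short and there is no serious obstacle; the only points meriting attention are the base-change identity $\Sym^j(\widetilde{H}(X,\QQ)\otimes_\QQ K)\otimes_K\CC\cong\Sym^j\widetilde{H}(X,\CC)$ and the harmless normalization of $v$ by a $j$-th root so that $v^j$ becomes $K$-rational. One could instead invoke injectivity of the $j$-th Veronese map $\PP(\widetilde{H}(X,\CC))\hookrightarrow\PP(\Sym^j\widetilde{H}(X,\CC))$ together with Galois-type descent for $\CC/K$, but the direct coordinate computation above is self-contained.
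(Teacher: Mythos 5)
Your proof is correct, and it takes a genuinely different route from the paper's. The paper argues via the Veronese map $\PP(\widetilde{H}(X,\CC))\rightarrow\PP(\Sym^j\widetilde{H}(X,\CC))$: since this map is defined over $\QQ$ and injective, it is equivariant with respect to every field automorphism of $\CC$, so the $K$-rationality of $\ell^j$ forces $\ell$ to be invariant under $\Aut(\CC/K)$, whence $\ell$ is defined over $K$. That last descent step — that an $\Aut(\CC/K)$-invariant line is spanned by a $K$-rational vector — is standard but not entirely free: it uses that $\CC$ is algebraically closed of infinite transcendence degree over $K$, so that the fixed field of $\Aut(\CC/K)$ is exactly $K$. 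Your coordinate computation (normalize $v$ by a $j$-th root of the scalar so that $v^j$ is $K$-rational, read off that every degree-$j$ monomial in the coordinates $a_i$ lies in $K$, then divide $a_{i_0}^{j-1}a_i$ by $a_{i_0}^j$) bypasses this descent machinery entirely; it is self-contained, works verbatim for any extension of fields in characteristic zero, and only needs the nonvanishing of multinomial coefficients. The trade-off is that the paper's argument is shorter and more conceptual, exhibiting the statement as an instance of equivariance of a $\QQ$-morphism, whereas yours is longer but elementary. Both are valid; you even flag the paper's route as an alternative in your closing remark.
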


\begin{proof}
The Veronese embedding $\varphi:\PP(\widetilde{H}(X,\CC))\rightarrow \PP(\Sym^{j}\widetilde{H}(X,\CC))$, $\ell\mapsto \ell^j$, is defined over $\QQ$ and is thus equivariant with respect to every automorphism $g$ of $\CC$. 
Assume that $\ell^j$ is defined over $K$.
The injectivity of  $\varphi$ implies that $\ell$ is invariant with respect to every automorphism of $\CC$ leaving $K$ invariant.
Hence, $\ell$ is defined over $K$.
\end{proof}

\begin{lem}
\label{lemma-pde}
Let $f$ be a non-zero element of the kernel of $\Delta:\Sym^n(\tilde{H}(X,\QQ))\rightarrow \Sym^{n-2}(\tilde{H}(X,\QQ))$ of Hodge type $(0,0)$.
Let $\Sigma$ be a subspace of $HT^2(X)$ of co-dimension $1$. 
Assume that $m(\Sigma)$ is contained in the subalgebra $\LieAlg{g}_{\CC,f}$ annihilating $f$. Then the line $\ell(\Sigma)\subset \tilde{H}(X,\CC)$, given in (\ref{eq-ell-Sigma}), is defined over $\QQ$, is contained in $\tilde{H}^{0,0}(X)$,  and
$f$ spans the image of the projection of $\ell(\Sigma)^n$ to $\ker(\Delta)$. Furthermore, $\LieAlg{g}_{\CC,f}=\LieAlg{g}_{\CC,\ell(\Sigma)}$.
\end{lem}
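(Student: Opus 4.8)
The plan is to bootstrap from the two-dimensional non-degeneracy input that $f$ provides. First I would observe that $m(\Sigma)\subseteq\LieAlg{g}_{\CC,f}$ together with the fact that $f$ is of Hodge type $(0,0)$ (hence real, being the rational class it is, so $\overline{f}=f$) forces $\overline{m(\Sigma)}\subseteq\LieAlg{g}_{\CC,f}$ as well. Thus the subalgebra $\LieAlg{a}$ generated by $m(\Sigma)$ and $\overline{m(\Sigma)}$ is contained in $\LieAlg{g}_{\CC,f}$. The key point is then to produce a line $\ell$ in $\tilde{H}(X,\CC)$, necessarily $\ell=\ell(\Sigma)$, such that $\mu(\ell)$ is defined over $\RR$; once this is in hand, Lemma \ref{lemma-m-Sigma-and-its-conjugate-generate-the-annihilator-of-ell} gives $\LieAlg{a}=\LieAlg{g}_{\CC,\ell(\Sigma)}$, so $\LieAlg{g}_{\CC,\ell(\Sigma)}\subseteq\LieAlg{g}_{\CC,f}$.

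To locate the line, I would argue that $\LieAlg{g}_{\CC,f}$ stabilizes $f\in\ker(\Delta)\subset\Sym^n\tilde{H}(X,\CC)$, and $\ker(\Delta)=\Im(\Psi)$ is the span of $n$-th powers of isotropic vectors (by \cite[Lemma 3.7]{taelman}). A nonzero element of $\ker(\Delta)$ fixed by a ``large'' subalgebra of $\LieAlg{g}_\CC$ must itself be (a scalar multiple of) a single power $\ell^n$ of an isotropic line: here I would use that $\LieAlg{g}_{\CC,f}$ has codimension at most $\dim\tilde H(X,\QQ)-1$ in $\LieAlg{g}_\CC$ (since it contains $\LieAlg{a}$, which by Lemma \ref{lemma-m-Sigma-and-its-conjugate-generate-the-annihilator-of-ell} and the $U_0$-filtration argument is the stabilizer of at most a line), and that the stabilizer of a decomposable-in-the-Veronese class is bigger than the stabilizer of a generic element of $\ker(\Delta)$. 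Concretely: the orbit of $\ell^n$ under $SO(\tilde H(X,\CC))$ is the affine cone over the image of the Veronese, which is exactly $\ker(\Delta)\setminus 0$ scaled, and the stabilizer of $\ell^n$ is the preimage of the stabilizer of the point $\ell\in\PP(\tilde H)$. So I claim $f$ spans $\CC\cdot\ell^n$ for a unique line $\ell=\mu(\Sigma^\perp)=\ell(\Sigma)$, whence $\LieAlg{g}_{\CC,f}=\LieAlg{g}_{\CC,\ell^n}=\LieAlg{g}_{\CC,\ell}$ (a line and its $n$-th power have the same $\LieAlg{g}_\CC$-stabilizer, since the Veronese is $SO$-equivariant and injective). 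Combining with $\LieAlg{g}_{\CC,\ell(\Sigma)}\subseteq\LieAlg{g}_{\CC,f}$ from the previous paragraph gives the asserted equality $\LieAlg{g}_{\CC,f}=\LieAlg{g}_{\CC,\ell(\Sigma)}$.

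It remains to check rationality and the Hodge-type statement. Since $f$ is rational and $f$ spans $\CC\cdot\ell(\Sigma)^n$, the line $\ell(\Sigma)^n$ in $\Sym^n\tilde H(X,\CC)$ is defined over $\QQ$; by Lemma \ref{lemma-Veronese-is-defined-over-Q} (with $K=\QQ$, $j=n$), the line $\ell(\Sigma)$ itself is defined over $\QQ$. In particular $\mu(\Sigma^\perp)=\ell(\Sigma)$ is real, which retroactively justifies the application of Lemma \ref{lemma-m-Sigma-and-its-conjugate-generate-the-annihilator-of-ell} above — so I should reorganize slightly and establish the rationality of $\ell(\Sigma)$ \emph{first} from the decomposability of $f$, then feed it into Lemma \ref{lemma-m-Sigma-and-its-conjugate-generate-the-annihilator-of-ell}. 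For the Hodge type: $f$ has Hodge type $(0,0)$ and $\Psi$ is a morphism of Hodge structures for the fine Hodge structure on $\Im(\Psi)$, under which $\alpha^n$ has type $(0,0)$; since $\ell(\Sigma)=\mu(\Sigma^\perp)\subset\tilde H^{0,0}(X)$ already by construction (each $m_\lambda(\sigma)$ for $\lambda\in HT^2(X)$ lies in $\tilde H^{0,0}(X)$ by the Hodge-type computation preceding \eqref{eq-embedding-of-HT-2-in-Mukai-lattice}), the statement ``$\ell(\Sigma)$ is contained in $\tilde H^{0,0}(X)$'' is essentially built in, and the $(0,0)$-type of $f$ is consistent with $\ell(\Sigma)^n$ lying in $\tilde H^{0,0}_\fine$.

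The main obstacle I anticipate is the representation-theoretic step asserting that a nonzero $f\in\ker(\Delta)$ whose stabilizer contains the codimension-$\le(\dim\tilde H-1)$ subalgebra $\LieAlg{a}$ must be a single $n$-th power of an isotropic line, rather than a genuine sum $\sum c_i\ell_i^n$. I would handle this by the filtration argument already used in the proof of Lemma \ref{lemma-m-Sigma-and-its-conjugate-generate-the-annihilator-of-ell}: $\LieAlg{a}$ contains $\LieAlg{g}_{\CC,U_0}$ for a non-degenerate real plane $U_0\supset\mu(\ell(\Sigma))$ in $\tilde H^{0,0}(X)$, and one analyzes the $\LieAlg{g}_{\CC,U_0}$-isotypic decomposition of $\ker(\Delta)$; the subspace annihilated by all of $\LieAlg{a}$ is forced to be one-dimensional and spanned by $(\text{isotropic vector in }U_0)^n$, which pins down $\ell(\Sigma)$. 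I'd double-check the low-rank edge cases ($n$ small, or $\dim\tilde H$ small) separately, but the general mechanism is exactly the $A,B,C$ bracket computation in the footnote to Lemma \ref{lemma-m-Sigma-and-its-conjugate-generate-the-annihilator-of-ell}.
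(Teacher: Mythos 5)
There is a genuine gap, and it sits exactly where you flagged your own unease. Your argument hinges on the claim that a non-zero $f\in\ker(\Delta)$ annihilated by a large subalgebra must be a scalar multiple of a single $n$-th power $\ell^n$ of an \emph{isotropic} line, so that rationality of $\ell(\Sigma)$ falls out of Lemma \ref{lemma-Veronese-is-defined-over-Q}. This is false in precisely the cases the lemma is designed for: the statement only asserts that $f$ spans the image of the \emph{projection} of $\ell(\Sigma)^n$ to $\ker(\Delta)$, and when $\ell(\Sigma)$ is non-isotropic (e.g.\ $f=\Psi(\sqrt{td_X})=\frac{1}{2}[(\alpha+\frac{5}{4}\beta)^2+\frac{5}{2}\tilde q]$ for $X$ of $K3^{[2]}$-type, where the line has self-intersection $-10/4\cdot 4=-10$) the class $\ell^n$ does not lie in $\ker(\Delta)$ at all, and its projection is a genuine combination $\sum_i c_i\lambda^{n-2i}\tilde q^i$, not a pure power. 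Your assertion that the $SO$-orbit of $\ell^n$ sweeps out all of $\ker(\Delta)\setminus 0$ up to scale is likewise false: that orbit is the cone over the Veronese image of the isotropic quadric, a proper subvariety. Consequently your route to rationality of $\ell(\Sigma)$ collapses in the non-isotropic case; the paper instead gets it there from the injectivity and $\QQ$-rationality of the map $\ell\mapsto\LieAlg{g}_{\CC,\ell}$ off the quadric, after first establishing $\LieAlg{g}_{\CC,f}=\LieAlg{g}_{\CC,\ell(\Sigma)}$.

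The second, more structural problem is the circularity you half-acknowledge: every invocation of Lemma \ref{lemma-m-Sigma-and-its-conjugate-generate-the-annihilator-of-ell} (including the step where you bound the codimension of $\LieAlg{a}$ to conclude $f$ is decomposable) requires knowing in advance that $\mu(\Sigma^\perp)$ is defined over $\RR$, and your proposed repair — deduce realness from the decomposability of $f$ — rests on the decomposability claim that fails. This is not a reorganization issue; it is the actual mathematical content of the lemma. The paper's proof spends its Steps 1 and 2 deriving and solving a recursive system for the components $f_i$ of $f=\sum_i(\sigma\bar\sigma)^if_i$, namely $\Delta(f_{i-1})+i^2f_i=0$ and $i\,\mu(\lambda)f_i=f_{i-1}\lrcorner\mu(\lambda)^*$ for $\lambda\in\Sigma$, and extracts realness of $\ell(\Sigma)$ from a case analysis on the top non-vanishing $f_j$ (with separate treatment of $n$ even, and of the isotropic subcase where one shows $f_{n/2}=0$). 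Your proposal has no substitute for this step, so the argument as written does not close.
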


\begin{proof}
The element $f$ has the form
${\displaystyle 
\sum_{i=0}^{\lfloor n/2\rfloor}(\sigma\bar{\sigma})^i f_i,
}$
where $f_i$ belongs to $\Sym^{n-2i}(\tilde{H}^{0,0}(X))$, since $f$ has Hodge type $(0,0)$.

\underline{Step 1:} We derive a system (\ref{eq-first-recursive-equation}) and (\ref{eq-second-recursive-equation}) 
of partial differential equations that the $f_i$ satisfy.
Let $\{x_1, \dots, x_N\}$ be an orthonormal basis of $\tilde{H}^{0,0}(X)$. 
We have
\[
\Delta\left((\sigma\bar{\sigma})^i f_i\right)=
i^2(\sigma\bar{\sigma})^{i-1}f_i+(\sigma\bar{\sigma})^i\Delta(f_i).
\]
Hence, 
\begin{equation}
\label{eq-first-recursive-equation}
\Delta(f_{i-1})+i^2f_{i}=0,
\end{equation}
for $1\leq i\leq\lfloor n/2\rfloor$, 
since $f$ belongs to the kernel of $\Delta$. 
Given $\lambda\in HT^2(X)$, we have $m_\lambda(x_i)=-(\mu(\lambda),x_i)\bar{\sigma}$, by Equation (\ref{eq-formula-for-m-lambda}). Thus,
\[
m_\lambda\left(
(\sigma\bar{\sigma})^i x_{i_1}\cdots x_{i_{n-2i}}
\right)=
i\mu(\lambda)\sigma^{i-1}\bar{\sigma}^i x_{i_1}\cdots x_{i_{n-2i}}-
\sigma^i \bar{\sigma}^{i+1} \sum_{j=1}^{n-2i}(\mu(\lambda),x_{i_j})x_{i_1}\cdots \hat{x}_{i_j} \cdots x_{i_{n-2i}}.
\]
Let $\mu(\lambda)^*$ be the element of $\tilde{H}^{0,0}(X)^*$ satisfying $\mu(\lambda)^*(\lambda')=(\mu(\lambda),\lambda')$, for all $\lambda'\in \tilde{H}^{0,0}(X)$. Then
\[
m_\lambda((\sigma\bar{\sigma})^i f_i)=i\mu(\lambda)\sigma^{i-1}\bar{\sigma}^i f_i
- \sigma^i \bar{\sigma}^{i+1} [f_i\lrcorner \mu(\lambda)^*].
\]

\[
m_\lambda(f)=
\sum_{i=1}^{\lfloor n/2\rfloor+1} \left(
i\mu(\lambda)f_i-[f_{i-1}\lrcorner \mu(\lambda)^*]
\right)\sigma^{i-1}\bar{\sigma}^i=0,
\]
for all $\lambda\in \Sigma$, where we set $f_{\lfloor n/2\rfloor+1}=0$. 
Hence, for all $\lambda\in \Sigma$ and $0<i\leq \lfloor n/2\rfloor+1$ we have
\begin{equation}
\label{eq-second-recursive-equation}
i\mu(\lambda)f_i-[f_{i-1}\lrcorner \mu(\lambda)^*]=0.
\end{equation}
So
$
ix_jf_i=\frac{\partial f_{i-1}}{\partial x_j},
$ 
if $x_j$ belongs to $\mu(\Sigma)$.

\underline{Step 2:} We prove that $\ell(\Sigma)$ is defined over $\RR$.
If $n$ is odd, then $f_{\lfloor n/2\rfloor}$ is linear and Equation (\ref{eq-second-recursive-equation}) for $i=1+\lfloor n/2\rfloor$ implies that $f_{\lfloor n/2\rfloor}$ is orthogonal to $\mu(\Sigma)$ and so belongs to $\ell(\Sigma)$. Note that $f_i$ is real, for all $i$, since $f$ is rational. 
If $f_{\lfloor n/2\rfloor}$ does not vanish, then  $\ell(\Sigma)$ is defined over $\RR$.
If $f_{\lfloor n/2\rfloor}$ does vanish, let $j$ be the maximal index, such that $f_j\neq 0$, but $f_{j+1}=0$.
Equation  (\ref{eq-second-recursive-equation}) for $i=j+1$ implies that $f_j$ belongs to $\ell(\Sigma)^{n-2j}$, which is thus defined over $\RR$, and again we get that
$\ell(\Sigma)$ is defined over $\RR$, by Lemma \ref{lemma-Veronese-is-defined-over-Q}.

If $n$ is even, then $f_{n/2}$ is constant. If it vanishes, then the above argument proves that $\ell(\Sigma)$ is defined over $\RR$. Assume that $f_{n/2}=t$, where $t\neq 0$. Set $g:=f_{n/2-1}$. Assume first that $\ell(\Sigma)$ is not isotropic. We can then assume that $x_1$ belongs to $\ell(\Sigma)$. 
Equation (\ref{eq-second-recursive-equation}) for $i={n/2}$ implies that
$\frac{\partial g}{\partial x_j}=\frac{nt}{2}x_j$, for $j\geq 2$. So $g=\frac{nt}{4}(x_2^2+ \ \cdots \ +x_N^2)+cx_1^2$.
Equation (\ref{eq-first-recursive-equation}) yields $-\frac{tn^2}{4}=\Delta(g)=\frac{nt(N-1)}{4}+c$, so
$c=\frac{-tn(n+N-1)}{4}$.
We get
\[
f_{n/2-1}=\frac{nt}{4}(x_1^2+ \ \cdots \ +x_N^2)-\left(\frac{tn(N+n)}{4}\right)x_1^2.
\]
Now, $t$ is real, as is $f_{n/2-1}$. The element $x_1^2+ \ \cdots \ +x_N^2$ is independent of the choice of an orthonormal basis and is thus real. The coefficient $\frac{tn(N+n)}{4}$ is a non-zero real number. Hence, $x_1^2$ is real and the line
$\ell(\Sigma)$ is defined over $\RR$.

Assume that $n$ is even and $\ell(\Sigma)$ is isotropic. We claim that $f_{n/2}=0.$ 
Set $t:=f_{n/2}$ and $g:=f_{n/2-1}$. We may assume that $\ell(\Sigma)$ is spanned by $x_1+\sqrt{-1}x_2$. Then
$x_1+\sqrt{-1}x_2$ belongs to $\mu(\Sigma)$. 
Equation (\ref{eq-second-recursive-equation}) for $i={n/2}$ implies that
$\frac{\partial g}{\partial x_j}=\frac{nt}{2}x_j$, for $j\geq 3$ and
\[
(n/2)(x_1+\sqrt{-1}x_2)t=g\lrcorner (x_1+\sqrt{-1}x_2)^*
\] 
So
\[
f_{n/2-1}=\frac{nt}{4}(x_3^2+ \ \cdots \ +x_N^2)+\frac{nt}{4}(x_1+\sqrt{-1}x_2)(x_1-\sqrt{-1}x_2)
+a(x_1+\sqrt{-1}x_2)^2,
\]
 for some scalar $a$. Equation (\ref{eq-first-recursive-equation}) yields 
 \[
-\frac{tn^2}{4}= \Delta(f_{n/2-1})=\frac{nt}{4}(N-2)+\frac{nt}{2}=\frac{nNt}{4}.
 \]
 Hence, $t=0$ as claimed. We conclude once again that $\ell(\Sigma)$ is defined over $\RR$.

\underline{Step 3:} The subalgebra $\LieAlg{a}$ of $\LieAlg{g}_\CC$ generated by $m(\Sigma)$ and its complex conjugate
$\overline{m(\Sigma)}$ is equal to the subalgebra $\LieAlg{g}_{\CC,\ell(\Sigma)}$  of $\LieAlg{g}_\CC$ annihilating $\ell(\Sigma)$,
by Lemma \ref{lemma-m-Sigma-and-its-conjugate-generate-the-annihilator-of-ell}. The subalgebra $\overline{m(\Sigma)}$ 
is contained in $\LieAlg{g}_{\CC,f}$, since $f$ is rational and $m(\Sigma)$ is contained in $\LieAlg{g}_{\CC,f}$. Hence, 
$\LieAlg{g}_{\CC,\ell(\Sigma)}$ is contained in $\LieAlg{g}_{\CC,f}$.

Let $\lambda$ be a non-zero element of $\ell(\Sigma)$. 
The trivial $\LieAlg{g}_{\CC,\ell(\Sigma)}$-submodule in $\Sym^n(\tilde{H}(X,\CC))$ has the basis
$\{\lambda^{n-2i}\tilde{q}^i\}_{i=0}^{\lfloor n/2\rfloor}$, where $\tilde{q}$ is the $\LieAlg{g}_\CC$-invariant degree $2$ 
element given in Example \ref{example-projection-to-Im-Psi}. 
$\Delta:\Sym^n(\tilde{H}(X,\CC))\rightarrow \Sym^{n-2}(\tilde{H}(X,\CC))$ is a surjective $\LieAlg{g}_\CC$-modules homomorphism, hence it induces a surjective homomorphism of the respective trivial $\LieAlg{g}_{\CC,\ell(\Sigma)}$-submodules.
We conclude that the trivial $\LieAlg{g}_{\CC,\ell(\Sigma)}$-submodule $\ker(\Delta)^{\LieAlg{g}_{\CC,\ell(\Sigma)}}$
of $\ker(\Delta)$ is one-dimensional. 
If $\ell(\Sigma)$ is isotropic, then $\lambda^n$ spans $\ker(\Delta)^{\LieAlg{g}_{\CC,\ell(\Sigma)}}$, 
and so $f$ is a scalar multiple of $\lambda^n$. We conclude that $\ell(\Sigma)^n$ is defined over $\QQ$, hence so is $\ell(\Sigma)$, by Lemma \ref{lemma-Veronese-is-defined-over-Q}. 

If $\ell(\Sigma)$ is non-isotropic, then the projection of $\lambda^n$ to $\ker(\Delta)$ spans 
$\ker(\Delta)^{\LieAlg{g}_{\CC,\ell(\Sigma)}}$, and so $f$ belongs to the projection of $\ell(\Sigma)^n.$
Hence, $\LieAlg{g}_{\CC,f}$ is contained in $\LieAlg{g}_{\CC,\ell(\Sigma)}$.
The equality $\LieAlg{g}_{\CC,\ell(\Sigma)}=\LieAlg{g}_{\CC,f}$ follows, regardless of whether $\ell(\Sigma)$ is isotropic.

It remains to prove that $\ell(\Sigma)$ is defined over $\QQ$ when $\ell(\Sigma)$ is non-isotropic.
The quadric $Q$ in $\PP(\widetilde{H}(X,\CC))$ of isotropic lines is defined over $\QQ$ and the morphism
\[
\PP(\widetilde{H}(X,\CC))\setminus Q\rightarrow Gr\left(\Choose{N+1}{2},\LieAlg{g}_\CC\right),
\]
sending $\ell$ to $\LieAlg{g}_{\CC,\ell}$, is defined over $\QQ$ (and is thus equivariant with respect to the group of field automorphisms of $\CC$, and is injective. Hence, if $\LieAlg{g}_{\CC,\ell}$ is defined over $\QQ$, then so is $\ell$, as both are invariant under all field automorphisms of $\CC$. We already know that 
$\LieAlg{g}_{\CC,\ell(\Sigma)}=\LieAlg{g}_{\CC,f}$ is defined over $\QQ$. Hence, so it $\ell(\Sigma)$.
\end{proof}

%
\subsection{The LLV subspace of an object in $D^b(X)$}
\label{sec-LLV-subspace}
The {\em Mukai vector}  of an object $F\in D^b(X)$ is the class $v(F):=ch(F)\sqrt{td_{X}}$ in $H^*(X,\RationalNumbers)$.
Let $\LieAlg{g}_{v(F)}$ be the subalgebra of $\LieAlg{g}$ annihilating $v(F)$. 
Let $\obs^{HT}_F$ be the obstruction map (\ref{eq-obstruction-map}) and set $\Sigma(F):=\ker\left(\obs^{HT}_F\right)$.
Let $\tilde{\Sigma}(F)$ be the image of the subspace $\Sigma(F)$  of $HT^2(X)$ via the graded Duflo isomorphism
$D:HT^*(X)\rightarrow HT^*(X)$, given by contraction with $\sqrt{td_X}$. 
\[
\tilde{\Sigma}(F):=D(\Sigma(F))=D\left(\ker\left(\obs^{HT}_F\right)\right).
\]

\begin{lem}
\label{lem-Sigma-F-is-contained-in-stabilizer-of-Mukai-vector}
Let $X$ be a projective irreducible holomorphic symplectic manifold and $F\in D^b(X)$ an object. 
The image $m(\tilde{\Sigma}(F))$ of the subspace $\tilde{\Sigma}(F)$  of $HT^2(X)$ via (\ref{eq-m-from-HT-2-into-LLV-algebra}) is contained in the complexification of $\LieAlg{g}_{v(F)}$. 
\end{lem}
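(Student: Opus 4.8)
The statement to be proved is that $m(\tilde{\Sigma}(F))$ lies in the complexification of $\LieAlg{g}_{v(F)}$, i.e., that every element of $\tilde{\Sigma}(F) = D(\ker(\obs^{HT}_F))$ maps, under the homomorphism $m : HT^2(X) \to \LieAlg{g}_\CC$ of Corollary \ref{cor-image-of-m-is-in-LLV-algebra}, to an operator annihilating the Mukai vector $v(F) = ch(F)\sqrt{td_X}$. The plan is to reduce this to the compatibility, established by Huang, between Toda's obstruction map $\obs^{HT}_F$ of (\ref{eq-obstruction-map}) and the action homomorphism $m$ together with the Duflo/HKR comparison. Concretely, I would proceed as follows.

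First I would recall the interpretation of the Duflo automorphism: the graded automorphism $D$ of $HT^*(X)$ given by contraction with $\sqrt{td_X}$ is, after transport by the HKR isomorphisms and the Căldăraru–Mukai compatibility isomorphisms, precisely the modification needed to make the HKR map a ring/module isomorphism intertwining Hochschild and Hochschild–homology structures (the ``Duflo–Kontsevich'' isomorphism). Under this identification, the action of $\xi \in HT^2(X)$ on $H\Omega_*(X)$ via $m$ corresponds to the $HH^2(X)$-action on $HH_*(X)$ composed with $D$. Second, I would invoke the key input: for an object $F$, the obstruction class $\obs^{HT}_F(\xi) \in \Hom(F,F[2])$ is computed by contraction with the exponential Atiyah class, and — by Huang's result, cited just above this lemma and in the acknowledgements — applying $m_\xi$ (equivalently the $HH_*$-action of the Duflo-corrected $\xi$) to $v(F) = ch(F)\sqrt{td_X} \in H\Omega_*(X)$ equals, up to the standard identifications, the image of $\obs^{HT}_F(\xi)$ under the map $\Hom(F,F[2]) \to H\Omega_*(X)$ sending a morphism to its ``Chern character against $F$'', i.e. $m_\xi(v(F))$ is, up to sign, $v$ of the obstruction. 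More precisely: the generalized Mukai pairing and functoriality of the Chern character give that if $\xi$ deforms $F$ to first order (i.e. $\obs^{HT}_F(\xi)=0$), then $v(F)$ is annihilated by the corresponding infinitesimal operator; this is exactly \cite{huang}. Hence for $\xi \in \Sigma(F) = \ker(\obs^{HT}_F)$ the Duflo-corrected element $D(\xi) \in \tilde\Sigma(F)$ satisfies $m_{D(\xi)}(v(F)) = 0$.

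Third, I would package this so that the conclusion lands in $\LieAlg{g}_{v(F)}$ rather than merely asserting annihilation of $v(F)$ by a single operator: since $m_{D(\xi)} \in \LieAlg{g}_\CC$ by Corollary \ref{cor-image-of-m-is-in-LLV-algebra}, and it annihilates $v(F)$, it lies in the subalgebra of $\LieAlg{g}_\CC$ stabilizing the line (indeed the vector) $v(F)$, which is by definition $(\LieAlg{g}_{v(F)})_\CC = \LieAlg{g}_{v(F)}\otimes_\QQ\CC$. Running this over all $\xi \in \Sigma(F)$ gives $m(\tilde\Sigma(F)) \subseteq (\LieAlg{g}_{v(F)})_\CC$, which is the assertion.

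The main obstacle is the precise bookkeeping in the second step: making rigorous the identification ``$m_{D(\xi)}(v(F)) = \pm v$ of the obstruction class'', which requires carefully matching (a) the normalization of the HKR isomorphism with the Duflo twist, (b) Toda's definition of the obstruction via the exponential Atiyah class versus the Hochschild-cohomology action on $F$, and (c) the comparison, due to Căldăraru and refined by Huang, between the action of $HH^2$ on $HH_*$ and the cup-product/contraction action on $HT^*$ and $H\Omega_*$. I expect the cited result of Huang to supply exactly this identification, so in the write-up I would state it as a black box (``by \cite{huang}, $m_{D(\xi)}(v(F))$ equals the image of $\obs^{HT}_F(\xi)$ under the Mukai-vector map, up to sign''), and then the lemma follows in two lines. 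The only subtlety I would flag explicitly is that the equivalence must be performed on the level of the \emph{full} cohomology $H\Omega_*(X) \cong H^*(X,\CC)$, not just its Verbitsky component, but since $m$ acts on all of $H^*(X,\CC)$ and $v(F)$ is a genuine element there, no projection is needed.
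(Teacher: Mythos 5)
Your overall route is the one the paper takes: transport the kernel of the obstruction map through the Duflo-twisted HKR isomorphisms into Hochschild cohomology, show that it annihilates the Hochschild Chern character $\widetilde{ch}(F)\in HH_0(X)$, and then use the Calaque--Rossi--Van den Bergh compatibility $I_K(I^K(\alpha)\circ\widetilde{ch}(F))=\alpha\cdot I_K(\widetilde{ch}(F))$ together with C\u{a}ld\u{a}raru's identity $I_K(\widetilde{ch}(F))=v(F)$ to conclude that $m(\tilde{\Sigma}(F))$ annihilates $v(F)$; since $m(\tilde{\Sigma}(F))\subset\LieAlg{g}_\CC$ by Corollary \ref{cor-image-of-m-is-in-LLV-algebra}, this places it in the complexified stabilizer. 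Your final step, and your remark that no projection to the Verbitsky component is needed, are both fine.

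The gap is in what you black-box. Huang's theorem supplies only the identity $\obs_F\circ I^{HKR}=\obs^{HT}_F$, i.e.\ the identification of Toda's exponential-Atiyah-class obstruction with the evaluation-at-$F$ map on $HH^2(X)$; it does not assert that $m_{D(\xi)}(v(F))$ is ``the Mukai vector of $\obs^{HT}_F(\xi)$'', nor that vanishing of the obstruction class forces $\widetilde{ch}(F)\circ\alpha=0$ in $HH_{-2}(X)$. That last implication is the one genuinely non-formal step in the whole lemma, and it is exactly what your write-up would omit by attributing it to a reference that does not contain it. The paper proves it by a Serre-duality trace argument: for $\alpha\in HH^2(X)$ and every $\beta\in\Hom(\Delta_*\StructureSheaf{X},\Delta_*\omega_X[2n-2])$ one has $Tr_{X\times X}([\widetilde{ch}(F)\circ\alpha]\circ\beta)=Tr_X(\alpha_F\circ\beta_F)$, so if $\alpha_F=0$ then $\widetilde{ch}(F)\circ\alpha$ pairs to zero with every such $\beta$, and the non-degeneracy of the trace pairing kills it. Some such argument --- or, equivalently, an actual construction of your proposed ``Mukai-vector map'' $\Ext^2(F,F)\to HH_{-2}(X)$ intertwining $\alpha\mapsto\alpha_F$ with $\alpha\mapsto\widetilde{ch}(F)\circ\alpha$, which is what the trace identity encodes --- must be supplied before the two-line conclusion you envisage is available.
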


\begin{proof}
The image $m(\tilde{\Sigma}(F))$ is contained in $\LieAlg{g}_\CC$, by Corollary \ref{cor-image-of-m-is-in-LLV-algebra}. We prove that it annihilates $v(F)$ in two steps.

\underline{Step 1:} Let $\Delta:X\rightarrow X\times X$ be the diagonal embedding. 
Then $HH^i(X):=\Hom(\Delta_*\StructureSheaf{X},\Delta_*\StructureSheaf{X}[i])$ and 
$HH_i(X):=\Hom(\Delta_!\StructureSheaf{X}[i],\Delta_*\StructureSheaf{X})$, where $\Delta_!$ is the left adjoint of $\Delta^*$.
We have $\Delta_!\StructureSheaf{X}\cong \Delta_*\omega_X^{-1}[-2n]$, where $\dim(X)=2n$. 
Hence, $HH_0(X)=\Hom(\Delta_*\omega_X^{-1}[-2n],\Delta_*\StructureSheaf{X})$.
Elements of $HH^i(X)$ yield natural transformations from the identity functor $id$ of $D^b(X)$ to $id[i]$. We get the homomorphism
$
\obs_F:HH^2(X)\rightarrow \Hom(F,F[2])
$
sending $\alpha$ to the value $\alpha_F$ of the natural transformation on the object $F$.
The Chern character $\widetilde{ch}(F)$ is defined as an element of $HH_0(X)$ \cite[Def. 6.2]{caldararu-I}.
We prove first that the kernel $\Sigma'(F)$ of $\obs_F$ is contained in the ideal of $HH^*(X)$ annihilating $\widetilde{ch}(F)$.

Given a smooth complex projective variety $Y$ and an object $E$ of $D^b(Y)$, let
Let $Tr_Y:\Hom(E,E\otimes\omega_Y[\dim Y])\rightarrow \CC$ be the Serre's Duality trace (see \cite[Sec. 2]{caldararu-I}).
The Chern character $\widetilde{ch}(F)$ satisfies 
\[
Tr_{X\times X}(\widetilde{ch}(F)\circ \nu)=Tr_X(\nu_F),
\]
for all $\nu\in \Hom(\Delta_*\StructureSheaf{X},\Delta_*\omega_X[2n])$, where $\nu_F\in \Hom(F,F\otimes\omega_X[2n])$
is the value on $F$ of the natural transformation $\nu$ from $id$ to the Serre functor of tensorization by $\omega_X[2n]$
\cite[Def. 6.2]{caldararu-I}. In particular, for every $\alpha\in HH^2(X)$ and $\beta\in \Hom(\Delta_*\StructureSheaf{X},\Delta_*\omega_X[2n-2])$, setting $\nu=\alpha\circ\beta$ above we get
\[
Tr_{X\times X}([\widetilde{ch}(F)\circ \alpha]\circ\beta)=Tr_X(\alpha_F\circ \beta_F).
\]
If $\alpha_F=0$, then the element $\widetilde{ch}(F)\circ \alpha$ of $HH_{-2}(X)=\Hom(\Delta_*\omega_X^{-1}[-2-2n],\Delta_*\StructureSheaf{X})$ pairs to zero with every class $\beta$ as above. Hence, $\widetilde{ch}(F)\circ \alpha=0,$
because the trace pairing is non-degenerate.

\underline{Step 2:} 
Consider the  Hochschild-Kostant-Rosenberg isomorphisms
\begin{eqnarray*}
I^{HKR} : HT^*(X) & \rightarrow & HH^*(X),
\\
I_{HKR} : HH_*(X) & \rightarrow & H\Omega_*(X).
\end{eqnarray*}
Set $I^K:=I^{HKR}\circ D^{-1}$ and let $I_K$ be given by $I_K(\alpha)=I_{HKR}(\alpha)\sqrt{td_X}$.
We have $I_K(\widetilde{ch}(F))=ch(F)\sqrt{td_X}=v(F)$, by \cite[Theorem 4.5]{caldararu-II},  
and 
$I_K(I^K(\alpha)\circ \widetilde{ch}(F))=\alpha\cdot I_K(\widetilde{ch}(F))$,  for all $\alpha\in HT^*(X)$, by \cite[Theorem 1.4]{calaque-et-al}.
Hence, the image via $(I^K)^{-1}$ of 
the kernel $\Sigma'(F)$ of $\obs_F$ annihilates $v(F)$, by Step 1 of the proof.
\begin{equation}
\label{I^K-inverse-maps-Sigma-prime-to-stabilizer}
m\left((I^K)^{-1}(\Sigma'(F))\right)\subset \LieAlg{g}_{\CC,v(F)}.
\end{equation}

We have $\obs_F(I^{HKR}(\alpha))=(\exp a_F)(\alpha)$, for all $\alpha\in HT^2(X)$, by \cite[Theorem A]{huang}.
Hence,  $I^{HKR}$ maps 
the kernel $\Sigma(F)$ of the obstruction map $\obs^{HT}_F$, given in (\ref{eq-obstruction-map}), to 
the kernel $\Sigma'(F)$ of $\obs_F$. So $(I^K)^{-1}(\Sigma'(F))=D(\Sigma(F))=\tilde{\Sigma}(F)$.
We conclude that  $m(\tilde{\Sigma}(F))$ annihilates $v(F)$, by Equation (\ref{I^K-inverse-maps-Sigma-prime-to-stabilizer}).
%
\end{proof}

Let $X$ be a $2n$-dimensional irreducible holomorphic symplectic manifold. Composing the homomorphism $m:HT^2(X)\rightarrow \LieAlg{g}_\CC$, given in Equation (\ref{eq-m-from-HT-2-into-LLV-algebra}), with the inclusion 
$\LieAlg{g}_\CC\subset \End(H^*(X,\CC))$ followed by evaluation at $v(F)$ we get the {\em cohomological obstruction map}
\[
\obs_F^{Coh}:HT^2(X)\rightarrow H^*(X,\CC).
\]

\begin{defi}
\label{def-deforms-in-co-dimension-one}
\begin{enumerate}
\item
We say that an object $F$ in $D^b(X)$ 
{\em deforms in co-dimension $c$}, if the kernel $\Sigma(F)$ of the obstruction map (\ref{eq-obstruction-map}) has co-dimension $c$ in $HT^2(X)$. 
\item
\label{def-item-remains-of-Hodge-type-in-co-dimension-1}
We say that $F$ {\em has a rank $c$ cohomological obstruction map}, if $\rank(\obs_F^{Coh})=c$, i.e., if
the annihilator $\LieAlg{g}_{v(F)}$ of the Mukai vector $v(F)$ intersects $m(HT^2(X))$ in a co-dimension $c$ subspace of the latter.
\item
If $F$ is $\theta$-twisted of non-zero rank, we say that 
$F$ {\em has a rank $c$ cohomological obstruction map}
if the assumption in Part (\ref{def-item-remains-of-Hodge-type-in-co-dimension-1}) holds with $v(F)$ replaced by  
$\bar{v}(F):=\kappa(F)\sqrt{td_X}$.
\end{enumerate}
\end{defi}

If $F$ deforms in co-dimension $c$, then $F$ has a rank $\leq c$ cohomological obstruction map,
since $m(\tilde{\Sigma}(F))$ is contained in $\LieAlg{g}_{v(F)}$
by Lemma \ref{lem-Sigma-F-is-contained-in-stabilizer-of-Mukai-vector}.
Important examples of very modular sheaves, which deform in co-dimension $2$, are constructed in \cite{markman-BBF-class-as-characteristic-class} in the $K3^{[n]}$ deformation type and in \cite{markman-generalized-kummers} in the generalized Kummer deformation type.

Let $k$ be a positive integer. If $F_i$, $1\leq i\leq k$, each deforms in co-dimension one and each has positive rank, then $\otimes_{i=1}^k F_i$ deforms in co-dimension $\leq 2$. Indeed, in this case $\Sigma(F_i)$ does not contain $H^2(\StructureSheaf{X})$, as was explained in the proof of 
Proposition \ref{prop-kappa-class-remains-of-Hodge-type}, 
and so the intersection $\Sigma(F_i)\cap [H^1(TX)\oplus H^2(\StructureSheaf{X})]$ 
is the graph of a linear homomorphism $\theta_i:H^1(TX)\rightarrow H^2(\StructureSheaf{X})$
and $F_i$ deforms in the direction $\xi+\theta_i(\xi)$ for all $\xi\in H^1(TX)$. Then $\otimes_{i=1}^k F_i$ deforms in the direction 
$\xi+(\sum_{i=1}^k\theta_i)(\xi)$, for each $\xi\in H^1(TX)$. So $\Sigma(\otimes_{i=1}^k F_i)\cap [H^1(TX)\oplus H^2(\StructureSheaf{X})]$ contains the graph of $\sum_{i=1}^k\theta_i$.
The linear homomorphism $\theta_i$ is computed in Lemma \ref{lemma-theta-i} below.

\begin{defi}
The {\em LLV subspace} of an object $F$ in $D^b(X)$, is the subspace of $\tilde{H}(X,\QQ)$ given by 
\[
W(F):=\mu(\tilde{\Sigma}(F)^\perp)
\]
generalizing Equation (\ref{eq-ell-Sigma}). 
\end{defi}

Note that $F$ deforms in co-dimension $\dim(W(F))$. 
If one-dimensional, the LLV subspace $W(\otimes_{i=1}^k F_i)$ of the tensor product discussed above is described by 
Theorem \ref{thm-Mukai-vector} (\ref{prop-item-spanning-Mukai-vector}), while if it is two dimensional it is computed in Lemma \ref{lemma-theta-i}.
Given a subspace $W\subset \tilde{H}(X,\RationalNumbers)$, let $\LieAlg{g}_W$ be the subalgebra of $\LieAlg{g}$ annihilating $W$. 
If $F$  has a rank $1$ cohomological obstruction map, 
and the projection $\hat{v}(F)$ of $v(F)$ to $SH^*(X)$ does not vanish, 
then $\LieAlg{g}_{v(F)}=\LieAlg{g}_{\ell(F)}$, for a rational line $\ell(F)$ in
$\tilde{H}(X,\QQ)$, by Lemma \ref{lemma-pde} applied with $f=\Psi(\hat{v}(F))$. The equality $\ell(F)=W(F)$ holds, if furthermore $F$ deforms in co-dimension $1$.

%
\subsection{The LLV line}
\label{sec-LLV-line}

\begin{thm}
\label{thm-Mukai-vector}
Let $X$ be a projective irreducible holomorphic symplectic manifold and $F\in D^b(X)$ an object which deforms in co-dimension $1$, so that $\tilde{\Sigma}(F)$ has co-dimension one in $HT^2(X)$.
Denote by $\ell(F)$ the line $\ell(\tilde{\Sigma}(F))$ in  $\tilde{H}(X,\CC)$ given in (\ref{eq-ell-Sigma}).
Assume\footnote{The assumption holds if $\rank(F)\neq 0$, or $c_1(F)\neq 0$. The assumption depends only on the orbit of $v(F)$ under the derived monodromy group.} 
that the projection $\hat{v}(F)$ of $v(F)$ to $SH^*(X)$  via (\ref{eq-projection-to-verbitsky-component}) does not vanish.
\begin{enumerate}
\item
\label{prop-item-line-in-rational-Mukai-lattice} 
The line $\ell(F)$ is defined over $\QQ$ and 
$\LieAlg{g}_{v(F)}$ is equal to the Lie subalgebra $\LieAlg{g}_{\ell(F)}$ annihilating $\ell(F)$.
\item
\label{prop-item-isotropic}
The embedding 
$\Psi$, given in (\ref{eq-Psi}), maps $\hat{v}(F)$ 
into the  image of $\ell(F)^n$ via the projection (\ref{eq-projection-to-Im-Psi}).
\item
\label{prop-item-spanning-Mukai-vector}
If the rank $r$ of $F$ does not vanish, then 
the line $\ell(F)$ is spanned by an element of the form $r\alpha+c_1(F)+s(F)\beta$, 
for some $s(F)\in\RationalNumbers$.
\item
\label{prop-item-functoriality}
Given an equivalence $\Phi:D^b(X)\rightarrow D^b(Y)$, let $\tilde{H}(\phi):\tilde{H}(X,\QQ)\rightarrow 
\tilde{H}(Y,\QQ)$ be its associated isometry via the functor (\ref{eq-functor-tilde-H}).
We have
$\ell(\Phi(F))=\tilde{H}(\phi)(\ell(F))$.
\end{enumerate}
\end{thm}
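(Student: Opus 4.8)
The plan is to assemble the four parts from the structural results developed in Sections \ref{sec-the-Hochschild-lie-algebra} and \ref{sec-subalgebras-of-LLV-generated-by-a-hyperplane}. The key input is Lemma \ref{lem-Sigma-F-is-contained-in-stabilizer-of-Mukai-vector}, which tells us that $m(\tilde{\Sigma}(F))\subset \LieAlg{g}_{\CC,v(F)}$, and the fact that $v(F)$ acts on $SH^*(X,\CC)$ through its projection $\hat v(F)$, which is nonzero by hypothesis. Applying $\Psi$ (an isomorphism of $\LieAlg{g}$-modules onto $\ker(\Delta)$) we set $f:=\Psi(\hat v(F))$, a nonzero rational class of Hodge type $(0,0)$ in $\ker(\Delta)\subset\Sym^n(\tilde H(X,\QQ))$. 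Since $m(\tilde\Sigma(F))$ annihilates $v(F)$ it annihilates $\hat v(F)$, hence annihilates $f$, so the hypotheses of Lemma \ref{lemma-pde} are met with $\Sigma=\tilde\Sigma(F)$. Lemma \ref{lemma-pde} then yields at once: $\ell(F)=\ell(\tilde\Sigma(F))$ is defined over $\QQ$, lies in $\tilde H^{0,0}(X)$, $\LieAlg{g}_{\CC,\ell(F)}=\LieAlg{g}_{\CC,f}=\LieAlg{g}_{\CC,v(F)}$, and $f$ spans the image of the projection of $\ell(F)^n$ to $\ker(\Delta)$. Taking rational points gives $\LieAlg{g}_{v(F)}=\LieAlg{g}_{\ell(F)}$, which is part \eqref{prop-item-line-in-rational-Mukai-lattice}; unravelling $f=\Psi(\hat v(F))$ and the description of $\Psi$'s image via the projection \eqref{eq-projection-to-Im-Psi} gives part \eqref{prop-item-isotropic}.

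For part \eqref{prop-item-spanning-Mukai-vector} I would argue as follows. Write $v(F)=r+c_1(F)+(\text{higher degree})$ with $r=\rank(F)\neq 0$; then $\hat v(F)=r\cdot 1 + c_1(F) + (\text{classes in }SH^{\geq 4})$, since the projection \eqref{eq-projection-to-verbitsky-component} is the identity on $H^0\oplus H^2$. Using $\Psi(1)=\alpha^n/n!$ and the formula \eqref{eq-Psi} for $\Psi$, the class $f=\Psi(\hat v(F))$ has leading term (in the $h$-grading, i.e.\ lowest cohomological degree in $H^*(X)$) equal to $\frac{r}{n!}\alpha^n + \frac{1}{(n-1)!}e_{c_1(F)}(\alpha^{n-1}) + \cdots$. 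On the other hand, by part \eqref{prop-item-isotropic}, $f$ is the projection to $\ker(\Delta)$ of $\ell(F)^n$ for a generator of $\ell(F)$; writing that generator as $a\alpha + \lambda + b\beta$ with $\lambda\in H^2(X,\QQ)$, the lowest-degree part of $(a\alpha+\lambda+b\beta)^n$ is $a^n\alpha^n + n a^{n-1}\alpha^{n-1}\lambda+\cdots$, and the projection to $\ker(\Delta)$ does not change the two lowest-degree graded pieces (since $\Delta$ raises the relevant grading). Comparing the $\alpha^n$ and $\alpha^{n-1}H^2$ components, after rescaling the generator so that $a=r$, forces $\lambda=c_1(F)$; the $\beta$-coefficient $b=:s(F)$ is then an unconstrained rational number. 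This establishes $\ell(F)=\span\{r\alpha+c_1(F)+s(F)\beta\}$.

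Part \eqref{prop-item-functoriality} is the functoriality statement and is where I expect the main bookkeeping to lie, though conceptually it is a formal consequence of compatibility of all the constructions with derived equivalences. The ingredients: (i) the commutative square relating $\obs_G$ for $G$ and $\obs_{\Phi(G)}$ under the graded isomorphism $\phi:HH^*(X)\to HH^*(Y)$ of \cite{caldararu-I} (displayed in the introduction), which gives $\Sigma(\Phi(F))=\phi(\Sigma(F))$ via \cite[Theorem 4.7]{toda} — the argument already used in Proposition \ref{prop-kappa-class-remains-of-Hodge-type}; (ii) that $\phi$ intertwines the Duflo isomorphisms and hence $\tilde\Sigma(\Phi(F))=I^K_Y\big((I^K_X)^{-1}(\tilde\Sigma(F))\big)$ transported through the cohomological action; (iii) Taelman's result that $\phi$ conjugates $\LieAlg{g}_{X,\CC}$ to $\LieAlg{g}_{Y,\CC}$ and that its restriction to $SH^*$ is induced by $\tilde H(\phi)$ (Theorem \ref{thm-def-of-H-tilde}), together with the fact that under the identification $m:HT^2\to\LieAlg{g}_\CC$ and $\mu:HT^2\to\tilde H^{0,0}$, the line $\ell(\tilde\Sigma(F))=\mu(\tilde\Sigma(F)^\perp)$ is characterized (Lemma \ref{lemma-line-in-Mukai-lattice-associated-to-hyperplane-in-HT-2}) purely as the line in $\tilde H^{0,0}(X)$ annihilated by $m(\tilde\Sigma(F))\subset\LieAlg{g}_\CC$. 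Since $\phi$ takes $m(\tilde\Sigma(F))$ to $m(\tilde\Sigma(\Phi(F)))$ and restricts on $\tilde H$ to $\tilde H(\phi)$, it carries the line annihilated by the former to the line annihilated by the latter, i.e.\ $\ell(\Phi(F))=\tilde H(\phi)(\ell(F))$. The delicate point — the step I expect to be the real obstacle — is checking that Taelman's isometry $\tilde H(\phi)$ really is the restriction of the cohomological action in a way compatible with the $\mu$-identification and with evaluation at the symplectic form $\sigma$ used to define $\mu$ (one must track how $\Phi$ moves $\sigma$, and invoke that $\mu$ is canonical only up to scalar, so that the \emph{line} $\ell(F)$ — not a chosen vector — is what transforms correctly); once that compatibility is pinned down the rest is formal.
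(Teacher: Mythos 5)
Your parts (\ref{prop-item-line-in-rational-Mukai-lattice}) and (\ref{prop-item-isotropic}) follow the paper's route exactly: set $f=\Psi(\hat{v}(F))$, use Lemma \ref{lem-Sigma-F-is-contained-in-stabilizer-of-Mukai-vector} together with the $\LieAlg{g}$-equivariance of the projection (\ref{eq-projection-to-verbitsky-component}) and of $\Psi$ to see that $m(\tilde{\Sigma}(F))\subset\LieAlg{g}_{\CC,f}$, and quote Lemma \ref{lemma-pde}. Your part (\ref{prop-item-spanning-Mukai-vector}) is correct but genuinely different from the paper's. The paper deduces from Proposition \ref{prop-kappa-class-remains-of-Hodge-type} that $\kappa(F)\sqrt{td_X}=\exp(e_{-c_1(F)/r})(v(F))$ is $\bar{\LieAlg{g}}$-invariant, uses $SO(\tilde{H}(X,\CC))$-equivariance of $v\mapsto\ell$ to conclude $\exp(e_{-c_1(F)/r})(\ell(F))\subset U_\QQ$, and unwinds. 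You instead compare the $\alpha^n$ and $\alpha^{n-1}\cdot H^2$ components of $\Psi(\hat{v}(F))$ with those of the projection of $(a\alpha+\lambda+b\beta)^n$; this works because the $\deg_\alpha=1$ part of $\tilde{q}$ is a multiple of $\alpha\beta$, so no element of $\bigoplus_{j\geq 1}\tilde{q}^jV(n-2j)$ has a component along $\alpha^n$ or along $\alpha^{n-1}\mu$ with $\mu\in H^2$, and only the $H^0$ and $H^2$ summands of $\hat{v}(F)$ contribute to those components of $\Psi(\hat{v}(F))$. Your computation is more elementary and does not need Proposition \ref{prop-kappa-class-remains-of-Hodge-type}; the paper's version establishes along the way that $\exp(e_{-c_1(F)/r})(\ell(F))\subset U_\QQ$, which it reuses elsewhere.

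The gap is in part (\ref{prop-item-functoriality}), and it is exactly the one you flag yourself: transporting the Hochschild-side data ($\tilde{\Sigma}(F)$, $m$, $\mu$, the chosen $\sigma$) through $\Phi$ requires compatibilities between the induced map on Hochschild cohomology, the cohomological action $m$, and Taelman's isometry $\tilde{H}(\phi)$ that you do not establish, and which would amount to redoing a substantial amount of the Hochschild-level machinery. The paper sidesteps this entirely by arguing on the cohomology side only: $\phi(v(F))=v(\Phi(F))$ gives $\phi(\hat{v}(F))=\hat{v}(\Phi(F))$; the commutative diagram of Theorem \ref{thm-def-of-H-tilde} shows that $\Sym^n(\tilde{H}(\phi))$ carries $\Psi_X(\hat{v}(F))$ to $\Psi_Y(\hat{v}(\Phi(F)))$ up to a sign that is immaterial for images of lines (as $\Sym^n(g)=\Sym^n(-g)$ precisely when the sign can be $-1$); by part (\ref{prop-item-isotropic}) applied to $F$ and to $\Phi(F)$ these span the projections of $\ell(F)^n$ and $\ell(\Phi(F))^n$, and by equivariance of the projection and the power map the first is also the projection of $(\tilde{H}(\phi)\ell(F))^n$; finally the map $\ell\mapsto(\mbox{projection of }\ell^n)$ is injective because $\LieAlg{g}_{\CC,\ell}$ is recovered from its image (Lemma \ref{lemma-pde}), whence $\tilde{H}(\phi)(\ell(F))=\ell(\Phi(F))$. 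Note that to apply parts (\ref{prop-item-line-in-rational-Mukai-lattice})--(\ref{prop-item-isotropic}) to $\Phi(F)$ one needs that $\Phi(F)$ again deforms in co-dimension one (this is $\Sigma(\Phi(F))=\phi(\Sigma(F))$, Toda) and that $\hat{v}(\Phi(F))=\phi(\hat{v}(F))\neq 0$; both are immediate but should be recorded. I recommend replacing your sketch of (\ref{prop-item-functoriality}) with this cohomological argument rather than trying to pin down the $\mu$-compatibility.
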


\begin{proof}
Part
(\ref{prop-item-line-in-rational-Mukai-lattice}) 
The inclusion 
$\LieAlg{g}_{\ell(F)} \subset \LieAlg{g}_{v(F)}$
follows from the assumption that $\Sigma(F)$ has co-dimension one in $HT^2(X)$, Lemma \ref{lemma-m-Sigma-and-its-conjugate-generate-the-annihilator-of-ell}, and Lemma \ref{lem-Sigma-F-is-contained-in-stabilizer-of-Mukai-vector}. 
The equality $\LieAlg{g}_{\ell(F)}=\LieAlg{g}_{\hat{v}(F)}$ and the fact that $\ell(F)$ is defined over $\QQ$ both follow from 
Lemma \ref{lemma-pde}, by letting $f$ be $\Psi(\hat{v}(F))$.
We conclude that $\LieAlg{g}_{\ell(F)}=\LieAlg{g}_{v(F)}$ from the evident inclusion $\LieAlg{g}_{v(F)}\subset \LieAlg{g}_{\hat{v}(F)}$.

Part (\ref{prop-item-isotropic}) 
follows from Lemma \ref{lemma-pde}, by letting $f$ be $\Psi(\hat{v}(F))$.

(\ref{prop-item-spanning-Mukai-vector})
The class $\kappa(F)$ is $\bar{\LieAlg{g}}$-invariant, by Proposition \ref{prop-kappa-class-remains-of-Hodge-type}.
Hence, so is $\kappa(F)\sqrt{td_X}$. 
Set $g:=\exp(e_{-c_1(F)/r})\in SO(\tilde{H}(X,\QQ))$. Then $g$ acts on $H^{ev}(X,\CC)$, via (\ref{eq-bar-rho}), and  $\kappa(F)\sqrt{td_X}=g(v(G)),$ by Theorem \ref{thm-LLV-Lie-algebra}.
The map $F\mapsto \ell(F)$ factors through $v$ and the map $v(F)\mapsto \ell(F)$ extends to an $SO(\tilde{H}(X,\CC))$-equivariant map from the $SO(\tilde{H}(X,\CC))$-orbit of $v(F)$ to the $SO(\tilde{H}(X,\CC))$-orbit of $\ell(F)$,
since the  embedding $\Psi$, the two projections 
(\ref{eq-projection-to-Im-Psi}) and 
(\ref{eq-projection-to-verbitsky-component}), and the $n$-power map $\ell(F)\mapsto \ell(F)^n$, are all
$SO(\tilde{H}(X,\QQ))$-equivariant. 
Hence, $g(\ell(F))$ is $\bar{\LieAlg{g}}$-invariant. So
$g(\ell(F))$ is contained in $U_\QQ$.

Let $w$ be a non-zero vector in $\ell(F)$. 
Write $w=r_0\alpha+\lambda+s\beta\in \tilde{H}(X,\RationalNumbers)$, with $r_0,s\in\RationalNumbers$,  and $\lambda\in H^2(X,\RationalNumbers)$. 
Note that $r_0\neq 0$. Indeed, $\exp(e_{\lambda'})(v(F))$
is not $\bar{\LieAlg{g}}$-invariant, for $\lambda'\neq -c_1(F)/r$, $\lambda'\in H^2(X,\QQ)$, and so 
neither is $\exp(e_{\lambda'})(\ell(F))$,
by the equivariance of the map $v(F)\mapsto \ell(F)$. But if $r_0=0$, then $\ell(F)$ is $\bar{\LieAlg{g}}$-invariant, if and only if
$\exp(e_{\lambda'})(\ell(F))$ is, for all $\lambda'\in H^2(X,\QQ)$.
Now, 
$\exp(e_{-\lambda'/r'})w=
r_0\alpha+\left(\lambda-\frac{r_0}{r'}\lambda'\right)+
\left(s-\frac{(\lambda,\lambda')}{r'}+\frac{r_0(\lambda,\lambda')}{2(r')^2}\right)\beta$  
belongs to $U_\RationalNumbers$ if and only if $\lambda'/r'=\lambda/r_0$. We know that 
$\exp(e_{-c_1(F)/r})w$ belongs to $U_\QQ$. Hence, $\lambda/r_0=c_1(F)/r$ and  $(r/r_0)w$ has the stated form.

(\ref{prop-item-functoriality}) Let $\phi:H^*(X,\QQ)\rightarrow H^*(Y,\QQ)$ be the isomorphism associated to $\Phi$.
Then $\phi(v(F))=v(\Phi(F))$ and so $\phi(\hat{v}(F))=\hat{v}(\Phi(F))$. 
Theorem \ref{thm-def-of-H-tilde} yields the commutative diagram
\[
\xymatrix{
SH^*(X,\QQ) \ar[d]_{\Psi_X} \ar[r]^{\epsilon(\tilde{H}(\phi))\phi} & SH^*(Y,\QQ) \ar[d]^{\Psi_Y}
\\
\ker(\Delta_X) \ar[r]_{\Sym^n(\tilde{H}(\phi))} & \ker(\Delta_Y),
}
\]
where $\Delta_X:\Sym^n(\tilde{H}(X,\QQ))\rightarrow \Sym^{n-2}(\tilde{H}(X,\QQ))$ is given in (\ref{eq-Delta}) and $\Delta_Y$ is its analogue.
Hence, $\Sym^n(\tilde{H}(\phi))$ maps $\Psi(\hat{v}(F))$ to $\Psi(\hat{v}(\Phi(F)))$.
This means that $\Sym^n(\tilde{H}(\phi))$ maps the projection of $\ell(F)^n$ in $\ker(\Delta_X)$ to the projection of $\ell(F)^n$
in $\ker(\Delta_Y)$. The map 
sending $\ell(F)$ to the projection of $\ell(F)^n$, is injective, by the equality $\LieAlg{g}_{\CC,\hat{v}(F)}=\LieAlg{g}_{\CC,\ell(F)}$ in Lemma 
\ref{lemma-pde}. Hence, $\tilde{H}(\phi)(\ell(F))=\ell(\Phi(F))$.
\hide{
The class $\kappa(F)$ is $\bar{\LieAlg{g}}$-invariant, by Proposition \ref{prop-kappa-class-remains-of-Hodge-type}.
Hence, so is $\kappa(F)\sqrt{td_X}$. Set $n=\dim_\CC(X)/2$ and let $h$ be the grading operator acting on 
$H^k(X,\RationalNumbers)$ by $k-2n$. The operator $h$ belongs to $\LieAlg{g}$  and commutes with $\bar{\LieAlg{g}}$ and hence belongs to $\LieAlg{so}(U_\RationalNumbers)$, yet it does not annihilate $\kappa(F)\sqrt{td_X}$, since the graded summand in $H^0(X,\RationalNumbers)$ of the latter is $r$ and so it does not vanish,  by assumption. Hence, 
the  class $\kappa(F)\sqrt{td_X}$ is not invariant under the Lie algebra $\LieAlg{so}(U_\RationalNumbers)$.

The Lie algebra $\LieAlg{g}_{\kappa(F)\sqrt{td_X}}$ is strictly bigger than $\bar{\LieAlg{g}}$, since $\Sigma(F)$ has co-dimension one in $HT^2(X)$ and its image 
n $\End(H^*(X,\RationalNumbers))$ is contained in the complexification of $\LieAlg{g}_{v(F)}$, by 
Lemma \ref{lem-Sigma-F-is-contained-in-stabilizer-of-Mukai-vector}.
(??? need the following to relate $\Sigma(F)$, which is contained in the stabilizer of $v(F)=ch(F)\sqrt{td_X}$, to its translate by $\exp(-c_1(F)/r)$ which is contained in the stabilizer of $\kappa(F)\sqrt{td_X}$: ???) Note that given an object $F\in D^b(X)$, we have $H^1(TX)\subset\Sigma(F)$, if and only if the subalgebra 
$\bar{\LieAlg{g}}$  
is contained in $\LieAlg{g}_{v(F)}$, by Lemma \ref{lemma-alpha-remains-of-Hodge-type}. 
$\Pic(X)$ embeds in $\Aut(D^b(X))$ and hence in
$DMon(X)$. $\Aut(D^b(X))$ acts on $HT^2(X)$, hence so does $\Pic(X)$, as well as $\Pic(X)_\RationalNumbers$. 
The map $Ob(D^b(X)) \ni F\mapsto \Sigma(F)\subset HT^2(X)$ is $\Aut(D^b(X))$ equivariant. Hence, if the rank $r$ of $F$ is non-zero, then 
$H^1(X,TX)\subset \det(F)^{-1/r}\otimes \Sigma(F)$ 
if and only if  $\bar{\LieAlg{g}}$ is contained in
$\LieAlg{g}_{\exp(-c_1(F)/r)v(F)}$.

There are two types of Lie subalgebras $\LieAlg{a}$ satisfying $\bar{\LieAlg{g}} \subset \LieAlg{a}\subset \LieAlg{g}$, with both inclusions strict, one is $\LieAlg{so}(U_\RationalNumbers)\times \bar{\LieAlg{g}}$, and the other is the stabilizer 
$\LieAlg{g}_\ell$ of a line $\ell$ in $U_\RationalNumbers$. Indeed, the quotient 
$\LieAlg{g}/\bar{\LieAlg{g}}$ is isomorphic to the direct sum of $\LieAlg{so}(U_\RationalNumbers)$ and $U_\RationalNumbers\otimes H^2(X,\RationalNumbers)$ as a $\bar{\LieAlg{g}}\times \LieAlg{so}(U_\RationalNumbers)$-module, and both direct summands are irreducible. 
If $\LieAlg{a}$ contains $\LieAlg{so}(U_\RationalNumbers)$, then $\LieAlg{a}$
must be $\bar{\LieAlg{g}}\times \LieAlg{so}(U_\RationalNumbers)$. 
If $\LieAlg{a}$ contains the $\bar{\LieAlg{g}}\times \LieAlg{so}(U_\RationalNumbers)$-submodule
$U_\RationalNumbers\otimes H^2(X,\RationalNumbers)$ of $\LieAlg{g}$, then it contains $\LieAlg{so}(U_\RationalNumbers)$, as the latter is contained\footnote{
Choose and orthogonal basis $\{e_i\}$ of $\tilde{H}(X,\RationalNumbers)$ with $e_1, e_2\in U_\QQ$. Let
$\{e_i^*\}$ be the dual basis. Set $A=(e_1,e_1)e_1^*\otimes e_3-(e_3,e_3)e_3^*\otimes e_1$, 
$B=(e_2,e_2)e_2^*\otimes e_3-(e_3,e_3)e_3^*\otimes e_2$, and
$C=(e_1,e_1)e_1^*\otimes e_2-(e_2,e_2)e_2^*\otimes e_1$. Then $A$ and $B$ belong to the submodule of $\LieAlg{g}$ isomorphic to 
$U_\RationalNumbers\otimes H^2(X,\RationalNumbers)$, $C$ spans $\LieAlg{so}(U_\RationalNumbers)$, and 
$[A,B]=(e_3,e_3)C$.
}
 in the Lie bracket of the former with itself. Hence, if  $\LieAlg{a}$ does not contain $\LieAlg{so}(U_\RationalNumbers)$, then  
the quotient $\LieAlg{a}/\LieAlg{g}$ is a $\bar{\LieAlg{g}}$-module, which is not a $\LieAlg{so}(U_\RationalNumbers)$-module, hence equal to $\ell'\otimes H^2(X,\RationalNumbers)$, for a line $\ell'$ in $U_\RationalNumbers$. We have seen that $\LieAlg{g}_{\kappa(F)\sqrt{td_X}}$ does not contain $\LieAlg{so}(U_\RationalNumbers)$. 
We conclude that $\LieAlg{g}_{\kappa(F)\sqrt{td_X}}$ is equal to $\LieAlg{g}_{\ell'}$, for some line $\ell'$ in $U_\RationalNumbers$. Hence, $\LieAlg{g}_{v(F)}=\LieAlg{g}_{\ell(F)}$, where $\ell(F)=\exp(c_1(F)/r)\ell'$.

The equality $\ell(F)\otimes_\QQ\CC=\ell(\Sigma(F))$ follows from 
Lemma \ref{lemma-line-in-Mukai-lattice-associated-to-hyperplane-in-HT-2} 
and the fact that $m(\Sigma(F))$ is contained in the complexification of $\LieAlg{g}_{v(F)}$ and hence of $\LieAlg{g}_{\ell(F)}$.

(\ref{prop-item-isotropic}) 
The orthogonal projection of $v(F)$ to
$SH^*(X,\RationalNumbers)$ is non-zero, since $r\neq 0$. 
If $v(F)$ is $\LieAlg{g}_\ell$-invariant, then so is 
its projection to $SH^*(X,\RationalNumbers)$. 
It remains to show that the projection of $\ell^n$ into the kernel of 
$\Delta:\Sym^n(\tilde{H}(X,\RationalNumbers))\rightarrow \Sym^{n-2}(\tilde{H}(X,\RationalNumbers))$
surjects onto 
the $\LieAlg{g}_\ell$-invariant subspace of $\ker(\Delta)$. 

Assume first that $\ell$ is non-isotropic. 
We have the direct sum decomposition
\[
\Sym^{n}(\tilde{H}(X,\RationalNumbers))\cong 
\oplus_{i=0}^n[\ell^{n-i}\otimes\Sym^{i}(\ell^\perp)].
\]
Choose an element $\lambda\in \ell$. 
Set $\tilde{q}_{\ell^\perp}:=\tilde{q}-\frac{1}{(\lambda,\lambda)\dim\tilde{H}(X,\RationalNumbers)}\lambda^2$.
Then the trivial $\LieAlg{g}_\ell$-submodule $\Sym^{i}(\ell^\perp)^{\LieAlg{g}_\ell}$ in $\Sym^{i}(\ell^\perp)$
vanishes, for odd $i$, and it is spanned by $\tilde{q}_{\ell^\perp}^{i/2}$ for even $i$. Hence, the trivial $\LieAlg{g}_\ell$-submodule $\Sym^{n}(\tilde{H}(X,\RationalNumbers))^{\LieAlg{g}_\ell}$ in 
$\Sym^{n}(\tilde{H}(X,\RationalNumbers))$ is spanned by $\{\lambda^{n-2i}\tilde{q}_{\ell^\perp}^i\}_{i=0}^{\lfloor\frac{n}{2}\rfloor}$.
Now $\Delta$ is a surjective homomorphism of $\LieAlg{g}$-modules, hence also of $\LieAlg{g}_\ell$-modules.
We see that 
$\Delta:\Sym^{n}(\tilde{H}(X,\RationalNumbers))^{\LieAlg{g}_\ell}\rightarrow \Sym^{n-2}(\tilde{H}(X,\RationalNumbers))^{\LieAlg{g}_\ell}$ is surjective and so its kernel is one-dimensional.
The  direct sum decomposition $\Sym^{n}(\tilde{H}(X,\RationalNumbers))=\ker(\Delta)\oplus \tilde{q}\Sym^{n-2}(\tilde{H}(X,\RationalNumbers))$ is $\LieAlg{g}$-equivariant and $\ell^n$ is not contained in the second direct summand, hence it projects to the first non-trivially and so surjectively onto $\ker(\Delta)^{\LieAlg{g}_\ell}.$

If $\ell$ is isotropic then $\ell^n$ is contained in $\ker(\Delta)$ is thus  equal to its projection into the latter. The two projections, of $v(F)$ and of $\ell^n$ do not vanish and the equality of the latter to the image of the former then follows by continuity from the non-isotropic case.
}
\end{proof}

\begin{rem}
Assume that the rank of $F$ is non-zero and $X$ is of $K3^{[2]}$ or $K3^{[3]}$-type.
Then $v(F)$ is determined by its rank and the line $\ell(F)$, by  parts 
(\ref{prop-item-isotropic}) and (\ref{prop-item-spanning-Mukai-vector}) of 
Theorem \ref{thm-Mukai-vector}, since $\hat{v}(F)=v(F)$ in this case. 
If $X$ is of $K3^{[2]}$-type the equality $\hat{v}(F)=v(F)$ follows from the equality $H^*(X,\QQ)=SH^*(X,\QQ)$.
When $X$ is of $K3^{[3]}$-type, $H^*(X,\QQ)$ decomposes as the direct sum of two irreducible $\LieAlg{g}$-module
$H^*(X,\QQ)\cong SH^*(X,\QQ)\oplus \wedge^2\tilde{H}(X,\QQ)$, by \cite[Cor. 3.2]{GKLR}.
The projection of $v(F)$ to $\wedge^2\tilde{H}(X,\QQ)$ must vanish, since the $\LieAlg{g}_\ell$-invariant subspace
of the latter vanishes. Hence, $v(F)$ is contained in $SH^*(X,\QQ)$ and again we get the equality $\hat{v}(F)=v(F)$.
\end{rem}

Let $\tilde{D}:\tilde{H}(X,\QQ)\rightarrow \tilde{H}(X,\QQ)$ be given by $\tilde{D}(r\alpha+\lambda+s\beta)=r\alpha-\lambda+s\beta$.
Let $D:H^{ev}(X,\QQ)\rightarrow H^{ev}(X,\QQ)$ be the automorphism of the even cohomology ring acting on
$H^{2i}(X,\QQ)$ by $(-1)^i$.
If $\Sigma(F)$ has codimension one in $HT^2(X)$, then so does the subspace $\Sigma(F^\vee)$ of the dual object $F^\vee$.

\begin{lem}
\label{lemma-Mukai-line-of-F-dual}
$\ell(F^\vee)=\tilde{D}(\ell(F))$.
\end{lem}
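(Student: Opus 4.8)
The plan is to reduce the statement to the description of $\ell(F)$ in terms of the Mukai vector in Theorem \ref{thm-Mukai-vector}, combined with the fact that dualization acts on cohomology by the sign operator $D$ of degree $(-1)^i$ on $H^{2i}(X,\QQ)$.

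First I would record that $v(F^\vee)=D(v(F))$. Indeed $ch(F^\vee)=D(ch(F))$, since dualizing an object negates the components of its Chern character lying in $H^{2i}(X,\QQ)$ with $i$ odd. Moreover $td_X=\hat{A}(X)\exp(c_1(X)/2)=\hat{A}(X)$, because $c_1(X)=0$, and $\hat{A}(X)$ is a polynomial in the Pontryagin classes of $X$, hence lies in $\bigoplus_k H^{4k}(X,\QQ)$; so does its square root, and therefore $D(\sqrt{td_X})=\sqrt{td_X}$, since $D$ is a ring automorphism restricting to the identity on $\bigoplus_k H^{4k}(X,\QQ)$. Consequently $v(F^\vee)=ch(F^\vee)\sqrt{td_X}=D(ch(F))D(\sqrt{td_X})=D(v(F))$. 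Writing $h$ for the degree operator of Theorem \ref{thm-LLV-Lie-algebra}, one has $D=(-1)^n\exp\!\bigl(\tfrac{\pi\sqrt{-1}}{2}h\bigr)$ on $H^{ev}(X,\CC)$, and since $h\in\LieAlg{g}$ this operator preserves every $\LieAlg{g}$-isotypic subrepresentation of $H^{ev}(X,\CC)$ and commutes with the projections onto them; in particular it commutes with the projection (\ref{eq-projection-to-verbitsky-component}) onto $SH^*(X,\QQ)$, so $\hat{v}(F^\vee)=D(\hat{v}(F))$. In particular $\hat{v}(F^\vee)\neq 0$, so, together with the hypothesis that $\Sigma(F^\vee)$ has codimension one in $HT^2(X)$, Theorem \ref{thm-Mukai-vector} applies to $F^\vee$ and $\ell(F^\vee)$ is defined.

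The key computation is that $\Psi\circ D=\Sym^n(\tilde{D})\circ\Psi$ on $SH^*(X,\QQ)$, which I would verify by a weight argument. Since $\Psi$ is $\LieAlg{g}$-equivariant it is $h$-equivariant, so it carries $SH^{2k}(X)$ into the subspace of $\Sym^n\widetilde{H}(X,\QQ)$ on which $h$ acts by $2k-2n$ (here $\alpha$ and $\beta$ have $h$-weights $-2$ and $2$, and $H^2(X,\QQ)$ has $h$-weight $0$). On that subspace every monomial $\alpha^{a}\beta^{c}\lambda_1\cdots\lambda_b$, with $\lambda_i\in H^2(X,\QQ)$ and $a+b+c=n$, has $h$-weight $2(c-a)=2k-2n$, whence $b\equiv k\pmod 2$; since $\tilde{D}$ fixes $\alpha$ and $\beta$ and negates $H^2(X,\QQ)$, $\Sym^n(\tilde{D})$ acts on that subspace by $(-1)^b=(-1)^k$, exactly as $D$ acts on $SH^{2k}(X)$. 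Because $\tilde{D}$ is an isometry, $\Sym^n(\tilde{D})$ intertwines $\Delta$ with $\Delta$, hence preserves $\ker(\Delta)=Im(\Psi)$ and commutes with the projection (\ref{eq-projection-to-Im-Psi}); and $\Sym^n(\tilde{D})(\ell(F)^n)=(\tilde{D}\,\ell(F))^n$.

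Finally I would assemble these facts. By Theorem \ref{thm-Mukai-vector}(\ref{prop-item-isotropic}), $\Psi(\hat{v}(F))$ spans the image of $\ell(F)^n$ under the projection (\ref{eq-projection-to-Im-Psi}); applying $\Sym^n(\tilde{D})$ and using $\Psi(\hat{v}(F^\vee))=\Psi(D\hat{v}(F))=\Sym^n(\tilde{D})\Psi(\hat{v}(F))$, we see that $\Psi(\hat{v}(F^\vee))$ spans the image of $(\tilde{D}\,\ell(F))^n$ under the same projection. On the other hand, Theorem \ref{thm-Mukai-vector}(\ref{prop-item-isotropic}) applied to $F^\vee$ says $\Psi(\hat{v}(F^\vee))$ spans the image of $\ell(F^\vee)^n$; and the assignment sending a line $\ell\subset\widetilde{H}(X,\CC)$ to the image of $\ell^n$ in $\ker(\Delta)$ is injective — this is the injectivity invoked in the proof of Theorem \ref{thm-Mukai-vector}(\ref{prop-item-functoriality}), which follows from Lemma \ref{lemma-pde} together with Lemma \ref{lemma-Veronese-is-defined-over-Q}. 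Therefore $\ell(F^\vee)=\tilde{D}(\ell(F))$. The main points requiring care are the compatibility $\hat{v}(F^\vee)=D(\hat{v}(F))$, i.e. that $D$ commutes with the Verbitsky projection, and the weight bookkeeping showing that $\Psi$ intertwines $D$ with $\Sym^n(\tilde{D})$; everything else is a formal consequence of Theorem \ref{thm-Mukai-vector}.
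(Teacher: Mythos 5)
Your proposal is correct and follows essentially the same route as the paper: both reduce the claim to the identities $\hat{v}(F^\vee)=D(\hat{v}(F))$, $\Psi\circ D=\Sym^n(\tilde{D})\circ\Psi$, and the compatibility of $\Sym^n(\tilde{D})$ with the projection to $\ker(\Delta)$, then conclude via Theorem \ref{thm-Mukai-vector}(\ref{prop-item-isotropic}). The only difference is that you supply verifications (the weight bookkeeping for the intertwining, and $D=(-1)^n\exp(\tfrac{\pi\sqrt{-1}}{2}h)$ to see that $D$ commutes with the Verbitsky projection) that the paper states without proof.
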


\begin{proof}
Let $\Pi:\Sym^n(\tilde{H}(X,\QQ))\rightarrow \ker(\Delta)$ be the projection (\ref{eq-projection-to-Im-Psi}).
Then $\Pi$ commutes with $\Sym^n(\tilde{D})$. The projection (\ref{eq-projection-to-verbitsky-component}) commutes with $D$.
The isometry $\Psi:SH^*(X,\QQ)\rightarrow \ker(\Delta)$, given in (\ref{eq-Psi}), 
satisfies $\Psi(D(\theta))=\Sym^n(\tilde{D})(\Psi(\theta))$, for all $\theta\in SH^*(X,\QQ)$.
The $n$-th power map satisfies $\Sym^n(\tilde{D})(\lambda^n)=(\tilde{D}(\lambda))^n$, for all
$\lambda\in \tilde{H}(X,\QQ)$. Let $\lambda$ be a non-zero vector in $\ell(F)$ satisfying
$\Psi(\hat{v}(F))=\Pi(\lambda^n)$. Then
\[
\Psi(\hat{v}(F^\vee))=\Psi(D(\hat{v}(F)))=\Sym^n(\tilde{D})(\Psi(\hat{v}(F)))=\Sym^n(\tilde{D})(\Pi(\lambda^n))=
\Pi(\tilde{D}(\lambda)^n).
\]
Hence, $\tilde{D}(\lambda)$ spans $\ell(F^\vee)$.
\end{proof}

\begin{example}
\label{example-Mukai-line-of-sky-scraper-sheaf}
Let $X$ be a $2n$-dimensional irreducible holomorphic symplectic manifold.
Given a closed point $z\in X$ we have $\ell(\StructureSheaf{z})=\span_\QQ\{\beta\}.$
Indeed, $v(\StructureSheaf{z})=[pt]$ belongs to $SH^*(X,\QQ)$ and $\Psi(1)=\beta^n/c_X$, by (\ref{eq-Psi-pt}).
\end{example}

We will need the following 
multiplication table in the cohomology of $X$ of $K3^{[2]}$type, where $\alpha$ is a class in $H^2(X)$ and $c_2$ is $c_2(TX)$. 
\begin{equation}
\label{eq-multiplication-table}
\left[
\begin{array}{cccccc}
1 & \alpha & \alpha^2 & c_2 & c_2\alpha & [\mbox{pt}]
\\
\alpha & \alpha^2 & \frac{(\alpha,\alpha)}{10}c_2\alpha & 
c_2\alpha & 30(\alpha,\alpha)[\mbox{pt}] & 0 
\\
\alpha^2 & \frac{(\alpha,\alpha)}{10}c_2\alpha & 3(\alpha,\alpha)^2[\mbox{pt}]& 
30(\alpha,\alpha)[\mbox{pt}] & 0 & 0 
\\
c_2 & c_2\alpha & 30(\alpha,\alpha)[\mbox{pt}] & 828[\mbox{pt}] & 0 & 0
\\
c_2\alpha & 30(\alpha,\alpha)[\mbox{pt}] & 0 & 0 & 0 & 0
\\
\mbox{[pt]} & 0 & 0 & 0 & 0 & 0
\end{array}
\right]
\end{equation}

Given classes $\alpha$, $\beta$, $\gamma$, $\delta$ in $H^2(X)$, we have
\[
\alpha\beta\gamma\delta=[(\alpha,\beta)(\gamma,\delta)+(\alpha,\gamma)(\beta,\delta)+(\alpha,\delta)(\beta,\gamma)][pt].
\]

\begin{example}
\label{example-Mukai-line-of-structure-sheaf}
Assume $X$ is of $K3^{[2]}$-type. We claim that 
$
\ell(\StructureSheaf{X})=\span_\QQ\{4\alpha+5\beta\}.
$
We have $v(\StructureSheaf{X})=\sqrt{td_X}=1+\frac{1}{24}c_2(X)+\frac{25}{32}[pt].$
Set 
\begin{equation}
\label{eq-b-X}
b_X:=\frac{1}{23}\sum_1^{23}y_i^2/(y_i,y_i), 
\end{equation}
where $\{y_i\}$ is an orthogonal basis of $H^2(X,\QQ)$. 
We have $\int_Xy_i^4=3(y_i,y_i)^2$ and for  $i\neq j$ we have $\int_X y_i^2y_j^2=(y_i,y_i)(y_j,y_j)$, due to the vanishing of $(y_i,y_j)$. Hence,
\begin{eqnarray*}
\int_Xb_X^2=\frac{1}{23^2}\left(\sum_{i<j}\frac{2}{(y_i,y_i)(y_j,y_j)}\int_Xy_i^2y_j^2+
\sum_{i=1}^{23} \frac{1}{(y_i,y_i)^2}\int_X y_i^4
\right)=\frac{25}{23},
\\
\int_Xb_Xy_i^2=\frac{1}{23(y_i,y_i)}\int_Xy_i^4+\frac{1}{23}\sum_{j=1,j\neq i}^{23}\frac{1}{(y_j,y_j)}\int_Xy_i^2y_j^2=
\frac{25}{23}(y_i,y_i)=\frac{5}{23\cdot 6}\int_Xc_2(X)y_i^2.
\end{eqnarray*}
It follows that 
\begin{equation}
\label{eq-b-X-in-terms-of-c-2}
c_2(X)=\frac{23\cdot 6}{5}b_X,
\end{equation} 
as both span the one-dimensional space of $O(H^2(X,\QQ))$-invariant classes.
We have
\begin{equation}
\label{eq-values-of-Psi}
\Psi(1)=\frac{\alpha^2}{2}, \
\Psi([pt])=\beta^2, \ \mbox{and} \ 
\Psi(b_X)=\tilde{b}_X+\alpha\beta,
\end{equation}
where $\tilde{b}_X$ denotes the element of $\Sym^2(\tilde{H}(X,\QQ))$ given by the right hand side of Equation (\ref{eq-b-X}). 
Hence,
\begin{eqnarray}
\label{eq-Psi-of-c-2}
\Psi(c_2(TX)) & = & 30(\tilde{q}+\alpha\beta),
\\
\label{eq-Psi-of-sqrt-td-class}
\Psi(\sqrt{td_X})&=&\frac{1}{2}\left[\alpha^2+\frac{5}{2}(\tilde{q}+\alpha\beta)+\frac{25}{16}\beta^2\right]=
\frac{1}{2}\left[(\alpha+\frac{5}{4}\beta)^2+\frac{5}{2}\tilde{q}\right],
\end{eqnarray}
where $\tilde{q}$ is defined in Example \ref{example-projection-to-Im-Psi}  and we used the equality 
\begin{equation}
\label{eq-tilde-q-in-terms-of-tilde-b-X}
\tilde{q}=\frac{1}{25}[23\tilde{b}_X-2\alpha\beta].
\end{equation} 
The right hand side of the latter displayed equation  is the projection of $\frac{1}{2}(\alpha+\frac{5}{4}\beta)^2$ from $\Sym^2(\tilde{H}(X,\QQ))$
to the image of $\Psi$, by Example \ref{example-projection-to-Im-Psi}. So
$
\ell(\StructureSheaf{X})=\span_\QQ\{4\alpha+5\beta\}
$ as claimed. 
\end{example}

More generally, we have the following. 

\begin{lem}
\label{lemma-LLV-line-of-structure-sheaf-k3-type}
Let $X$ be a $2n$-dimensional irreducible holomorphic symplectic manifold. There exists a rational number $t$, such that 
$t^n=\frac{n!}{c_X}\int_X\sqrt{td_X}$ and
$
\ell(\StructureSheaf{X})=\span_\QQ\{\alpha+t\beta\}.
$
If  $X$ is of $K3^{[n]}$-type, $n\geq 2$, then $\ell(\StructureSheaf{X})=\span_\QQ\{4\alpha+(n+3)\beta\}$.
If  $X$ is deformation equivalent to a $2n$-dimensional generalized Kummer, $n\geq 2$, then $\ell(\StructureSheaf{X})=\span_\QQ\{4\alpha+(n+1)\beta\}$.
\end{lem}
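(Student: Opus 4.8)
The plan is to apply Theorem \ref{thm-Mukai-vector} to $F=\StructureSheaf{X}$. First note that $\ell(\StructureSheaf{X})$ is a deformation invariant: it is the unique line whose stabilizer in $\LieAlg{g}$ is $\LieAlg{g}_{v(\StructureSheaf{X})}$, the class $v(\StructureSheaf{X})=\sqrt{td_X}$ and the $\LieAlg{g}$-action on $H^*$ vary flatly in any family, and the summand $U_\QQ\subset\tilde{H}(X,\QQ)$ with its basis $\{\alpha,\beta\}$ is fixed under parallel transport; hence we may assume $X$ projective. The object $\StructureSheaf{X}$ deforms in co-dimension $1$: the kernel $\Sigma(\StructureSheaf{X})$ contains $H^0(\wedge^2TX)\oplus H^1(TX)$ (as recalled before Lemma \ref{lemma--kappa-F-remains-of-Hodge-type}), while the obstruction map restricted to $H^2(\StructureSheaf{X})$ is injective, since its composition with the trace is multiplication by $\rank(\StructureSheaf{X})=1$; thus $\Sigma(\StructureSheaf{X})=H^0(\wedge^2TX)\oplus H^1(TX)$ is a hyperplane. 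Since $\hat{v}(\StructureSheaf{X})\neq 0$ (its degree $0$ component is $1$), Theorem \ref{thm-Mukai-vector} applies, and because $\rank(\StructureSheaf{X})=1$ and $c_1(\StructureSheaf{X})=0$, parts (\ref{prop-item-spanning-Mukai-vector}) and (\ref{prop-item-line-in-rational-Mukai-lattice}) yield a rational number $t$ with $\ell(\StructureSheaf{X})=\span_\QQ\{\alpha+t\beta\}$.

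Next I identify $t^n$. By Theorem \ref{thm-Mukai-vector}(\ref{prop-item-isotropic}), $\Psi(\hat{v}(\StructureSheaf{X}))$ is, up to a nonzero scalar, the image of $(\alpha+t\beta)^n$ under the $\LieAlg{g}$-equivariant projection $\Pi$ of (\ref{eq-projection-to-Im-Psi}). Now $v(\StructureSheaf{X})=\sqrt{td_X}$ has degree $0$ component $1$ and top degree component $\left(\int_X\sqrt{td_X}\right)[pt]$, and $\hat{v}(\StructureSheaf{X})$ has the same extreme components; since $\Psi$ respects the grading, the $\alpha^n$-component of $\Psi(\hat{v}(\StructureSheaf{X}))$ is $\Psi(1)=\alpha^n/n!$ and its $\beta^n$-component is $\left(\int_X\sqrt{td_X}\right)\Psi([pt])=\tfrac{1}{c_X}\left(\int_X\sqrt{td_X}\right)\beta^n$ by (\ref{eq-Psi-pt}). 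On the other hand the complement $\tilde{q}\cdot\Sym^{n-2}(\tilde{H}(X,\QQ))$ along which $\Pi$ projects contains neither $\alpha^n$ nor $\beta^n$, because the invariant $\tilde{q}$ of Example \ref{example-projection-to-Im-Psi} involves only the $\alpha\beta$- and $H^2$-directions; hence the $\alpha^n$- and $\beta^n$-coefficients of $\Pi\!\left((\alpha+t\beta)^n\right)$ are $1$ and $t^n$. Matching $\alpha^n$-coefficients fixes the scalar as $1/n!$, and matching $\beta^n$-coefficients then gives $t^n=\tfrac{n!}{c_X}\int_X\sqrt{td_X}$; in particular the right-hand side is an $n$-th power in $\QQ$, which is the first assertion.

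For the two deformation types I substitute Sawon's computations \cite[Prop. 19 and 21]{sawon-thesis}: for $X$ of $K3^{[n]}$-type one has $c_X=1$ and $\int_X\sqrt{td_X}=\tfrac{1}{n!}\left(\tfrac{n+3}{4}\right)^n$, so $t^n=\left(\tfrac{n+3}{4}\right)^n$; for $X$ a deformation of a $2n$-dimensional generalized Kummer one has $c_X=n+1$ and $\int_X\sqrt{td_X}=\tfrac{n+1}{n!}\left(\tfrac{n+1}{4}\right)^n$, so $t^n=\left(\tfrac{n+1}{4}\right)^n$. For odd $n$ this already forces $t=\tfrac{n+3}{4}$, resp. $t=\tfrac{n+1}{4}$. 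The main obstacle is the sign of $t$ for even $n$, which the displayed equation does not detect. To settle it I would compare one further coefficient in the identity $\Psi(\hat{v}(\StructureSheaf{X}))=\tfrac{1}{n!}\,\Pi\!\left((\alpha+t\beta)^n\right)$, namely that of $\alpha^{n-2}\tilde{q}$: on the right it is a fixed nonzero rational multiple of $(\alpha+t\beta,\alpha+t\beta)\,t^{n-2}=-2t^{n-1}$, while on the left it is determined by the projection of the degree $4$ part $c_2(TX)/24$ of $\sqrt{td_X}$ to $SH^4(X,\QQ)$, which carries a definite sign by Sawon's data (equivalently, by the computation underlying Example \ref{example-Mukai-line-of-structure-sheaf} performed in $\Sym^n$ rather than $\Sym^2$); comparing the two forces $t>0$. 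This gives $\ell(\StructureSheaf{X})=\span_\QQ\{4\alpha+(n+3)\beta\}$ in the $K3^{[n]}$ case and $\span_\QQ\{4\alpha+(n+1)\beta\}$ in the generalized Kummer case, completing the proof.
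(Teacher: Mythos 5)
Your derivation of the formula $t^n=\frac{n!}{c_X}\int_X\sqrt{td_X}$ is correct and is essentially the paper's argument: reduce to the extreme coefficients by observing that $\Psi(\bar{\tau}_{2i})$ for $0<i<n$ has degree $<n$ in both $\alpha$ and $\beta$, that the complement $\tilde{q}\,\Sym^{n-2}$ along which one projects likewise contains no $\alpha^n$ or $\beta^n$, and then match the $\alpha^n$ and $\beta^n$ coefficients; the substitution of Sawon's integrals is also right. The setup (showing $\Sigma(\StructureSheaf{X})=H^0(\wedge^2TX)\oplus H^1(TX)$ is a hyperplane, so Theorem \ref{thm-Mukai-vector} applies with $r=1$, $c_1=0$) matches the paper as well.

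The gap is in the determination of the sign of $t$ when $n$ is even, and it is a real one because the stated conclusions for $K3^{[n]}$ and generalized Kummer type include all even $n$. Two things are missing. First, the quantity you propose to compare is not well defined as stated: both sides of $\Psi(\hat{v}(\StructureSheaf{X}))=\frac{1}{n!}\Pi((\alpha+t\beta)^n)$ lie in $\ker(\Delta)$, which is complementary to $\tilde{q}\,\Sym^{n-2}$, so ``the coefficient of $\alpha^{n-2}\tilde{q}$'' only makes sense after fixing a basis of the weight-$4$ graded piece; once you do, the relevant coefficient of the right-hand side is a rational multiple of $(\alpha+t\beta,\alpha+t\beta)=-2t$ (not of $-2t^{n-1}$ — the $\alpha^{n-2}$-coefficient of $(\alpha+t\beta)^{n-2}$ is $1$), and you must also determine the \emph{sign} of that rational multiple, which your ``fixed nonzero rational multiple'' leaves open. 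The paper does this by computing explicitly that the projection of $\alpha^{n-1}\beta$ to $\ker(\Delta)$ has $\alpha^{n-1}\beta$-coefficient $\frac{b_2(X)}{b_2(X)+2n-2}>0$. Second, and more seriously, the sign of the left-hand side does \emph{not} follow from ``Sawon's data'': Sawon computes only $\int_X\sqrt{td_X}$, which is exactly the quantity that fails to see the sign. What is needed is that the projection of $\tau_2=c_2(TX)/24$ to $SH^4(X,\QQ)$ is a \emph{positive} multiple of $b_X$; this is equivalent to the positivity of $\int_Xc_2(TX)\,\omega^{2n-2}$ for a K\"{a}hler class $\omega$ (i.e., of the subleading coefficient of the Riemann--Roch polynomial), which the paper imports from \cite[Examples 7 and 8]{huybrechts-survey} together with \cite[Eq. (2.1.4)]{ogrady-survey}. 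Your parenthetical appeal to ``the computation underlying Example \ref{example-Mukai-line-of-structure-sheaf} performed in $\Sym^n$ rather than $\Sym^2$'' does not fill this, since that example rests on the explicit Chern numbers of $K3^{[2]}$ (e.g. $\int_Xc_2^2=828$), which are not available for general $n$ or for generalized Kummers. Without this positivity input the argument only determines $t$ up to sign for even $n$.
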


\begin{proof}
Let $\tau_{2i}$ be the graded summand of $\sqrt{td_X}$ in $H^{4i}(X,\QQ)$. So $\tau_0=1$ and $\tau_{2n}=s[pt]$ for $s:=\int_X\sqrt{td_X}$.
It is known that $s>0$, by \cite[Sec. 6]{hitchin-sawon}.
If $X$ is of $K3^{[n]}$-type, then $s=\frac{(n+3)^n}{4^nn!}$, by \cite[Prop. 19]{sawon-thesis}. 
If  $X$ is deformation equivalent to a $2n$-dimensional generalized Kummer, then
$s=\frac{(n+1)^{n+1}}{4^n(n!)}$,
 by \cite[Prop. 21]{sawon-thesis}.
Choose a basis of $\widetilde{H}(X,\QQ)$ consisting of $\alpha$, $\beta$, and a basis of $H^2(X,\QQ)$.
We get a well defined notion of the degree $\deg_\alpha(f)$ of an element of $\Sym^n\widetilde{H}(X,\QQ)$ in $\alpha$.  Define 
$\deg_\beta(f)$ similarly. 
Let $\bar{\tau}_{2i}$ be the projection of $\tau_{2i}$ to $SH^*(X)$. Note that $\bar{\tau}_{0}=\tau_0=1$ and
$\bar{\tau}_{2n}=\tau_{2n}=s[pt]$. 
Then $\Psi(\bar{\tau}_0)=\alpha^n/n!$,
$\Psi(\bar{\tau}_{2n})=s\beta^n/c_X$, and for $1\leq i\leq n-1$ we have $\deg_{\alpha}(\Psi(\bar{\tau}_{2i}))<n$
and $\deg_{\beta}(\Psi(\bar{\tau}_{2i}))<n$, by definition of $\Psi$. We know that 
$\ell(\StructureSheaf{X})=\span\{\alpha+t\beta\}$, for some $t\in \QQ$, by Theorem \ref{thm-Mukai-vector}.
Furthermore, the projection of $(\alpha+t\beta)^n$ to $\ker(\Delta)$ is a rational scalar multiple of $\Psi(\sum_{i=0}^n\bar{\tau}_{2i})$,
by Theorem \ref{thm-Mukai-vector}. Let $V(d)$ be the kernel of $\Delta:\Sym^d(\widetilde{H}(X,\QQ))\rightarrow \Sym^{d-2}(\widetilde{H}(X,\QQ))$. 
We have the decomposition into irreducible $\LieAlg{so}(\widetilde{H}(X,\QQ))$-sub-representations
\[
\Sym^n(\widetilde{H}(X,\QQ))=\oplus_{j=0}^{\lfloor n/2\rfloor}\tilde{q}^jV(n-2j),
\]
where $\tilde{q}$ is defined in Example \ref{example-projection-to-Im-Psi} (see the proof of \cite[Prop. 2.14]{looijenga-lunts}).
Now $\deg_\alpha(\tilde{q})=\deg_\beta(\tilde{q})=1$, and so 
only the direct summand $V(n)$ in the above decomposition of $\Sym^n(\widetilde{H}(X,\QQ))$ contains elements $f$ with
$\deg_\alpha(f)=n$ and similarly for $\deg_\beta$.
Thus the coefficient of $\alpha^n$ 
 in  the projection of $\frac{1}{n!}(\alpha+t\beta)^n$ to $\ker(\Delta)=V(n)$ is $\frac{1}{n!}$ and the coefficient of $\beta^n$ is
 $\frac{t^n}{n!}$.
Hence, $t^n=\frac{s(n!)}{c_X}$. We see that $t^n=\left(\frac{n+3}{4}\right)^n$ in the $K3^{[n]}$-type and $t^n=\left(\frac{n+1}{4}\right)^n$ in the generalized Kummer case. 
If $n$ is odd, we are done. If $n$ is even it remains to prove that $t>0$. 
The inequality $t>0$  is 
equivalent to $(\alpha+t\beta,\alpha+t\beta)<0$.
The inequality $(\alpha+t\beta,\alpha+t\beta)<0$ follows from Remark \ref{rem-the-LLV-line-of-the-structure-sheaf-for-n-geq-2} for $X$ of $K3^{[n]}$-type. Below we will prove the inequality $t>0$ directly in the $K3^{[n]}$ and generalized Kummer deformation types.

The coefficient of $\alpha^{n-1}\beta$ 
 in  the projection of $(\alpha+t\beta)^n$ to $\ker(\Delta)=V(n)$ has the same sign as $t$.
 Indeed, we have $\Delta(\tilde{b}_X)=\Delta(\tilde{q})=1$, 
 \begin{eqnarray*}
 \Delta\left(\alpha^{n-1}\beta+(n-1)\alpha^{n-2}\tilde{b}_X
 \right)&=& (n-1)(\alpha,\beta)\alpha^{n-2}+(n-1)\alpha^{n-2}\Delta(\tilde{b}_X)=0,
 \\
 \tilde{b}_X&=& \left(\frac{b_2(X)+2}{b_2(X)}\right)\tilde{q}+\left(\frac{2}{b_2(X)}\right)\alpha\beta,
 \\
 \alpha^{n-1}\beta+(n-1)\alpha^{n-2}\tilde{b}_X&=&
\left( \frac{(n-1)(b_2(X)+2)}{b_2(X)}\right)\alpha^{n-2}\tilde{q}+\left(\frac{b_2(X)+2n-2}{b_2(X)}\right)\alpha^{n-1}\beta.
 \end{eqnarray*}
 Multiplying both sides of the last equation by 
 $\frac{b_2(X)}{b_2(X)+2n-2}$ we see that the projection of $\alpha^{n-1}\beta$ to $\ker(\Delta)$ is
 \[
 \left(\frac{b_2(X)}{b_2(X)+2n-2} \right)
 \left[
 \alpha^{n-1}\beta+(n-1)\alpha^{n-2}\tilde{b}_X
 \right],
 \]
 and the coefficient of $\alpha^{n-1}\beta$ is indeed positive.
 
The only term in $\sum_{i=0}^n\Psi(\bar{\tau}_{2i})$ that contributes to the coefficient of $\alpha^{n-1}\beta$ is $\Psi(\bar{\tau}_2)$, by definition of $\Psi$. It suffices to show that the coefficient of $\alpha^{n-1}\beta$ in $\Psi(\bar{\tau}_2)$ is positive.
Now, $\tau_2=c_2(TX)/24$. The projection of $c_2(TX)$ to $SH^4(X)$ is monodromy invariant, hence a  multiple of $b_X$, and we claim that it is a positive multiple.
Indeed, given a line bundle $L$ on $X$, the coefficient of $(c_1(L),c_1(L))^{n-1}$ in the formula for
$\int_X ch(L)td_X$ depends only on $\int_X \frac{c_1(L)^{2n-2}}{(2n-2)!} \frac{c_2(TX)}{12}$ and it is 
positive, by \cite[Examples 7 and 8]{huybrechts-survey}, as is the coefficient of $(c_1(L),c_1(L))^{n-1}$ in the formula for
$\int_X b_Xc_1(L)^{2n-2}$, by \cite[Eq. (2.1.4)]{ogrady-survey}. 
We conclude that $\Psi(\bar{\tau}_2)$ is a positive scalar multiple of $\Psi(b_X)$.
Finally, the coefficient of $\alpha^{n-1}\beta$ in $\Psi(b_X)=\frac{1}{(n-2)!}\alpha^{n-2}[\tilde{b}_X+\alpha\beta]$ is positive. 
\end{proof}

\hide{
\begin{rem}
 The proof of Lemma \ref{lemma-LLV-line-of-structure-sheaf-k3-type} shows that the positive $n$-th root $\sqrt[n]{\frac{s(n!)}{c_X}}$ of $\frac{s(n!)}{c_X}$ is rational  
and $\ell(\StructureSheaf{X})=\span\left\{\alpha+\epsilon\sqrt[n]{\frac{s(n!)}{c_X}}\beta\right\}$, with $\epsilon=1$, if $n$ is odd, and $\epsilon=\pm 1$, if $n$ is even.
\hide{
In \cite{taelman} the BBF form $b(\bullet,\bullet)$ on a $2d$-dimensional irreducible holomorphic symplectic manifold $X$ is normalized as
 \[
 \int_X\lambda^{2d}=\frac{(2d)!}{2^dd!}b(\lambda,\lambda)^d,
 \]
for $\lambda\in H^2(X,\QQ)$. 
If $X$ is a $2d$-dimensional irreducible holomorphic symplectic manifold deformation equivalent to a generalized Kummer, then
the integral and primitive BBF form $b_0(\bullet,\bullet)$ satisfies
\[
\int_X\lambda^{2d}=(d+1)(2d-1)!!b_0(\lambda,\lambda)^d,
\]
for $\lambda\in H^2(X,\QQ)$, by O'Grady's arxiv:1805.12075.v1 formula (2.2.5), where the double factorial is defined in formula 
(1.4.1) of that paper.  When $d=2$, then 
$(d+1)(2d-1)!!=9$, while $\frac{(2d)!}{2^dd!}=3$. Hence
$b(\lambda,\lambda)^2=3b_0(\lambda,\lambda)^2$. Thus, pairing $(\bullet,\bullet)$ has values in $\sqrt{3}\QQ$.
The Chern numbers of a fourfold $X$ of generalized Kummer type 
are $\int_Xc_2(X)^2=756$, and $\int_Xc_4(X)=108$, by Appendix A in \cite{nieper}. Hence, $td_X=1+\frac{1}{12}c_2(X)+3[pt]$ and
\[
\sqrt{td_X}=1+\frac{1}{24}c_2+\frac{27}{32}[pt].
\]
}
If $X$ is $2n$-dimensional of  generalized Kummer deformation type, then  
$c_X=n+1$ and
\[
 \int_X\sqrt{td_X}=\frac{(n+1)^{n+1}}{4^n(n!)},
 \]
 by \cite[Prop. 21]{sawon-thesis}.
We see that $\frac{s(n!)}{c_X}=\left(\frac{n+1}{4}\right)^n$ and 
$\ell(\StructureSheaf{X})=\span\{\alpha\pm \left(\frac{n+1}{4}\right)\beta\}$.
\end{rem}
}

The computations of  Example \ref{example-Mukai-line-of-structure-sheaf} generalize to yield  the following statement.

\begin{lem}
Let $X$ be of $K3^{[2]}$-type and $F$ an object of $D^b(X)$ of non-zero rank.
\begin{enumerate}
\item
\label{item-LLV-line-of-an-objects-which-deforms-in-co-dimension-1}
If $F$ deforms in co-dimension $1$, then $\kappa_3(F)=0$ and
$
\kappa(F)=x+yc_2(TX)+z[pt],
$
where $x, y, z\in \RationalNumbers$ and 
\begin{equation}
\label{eq-necessary-condition--on-kappa-class-for-F-to-deform-in-co-dimension-1}
450y^2+3xy-xz=0. 
\end{equation}
In this case,
\begin{equation}
\label{eq-LLV-line-of-an-objects-which-deforms-in-co-dimension-1}
\ell(F)=\span\left\{x^2\alpha+xc_1(F)+\left(\frac{5x^2}{4}+30xy+\frac{(c_1(F),c_1(F))}{2}\right)\beta\right\}.
\end{equation}
\item
\label{item-kappa-class-of-an-object-with-Mikai-vector-deforming-in-co-dimension-1}
Conversely, if $\kappa_3(F)=0$, 
$
\kappa(F)=x+yc_2(TX)+z[pt],
$
and $450y^2+3xy-xz=0$, then 
$F$  has a rank $1$ cohomological obstruction map.
\item
\label{item-tensor-product-does-not-deform-in-co-dimension-1}
If $F_1, F_2\in D^b(X)$ are of non-zero rank and $F_1$, $F_2$, and $F_1\otimes F_2$, all deform in co-dimension $1$,
then $\kappa(F_i)$ is a scalar multiple of $\kappa(\StructureSheaf{X})$, for $i=1$ or $i=2$.
\end{enumerate}
\end{lem}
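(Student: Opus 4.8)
The plan is to work in the explicit basis $\{1,\alpha,\alpha^2,c_2,c_2\alpha,[\mathrm{pt}]\}$ for $SH^*(X,\QQ)$ (where $X$ is of $K3^{[2]}$-type, so $n=2$ and $H^*(X,\QQ)=SH^*(X,\QQ)$) and to translate everything into statements about the line $\ell(F)$ via Theorem \ref{thm-Mukai-vector} and the computation in Example \ref{example-Mukai-line-of-structure-sheaf}. For part (\ref{item-LLV-line-of-an-objects-which-deforms-in-co-dimension-1}), the hypothesis that $F$ deforms in co-dimension $1$ means $\tilde\Sigma(F)$ is a hyperplane, so by Theorem \ref{thm-Mukai-vector}(\ref{prop-item-line-in-rational-Mukai-lattice})--(\ref{prop-item-isotropic}), $\Psi(\hat v(F))$ is (a multiple of) the projection of $\ell(F)^2$ to $\ker(\Delta)$, and by part (\ref{prop-item-spanning-Mukai-vector}) we may write $\ell(F)=\span\{r\alpha+c_1(F)+s\beta\}$. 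Since $\kappa(F)$ is $\bar{\LieAlg g}$-invariant by Proposition \ref{prop-kappa-class-remains-of-Hodge-type}, $\kappa(F)$ lies in the span of the monodromy-invariant classes $1$, $c_2(TX)$, $[\mathrm{pt}]$, giving $\kappa_3(F)=0$ and $\kappa(F)=x+yc_2+z[\mathrm{pt}]$. Then $v(F)=\exp(e_{c_1(F)/r})(\kappa(F)\sqrt{td_X})$, and I would compute $\Psi(v(F))$ using $\Psi(1)=\alpha^2/2$, $\Psi(c_2)=30(\tilde q+\alpha\beta)$, $\Psi([\mathrm{pt}])=\beta^2$, $\Psi(\sqrt{td_X})$ from \eqref{eq-Psi-of-sqrt-td-class}, together with the action of $\exp(e_{\lambda})$ on $\widetilde H(X,\QQ)$. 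The constraint that $\Psi(v(F))$, after undoing the $\exp(e_{c_1(F)/r})$ twist, be the projection of a \emph{square} $(r\alpha+s\beta)^2$ rather than a general degree-two element is one quadratic equation among $x,y,z$; matching coefficients I expect to recover \eqref{eq-necessary-condition--on-kappa-class-for-F-to-deform-in-co-dimension-1} ($450y^2+3xy-xz=0$, noting $450=30\cdot 15$ and the $828$ in the multiplication table \eqref{eq-multiplication-table} will enter through $\Delta$ of $c_2^2$-type terms) and read off $r=x^2$ (up to scale), $s=\tfrac{5x^2}{4}+30xy+\tfrac12(c_1(F),c_1(F))$, yielding \eqref{eq-LLV-line-of-an-objects-which-deforms-in-co-dimension-1}.

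For part (\ref{item-kappa-class-of-an-object-with-Mikai-vector-deforming-in-co-dimension-1}), this is essentially the converse bookkeeping: given $\kappa_3(F)=0$, $\kappa(F)=x+yc_2+z[\mathrm{pt}]$ and $450y^2+3xy-xz=0$, I would exhibit explicitly the isotropic (or suitably non-degenerate) line $\ell$ whose $\Psi$-projected square equals $\Psi(\hat v(F))$, namely the line in \eqref{eq-LLV-line-of-an-objects-which-deforms-in-co-dimension-1}; then $\LieAlg g_{\hat v(F)}=\LieAlg g_{\ell}$ has the right codimension in $m(HT^2(X))$, i.e. $F$ has a rank $1$ cohomological obstruction map, by the definition in Definition \ref{def-deforms-in-co-dimension-one}(\ref{def-item-remains-of-Hodge-type-in-co-dimension-1}) and Lemma \ref{lemma-pde}. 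The point is that the quadratic relation is exactly the condition that $\Psi(v(F))-(\text{multiple of }\tilde q\text{-part})$ be a perfect square in the $SO$-orbit sense.

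For part (\ref{item-tensor-product-does-not-deform-in-co-dimension-1}), I would argue by the multiplicativity of $\kappa$ and $v$: if $F_1,F_2$ and $F_1\otimes F_2$ all deform in co-dimension $1$, then using the additivity of the $\theta_i$ discussed after Definition \ref{def-deforms-in-co-dimension-one} (the graph of $\theta_1+\theta_2$ lies in $\Sigma(F_1\otimes F_2)$), the hypothesis forces $\Sigma(F_1\otimes F_2)$ to be a genuine hyperplane, hence $\theta_1+\theta_2$ must itself be the $\theta$ of an object deforming in codimension $1$. Translating to $\kappa$-classes via part (\ref{item-LLV-line-of-an-objects-which-deforms-in-co-dimension-1}): each of $\kappa(F_1)$, $\kappa(F_2)$, $\kappa(F_1\otimes F_2)$ satisfies \eqref{eq-necessary-condition--on-kappa-class-for-F-to-deform-in-co-dimension-1}, and $\kappa(F_1\otimes F_2)=\kappa(F_1)\kappa(F_2)$ (product in $H^*(X,\QQ)$, since $\kappa$ is the characteristic class of the projective bundle and is multiplicative on tensor products after removing the rank-one twist). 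Writing $\kappa(F_i)=x_i+y_ic_2+z_i[\mathrm{pt}]$ and expanding the product with the table \eqref{eq-multiplication-table}, the three copies of the quadratic relation become a polynomial system in $(x_i,y_i,z_i)$ whose only solutions have $(x_i,y_i,z_i)$ proportional to $(x,\,x/24\cdot\text{something},\ldots)$ — i.e. proportional to $\kappa(\StructureSheaf X)=1+\tfrac1{24}c_2+\tfrac{25}{32}[\mathrm{pt}]$ — for at least one index $i$.

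The main obstacle I anticipate is part (\ref{item-tensor-product-does-not-deform-in-co-dimension-1}): showing the polynomial system genuinely forces proportionality to $\kappa(\StructureSheaf X)$ for one factor, rather than admitting some spurious extra solution family, will require care with the quadratic constraint and with how $c_2^2=828[\mathrm{pt}]$ and $c_2\alpha^2=30(\alpha,\alpha)[\mathrm{pt}]$ feed into $\kappa(F_1)\kappa(F_2)$; I would likely normalize $x_1=x_2=1$ (non-zero rank) and reduce to a two-variable elimination, checking that the resultant vanishes only on the expected locus. Parts (\ref{item-LLV-line-of-an-objects-which-deforms-in-co-dimension-1}) and (\ref{item-kappa-class-of-an-object-with-Mikai-vector-deforming-in-co-dimension-1}) are, by contrast, a direct if somewhat lengthy computation with the tables already displayed in the excerpt.
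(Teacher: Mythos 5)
Your outline for parts (\ref{item-LLV-line-of-an-objects-which-deforms-in-co-dimension-1}) and (\ref{item-kappa-class-of-an-object-with-Mikai-vector-deforming-in-co-dimension-1}) is essentially the paper's argument: one writes $\kappa(F)\sqrt{td_X}=x+tc_2(X)+s[\mathrm{pt}]$ with $t=x/24+y$ and $s=z+25x/32+69y/2$, applies $\Psi$ to get $30t\tilde q+\frac{x}{2}\bigl(\alpha^2+\frac{60t}{x}\alpha\beta+\frac{2s}{x}\beta^2\bigr)$, and observes (via Example \ref{example-projection-to-Im-Psi}) that this is the projection of the square of a line if and only if $450t^2=sx$, which unwinds to exactly $450y^2+3xy-xz=0$; the line is $\span\{\alpha+(\frac54+\frac{30y}{x})\beta\}$ and the displayed formula for $\ell(F)$ follows by applying $\exp(e_{c_1(F)/x})$ and using the $SO(\widetilde H(X,\QQ))$-equivariance of $\Psi$, just as you propose. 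Part (\ref{item-tensor-product-does-not-deform-in-co-dimension-1}) also follows your intended route (multiplicativity of $\kappa$ plus the quadratic relation imposed on all three objects), but two remarks. First, no resultant is needed: setting $\bar y_i=y_i/x_i$, $\bar z_i=z_i/x_i$, the relation for $F_1\otimes F_2$ minus the sum of the relations for $F_1$ and $F_2$ is exactly $(2\cdot450-828)\bar y_1\bar y_2=72\,\bar y_1\bar y_2$, so $y_i=0$ for some $i$, and then the relation for that $F_i$ forces $z_i=0$ as well. Second, and this is a genuine error in your write-up: $\kappa(\StructureSheaf{X})=1$, since $\kappa$ of any line bundle is $1$ by definition; the class $1+\frac{1}{24}c_2+\frac{25}{32}[\mathrm{pt}]$ you name is $v(\StructureSheaf{X})=\sqrt{td_X}$, not $\kappa(\StructureSheaf{X})$. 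The conclusion of part (\ref{item-tensor-product-does-not-deform-in-co-dimension-1}) is precisely that $\kappa(F_i)=x_i$ is a constant for one of the two factors; if you aim your elimination at the locus proportional to $\sqrt{td_X}$ you will not find it, so this needs to be corrected before the computation can close.
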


\begin{proof}
\ref{item-LLV-line-of-an-objects-which-deforms-in-co-dimension-1})
The class $\kappa(F)$ remains of Hodge type under every K\"{a}hler deformation of $X$, by Proposition \ref{prop-kappa-class-remains-of-Hodge-type}. Hence, $\kappa_3(F)=0$ and $\kappa_2(F)$ is a scalar multiple of $c_2(TX)$. 
We have
\begin{eqnarray*}
\kappa(F)\sqrt{td_X}&=&(x+yc_2(X)+z[pt])\left(1+\frac{1}{24}c_2(X)+\frac{25}{32}[pt]\right)
\\&=& x+(x/24+y)c_2(X)+(z+25x/32+69y/2)[pt].
\end{eqnarray*}
Set $t=x/24+y$ and $s=z+25x/32+69y/2$.
We have
\begin{eqnarray*}
\Psi(x+tc_2(X)+s[pt])&=&
x\alpha^2/2+30t(\tilde{q}+\alpha\beta)+s\beta^2
\\
&=& 30t\tilde{q}+\frac{x}{2}\left(\alpha^2+\frac{60t}{x}\alpha\beta+\frac{2s}{x}\beta^2\right).
\end{eqnarray*}
The latter belongs to the projection via (\ref{eq-projection-to-Im-Psi}) of the  square of a line, if and only if $450t^2=sx$, in which case it belongs to the projection of the square of
$\span\{\alpha+\frac{30t}{x}\beta\}=\span\{\alpha+\left(\frac{5}{4}+\frac{30y}{x}\right)\beta\}$, by Example \ref{example-projection-to-Im-Psi}. This is the case, if and only if 
$450y^2+3xy-xz=0$. The equality $v(F)=\exp(e_{c_1(F)/x})\kappa(F)\sqrt{td_X}$ implies that $\ell(F)$ is spanned by 
$\exp(e_{c_1(F)/x})\left(\alpha+\left(\frac{5}{4}+\frac{30y}{x}\right)\beta\right)$, by the $SO(\widetilde{H}(X,\QQ))$-equivariance of $\Psi$.
Finally observe that 
\[
\exp(e_{c_1(F)/x})\left(\alpha+\left(\frac{5}{4}+\frac{30y}{x}\right)\beta\right)=
\alpha+c_1(F)/x+\left(\frac{5}{4}+30y/x+\frac{(c_1(F),c_1(F))}{2x^2}\right)\beta.
\]

\ref{item-kappa-class-of-an-object-with-Mikai-vector-deforming-in-co-dimension-1})
The assumptions imply that $\Psi(\kappa(F)\sqrt{td_X})$ is invariant under the stabilizer of the line in Equation (\ref{eq-LLV-line-of-an-objects-which-deforms-in-co-dimension-1}) in the LLV algebra $\LieAlg{g}:=\LieAlg{so}(\tilde{H}(X,\QQ))$. Hence, 
$\kappa(F)\sqrt{td_X}$ is invariant under this subalgebra, by the $\LieAlg{g}$-equivariance of $\Psi$. 

\ref{item-tensor-product-does-not-deform-in-co-dimension-1})
Set $\kappa(F_i)=x_i+y_ic_2(X)+z_i[pt]$. Then
\[
\kappa(F_1\otimes F_2)=
x_1x_2+[x_1y_2+x_2y_1]c_2(X)+
[x_1z_2+x_2z_1+828y_1y_2][pt].
\]
Set $\bar{y}_i:=y_i/x_i$ and $\bar{z_i}=z_i/x_i$. 
Equation (\ref{eq-necessary-condition--on-kappa-class-for-F-to-deform-in-co-dimension-1}) for $F_1\otimes F_2$ translates to the vanishing of
\begin{eqnarray*}
450(\bar{y}_1+\bar{y_2})^2+3(\bar{y}_1+\bar{y}_2)-(\bar{z}_1+\bar{z}_2+828\bar{y}_1\bar{y}_2)&=&
\\
(450\bar{y}_1^2+3\bar{y}_1-\bar{z}_1)+(450\bar{y}_2^2+3\bar{y}_2-\bar{z}_2)+72\bar{y}_1\bar{y}_2&=&72\bar{y}_1\bar{y}_2.
\end{eqnarray*}
Hence, $y_i=0$, for $i=1$ or $i=2$. Equation (\ref{eq-necessary-condition--on-kappa-class-for-F-to-deform-in-co-dimension-1}) and the vanishing of $y_i$ imply that $z_i=0$ as well.
\end{proof}

\begin{rem}
Theorem \ref{thm-Mukai-vector} imposes the following cohomological constraint on the Todd class of a $2n$ dimensional irreducible holomorphic symplectic manifold $X$. 
Denote by $\hat{c}_{2k}$ 
the projection of $c_{2k}(X)$ to the primitive subspace of $H^{4k}(X,\QQ)$ defined in \cite[Cor. 1.13]{looijenga-lunts}.
Then $\hat{c}_{2k}$ generates an irreducible subrepresentation $\LieAlg{g}(\hat{c}_{2k})$ of the LLV algebra $\LieAlg{g}$  isomorphic to the subspace $V(n\!-\!2k)$ of 
$\Sym^{n-2k}\tilde{H}(X,\QQ)$ spanned by powers of isotropic classes, by \cite[Lemma 2.8]{looijenga-lunts}. 
The direct sum  $\oplus_{k=0}^{\lfloor n/2\rfloor}\LieAlg{g}(\hat{c}_{2k})$ is the minimal $\LieAlg{g}$ invariant subspace containing $\sqrt{td_X}$. Theorem \ref{thm-Mukai-vector} implies that 
the projection of $\sqrt{td_X}$ to $\LieAlg{g}(\hat{c}_{2k})$ spans the image of the power $\ell(\StructureSheaf{X})^{n-2k}$ of the same line $\ell(\StructureSheaf{X})$ under the unique, up to scalar, isomorphism $\LieAlg{g}(\hat{c}_{2k})\cong V(n\!-\!2k)$. Note that when $X$ is of $K3^{[n]}$-type and $n\geq 4$, then $\hat{c}_{2k}$ is non-trivial
for $1\leq k\leq n/2$, by  \cite[Theorem 1.10]{markman-constraints}.
\end{rem}

Theorem \ref{thm-Mukai-vector} lists topological consequences about the Mukai vector of an object $F$ 
deforming in co-dimension $1$ (Definition \ref{def-deforms-in-co-dimension-one}).
We emphasize next the evident fact that these topological consequences remain valid under deformations of the pair $(X,F)$. For simplicity we restrict to the case of locally free sheaves.
Let $\pi:\X\rightarrow B$ be a smooth and proper morphism over a connected analytic space $B$, all of which fibers are irreducible holomorphic symplectic manifolds. Let  $\theta$ be a class in $H^2(\X,\StructureSheaf{\X}^*)$, in the analytic topology, 
and let $\F$ be a locally free $\theta$-twisted sheaf of rank $r$. Denote by $X_b$ the fiber of $\pi$ over a point $b$ of $B$ and by $\theta_b\in H^2(X_b,\StructureSheaf{X_b}^*)$ and $F_b$ the restrictions of $\theta$ and $\F$ to $X_b$. The rational LLV lattices $\tilde{H}(X_b,\QQ)$ then form a local system $\underline{\QQ\alpha}\oplus R^2\pi_*\QQ\oplus \underline{\QQ\beta}$, where
$\underline{\QQ\alpha}\oplus\underline{\QQ\beta}$ is a trivial local subsystem. 
Denote by $\LieAlg{g}(X_b)$ the LLV Lie algebra of $X_b$.
Set $\bar{v}(F_b):=\kappa(F_b)\sqrt{td_{X_b}}$ and let $\hat{\bar{v}}(F_b)$ be its projection to $SH^*(X_b,\QQ)$.

\begin{cor}
\label{cor-constant-LLV-line-of-a-family-of-twisted-sheaves}
Assume that for some point $b_0$ of $B$ the class $\theta_{b_0}$ is trivial\footnote{Strictly speaking, $\F$ is a twisted sheaf with respect to a co-cycle $\tilde{\theta}$ representing the class $\theta$ and $F_{b_0}$ is a twisted sheaf with respect to the restriction $\tilde{\theta}_{b_0}$ of $\tilde{\theta}$. The co-cycle $\tilde{\theta}_{b_0}$ is a co-boundary. We then abuse notation and denote by $F_{b_0}$ also an untwisted coherent sheaf under an equivalence of the category of coherent sheaves on $X_{b_0}$ with the category of $\tilde{\theta}_{b_0}$-twisted sheaves. The isomorphism depends on a choice of a $1$-co-chain whose co-boundary is $\tilde{\theta}_{b_0}$, and so is unique up to tensorization by a line-bundle. In particular, the condition that 
$F_{b_0}$ deforms in co-dimension $1$
is independent of the choice of the equivalence.}
and that $F_{b_0}$ deforms in co-dimension $1$.
Then 
\[
\ell_0:=\exp\left(e_{-c_1(F_{b_0})/r}\right)(\ell(F_{b_0}))
\]
is a line in $\QQ\alpha\oplus\QQ\beta$ and the following statements hold for every $b\in B$.
\begin{enumerate}
\item The subalgebra $\LieAlg{g}(X_b)_{\ell_0}$ of $\LieAlg{g}(X_b)$ annihilating the line $\ell_0$ is equal to the subalgebra
$\LieAlg{g}(X_b)_{\bar{v}(F_b)}$ annihilating the class $\bar{v}(F_b)$. In particular, $\bar{v}(F_b)$ is annihilated by 
$\LieAlg{so}(H^2(X_b,\QQ))$.
\item
The  embedding 
$\Psi:SH^*(X_b,\QQ)\rightarrow \Sym^n\tilde{H}(X_b,\QQ)$, given in (\ref{eq-Psi}), maps $\hat{\bar{v}}(F_b)$ 
into the  image of the $n$-th power $\ell_0^n$ of $\ell_0$ via the projection (\ref{eq-projection-to-Im-Psi}).
\end{enumerate}
\end{cor}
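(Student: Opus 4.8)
The plan is to reduce Corollary \ref{cor-constant-LLV-line-of-a-family-of-twisted-sheaves} to Theorem \ref{thm-Mukai-vector} by a deformation-invariance argument. First I would observe that at the base point $b_0$, since $\theta_{b_0}$ is trivial and $F_{b_0}$ deforms in co-dimension $1$, Theorem \ref{thm-Mukai-vector}(\ref{prop-item-line-in-rational-Mukai-lattice}) and (\ref{prop-item-spanning-Mukai-vector}) give a rational line $\ell(F_{b_0})\subset \tilde{H}(X_{b_0},\QQ)$ with $\LieAlg{g}(X_{b_0})_{v(F_{b_0})}=\LieAlg{g}(X_{b_0})_{\ell(F_{b_0})}$, and $\ell(F_{b_0})$ is spanned by $r\alpha+c_1(F_{b_0})+s\beta$ for some $s$. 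Applying $\exp(e_{-c_1(F_{b_0})/r})\in SO(\tilde{H}(X_{b_0},\QQ))$ kills the $H^2$-component of this spanning vector, so $\ell_0:=\exp(e_{-c_1(F_{b_0})/r})(\ell(F_{b_0}))$ lies in $\QQ\alpha\oplus\QQ\beta$, which is the trivial subsystem $\underline{\QQ\alpha}\oplus\underline{\QQ\beta}$; hence $\ell_0$ makes sense simultaneously in every fiber $\tilde{H}(X_b,\QQ)$, and $\LieAlg{g}(X_{b_0})_{\ell_0}=\exp(e_{-c_1(F_{b_0})/r})\LieAlg{g}(X_{b_0})_{\ell(F_{b_0})}\exp(e_{c_1(F_{b_0})/r})=\LieAlg{g}(X_{b_0})_{\bar v(F_{b_0})}$, using $\bar v(F_{b_0})=\exp(e_{-c_1(F_{b_0})/r})v(F_{b_0})$ up to the sign of the $H^{4k}$-graded pieces, which does not affect the stabilizer.

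Next I would invoke the local system structure: the classes $\kappa(F_b)$, $td_{X_b}$, hence $\bar v(F_b)=\kappa(F_b)\sqrt{td_{X_b}}$, are flat sections of $R^*\pi_*\QQ$, because $\kappa$ is a characteristic class of the (twisted) projective bundle $\PP(\F)$ which deforms over $B$, and the Todd class is topological. Likewise the LLV Lie algebra bundle $b\mapsto\LieAlg{g}(X_b)$, the Verbitsky operator $\eta$, the projection $SH^*\to$ the Verbitsky component, the embedding $\Psi$, and the projection $\Sym^n\tilde H\to\ker(\Delta)$ are all defined fiberwise in a way compatible with the monodromy of the local system (they are purely Lie-theoretic, built from the flat pairing on $\tilde H(X_b,\QQ)$). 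Therefore the property "$\LieAlg{g}(X_b)_{\ell_0}=\LieAlg{g}(X_b)_{\bar v(F_b)}$ and $\Psi(\hat{\bar v}(F_b))$ lies in the projection of $\ell_0^n$" is a closed, monodromy-invariant condition on $b$, and I would check it holds on all of $B$ by parallel transport from $b_0$: both $\ell_0$ (being in the trivial subsystem) and $\bar v(F_b)$ (being a flat section) transport correctly, and an equation between two flat sections that holds at one point holds everywhere on a connected base.

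For statement (1) I would spell out that $\LieAlg{g}(X_b)_{\ell_0}$ contains $\LieAlg{so}(H^2(X_b,\QQ))$ automatically, since $\ell_0\subset\QQ\alpha\oplus\QQ\beta$ is orthogonal to $H^2(X_b,\QQ)$ and $\LieAlg{so}(H^2(X_b,\QQ))=\bar{\LieAlg{g}}$ annihilates $U_\QQ$ by Theorem \ref{thm-LLV-Lie-algebra}; combined with the transported equality $\LieAlg{g}(X_b)_{\ell_0}=\LieAlg{g}(X_b)_{\bar v(F_b)}$ this gives that $\bar v(F_b)$ is $\bar{\LieAlg{g}}$-invariant. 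Statement (2) is just the parallel transport of Theorem \ref{thm-Mukai-vector}(\ref{prop-item-isotropic}), phrased for $\bar v$ rather than $v$ via the shift by $\exp(e_{-c_1(F_{b_0})/r})$. The main obstacle I anticipate is bookkeeping the twisting carefully: I must confirm that $\kappa(F_b)$ and hence $\bar v(F_b)$ is well defined as an untwisted rational cohomology class even when $\theta_b\ne 0$ (this is exactly the content of the construction in \cite[Sec. 2.2]{markman-BBF-class-as-characteristic-class} recalled at the start of Section \ref{sec-proof-of-modularity}, where $\kappa(F)$ is the $r$-th root of $ch(F^{\otimes r}\otimes\det(F)^*)$ and the twists cancel), and that the flatness of $\bar v(F_b)$ over $B$ survives the presence of $\theta$ — which it does, since $\PP(\F)$ itself is an honest (untwisted) projective bundle over $\X$ whose Chern classes are flat.
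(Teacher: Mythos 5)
Your proposal is correct and follows essentially the same route as the paper: establish the statements at $b_0$ via Theorem \ref{thm-Mukai-vector} (the rationality of $\ell(F_{b_0})$, the form $r\alpha+c_1(F_{b_0})+s\beta$ of its spanning vector, and the equivariance of $\Psi$ under $\exp(e_{-c_1(F_{b_0})/r})$), and then propagate to all of $B$ using the flatness of the section $\kappa(\F)\sqrt{td_\pi}$ in $R\pi_*\QQ$ together with the connectedness of $B$. The extra care you take with the monodromy-invariance of the Lie-theoretic constructions and with the untwisted definition of $\kappa$ is exactly what the paper leaves implicit, and the hedge about signs in relating $\bar{v}(F_{b_0})$ to $\exp(e_{-c_1(F_{b_0})/r})(v(F_{b_0}))$ is unnecessary (the two classes are equal on the nose), but harmless.
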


\begin{proof}
The line $\ell_0$ is in $\QQ\alpha\oplus\QQ\beta$, by Theorem \ref{thm-Mukai-vector}(\ref{prop-item-spanning-Mukai-vector}).
The statements hold for $X_{b_0}$, by the equivariance of $\Psi$ with respect to the isometry 
$\exp\left(e_{-c_1(F_{b_0})/r}\right)$ and by Theorem \ref{thm-Mukai-vector} parts (\ref{prop-item-line-in-rational-Mukai-lattice}) and (\ref{prop-item-isotropic}). It follows for all $b\in B$, by the flatness of the section associated to $\kappa(\F)\sqrt{td_\pi}$ 
in $R\pi_*\QQ$ and the assumption that the base $B$ is connected. 
\end{proof}

\begin{lem}
\label{lemma-theta-i}
Assume that $F_i$, $1\leq i\leq k$, each deforms in co-dimension one and its rank $r_i$ is positive.
Then the following statements hold.
\begin{enumerate}
\item
\label{lemma-item-theta-i}
$\Sigma(F_i)\cap [H^1(TX)\oplus H^2(\StructureSheaf{X})]$ is the graph of the homomorphism
$\theta_i:H^1(TX)\rightarrow H^2(\StructureSheaf{X})$ given by 
$\lambda\mapsto -m_\lambda\left(c_1(F_i)/r_i\right)$.
\item 
\label{lemma-item-LLV-subspace}
The LLV subspace  $W(\otimes_{i=1}^k F_i)$ is contained in
${\displaystyle
\exp\left(e_{\sum_{i=1}^k c_1(F_i)/r_i}\right)(\span_\QQ\{\alpha,\beta\}).
}$
\end{enumerate}
\end{lem}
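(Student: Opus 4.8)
\textbf{Proof plan for Lemma \ref{lemma-theta-i}.}

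The plan is to prove part (\ref{lemma-item-theta-i}) first and then deduce part (\ref{lemma-item-LLV-subspace}) from it by a direct computation with the isometry $\mu$ and the operators $m_\lambda$. For part (\ref{lemma-item-theta-i}), I would start from the observation, already made in the proof of Proposition \ref{prop-kappa-class-remains-of-Hodge-type}, that since $r_i>0$ the restriction of $\obs^{HT}_{F_i}$ to $H^2(\StructureSheaf{X})$ is injective; hence $\Sigma(F_i)$ does not contain $H^2(\StructureSheaf{X})$, so $\Sigma(F_i)\cap[H^1(TX)\oplus H^2(\StructureSheaf{X})]$ is the graph of a linear map $\theta_i:H^1(TX)\to H^2(\StructureSheaf{X})$. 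To identify $\theta_i$ explicitly, I would apply the equivariance of $F\mapsto \Sigma(F)$ under $\Aut(D^b(X))$ with respect to tensorization by a (rational power of a) line bundle: set $G_i:=\det(F_i)^{-1/r_i}\otimes F_i$, so that $c_1(G_i)=0$ and $\bar\LieAlg{g}\subset \LieAlg{g}_{v(G_i)}$, equivalently $H^1(TX)\subset \Sigma(G_i)$. Since tensorization by $\det(F_i)^{1/r_i}$ corresponds on $HT^2(X)$ to the action of $\exp(e_{c_1(F_i)/r_i})$ transported via $\mu$ (by \cite[Theorem 3.1 and Prop. 3.2]{taelman} together with the commutative diagram of Section \ref{sec-the-Hochschild-lie-algebra}), we get $\Sigma(F_i)=\mu^{-1}\left(\exp(e_{-c_1(F_i)/r_i})\,\mu(\Sigma(G_i))\right)$. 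Computing $\exp(e_{-c_1(F_i)/r_i})$ on $\mu(H^1(TX))=H^{1,1}(X)$ using Equation (\ref{eq-e-lambda}) and transporting back through $\mu$, the ``$\beta$-component'' of the image of $\lambda\in H^1(TX)$ is $-m_\lambda(c_1(F_i)/r_i)$ by Lemma \ref{lemma-formula-for-m-lambda} and Equation (\ref{eq-mu-takes-Poisson-tensor-to-alpha}); this is precisely the graph of $\theta_i$ as stated. Alternatively, one can argue more directly: the restriction of $\exp a_{F_i}$ to $H^1(TX)\oplus H^2(\StructureSheaf{X})$ is the identity in the $\Hom(F_i,F_i)$-component plus the contraction with the Atiyah class $a_{F_i,1}\in\Hom(F_i,F_i\otimes\Omega^1_X[1])$, whose trace-free part is the obstruction and whose trace part is $c_1(F_i)/r_i$ acting by cup product; the condition $\obs^{HT}_{F_i}(\lambda+\mu_0)=0$ then reads $\mu_0=-m_\lambda(c_1(F_i)/r_i)$ after identifying $H^2(\StructureSheaf X)$ with its image under $\mu$.

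For part (\ref{lemma-item-LLV-subspace}), I would use the additivity observed in the paragraph preceding Definition \ref{def-deforms-in-co-dimension-one}: $\otimes_{i=1}^k F_i$ deforms in the direction $\xi+\left(\sum_{i=1}^k\theta_i\right)(\xi)$ for every $\xi\in H^1(TX)$, so $\Sigma(\otimes_i F_i)\cap[H^1(TX)\oplus H^2(\StructureSheaf X)]$ contains the graph of $\sum_i\theta_i$, and by part (\ref{lemma-item-theta-i}) this homomorphism is $\lambda\mapsto -m_\lambda\left(\sum_i c_1(F_i)/r_i\right)$. Since $\tilde\Sigma(\otimes_i F_i)^\perp=\Sigma(\otimes_i F_i)^\perp$ up to the Duflo twist (which does not affect the pairing), and $W(\otimes_i F_i)=\mu(\tilde\Sigma(\otimes_i F_i)^\perp)$, the containment of the graph of $\sum_i\theta_i$ in $\Sigma(\otimes_i F_i)$ means that $W(\otimes_i F_i)$ is orthogonal to that graph; identifying, as above, the graph of $\lambda\mapsto -m_\lambda(c_1)$ (where $c_1:=\sum_i c_1(F_i)/r_i$) with $\exp(e_{-c_1})(H^{1,1}(X))$ under $\mu$, its orthogonal complement inside $\tilde H^{0,0}(X)$ is $\exp(e_{c_1})\left(\CC\alpha\oplus\CC\beta\right)$, whence $W(\otimes_i F_i)\subseteq \exp\left(e_{\sum_i c_1(F_i)/r_i}\right)(\span_\QQ\{\alpha,\beta\})$ after noting that $W$ is defined over $\QQ$.

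The main obstacle I anticipate is bookkeeping rather than conceptual: making the identification ``tensorization by $\det(F)^{1/r}$ acts on $HT^2(X)$ via $\exp(e_{c_1(F)/r})$ transported by $\mu$'' fully precise, since it requires combining the action of $DMon(X)$ on $H^*(X,\QQ)$ from \cite[Theorem 3.1]{taelman} with the identification of $HT^2(X)$ with $\tilde H^{0,0}(X)$ via $\mu$ from Section \ref{sec-the-Hochschild-lie-algebra}, and then checking that under this identification the $\Aut(D^b(X))$-equivariance of $F\mapsto\Sigma(F)$ translates into the expected equivariance. The formula $\theta_i(\lambda)=-m_\lambda(c_1(F_i)/r_i)$ must be interpreted with $H^2(\StructureSheaf X)$ identified with $\CC\beta\subset\tilde H^{0,0}(X)$ via $\mu$, so one has to be careful that $m_\lambda(c_1(F_i)/r_i)$, which a priori lies in $\tilde H^{0,2}_{\fine}(X)=\CC\bar\sigma$ by the degree count after Equation (\ref{eq-m}), is being transported back to $H^2(\StructureSheaf X)$ by the appropriate component of $\mu^{-1}$; this is exactly the content of Equation (\ref{eq-formula-for-m-lambda}) and should be stated with that lemma cited. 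Once these identifications are pinned down, both parts follow by the linear-algebra manipulations sketched above, and no genuinely new input beyond Lemmas \ref{lemma-formula-for-m-lambda}, \ref{lemma-line-in-Mukai-lattice-associated-to-hyperplane-in-HT-2}, and \ref{lem-Sigma-F-is-contained-in-stabilizer-of-Mukai-vector} is needed.
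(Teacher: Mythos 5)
Your overall strategy is sound and, for part (\ref{lemma-item-LLV-subspace}), essentially identical to the paper's: both use the additivity $\sum_i\theta_i$ from the paragraph preceding Definition \ref{def-deforms-in-co-dimension-one}, identify the $\mu$-image of the graph with $\exp(e_{c})(H^{1,1}(X))$ for $c=\sum_i c_1(F_i)/r_i$, and pass to orthogonal complements inside $\tilde{H}^{0,0}(X)$. For part (\ref{lemma-item-theta-i}) the paper does not twist to a normalized object $G_i$: it reads off $\mu\left(\Sigma(F_i)\cap[H^1(TX)\oplus H^2(\StructureSheaf{X})]\right)=\ell(F_i)^\perp\cap[H^{1,1}(X)\oplus\CC\beta]$ directly from Theorem \ref{thm-Mukai-vector}(\ref{prop-item-spanning-Mukai-vector}) (which already encodes the twist by $c_1(F_i)/r_i$), and then verifies $\mu(\theta_i(\lambda))=(\mu(\lambda),c_1(F_i)/r_i)\beta$ by the inline computation $m_{\theta_i(\lambda)}(\alpha)=\theta_i(\lambda)$, using $m_{\theta}=\eta\circ e_\theta\circ\eta^{-1}=e_\theta$ for $\theta\in H^2(\wedge^0TX)$; this last identity is the piece your sketch attributes to Lemma \ref{lemma-formula-for-m-lambda} but which actually needs Equations (\ref{eq-eta-conjugates-m-sigma-a-to-m-a}) and (\ref{eq-e-lambda}). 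Your ``alternative direct argument'' via the Atiyah class is a genuinely different and arguably cleaner route: taking the trace of $\obs^{HT}_{F_i}(\xi,\theta)=0$ and using \cite[Lemma 10.1.3]{huybrechts-lehn} gives $r_i\theta+m_\xi(c_1(F_i))=0$, so the intersection is \emph{contained} in the graph, and equality then follows from the codimension-one hypothesis by a dimension count; this avoids Theorem \ref{thm-Mukai-vector} entirely and is worth writing out.

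The one point you must repair is a sign inconsistency, which matters here because the entire content of the lemma is a sign. The $\mu$-image of the graph of $\lambda\mapsto -m_\lambda(c)$ is $\{x+(x,c)\beta\}=\exp(e_{+c})(H^{1,1}(X))$, not $\exp(e_{-c})(H^{1,1}(X))$; correspondingly, in your transport step one has $\ell(F_i)=\exp(e_{+c_1(F_i)/r_i})(\ell(G_i))$ and hence $\mu(\tilde\Sigma(F_i))=\exp(e_{+c_1(F_i)/r_i})\mu(\tilde\Sigma(G_i))$. As written, your route (a) would output $\theta_i(\lambda)=+m_\lambda(c_1(F_i)/r_i)$, and in part (\ref{lemma-item-LLV-subspace}) you identify the graph with $\exp(e_{-c})(H^{1,1}(X))$ but then assert its orthogonal complement is $\exp(e_{+c})(\span_\QQ\{\alpha,\beta\})$ --- an isometry carries orthogonal complements to orthogonal complements, so the two exponentials must carry the same sign. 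The final displayed conclusion is correct, but the chain of identifications leading to it is not; with the sign fixed throughout (and with the harmless observation that the Duflo twist $D$ acts as the identity on $H^1(TX)\oplus H^2(\StructureSheaf{X})$, so $\Sigma$ and $\tilde\Sigma$ have the same intersection with that subspace), the argument closes.
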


\begin{proof}
(\ref{lemma-item-theta-i})
The isomorphism $\mu: HT^2(X)\rightarrow \tilde{H}^{0,0}(X,\CC)$, given by $\mu(\lambda)=m_\lambda(\sigma)$
in (\ref{eq-embedding-of-HT-2-in-Mukai-lattice}),
maps $H^1(TX)\oplus H^2(\StructureSheaf{X})$ to $H^{1,1}(X)\oplus \ComplexNumbers\beta$ 
(see Equation (\ref{eq-mu-takes-Poisson-tensor-to-alpha})).
The homomorphism $\mu$ maps $\Sigma(F_i)\cap [H^1(TX)\oplus H^2(\StructureSheaf{X})]$ to 
$\ell(F_i)^\perp\cap[H^{1,1}(X)\oplus \CC\beta]$, hence to 
the graph of $\tilde{\theta}_i:H^{1,1}(X)\rightarrow \CC\beta$ given by 
$\tilde{\theta}_i(x)=(x,c_1(F_i)/r_i)\beta$, by Theorem \ref{thm-Mukai-vector}(\ref{prop-item-spanning-Mukai-vector}).
We have $m_{\theta_i(\lambda)}(\sigma)=\mu(\theta_i(\lambda))=\tilde{\theta}_i(\mu(\lambda))=(m_\lambda(\sigma),c_1(F_i)/r_i)\beta$, for all $\lambda\in H^1(TX)$. Pairing with $\alpha$, we get:
\begin{eqnarray*}
(\alpha,\mu(\theta_i(\lambda)))&=& -(m_{\theta_i(\lambda)}(\alpha),\sigma),
\\
(\alpha,\tilde{\theta}_i(\mu(\lambda)))&=& -(m_\lambda(\sigma),c_1(F_i)/r_i)=(\sigma,m_\lambda(c_1(F_i)/r_i)).
\end{eqnarray*}
It remains to prove that $m_{\theta_i(\lambda)}(\alpha)=\theta_i(\lambda)$.
Now, $m_{\theta_i(\lambda)}=\eta\circ e_{\theta_i(\lambda)}\circ \eta^{-1}$, 
by Equation (\ref{eq-eta-conjugates-m-sigma-a-to-m-a}), and $\eta\circ e_{\theta_i(\lambda)}\circ \eta^{-1}=e_{\theta_i(\lambda)},$
since $\theta_i(\lambda)$ belongs to $H^2(\wedge^{0}TX)$, and $e_{\theta_i(\lambda)}(\alpha)=\theta_i(\lambda)$, by definition
(Eq. (\ref{eq-e-lambda})).

(\ref{lemma-item-LLV-subspace})
The intersection $\Sigma(\otimes_{i=1}^k F_i)\cap [H^1(TX)\oplus H^2(\StructureSheaf{X})]$
is the graph of $\sum_{i=1}^k\theta_i:H^1(TX)\rightarrow H^2(\StructureSheaf{X})$, 
as observed in Section \ref{sec-LLV-subspace}. Hence,
$\mu$ maps the intersection to the graph of the homomorphism from $H^{1,1}(X)$ to $\CC\beta$ given by
\[
x\mapsto \left(x,\sum_{i=1}^k c_1(F_i)/r_i\right)\beta,
\]
by the proof of Part (\ref{lemma-item-theta-i}). 
The latter graph is precisely 
$\exp\left(e_{\sum_{i=1}^k c_1(F_i)/r_i}\right)(H^{1,1}(X))$. The subspace 
$W(\otimes_{i=1}^k F_i)$ is thus contained in the subspace ${\displaystyle
\exp\left(e_{\sum_{i=1}^k c_1(F_i)/r_i}\right)(\span_\QQ\{\alpha,\beta\})
}$
orthogonal to $\exp\left(e_{\sum_{i=1}^k c_1(F_i)/r_i}\right)(H^{1,1}(X))$. 
\end{proof}

%
\subsection{Torsion sheaves which deform in co-dimension $1$ are supported on lagrangian subvarieties or points}
\label{sec-torsion-sheaves-with-rank-1-obstruction-map}
Let $X$ be a $2n$-dimensional irreducible holomorphic symplectic manifold.
Let $\sigma$ be a non-zero class in $H^0(X,\Omega^2_X)$.

\begin{lem}
\label{lemma-vanishing-of-ch-p-for-p-less-than-n}
Let $F$ be an object of $D^b(X)$ such that 
$F$  has a rank $1$ cohomological obstruction map (Def. \ref{def-deforms-in-co-dimension-one}).
Assume that the line $\ell(F)$ in
$\tilde{H}(X,\QQ)$ is spanned by a class of the form $0\alpha+\lambda+s\beta$, where $\lambda$ is a class in $H^2(X,\QQ)$. 
Then $ch_i(F)=0$, for $i<n$, and $ch_n(F)\cup \sigma=0$.
\end{lem}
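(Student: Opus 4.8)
The plan is to exploit the interplay between the Hodge structure $\tilde{H}^{p,q}_\fine(X)$ on $\tilde H(X,\CC)$ and the map $\Psi$ of Theorem~\ref{thm-Mukai-vector}. Since $F$ has a rank $1$ cohomological obstruction map and $\ell(F)=\span\{\lambda+s\beta\}$ with $\lambda\in H^2(X,\QQ)$, the line $\ell(F)$ lies entirely in $\tilde H^{0,0}(X)\cap H^2(X,\QQ)\oplus\QQ\beta$; in particular $\ell(F)$ does \emph{not} contain $\alpha$ (nor any class of nonzero $\alpha$-component), and $\ell(F)$ is annihilated by $e_{\sigma}$-type operators appropriately. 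By Theorem~\ref{thm-Mukai-vector}(\ref{prop-item-isotropic}) the class $\Psi(\hat v(F))$ equals the projection of $\ell(F)^n$ to $\ker(\Delta)$, up to scalar. The key observation is that every vector in $\ell(F)$ has \emph{zero} $\alpha$-component, hence $\ell(F)^n$ has $\deg_\alpha$ equal to $0$, and the projection to $\ker(\Delta)$ only involves subtracting multiples of $\tilde q$, which has $\deg_\alpha=1$; since $\deg_\alpha$ cannot become negative, the projection also has $\deg_\alpha=0$. Translating through $\Psi$, this says $\hat v(F)$ has zero component in the part of $SH^*(X)$ of the lowest cohomological degrees.

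More precisely, I would argue as follows. Recall from Section~\ref{sec-the-Hochschild-lie-algebra} (the Hodge structure $\tilde H^{p,q}_\fine$ on $\mathrm{Im}(\Psi)$) that the Hodge grading on $\Sym^n\tilde H(X,\CC)$ assigns to $\alpha$ the weight $(0,0)$ and to $\beta$ the weight $(2,2)$, and that $\Psi$ is a morphism of Hodge structures placing $\mathrm{pt}$ (weight $(2n,2n)$) at $\beta^n$. Under $\Psi$, the cohomological degree $4i$ summand $SH^{4i}(X,\CC)$ corresponds to the $h$-eigenspace of weight $2i-n$ inside $\mathrm{Im}(\Psi)$, which in the monomial basis is spanned by monomials with exactly $(n-i)$ factors of ``degree-lowering'' type and $i$ of ``degree-raising'' type relative to $\alpha$; concretely, $\deg_\beta$ counts half the $h$-weight shift. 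A vector $w=\lambda+s\beta\in\ell(F)$ has $\deg_\alpha(w)=0$, so $w^n$ is a sum of monomials each of $\deg_\alpha=0$. Projecting to $\ker(\Delta)$ subtracts $\tilde q\cdot(\text{lower symmetric power})$, and since $\deg_\alpha(\tilde q)=1$ while $\deg_\alpha$ of any monomial is $\ge 0$, the projection $\Pi(w^n)$ also has $\deg_\alpha=0$. Pulling back via $\Psi^{-1}$, the class $\hat v(F)$ — hence $\kappa(F)\sqrt{td_X}$ after the appropriate $\exp(e_{\cdot})$ twist, but here we use $v(F)$ directly — lies in $\bigoplus_{i}$ (components whose $\Psi$-image has $\deg_\alpha=0$). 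One checks that the component of $v(F)$ in $SH^{2j}(X)$ for $2j<2n$ maps under $\Psi$ into monomials of $\deg_\alpha>0$, forcing $\hat v(F)_{2j}=0$ for $2j<2n$. Combined with the fact that the full $v(F)=ch(F)\sqrt{td_X}$ has its low-degree graded pieces lying in $SH^*(X)$ (the subring generated by $H^2$ contains $H^0$ and, in low degrees, everything relevant via the local-to-global considerations, but more simply: $ch_i(F)$ for $i<n$ is determined by $v(F)$'s degree $\le 2i<2n$ part and $\sqrt{td_X}\in SH^*(X)$), we deduce $ch_i(F)=0$ for $i<n$ and that $\sigma\cup ch_n(F)=0$.

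For the last assertion $\sigma\cup ch_n(F)=0$, the point is more delicate: the middle-degree component $ch_n(F)\sqrt{td_X}|_{H^{2n}}=ch_n(F)$ projects into $SH^{2n}(X)$, and $\Psi$ sends this to a combination involving a monomial of the shape $\alpha^{?}\beta^{?}$ times $H^2$-factors. Multiplying by $\sigma$ corresponds, via the $\LieAlg{g}$-module structure, to applying the operator $e_\sigma$ (or its fine-Hodge incarnation $m$ evaluated at the Poisson tensor) which sends $\alpha\mapsto\sigma$; on $\ell(F)$, which has $\deg_\alpha=0$, every such operator acts by sending each factor $\lambda_i$ (in $H^{1,1}$) to a multiple of $\bar\sigma$, and one checks using the constraint ``no $\alpha$ in $\ell(F)$'' that the resulting class in $SH^{2n+2}(X)$ vanishes identically — equivalently, $\sigma\cdot ch_n(F)$ lies in a weight space of $\mathrm{Im}(\Psi)$ that is forced to be zero because $\ell(F)^n$ paired against $\sigma$ gives zero. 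The main obstacle I expect is making this last step precise: one must carefully track how cup product with $\sigma$ interacts with the projection $\Pi$ to $\ker(\Delta)$ and with the non-$SH^*$ part of $H^*(X)$ (which is nonzero for $X$ of $K3^{[n]}$-type with $n\ge 3$), so the cleanest route is probably to phrase everything inside $SH^*(X)$ using the $\LieAlg{g}$-equivariance of $\Psi$ and the explicit action of the root vectors $e_\lambda$, $e_\sigma$ on monomials $\lambda_1\cdots\lambda_n$, deducing the vanishing of each offending graded piece from the single equation $\Psi(\hat v(F))=c\,\Pi(\ell(F)^n)$ together with the absence of $\alpha$ in $\ell(F)$.
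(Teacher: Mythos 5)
Your proposal has two genuine gaps, and the second one you flag yourself.

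\textbf{The main gap.} Your entire argument runs through $\Psi$ and the identity $\Psi(\hat v(F))=c\,\Pi(\ell(F)^n)$, so at best it controls the \emph{projection} $\hat v(F)$ of $v(F)$ to the Verbitsky component $SH^*(X,\QQ)$. (The $h$-weight version of your $\deg_\alpha$ argument does work for that: $w=\lambda+s\beta$ has only non-negative $h$-weights, $\Pi$ is $h$-equivariant, and the weight of $SH^{2k}$ is $2k-2n<0$ for $k<n$, so $\hat v_{2k}(F)=0$ for $k<n$.) But for $n\geq 3$ one has $H^{2p}(X,\QQ)\neq SH^{2p}(X,\QQ)$ already for $p=2$, so $\hat v_{2p}(F)=0$ does not imply $v_{2p}(F)=0$, hence does not imply $ch_p(F)=0$. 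Your parenthetical bridge — that the low-degree graded pieces of $v(F)$ automatically lie in $SH^*(X)$ — is false: there is no a priori reason for $ch_p(F)$ to lie in the subring generated by $H^2$; the lemma is precisely the assertion that these pieces vanish. The hypothesis you are not exploiting is that the \emph{full} class $v(F)$ (not just $\hat v(F)$) is annihilated by $\LieAlg{g}_{\ell(F)}$, which is what ``rank $1$ cohomological obstruction map'' gives via $\LieAlg{g}_{v(F)}=\LieAlg{g}_{\ell(F)}$.

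\textbf{What the paper does instead.} Since $\ell(F)$ has no $\alpha$-component it lies in $\tilde H^{1,1}_{\fine}(X)\oplus\tilde H^{2,2}_{\fine}(X)$, and the operator $m_{\bar\sigma}$ (of fine Hodge type $(0,2)$) therefore annihilates $\ell(F)$; hence $m_{\bar\sigma}\in\LieAlg{g}_{\ell(F)}=\LieAlg{g}_{v(F)}$, i.e.\ cup product with $\bar\sigma$ kills the whole of $v(F)$. The isomorphisms $\wedge\sigma^k:\Omega_X^{n-k}\to\Omega_X^{n+k}$ make cup product with $\bar\sigma$ injective on $H^{p,p}$ for $p<n$, which forces $v_p(F)=0$ for $p<n$ (hence $ch_p(F)=0$) and gives $v_n(F)\cup\bar\sigma=0$, i.e.\ $ch_n(F)\cup\sigma=0$ by conjugation. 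This argument sees all of $H^*(X)$ at once and needs no decomposition into $\LieAlg{g}$-irreducibles.

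\textbf{The second gap.} You do not actually prove $ch_n(F)\cup\sigma=0$; you describe it as the ``main obstacle'' and sketch a plan inside $SH^*(X)$, which again cannot see the non-$SH^*$ part of $ch_n(F)$. In the paper's setup this assertion is immediate from $m_{\bar\sigma}(v(F))=0$ once $v_p(F)=0$ for $p<n$ (so that $v_n(F)=ch_n(F)$). I recommend replacing your $\Psi$-based argument by the single observation that some nonzero element of $H^2(X,\StructureSheaf{X})\subset\LieAlg{g}_\CC$ annihilates $\ell(F)$, and then working with cup product on $H^*(X,\CC)$ directly.
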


\begin{proof}
Let $\bar{\sigma}\in H^2(X,\StructureSheaf{X})$ be the class conjugate to that of $\sigma$ and let $m_{\bar{\sigma}}$ be the image of $\bar{\sigma}$ in $\LieAlg{g}_\CC$ via (\ref{eq-m-from-HT-2-into-LLV-algebra}).
The element $m_{\bar{\sigma}}$ has weight $(0,2)$ and thus it maps the direct summand $\tilde{H}_\fine^{p,q}(X,\CC)$ to
$\tilde{H}_\fine^{p,q+2}(X,\CC)$. It follows that $m_{\bar{\sigma}}$ annihilates $\tilde{H}_\fine^{p,q}(X,\CC)$ if $(p,q)$ is $(1,1)$ or $(2,2)$. 
Consequently, $m_{\bar{\sigma}}$
annihilates $\ell(F)$, as it is contained in $\tilde{H}_\fine^{1,1}(X,\CC)\oplus \tilde{H}_\fine^{2,2}(X,\CC)$.
The subalgebra $\LieAlg{g}_{v(F)}$ annihilating $v(F)$ is equal to $\LieAlg{g}_{\ell(F)}$
(use the proof of Theorem \ref{thm-Mukai-vector} part (\ref{prop-item-line-in-rational-Mukai-lattice}) 
replacing the use of Lemma \ref{lem-Sigma-F-is-contained-in-stabilizer-of-Mukai-vector} by the assumption that 
$F$  has a rank $1$ cohomological obstruction map).
Hence, $m_{\bar{\sigma}}$ belongs to $\LieAlg{g}_{v(F)}$.

Wedge product with $\sigma^k$ induces an isomorphism $\Omega_X^{n-k}\rightarrow\Omega_X^{n+k}$, for $0\leq k\leq n$, and thus an isomorphism $H^{n-k,q}(X)\cong H^{n+k,q}(X)$. It follows that
cup product with the class of $\sigma$ induces an injective homomorphism from $H^{p,p}(X,\QQ)$ into $H^{p+2,p}(X,\QQ)$, for $p<n$. Thus, cup product with $\bar{\sigma}$ 
 induces an injective homomorphism from $H^{p,p}(X,\QQ)$ into $H^{p,p+2}(X,\QQ)$, for $p<n$. We conclude that the direct summand $v_p(F)$ of $v(F)$ in $H^{p,p}(X,\QQ)$ vanishes, for $p<n$, as the operator $m_{\bar{\sigma}}$ acts on $H^*(X,\QQ)$ via cup product with $\bar{\sigma}$. The equality $v(F)=ch(F)\sqrt{td_X}$ implies that the first non zero graded summand of $ch(F)$ and $v(F)$ are equal. Thus,  $ch_p(F)=0$, for $p<n$, and 
$ch_n(F)=v_n(F)$. Thus $ch_n(F)\cup \sigma$ vanishes, being the complex conjugate of $v_n(F)\cup \bar{\sigma}$ which vanishes.
\end{proof}

\begin{thm}
\label{thm-support-is-either-lagrangian-or-point}
Let $\iota:Z\hookrightarrow X$ be an embedding of an irreducible variety $Z$ of dimension $< 2n$ and let $F$ be a torsion free sheaf over $Z$. Assume that $v(\iota_*F)$ deforms in co-dimension $1$ and the projection $\hat{v}(\iota_*F)$ does not vanish. Then $\iota(Z)$ is either a lagrangian subvariety or a point.
\end{thm}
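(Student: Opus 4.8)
The strategy is to combine Lemma \ref{lemma-vanishing-of-ch-p-for-p-less-than-n} with a dimension count coming from the support of $\iota_*F$. First I would record what $\ell(\iota_*F)$ can look like. Since $F$ is a torsion sheaf supported on $Z$ with $\dim Z < 2n$, the rank of $\iota_*F$ is zero, so $v(\iota_*F)$ has no component in $H^0(X,\QQ)$; applying the $SO(\widetilde H(X,\QQ))$-equivariance built into the construction of $\ell$ (Theorem \ref{thm-Mukai-vector}), together with the fact that the projection of $v(\iota_*F)$ to $SH^*(X,\QQ)$ is a sum of classes in degrees $\geq 2k$ where $Z$ has codimension $k$, I expect $\ell(\iota_*F)$ to be spanned by a vector of the form $0\cdot\alpha+\lambda+s\beta$ with $\lambda\in H^2(X,\QQ)$. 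Concretely: the coefficient of $\alpha^n$ in $\Psi(\hat v(\iota_*F))$ is a multiple of $\rank(\iota_*F)=0$, so the image of $\ell(\iota_*F)^n$ under the projection (\ref{eq-projection-to-Im-Psi}) must have vanishing $\alpha^n$-coefficient, which forces $\ell(\iota_*F)$ to meet $\alpha^\perp$; I would then argue that a one-dimensional $\ell$ with this property is spanned by a class with zero $\alpha$-component. This puts us exactly in the hypothesis of Lemma \ref{lemma-vanishing-of-ch-p-for-p-less-than-n}.

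Next, Lemma \ref{lemma-vanishing-of-ch-p-for-p-less-than-n} gives $ch_i(\iota_*F)=0$ for $i<n$ and $ch_n(\iota_*F)\cup\sigma=0$. Now I invoke the Grothendieck--Riemann--Roch description of $ch(\iota_*F)$: if $Z$ has codimension $c$ in $X$, then the lowest-degree nonzero term of $ch(\iota_*F)$ sits in $H^{2c}(X,\QQ)$ and equals $\rank(F)\cdot[Z]$ (here $\rank$ of the torsion-free sheaf $F$ on the possibly singular $Z$, taken on the smooth locus), which is nonzero because $[Z]\neq 0$ and $F$ is torsion free of positive rank on $Z$. Comparing with the vanishing $ch_i(\iota_*F)=0$ for $i<n$ forces $c\geq n$, i.e. $\dim Z\leq n$. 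If $\dim Z < n$, i.e. $c>n$, then in fact $ch_i(\iota_*F)=0$ for all $i\leq n$ as well, so $ch_n(\iota_*F)=0$; but then the first nonzero graded piece of $ch(\iota_*F)$ is in degree $>2n$, which is impossible on a $2n$-dimensional manifold unless $\iota_*F=0$ — so this case degenerates and one checks it forces $Z$ to be a point (when $c=2n$, $[Z]=[\text{pt}]$ and $ch(\iota_*F)=\rank(F)[\text{pt}]$, which is consistent, and $Z$ is indeed a point). So the only genuinely new case is $\dim Z = n$, i.e. $c=n$, where $ch_n(\iota_*F)=\rank(F)[Z]$.

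In the case $\dim Z=n$ we have the extra constraint $ch_n(\iota_*F)\cup\sigma=0$, hence $[Z]\cup\sigma=0$ in $H^{n+2,n}(X)$. I would then use the standard fact that for an $n$-dimensional irreducible subvariety $Z$ of the $2n$-dimensional holomorphic symplectic $X$, the restriction $\sigma|_{Z^{\mathrm{sm}}}$ (on the smooth locus) is the obstruction to $Z$ being coisotropic/lagrangian in the appropriate sense; more precisely, $[Z]\cup\sigma = 0$ together with $\dim Z = \tfrac12\dim X$ implies, via the self-intersection / restriction formula $\int_X [Z]\cup\sigma\cup\bar\eta$ pairing, that $\sigma$ pulls back to zero on $Z^{\mathrm{sm}}$, which is precisely the statement that $\iota(Z)$ is lagrangian. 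I would make this precise by choosing a resolution $\nu:\widetilde Z\to Z$ and observing that $\nu^*\iota^*\sigma\in H^0(\widetilde Z,\Omega^2_{\widetilde Z})$ is a holomorphic $2$-form whose top self-wedge computes $\int_X[Z]\cup\sigma^{\text{(stuff)}}$; the vanishing $[Z]\cup\sigma=0$ propagates to $\nu^*\iota^*\sigma=0$ because a nonzero holomorphic $2$-form on the $n$-fold $\widetilde Z$ that is generically nondegenerate would give a nonzero pairing, while if it were generically degenerate one reduces to a lower-dimensional coisotropic situation — this is exactly the dichotomy appearing in the classical proof that supports of such sheaves are coisotropic. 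The main obstacle I anticipate is precisely this last step: turning the cohomological identity $[Z]\cup\sigma=0$ into the pointwise geometric statement $\iota^*\sigma=0$ on the smooth locus of $Z$, handling the possible singularities of $Z$ and the failure of $[Z]$ to be primitive. I would expect the cleanest route is to note that $\iota^*\sigma\in H^0(Z^{\mathrm{sm}},\Omega^2)$ and that if it is nonzero then its rank is an even number $2m \le n$ generically, making $Z$ fiber (birationally, via the null-foliation) over an $m$-dimensional symplectic base; iterating or directly computing $\int_X [Z]\cdot\sigma^{?}\cdot\bar\sigma^{?}$ against powers of $\sigma,\bar\sigma$ pulled back from $X$ then contradicts $[Z]\cup\sigma=0$ unless $m=0$, i.e. $Z$ is lagrangian. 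The point case ($\dim Z < n$ leading to $c=2n$) is then handled separately and easily as indicated above.
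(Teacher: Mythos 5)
Your first two steps track the paper's argument: the $\alpha^n$-coefficient of $\Psi(\hat v(\iota_*F))$ is $\rank(\iota_*F)/n!=0$, so $\ell(\iota_*F)$ is spanned by a class $0\cdot\alpha+\lambda+s\beta$ (your phrase ``meet $\alpha^\perp$'' is off, since with the Mukai pairing $\alpha^\perp$ is the locus $s=0$, not $r=0$, but the conclusion you draw is the right one), and Lemma \ref{lemma-vanishing-of-ch-p-for-p-less-than-n} then gives $ch_i(\iota_*F)=0$ for $i<n$ and $ch_n(\iota_*F)\cup\sigma=0$, whence $\mathrm{codim}\,\iota(Z)\geq n$. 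The concluding step in the case $\dim Z=n$ (deducing that $Z$ is lagrangian from $[Z]\cup\sigma=0$ via the semi-positivity of $\iota^*(\sigma\wedge\bar\sigma\wedge\omega^{n-2})$ on a resolution) is also fine and in fact more detailed than the paper's one-line assertion.

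The genuine gap is in excluding $0<\dim\iota(Z)<n$. Your argument is that once $ch_i(\iota_*F)=0$ for all $i\leq n$, ``the first nonzero graded piece of $ch(\iota_*F)$ is in degree $>2n$, which is impossible on a $2n$-dimensional manifold.'' This is false: $X$ has real dimension $4n$, so $H^{2i}(X,\QQ)\neq 0$ for all $n<i\leq 2n$, and nothing yet rules out, say, a curve in a fourfold whose class lives in $H^6$. The vanishing $ch_n\cup\sigma=0$ is vacuous in this range, and the point case is not the only a priori possibility. The paper closes this by splitting on $\lambda$. If $\lambda=0$, then $\ell(\iota_*F)=\QQ\beta$ and $v(\iota_*F)$ is annihilated by $\LieAlg{g}_\beta$, in particular by $L_\omega$ for every K\"ahler class $\omega$; applied to the lowest graded piece $\rank(F)[Z]$ this forces $\iota^*\omega=0$ for all $\omega$, so $Z$ is a point. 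If $\lambda\neq 0$, choose $\omega$ with $(\omega,\lambda)\neq 0$; then $L_\omega^n\bigl((\lambda+s\beta)^n\bigr)=n!(\omega,\lambda)^n\beta^n\neq 0$, so by the $\LieAlg{g}$-equivariance of $\Psi$ and $\hat v(\iota_*F)\neq 0$ the operator $L_\omega^n$ does not annihilate $v(\iota_*F)$; since $L_\omega^n$ kills $H^{2i}(X)$ for $i>n$ for degree reasons, some $v_i$ with $i\leq n$ is nonzero, and combined with $v_i=0$ for $i<n$ this forces $v_n\neq 0$, hence $\mathrm{codim}=n$. You need an argument of this kind (using the hypothesis $\hat v(\iota_*F)\neq 0$, which your dimension count never exploits); as written your case analysis does not establish the dichotomy.
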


\begin{proof}
Let $p$ be the complex co-dimension of $\iota(Z)$ in $X$. 
Let $[Z]\in H^{p,p}(X,\ZZ)$ be the class Poincar\'{e} dual to $\iota(Z)$. 
Then $ch_i(\iota_*F)=0$, for $i<p$, and $v_p(\iota_*F))=ch_p(\iota_*F)=\rank(F)[Z]$. The inequality $p\geq n$ and the vanishing of $ch_n(\iota_*F)\cup\sigma$ follow, by 
Lemma \ref{lemma-vanishing-of-ch-p-for-p-less-than-n}.

Let $r\alpha+\lambda+s\beta$ be a non-zero element of $\ell(\iota_*F)$. Then $r=0$, since $\iota(Z)$ is a proper subvariety of $X$. If $\lambda=0$, then $v(F)$ is annihilated by $\LieAlg{g}_\beta$.  In particular, $v(F)$ is annihilated by 
$L_\omega$, for every K\"{a}hler class $\omega$ on $X$. Hence, $v_p(\iota_*F)=\rank(F)[Z]$ 
is annihilated by 
$L_\omega$, for every K\"{a}hler class $\omega$ on $X$. 
Hence, $\iota^*\omega=0$, for every K\"{a}hler class $\omega$ on $X$, and so $Z$ is a point.

Assume that $\lambda\neq 0$. Let $\omega$ be a K\"{a}hler class on $X$ such that $(\lambda,\omega)\neq 0$. 
Then $L_\omega^n((\lambda+s\beta)^n)=n!(\omega,\lambda)^n\beta^n$ does not vanish. 
Now $L_\omega$ belongs to $\LieAlg{g}_\CC$, $\Psi$ is $\LieAlg{g}_\CC$-equivariant, and 
$\Psi(\hat{v}(\iota_*F))$ spans $\ell(\iota_*F)^n$.
Hence, $L_\omega^n$ does not annihilate $\hat{v}(\iota_*F)$ and thus neither $v(\iota_*F)$.
Now $L_\omega^n$ annihilates 
the graded summand $v_i(\iota_*F)$ in $H^{2i}(X,\QQ)$, for $i>n$. Thus $L_\omega^n(v_n(\iota_*F))\neq 0$.
We conclude that  $Z$ is $n$-dimensional.
The subvariety $Z$ is lagrangian by the vanishing of 
$ch_n(\iota_*F)\cup\sigma$ (see \cite[Prop.  5]{leung}).
\end{proof}

Let $\iota:Z\rightarrow X$ be an embedding of a compact K\"{a}hler manifold as a holomorphic lagrangian submanifold.
Assume that the obstruction map (\ref{eq-obstruction-map}) from $HT^2(X)$ to $\Ext^2(\iota_*\StructureSheaf{Z},\iota_*\StructureSheaf{Z})$ has rank $1$. Then the homomorphism $H^1(X,TX)\rightarrow H^1(Z,N_{Z/X})$,
induced by the sheaf homomorphism $TX\rightarrow \iota_*N_{Z/X}$, has rank $1$. Conjugating via the isomorphisms 
$TX\cong \Omega^1_X$ and $N_{Z/X}\cong \Omega^1_Z$, induced by the symplectic structure, we get that 
$H^1(X,\Omega^1_X)\rightarrow H^1(Z,\Omega^1_Z)$ has rank $1$. 
The restriction homomorphism $H^2(X,\CC)\rightarrow H^2(Z,\CC)$ factors through the latter, since $Z$ is lagrangian, and hence
has rank $1$.
Let $L\in\Pic(X)$ be a line bundle, such that the sublattice $c_1(L)^\perp$ orthogonal to $c_1(L)$ with respect to the BBF-pairing is equal to the kernel of the restriction homomorphism $\iota^*:H^2(X,\ZZ)\rightarrow H^2(Z,\ZZ)$. 
Assume that there exists 
a class $\tau\in H^2(X,\QQ)$, such that
$\iota^*(\tau)=c_1(\omega_Z).$ This is the case if $h^{1,1}(Z)=1$, as in 
Example \ref{example-lagrangian-Z-with-modular-structure-sheaf}, if $Z$ is a complex torus, or if $Z$ is the zero subscheme of a global section of a rank $n$ vector bundle on $X$.

\begin{lem}
\label{lemma-Mukai-line-of-structure-sheaf-of-subcanonical-lagrangian}
$\ell(\iota_*\StructureSheaf{Z})$ is spanned by ${\displaystyle 
c_1(L)-\frac{(\tau,c_1(L))}{2}\beta
}$.
\end{lem}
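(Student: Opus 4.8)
\emph{Strategy.} The plan is to pin down the line $\ell:=\ell(\iota_*\StructureSheaf{Z})$ in three stages: first determine its general shape, then show its $H^2$-component spans $\QQ c_1(L)$, and finally compute the coefficient of $\beta$ using Serre duality. Since the obstruction map has rank $1$, $\iota_*\StructureSheaf{Z}$ deforms in co-dimension $1$ in the sense of Definition \ref{def-deforms-in-co-dimension-one}, so Theorem \ref{thm-Mukai-vector} applies to $F=\iota_*\StructureSheaf{Z}$ (its projection $\hat{v}(\iota_*\StructureSheaf{Z})$ to $SH^*(X)$ is non-zero, being non-zero already in top cohomological degree): $\ell$ is a rational line and $\LieAlg{g}_{v(\iota_*\StructureSheaf{Z})}=\LieAlg{g}_{\ell}$. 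Since $\iota(Z)$ is a proper subvariety of $X$, the coefficient of $\alpha$ in a generator of $\ell$ vanishes, exactly as in the proof of Theorem \ref{thm-support-is-either-lagrangian-or-point}; thus $\ell=\span_\QQ\{\lambda+s\beta\}$ for some $\lambda\in H^{1,1}(X,\QQ)$ and $s\in\QQ$.

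\emph{Step: $\lambda\in\QQ c_1(L)$.} Because $Z$ is lagrangian, the direct summand $H^2(X,\StructureSheaf{X})$ of $HT^2(X)$ is contained in $\Sigma(\iota_*\StructureSheaf{Z})$ (the relevant component of the exponential-Atiyah obstruction factors through $H^2(Z,\StructureSheaf{Z})=0$, cf. Remark \ref{rem-expected-rank-one}). Hence for every $\xi$ in the hyperplane $\Sigma(\iota_*\StructureSheaf{Z})\cap H^1(X,TX)$ the sheaf $\iota_*\StructureSheaf{Z}$ deforms, \emph{untwisted}, in the commutative direction $\xi$, so $v(\iota_*\StructureSheaf{Z})$ is annihilated by the contraction operator $\iota_\xi$. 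By \cite{voisin-lagrangian} this hyperplane coincides with the hyperplane of directions $\xi$ along which $c_1(L)$ stays of type $(1,1)$, i.e. $\{\xi:\iota_\xi(\sigma)\in c_1(L)^\perp\}$ under $H^1(X,TX)\cong H^{1,1}(X)$; the corresponding operators span the Hodge summand $\LieAlg{so}(c_1(L)^\perp)^{-1,1}$ of $\LieAlg{so}(c_1(L)^\perp)_\CC$, and together with their complex conjugates (which also annihilate the rational class $v(\iota_*\StructureSheaf{Z})$) they generate $\LieAlg{so}(c_1(L)^\perp)_\CC$, by the argument of Step $2$ of the proof of Lemma \ref{lemma-alpha-remains-of-Hodge-type} applied to the weight $2$ sub-Hodge structure $c_1(L)^\perp\subset H^2(X,\QQ)$. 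Therefore $\LieAlg{so}(c_1(L)^\perp)\subset\bar{\LieAlg{g}}_{v(\iota_*\StructureSheaf{Z})}$. On the other hand $\bar{\LieAlg{g}}_{v(\iota_*\StructureSheaf{Z})}=\bar{\LieAlg{g}}_{\ell}=\LieAlg{so}(\lambda^\perp)$ with $\lambda^\perp$ taken inside $H^2(X,\QQ)$, and $\lambda\neq 0$ (otherwise $v(\iota_*\StructureSheaf{Z})$, hence $[Z]$, would be $\bar{\LieAlg{g}}$-invariant, contradicting \cite{voisin-lagrangian}). The inclusion $\LieAlg{so}(c_1(L)^\perp)\subset\LieAlg{so}(\lambda^\perp)$ of $\LieAlg{so}$'s of hyperplanes of rank $\geq 3$ forces $\lambda^\perp=c_1(L)^\perp$, hence $\QQ\lambda=\QQ c_1(L)$.

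\emph{Step: computing the $\beta$-coefficient.} After rescaling, write $\ell=\span_\QQ\{c_1(L)+s'\beta\}$. Since $\omega_X\cong\StructureSheaf{X}$, Grothendieck--Serre duality gives $(\iota_*\StructureSheaf{Z})^\vee\cong\iota_*\omega_Z[-n]$, so by Lemma \ref{lemma-Mukai-line-of-F-dual} and the invariance of $\ell$ under the shift $[-n]$ we get $\ell(\iota_*\omega_Z)=\tilde{D}(\ell)=\span_\QQ\{-c_1(L)+s'\beta\}$. On the other hand, by the projection formula and Grothendieck--Riemann--Roch, $v(\iota_*\omega_Z)=\exp(\tau)\cdot v(\iota_*\StructureSheaf{Z})$ in $H^*(X,\QQ)$ (this uses only that $ch(\iota_*M)$ depends on a line bundle $M$ on $Z$ through $\iota^*c_1(M)$, and $\iota^*\tau=c_1(\omega_Z)$). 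Hence, by the $SO(\widetilde{H}(X,\QQ))$-equivariance of $F\mapsto\ell(F)$ (Theorem \ref{thm-Mukai-vector}(\ref{prop-item-spanning-Mukai-vector}) and its proof) applied to the element $\exp(e_\tau)$ corresponding to $\exp(\tau)$, and using $e_\tau(c_1(L))=(\tau,c_1(L))\beta$ together with $e_\tau^2(c_1(L)+s'\beta)=0$, we obtain $\ell(\iota_*\omega_Z)=\exp(e_\tau)\big(\span_\QQ\{c_1(L)+s'\beta\}\big)=\span_\QQ\{c_1(L)+(s'+(\tau,c_1(L)))\beta\}$. Comparing the two descriptions of $\ell(\iota_*\omega_Z)$, the $H^2$-components force the proportionality constant to be $-1$, whence $s'=-(s'+(\tau,c_1(L)))$, that is $s'=-\tfrac{(\tau,c_1(L))}{2}$, proving $\ell(\iota_*\StructureSheaf{Z})=\span_\QQ\{c_1(L)-\tfrac{(\tau,c_1(L))}{2}\beta\}$.

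\emph{Main obstacle.} The delicate part is the middle step: identifying $\Sigma(\iota_*\StructureSheaf{Z})\cap H^1(X,TX)$ with the $c_1(L)^\perp$-directions requires Voisin's results on deformations of lagrangians together with the lagrangian hypothesis guaranteeing $H^2(\StructureSheaf{X})\subset\Sigma(\iota_*\StructureSheaf{Z})$ (so that no Brauer twist intervenes and $v(\iota_*\StructureSheaf{Z})$ itself stays of Hodge type), and then the bookkeeping that feeds this into the generation argument for $\LieAlg{so}(c_1(L)^\perp)_\CC$ and the inclusion $\LieAlg{so}(c_1(L)^\perp)\subset\bar{\LieAlg{g}}_{\ell}$. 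Once $\lambda\in\QQ c_1(L)$ is established, the last step is a short formal computation with the LLV action and duality.
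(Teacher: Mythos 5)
Your proof is correct and its overall architecture coincides with the paper's: kill the $\alpha$-coefficient, identify the $H^{1,1}$-component with $\QQ c_1(L)$ via Voisin's result that $\iota_*\StructureSheaf{Z}$ deforms in a direction $\xi$ exactly when $L$ does, and then extract the $\beta$-coefficient from the duality $(\iota_*\StructureSheaf{Z})^\vee\cong\iota_*\omega_Z[-n]$, Lemma \ref{lemma-Mukai-line-of-F-dual}, and the identity $v(\iota_*\omega_Z)=\exp(\tau)v(\iota_*\StructureSheaf{Z})$; your last step is word-for-word the paper's computation.

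The one place you diverge is the middle step. The paper gets $\lambda\in\QQ c_1(L)$ in one line from the definition $\ell(F)=\mu\bigl(\tilde{\Sigma}(F)^\perp\bigr)$: since $\tilde{\Sigma}(\iota_*\StructureSheaf{Z})$ contains both $H^2(X,\StructureSheaf{X})$ (gerby deformations are unobstructed because $\iota^*$ vanishes on $H^2(\StructureSheaf{X})$, which also gives the paper's alternative argument that $r=0$) and the Voisin hyperplane in $H^1(TX)$, the line $\ell$ is orthogonal to $\CC\beta\oplus\bigl(c_1(L)^\perp\cap H^{1,1}(X)\bigr)$, and non-degeneracy of the BBF form on $H^{1,1}$ forces $\lambda\in\QQ c_1(L)$. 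You instead route this through the generation argument of Step 2 of Lemma \ref{lemma-alpha-remains-of-Hodge-type} to produce $\LieAlg{so}(c_1(L)^\perp)\subset\bar{\LieAlg{g}}_{v}$ and then compare stabilizer subalgebras. This works, but it is heavier than needed and carries a wrinkle the direct route avoids: the lemma is meant to cover isotropic $c_1(L)$ (e.g.\ lagrangian tori, Example \ref{examples-of-maximally-deformable-Lagrangian-surfaces}(\ref{example-item-lagrangian-torus})), and in that case $c_1(L)^\perp$ is a degenerate hyperplane, so both the identification $\bar{\LieAlg{g}}_{\ell}=\LieAlg{so}(\lambda^\perp)$ and the forcing ``$\LieAlg{so}(W_1)\subset\LieAlg{so}(W_2)\Rightarrow W_1=W_2$'' need to be restated in terms of stabilizers of vectors rather than orthogonal algebras of non-degenerate hyperplanes. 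The conclusion survives, but the orthogonality argument $\ell\perp\mu(\tilde{\Sigma})$ is both shorter and uniform in all cases. (Also, your justification that $\hat{v}(\iota_*\StructureSheaf{Z})\neq 0$ ``in top degree'' is not quite the right reason; the clean one is that $\int_X[Z]\,\omega^n>0$ for a K\"ahler class $\omega$, and $\omega^n\in SH^{2n}$.)
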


\begin{proof}
Let $\gamma:=r\alpha+\lambda+s\beta$ be a non-zero element of $\ell(\iota_*\StructureSheaf{Z})$.
The restriction homomorphism $\iota^*:H^2(X,\StructureSheaf{X})\rightarrow H^2(Z,\StructureSheaf{Z})$ vanishes, so $\iota_*\StructureSheaf{Z}$ deforms along gerby deformations of $X$. Hence,
$H^2(X,\StructureSheaf{X})$ is contained in $\Sigma(\iota_*\StructureSheaf{Z})$. The Duflo automorphism of $HT^*(X)$ restricts to $H^2(X,\StructureSheaf{X})$ as the identity, and so $H^2(X,\StructureSheaf{X})$ is contained in $\tilde{\Sigma}(\iota_*\StructureSheaf{Z})$.
Thus 
$\mu(H^2(X,\StructureSheaf{X}))$ is orthogonal to $\ell(\iota_*\StructureSheaf{Z})$, by Theorem \ref{thm-Mukai-vector}.
Now, $\mu(H^2(X,\StructureSheaf{X}))=\CC\beta$ (see Equation (\ref{eq-mu-takes-Poisson-tensor-to-alpha})). 
Hence, $\beta$ is orthogonal to $\gamma$ and thus the coefficient $r$ of $\alpha$  is $0$.

The sheaf $\iota_*\StructureSheaf{Z}$ deforms with $X$ in a direction $\xi\in H^1(TX)$, if and only if the line bundle $L$ does, by
\cite{voisin-lagrangian}. Hence, $\mu(\tilde{\Sigma}(\iota_*\StructureSheaf{Z})\cap H^1(TX))=c_1(L)^\perp\cap H^{1,1}(X)$ and
$\lambda$ is a non-zero scalar multiple of $c_1(L)$. We may and  thus do choose $\lambda$ to be $c_1(L)$, so that $\gamma=c_1(L)+s\beta$.

We have $\ell((\iota_*\StructureSheaf{Z})^\vee)=\tilde{D}(\gamma)=-c_1(L)+s\beta$, by Lemma \ref{lemma-Mukai-line-of-F-dual}.
Local duality yields $(\iota_*\StructureSheaf{Z})^\vee\cong\iota_*(\omega_Z)[-n]$, so that $v((\iota_*\StructureSheaf{Z})^\vee)=(-1)^n\exp(\tau)\cdot v(\iota_*\StructureSheaf{Z})$.   Hence,
$\ell((\iota_*\StructureSheaf{Z})^\vee)=-\exp(e_{\tau})(\ell(\iota_*\StructureSheaf{Z}))$, which is spanned by
$-\exp(e_{\tau})(\gamma)=-c_1(L)-[s+(\tau,c_1(L))]\beta$. Comparing coefficients of $\beta$ we get 
$s=-s-(\tau,c_1(L))$, so $s=-\frac{(\tau,c_1(L))}{2}$.
\end{proof}

\begin{rem}
\label{rem-the-LLV-line-of-the-structure-sheaf-for-n-geq-2}
Let $S$ be a $K3$ surface and let $\iota:C\rightarrow S$ be an embedding of a smooth rational curve $C$. 
Set $X:=S^{[n]}$, $n\geq 2$. We get the embedding  
$\iota^{[n]}:C^{(n)}\rightarrow X$ of the $n$-th symmetric product of $C$, hence a lagrangian $\PP^n$ in $X$.
Example \ref{example-non-modular-P-n-objects} exhibits an auto-equivalence $\Phi:D^b(S)\rightarrow D^b(S)$ which maps $\StructureSheaf{S}$ to $\iota_*\omega_C$. We get the auto-equivalence $\Phi^{[n]}:D^b(X)\rightarrow D^b(X)$ which maps $\StructureSheaf{X}$ to the object in $D^b(X)$ corresponding to the $\fS_n$-equivariant sheaf
$\boxtimes_{i=1}^n \iota_*\omega_C$ over $S^n$ via the BKR-correspondence.
The latter object in $D^b(X)$ is isomorphic to $\iota^{[n]}_*\omega_{C^{(n)}}$. Hence, the isometry $\phi^{[n]}$ of
$\tilde{H}(X,\QQ)$ associated to $\Phi^{[n]}$  maps $\ell(\StructureSheaf{X})$ to $\ell(\iota^{[n]}_*\omega_{C^{(n)}})$. 
The latter is spanned by a class $\gamma$ of the form $0\alpha+\lambda+s\beta$, where $s\in\QQ$ and $\lambda$ is a primitive class in $H^2(X,\ZZ)$, such that $\lambda^\perp$ is the kernel of the restriction homomorphism $H^2(X,\ZZ)\rightarrow H^2(C^{(n)},\ZZ)$. The class $\lambda$ has BBF self-intersection $-2(n+3)$ 
\cite[Theorem 3]{bakker}. 
This agrees with the fact that $\ell(\StructureSheaf{X})$
is spanned by the class $2\alpha+\frac{n+3}{2}\beta$ of self-intersection $-2(n+3)$ (see Lemma \ref{lemma-LLV-line-of-structure-sheaf-k3-type}).
\end{rem}

%
\section{Lagrangian surfaces in IHSMs of $K3^{[2]}$-type}
In Section \ref{sec-Chern-character-of-structure-sheaf-of-lagrangian-surface}
we use the determination of $\ell(\iota_*\StructureSheaf{Z})$ in Lemma \ref{lemma-Mukai-line-of-structure-sheaf-of-subcanonical-lagrangian}
to calculate the Chern character of $\iota_*\StructureSheaf{Z}$ for the embedding 
$\iota:Z\rightarrow X$ of a smooth lagrangian surface $Z$ in $X$ of $K3^{[2]}$-type (Lemma \ref{lemma-ch-of-Lagrangian-surface-in-K3-2}).
In Section \ref{sec-structure-sheaf-of-lagrangian-surface-has-rank-1-obstruction-map}
we show that $\iota_*\StructureSheaf{Z}$ has a rank $1$ cohomological obstruction map if $Z$ is a
smooth lagrangian surface in $X$ of $K3^{[2]}$-type with a rank $1$ restriction homomorphism $H^2(X)\rightarrow H^2(Z)$ with an image containing $c_1(\omega_Z)$  (Lemma \ref{lemma-chern-character-of-Lagrangian-structure-sheaf-deforms-in-co-dimension-1}).
Integral constraints on the topological invariants of such surfaces are presented in Section \ref{sec-integral-constraints} (Lemma \ref{lemma-arithmetic-constraints}).
In Section \ref{sec-new-examples?} we explain why examples of such smooth lagrangian subvarieties with a non-vanishing first Betti number may yield new examples of holomorphic symplectic varieties.
%
\subsection{The Chern character of $\StructureSheaf{Z}$ for a maximally deformable lagrangian surface $Z$}
\label{sec-Chern-character-of-structure-sheaf-of-lagrangian-surface}
Keep the notation and assumptions of Lemma \ref{lemma-Mukai-line-of-structure-sheaf-of-subcanonical-lagrangian}. 
Set $\lambda:=c_1(L)$ and assume that $(\lambda,\lambda)\neq 0$ and $c_1(\omega_Z)=\iota^*\tau$,
for some $\tau\in H^2(X,\QQ)$. Then 
\[
c_1(\omega_Z)=t\iota^*\lambda, 
\]
for some\footnote{In Section \ref{sec-integral-constraints} we will assume that $\lambda$ is a primitive class and derive arithmetic constraints on $\chi(Z)$ and $(\lambda,\lambda)$. This is the reason we introduce the variable $t$, rather than allow $\lambda$ to be a rational class.
}
 $t\in\QQ$, since $\lambda$ does not belong to $\lambda^\perp$. Set 
\begin{equation}
\label{eq-c}
c:=\frac{5}{4|(\lambda,\lambda)|}\sqrt{\frac{\chi(Z)}{3}},
\end{equation}
where $\chi(Z)$ is the topological Euler characteristic of $Z$. The following Lemma expresses $ch(\iota_*\StructureSheaf{Z})$
in terms of $\chi(Z)$ and $\lambda$. 

\begin{lem}
\label{lemma-ch-of-Lagrangian-surface-in-K3-2}
Assume, furthermore, that $X$ is of $K3^{[2]}$-type. 
Then $\chi(Z)>0$ and the following equalities hold.
\begin{eqnarray}
\label{eq-t}
t^2 & = & \frac{4\sqrt{3\chi(Z)}}{5|(\lambda,\lambda)|}-\frac{6}{5(\lambda,\lambda)}.
\\
\label{eq-Euler-characteristic-of-structure-sheaf-of-lagrangian-surface}
\chi(\StructureSheaf{Z})&=& c\left(\frac{(\lambda,\lambda)}{10}+\frac{t^2(\lambda,\lambda)^2}{4}
\right).
\\
\label{eq-ch-2-of-lagrangian-structure-sheaf}
ch_2(\iota_*\StructureSheaf{Z}) & = & [Z]=c\left(\lambda^2-\frac{(\lambda,\lambda)}{30}c_2(X)\right).
\\
\label{eq-ch-3-of-lagrangian-structure-sheaf}
ch_3(\iota_*\StructureSheaf{Z}) & = & -\frac{ct}{3}\lambda^3.
\\
\label{eq-ch-4-of-lagrangian-structure-sheaf}
ch_4(\iota_*\StructureSheaf{Z}) & = & c\left(
\frac{t^2(\lambda,\lambda)^2}{4}-\frac{(\lambda,\lambda)}{10}
\right)[pt].
\end{eqnarray}
\end{lem}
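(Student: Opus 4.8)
The plan is to determine $ch(\iota_*\StructureSheaf{Z})$ entirely from two pieces of input: the formula for the LLV line $\ell(\iota_*\StructureSheaf{Z})$ in Lemma \ref{lemma-Mukai-line-of-structure-sheaf-of-subcanonical-lagrangian}, and the structure of the cohomology ring of $X$ of $K3^{[2]}$-type recorded in the multiplication table (\ref{eq-multiplication-table}). First I would invoke Lemma \ref{lemma-chern-character-of-Lagrangian-structure-sheaf-deforms-in-co-dimension-1} (which guarantees $\iota_*\StructureSheaf{Z}$ has a rank $1$ cohomological obstruction map under the standing hypotheses) so that Theorem \ref{thm-Mukai-vector} applies: the projection $\hat{v}(\iota_*\StructureSheaf{Z})$ of $v(\iota_*\StructureSheaf{Z})=ch(\iota_*\StructureSheaf{Z})\sqrt{td_X}$ to $SH^*(X,\QQ)$ is sent by $\Psi$ into the image of $\ell^n=\ell^2$, where $\ell=\span\{c_1(L)-\tfrac{(\tau,c_1(L))}{2}\beta\}=\span\{\lambda-\tfrac{t(\lambda,\lambda)}{2}\beta\}$. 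Since $X$ is of $K3^{[2]}$-type we have $H^*(X,\QQ)=SH^*(X,\QQ)$, so $\hat v=v$ and $v(\iota_*\StructureSheaf{Z})$ is, up to a rational scalar $c'$, the $\Psi$-preimage of the projection of $(\lambda-\tfrac{t(\lambda,\lambda)}{2}\beta)^2$ to $\mathrm{Im}(\Psi)$. Using Example \ref{example-projection-to-Im-Psi}, that projection is $\gamma^2-(\gamma,\gamma)\tilde q$ with $\gamma=\lambda-\tfrac{t(\lambda,\lambda)}{2}\beta$ and $(\gamma,\gamma)=(\lambda,\lambda)+t(\lambda,\lambda)\cdot 0 = (\lambda,\lambda)$ wait — carefully: $(\gamma,\gamma)=(\lambda,\lambda)+2\cdot(-\tfrac{t(\lambda,\lambda)}{2})(\lambda,\beta)+(\tfrac{t(\lambda,\lambda)}{2})^2(\beta,\beta)=(\lambda,\lambda)$ since $\lambda\perp\beta$ and $\beta$ isotropic. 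Then I would apply $\Psi^{-1}$ term by term using (\ref{eq-values-of-Psi}), (\ref{eq-Psi-of-c-2}), (\ref{eq-tilde-q-in-terms-of-tilde-b-X}) to express this in the basis $\{1,\lambda,\lambda^2,c_2(X),c_2(X)\lambda,[pt]\}$.

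The next step is bookkeeping: write $v=ch\cdot\sqrt{td_X}=(ch_0+ch_1+ch_2+ch_3+ch_4)(1+\tfrac{1}{24}c_2(X)+\tfrac{25}{32}[pt])$, match graded pieces against the $\Psi^{-1}$ expression just obtained, and solve. From $ch_0$: the degree-zero part of $v$ is $ch_0(\iota_*\StructureSheaf{Z})$, which must vanish since the preimage has no degree-zero component (the coefficient of $1=\Psi^{-1}(\alpha^2/2)$ in $\gamma^2-(\gamma,\gamma)\tilde q$ involves $\alpha$, which is absent because $\ell$ has $r=0$). Similarly the degree-one part forces $ch_1=0$; consistency with $Z$ being a codimension-$2$ subvariety. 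Then $ch_2(\iota_*\StructureSheaf{Z})=[Z]=$ (scalar)$\cdot(\lambda^2 + (\text{multiple of }c_2(X)))$, and comparing with the $\tilde q$-term via (\ref{eq-Psi-of-c-2}) pins the $c_2(X)$-coefficient, giving (\ref{eq-ch-3-of-lagrangian-structure-sheaf})'s companion (\ref{eq-ch-2-of-lagrangian-structure-sheaf}) once the overall scalar is named. The overall scalar is fixed by computing $\chi(Z)=\int_Z 1$; by Gauss–Bonnet $\chi(Z)=\int_X [Z]\cup c_2(TX|_Z)$-type expression — more precisely, I would use that for a codimension-$2$ smooth subvariety $c_2(N_{Z/X})=[Z]^2$ restricted appropriately, and $\chi(Z)=\int_Z c_2(T_Z)$; combining with the lagrangian identity $N_{Z/X}\cong\Omega^1_Z$ gives $\int_X[Z]^2=\chi(Z)$. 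Evaluating $\int_X[Z]^2$ from the ring structure (table (\ref{eq-multiplication-table}) for $\lambda^4$, $\lambda^2 c_2$, $c_2^2$) produces a quadratic relation determining the scalar $c$ in terms of $\chi(Z)$ and $(\lambda,\lambda)$; solving it and checking signs yields (\ref{eq-c}) and simultaneously forces $\chi(Z)>0$ (the discriminant / positivity of $\int_X[Z]^2=\chi(Z)$).

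With $c$ known, I would read off $ch_3$ and $ch_4$. For $ch_3$: the degree-$6$ part of $v$ equals $ch_3$ plus lower-degree-times-Todd corrections; since $ch_0=ch_1=0$ those corrections vanish, so $ch_3(\iota_*\StructureSheaf{Z})$ equals the degree-$6$ component of $\Psi^{-1}$ applied to the $\beta$-linear part of $\gamma^2$, namely $2\lambda\cdot(-\tfrac{t(\lambda,\lambda)}{2}\beta)$, pushed through $\Psi^{-1}$; this gives a multiple of $\lambda^3$, and chasing constants yields $ch_3=-\tfrac{ct}{3}\lambda^3$ as in (\ref{eq-ch-3-of-lagrangian-structure-sheaf}). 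For $ch_4$: the top piece of $v$ is $ch_4 + ch_2\cdot\tfrac{1}{24}c_2(X) + ch_0\cdot\tfrac{25}{32}[pt]$; with $ch_0=0$ I solve $ch_4 = v_4 - \tfrac{1}{24}ch_2\cdot c_2(X)$, where $v_4$ is the $\beta^2$-coefficient of $\gamma^2-(\gamma,\gamma)\tilde q$ mapped by $\Psi^{-1}([pt])=\Psi^{-1}(\beta^2)$-wait, $\Psi([pt])=\beta^2$, so the $\beta^2$-coefficient directly gives the $[pt]$-coefficient of $v$, and one subtracts $\tfrac{1}{24}(ch_2\cdot c_2(X))$ evaluated via table (\ref{eq-multiplication-table}) ($c_2(X)\cdot\lambda^2 = 30(\lambda,\lambda)[pt]$, $c_2(X)^2=828[pt]$); the arithmetic collapses to (\ref{eq-ch-4-of-lagrangian-structure-sheaf}). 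Finally (\ref{eq-Euler-characteristic-of-structure-sheaf-of-lagrangian-surface}) follows since $\chi(\StructureSheaf{Z})=\int_X v(\iota_*\StructureSheaf{Z}) \cdot$(unit), i.e. it is the $[pt]$-coefficient of $v$, read off directly; and (\ref{eq-t}) follows by eliminating: $ch_4$ is also, by Hirzebruch–Riemann–Roch on $Z$ or by $\int_X[Z]^2=\chi(Z)$ reconciled with the two expressions for the $[pt]$-coefficient, which produces the stated identity relating $t^2$, $\chi(Z)$, $(\lambda,\lambda)$. I expect the main obstacle to be purely computational stamina — correctly propagating the normalization constant $c$ through $\sqrt{td_X}$ and the non-diagonal ring structure — rather than any conceptual difficulty; a secondary subtlety is justifying the sign choices (that $t^2\ge 0$ and $c>0$ and hence $\chi(Z)>0$), which I would handle by noting $\ell$ must be rational and real (Theorem \ref{thm-Mukai-vector}) together with positivity of Fujiki-type integrals as invoked in the proof of Lemma \ref{lemma-LLV-line-of-structure-sheaf-k3-type}.
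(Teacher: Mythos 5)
Your strategy is the one the paper uses: feed the line $\ell(\iota_*\StructureSheaf{Z})=\span\{\lambda-\tfrac{t(\lambda,\lambda)}{2}\beta\}$ from Lemma \ref{lemma-Mukai-line-of-structure-sheaf-of-subcanonical-lagrangian} into Theorem \ref{thm-Mukai-vector}, write $\Psi(v(\iota_*\StructureSheaf{Z}))$ as an unknown scalar times the projection of $\gamma^2$ to $\ker(\Delta)$ via Example \ref{example-projection-to-Im-Psi}, match graded pieces using (\ref{eq-values-of-Psi})--(\ref{eq-tilde-q-in-terms-of-tilde-b-X}) and the multiplication table, fix the scalar by $\chi(Z)=\int_X[Z]^2$ (which also yields $\chi(Z)>0$), and obtain (\ref{eq-t}) from Riemann--Roch on $Z$ together with $c_1(\omega_Z)=t\iota^*\lambda$. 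That is exactly the paper's proof, so the plan is sound. Two corrections, though.

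First, do not invoke Lemma \ref{lemma-chern-character-of-Lagrangian-structure-sheaf-deforms-in-co-dimension-1}: it appears later and its proof consists precisely of re-establishing Equations (\ref{eq-ch-2-of-lagrangian-structure-sheaf})--(\ref{eq-ch-4-of-lagrangian-structure-sheaf}) under weaker hypotheses, explicitly referring back to the computations of the present lemma, so citing it here is circular. It is also unnecessary: the standing hypotheses inherited from Lemma \ref{lemma-Mukai-line-of-structure-sheaf-of-subcanonical-lagrangian} already assume the obstruction map has rank $1$, which is what Theorem \ref{thm-Mukai-vector} needs.

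Second, your derivation of (\ref{eq-Euler-characteristic-of-structure-sheaf-of-lagrangian-surface}) is off: $\chi(\StructureSheaf{Z})$ is \emph{not} the $[pt]$-coefficient of $v(\iota_*\StructureSheaf{Z})$. One has $\chi(F)=\int_X ch(F)\,td_X=\int_X v(F)\sqrt{td_X}$, so with $ch_0=ch_1=0$ the correct expression is $\chi(\StructureSheaf{Z})=\int_X\bigl(ch_4+\tfrac{1}{12}[Z]c_2(X)\bigr)$, whereas the $[pt]$-coefficient of $v$ is $\int_X\bigl(ch_4+\tfrac{1}{24}[Z]c_2(X)\bigr)=c\,t^2(\lambda,\lambda)^2/4$; taking your shortcut literally drops the term $c(\lambda,\lambda)/10$ in (\ref{eq-Euler-characteristic-of-structure-sheaf-of-lagrangian-surface}) and would then corrupt (\ref{eq-t}). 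Relatedly, for (\ref{eq-t}) only the Riemann--Roch-on-$Z$ route closes the system: the alternative you hedge with ("reconciling $\int_X[Z]^2=\chi(Z)$ with the two expressions for the $[pt]$-coefficient") gives nothing new, since that identity was already spent determining $c$. The needed input is $\chi(\StructureSheaf{Z})=\tfrac{1}{12}\int_Z\bigl(c_1(\omega_Z)^2+c_2(T_Z)\bigr)$ with $\int_Zc_1(\omega_Z)^2=t^2\int_X[Z]\lambda^2$ computed from (\ref{eq-ch-2-of-lagrangian-structure-sheaf}); equating with (\ref{eq-Euler-characteristic-of-structure-sheaf-of-lagrangian-surface}) and solving for $t^2$ gives (\ref{eq-t}).
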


\begin{proof}
Lemma \ref{lemma-Mukai-line-of-structure-sheaf-of-subcanonical-lagrangian}
yields the equality
\begin{equation}
\label{eq-Psi-of-Mukai-vector-of-lagrangian-structure-sheaf}
\Psi(ch(\iota_*\StructureSheaf{Z})\sqrt{td_X})
=
a\Pi\left(\lambda^2-t(\lambda,\lambda)\lambda\beta+\frac{t^2(\lambda,\lambda)^2}{4}\beta^2
\right),
\end{equation}
for some $a\in\QQ$, where $\Pi$ is the projection from $\Sym^2(\tilde{H}(X,\QQ))$ to $\ker(\Delta)$.
We get
\begin{eqnarray*}
& &
\Psi\left([Z]+ch_3(\iota_*\StructureSheaf{Z})+ch_4(\iota_*\StructureSheaf{Z})+\frac{1}{24}[Z]c_2(X)
\right) \ \ \ =
\\
& &
a\left(
\lambda^2-(\lambda,\lambda)\tilde{q}-t(\lambda,\lambda)\lambda\beta+\frac{t^2(\lambda,\lambda)^2}{4}\beta^2
\right)
\end{eqnarray*}
Comparing the weight $(2,2)$ summands we get Equation (\ref{eq-ch-2-of-lagrangian-structure-sheaf}), with $a$ instead of $c$, using also Example \ref{example-projection-to-Im-Psi}, Equation (\ref{eq-Psi-of-c-2}), and the equality $\Psi(\lambda^2)=\lambda^2+(\lambda,\lambda)\alpha\beta$. We have
\[
\chi(Z)= \int_X[Z]^2=a^2\int_X\left(\lambda^4-\frac{(\lambda,\lambda)}{15}c_2(X)\lambda^2+\frac{(\lambda,\lambda)^2}{900}c_2(X)^2
\right)
= a^2\frac{48}{25}(\lambda,\lambda)^2.
\]
Hence, $\chi(Z)>0$ and $a=c$. 
Equation (\ref{eq-ch-3-of-lagrangian-structure-sheaf}) follows from the equality $\Psi(\lambda^3)=3(\lambda,\lambda)\lambda\beta$, by comparing the weight $(3,3)$ summands. Equation (\ref{eq-ch-4-of-lagrangian-structure-sheaf})
follows by equating the weight $(4,4)$ summands using Equation 
(\ref{eq-ch-2-of-lagrangian-structure-sheaf}). Equation (\ref{eq-Euler-characteristic-of-structure-sheaf-of-lagrangian-surface}) follows from the equality 
\[
\chi(\StructureSheaf{Z})=\chi(\iota_*\StructureSheaf{Z})=\int_Xch(\iota_*\StructureSheaf{Z})td_X=
\int_X\left(ch_4(\iota_*\StructureSheaf{Z})+\frac{1}{12}[Z]c_2(X)\right)
\] 
and  (\ref{eq-ch-4-of-lagrangian-structure-sheaf}).

It remains to prove Equation (\ref{eq-t}).
We have the short exact sequence
\[
0\rightarrow TZ\rightarrow \iota^*TX \rightarrow N_{Z/X}\rightarrow 0.
\]
The isomorphism $TZ\cong N^*_{Z/X}$ yields
\[
1+\iota^*c_2(TX)=1-c_1(TZ)^2+2c_2(TZ).
\]
Hence, $c_2(TZ)=\frac{1}{2}\left[\iota^*c_2(TX)+c_1(\omega_Z)^2\right].$
The equality $\chi(Z)=\int_Zc_2(TZ)$ yields
\[
\int_Zc_1(\omega_Z)^2=2\chi(Z)-\int_X[Z]c_2(TX).
\]
Hirzebruch-Riemann-Roch over $Z$ yields the first equality below.
\[
\chi(\StructureSheaf{Z})=\frac{1}{12} \int_Z\left(c_1(\omega_Z)^2+c_2(TZ)\right)=
\frac{\chi(Z)}{4}-\frac{1}{12}\int_X[Z]c_2(TX).
\]
Equation (\ref{eq-ch-2-of-lagrangian-structure-sheaf}) and the definition of $c$ yield
\begin{equation}
\label{eq-Euler-characteristic-of-structure-sheaf-depends-only-on-topological-Euler-characteristic}
\chi(\StructureSheaf{Z}) = \frac{\chi(Z)}{4}-\frac{c(\lambda,\lambda)}{5}=\frac{1}{4}\left(
\chi(Z)-\frac{(\lambda,\lambda)}{|(\lambda,\lambda)|}\sqrt{\frac{\chi(Z)}{3}}
\right).
\end{equation}
Comparing the right hand sides of the above equation with that of Equation (\ref{eq-Euler-characteristic-of-structure-sheaf-of-lagrangian-surface}) and solving for $t^2$ we get Equation (\ref{eq-t}).
\end{proof}

\begin{rem}
Note that Equation (\ref{eq-t}) and the definition of $c$ yield
\begin{equation}
\label{eq-t-squared}
t^2=\frac{48}{25}c-\frac{6}{5(\lambda,\lambda)}.
\end{equation}
The latter, combined with 
Equation (\ref{eq-Euler-characteristic-of-structure-sheaf-of-lagrangian-surface}), yields
\begin{equation}
\label{eq-two-Euler-characteristics}
\chi(\StructureSheaf{Z}) = \frac{1}{4}\left(
\chi(Z)-\frac{(\lambda,\lambda)}{|(\lambda,\lambda)|}\sqrt{\frac{\chi(Z)}{3}}
\right).
\end{equation}
The above and the definition of $\chi(Z)$ yield the two linear equations in three variables.
\begin{eqnarray*}
2\pm \sqrt{\frac{\chi(Z)}{3}}&=&h^{1,1}(Z)-2h^{2,0}(Z),
\\
\chi(Z)-2&=&h^{1,1}(Z)+2h^{2,0}(Z)-4h^{1,0}(Z),
\end{eqnarray*}
where the sign in the first equation is that of $(\lambda,\lambda)$.
If $(\lambda,\lambda)>0$, then 
\begin{eqnarray}
\label{eq-h-2-0-when-h-1-0-vanishes}
h^{2,0}(Z)&=&\frac{1}{4}\left[\chi(Z)-\sqrt{\frac{\chi(Z)}{3}}+4h^{1,0}(Z)-4\right],
\\
\nonumber
h^{1,1}(Z) & = & \frac{1}{2}\left[\chi(Z)+\sqrt{\frac{\chi(Z)}{3}}+4h^{1,0}(Z)\right].
\end{eqnarray}
Equation (\ref{eq-two-Euler-characteristics}) yields $\chi(Z)=3\left[h^{1,1}(Z)-2-2h^{2,0}(Z)\right]^2$ (the Euler characteristic is $3$ times the signature minus $2$ squared), or equivalently,
\[
h^{1,0}(Z)=\frac{1}{4}\left(
2+2h^{2,0}(Z)+h^{1,1}(Z)-3\left[h^{1,1}(Z)-2-2h^{2,0}(Z)\right]^2
\right).
\]
If $h^{1,1}(Z)=1$, then $h^{1,0}(Z)=-\frac{1}{2}(5h^{2,0}(Z)+6h^{2,0}(Z)^2)$, so $h^{1,0}(Z)=h^{2,0}(Z)=0$.
If, for example, $h^{2,0}(Z)=0$, then the pair $(h^{1,0}(Z),h^{1,1}(Z))$ is either $(0,1)$, or $(1,2)$. The former is realized by $\PP^2$, while the latter implies $\chi(Z)=0$ which is impossible if $(\lambda,\lambda)\neq 0$, by Lemma \ref{lemma-ch-of-Lagrangian-surface-in-K3-2}. If $h^{2,0}=1$, then the pair $(h^{1,0}(Z),h^{1,1}(Z))$ is either 
$(1,3)$, or $(2,4)$. The latter is realized by a lagrangian abelian surface, while the former can not occur if $(\lambda,\lambda)>0$,
by Equation (\ref{eq-h-2-0-when-h-1-0-vanishes}). More generally, the last displayed equality implies that $h^{1,1}(Z)$ and $2+2h^{2,0}(Z)$ are ``close'', in the sense that if $2+2h^{2,0}(Z)=ah^{1,1}(Z)$, then 
$h^{1,1}(Z)\leq \frac{a+1}{3(a-1)^2}$, since $h^{1,0}(Z)\geq 0$, so $a$ is close to $1$ for large values of $h^{1,1}(Z)$. 
\end{rem}

%
\subsection{Lagrangian surfaces $Z$ with $\StructureSheaf{Z}$ having a rank $1$ cohomological obstruction map}
\label{sec-structure-sheaf-of-lagrangian-surface-has-rank-1-obstruction-map}

\begin{lem}
\label{lemma-chern-character-of-Lagrangian-structure-sheaf-deforms-in-co-dimension-1}
Assume that $X$ is of $K3^{[2]}$-type and that $\iota^*:H^2(X,\ZZ)\rightarrow H^2(Z,\ZZ)$ has rank $1$,
Let $L$ be the line bundle over $X$ such that $\ker(\iota^*)=c_1(L)^\perp$. Assume that $(c_1(L),c_1(L))\neq 0$
and that $c_1(\omega_Z)=t\iota^*c_1(L)$, for some $t\in \QQ$.
Then $\LieAlg{g}_{v(\iota_*\StructureSheaf{Z})}=\LieAlg{g}_\ell$, where $\ell$ is spanned by 
$c_1(L)-t\frac{(c_1(L),c_1(L))}{2}\beta$.
\end{lem}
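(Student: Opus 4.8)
The plan is to verify directly that the Mukai vector $v(\iota_*\StructureSheaf{Z})$ is annihilated by the Lie subalgebra $\LieAlg{g}_\ell$, where $\ell = \span\{c_1(L) - t\tfrac{(c_1(L),c_1(L))}{2}\beta\}$, and that this subalgebra is maximal with this property, so that equality of stabilizers holds. The key input is Lemma \ref{lemma-ch-of-Lagrangian-surface-in-K3-2}, which computes $ch(\iota_*\StructureSheaf{Z})$ (hence $v(\iota_*\StructureSheaf{Z}) = ch(\iota_*\StructureSheaf{Z})\sqrt{td_X}$) explicitly in terms of $\lambda := c_1(L)$, $t$, and $\chi(Z)$; together with Equation (\ref{eq-Psi-of-Mukai-vector-of-lagrangian-structure-sheaf}) which already records that $\Psi(v(\iota_*\StructureSheaf{Z}))$ is a rational multiple of $\Pi\bigl(\lambda^2 - t(\lambda,\lambda)\lambda\beta + \tfrac{t^2(\lambda,\lambda)^2}{4}\beta^2\bigr) = \Pi\bigl((\lambda - t\tfrac{(\lambda,\lambda)}{2}\beta)^2\bigr)$.

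First I would observe that $v(\iota_*\StructureSheaf{Z})$ lies in the subring $SH^*(X,\QQ)$: this is the content of Equation (\ref{eq-Psi-of-Mukai-vector-of-lagrangian-structure-sheaf}), since $\Psi$ is an isomorphism onto $SH^*$, and when $X$ is of $K3^{[2]}$-type we have $H^*(X,\QQ) = SH^*(X,\QQ)$, so $\hat{v} = v$ automatically and no projection is needed. Next, because $\Psi$ is $\LieAlg{g}$-equivariant and $\Psi(v(\iota_*\StructureSheaf{Z}))$ spans the line $\Pi\bigl(w^2\bigr)$ with $w := \lambda - t\tfrac{(\lambda,\lambda)}{2}\beta$, the stabilizer $\LieAlg{g}_{v(\iota_*\StructureSheaf{Z})}$ equals the stabilizer in $\LieAlg{g}$ of the projected class $\Pi(w^2) \in \ker(\Delta)$. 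The Veronese-type injectivity argument already used in the proof of Lemma \ref{lemma-pde} (and in Theorem \ref{thm-Mukai-vector}) shows that for a non-isotropic line $\ell = \span\{w\}$ the map $\ell \mapsto \Pi(w^2)$ is injective with $\LieAlg{g}_{\Pi(w^2)} = \LieAlg{g}_\ell$; here $w$ is non-isotropic precisely because $(w,w) = (\lambda,\lambda) \neq 0$ in degree terms once one checks the $\beta$-coefficient does not conspire to make it isotropic — but in fact $(w,w) = (\lambda + s\beta, \lambda + s\beta) = (\lambda,\lambda)$ since $\alpha,\beta$ are isotropic and $\beta \perp H^2$, so $w$ is non-isotropic iff $(\lambda,\lambda)\neq 0$, which is our hypothesis. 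This gives $\LieAlg{g}_{v(\iota_*\StructureSheaf{Z})} = \LieAlg{g}_\ell$ directly.

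Alternatively, and perhaps more cleanly, I would note that the rank-$1$ restriction hypothesis forces the homomorphism $H^1(X,TX)\to H^1(Z,N_{Z/X})$ to have rank $1$ (via the symplectic identifications, as in the paragraph preceding the Lemma), and then invoke Voisin's result \cite{voisin-lagrangian} that $\iota_*\StructureSheaf{Z}$ deforms with $X$ exactly along the hyperplane $c_1(L)^\perp \subset H^1(TX)$; combined with the vanishing of $\iota^*: H^2(X,\StructureSheaf{X})\to H^2(Z,\StructureSheaf{Z})$ (because $Z$ is lagrangian), this shows $\StructureSheaf{Z}$ deforms in co-dimension one in $HT^2(X)$. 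Then Theorem \ref{thm-Mukai-vector}(\ref{prop-item-line-in-rational-Mukai-lattice}) and (\ref{prop-item-spanning-Mukai-vector}) apply, except that $\iota_*\StructureSheaf{Z}$ has rank zero, so part (\ref{prop-item-spanning-Mukai-vector}) is not directly available and one must instead argue as in Lemma \ref{lemma-Mukai-line-of-structure-sheaf-of-subcanonical-lagrangian}: the $\beta$-coefficient of the spanning vector vanishes because $\mu(H^2(\StructureSheaf{X})) = \CC\beta \subset \mu(\tilde\Sigma(\iota_*\StructureSheaf{Z}))^\perp$, the $H^2$-part is forced to be (a multiple of) $c_1(L)$ by Voisin's deformation statement, and the coefficient of $\beta$ is pinned down using local duality $(\iota_*\StructureSheaf{Z})^\vee \cong \iota_*\omega_Z[-n]$ and Lemma \ref{lemma-Mukai-line-of-F-dual}, exactly yielding $-t\tfrac{(c_1(L),c_1(L))}{2}$.

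The main obstacle I anticipate is reconciling the two routes: the hypothesis here is only that $\StructureSheaf{Z}$ \emph{has a rank $1$ cohomological obstruction map} (equality of $m(HT^2(X))\cap\LieAlg{g}_{v}$ with a codimension-$1$ subspace), which is strictly weaker than deforming in co-dimension $1$ in $HT^2(X)$, so I cannot simply quote Theorem \ref{thm-Mukai-vector} verbatim — I must run the argument of Lemma \ref{lemma-pde} with $f = \Psi(v(\iota_*\StructureSheaf{Z}))$ to extract that $\ell(\tilde\Sigma)$ is defined over $\QQ$ and that $\LieAlg{g}_{v} = \LieAlg{g}_{\ell(\tilde\Sigma)}$, and then identify this line with $\span\{c_1(L) - t\tfrac{(c_1(L),c_1(L))}{2}\beta\}$ using the explicit $ch(\iota_*\StructureSheaf{Z})$ from Lemma \ref{lemma-ch-of-Lagrangian-surface-in-K3-2}. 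Concretely: compute $\Psi(v(\iota_*\StructureSheaf{Z}))$ from (\ref{eq-Psi-of-Mukai-vector-of-lagrangian-structure-sheaf}), recognize it as the $\Pi$-projection of $w^2$, and conclude $\LieAlg{g}_{v(\iota_*\StructureSheaf{Z})} = \LieAlg{g}_{\Pi(w^2)} = \LieAlg{g}_{\span\{w\}}$ by the injectivity of the Veronese-projection map on non-isotropic lines. This last identification is the crux and is essentially a bookkeeping exercise in $\Sym^2(\widetilde H(X,\QQ))$ using Example \ref{example-projection-to-Im-Psi}.
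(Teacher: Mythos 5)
Your reduction of the lemma to the single identity $\Psi(v(\iota_*\StructureSheaf{Z}))\in\QQ^*\cdot\Pi(w^2)$, where $w=c_1(L)-t\tfrac{(c_1(L),c_1(L))}{2}\beta$ and $\Pi$ is the projection (\ref{eq-projection-to-Im-Psi}), followed by the observations that $(w,w)=(c_1(L),c_1(L))\neq 0$, that $H^*(X,\QQ)=SH^*(X,\QQ)$ in the $K3^{[2]}$ case, and that the stabilizer of $\Pi(w^2)$ for non-isotropic $w$ is $\LieAlg{g}_{\span\{w\}}$, is exactly the right endgame and matches the paper. The gap is in how you establish that identity. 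You take Lemma \ref{lemma-ch-of-Lagrangian-surface-in-K3-2} and Equation (\ref{eq-Psi-of-Mukai-vector-of-lagrangian-structure-sheaf}) as the ``key input'', but both are established under the standing assumption of Lemma \ref{lemma-Mukai-line-of-structure-sheaf-of-subcanonical-lagrangian} --- namely that the obstruction map $HT^2(X)\to\Ext^2(\iota_*\StructureSheaf{Z},\iota_*\StructureSheaf{Z})$ has rank $1$; indeed Equation (\ref{eq-Psi-of-Mukai-vector-of-lagrangian-structure-sheaf}) is deduced from that lemma in the first line of the proof of Lemma \ref{lemma-ch-of-Lagrangian-surface-in-K3-2}. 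The present lemma assumes none of this: its hypotheses are purely cohomological, and its conclusion \emph{is} the rank-$1$ cohomological obstruction property (with the line identified). So your first route assumes what the lemma is designed to avoid assuming. Your fallback route fails for a related reason: Voisin's theorem controls the obstruction only along $H^1(TX)$, and the lagrangian condition handles $H^2(\StructureSheaf{X})$, but nothing is said about the summand $H^0(\wedge^2TX)$ of $HT^2(X)$; whether its image under the obstruction map lies in that of $H^1(TX)$ is precisely the open issue of Remark \ref{rem-expected-rank-one}. Hence ``deforms in co-dimension $1$'' is not established, and neither Theorem \ref{thm-Mukai-vector} nor Lemma \ref{lemma-Mukai-line-of-structure-sheaf-of-subcanonical-lagrangian} can be invoked. (You also misstate the hypothesis in your last paragraph: having a rank-$1$ cohomological obstruction map is the conclusion here, not an assumption.)

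What is missing is an independent derivation of the graded pieces of $v(\iota_*\StructureSheaf{Z})$ from the cohomological hypotheses alone, and this is what the paper's proof supplies. Since the pair $(X,Z)$ deforms over the hyperplane $\lambda^\perp$, $\lambda:=c_1(L)$, the classes $[Z]$ and $ch_3(\iota_*\StructureSheaf{Z})$ are invariant under the stabilizer of $\lambda$ in the monodromy group, forcing $[Z]=a\lambda^2+b\,c_2(TX)$ and $ch_3(\iota_*\StructureSheaf{Z})=a'\lambda^3$. The lagrangian condition gives $\int_X[Z]\sigma\bar\sigma=0$, which pins down $b=-a(\lambda,\lambda)/30$, and $\int_X[Z]^2=\chi(Z)$ gives $a=c$; the $ch_4$ term is then equivalent to the Euler-characteristic identity (\ref{eq-Euler-characteristic-of-structure-sheaf-of-lagrangian-surface}), which follows from $ch_2$ and $N_{Z/X}\cong\Omega^1_Z$; and $a'=-ct/3$ is extracted by computing $\chi(\iota^*(L^{\otimes nt}))$ both by Grothendieck-Riemann-Roch on $X$ and by Hirzebruch-Riemann-Roch on $Z$ (using $c_1(\omega_Z)=t\iota^*\lambda$) and comparing the coefficients of $n$. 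Only after these computations does one recognize $\Psi(v(\iota_*\StructureSheaf{Z}))$ as $c\,\Pi(w^2)$ and conclude as you propose.
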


\begin{proof}
Set $\lambda:=c_1(L)$.
It suffices to verify Equation (\ref{eq-Psi-of-Mukai-vector-of-lagrangian-structure-sheaf}) with $a=c$. 
Equation (\ref{eq-Psi-of-Mukai-vector-of-lagrangian-structure-sheaf}) would follow once we verify Equations
(\ref{eq-ch-2-of-lagrangian-structure-sheaf}), (\ref{eq-ch-3-of-lagrangian-structure-sheaf}), and (\ref{eq-ch-4-of-lagrangian-structure-sheaf}).
The class $[Z]\in H^4(X,\QQ)$ is invariant under the stabilizer of $\lambda$ in the monodromy group of $X$.
Hence, $[Z]=a\lambda^2+bc_2(TX)$, for some $a, b\in\QQ$. Let $\sigma$ be a non-zero element of $H^0(X,\Omega^2_X)$. The pull back $\iota^*\sigma$ vanishes.
Hence, the cohomology class $[Z]\cup \sigma$ vanishes, since $\int_X[Z]\cup\sigma \cup (\bullet)=\int_Z\iota^*(\sigma) \cup (\bullet)$.
We drop the symbol $\cup$ below.
We have
\[
0=\int_X[Z]\sigma\bar{\sigma}=\int_X [a\lambda^2+bc_2(TX)]\sigma\bar{\sigma}=
a[2(\lambda,\sigma)(\lambda,\bar{\sigma})+(\lambda,\lambda)(\sigma,\bar{\sigma})]+
30b(\sigma,\bar{\sigma})
\]
and $(\lambda,\sigma)=0$. 
Hence, $b=-a(\lambda,\lambda)/30$ and Equation (\ref{eq-ch-2-of-lagrangian-structure-sheaf}) holds with $c$ replaced by $a$. The equality $a=c$ is proven using the equality $\chi(Z)=\int_X[Z]^2$ as in the proof of 
Lemma \ref{lemma-ch-of-Lagrangian-surface-in-K3-2}. 

Equation (\ref{eq-ch-4-of-lagrangian-structure-sheaf}) is equivalent to Equation (\ref{eq-Euler-characteristic-of-structure-sheaf-of-lagrangian-surface}), by Hirzebruch-Riemann-Roch (and the fact that Equation (\ref{eq-ch-2-of-lagrangian-structure-sheaf}) is verified and the vanishing of the direct summand of $td_X$ in $H^2(X,\QQ)$). Now Equation (\ref{eq-Euler-characteristic-of-structure-sheaf-of-lagrangian-surface}) follows from Equation (\ref{eq-ch-2-of-lagrangian-structure-sheaf}) and the fact that $Z$ is lagrangian, see the derivation of Equation (\ref{eq-Euler-characteristic-of-structure-sheaf-depends-only-on-topological-Euler-characteristic}) which exhibits $\chi(\StructureSheaf{Z})$ as a function of only  $\chi(Z)$ and the sign of $(\lambda,\lambda)$.

It remains to prove Equation (\ref{eq-ch-3-of-lagrangian-structure-sheaf}). The class $ch_3(\iota_*\StructureSheaf{Z})$
must be equal to $a\lambda^3$, for some $a\in\QQ$, as it is invariant under the stabilizer of $\lambda$ in the monodromy group of $X$. We need to prove the equality $a=-ct/3$. We calculate $\chi(\iota^*(L^{\otimes nt}))$ in two ways.
Using Grothendieck-Riemann-Roch on $X$ we get the polynomial in $n$
\[
\chi(\iota^*(L^{\otimes nt}))=
\int_X\left(1+nt\lambda+\dots +\frac{n^4t^4}{4!}\lambda^4\right)ch(\iota_*\StructureSheaf{Z})td_X.
\]
The coefficient of $n^1$ is $3at(\lambda,\lambda)^2$.
On the other hand, Hirzebruch-Riemann-Roch on $Z$ yields the second equality below:
\[
\chi(\iota^*(L^{\otimes nt}))=\chi(\omega_Z^{\otimes n})=
\chi(\StructureSheaf{Z})+\frac{n(n-1)}{2}\int_Z c_1(\omega_Z)^2.
\]
The coefficient of $n^1$ is
\[
-\frac{1}{2}
\int_Z c_1(\omega_Z)^2
= -\frac{1}{2}\int_X c(\lambda^2-\frac{(\lambda,\lambda)}{30}c_2(TX)) t^2\lambda^2 = -ct^2(\lambda,\lambda)^2.
\]
Comparing the two coefficients of $n^1$ we get the equality $a=-ct/3$.
\end{proof}

\begin{example}
\label{example-lagrangian-surfaces-which-deform-in-co-dimension-1}
If we let $Z=\PP^2$ in Lemma \ref{lemma-ch-of-Lagrangian-surface-in-K3-2}, then $(\lambda,\lambda)=-10$, the divisibility (Definition \ref{def-divisibility}) of $\lambda$ is $2$, $c=\frac{1}{8}$, and $t=\pm\frac{3}{5}$. If $Z$ is the Fano variety of lines on a hyperplane section of a cubic $4$-fold, as in Example \ref{examples-of-maximally-deformable-Lagrangian-surfaces}(\ref{example-item-Fano-variety-of-lines-on-a-cubic-threefold}), then $(\lambda,\lambda)=6$, the divisibility of $\lambda$ is $2$, $c=\frac{5}{8}$, and $t=1$. 
If $Z$ is the fixed locus of an anti-symplectic involution, as in Example \ref{examples-of-maximally-deformable-Lagrangian-surfaces}, then $(\lambda,\lambda)=2$, the divisibility of $\lambda$ is $1$, $c=5$ (since $\chi(Z)=192$), and $t=3$, by \cite[Prop. 1.10]{ferretti}. 
\end{example}
%
\subsection{Integral constraints for the existence of smooth lagrangian surfaces}
\label{sec-integral-constraints}

\begin{lem}
\label{lemma-integral-lagrangian-classes}
Let  $X$ be of $K3^{[2]}$-type and let $\lambda$ be a primitive class in $H^2(X,\ZZ)$.
\begin{enumerate}
\item
If $\div(\lambda)=1$, then $\frac{1}{\gcd(5,(\lambda,\lambda))}\left(5\lambda^2-\frac{(\lambda,\lambda)}{6}c_2(TX)\right)$
is a primitive class in $H^4(X,\ZZ)$.
\item
If $\div(\lambda)=2$, then $\frac{1}{\gcd\left(40,5+\frac{(\lambda,\lambda)}{2}\right)}\left(5\lambda^2-\frac{(\lambda,\lambda)}{6}c_2(TX)\right)$
is a primitive class in $H^4(X,\ZZ)$.
\end{enumerate}
\end{lem}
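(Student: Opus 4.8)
\textbf{Proof proposal for Lemma \ref{lemma-integral-lagrangian-classes}.}

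The plan is to compute the pairings of the class $5\lambda^2 - \frac{(\lambda,\lambda)}{6}c_2(TX)$ against a $\ZZ$-basis of $H^4(X,\ZZ)$ and extract the greatest common divisor. First I would recall the lattice structure of $H^4(X,\ZZ)$ and of the relevant intersection pairings for $X$ of $K3^{[2]}$-type: the cup product $\Sym^2 H^2(X,\ZZ) \to H^4(X,\ZZ)$ together with the Poincar\'e pairing $H^4(X,\ZZ)\otimes H^4(X,\ZZ)\to \ZZ$ is governed by the Fujiki relations, and the relevant numerics are encoded in the multiplication table (\ref{eq-multiplication-table}) together with the quadruple-product formula $\alpha\beta\gamma\delta = [(\alpha,\beta)(\gamma,\delta)+(\alpha,\gamma)(\beta,\delta)+(\alpha,\delta)(\beta,\gamma)][\mathrm{pt}]$. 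The idea is that the class $\eta := 5\lambda^2 - \frac{(\lambda,\lambda)}{6}c_2(TX)$ is, up to normalization, the image under $\Psi^{-1}\circ\Pi$ of the square $\lambda^2$ of the line $\ell$ in the LLV lattice — indeed from Example \ref{example-projection-to-Im-Psi} and Equation (\ref{eq-Psi-of-c-2}) the projection of $\lambda^2$ to $\ker(\Delta)$ is $\lambda^2 - (\lambda,\lambda)\tilde{q}$, and $c_2(TX) = 30\,\Psi^{-1}(\tilde q + \alpha\beta)$ up to the identifications in use. So $\eta$ is the ``primitive part'' of $\lambda^2$, i.e.\ $[Z]$ rescaled, which is why it should be close to a primitive integral class — the content of the lemma is the precise scaling factor.

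The key computation: using the multiplication table, $\int_X \lambda^2\cdot\lambda^2 = 3(\lambda,\lambda)^2$, $\int_X \lambda^2\cdot c_2(TX) = 30(\lambda,\lambda)$, and $\int_X c_2(TX)^2 = 828$. I would then pair $\eta$ with $\lambda^2$ and with $c_2(TX)$:
\[
\int_X \eta\cdot\lambda^2 = 15(\lambda,\lambda)^2 - \frac{(\lambda,\lambda)}{6}\cdot 30(\lambda,\lambda) = 10(\lambda,\lambda)^2,
\]
\[
\int_X \eta\cdot c_2(TX) = 5\cdot 30(\lambda,\lambda) - \frac{(\lambda,\lambda)}{6}\cdot 828 = 150(\lambda,\lambda) - 138(\lambda,\lambda) = 12(\lambda,\lambda).
\]
More generally, for a class $\xi\in H^2(X,\ZZ)$ with $(\xi,\lambda)$ and $(\xi,\xi)$ given, $\int_X\eta\cdot\xi^2 = 5(2(\xi,\lambda)^2 + (\xi,\xi)(\lambda,\lambda)) - \frac{(\lambda,\lambda)}{6}\cdot 30(\xi,\xi) = 10(\xi,\lambda)^2 - 5(\xi,\xi)(\lambda,\lambda) + \text{(wait, recompute)}$ — one must be careful: $\int_X\lambda^2\xi^2 = (\lambda,\lambda)(\xi,\xi) + 2(\lambda,\xi)^2$ and $\int_X c_2(TX)\xi^2 = 30(\xi,\xi)$, giving $\int_X\eta\,\xi^2 = 5(\lambda,\lambda)(\xi,\xi)+10(\lambda,\xi)^2 - 5(\lambda,\lambda)(\xi,\xi) = 10(\lambda,\xi)^2$. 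Similarly $\int_X\eta\cdot\xi\zeta = 10(\lambda,\xi)(\lambda,\zeta)$ for $\xi,\zeta\in H^2$. Since $H^4(X,\ZZ)$ is spanned by products $\xi\zeta$ together with $c_2(TX)$ (for $K3^{[2]}$-type, $H^4$ is generated over $\ZZ$ by $\Sym^2 H^2$ and $c_2$ — this generation statement I would cite or verify via the known integral structure), the divisibility of $\eta$ equals $\gcd$ of the set $\{10(\lambda,\xi)(\lambda,\zeta) : \xi,\zeta\in H^2\}\cup\{12(\lambda,\lambda)\}$. When $\div(\lambda)=1$ there is $\xi$ with $(\lambda,\xi)=1$, so this $\gcd$ is $\gcd(10, 12(\lambda,\lambda), 10(\lambda,\lambda))$; since $(\lambda,\lambda)$ is even this is $\gcd(10,(\lambda,\lambda))\cdot(\text{a unit})$ after checking the $12(\lambda,\lambda)$ term contributes nothing new modulo the relevant prime $5$, yielding $\gcd(5,(\lambda,\lambda))\cdot 2$ — reconciling with the $5\lambda^2 - \frac{(\lambda,\lambda)}{6}c_2$ form requires tracking the extra factor of $2$ coming from $\int_X\eta\xi\zeta$ always being even. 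When $\div(\lambda)=2$, all $(\lambda,\xi)$ are even, so $10(\lambda,\xi)(\lambda,\zeta)\in 40\ZZ$, and the $\gcd$ becomes $\gcd(40, 12(\lambda,\lambda))$; writing $(\lambda,\lambda)/2 = m$ one checks $\gcd(40,12(\lambda,\lambda))$ relates to $\gcd(40, 5+m)$ via elementary congruences (using that for $\div(\lambda)=2$ classes on $K3^{[2]}$-type one has $(\lambda,\lambda)\equiv -2\pmod 8$, which constrains $m$).

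\textbf{Main obstacle.} The genuinely delicate part is \emph{not} the cup-product arithmetic but (a) establishing that $H^4(X,\ZZ)$ is generated over $\ZZ$ by $\Sym^2 H^2(X,\ZZ)$ together with $c_2(TX)$ — or more precisely identifying exactly which integral classes arise, since $H^4(X,\ZZ)$ for $K3^{[2]}$-type is strictly larger than the image of $\Sym^2 H^2$ and the precise index and the role of $c_2$ must be pinned down (this is where one invokes the known computation of the integral cohomology ring, e.g.\ results on the Hilbert scheme of two points); and (b) carefully converting the $\gcd$ of the Poincar\'e pairings of $\eta$ into the divisibility (= primitivity constraint) of $\eta$ itself, which requires that the Poincar\'e pairing on $H^4(X,\ZZ)$ is unimodular (true, by Poincar\'e duality, since $H^*(X,\ZZ)$ is torsion-free for $K3^{[2]}$-type) so that divisibility of a class equals the $\gcd$ of its pairings against a basis. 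Once (a) and (b) are in place, the two cases $\div(\lambda)=1,2$ follow by the residue computations modulo $5$ (resp.\ modulo $8$ and $5$) sketched above, using Eichler-type normal forms for $\lambda$ exactly as in the proof of Lemma \ref{lemma-faithful-invariants-of-orbits} to reduce to a single representative in each orbit. I would organize the final write-up as: normalize $\lambda$; compute $\int_X\eta\cdot(\text{basis of }H^4)$; invoke unimodularity; conclude the divisibility; then split into the two congruence cases.
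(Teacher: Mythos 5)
The paper omits the proof of this lemma, noting only that it relies on the integral generators of $H^*(X,\ZZ)$ from \cite{markman-integral-generators}, so there is no detailed argument to compare against; your overall strategy (pair $\eta:=5\lambda^2-\frac{(\lambda,\lambda)}{6}c_2(TX)$ against integral classes, use torsion-freeness and unimodularity of the Poincar\'e pairing on $H^4(X,\ZZ)$ to read off integrality and primitivity) is the right one, and your cup-product arithmetic is correct: $\int_X\eta\,\xi\zeta=10(\lambda,\xi)(\lambda,\zeta)$ and $\int_X\eta\, c_2(TX)=12(\lambda,\lambda)$.

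The genuine gap is exactly your ``main obstacle (a),'' and it is not a formality you can wave at: $H^4(X,\ZZ)$ is \emph{not} generated over $\ZZ$ by $\Sym^2H^2(X,\ZZ)$ and $c_2(TX)$ for $X$ of $K3^{[2]}$-type, and the discrepancy changes the answer. Run your own numbers: in case (1) the gcd of $\{10(\lambda,\xi)(\lambda,\zeta)\}\cup\{12(\lambda,\lambda)\}$ is $2\gcd(5,(\lambda,\lambda))$, twice the claimed divisor, for every $\lambda$ of divisibility $1$; in case (2), writing $x=(\lambda,\lambda)/2$ (which satisfies $x\equiv 3\pmod 4$), the gcd over your sublattice is $\gcd(40,24x)=8\gcd(5,x)$, whereas the claimed divisor $\gcd(40,5+x)$ equals $4\gcd(5,x)$ whenever $x\equiv 7\pmod 8$. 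So restricting to the sublattice $\Sym^2H^2(X,\ZZ)+\ZZ c_2(TX)$ systematically overestimates the divisibility, and the lemma is strictly stronger than what your computation can yield: one must exhibit the additional integral classes of $H^4(X,\ZZ)$ (the extra generators of \cite{markman-integral-generators}, which involve halves of certain combinations of products of degree-two classes) and pair $\eta$ against them to kill the residual factor of $2$. Until those classes are written down and their pairings with $\eta$ computed, the proof is incomplete --- and an argument that silently assumed $H^4(X,\ZZ)=\Sym^2H^2(X,\ZZ)+\ZZ c_2(TX)$ would prove a false normalization. The congruence bookkeeping in your two cases is otherwise on the right track, as is the reduction to a normal form for $\lambda$ via Eichler's criterion.
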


The proof of Lemma \ref{lemma-integral-lagrangian-classes} uses the integral generators for the cohomology ring $H^*(X,\ZZ)$ found in \cite{markman-integral-generators}. The proof is omitted.

Smooth lagrangian surfaces satisfying the hypotheses of Lemma \ref{lemma-ch-of-Lagrangian-surface-in-K3-2} 
must satisfy the following arithmetic constraints. Let $c$ be the rational number given in Equation (\ref{eq-c}).

\begin{lem}
\label{lemma-arithmetic-constraints}
Let $\iota:Z\rightarrow X$ be an embedding of a smooth projective lagrangian surface in $X$ of $K3^{[2]}$-type. Assume that
the kernel of the restriction homomorphism $\iota^*:H^2(X,\ZZ)\rightarrow H^2(Z,\ZZ)$ is equal to $c_1(L)^\perp$ for a line bundle $L$ with primitive $\lambda:=c_1(L)$, such that $(\lambda,\lambda)>0$. 
Assume furthermore that $c_1(\omega_Z)=t\iota^*\tau$, for some class $\tau\in H^2(X,\ZZ)$ and $t\in\QQ$. Then 
\begin{enumerate}
\item
\label{lemma-item-5-divides}
$5$ does not divide $(\lambda,\lambda)$.
\item
\label{lemma-item-3-divides-x-but-5-does-not}
If $3$ divides $x:=\frac{(\lambda,\lambda)}{2}$, 
then 
$\div(\lambda)=2$, $x\equiv 3$ modulo $8$,  $8c/5$ is an integer, and both 
$3x$ and $16cx-5$ are perfect squares. 
\item
\label{lemma-item-neither-3-nor-5}
If 
$3$ does not divide  
$x:=\frac{(\lambda,\lambda)}{2}$, then $x$ and $3(16cx-5)$ are perfect squares. 
If $\div(\lambda)=1$, then $c/5$ is an integer. If $\div(\lambda)=2$, then $\gcd\left(8,5+\frac{(\lambda,\lambda)}{2}\right)\frac{c}{5}$ is an integer. 
\end{enumerate}
\end{lem}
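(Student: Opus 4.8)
The plan is to extract integral information out of the rational formulas in Lemma \ref{lemma-ch-of-Lagrangian-surface-in-K3-2} using the integrality and primitivity statements of Lemma \ref{lemma-integral-lagrangian-classes}. First I would record the key identity $ch_2(\iota_*\StructureSheaf{Z})=[Z]=c\left(\lambda^2-\frac{(\lambda,\lambda)}{30}c_2(TX)\right)$, rewrite it as $[Z]=\frac{c}{30}\left(30\lambda^2-(\lambda,\lambda)c_2(TX)\right)=\frac{c}{5}\left(5\lambda^2-\frac{(\lambda,\lambda)}{6}c_2(TX)\right)$, and compare it with the primitive integral class produced by Lemma \ref{lemma-integral-lagrangian-classes}. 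Since $[Z]$ is an integral class (it is Poincar\'e dual to a subvariety), the rational scalar relating $\frac{c}{5}\left(5\lambda^2-\frac{(\lambda,\lambda)}{6}c_2(TX)\right)$ to the named primitive integral generator must be an integer. In the $\div(\lambda)=1$ case this forces $\frac{c}{5}\gcd(5,(\lambda,\lambda))\in\ZZ$, and in the $\div(\lambda)=2$ case it forces $\frac{c}{5}\gcd\!\left(40,5+\frac{(\lambda,\lambda)}{2}\right)\in\ZZ$; this is most of the divisibility content of parts (\ref{lemma-item-3-divides-x-but-5-does-not}) and (\ref{lemma-item-neither-3-nor-5}).

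Next I would turn to the perfect-square statements. From the definition $c=\frac{5}{4|(\lambda,\lambda)|}\sqrt{\frac{\chi(Z)}{3}}$ and the hypothesis $(\lambda,\lambda)>0$ we get $\sqrt{3\chi(Z)}=\frac{12(\lambda,\lambda)c}{5}=\frac{6\cdot 2x\cdot c}{5}$ with $x:=\frac{(\lambda,\lambda)}{2}$, so $\sqrt{3\chi(Z)}\in\QQ$ forces $3\chi(Z)$ to be a perfect square; combined with $\chi(Z)=3[h^{1,1}(Z)-2-2h^{2,0}(Z)]^2$ already noted in the text, this tells us $\chi(Z)=3m^2$ for an integer $m$, and hence $\sqrt{\chi(Z)/3}=m\in\ZZ$ and $c=\frac{5m}{8x}$. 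Plugging $c=\frac{5m}{8x}$ into the two divisibility conditions above, and into Equation (\ref{eq-t-squared}) $t^2=\frac{48}{25}c-\frac{6}{5(\lambda,\lambda)}=\frac{6m}{5x}-\frac{3}{5x}=\frac{3(2m-1)}{5x}$, I would then analyze when $t^2$ can be rational. Rationality of $t^2$ together with the fact (from Lemma \ref{lemma-Mukai-line-of-structure-sheaf-of-subcanonical-lagrangian}, or rather its sharpening) that $t$ itself is rational is what produces the square conditions: writing $16cx-5 = 10m-5 = 5(2m-1)$, the identity $t^2=\frac{3(2m-1)}{5x}$ shows $15x(2m-1)$ is a perfect square, and $3\chi(Z)=27x^2t^4\cdot(\text{const})$ must be reconciled with $\chi(Z)=3m^2$. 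Carefully tracking gcd's of $x$ with $3$ and $5$ — using that $5\nmid(\lambda,\lambda)$, which follows from part (\ref{lemma-item-5-divides}) — I would disentangle the two regimes: if $3\nmid x$ then $x$ and $3\cdot 5(2m-1)=3(16cx-5)$ are separately forced to be squares, while if $3\mid x$ then the factor of $3$ migrates and instead $3x$ and $16cx-5=5(2m-1)$ become squares, with the extra rigidity $\div(\lambda)=2$ and $x\equiv 3\pmod 8$ coming from parity of $t$ and the structure of the discriminant form.

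For part (\ref{lemma-item-5-divides}) I would argue separately: if $5\mid(\lambda,\lambda)$ then $\frac{(\lambda,\lambda)}{6}c_2(TX)$ and $5\lambda^2$ are both divisible by $5$ in a way that, combined with the explicit integral generators of $H^4(X,\ZZ)$ from \cite{markman-integral-generators}, would make $\frac15\bigl(5\lambda^2-\frac{(\lambda,\lambda)}{6}c_2(TX)\bigr)$ already integral and primitive, forcing $c$ to be an integer; but then $c=\frac{5m}{8x}$ with $5\mid x$ makes $c$ a fraction with denominator divisible by a power of $5$ unless $5\mid m$, and iterating (or using primitivity of $\lambda$ and $\div(\lambda)\in\{1,2\}$, so $5\nmid\div(\lambda)$) yields a contradiction with $5\mid (\lambda,\lambda)=2x$; I expect the cleanest route is to show directly that $5\mid(\lambda,\lambda)$ is incompatible with the simultaneous integrality of $[Z]$, $ch_3$, and $ch_4$ via Equations (\ref{eq-ch-3-of-lagrangian-structure-sheaf}) and (\ref{eq-ch-4-of-lagrangian-structure-sheaf}). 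The main obstacle will be the bookkeeping in part (\ref{lemma-item-3-divides-x-but-5-does-not}): pinning down that $3\mid x$ forces $\div(\lambda)=2$ and the precise congruence $x\equiv 3\pmod 8$ requires combining the discriminant-form constraints on primitive classes of given divisibility in the $K3^{[2]}$-lattice with the integrality of $8c/5=m/x$ and of $t=\sqrt{3(2m-1)/(5x)}$, and making sure the chains of implications are reversible enough to land exactly on the stated list rather than something weaker or stronger. Everything else is routine substitution once $c=\frac{5m}{8x}$, $m=\sqrt{\chi(Z)/3}$, and $t^2=\frac{3(2m-1)}{5x}$ are in hand.
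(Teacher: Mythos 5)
Your overall strategy for parts (\ref{lemma-item-3-divides-x-but-5-does-not}) and (\ref{lemma-item-neither-3-nor-5}) is essentially the paper's: compare $[Z]=c\bigl(\lambda^2-\tfrac{(\lambda,\lambda)}{30}c_2(TX)\bigr)$ with the primitive integral class of Lemma \ref{lemma-integral-lagrangian-classes} to get an integrality statement for $c$, then use the rationality of $t$ in Equation (\ref{eq-t-squared}) to conclude that $15x(2m-1)$ is a perfect square (your $2m-1$ is the paper's $2^{4-n}kx-1$, and your $m=\sqrt{\chi(Z)/3}$ is the paper's $2^{3-n}kx$), and split according to how the squarefree parts of the coprime factors $x$ and $2m-1$ absorb the primes $3$ and $5$. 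That dichotomy is correct and does yield the two square conditions as stated. Your route to the integrality of $m$ via the signature identity $\chi(Z)=3[h^{1,1}(Z)-2-2h^{2,0}(Z)]^2$ is a legitimate alternative to the paper's, which gets it from $m=2^{3-n}kx$ with $k\in\ZZ$.

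There are, however, two genuine gaps. First, part (\ref{lemma-item-5-divides}) is not proved. Your suggestion that $5\mid(\lambda,\lambda)$ forces $c\in\ZZ$ and then "iterating" gives a contradiction does not work: with $5\mid x$ one only gets $c=m/(8x')$ for $x=5x'$, which is perfectly consistent. The paper's actual argument in this case is a separate perfect-square analysis with the renormalization $x:=(\lambda,\lambda)/10$: one shows $3x(2^{4-n}kx-1)$ is a perfect square, rules out the branch where $3x$ is a square by reducing $2^{4-n}kx-1$ modulo $3$, concludes that $x$ itself is a perfect square after forcing $n\geq 2$ (because $-3$ is not a square mod $8$), hence $\div(\lambda)=2$, and then derives a contradiction from $(\lambda,\lambda)\equiv 6\pmod 8$, which gives $x\equiv 3\pmod 4$ — impossible for a square. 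None of this is in your sketch, and since you invoke part (\ref{lemma-item-5-divides}) in your treatment of the other two parts, the gap propagates. Second, in part (\ref{lemma-item-3-divides-x-but-5-does-not}) you attribute the conclusions $\div(\lambda)=2$, $x\equiv 3\pmod 8$, and $8c/5\in\ZZ$ to "parity of $t$ and the structure of the discriminant form," but that is not the mechanism. The point is that $5(2m-1)=16cx-5$ is an \emph{odd} perfect square, hence $\equiv 1\pmod 8$, hence $m$ is odd; since $m=2^{3-n}kx$ with $k=2^nc/5$ an integer supplied by Lemma \ref{lemma-integral-lagrangian-classes}, oddness of $m$ forces $n=3$, which simultaneously gives $\div(\lambda)=2$, $8\mid(5+x)$ (i.e.\ $x\equiv 3\pmod 8$), and $k=8c/5\in\ZZ$. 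You have all the ingredients for this in your first paragraph, but the assembly — which is where the real content of part (\ref{lemma-item-3-divides-x-but-5-does-not}) lies — is missing.
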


\begin{proof}
Assume first that $\gcd(5,(\lambda,\lambda))=1$.
If $\div(\lambda)=1$, set $n:=0$. If $\div(\lambda)=2$, let $n$ the integer, such that  $2^n:=\gcd\left(8,5+\frac{(\lambda,\lambda)}{2}\right)$.
Then $[Z]=\frac{k}{2^n}\left(5\lambda^2-\frac{(\lambda,\lambda)}{6}c_2(TX)\right)$, for an integer $k=\left\{
\begin{array}{ccl}
c/5 & \mbox{if} & \div(\lambda)=1,
\\
2^nc/5 & \mbox{if} & \div(\lambda)=2,
\end{array}
\right.$
 by Lemmas \ref{lemma-ch-of-Lagrangian-surface-in-K3-2} and \ref{lemma-integral-lagrangian-classes}.
 Hence,
\[
\frac{48k}{5\cdot 2^n}-\frac{6}{5(\lambda,\lambda)}
\ \ = \ \ 
\frac{6(2^{3-n}k(\lambda,\lambda)-1)}{5(\lambda,\lambda)}
\]
must be the square of the rational number $t$, by Equation (\ref{eq-t-squared}).
Equivalently,
\[
15\frac{(\lambda,\lambda)}{2}(2^{3-n}k(\lambda,\lambda)-1)
\] 
is a perfect square.

Set $x:=\frac{(\lambda,\lambda)}{2}$.
Then $15x(2^{4-n}kx-1)$ is a perfect square. 
Let $p$ be a prime integer. If $p$ divides $x$, then 
$p$ does not divide $(2^{4-n}kx-1)$. 
Thus, either both $3x$  and $5(2^{4-n}kx-1)$ are perfect squares, or both 
$x$  and $15(2^{4-n}kx-1)$ are perfect squares. Part (\ref{lemma-item-neither-3-nor-5}) follow from the second case. 
The first case implies that $n=3$, since otherwise $5(2^{4-n}kx-1)\equiv -1$ modulo $4$ and it can not be a square.
Hence, if $3$ divides $x$, then 
$\div(\lambda)=2$, $x=(\lambda,\lambda)/2\equiv 3$ modulo $8$, $k=\frac{8c}{5}$ is an integer, and both $3x$  and $5(2kx-1)$ are perfect squares. Part (\ref{lemma-item-3-divides-x-but-5-does-not}) follows.

Assume next that $5$ divides $(\lambda,\lambda)$.
Then the primitive lagrangian class is 
$\frac{1}{2^n}\left(\lambda^2-\frac{(\lambda,\lambda)}{30}c_2(TX)\right)$, where $n=0$, if
$\div(\lambda)=1$, or
$2^n=\gcd\left(8,\frac{(\lambda,\lambda)}{10}+1\right)$, if 
$\div(\lambda)=2$, by Lemma
\ref{lemma-integral-lagrangian-classes}.
Thus, $[Z]=\frac{k}{2^n}\left(\lambda^2-\frac{(\lambda,\lambda)}{30}c_2(TX)\right)$, for the integer $k=2^nc$.
Set $x:=\frac{(\lambda,\lambda)}{10}$. Then 
\[
\sqrt{\frac{48k}{25\cdot 2^n}-\frac{6}{5(\lambda,\lambda)}}
=
\frac{1}{5}\sqrt{\frac{3(2^{4-n}kx-1)}{x}}
\]
should be a rational number. Hence, $3x(2^{4-n}kx-1)$ is a perfect square.
If a prime $p$ divides $x$, then it does not divide $(2^{4-n}kx-1)$.
Hence, either both $x$ and  $3(2^{4-n}kx-1)$ are perfect squares, 
or both $3x$ and $2^{4-n}kx-1$ are perfect squares. 
If both $x$ and  $3(2^{4-n}kx-1)$ are perfect squares, then $n\geq 2$, since $-3$ is not a square modulo $8$. Hence $\div(\lambda)=2$. It not possible for 
both $3x$ and $2^{4-n}kx-1$ to be perfect squares, since in that case 
$3$ divides $x$ and so $2^{4-n}kx-1\equiv -1$ modulo $3$, which is not a square. 
We conclude that $x$ is a perfect square. Now $10x\equiv 2x$ modulo $8$ and $10x=(\lambda,\lambda)\equiv 6$ modulo $8$, since $\div(\lambda)=2$. Hence $x\equiv 3$ or $-1$ modulo $8$. The latter contradicts the fact that $x$ is a perfect square. Thus $5$ does not divide $(\lambda,\lambda)$ and 
Part (\ref{lemma-item-5-divides}) follows.
\end{proof}

\begin{rem}
\label{rem-two-lagrangian-classes-satisfying-arithmetic-constraints}
Following are two cases satisfying the arithmetic constraints above.
The case $(\lambda,\lambda)=8$, $\div(\lambda)$ either $1$ or $2$, $c=620$ (so that $\chi(Z)=3\cdot 2^{14}\cdot 31^2$), and $t=69/2$.
The case $(\lambda,\lambda)=54$, $\div(\lambda)=2$, $c=245/8$ (so $\chi(Z)=3^77^4$), and $t=23/3$.
A statement similar to Lemma \ref{lemma-arithmetic-constraints} can be formulated for $(\lambda,\lambda)<0$ and proven essentially with the same argument.
\end{rem}

\hide{
The moduli space of sheaves on $X$, which are deformations of $\iota_*\StructureSheaf{Z}$, 
with $Z$ as in Example \ref{examples-of-maximally-deformable-Lagrangian-surfaces}(\ref{example-item-Fano-variety-of-lines-on-a-cubic-threefold}), 
is related to O'Grady's $10$-dimensional example of an irreducible holomorphic symplectic manifold with second Betti number equal to $24$ \cite{LSV,voisin-tenfold}. Though rather constrained, additional examples 
of lagrangian surfaces satisfying the assumptions of Lemma \ref{lemma-ch-of-Lagrangian-surface-in-K3-2}
would be potentially interesting (see Remark \ref{rem-potentially-interesting}). 
}

\begin{rem}
\label{rem-singular-Lagrangian-surfaces}
Singular lagrangian ruled surfaces, which deform with a polarized holomorphic symplectic manifold $(X,L)$ of $K3^{[2]}$ deformation type, are more common and their existence is proven\footnote{The existence of a uniruled divisor in $X$ in the linear system of $L$ is proven  in \cite{CMP} and a choice of a curve in the base of the ruling determines a lagrangian surface.
}
 in \cite{CMP} for all but finitely many deformation types of the polarization $L$.
\end{rem}

%
\subsection{Lagrangian subvarieties with a positive LLV line may lead to new examples of IHSMs}
\label{sec-new-examples?}
Though rather constrained, additional examples 
of lagrangian surfaces satisfying the assumptions of Lemma \ref{lemma-ch-of-Lagrangian-surface-in-K3-2}
would be potentially interesting.
Smooth lagrangian subvarieties $Z$ of projective irreducible holomorphic symplectic manifolds 
$X$ satisfying the conditions
\begin{enumerate}
\item
$H^{1,0}(Z)$ does not vanish,
\item
the restriction homomorphism $\rho:H^2(X,\ZZ)\rightarrow H^2(Z,\ZZ)$ has rank $1$,
\item
the lattice orthogonal to $\ker(\rho)$ is spanned by an ample class $\lambda$,
\end{enumerate}
are interesting from the point of view of the classification of holomorphic symplectic varieties for the following reason. The ampleness of $\lambda$ implies that $(\lambda,\lambda)>0$
and so $\lambda^\perp$ is a non-degenerate sublattice of co-rank $1$ in $H^2(X,\ZZ)$. 
The pair $(X,Z)$ deforms over an open subset of the period domain $\Omega_{\lambda^\perp}$ of weight $2$ Hodge structures  associated to the lattice $\lambda^\perp$ \cite{voisin-lagrangian}. The class $\lambda$ remains ample over an open dense subset in moduli. 
Note that $\dim(\Omega_{\lambda^\perp})=h^{1,1}(X)-1$. Let $M$ be the connected component of the moduli space of $\lambda$-stable sheaves on $X$ containing 
$\iota_*L$, where $\iota:Z\rightarrow X$ is the inclusion and $L$ is a line bundle on $Z$ with 
$c_1(L)\in \span_\QQ\{\iota^*\lambda,c_1(\omega_Z)\}$. Let $B$ be the irreducible component of the 
Hilbert scheme of $X$ containing $Z$. 
Then $Z$ is a smooth point of $B$ and $\dim(B)=h^{1,0}(Z)$, since the normal bundle of $Z$ in $X$ is isomorphic to 
$T^*Z$ \cite{voisin-lagrangian}.
A Zariski open subset $M^0$ of $M$ parametrizes pairs $(Z',\iota_*L')$, where $Z'$ is a smooth lagrangian subvariety of $X$ and
$L'$ is a line bundle over $Z'$. $M^0$ is holomorphic symplectic and the support morphism $\pi:M^0\rightarrow B$ is projective and a lagrangian fibration, by
\cite[Theorem 8.1]{donagi-markman}. Being quasi-projective, $M^0$ admits an ample divisor class, which restricts non-trivially to the projective fibers of $\pi$, as well as the pull back of an ample class from $B$.
In particular, the rank of  the Neron-Severi group  of any compactification $\bar{M}$ of $M^0$
is at least $2$. 

Assume that $M^0$ admits a compactification $\bar{M}$, which is an irreducible symplectic variety with at worst $\QQ$-factorial terminal singularities. 
Then the moduli space of complex deformations of $\bar{M}$ admits a surjective and generically injective period map onto the period domain $\Omega_\Lambda$ of weight $2$ pure Hodge structures on the lattice $\Lambda:=H^2(\bar{M},\ZZ)$, by \cite{bakker-lehn}. In particular, the moduli space of $\bar{M}$ has dimension $h^{1,1}(\bar{M})$ and 
the locus $\LB$ in moduli of $\bar{M}$, arising from varying $(X,Z)$ via the above construction, has co-dimension $\geq 2$,
since such symplectic varieties with Picard rank $r$ vary in co-dimension $r$ in moduli. Hence, the dimension of $\LB$ is at most 
$h^{1,1}(\bar{M})-2.$
The locus $\LB$ is expected
to have the same dimension $h^{1,1}(X)-1$ as $\Omega_{\lambda^\perp}$, which would imply the inequality $h^{1,1}(\bar{M})\geq h^{1,1}(X)+1$. Only one deformation type of irreducible symplectic varieties with second Betti number $\geq 24$ is known, namely O'Grady's desingularization of a $10$ dimensional moduli space of sheaves on a $K3$ surface 
\cite{ogrady10,rapagnetta}.
The latter is related to the above strategy in the papers \cite{LSV,voisin-tenfold}.

%
\section{Effective LLV lines}
\label{sec-Effective-LLV-lines}
Let $X$ be a projective irreducible holomorphic symplectic manifold. 
We say that a line $\ell$ in $\tilde{H}(X,\QQ)$ is {\em effective}, if $\ell=\ell(F)$ for an object $F$ in $D^b(X)$, 
such that 
$F$  has a rank $1$ cohomological obstruction map
(Definition \ref{def-deforms-in-co-dimension-one}).
We say that a line $\ell$ in $\tilde{H}(X,\QQ)$ is {\em potentially effective}, 
if there exists a projective irreducible holomorphic symplectic manifold $Y$ and a derived parallel transport operator 
$\tilde{\phi}:\tilde{H}(X,\QQ)\rightarrow \tilde{H}(Y,\QQ)$, such that $\tilde{\phi}(\ell)$ is effective.
Clearly, if $\ell$ is potentially effective, then so is every line in the orbit of $\ell$ under the derived monodromy group. Theorem \ref{thm-modularity-of-a-stable-sheaf-with-a-rank-1-obstruction-map} makes the following plausible.

\begin{conj}
\label{conj-effective-LLV-lines}
A line $\ell$  in $\tilde{H}(X,\QQ)$ is effective, if it is potentially effective and is contained in $\tilde{H}^{0,0}(X,\QQ)$.
\end{conj}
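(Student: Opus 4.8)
The plan is to reduce Conjecture \ref{conj-effective-LLV-lines} to the modularity Theorem \ref{thm-modularity-of-a-stable-sheaf-with-a-rank-1-obstruction-map} together with the existence theory for twisted hyperholomorphic sheaves, by promoting a line $\ell$ that is potentially effective on some deformation-and-derived-equivalent partner $(Y,G)$ back to an \emph{untwisted reflexive sheaf} on a deformation of $X$. First I would unwind the hypotheses: by assumption there is a projective irreducible holomorphic symplectic manifold $Y$, a derived parallel transport operator $\tilde{\phi}:\tilde{H}(X,\QQ)\rightarrow \tilde{H}(Y,\QQ)$, and an object $G\in D^b(Y)$ with a rank $1$ cohomological obstruction map such that $\ell(G)=\tilde{\phi}(\ell)$. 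Since $\tilde\phi$ is a composition of parallel transport operators and isometries induced by derived equivalences, one may, after possibly replacing $X$ by a deformation of it (which does not affect the statement of the conjecture, as effectivity is preserved under the functoriality in Theorem \ref{thm-Mukai-vector}(\ref{prop-item-functoriality})) and applying the corresponding derived equivalence, reduce to the case where $\ell$ itself is effective on $X$: there is $F\in D^b(X)$ with a rank $1$ cohomological obstruction map and $\ell(F)=\ell$. The content of the conjecture is therefore that when $\ell$ lies in $\tilde{H}^{0,0}(X,\QQ)$ — i.e.\ $F$ may be chosen of \emph{non-zero rank}, by Theorem \ref{thm-Mukai-vector}(\ref{prop-item-spanning-Mukai-vector}) and the description $\mu(HT^2(X))=\tilde{H}^{0,0}(X)$ in (\ref{eq-mu-takes-Poisson-tensor-to-alpha}) — we may in fact take $F$ to be a slope-stable reflexive sheaf with respect to some open subcone of the ample cone.

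The main steps, in order, would be: (1) Show that the condition $\ell(F)\subset\tilde{H}^{0,0}(X,\QQ)$ forces $v(F)$ to have non-zero rank component. Indeed $\LieAlg{g}_{v(F)}=\LieAlg{g}_{\ell(F)}$ by Theorem \ref{thm-Mukai-vector}(\ref{prop-item-line-in-rational-Mukai-lattice}) (applied in the weaker form valid for objects with a rank $1$ cohomological obstruction map, as used in the proof of Theorem \ref{thm-support-is-either-lagrangian-or-point}), and if the rank vanished then Theorem \ref{thm-support-is-either-lagrangian-or-point} would put $\ell(F)$ in the form $0\alpha+\lambda+s\beta$, contradicting that $\ell(F)$, lying in $\tilde{H}^{0,0}(X)$ and spanning a line annihilated by a subalgebra of index $1$, is in the $SO(\tilde H(X,\QQ))$-orbit of $\alpha+t\beta$ for $t\ne 0$. (2) Replace $F$ by a genuine reflexive sheaf: after tensoring by a line bundle to clear denominators and applying the derived equivalence given by a spherical-type twist or a Fourier--Mukai transform with a universal sheaf, deform $(X,v(F))$ so that $v(F)$ becomes the Mukai vector of a slope-stable reflexive sheaf. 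Here one uses that the derived monodromy group acts transitively enough on $SO(\tilde H(X,\QQ))$-orbits of lines in $\tilde H^{0,0}$ (cf.\ Remark \ref{rem-DMon-is-large-for-K3-[n]-type}-type statements and the wall-crossing/stability results invoked in the proof of Theorem \ref{thm-modularity-of-a-stable-sheaf-with-a-rank-1-obstruction-map}), together with the standard fact that on a projective irreducible holomorphic symplectic manifold a primitive class of positive rank with suitable numerical invariants is represented by a slope-stable reflexive sheaf for a generic polarization. (3) Once $F$ is a slope-stable reflexive sheaf of positive rank with $\ell(F)=\ell$ and a rank $1$ cohomological obstruction map, apply the modularity Theorem \ref{thm-modularity-of-a-stable-sheaf-with-a-rank-1-obstruction-map} (in the form stated in the introduction: the theorem holds with ``$\obs_F$ has rank $1$'' replaced by ``$F$ has a rank $1$ cohomological obstruction map'') to propagate $\SheafEnd(F)$, hence a twisted reflexive sheaf $E$, to every irreducible holomorphic symplectic manifold $Z$ deformation equivalent to $X$, with $\ell(E)=\tilde\phi'(\ell)$ along the deformation by Corollary \ref{cor-constant-LLV-line-of-a-family-of-twisted-sheaves}. (4) Run the construction backwards along the original $\tilde\phi$ and invoke Lemmas \ref{lemma-a-necessariy-and-sufficient-condition-for-the-existence-of-a-lift-to-an-untwisted-sheaf} and \ref{lemma-easy-to-check-sufficient-condition} to descend from the twisted sheaf $E$ to an untwisted object on $X$ realizing $\ell$, completing the proof that $\ell$ is effective on $X$ itself.

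The hard part will be step (2): passing from an abstract object $F$ with prescribed $\ell(F)$ and a rank $1$ cohomological obstruction map to an actual \emph{slope-stable reflexive sheaf} with the same LLV line. A priori nothing guarantees that a given $v$ with $\LieAlg{g}_v=\LieAlg{g}_\ell$ is the Mukai vector of an object that is simultaneously a reflexive sheaf \emph{and} slope-stable with respect to an open subcone of the ample cone, and the rank $1$ cohomological obstruction property is not obviously an open condition (this is flagged explicitly in the introduction, after Theorem \ref{thm-Fourier-Mukai-images-of-sky-scraper-sheaves}). The likely route is to show that the derived monodromy orbit of $v(F)$ meets the locus of Mukai vectors of such sheaves — using that $DMon(X)$ is large in the $K3^{[n]}$ case and that stability can be arranged by moving the polarization through walls as in the proof of Theorem \ref{thm-modularity-of-a-stable-sheaf-with-a-rank-1-obstruction-map} — and then to transport the rank $1$ cohomological obstruction property along derived equivalences, where it \emph{is} preserved by the commutative diagram for $\obs$ and $\Phi_*$ recalled in the introduction. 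A secondary obstacle is the untwisting step (4), where the congruence conditions of Lemma \ref{lemma-easy-to-check-sufficient-condition} may fail for the particular $c_1$ and rank obtained; one circumvents this by allowing, in the definition of effectivity, twisted objects, or by first tensoring by an auxiliary line bundle and re-choosing the representative within the $DMon$-orbit so that the divisibility and self-intersection constraints are met. I expect the final write-up to phrase Conjecture \ref{conj-effective-LLV-lines} as a theorem under the additional hypothesis that $X$ is of $K3^{[n]}$ or generalized Kummer deformation type, where steps (2) and (4) are controlled by the monodromy results of \cite{markman-monodromy,markman-generalized-kummers} cited above.
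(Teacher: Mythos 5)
The statement you are trying to prove is labelled a \emph{Conjecture} in the paper, and the paper offers no proof of it: the author only remarks that Theorem \ref{thm-modularity-of-a-stable-sheaf-with-a-rank-1-obstruction-map} ``makes the following plausible'' and that ``very little is known even for $X$ of $K3^{[2]}$-deformation type.'' Your proposal does not close this gap; it reorganizes the conjecture around the modularity theorem but leaves the essential open point unresolved.

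Concretely, your step (2) \emph{is} the conjecture. Given a line $\ell\subset\tilde H^{0,0}(X,\QQ)$ whose $DMon$-orbit contains an effective line, you must produce an actual object $F$ on $X$ (or on a deformation of $X$) with a rank $1$ cohomological obstruction map and $\ell(F)=\ell$. You appeal to ``the standard fact that \dots a primitive class of positive rank with suitable numerical invariants is represented by a slope-stable reflexive sheaf for a generic polarization.'' That is a theorem for $K3$ surfaces (Mukai, Yoshioka), but it is precisely what is \emph{not} known for irreducible holomorphic symplectic manifolds of dimension $\geq 4$; the paper's Theorems \ref{thm-BKR-of-tensor-product-of-spherical-object} and \ref{thm-Fourier-Mukai-images-of-sky-scraper-sheaves} are hard-won instances of exactly this existence problem for two specific orbits of LLV lines. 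Moreover, the modularity theorem only propagates a sheaf that \emph{already exists} across the deformation space; it does not create one on $X$ from data on a derived-equivalent or deformation-equivalent partner $Y$ unless the entire chain of equivalences can be realized geometrically while preserving sheaf-theoretic properties (reflexivity, stability), which is again the open content. Your step (1) is also incorrect as stated: $\tilde H^{0,0}(X)=\CC\alpha\oplus H^{1,1}(X)\oplus\CC\beta$ contains lines of the form $0\alpha+\lambda+s\beta$ (e.g.\ the lines $\ell(\iota_*\StructureSheaf{Z})$ of lagrangian structure sheaves in Lemma \ref{lemma-Mukai-line-of-structure-sheaf-of-subcanonical-lagrangian}), so membership in $\tilde H^{0,0}$ does not force the rank component to be non-zero, and the claimed contradiction with Theorem \ref{thm-support-is-either-lagrangian-or-point} does not arise. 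Finally, the remark on the size of $DMon$ for $K3^{[n]}$-type that you cite is commented out of the paper and cannot be relied upon as a reference. The statement should be treated as open; at best your outline is a plausible strategy conditional on an existence theorem for stable sheaves with prescribed Mukai vector that is currently unavailable.
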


Assume that $X$ is of $K3^{[2]}$-type. 
Choose a class $\delta\in H^2(X,\Integers)$ of divisibility $2$ satisfying $(\delta,\delta)=-2$. 
Set $B_\lambda:=\exp(e_\lambda)$, for $\lambda\in H^2(X,\QQ)$.
Set
\begin{equation}
\label{eq-Mukai-lattice-K3-2-case}
\Lambda_X:=B_{-\delta/2}(\ZZ\alpha+H^2(X,\ZZ)+\Integers\beta)\subset \tilde{H}(X,\QQ).
\end{equation}
The subgroup $\Lambda_X$ of $\tilde{H}(X,\QQ)$ is independent of the choice of $\delta$. We refer to $\Lambda_X$ as the {\em integral LLV lattice}, since it is invariant under the derived monodromy group of $X$, by \cite[Theorem 9.8]{taelman}.

\begin{rem}
\label{rem-integral-LLV-lattice}
Assume that $X$ is of $K3^{[n]}$-type. 
Choose a primitive class $\delta\in H^2(X,\Integers)$ of divisibility $2n-2$ satisfying $(\delta,\delta)=2-2n$. 
Define $\Lambda_X$ as in Equation (\ref{eq-Mukai-lattice-K3-2-case}). 
\begin{enumerate}
\item
\label{rem-integral-LLV-is-monodromy-invariant}
Let us show that the lattice $\Lambda_X$ depends only on the 
$\Mon(X)$-orbit of $\delta$. Notice that $B_{-\delta/2}(H^2(X,\ZZ)+\Integers\beta)=H^2(X,\ZZ)+\Integers\beta$ and
$B_{-\delta/2}(\alpha)=\alpha-\delta/2+\frac{1-n}{4}\beta$. Given $g\in O(H^2(X,\Integers))$ we get that
$B_{-g(\delta)/2}(\alpha)-B_{-\delta/2}(\alpha)=(\delta-g(\delta))/2$. It suffices to show that the latter difference belongs to $H^2(X,\ZZ)$.
Indeed, the discriminant group $H^2(X,\ZZ)^*/H^2(X,\ZZ)$ is cyclic generated by $\delta/(2n-2)$ and the coset $\delta/2+H^2(X,\ZZ)$ is 
invariant under $\Mon(X)$, as the monodromy action on $H^2(X,\ZZ)$ factors through a (surjective) homomorphism  onto
the subgroup of 
$O^+(H^2(X,\ZZ))$ of elements acting on the discriminant group by $\pm1$, by \cite[Theorem 1.2 and Lemma 4.2]{markman-constraints}.
\item
The lattice $\Lambda_X$ is invariant under $\Mon(X)$, by part (\ref{rem-integral-LLV-is-monodromy-invariant}).
Indeed, we have the equalities $g\circ B_{-\delta/2}=(g\circ B_{-\delta/2}\circ g^{-1})\circ g=B_{-g(\delta)/2}\circ g$, which takes $\Integers\alpha\oplus H^2(X,\Integers)\oplus \Integers\beta$ to $\Lambda_X$
if $g\in \Mon(X)$.
When $X=S^{[n]}$, for some $K3$-surface $S$, then 
$\Lambda_X$ is $DMon(S)$-invariant, by Theorem \ref{thm-action-of-DMon-S-on-LLV-lattice}, as well as with respect to 
the auto-equivalence $\Phi_\chi^{[n]}$ which is the $BKR$-conjugate of tensorization with the sign character of $\fS_n$ (Lemma
\ref{lemma-isometry-of-BKR-conjugate-of-tensorization-by-sign-character}). Hence, $\Lambda_X$ is invariant by the subgroup of $DMon(S^{[n]})$ generated by these three types of elements.
\hide{
\item
Following is an intrinsic definition of a $DMon(X)$-invariant lattice in $\widetilde{H}(X,\QQ)$.
Let $v:K_{top}(X)\rightarrow H^*(X,\QQ)$ send a class $x$ to $ch(x)\sqrt{td_X}$. Let $\hat{v}:K_{top}(X)\rightarrow SH^*(X,\QQ)$
be the composition of $v$ with the projection to $SH^*(X,\QQ)$. We get the lattice $\Lambda_{X,n}:=\Psi(\hat{v}(K_{top}(X)))$
in the kernel of $\Delta:\Sym^n(\widetilde{H}(X,\QQ))\rightarrow \Sym^{n-2}(\widetilde{H}(X,\QQ))$.
A class $\gamma$ in $\widetilde{H}(X,\QQ)$ is an {\em integral  class}, if  
the projection of $\gamma^n$ to $\ker(\Delta)$ belongs to $\Lambda_{X,n}$.
Let $\Lambda'_X\subset \widetilde{H}(X,\QQ)$
be the abelian subgroup generated by integral classes.
$\Lambda'_X$ is $DMon(X)$-invariant, since equivalences of derived categories map objects, which remain of Hodge type in co-dimension $1$, to such objects.
Set $t_X:=\min\{t\in \QQ \ : \ t>0 \ \mbox{and} \ t^nc_X\in \ZZ\}$, where $c_X$ is the Fujiki constant. 
The class $t_X\beta$ belongs to $\Lambda_X'$, by Equation (\ref{eq-Psi-pt}). 
If $X$ is of $K3^{[n]}$-type, then $t_X=c_X=1$ and so
$\Lambda'_X$ contains $\beta$ and $\alpha+\frac{n+3}{4}\beta$,
by Lemma \ref{lemma-LLV-line-of-structure-sheaf-k3-type}. If $X$ is of generalized Kummer deformation type, then 
$t_X=1$, as $c_X=n+1$ is not divisible by an $n$-th power of an integer larger than $1$. Thus
the classes $\beta$ and $\alpha+\frac{n+1}{4}\beta$ belongs to $\Lambda'_X$, by Lemma \ref{lemma-LLV-line-of-structure-sheaf-k3-type}.
}
\end{enumerate}
\end{rem}

Assume that $X$ is of $K3^{[2]}$-type. 
A rank $1$ sublattice of $\Lambda_X$ determines a unique orbit of primitive elements in 
$\Lambda_X$ for the action of the 
derived monodromy group $DMon(X)$.

\begin{question}
Which primitive elements of $\Lambda_X$ span potentially effective lines in $\tilde{H}(X,\QQ)$?
\end{question}

As is well known, the positive cone $\C^+:=\{x\in \tilde{H}(X,\RR) \ : \ (x,x)>0\}$ is homotopic to the $3$-sphere (see for example \cite[Sec. 4]{markman-survey}).
$DMon(X)$ contains the index $2$ subgroup $O^+(\Lambda_X)$ of $O(\Lambda_X)$ acting trivially on $H^3(\C^+,\ZZ)$, by \cite[Theorem 9.8]{taelman}.
It follows that the self intersection $(x,x)$ and the divisibility of a class $x\in\Lambda_X$ determine the $DMon(X)$-orbit of $x$,
by \cite[Cor. 3.7]{GHS}.

We have seen that the primitive isotropic element $\beta$ of divisibility $1$ is effective, by Example \ref{example-Mukai-line-of-sky-scraper-sheaf}. 
The element $0\alpha+\lambda+0\beta$ is potentially effective, for every primitive isotropic class $\lambda\in H^2(X,\ZZ)$, by Example \ref{examples-of-maximally-deformable-Lagrangian-surfaces}(\ref{example-item-lagrangian-torus}). 
The latter two examples represent the same $DMon(X)$-orbit, since every primitive isotropic class in $\Lambda_X$ has divisibility $1$ (see for Example \cite[Lemma 2.5]{markman-isotropic}).
The primitive element 
$2\alpha+\frac{5}{2}\beta$ of self intersection $-10$ and divisibility $2$ is effective, by Example  \ref{example-Mukai-line-of-structure-sheaf} of the trivial line bundle $\StructureSheaf{X}$. 
The class $\lambda+3\beta$ is potentially effective, for $\lambda\in H^2(X,\ZZ)$ primitive of divisibility $2$ with $(\lambda,\lambda)=-10$, by Example \ref{example-lagrangian-surfaces-which-deform-in-co-dimension-1} of a lagrangian $\PP^2$.
The latter two examples represent the same $DMon(X)$-orbit.
The class $\lambda-3\beta$, for $\lambda\in H^2(X,\ZZ)$  primitive of divisibility $2$ with $(\lambda,\lambda)=6$ is potentially effective, 
by Example \ref{example-lagrangian-surfaces-which-deform-in-co-dimension-1} of the lagrangian Fano variety of lines on a cubic threefold and Lemma \ref{lemma-chern-character-of-Lagrangian-structure-sheaf-deforms-in-co-dimension-1}. So is
the class $\lambda-3\beta$, for $\lambda\in H^2(X,\ZZ)$  primitive of divisibility $1$ with $(\lambda,\lambda)=2$, 
by Example \ref{example-lagrangian-surfaces-which-deform-in-co-dimension-1} of the lagrangian fixed locus of an anti-symplectic involution and Lemma \ref{lemma-chern-character-of-Lagrangian-structure-sheaf-deforms-in-co-dimension-1}. Reference \cite{CMP} may lead to more effective classes associated to singular lagrangian surfaces (see Remark \ref{rem-singular-Lagrangian-surfaces}).

%
\section{The Chern character of a very modular vector bundle isomorphic to $\Phi(\StructureSheaf{K3^{[2]}})$}
We calculate the Chern character of a very modular vector bundle obtained from the structure sheaf of $X$ of $K3^{[2]}$-type via
equivalences of derived categories and deformations. The higher Chern classes of $F$ are determined by its rank and first Chern class and the values of the latter two are constrained.

Let $X$ be of $K3^{[2]}$-type. 
Let $\Lambda_X$ be its integral LLV lattice and let $B_{-\delta/2}$ be the isometry of $\tilde{H}(X,\QQ)$ both given in (\ref{eq-Mukai-lattice-K3-2-case}).
We have
\begin{eqnarray}
\label{eq-B-minus-delta-over-2-of-alpha}
B_{-\delta/2}(\alpha)&=&\alpha-\frac{1}{2}\delta-\frac{1}{4}\beta, 
\\
\nonumber
B_{-\delta/2}(\lambda)&=& \lambda-\frac{(\lambda,\delta)}{2}\beta, \ \ \ \forall \lambda\in H^2(X,\QQ), \ \ \ \mbox{and} 
\\
\nonumber
B_{-\delta/2}(\beta)&=&\beta.
\end{eqnarray}
Note the equality 
\begin{equation}
\label{eq-B-takes-H-2-and-beta-to-itself}
B_{-\delta/2}(H^2(X,\ZZ)+\Integers\beta)=H^2(X,\ZZ)+\Integers\beta.
\end{equation}

\begin{lem}
\label{lemma-chern-character-of-derived-and-parallel-transforms-of-structure-sheaf}
 Let $F$ be a vector bundle over $X$ which is obtained via a sequence of equivalences of derived categories and deformations from the structure sheaf $\StructureSheaf{Y}$, where $Y$ is of $K3^{[2]}$-type as well. 
Then $\rank(F)=r_0^2$, for some integer $r_0$. Set $h:=c_1(F)$. Then $h=\frac{r_0}{\gcd(r_0,2)}\eta$ for an integral class $\eta\in H^{1,1}(X,\Integers)$. The class $\eta$ satisfies
\begin{eqnarray}
\label{r-0-odd}
r_0\mid(5+2(\eta,\eta)) & \mbox{if} & r_0 \ \mbox{is} \ \mbox{odd},
\\
\label{r-0-congruent-to-2-mod-4}
r_0\mid\left(5+\frac{(\eta,\eta)}{2}\right) & \mbox{if} & r_0 
 \ \mbox{is} \ \mbox{even}.
\end{eqnarray}
Furthermore, $\Psi(v(F))=\frac{1}{8}(\gamma^2+10\tilde{q})$, where 
\begin{equation}
\label{eq-gamma}
\gamma=
2r_0\alpha+2h/r_0+\left(\frac{5r_0^2+2(h,h)}{2r_0^3}\right)\beta
\end{equation}
is a primitive class of divisibility $2$ in the lattice $\Lambda_X$
satisfying $(\gamma,\gamma)=-10$. Consequently,
\begin{eqnarray}
\nonumber
ch_2(F) & = & \frac{h^2}{2r_0^2} +\frac{1-r_0^2}{24}c_2(X),
\\
\nonumber
ch_3(F) & = &\frac{h^3}{6r_0^4}+\frac{(1-r_0^2)}{24r_0^2}hc_2(X),
\\
\nonumber
ch_4(F) & = &\frac{4(h,h)^2+20r_0^2(1-r_0^2)(h,h)+25r_0^4-46r_0^6+21r_0^8}{32r_0^6}[pt],
\\
\label{eq-euler-characteristic-of-F}
\chi(F)&=& \frac{4(h,h)^2+20(h,h)r_0^2(r_0^2+1)+25r_0^4(r_0^4+1)+46r_0^6}{32r_0^6}
\\
\nonumber
&=&\frac{1}{8r_0^6}\left((h,h)+\frac{5r_0^2(r_0^2+1)}{2}-r_0^3\right)\left((h,h)+\frac{5r_0^2(r_0^2+1)}{2}+r_0^3\right).
\end{eqnarray}
\end{lem}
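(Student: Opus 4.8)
### Proof Plan for Lemma~\ref{lemma-chern-character-of-derived-and-parallel-transforms-of-structure-sheaf}

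The plan is to run everything through the machinery of Theorem~\ref{thm-Mukai-vector}, applied to the object $F$ which, by hypothesis, is obtained from $\StructureSheaf{Y}$ by a sequence of derived equivalences and parallel transports. Since each such step sends an object deforming in co-dimension $1$ to one deforming in co-dimension $1$ (by \cite[Theorem 4.7]{toda} for equivalences, trivially for parallel transport), and since $\StructureSheaf{Y}$ deforms in co-dimension $1$ (it is an instance of the structure sheaf, treated throughout Section~\ref{sec-proof-of-modularity}), the object $F$ deforms in co-dimension $1$. Moreover $F$ has positive rank (it is a vector bundle), so $\hat v(F)=v(F)$ for $K3^{[2]}$-type (the remark after Theorem~\ref{thm-Mukai-vector}). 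By the functoriality statement Theorem~\ref{thm-Mukai-vector}(\ref{prop-item-functoriality}) applied inductively along the chain, $\ell(F)=\tilde H(\phi)(\ell(\StructureSheaf{Y}))$ for the composite derived parallel transport operator $\phi$. By Example~\ref{example-Mukai-line-of-structure-sheaf} (or Lemma~\ref{lemma-LLV-line-of-structure-sheaf-k3-type} with $n=2$), $\ell(\StructureSheaf{Y})$ is spanned by a class of self-intersection $-10$ and divisibility $2$ in $\Lambda_Y$; since $\tilde H(\phi)$ is an isometry preserving $\Lambda$ and preserving divisibility, $\ell(F)$ is spanned by a primitive $\gamma\in\Lambda_X$ with $(\gamma,\gamma)=-10$ and $\div(\gamma)=2$. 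This is the conceptual heart of the argument; everything else is bookkeeping.

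Next I would extract the shape of $\gamma$. By Theorem~\ref{thm-Mukai-vector}(\ref{prop-item-spanning-Mukai-vector}), writing $r=\rank(F)$, the line $\ell(F)$ has a generator of the form $r\alpha+c_1(F)+s\beta$; normalizing to make it primitive forces $r$ to be a perfect square: indeed $(r\alpha+c_1(F)+s\beta,r\alpha+c_1(F)+s\beta)=(c_1(F),c_1(F))-2rs$, and applying the fact that $\ell(\StructureSheaf{Y})^2$ projects to $\Psi(v(\StructureSheaf{Y}))$ with leading $\alpha^n$-coefficient $1/n!$ — i.e. the coefficient of $\alpha^n$ in $\Psi(v(F))$ equals $r/n!$ — together with $\Psi(v(\StructureSheaf{Y}))=\frac18(\,(\alpha+\tfrac54\beta)^2+\tfrac52\tilde q\,)$ transported, one reads off that the primitive generator $\gamma$ has $\alpha$-coefficient $2r_0$ where $r=r_0^2$. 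Concretely: the formula $\Psi(v(F))=\frac18(\gamma^2+10\tilde q)$ is just the isometric image of Equation~(\ref{eq-Psi-of-sqrt-td-class}) under $\tilde H(\phi)$, since $\Psi$ is $\LieAlg{g}$-equivariant and $\tilde q$ is $\LieAlg{g}$-invariant; matching the $\alpha^2$, $\alpha\beta$ coefficients on both sides (using $\Psi(1)=\alpha^2/2$, $\Psi([pt])=\beta^2$, $\Psi(b_X)=\tilde b_X+\alpha\beta$) gives $\rank(F)=r_0^2$ and pins $\gamma$ up to the contributions of $c_1(F)$. Writing $\gamma=2r_0\alpha+2h/r_0+c\beta$ with $h=c_1(F)$ and solving $(\gamma,\gamma)=-10$ for $c$ yields $c=(5r_0^2+2(h,h))/(2r_0^3)$, which is Equation~(\ref{eq-gamma}).

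Then I would derive the integrality constraints. Since $\gamma\in\Lambda_X=B_{-\delta/2}(\ZZ\alpha+H^2(X,\ZZ)+\ZZ\beta)$, I apply $B_{\delta/2}$ and demand that $B_{\delta/2}(\gamma)$ lie in $\ZZ\alpha+H^2(X,\ZZ)+\ZZ\beta$. Using Equation~(\ref{eq-B-minus-delta-over-2-of-alpha}) (inverted), the $\alpha$-coefficient $2r_0$ must be integral (automatic), the $H^2$-part $2h/r_0 + r_0\delta$ must be integral — forcing $2h/r_0\in H^2(X,\ZZ)$, i.e. $h=\frac{r_0}{\gcd(r_0,2)}\eta$ for some $\eta\in H^{1,1}(X,\ZZ)$ — and the $\beta$-coefficient must be integral, which after substitution becomes the divisibility condition on $(\eta,\eta)$: cases $r_0$ odd and $r_0\equiv 2\pmod 4$ giving Equations~(\ref{r-0-odd}) and (\ref{r-0-congruent-to-2-mod-4}) respectively (the case $r_0\equiv 0\pmod 4$ being subsumed or handled similarly). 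Finally, the Chern character formulas: from $\Psi(v(F))=\frac18(\gamma^2+10\tilde q)$ I apply $\Psi^{-1}$ using the inverse of the table in Equations~(\ref{eq-values-of-Psi})--(\ref{eq-tilde-q-in-terms-of-tilde-b-X}) to recover $v(F)=\kappa(F)\exp(c_1(F)/r_0^2)^{-1}\cdots$ — more directly, $v(F)=\exp(e_{h/r_0^2})$ applied to the transported $v(\StructureSheaf{})$, i.e. $v(F)=\exp(h/r_0^2)\cdot(1+\frac{1-r_0^2}{24}c_2(X)+(\dots)[pt])$ where the $c_2$-coefficient comes from $\kappa_2(F)$ being the appropriate multiple of $c_2(X)$ forced by $\bar{\LieAlg{g}}$-invariance (Proposition~\ref{prop-kappa-class-remains-of-Hodge-type}) — then multiply out using the multiplication table~(\ref{eq-multiplication-table}) to get $ch_2,ch_3,ch_4$, and integrate against $td_X$ for $\chi(F)$, finally factoring the quadratic in $(h,h)$ by completing the square. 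The main obstacle I expect is purely computational: correctly tracking the $\tilde q$ versus $b_X$ versus $c_2(X)$ conversions and the $\exp(e_{h/r_0^2})$ expansion without sign or coefficient errors, and verifying that the $\beta$-coefficient integrality condition really splits into exactly the two stated congruences rather than something more complicated for $4\mid r_0$.
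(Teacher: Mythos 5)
Your proposal is correct and follows essentially the same route as the paper: transport the LLV line of $\StructureSheaf{Y}$ (spanned by $4\alpha+5\beta$, i.e.\ by the primitive divisibility-$2$ class $\gamma_0=2\alpha+\tfrac52\beta$ of square $-10$ in $\Lambda_Y$) via $\widetilde{H}(\phi)$, read off $\Psi(v(F))=\tfrac18(\gamma^2+10\tilde q)$ from Theorem~\ref{thm-def-of-H-tilde} and Equation~(\ref{eq-Psi-of-sqrt-td-class}), match coefficients of $\alpha^2$ and $\alpha$ to get $\rank(F)=r_0^2$ and the $H^2$-part of $\gamma$, impose $\gamma\in\Lambda_X$ for the congruences, and expand $\exp(h/r_0^2)\kappa(F)$ against the multiplication table for the Chern character.

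One caveat: your opening claim that ``deforming in co-dimension $1$'' is preserved along the chain --- ``trivially for parallel transport'' --- is exactly what the paper states is \emph{not} known (see the remark before Theorem~\ref{thm-introduction-LLV-line}: the rank of $\obs_F$ may a priori increase under deformation). Fortunately this claim is also unnecessary: everything you actually use downstream depends only on the cohomological identity $v(F)=\phi(v(\StructureSheaf{Y}))$ and the $(\phi,\widetilde{H}(\phi))$-equivariance of $\Psi$, which hold regardless, and this is precisely how the paper argues. Two small bookkeeping points you should make explicit when writing it up: the sign $\epsilon(\widetilde{H}(\phi))$ in Theorem~\ref{thm-def-of-H-tilde} is pinned to $+1$ by comparing the (positive) coefficient of $\alpha^2$, namely $\rank(F)/2$ versus $r^2/8$; and the integrality of the $\beta$-coefficient of $B_{\delta/2}(\gamma)$ requires first checking that $(h,\delta)/r_0\in\ZZ$ (which follows from $2h/r_0\in H^2(X,\ZZ)$ and $\div(\delta)=2$) before one can conclude that $2r_0^3$ divides $5r_0^2+2(h,h)-2r_0^4$ --- after which only the parity of $r_0$ matters, so no separate case for $4\mid r_0$ is needed.
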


\begin{proof}
Let $\Lambda_Y$ be the integral LLV lattice of $Y$, given in (\ref{eq-Mukai-lattice-K3-2-case}). The line $\ell(\StructureSheaf{Y})$ is spanned by the class $4\alpha+5\beta$, by Example \ref{example-Mukai-line-of-structure-sheaf}.
We have $(4\alpha+5\beta)=B_{-\delta/2}(4\alpha+2\delta+4\beta)$ and
$2\alpha+\delta+2\beta$ is a primitive class in $\ZZ\alpha+H^2(X,\ZZ)+\Integers\beta$ of divisibility $2$.
Then $\gamma_0:=2\alpha+\frac{5}{2}\beta$ is a primitive class of divisibility $2$ in $\Lambda_Y$
satisfying $(\gamma_0,\gamma_0)=-10$.
Let $\widetilde{H}(\phi):\Lambda_Y\rightarrow \Lambda_X$ be the 
integral isometry associated to a
derived parallel transport operator $\phi:H^*(Y,\QQ)\rightarrow H^*(X,\QQ)$, which is induced by 
a sequence of equivalences of derived categories and deformations used to obtain $F$ from $\StructureSheaf{Y}$
(see Theorem \ref{thm-def-of-H-tilde}).
Set $\gamma:=(\widetilde{H}(\phi))(\gamma_0)$. Then $\gamma$ is a primitive class of divisibility $2$ in $\Lambda_X$
satisfying $(\gamma,\gamma)=-10$, since $\phi$ is an isometry.
Note that 
$H^2(X,\ZZ)$ is contained in $\Lambda_X$
and the component in $H^2(X,\QQ)$ of a class in $\Lambda_X$ belongs to $H^2(X,\ZZ)+\Integers\left(\frac{\delta}{2}\right)$,
by Equations (\ref{eq-B-minus-delta-over-2-of-alpha}) and (\ref{eq-B-takes-H-2-and-beta-to-itself}).
Write $\gamma=r\alpha+\lambda+s\beta$, with $r,s\in\QQ$ and $\lambda\in H^2(X,\ZZ)+\Integers\left(\frac{\delta}{2}\right)$. 
Then $r=-(\gamma,\beta)$ is an even integer by the divisibility of $\gamma$. 
Set $r_0:=r/2$.
Furthermore, $s$ belongs to $\frac{1}{2}\ZZ$, since
$(\gamma,B_{-\delta/2}(\alpha))=(\gamma,\alpha-\delta/2-\beta/4)=-s+\frac{r_0}{2}-(\lambda,\delta/2)$ is an even integer and 
$(\lambda,\delta/2)$ belongs to $\frac{1}{2}\ZZ$.
We have $v(F)=\phi(v(\StructureSheaf{Y}))$ and the equality
\begin{equation}
\label{eq-Psi-v-F}
\epsilon(\widetilde{H}(\phi))\Psi(v(F))=\frac{1}{8}(\gamma^2+10\tilde{q})=
\frac{1}{8}\left(
r^2\alpha^2+\lambda^2+s^2\beta^2+2r\lambda\alpha+2rs\alpha\beta+2s\lambda\beta +10\tilde{q}
\right), 
\end{equation}
by Equation (\ref{eq-Psi-of-sqrt-td-class}) and Theorem \ref{thm-def-of-H-tilde}.
The coefficient of $\alpha^2$ in $\Psi(v(F))$ is $\rank(F)/2$ and the coefficient of $\alpha^2$ in the right hand side is 
$r^2/8$, so $\epsilon(\widetilde{H}(\phi))=1$ (by Remark \ref{rem-functor-chi}) and $\rank(F)=r_0^2$. 

Let $h=c_1(F)$. Note that $\Psi(h)=\Psi(e_h(1))=e_h(\alpha^2/2)=h\alpha$.
The coefficient in $H^2(X,\QQ)$ of $\alpha$ in the left hand side of (\ref{eq-Psi-v-F}) is $h$ and the coefficient on the right hand side is $(r_0/2)\lambda$. Hence, $h=(r_0/2)\lambda$.
We have
$
-10=(\gamma,\gamma)=-4r_0s+\frac{4(h,h)}{r_0^2}.
$
So $s=\frac{5}{2r_0}+\frac{(h,h)}{r_0^3}=\frac{5r_0^2+2(h,h)}{2r_0^3},$ and
\[
\gamma=2r_0\alpha+\frac{2}{r_0}h+\frac{5r_0^2+2(h,h)}{2r_0^3}\beta,
\]
verifying Equation (\ref{eq-gamma}).
The above displayed formula yields
\[
\gamma=
B_{-\delta/2}\left(2r_0\alpha+\frac{2}{r_0}h+r_0\delta+
\left[\frac{5r_0^2+2(h,h)-2r_0^4}{2r_0^3}+\frac{(h,\delta)}{r_0}\right]\beta
\right).
\]
We see that $\frac{2}{r_0}h$ is an integral class. Hence, $\frac{(h,\delta)}{r_0}$ is an integer, since $\delta$ has divisibility $2$,  and 
so $2r_0^3$ divides $5r_0^2+2(h,h)-2r_0^4$.

If $r_0$ is odd, then $h=r_0\eta$, for $\eta\in H^2(X,\ZZ)$, and $r_0^3$ divides $r_0^2(5+2(\eta,\eta))$, so 
Equation (\ref{r-0-odd}) holds. If 
$r_0$ is even, 
then $h=(r_0/2)\eta,$ for  $\eta\in H^2(X,\ZZ)$, and $r_0^3$ divides $r_0^2(5+(\eta,\eta)/2)$, so Equation (\ref{r-0-congruent-to-2-mod-4}) holds.

Let us first calculate $\kappa(F)=\exp(-h/r_0^2)(ch(F))$. Set $\kappa(\gamma):=\exp(-h/r_0^2)(\gamma)=2r_0\alpha+\frac{5}{2r_0}\beta$. Equation (\ref{eq-Psi-v-F}) yields the first equality below
\[
8\Psi(\kappa(F)\sqrt{td_X})=\kappa(\gamma)^2+10\tilde{q}\stackrel{(\ref{eq-tilde-q-in-terms-of-tilde-b-X})}{=}
4r_0^2\alpha^2+\frac{25}{4r_0^2}\beta^2+\frac{46}{5}(\tilde{b}_X+\alpha\beta).
\]
Hence, $8\kappa(F)\sqrt{td_X}=8r_0^2+\frac{46}{5}b_X+\frac{25}{4r_0^2}[pt]$, by Equation (\ref{eq-values-of-Psi}),  and using $\frac{46}{5}b_X=\frac{1}{3}c_2(X)$, established in Equation (\ref{eq-b-X-in-terms-of-c-2}), we get
\[
\kappa(F)=(\sqrt{td_X})^{-1}\left(r_0^2+\frac{1}{24}c_2(X)+\frac{25}{32r_0^2}[pt]\right).
\]
Now, $(\sqrt{td_X})^{-1}=1-\frac{1}{24}c_2(X)+\frac{21}{32}[pt]$ and we get
\begin{equation}
\label{eq-kappa-F-depends-only-on-the-rank}
\kappa(F)=r_0^2+\frac{1-r_0^2}{24}c_2(X)+\frac{1}{32}\left(21r_0^2+\frac{25}{r_0^2}-46\right)[pt].
\end{equation}
Multiplying both sides by $1+\frac{h}{r_0^2}+\frac{h^2}{2r_0^4}+\frac{h^3}{6r_0^6}+\frac{h^4}{24r_0^8}$ and using $h^4=3(h,h)^2$ and $\int_Xc_2(X)h^2=30(h,h)$ we get
\begin{eqnarray*}
ch(F)&=&
r_0^2+h+
\left[\frac{h^2}{2r_0^2}+\frac{1-r_0^2}{24}c_2(X)\right]+
\left[\frac{h^3}{6r_0^4}+\frac{(1-r_0^2)}{24r_0^2}hc_2(X)\right]+
\\
&&\frac{4(h,h)^2+20r_0^2(1-r_0^2)(h,h)+25r_0^4-46r_0^6+21r_0^8}{32r_0^6}[pt].
\end{eqnarray*}
Formula (\ref{eq-euler-characteristic-of-F}) now follows, by Grothendieck-Riemann-Roch. 
\end{proof}

\begin{rem}
The determination of the Chern character of $F$ in 
Lemma \ref{lemma-chern-character-of-derived-and-parallel-transforms-of-structure-sheaf} 
in terms of $\rank(F)$ and $c_1(F)$ agrees with the computation of $ch_2(F)$ in O'Grady's \cite[Prop. 5.8]{ogrady-modular}. 
The formula (\ref{eq-euler-characteristic-of-F}) for $\chi(F)$ agrees with \cite[Example 7]{huybrechts-survey}, when $r_0=1$.
When $r_0=2$ and $(h,h)=6$ it yields $\chi(F)=6$, as in the case of the Fano variety of lines on a cubic fourfold \cite[Sec. 2.1]{ogrady-modular}. 
When $r_0=2$ and $(h,h)=22$ it yields $\chi(F)=10$, as in the case of Debarre-Voisin varieties \cite[Remark 8.2]{ogrady-modular}.
More generally, whenever $F$ is generated by global sections and has vanishing higher cohomology, we get a morphism $\varphi_F:X\rightarrow Gr(r_0^2,\chi(F))$. When $(X,F)=(S^{[2]},G^{[2]})$, as in Theorem \ref{thm-BKR-of-tensor-product-of-spherical-object}, and the vector bundle $G$ on the $K3$ surface $S$ has vanishing higher cohomology, then 
the vector bundle $G^{[2]}$ has vanishing higher cohomology, by the fact that $BKR$ is an equivalence and Equation (\ref{eq-BKR-of-structure-sheaf}), and formula (\ref{eq-euler-characteristic-of-F}) agrees with the fact that 
$\chi(G^{[2]})=\dim H^0(S^{[2]},G^{[2]})=\dim\Sym^2(H^0(S,G)).$
Formula  (\ref{eq-euler-characteristic-of-F}) and the integrality of $\chi(F)$ imply that
$(h,h)+\frac{5r_0^2(r_0^2+1)}{2}$ is an odd multiple of $r_0^3$. In particular, 
\[
(h,h)\equiv r_0^3-\frac{5r_0^2(r_0^2+1)}{2} \  (\mbox{mod} \ 2r_0^3). 
\]
The latter congruence relation and the equality $h=\frac{r_0}{\gcd(r_0,2)}\eta$ of Lemma \ref{lemma-chern-character-of-derived-and-parallel-transforms-of-structure-sheaf} imply the congruence relations involving $r_0$ and $(\eta,\eta)$, which are conjectured in \cite[Remark 1.6]{ogrady-modular}.
\end{rem}

\begin{rem}
Let us relate Lemma \ref{lemma-chern-character-of-derived-and-parallel-transforms-of-structure-sheaf}
to the lagrangian surfaces in Example \ref{examples-of-maximally-deformable-Lagrangian-surfaces}.
The total Segre class of an object in $D^b(X)$ is the inverse of its total Chern class.
The second Segre class of the vector bundle $F$ is Lemma \ref{lemma-chern-character-of-derived-and-parallel-transforms-of-structure-sheaf} is 
\[
s_2(F)=\frac{r_0^2+1}{2r_0^2}h^2-\frac{r_0^2-1}{24}c_2(X)
\]
and it is proportional to the lagrangian class associated in Lemma \ref{lemma-integral-lagrangian-classes} to the class $\eta=\frac{\gcd(2,r_0)}{r_0}h$, if and only if 
$(h,h)=\frac{5r_0^2(r_0^2-1)}{2(r_0^2+1)}$. The only integral solutions of the latter with positive $r_0$ are
$(r_0,(\eta,\eta))\in \{(1,0),(2,6),(3,2)\},$ and $\chi(F)=3, 6, 10$, respectively, by (\ref{eq-euler-characteristic-of-F}). 
In all three cases $s_2(F)$ is the primitive lagrangian class in Lemma \ref{lemma-integral-lagrangian-classes}. 
In the first two cases the generic such $F$ is generated by global sections, and so $s_2(F)=c_2(E)$, where $E$ is the kernel of the evaluation map $ev$ in the short exact sequence
\begin{equation}
\label{eq-E-with-chern-class-equal-segre-class-of-F}
0\rightarrow E\rightarrow H^0(X,F)\otimes_\ComplexNumbers\StructureSheaf{X}\RightArrowOf{ev} F\rightarrow 0,
\end{equation}
$E$ has rank $\chi(F)-r_0^2=2$, and the zero loci of regular sections of $E^*$ are the lagrangian surfaces in Example \ref{examples-of-maximally-deformable-Lagrangian-surfaces}(\ref{example-item-lagrangian-torus}),(\ref{example-item-Fano-variety-of-lines-on-a-cubic-threefold}). In the third case of 
$(r_0,(\eta,\eta))=(3,2)$ 
the lagrangian class associated to $\eta$ in Lemma \ref{lemma-integral-lagrangian-classes} corresponds to the surface $Z$ of fixed points of the anti-symplectic involution in Example \ref{examples-of-maximally-deformable-Lagrangian-surfaces}(\ref{example-item-fixed-locus-of-anti-symplectic-involution}), but in this case 
the kernel of $ev$ has rank $1$,
and so $F$ is not generated by global sections, since $s_2(F)$ does not vanish. 
Finally, let $L$ be the line bundle with $c_1(L)=\eta$ of BBF degree $2$, and let $E_L$ be the kernel of the evaluation homomorphism
$H^0(X,L)\otimes\StructureSheaf{X}\rightarrow L$. The fixed locus $Z$ of the anti-symplectic involution is the set theoretic degeneracy locus
of the differential $d\varphi_L:TX\rightarrow \SheafHom(E_L,L)$ of the morphism from $X$ to $\PP H^0(X,L)^*\cong\PP^5$ exhibiting $X$ as a double cover of an EPW sextic. The class of the scheme theoretic degeneracy locus is
$c_2(TX\rightarrow \SheafHom(E_L,L))$, by the Thom-Porteous formula, and is equal to $3[Z]$.
\end{rem}

\hide{
\begin{rem}
Lemma  \ref{lemma-chern-character-of-derived-and-parallel-transforms-of-structure-sheaf} may also help construct examples of lagrangian surfaces with the two classes in Remark \ref{rem-two-lagrangian-classes-satisfying-arithmetic-constraints} satisfying the arithmetic constraints of Lemma \ref{lemma-arithmetic-constraints}. Over the generic IHSM $X$ of $K3^{[2]}$ type with a primitive polarization of class $\eta$ with $(\eta,\eta)=54$ there exists a rank $4$ vector bundle $F$ with $c_1(F)=\eta$ as in 
Lemma \ref{lemma-chern-character-of-derived-and-parallel-transforms-of-structure-sheaf}, by \cite[Theorem 1.4]{ogrady-modular}.
Similarly, 
when $(\eta,\eta)=8$ there exists over $X$
a rank $9$ vector bundle $F$ with $c_1(F)=3\eta$ as in 
Lemma \ref{lemma-chern-character-of-derived-and-parallel-transforms-of-structure-sheaf}, by \cite[Theorem 1.4]{ogrady-modular}.
In both cases the second Segre class $s_2(F)$ minus $c_2(TX)$ is the primitive lagrangian class associated to $\eta$ in Lemma \ref{lemma-integral-lagrangian-classes}. 
\end{rem}
}

\begin{example} 
\label{example-LLV-line-of-BKR-image-of-G-boxtimes G}
Let $S$ be a $K3$ surface and 
$X=S^{[n]}$. Let $F$ be the vector bundle 
corresponding via the BKR equivalence to 
$G\boxtimes G \boxtimes \cdots \boxtimes G\otimes_\ComplexNumbers \chi^i$, for a rigid slope-stable vector bundle $G$ on $S$.
If $n=2$, Lemma \ref{lemma-chern-character-of-derived-and-parallel-transforms-of-structure-sheaf} applies to the vector bundle $F$, 
by Remark \ref{rem--is deformation-of-FM-image-of-structure-sheaf}. We use Lemma \ref{lemma-chern-character-of-derived-and-parallel-transforms-of-structure-sheaf} to show that $\ell(F)$ is spanned by
\[
\left\{
\begin{array}{ccl}
r_0^2\alpha+\left(r_0c_1(G)-\frac{r_0(r_0-1)}{2}\delta\right)+\left[
\frac{(c_1(G),c_1(G))+2}{2}+\frac{r_0^2}{4}-\frac{r_0(r_0-1)}{2}
\right]\beta &\mbox{if} & i=0,
\\
r_0^2\alpha+\left(r_0c_1(G)-\frac{r_0(r_0+1)}{2}\delta\right)+\left[
\frac{(c_1(G),c_1(G))+2}{2}+\frac{r_0^2}{4}-\frac{r_0(r_0+1)}{2}
\right]\beta &\mbox{if} & i=1.
\end{array}
\right.
\] 
We start with comments for all $n$, which will be used in the following Remark \ref{rem-eliminate-use-of-lemma-for-n-equal-2} to calculate $\ell(F)$ for all $n$.
The line $\ell(F)$ is invariant under the action of the stabilizer of $\ell(G)$ in the isometry group of the sublattice 
$\tilde{H}(S,\RationalNumbers)$ of $\tilde{H}(S^{[n]},\RationalNumbers)$, where the isometry group of the former is embedded as a subgroup of the isometry group of the latter via the homomorphism $h$  making the diagram below commutative.
\[
\xymatrix{
DMon(S) \ar[d]  \ar[r] & DMon(S^{[n]}) \ar[d]
\\
O(\tilde{H}(S,\QQ)) \ar[r]^h & O(\tilde{H}(S^{[n]},\QQ)),
}
\]
The top horizontal arrow above is $^{[n]}:\Hom_{K3^{[1]}}(S,S)\rightarrow \Hom_{K3^{[n]}}(S^{[n]},S^{[n]})$ 
induced by the functor (\ref{eq-functor-[n]}).
The homomorphism $h$ is given by 
$h(g)=\det(g)\cdot (B_{-\delta/2}\circ \iota(g)\circ B_{\delta/2})$, where $\iota(g)$ is the extension acting trivially on $\span\{\delta\}$ (see Example \ref{example-Mukai-lattice-of-Hilbert-schemes}), and $B_\lambda$ is  $\exp(e_\lambda)$, by
\cite[Theorem 9.4]{taelman} when $n=2$ and by Theorem \ref{thm-introduction-action-of-DMon-S-on-LLV-lattice} for general $n$.
It follows that $B_{\delta/2}(\ell(F))$ is a line in the plane spanned by $\ell(G)$ and $\delta$.
\begin{equation}
\label{eq-B-of-ell-F-is-in-the-plane-spanned-by-delta-and-ell-G}
B_{\delta/2}(\ell(F))\subset \ell(G)+\QQ\delta.
\end{equation}
Now, if the rank of $G$ is $r_0$, then the rank $r$ of $F$ is $r_0^n$ and $c_1(F)=r_0^{n-1}c_1(G)+t\delta$, under the inclusion $H^2(S,\Integers)\subset H^2(S^{[n]},\Integers)$ given in (\ref{eq-theta}), where $t$ is an integer
depending on the exponent $i\in \{0, 1\}$ of $\chi$ and $\delta$ is half the class of the divisor of non-reduced subschemes. 
The line $\ell(F)$ is spanned by a vector of the form
$r_0^n\alpha+c_1(F)+s(F)\beta$, for some $s(F)\in\QQ$,
by Theorem \ref{thm-Mukai-vector}(\ref{prop-item-spanning-Mukai-vector}).

Assume $n=2$. The coefficient $s(F)-t-\frac{r_0^2}{4}$ of $\beta$ in $B_{\delta/2}(r_0^2\alpha+c_1(F)+s(F)\beta)$ must be
equal $r_0 \cdot s(G)$, by (\ref{eq-B-of-ell-F-is-in-the-plane-spanned-by-delta-and-ell-G}), yielding $s(F)=r_0\cdot s(G)+\frac{r_0^2}{4}+t$.  Set $\lambda:=c_1(G)$. Then $s(G)=\frac{(\lambda,\lambda)+2}{2r_0}$ and 
\[
s(F)=\frac{(\lambda,\lambda)+2}{2}+\frac{r_0^2}{4}+t.
\]
On the other hand, multiplying the coefficient of $\beta$ in (\ref{eq-gamma}) by $r_0/2$ we get that 
\[
s(F)=\frac{5r_0^2+2(c_1(F),c_1(F))}{4r_0^2}=\frac{5r_0^2+2r_0^2(\lambda,\lambda)-4t^2}{4r_0^2}.
\]
Comparing the above two expressions for $s(F)$ we get that $t^2+r_0^2t+\frac{r_0^4-r_0^2}{4}=0,$
so that 
\[
t\in \left\{
-\frac{r_0(r_0-1)}{2}, \ -\frac{r_0(r_0+1)}{2}
\right\}.
\]
O'Grady calculated in \cite[Prop. 5.8]{ogrady-modular} that $t=-r_0(r_0-1)/2$ if $i=0$ and $t=-r_0(r_0+1)/2$ if $i=1$ as follows. 
Using the notation of (\ref{eq-b-q}) we have the short exact sequence over $\Gamma$
\[
0\rightarrow q^*F\rightarrow b^*(G\boxtimes G)\rightarrow \left[(G\boxtimes G\restricted{)}{\hat{D}}\right]^{\chi^{i+1}}\rightarrow 0,
\]
where $\hat{D}\subset \Gamma$ is the exceptional divisor of the blow-up $b:\Gamma\rightarrow S\times S$ centered along the diagonal. 
Now $q^*(\delta)=[E]$, $c_1(b^*(G\boxtimes G))=r_0q^*(\lambda)$, and so  
\begin{equation}
\label{eq-minus-t}
-t=\rank \left[(G\boxtimes G\restricted{)}{\hat{D}}\right]^{\chi^{i+1}}=
\left\{
\begin{array}{ccl}
r_0(r_0-1)/2 & \mbox{if} & i=0,
\\
r_0(r_0+1)/2 & \mbox{if} & i=1.
\end{array}
\right.
\end{equation}
O'Grady denotes $F$ by $G[2]^+$, if $i=0$, and by $G[2]^-$, if $i=1$.
Note that $G[2]^-$ is isomorphic to $(G^*[2]^+)^*\otimes \StructureSheaf{S^{[2]}}(-\delta)$, 
by Lemma \ref{lemma-BKR-conjugate-of-tensorization-by-sign-character}.
Indeed, the above description of $\ell(G[2]^-)$ agrees with that of $\ell((G^*[2]^+)^*\otimes \StructureSheaf{S^{[2]}}(-\delta))$
via Lemma \ref{lemma-Mukai-line-of-F-dual}.
\end{example}

\begin{rem}
\label{rem-eliminate-use-of-lemma-for-n-equal-2}
The argument of Example \ref{example-LLV-line-of-BKR-image-of-G-boxtimes G} generalizes to calculate $\ell(F)$ for
the vector bundle $F$
corresponding via the BKR equivalence to 
$G\boxtimes G \boxtimes \cdots \boxtimes G\otimes_\ComplexNumbers \chi^i$, for a rigid slope-stable vector bundle $G$ on a $K3$ surface $S$, for all $n$. Again $c_1(F)=r_0^{n-1}c_1(G)+t\delta$ and the coefficient 
$\frac{1-n}{4}r_0^n+(1-n)t+s(F)$
of $\beta$ in the image
$B_{\delta/2}(r_0^n\alpha+c_1(F)+s(F)\beta)$ must be equal to $r_0^{n-1}s(G)=r_0^{n-2}\left(\frac{(c_1(G),c_1(G))+2}{2}\right)$, by (\ref{eq-B-of-ell-F-is-in-the-plane-spanned-by-delta-and-ell-G}).
We get that
\[
s(F)=r_0^{n-2}\left(\frac{(c_1(G),c_1(G))+2}{2}\right)+r_0^n\left(\frac{n-1}{4}\right)+(n-1)t.
\]
It remains to calculate $t$.
The quotient $\Gamma/{\mathfrak A}_n$ of $\Gamma$ by the alternating group is a double cover 
of $S^{[n]}$ branched over the divisor of non-reduced subschemes
\[
\Gamma\LongRightArrowOf{q_1} \Gamma/{\mathfrak A}_n \LongRightArrowOf{q_2} S^{[n]}.
\]
We have the short exact sequence over $\Gamma/{\mathfrak A}_n$
\[
0\rightarrow q_2^*F\rightarrow \left(q_{1,*}(b^*(G\boxtimes \ \cdots\ \boxtimes G))\right)^{{\mathfrak A}_n}\rightarrow Q\rightarrow 0.
\]
The quotient sheaf $Q$ is again supported over the inverse image of the divisor of non-reduced subschemes in $S^{[n]}$. 
Away from the locus of co-dimension $2$, where more then two points come together, $Q$ is locally free of rank
$r_0^{n-2}$ times the right hand side of (\ref{eq-minus-t}). We conclude that
\[
-t=
\left\{
\begin{array}{ccl}
r_0^{n-1}(r_0-1)/2 & \mbox{if} & i=0,
\\
r_0^{n-1}(r_0+1)/2 & \mbox{if} & i=1.
\end{array}
\right.
\]

\end{rem}

\hide{
\begin{question}
Use Taelman's calculation of the derived monodromy group for $X$ of $K3^{[2]}$-type and the integral lattice $\Lambda\subset \tilde{H}(X,\RationalNumbers)$ of \cite[Theorem 9.8]{taelman} in order to characterize the LLV vectors in $\Lambda$ admitting a modular vector bundle $F$, which is a lift of the Azumaya algebra deforming $\End(G[2]^{\pm})$ (in O'Grady's notation), for a spherical bundle $G$ on a $K3$ surface. Recover the constraints on $\rank(F)$ and $c_1(F)$ in \cite[Theorem 1.4]{ogrady-modular}. Relate also those lifts, which correspond to the same LLV line. The rational translate of the LLV line  $\ell(F)$ to the line $\ell'(F)=exp(e_{-c_1(F)/r}\ell(F)$ in $U_\QQ$ should depend only on the rank $r$ of $F$ and whether $F$ is a deformation of $G[2]^+$ or $G[2]^-$ for some spherical stable vector bundle  $G$ on a $K3$ surface.
\end{question}
}

%
\section{Very modular vector bundles isomorphic to $\Phi(\StructureSheaf{K3^{[n]}})$}
\label{sec-stability-of-images-of-structure-sheaf}

We prove Theorem \ref{thm-BKR-of-tensor-product-of-spherical-object} in this section.

\begin{prop}
\label{prop-tensor-outer-product-is-stable}
Let $X_i$, $1\leq i \leq n$, be compact complex manifolds with trivial Picard groups and let $V_i$ be a $\theta_i$-twisted vector bundle over $X_i$ or rank $r_i$. Assume that $V_i$ does not have any non-zero subsheaf of rank less than $r_i$. 
Then the vector bundle $V_1\boxtimes V_2\boxtimes \cdots \boxtimes V_n$ does not have any non-zero subsheaf of rank less than $\prod_{i=1}^n r_i$.
\end{prop}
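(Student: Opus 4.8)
The plan is to induct on $n$, so the core of the argument is the case $n=2$: given $\theta_1$-twisted and $\theta_2$-twisted vector bundles $V_1$ on $X_1$ and $V_2$ on $X_2$, each with no nonzero subsheaf of rank smaller than its own rank, I want to show $V_1\boxtimes V_2$ has no nonzero subsheaf of rank less than $r_1 r_2$. First I would reduce the statement about subsheaves to a statement about torsion-free quotients: if $W\subset V_1\boxtimes V_2$ has rank $s$ with $0<s<r_1r_2$, then the quotient $Q:=(V_1\boxtimes V_2)/W$ is a coherent sheaf of positive rank $r_1 r_2 - s$; modding out by torsion I may assume $Q$ is torsion-free of rank strictly between $1$ and $r_1 r_2$. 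The goal is to derive a contradiction from the existence of a surjection $V_1\boxtimes V_2\twoheadrightarrow Q$.

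The key step is a fiberwise/generic-point analysis. Let $\eta_1$ be the generic point of $X_1$ and consider the restriction to the fiber $\{\eta_1\}\times X_2$: the sheaf $V_1\boxtimes V_2$ restricts to $V_1|_{\eta_1}\otimes V_2 \cong V_2^{\oplus r_1}$ over the base change of $X_2$ to the function field of $X_1$. A nonzero subsheaf of small rank in $V_1\boxtimes V_2$ would restrict, over a general fiber in either direction, to a subsheaf of $V_2^{\oplus r_1}$ (resp. $V_1^{\oplus r_2}$). So the mechanism is: use the hypothesis on $V_2$ to control subsheaves of $V_2^{\oplus r_1}$ — any subsheaf of $V_2^{\oplus r_1}$ of rank not a multiple of $r_2$ would project nontrivially to some copy of $V_2$ with image a proper-rank subsheaf, contradicting the hypothesis — to conclude that the generic rank of $W$ along $X_2$-fibers is a multiple of $r_2$, and symmetrically a multiple of $r_1$ along $X_1$-fibers; but the rank of $W$ as a subsheaf of $V_1\boxtimes V_2$ is a single number $s$, and being simultaneously $\le r_1 r_2$, a multiple of $r_1$ and a multiple of $r_2$ forces $s\in\{0, r_1r_2\}$ only if $\gcd$-considerations cooperate, which they need not. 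This is where the triviality of $\Pic(X_i)$ must enter more forcefully: the right formulation is that $V_i$ is \emph{simple} up to the constraint, or rather that $V_1\boxtimes V_2$ is ``generically indecomposable'' in a strong sense.

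The cleaner route, which I expect to be the actual argument, is via slopes on a projective model or via the following direct observation: a subsheaf $W$ of $V_1\boxtimes V_2$ of rank $s$ with $0<s<r_1r_2$ would, after restriction to a general point $x_1\in X_1$ where the twisting is trivialized, give a subsheaf of $(V_2)^{\oplus r_1}$ of rank $s$; by the hypothesis on $V_2$ this subsheaf must have rank a multiple of $r_2$, hence $r_2\mid s$. Symmetrically $r_1\mid s$. Now I would leverage that $W$ restricted to a general $\{x_1\}\times X_2$ is actually of the form $(\text{subspace of }\CC^{r_1})\otimes V_2$ — this uses that the only rank-$(kr_2)$ subsheaves of $V_2^{\oplus r_1}$ are the ``obvious'' ones $\CC^k\otimes V_2$, which again follows from the no-small-subsheaf hypothesis applied repeatedly together with $\Hom(V_2,V_2)=\CC$ (simplicity, which follows from the hypothesis: an endomorphism with proper-rank kernel or image is excluded). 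Then $W$ corresponds to a sub-vector-bundle of $V_1$ of rank $s/r_2$, contradicting the hypothesis on $V_1$ unless $s/r_2\in\{0,r_1\}$. This settles $n=2$; the general case follows by writing $V_1\boxtimes\cdots\boxtimes V_n = V_1 \boxtimes (V_2\boxtimes\cdots\boxtimes V_n)$ and invoking the inductive hypothesis that $V_2\boxtimes\cdots\boxtimes V_n$ has no small subsheaf (and is simple, which propagates the same way). The main obstacle will be making the ``only obvious subsheaves of $V_2^{\oplus r_1}$'' step rigorous without properness of the $X_i$ — I would handle this by passing to the generic fiber and working over the function field, where $V_2$ becomes a simple vector bundle with no proper-rank subsheaf and the classification of its iterated extensions-by-itself inside a direct sum is linear algebra over $\Hom=\CC$.
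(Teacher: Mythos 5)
Your overall skeleton is the same as the paper's: induct on $n$, restrict a saturated subsheaf $E$ of the box product to slices $\{x_1\}\times X_2$ (resp.\ $(\prod_{i<n}X_i)\times\{p\}$), and reduce everything to the key lemma that every saturated subsheaf of $\CC^m\otimes_\CC V$ is of the form $W\otimes_\CC V$ for a linear subspace $W\subset\CC^m$. The divisibility observation ($r_2\mid s$) and the gluing of the fiberwise subspaces $W_{x_1}$ into a subsheaf of $V_1$ are fine and match the paper's steps.

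The genuine gap is in your justification of the key lemma. You propose to get ``the only rank-$kr_2$ subsheaves of $V_2^{\oplus r_1}$ are the obvious ones $\CC^k\otimes V_2$'' from simplicity of $V_2$ plus ``linear algebra over $\Hom=\CC$'' after ``passing to the generic fiber.'' This cannot work: at the generic point $V_2$ is just a free module, every linear subspace of the generic fiber of $V_2^{\oplus r_1}$ arises from some saturated coherent subsheaf, and linear algebra sees no obstruction. Simplicity is also not enough: on $\PP^1$ the Euler sequence gives a saturated rank-one subsheaf $\StructureSheaf{\PP^1}(-1)\hookrightarrow\CC^2\otimes_\CC\StructureSheaf{\PP^1}$ which is not of the form $W\otimes\StructureSheaf{\PP^1}$, even though $\StructureSheaf{\PP^1}$ is simple and has no nonzero subsheaf of rank $<1$. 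So the triviality of $\Pic(X_2)$ must do real work here, and your proposal never actually deploys it (you only remark that it ``must enter more forcefully''). The paper's route is: the hypothesis makes $V_2$ vacuously slope-stable for every K\"{a}hler class, so $\CC^m\otimes V_2$ is polystable; triviality of $\Pic(X_2)$ forces every saturated subsheaf to have the same (zero) slope because all the relevant determinant line bundles are trivial; and a saturated subsheaf of a polystable sheaf of equal slope is a direct summand, which together with $\End(V_2)=\CC$ gives exactly $W\otimes V_2$. Without this (or an equivalent substitute for the semisimplicity of the category of slope-zero polystable sheaves), your argument does not close.
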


The proof of the proposition will use the following  lemma.

\hide{
\begin{lem}
\label{lemma-U-otimes-V-does-not-have-subsheaves-of-rank-leq-r}
Let $X$ be a connected compact complex manifold with a trivial Picard group. Let $V$ be a possibly twisted rank $r$ vector bundle over $X$. Assume that $V$ does not have any non-zero subsheaf of rank less than $r$. Let $U$ be a vector space and $E\subset U\otimes_\CC V$ a saturated non-zero subsheaf of rank $\leq r$. Then there exists a line $\ell\subset U$, such that $E=\ell\otimes_\CC V$. 
\end{lem}

Note that the statement of the theorem is false if we remove the assumption that $\Pic(X)$ is trivial. Indeed,
take $V=\StructureSheaf{\PP(U)}$ and $E=\StructureSheaf{\PP(U)}(-1)\subset U\otimes_\CC V.$ 

\begin{proof}
The proof is by induction on $n:=\dim(U)$. The statement is clear if $n=1$. Assume that $n>1$ and the statement holds for $n-1$.
Choose a non-zero element $\xi\in U^*$. Let $h_\xi:E\rightarrow V$
be the restriction of $\xi\otimes id:U\otimes_\CC V\rightarrow V$ to $E$.
If $h_\xi$ vanishes, then $E$ is a saturated subsheaf of $\ker(\xi)\otimes_\CC V$ and we are done, by the induction hypothesis. 
It remains to prove that $h_\xi$ vanishes, for some non-zero $\xi\in U^*$.

The proof is by contradiction.
Assume that $h_\xi$ does not vanish for every non-zero $\xi\in U^*$. Then the image of $h_\xi$ must have rank $r$. Hence, $\wedge^r(h_\xi)$ is a non-zero section of
$\det(E)^*\otimes\det(V)$. The latter is a trivial line-bundle, since $\Pic(X)$ is trivial. Hence, $h_\xi$ is an isomorphism, as $E$ is reflexive, being a saturated subsheaf of a locally free sheaf. The homomorphism $h_\xi$ maps the fiber $E_p$ surjectively onto $V_p$, for all $p\in X$. Hence, $E_p$ maps to an $r$-dimensional subspace of $U\otimes_\CC V_p$, for all $p\in X$.

Over $\PP(U^*)\times X$ we have the composite homomorphism $h:\pi_2^*E\rightarrow \pi_1^*\StructureSheaf{\PP(U^*)}(1)\otimes \pi_2^V$, given by
\[
\pi_2^*E
\rightarrow U\otimes_\CC \pi_2^*V 
\RightArrowOf{\pi_1^*(\xi)\otimes id}
\pi_1^*\StructureSheaf{\PP(U^*)}(1)\otimes \pi_2^*V,
\]
where $\xi:U\otimes_\CC\StructureSheaf{\PP(U^*)}\rightarrow \StructureSheaf{\PP(U^*)}(1)$
is the quotient homomorphism. Then $\wedge^r(h)$ is a nowhere-vanishing section of 
$\pi_1^*\StructureSheaf{\PP(U^*)}(r)\otimes \pi_2^*(\det(E^*)\otimes\det(V))$, by the above discussion.
Such a section does not exist, as the line bundle $\det(E^*)\otimes\det(V)$ is trivial and any non-zero section would vanish over a degree $r$ hypersurface in $\PP(U^*)$. A contradiction. 
\end{proof}

}

\begin{lem}
\label{lemma-U-otimes-V-does-not-have-subsheaves}
Let $X$ be a connected compact K\"{a}hler manifold with a trivial Picard group. Let $V$ be a possibly twisted rank $r$ vector bundle over $X$. Assume that $V$ does not have any non-zero subsheaf of rank less than $r$. Let $U$ be a vector space and $E\subset U\otimes_\CC V$ a saturated non-zero subsheaf. Then there exists a subspace $W\subset U$, such that $E=W\otimes_\CC V$. 
\end{lem}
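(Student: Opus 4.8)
The plan is to reduce the statement, by induction on $\dim_\CC(U)$, to the case where the candidate subspace $W$ is either $0$ or all of $U$, and to handle the latter dichotomy using the rigidity coming from the triviality of $\Pic(X)$ together with a projective-bundle / degree argument. If $\dim(U)=1$ the claim is immediate: $E$ is a saturated nonzero subsheaf of $V$, hence has rank $r$ (any saturated subsheaf of smaller rank would be a nonzero subsheaf of $V$ of rank $<r$), and then $E=\det(E)^*\otimes\det(V)\otimes E$ forces $E=V$ because a saturated subsheaf of full rank in a locally free sheaf on a manifold with trivial Picard group, with trivial determinant quotient, is the whole sheaf. So assume $\dim(U)=n>1$ and that the statement holds for all vector spaces of dimension $<n$.

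The first step is to show that some nonzero functional kills $E$ after tensoring. Fix $0\neq\xi\in U^*$ and let $h_\xi:E\rightarrow V$ be the restriction of $\xi\otimes\mathrm{id}_V:U\otimes_\CC V\rightarrow V$. If $h_\xi=0$ for some $\xi$, then $E\subset\ker(\xi)\otimes_\CC V$, and since $E$ is saturated in $U\otimes V$ it is saturated in $\ker(\xi)\otimes V$; applying the induction hypothesis to $\ker(\xi)$ (which has dimension $n-1$) gives $E=W'\otimes_\CC V$ for a subspace $W'\subset\ker(\xi)\subset U$, and we are done with $W=W'$. So the whole content is to rule out the case $h_\xi\neq 0$ for every nonzero $\xi$. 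Assume that. Then for each $\xi$ the image of $h_\xi$ is a nonzero subsheaf of $V$, hence (by hypothesis on $V$) has rank exactly $r$, so $h_\xi$ is generically an isomorphism; in particular $E$ has rank $r$. Moreover $\wedge^r h_\xi$ is a nonzero section of $\det(E)^*\otimes\det(V)$, which is trivial since $\Pic(X)$ is trivial, so $\wedge^r h_\xi$ is a nowhere-vanishing section and $h_\xi:E\rightarrow V$ is an isomorphism of sheaves for every nonzero $\xi$. Consequently the fiber $E_p\subset U\otimes_\CC V_p$ maps isomorphically onto $V_p$ under every $\xi\otimes\mathrm{id}$, i.e. $E_p$ is an $r$-dimensional subspace of $U\otimes V_p$ meeting $\ker(\xi)\otimes V_p$ in dimension $<r$ for all $\xi$.

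The second step turns this pointwise picture into a contradiction via a relative version over $\PP(U^*)$. On $\PP(U^*)\times X$, with projections $\pi_1,\pi_2$, form the composite
\[
h\ :\ \pi_2^*E\ \longrightarrow\ U\otimes_\CC\pi_2^*V\ \xrightarrow{\ \pi_1^*(q)\otimes\mathrm{id}\ }\ \pi_1^*\StructureSheaf{\PP(U^*)}(1)\otimes\pi_2^*V,
\]
where $q:U\otimes\StructureSheaf{\PP(U^*)}\rightarrow\StructureSheaf{\PP(U^*)}(1)$ is the tautological quotient. By the conclusion of Step 1, $h$ restricts to an isomorphism on every fiber $\{[\xi]\}\times X$, hence $\wedge^r h$ is a nowhere-vanishing section of $\pi_1^*\StructureSheaf{\PP(U^*)}(r)\otimes\pi_2^*\big(\det(E)^*\otimes\det(V)\big)$. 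But $\det(E)^*\otimes\det(V)$ is trivial on $X$, so this section is a nowhere-vanishing global section of $\pi_1^*\StructureSheaf{\PP(U^*)}(r)$; pushing forward by $\pi_1$ (or restricting to $\PP(U^*)\times\{x\}$ for any $x\in X$), a nonzero such section vanishes along a degree-$r$ hypersurface in $\PP(U^*)$, contradicting nowhere-vanishing since $r\geq 1$ and $n-1\geq 1$ makes $\PP(U^*)$ positive-dimensional. Hence $h_\xi=0$ for some $\xi$, and the induction closes.

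The main obstacle is the second step: one must be careful that the ``fiberwise isomorphism $\Rightarrow$ nowhere-vanishing top exterior power'' argument is genuinely valid in the twisted setting and on a not-necessarily-projective compact K\"ahler manifold. The twist is harmless because $\det(E)^*\otimes\det(V)$ is an \emph{untwisted} line bundle (the twist cancels, as the rank of $E$ equals that of $V$), so $\Pic(X)$ being trivial applies directly; and the final contradiction only needs a nowhere-vanishing section of a negative-degree-on-lines line bundle on $\PP(U^*)$, which is a statement on projective space and needs no projectivity or K\"ahlerness of $X$. One should also double-check the degenerate possibility that $E=0$ is excluded by hypothesis and that ``saturated'' is used correctly when passing to $\ker(\xi)\otimes V$ — both are routine. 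With Lemma~\ref{lemma-U-otimes-V-does-not-have-subsheaves} in hand, Proposition~\ref{prop-tensor-outer-product-is-stable} follows by induction on $n$, writing $V_1\boxtimes\cdots\boxtimes V_n=(V_1\boxtimes\cdots\boxtimes V_{n-1})\boxtimes V_n$, applying the K\"unneth description of a subsheaf fiberwise over $X_n$, and invoking the lemma with $U=H^0$-type fibers; but that is the next statement, not this one.
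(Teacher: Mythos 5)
There is a genuine gap in your Step 1, at the sentence ``the image of $h_\xi$ is a nonzero subsheaf of $V$, hence has rank exactly $r$, so $h_\xi$ is generically an isomorphism; in particular $E$ has rank $r$.'' Generic \emph{surjectivity} of $h_\xi$ is all that follows from the image having rank $r$; the rank of $E$ can be anything up to $r\dim(U)$, in which case $h_\xi$ has a nontrivial kernel and is nowhere close to an isomorphism. The cleanest illustration is $E=U\otimes_\CC V$ itself (or any saturated $E$ of rank $>r$ not contained in any $\ker(\xi)\otimes_\CC V$): every $h_\xi$ is nonzero, yet no contradiction can possibly arise, since the conclusion of the lemma holds with $W=U$. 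Your Step 2 silently relies on $\rank(E)=r$ as well, because only then is $\wedge^r h$ a section of a line bundle on $\PP(U^*)\times X$; for $\rank(E)>r$ the source $\wedge^r(\pi_2^*E)$ has rank $\binom{\rank E}{r}>1$ and the ``nowhere-vanishing section of $\StructureSheaf{\PP(U^*)}(r)$'' contradiction evaporates. So as written, your induction on $\dim(U)$ establishes only the sub-case $\rank(E)\leq r$ (where the dichotomy and the projective-space argument are correct).

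The missing ingredient is a second induction, on $e:=\rank(E)$. When $e>r$ and some $h_\xi\neq 0$, the image $h_\xi(E)$ has rank $r$, so $E_0:=\ker(h_\xi)=E\cap(\ker(\xi)\otimes_\CC V)$ is a saturated subsheaf of $U\otimes_\CC V$ of rank $e-r<e$; the rank induction gives $E_0=W_0\otimes_\CC V$ for some $W_0\subset\ker(\xi)$, and the quotient $E/E_0$ sits inside $(U/W_0)\otimes_\CC V$ with rank $r$, so its saturation is $\ell\otimes_\CC V$ for a line $\ell$ by your rank-$\leq r$ case; then $E$ is a saturated full-rank subsheaf of $W\otimes_\CC V$ for $W$ the preimage of $\ell$, hence equal to it. With that supplement your argument closes. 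For comparison, the paper's actual proof takes an entirely different and much shorter route: since $V$ has no proper nonzero subsheaf at all it is slope-stable for every K\"ahler class, $U\otimes_\CC V$ is therefore polystable of slope zero (all determinants in sight are trivial because $\Pic(X)$ is), and a saturated subsheaf of the same slope in a polystable sheaf is a direct summand, necessarily of the form $W\otimes_\CC V$. Your elementary route is salvageable and avoids invoking polystability, but it needs the extra rank induction to be a proof of the stated lemma.
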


\begin{proof}
The vector bundle $V$ is slope-stable with respect to every K\"{a}hler class on $X$. 
The determinant of $\Hom(E,U\otimes_\CC V)$ is trivial, since $\Pic(X)$ is trivial. The statement is thus a special case of the
 the more general fact that saturated subsheaves of polystable sheaves of the same slope are direct summands.
\end{proof}
\hide{
\begin{proof}
The proof is by induction on $e:=\rank(E)$. 
The statement holds if $e\leq r$, by Lemma \ref{lemma-U-otimes-V-does-not-have-subsheaves-of-rank-leq-r}. 
Assume that $e>r$ and the statement holds for all saturated subsheaves $E'$ with $\rank(E')<e$.
Choose $\xi\in U^*$, such that the restriction $h:E\rightarrow V$ of $\xi\otimes id:U\otimes_\CC V\rightarrow V$ to $E$ does not vanish. Then $h(E)$ is a rank $r$ subsheaf of $V$ and so $E_0:=\ker(h)$ is a saturated subsheaf of $U\otimes_\CC V$ of rank $e-r$. Hence, $E_0=W_0\otimes_\CC V$, for some subspace $W_0\subset U$, by the induction hypothesis. 
The quotient $E_1:=E/E_0$ is a saturated subsheaf of $(U/W_0)\otimes_\CC V$  of rank $r$.
Thus, $E_1=\ell\otimes_\CC V$, for some line $\ell\subset U$, by Lemma \ref{lemma-U-otimes-V-does-not-have-subsheaves-of-rank-leq-r}. It follows that $E=W\otimes_\CC V$, where $W$ is the unique subspace of $U$ containing $W_0$ and projecting onto $\ell$. 
\end{proof}

}

\begin{proof}
(of Proposition \ref{prop-tensor-outer-product-is-stable}). The proof is by induction on $n$. The case $n=1$ is clear. Assume that
$n>1$ and the statement holds for $n-1$. Let $E$ be a non-zero saturated subsheaf of $V:=V_1\boxtimes \cdots \boxtimes V_n$.
Denote the rank of $E$ by $e$. 
There exists an open subset $U$ of $X_n$, such that the restriction of $E$ to $(\prod_{i=1}^{n-1}X_i)\times\{p\}$ maps to a non-zero subsheaf $E_p$ of rank $e$ of the restriction $V_p$ of $V$, for all $p\in U$. Then there exists a  subspace $W_p$ of the fiber $V_{n,p}$ of $V_n$ at $p$, such that the saturation of $E_p$ in $V_p$ is 
$V_1\boxtimes \cdots \boxtimes V_{n-1}\otimes_\CC W_p$ and $e=\dim(W_p)\prod_{i=1}^{n-1}r_i$, by the induction hypothesis and Lemma \ref{lemma-U-otimes-V-does-not-have-subsheaves}. We conclude that there is a 
rank $e/\prod_{i=1}^{n-1}r_i$ subsheaf $W$ of $V_p$, such that 
$E$ is a subsheaf of $V_1\boxtimes \cdots \boxtimes V_{n-1}\boxtimes W$. But the saturation of $W$ must be equal to $V_n$, as $V_n$ does not have non-zero saturated subsheaves. Hence, $E$ must be equal to $V$.
\end{proof}

\begin{proof} (of Theorem \ref{thm-BKR-of-tensor-product-of-spherical-object}).
Being $H$-slope-stable, $G$ is projectively $H$-hyperholomorphic \cite{kaledin-verbitsky-book} (see also \cite{huybrechts-schroer}). Hence, 
the triple $(S,H,G)$ determines a twistor family $\pi:\S\rightarrow \PP^1$ and a possibly twisted vector bundle $\G$ over $\S$, such that each fiber $S_t$ of $\pi$ over $t\in \PP^1$ is endowed with a K\"{a}hler class $\omega_t$ and the restriction $G_t$ of $\G$ to $S_t$ is
$\omega_t$-slope-stable. We may assume, possibly after replacing $H$ by another ample class with respect to which $G$ is slope-stable, that $\Pic(S_t)$ is trivial, for a generic $t\in\PP^1$. Then $G_t$ does not have any non-zero saturated proper subsheaves. Thus, the vector bundle $G_t\boxtimes \cdots \boxtimes G_t$ over $(S_t)^n$ does not have any non-zero saturated proper subsheaves, by Proposition \ref{prop-tensor-outer-product-is-stable}. 

The isospectral Hilbert scheme $\Gamma\subset S^n\times S^{[n]}$ is the reduced fiber product of $S^n$ and $S^{[n]}$ over the symmetric product $S^{(n)}$, by \cite[Def. 3.2.4]{haiman}. Hence, we have a relative isospectral subscheme over the base of the twistor deformation with fiber $\Gamma_t\subset S_t^n\times S_t^{[n]}$.
Let $b_t:\Gamma_t\rightarrow S^n$ and $q_t:\Gamma_t\rightarrow S_t^{[n]}$ be the restriction of the two projections.
The Brauer class $\alpha_t\in H^2_{an}(S_t,\StructureSheaf{S})$ of $G_t$ is the image of a class $\tilde{\alpha}_t\in H^2(S_t,\mu_r)$,
where $r$ is the rank of $G_t$, as pointed out in Equation (\ref{eq-HS-lemma2.5}).
The Brauer class $\alpha_t^{\boxtimes n}$ of $G_t\boxtimes \cdots \boxtimes G_t$ is thus the image of an $\fS_n$-invariant class in
$H^2(S_t^n,\mu_r)$, which is the pullback of a class in $H^2(S_t^{(n)},\mu_r)$. 
Hence, the Brauer class $b_t^*(\alpha_t^{\boxtimes n})$ of $b_t^*(G_t\boxtimes \cdots \boxtimes G_t)$
is the pullback via $q_t$ of a class in $H^2_{an}(S_t^{[n]},\StructureSheaf{S_t^{[n]}})$. The morphism $q_t$ is flat, by the proof of
\cite[Prop. 3.7.4 ]{haiman}.
The pushforward $q_{t,*}b_t^*(G_t\boxtimes \cdots \boxtimes G_t)$ is thus a well defined locally free twisted sheaf.
Set 
\[
G_t^{[n]}:=(q_{t,*}b_t^*(G_t\boxtimes \cdots \boxtimes G_t))^{\fS_n}.
\]
A non-zero proper saturated subsheaf $E$ of $G_t^{[n]}$ over the Douady space $S_t^{[n]}$ determines an $\fS_n$-invariant non-zero subsheaf $\tilde{E}$ of $G_t\boxtimes \cdots \boxtimes G_t$ 
of the same rank as $E$, since $q_t$ restricts to a Galois covering over the complement of the divisor in $S_t^{[n]}$ of non-reduced subschemes. We have seen that such a subsheaf $\tilde{E}$ does not exist. Hence,  the vector bundle $G_t^{[n]}$ does not have any non-zero saturated proper subsheaves. Thus, $G_t^{[n]}$ is slope-stable with respect to every K\"{a}hler class on $S_t^{[n]}$. 

The paragraph before the statement of Theorem \ref{thm-BKR-of-tensor-product-of-spherical-object} establishes that $\obs_{G^{[n]}}$ has rank $1$. Hence, the class $\kappa(G^{[n]})$ remains of Hodge type under every K\"{a}hler deformation of $S^{[n]}$, by Proposition \ref{prop-kappa-class-remains-of-Hodge-type} (applied with $\Phi$ the identity endofunctor). The same thus holds for the class $\kappa(G_t^{[n]})$.
We conclude that $G_t^{[n]}$ deforms with $S_t^{[n]}$ to a pair $(X,F)$ of a vector bundle $F$ over every K\"{a}hler deformation $X$ of $S_t^{[n]}$, by \cite[Prop. 6.17]{markman-BBF-class-as-characteristic-class} 
(applied via an argument identical to the one used in the proof of Theorem
\ref{thm-modularity-of-a-stable-sheaf-with-a-rank-1-obstruction-map}).

The vector space $\Ext^j(G_t,G_t)$ is one dimensional, for $j=0$ and $j=2$, and it vanishes otherwise. Hence, 
the vector space $\Ext^{j}_{\fS_n}((G_t^{[n]},\rho_{\boxtimes}),(G_t^{[n]},\rho_{\boxtimes}))$ is one dimensional, for $j=2i$, $0\leq i\leq n$, and it vanishes otherwise. We conclude that $\Ext^j(G_t^{[n]},G_t^{[n]})$ is one dimensional, for $j=2i$, $0\leq i\leq n$, and it vanishes otherwise, since $BKR$ is an equivalence of categories. Now 
$\Ext^j(G_t^{[n]},G_t^{[n]})$ is isomorphic to $H^j(S_t^{[n]},\SheafEnd(G_t^{[n]}))$ and $\SheafEnd(G_t^{[n]})$
is a poly-stable hyperholomorphic vector bundle.
The dimensions of sheaf cohomologies of poly-stable hyperholomorphic vector bundles are constant along twistor deformations, by 
\cite[Cor. 8.1]{verbitsky-hyperholomorphic}. Hence, the deformations of $G_t^{[n]}$ remain $\PP^n$-objects in the sense that $\Ext^{i}(F,F)\cong H^i(\PP^n,\CC)$. In particular, $\Ext^1(F,F)$ vanishes and $F$ is infinitesimally rigid.
\end{proof}

%
\section{Very modular vector bundles with isotropic Mukai vector over Hilbert schemes}
\label{sec-modular-vector-bundles-with-isotropic-LLV-line}
In Section \ref{sec-chern-character-of-FM-image-of-sky-scraper-sheaf} we compute the Chern character of a vector bundle on an irreducible holomorphic symplectic manifold of $K3^{[n]}$-type, which is the Fourier-Mukai image of a sky-scraper sheaf.
The Chern character is determined by the rank and first Chern class of $F$.  
Let $S$ be a $K3$ surface. 
In Section \ref{sec-proof-of-modularity-of-images-of-sky-scraper-sheaves} we prove Theorem \ref{thm-Fourier-Mukai-images-of-sky-scraper-sheaves} about the modularity of the BKR images on $S^{[n]}$ of equivariant vector bundles on $S^n$ constructed from $n$ vector bundles $\{G_i\}_{i=1}^n$ on $S$, where all $G_i$ belong to  the same two dimensional  component of the moduli space of stable sheaves on $S$.
%
\subsection{The Chern character of the Fourier-Mukai image of a sky-scraper sheaf}
\label{sec-chern-character-of-FM-image-of-sky-scraper-sheaf}
%
\subsubsection{An example of a Fourier-Mukai image of a sky-scraper sheaf}
Let $\Phi: D^b(M)\rightarrow D^b(S)$ be an equivalence of derived categories of $K3$ surfaces. Assume that for every point $p\in M$, the image $\Phi(\StructureSheaf{p})$ of the sky-scraper sheaf of $p$ is isomorphic to a 
vector bundle of Mukai vector $v$
on $S$. Let $\{p_i\}_{i=1}^n$ be a set of $n$ distinct points on $M$
and denote by $G_i$ the  vector bundle isomorphic to $\Phi(\StructureSheaf{p_i})$.
Let $Z\subset M$ be the length $n$ zero dimensional subscheme of $M$ with support $\{p_i\}_{i=1}^n$. 
Denote by $z$ the corresponding point of $M^{[n]}$. 
Note that the object $\StructureSheaf{p_{\sigma(1)}}\boxtimes \cdots \boxtimes \StructureSheaf{p_{\sigma(n)}}$ is isomorphic in $D^b(M^n)$ to
the sky scraper sheaf $\StructureSheaf{(p_{\sigma(1)}, \dots, p_{\sigma(n)})}$.
The ``usual'' BKR equivalence is given by the functor
\begin{equation}
\label{eq-usual-bkr}
bkr:=\Phi_{\StructureSheaf{\Gamma_M},\rho}:D^b(M^{[n]})\rightarrow D^b_{\fS_n}(M^n)
\end{equation}
using the same object as in (\ref{eq-BKR}) as Fourier-Mukai kernel, but going in the reverse direction. 
We see that the sky-scraper sheaf $\StructureSheaf{z}$ corresponds via $bkr$
to the $\fS_n$-equivariant object
\[
\oplus_{\sigma\in\fS_n}\left[\StructureSheaf{p_{\sigma(1)}}\boxtimes \cdots \boxtimes \StructureSheaf{p_{\sigma(n)}}\right]
\]
over $M^n$. The equivalence $\Phi^{\boxtimes n}:D^b_{\fS_n}(M^n)\rightarrow D^b_{\fS_n}(S^n)$
takes the above displayed object to 
\begin{equation}
\label{eq-G-Z}
G_z:=\oplus_{\sigma\in\fS_n}\left[G_{\sigma(1)}\boxtimes \cdots \boxtimes G_{\sigma(n)}\right]
\end{equation}
with its natural $\fS_n$-linearization $\rho$. 
Let $\tilde{\Phi}^{[n]}:D^b(M^{[n]})\rightarrow D^b(S^{[n]})$ be the composition
\[
\tilde{\Phi}^{[n]}:= BKR\circ \Phi^{\boxtimes n}\circ bkr,
\]
where $BKR$ is given in (\ref{eq-BKR}).
Let $F_z$ be the sheaf over $S^{[n]}$ isomorphic to $BKR(G_z,\rho)$. Then
$F_z$ is isomorphic to $\tilde{\Phi}^{[n]}(\StructureSheaf{z})$ and hence 
$\obs_{F_z}$ has rank $1$.
Write $v=(r_0,\lambda,(\lambda,\lambda)/2r_0)$, so that $r_0:=\rank(G_i)$ and $\lambda:=c_1(G_i)$, for $1\leq i\leq n$. Let $S^n\LeftArrowOf{b} \Gamma \RightArrowOf{q} S^{[n]}$ be the morphisms from the  isospectral Hilbert scheme given in (\ref{eq-b-q}). Let $\theta:H^2(S,\Integers)\rightarrow H^2(S^{[n]},\Integers)$ be the homomorphism given in (\ref{eq-theta}) and let $\delta$ be half the class of the divisor of non-reduced subschemes. 

\begin{lem}
\label{lemma-vector-bundle-over-S[n]-associated-to-a-point-in-M[n]}
$F_z$ is a locally free sheaf  of rank $n!r_0^n$, which is isomorphic to $q_*b^*(G_1\boxtimes \cdots \boxtimes G_n),$
and $c_1(F_z)=n!r_0^{n-1}\left(\theta(\lambda)-r_0\delta/2\right).$
\end{lem}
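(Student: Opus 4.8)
The plan is to compute the three assertions — local freeness and rank, the Fourier–Mukai formula $F_z \cong q_*b^*(G_1\boxtimes\cdots\boxtimes G_n)$, and the first Chern class — in that order, reducing each to a concrete statement about the correspondence $S^n \xleftarrow{b} \Gamma \xrightarrow{q} S^{[n]}$. First I would unravel the definition $F_z = BKR(G_z,\rho)$ with $G_z = \oplus_{\sigma\in\fS_n}[G_{\sigma(1)}\boxtimes\cdots\boxtimes G_{\sigma(n)}]$. By Remark \ref{rem-chi-invariant-objects}(1), applied with $G = \fS_n$, the subscheme $Y = \{(p_1,\dots,p_n)\}$ inside $M^n$ (respectively the bundle $G_1\boxtimes\cdots\boxtimes G_n$ on $S^n$), and the trivial character, any $\fS_n$-linearization of $G_z$ yields the same object; so the choice of $\rho$ is immaterial. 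Expanding $BKR = \Phi_{(\StructureSheaf{\Gamma},\rho)}$ from (\ref{eq-BKR}) on the direct sum and using that $\oplus_{\sigma}$ together with the $\fS_n$-invariance forces $BKR(G_z,\rho) \cong Rq_*^{\fS_n}(b^*G_z)^{\text{one summand}}$, I would identify the result with $q_*b^*(G_1\boxtimes\cdots\boxtimes G_n)$: concretely, $b^*(G_z)$ decomposes as $\bigoplus_{\sigma} b^*(G_{\sigma(1)}\boxtimes\cdots\boxtimes G_{\sigma(n)})$, the $\fS_n$-action permutes these summands transitively (with stabilizer trivial, since the $p_i$ are distinct, hence the $G_i$ are non-isomorphic on a generic such $S$), so the invariant pushforward picks out exactly one summand $q_*b^*(G_1\boxtimes\cdots\boxtimes G_n)$ after the natural identification $q_*(\text{sheaf})^{\fS_n} \cong q_*(\text{one orbit representative})$ valid because $q$ is finite flat of degree $n!$.

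For local freeness and the rank, the key input is that $q:\Gamma\to S^{[n]}$ is flat of degree $n!$ (stated after (\ref{eq-b-q}), with reference to \cite{haiman}), $b:\Gamma\to S^n$ is birational (it is the blow-up-type resolution in Haiman's isospectral picture), and $G_1\boxtimes\cdots\boxtimes G_n$ is locally free of rank $r_0^n$ on $S^n$; hence $b^*(G_1\boxtimes\cdots\boxtimes G_n)$ is locally free of rank $r_0^n$ on $\Gamma$, and $q_*$ of a locally free sheaf along a finite flat degree-$n!$ map is locally free of rank $n!\,r_0^n$ — this uses that $q_*\StructureSheaf{\Gamma}$ is locally free of rank $n!$ (Haiman's theorem that $S^{[n]}$ is the $\fS_n$-Hilbert scheme, which is exactly why the BKR kernel $\StructureSheaf{\Gamma}$ works), combined with the projection formula locally on $S^{[n]}$. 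Alternatively one cites $BKR$ being an equivalence sending $\StructureSheaf{z}$ (a $0$-object whose higher Hochschild/cohomological invariants are those of a point) to a sheaf, and uses that $\Phi^{\boxtimes n}$ sends sky-scrapers to vector bundles of rank $r_0^n$ on $S^n$, so $G_z$ is a vector bundle of rank $n!\,r_0^n$; then one only needs the pushforward formula to match. I would present the $q_*b^*$ description first and deduce local freeness from it.

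For the first Chern class, the computation is a Grothendieck–Riemann–Roch / pushforward bookkeeping. Write $E := b^*(G_1\boxtimes\cdots\boxtimes G_n)$, a rank $r_0^n$ bundle on $\Gamma$ with $c_1(E) = b^*(\sum_i \pi_i^*\lambda) = b^*(\Sigma)$ where $\Sigma$ pulls back from $S^{(n)}$ to $\theta(\lambda)$ on $S^{[n]}$. Then $c_1(q_*E) = q_*\!\left(c_1(E) + \tfrac{r_0^n}{n!}\,c_1(q_*\StructureSheaf{\Gamma})\text{-type correction}\right)$; more precisely, using $c_1(q_*E) = q_*(\operatorname{ch}(E))_{\deg 1} + r_0^n\,c_1(q_*\StructureSheaf{\Gamma})/(\text{normalization})$, the two contributions are: (i) $n!\,r_0^{n-1}\theta(\lambda)$ coming from $q_*b^*(\Sigma)$ together with the degree $n!$ of $q$ and the factorization through $S^{(n)}$ (each of the $n!$ sheets contributes $r_0^{n-1}\lambda$ on the $S^{[n]}$ side), and (ii) the term $-n!\,r_0^{n-1}\cdot r_0\delta/2$ coming from the discrepancy $c_1(q_*\StructureSheaf{\Gamma})$, which is $-\tfrac{n!}{2}\delta$ — this is exactly the statement, recalled around (\ref{eq-BKR-of-structure-sheaf}), that $\bar q_*\StructureSheaf{\Gamma/\mathfrak{A}_n} \cong \StructureSheaf{S^{[n]}}\oplus\LB$ with $\LB^{\otimes 2}\cong\StructureSheaf{S^{[n]}}(-D)$ and $D = 2\delta$, so $c_1(q_*\StructureSheaf{\Gamma}) = c_1(\StructureSheaf{S^{[n]}})\cdot\tfrac{n!}{2} + c_1(\LB)\cdot\tfrac{n!}{2}\cdot(\cdots)$ — I would pin down the precise multiplicity by the $\mathfrak{A}_n$-quotient argument, noting $q$ factors as $\Gamma\to\Gamma/\mathfrak{A}_n\xrightarrow{\bar q} S^{[n]}$ with $\Gamma\to\Gamma/\mathfrak{A}_n$ étale in codimension $1$ over the reduced locus. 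Assembling, $c_1(F_z) = c_1(q_*E) = n!\,r_0^{n-1}\theta(\lambda) - n!\,r_0^{n-1}\cdot r_0\delta/2 = n!\,r_0^{n-1}(\theta(\lambda) - r_0\delta/2)$.

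\textbf{Main obstacle.} The main difficulty I anticipate is getting the $\delta$-coefficient exactly right: it requires correctly tracking how $c_1$ of a $q$-pushforward of a rank-$r_0^n$ bundle picks up $r_0^n$ times the "ramification-type" class $c_1(q_*\StructureSheaf{\Gamma})$, and identifying the latter with $-\tfrac{n!}{2}\delta$ via the branched double cover $\bar q:\Gamma/\mathfrak{A}_n\to S^{[n]}$ and the relation $\LB^{\otimes 2}\cong\StructureSheaf{}(-2\delta)$ (equivalently comparing with the known case $n!=1$ trivially and with O'Grady's $n=2$, rank-$r_0^2$ computation in \cite[Prop. 5.8]{ogrady-modular}, which must match $c_1 = r_0(\theta(\lambda) - r_0\delta/2)$ up to the factor $n!=2$). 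A clean way to sidestep sign/normalization ambiguities is to compute $c_1(F_z)$ via $\operatorname{ch}(F_z)\sqrt{td} = \phi(\operatorname{ch}(\StructureSheaf{z})\sqrt{td})$ where $\phi$ is the cohomological action of $\tilde\Phi^{[n]}$ (so $v(F_z) = \phi(v(\StructureSheaf{z})) = \phi([pt])$), combined with Theorem \ref{thm-Mukai-vector}(\ref{prop-item-spanning-Mukai-vector}): $\ell(F_z)$ is spanned by $\operatorname{rank}(F_z)\alpha + c_1(F_z) + s\beta$, and Remark \ref{rem-eliminate-use-of-lemma-for-n-equal-2} already computes, by the same $\Gamma/\mathfrak{A}_n$ argument, that the $\delta$-coefficient of $c_1$ of such BKR-images equals $-r_0^{n-1}(r_0\mp1)/2$ for the $\chi^0$/$\chi^1$ twists — here, since $G_z$ carries the trivial linearization and is a sum over the full orbit (not a single $\chi^i$-twist of $G^{\boxtimes n}$), the relevant count is the untwisted $q_*b^*$ one giving coefficient $-r_0^n/2$ per sheet times $n!$ sheets divided appropriately, landing on $-n!\,r_0^{n-1}\cdot r_0/2 = -n!\,r_0^n/2$. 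I would cross-check the final answer against $n=1$ (where $F_z = G_1$, $c_1 = \lambda = \theta(\lambda)$, $\delta = 0$, consistent) and against O'Grady's $n=2$ formula before finalizing.
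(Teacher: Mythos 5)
Your identification of $F_z$ with $q_*b^*(G_1\boxtimes\cdots\boxtimes G_n)$ and the deduction of local freeness and rank from the finite flatness of $q$ coincide with the paper's argument: the paper describes the $\fS_n$-invariant part of $\oplus_\sigma q_*b^*G_\sigma$ as the diagonal copy of the identity summand, which is exactly your ``free orbit'' formulation (note that the non-isomorphy of the $G_i$ is irrelevant here; only the free permutation of the index set by the linearization $\rho$ matters). Where you genuinely diverge is the first Chern class. The paper defers that computation to Corollary \ref{cor-vector-bundle-over-S[n]-associated-to-a-point-in-M[n]} and extracts it from the LLV machinery: $\ell(F_z)=\widetilde{H}(\phi^{[n]})(\span\{\beta\})$ together with the equivariance of $B_{-\delta/2}\circ\tilde{\theta}$ from Theorem \ref{thm-action-of-DMon-S-on-LLV-lattice} and Theorem \ref{thm-Mukai-vector}(\ref{prop-item-spanning-Mukai-vector}). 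You instead compute directly along $q$: $c_1(q_*E)=q_*c_1(E)+\rank(E)\,c_1(q_*\StructureSheaf{\Gamma})$, with $q_*c_1(E)=n!\,r_0^{n-1}\theta(\lambda)$ (the factor $r_0^{n-1}$ in $c_1(G_1\boxtimes\cdots\boxtimes G_n)=r_0^{n-1}\sum_i\pi_i^*\lambda$ is essential) and $c_1(q_*\StructureSheaf{\Gamma})=-\tfrac{n!}{2}\delta$. For the latter, a cleaner derivation than your ${\mathfrak A}_n$-cover heuristic is duality: $q_*\omega_{\Gamma/S^{[n]}}\cong(q_*\StructureSheaf{\Gamma})^\vee$ and $\omega_{\Gamma/S^{[n]}}\cong q^*\StructureSheaf{S^{[n]}}(\delta)$ give $(\det q_*\StructureSheaf{\Gamma})^{\otimes 2}\cong\StructureSheaf{S^{[n]}}(-n!\,\delta)$. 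Be aware that for $n\geq 3$ the isospectral Hilbert scheme $\Gamma$ is singular, so you should use the determinant-of-pushforward (norm) formula for finite flat morphisms rather than Grothendieck--Riemann--Roch on $\Gamma$ proper. Your route is more elementary and avoids the forward reference; the paper's route is essentially free once the derived-monodromy computation is established and yields the whole line $\ell(F_z)$, hence the full Chern character, not only $c_1$. Both give $n!\,r_0^{n-1}(\theta(\lambda)-r_0\delta/2)$, and your cross-check against O'Grady's $n=2$ formula (summing the $G[2]^+$ and $G[2]^-$ contributions) is consistent.
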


\begin{proof}
Set $G_\sigma:=G_{\sigma(1)}\boxtimes \cdots \boxtimes G_{\sigma(n)}$. Denote the automorphisms of $S^n$ and of $\Gamma$ associated to a permutation $\sigma$ by $\sigma$ as well.
The equality $q=q\circ\sigma$ and the isomorphism $\rho_{\sigma^{-1}}:G_\sigma\rightarrow\sigma_*(G_{id})$ induced by  the linearization give rise to the natural isomorphism
\[
q_*b^*(\rho_{\sigma^{-1}}^{-1}): q_*b^*G_{id}\rightarrow q_*b^*G_\sigma.
\]
The invariant subsheaf $F_z$ of $\oplus_{\sigma\in\fS_n}q_*b^*G_\sigma$ is hence the diagonal embedding of $q_*b^*G_{id}$
into the direct sum. 
$F_z$ is locally free, by the flatness of the morphism $q$ (see the proof of \cite[Prop. 3.7.4]{haiman}). 
The computation of $c_1(F_z)$ is carried out in Corollary \ref{cor-vector-bundle-over-S[n]-associated-to-a-point-in-M[n]}.
\end{proof}

\hide{
Let $Y\subset S$ be a length $n$ subscheme supported on $n$ distinct points $\{y_1, \dots, y_n\}$. Let $y\in S^{[n]}$ be the point representing $Y$. Let $G_{i,y_{j}}$ be the fiber of $G_i$ over $y_j$. 
The fiber $F_{z,y}$ of $F_z$ over $y$ is 
 \[
F_{z,y}= \left(\bigoplus_{\tau\in\fS_n}\bigoplus_{\sigma\in\fS_n}\left(G_{\sigma(1),y_{\tau(1)}}\otimes \ \cdots \ \otimes G_{\sigma(n),y_{\tau(n)}}\right)\right)^{\fS_n}.
 \]
The  $\fS_n$-linearization of $G_z$ identifies the fiber 
$G_{\sigma(1),y_1}\otimes \cdots \otimes G_{\sigma(n),y_n}$ over $(y_1, \dots, y_n)$ of the direct summand of $G_z$ corresponding to $\sigma$ with the fiber $G_{\sigma(\tau(1)),y_{\tau(1)}}\otimes \cdots \otimes G_{\sigma(\tau(n)),y_{\tau(n)}}$ 
 over $(y_{\tau(1)}, \dots, y_{\tau(n)})$ of the direct summand corresponding to $\sigma\circ \tau$ by permutation of the factors, for all $\sigma,\tau\in\fS_n$. Hence, 
 \begin{eqnarray*}
F_{z,y}&\cong & \left(\bigoplus_{\tau\in\fS_n}\bigoplus_{\sigma\in\fS_n}\left(G_{\sigma(\tau^{-1}(1)),y_1}\otimes \ \cdots \ \otimes G_{\sigma(\tau^{-1}(n)),y_n}\right)\right)^{\fS_n}
\\
& \stackrel{\gamma=\sigma\circ\tau^{-1}}{\cong} &
\left(\bigoplus_{\tau\in\fS_n}\bigoplus_{\gamma\in\fS_n}\left(G_{\gamma(1),y_1}\otimes \ \cdots \ \otimes G_{\gamma(n),y_n}\right)\right)^{\fS_n}.
 \end{eqnarray*}
 and
the fiber $F_{z,y}$ is naturally isomorphic to
$
\oplus_{\gamma\in\fS_n}\left(G_{\gamma(1),y_1}\otimes \ \cdots \ \otimes G_{\gamma(n),y_n}\right).
$
}

%
\subsubsection{A relative version of the construction}
Assume next that $M$ is a two dimensional projective moduli space of sheaves with a primitive Mukai vector $v$ which are 
$H$-stable with respect to a $v$-generic polarization $H$ on $S$. Assume that a universal sheaf $\G$ exists over $M\times S$.
Then $\G$ is the Fourier-Mukai kernel of an equivalence $\Phi_\G:D^b(M)\rightarrow D^b(S)$.
Let $M^n\LeftArrowOf{b} \Gamma_M \RightArrowOf{q}M^{[n]}$ be the projections from the isospectral Hilbert scheme of the $K3$ surface $M$. The exterior product $\G^{\boxtimes n}$ over $M^n\times S^n$ admits a natural linearization $\rho_\boxtimes$
with respect to the diagonal action of $\fS_n$.
A relative version of the above construction produces the object
\[
\tilde{\F}:=(\StructureSheaf{\Gamma_M},\rho)\circ (\G^{\boxtimes n},\rho_\boxtimes)\circ (\StructureSheaf{\Gamma},\rho)
\]
over $M^{[n]}\times S^{[n]}$, which is the Fourier-Mukai kernel of the composition 
$\tilde{\Phi}_\G^{[n]}:=BKR\circ \Phi^{\boxtimes n}_\G\circ bkr$. Using \cite[Lemma 5(3)]{ploog} we get the isomorphism 
\begin{equation}
\label{eq-universal-sheaf-over-product-of-Hilbert-schemes}
\tilde{\F} \cong R(q\times q)_*^{\fS_{n,\Delta}}\left((b\times b)^*(\G^{\boxtimes n},\rho_\boxtimes\right),
\end{equation}
which is locally free by the flatness of $q\times q$. 

The isospectral Hilbert scheme $\Gamma$ is Gorenstein, by \cite[Theorem 3.1]{haiman},  and its dualizing sheaf $\omega_\Gamma$ is the line bundle $q^*\StructureSheaf{S^{[n]}}(\delta)$, by 
\cite[Theorem 3.1, Prop. 3.4.3, comment after Lemma 3.4.2]{haiman}. 
Hence, the right adjoint $q^!$ of $Rq_*$ is  $\omega_\Gamma\otimes Lq^*$ and so
\[
BKR^{-1}(\bullet) \cong (Rq_*^{\fS_n}\circ L_b^*)^{-1}(\bullet)\cong (Rb_*\circ q^!)(\bullet)\cong
Rb_*((\omega_\Gamma,\rho_\omega)\otimes Lq^*)(\bullet),
\]
where $\rho_{\omega}$ is the natural $\fS_n$-linearization of $\omega_\Gamma$. We have
\begin{equation}
\label{eq-linearized-dualizing-sheaf-of-isospectral-Hilbert-scheme}
(\omega_\Gamma,\rho_\omega)\cong \chi\otimes Lq^*\StructureSheaf{S^{[n]}}(\delta),
\end{equation} 
since $Lq^*\StructureSheaf{S^{[n]}}(\delta)$, with its induced linearization, does not have an invariant global section, while $(\omega_\Gamma,\rho_\omega)$ does.
Hence,
\begin{equation}
\label{eq-relation-betweek-BKR-inverse-and-bkr}
BKR^{-1}(\bullet) \cong \chi\otimes (Rb_*\otimes Lq^*)(\StructureSheaf{S^{[n]}}(\delta)\otimes (\bullet)) = 
\chi\otimes bkr(\StructureSheaf{S^{[n]}}(\delta)\otimes (\bullet)).
\end{equation}
We conclude that the kernel of $\Phi^{[n]}:=BKR\circ \Phi^{\boxtimes n}_\G\circ BKR^{-1}:D^b(M^{[n]})\rightarrow D^b(S^{[n]})$ is the locally free
\begin{equation}
\label{eq-second-universal-sheaf-over-product-of-Hilbert-schemes}
\F \cong \pi_{M^{[n]}}^*\left(\StructureSheaf{M^{[n]}}(\delta)\right)\otimes R(q\times q)_*^{\fS_{n,\Delta}}\left((b\times b)^*(\G^{\boxtimes n},\chi\otimes\rho_\boxtimes)\right).
\end{equation}
Equation (\ref{eq-relation-betweek-BKR-inverse-and-bkr}) and Remark \ref{rem-chi-invariant-objects} yield the isomorphism 
$bkr(\StructureSheaf{z})\cong BKR^{-1}(\StructureSheaf{z})$,  
for sky-scraper sheaves of points $z\in M^{[n]}\setminus D_{M^{[n]}}$, where $D_{M^{[n]}}$ is the divisor of non-reduced subschemes.  Consequently, 
\begin{equation}
\label{eq-F-z-is-image-of-sky-scpaper-via-Phi[n]}
F_z\cong\tilde{\Phi}^{[n]}(\StructureSheaf{z})\cong \Phi^{[n]}(\StructureSheaf{z}),
\end{equation}
for $z\in M^{[n]}\setminus D_{M^{[n]}}$. 

\begin{rem}
Let $X$ and $Y$ be two projective irreducible holomorphic symplectic manifolds.
Is every rational Hodge isometry $f:H^2(X,\RationalNumbers)\rightarrow H^2(Y,\RationalNumbers)$
induced by an algebraic correspondence? Shafarevich asked this question in case $X$ and $Y$ are $K3$ surfaces. Buskin, building on results of Mukai, proved that every rational Hodge isometry is algebraic \cite{mukai-hodge,buskin-thesis} (see \cite{huybrechts-rational-hodge-isometries} for another proof). Buskin's approach uses the fact the the universal bundle $\G$ as above is hyperholomorphic and it deforms over
the graph of the automorphism of the period domain of K\"{a}hler $K3$-surfaces induced by an isometry $g:H^2(M,\RationalNumbers)\rightarrow H^2(S,\RationalNumbers)$ associated to $\G$. It is natural to expect that both the vector bundle $\tilde{\F}$ given in (\ref{eq-universal-sheaf-over-product-of-Hilbert-schemes}) and 
$\F$ given in (\ref{eq-second-universal-sheaf-over-product-of-Hilbert-schemes}) 
have an analogous property and that the techniques  of \cite{buskin-thesis} would establish the algebraicity of many rational Hodge isometries between the second cohomologies of irreducible holomorphic symplectic manifolds of $K3^{[n]}$-type. We plan to return to this question in the future.
\end{rem}

%
\subsubsection{The Chern character in terms of the rank and first Chern class}
\begin{lem}
\label{lemma-isotropic-Mukai-line-all-n}
Let $X$ be of $K3^{[n]}$-type. Let $F$ be a vector bundle over $X$ which is obtained via a sequence of equivalences of derived categories and deformations from a sky scraper sheaf $\StructureSheaf{z}$, $z\in Y$, where $Y$ is of $K3^{[n]}$-type as well.
Then $v(F)$ belongs to $SH^*(X,\QQ)$. Set $h:=c_1(F)$. The rank of $F$ is equal $n!r_0^n$, for some rational number $r_0$, and
$
\Psi(v(F))=\gamma^n,
$
where
\begin{equation}
\label{eq-isotropic-gamma-for-all-n}
\gamma=r_0\alpha+\frac{h}{n!r_0^{n-1}}+\frac{(h,h)}{2(n!)^2r_0^{2n-1}}\beta
\end{equation}
is an isotropic class. In particular, $\ell(F)=\span_\QQ\{\gamma\}.$
\end{lem}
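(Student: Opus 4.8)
\textbf{Proof plan for Lemma \ref{lemma-isotropic-Mukai-line-all-n}.}

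The plan is to run the same argument used in Lemma \ref{lemma-chern-character-of-derived-and-parallel-transforms-of-structure-sheaf} for the structure sheaf, but starting from a sky-scraper sheaf instead. First I would record the base case: if $F_0=\StructureSheaf{z}$ is a sky-scraper sheaf of a point $z\in Y$, then $v(\StructureSheaf{z})=[\mathrm{pt}]$ belongs to $SH^*(Y,\QQ)$, and by Example \ref{example-Mukai-line-of-sky-scraper-sheaf} we have $\ell(\StructureSheaf{z})=\span_\QQ\{\beta\}$, with $\Psi(v(\StructureSheaf{z}))=\Psi([\mathrm{pt}])=\beta^n/c_Y=\beta^n$ since $c_Y=1$ for $K3^{[n]}$-type (Eq. (\ref{eq-Psi-pt})). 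Then I invoke Theorem \ref{thm-Mukai-vector}: the class $v$, the line $\ell$, and the projection $\hat v$ are all preserved (in the appropriate equivariant sense) under derived equivalences and parallel transport, via the isometry $\tilde H(\phi)$ attached to $\phi$. Concretely, if $\phi:H^*(Y,\QQ)\to H^*(X,\QQ)$ is the composite of the equivalences and parallel transports producing $F$ from $\StructureSheaf{z}$, then $v(F)=\phi(v(\StructureSheaf{z}))$ lies in $SH^*(X,\QQ)$ because $\phi$ maps $SH^*(Y,\QQ)$ to $SH^*(X,\QQ)$ (each being the unique irreducible $\LieAlg{g}$-submodule isomorphic to $V(n)$), so $\hat v(F)=v(F)$, and $\Psi(v(F))=\Sym^n(\tilde H(\phi))(\Psi(v(\StructureSheaf{z})))=\Sym^n(\tilde H(\phi))(\beta^n)=(\tilde H(\phi)(\beta))^n$. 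Setting $\gamma_0:=\tilde H(\phi)(\beta)$ gives an isotropic class (isometries preserve $(\beta,\beta)=0$) with $\Psi(v(F))=\gamma_0^n$, hence $\ell(F)=\span_\QQ\{\gamma_0\}$ by Theorem \ref{thm-Mukai-vector}(\ref{prop-item-isotropic}) together with the injectivity of the $n$-th power projection map noted in the proof of Lemma \ref{lemma-pde}. I should also check the sign issue: by Remark \ref{rem-functor-chi}, $\chi(\phi)=1$ when $\phi$ comes from an equivalence sending a sky-scraper sheaf to an object of positive rank (and $=1$ for parallel transports), so $\epsilon(\tilde H(\phi))=1$ throughout and $\Sym^n(\tilde H(\phi))\circ\Psi=\Psi\circ SH(\phi)$ on the nose; since $F$ is a vector bundle of positive rank this applies.

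Next I would pin down the coefficients of $\gamma_0$ in the basis $\alpha$, $H^2(X,\QQ)$, $\beta$. Write $\gamma_0=r_0\alpha+\eta+s\beta$. The coefficient of $\alpha^n$ in $\Psi(v(F))=\gamma_0^n$ is $r_0^n$, while by definition of $\Psi$ (Eq. (\ref{eq-Psi})) the coefficient of $\alpha^n$ in $\Psi(v(F))$ reads off the degree-zero part of $v(F)$, which is $\mathrm{ch}_0(F)=\rank(F)$ divided by $n!$ (since $\Psi(1)=\alpha^n/n!$, i.e. $\Psi$ of the unit equals $\alpha^n/n!$ and the rank-$\rank(F)$ part contributes $\rank(F)\,\alpha^n/n!$). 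Hence $\rank(F)=n!\,r_0^n$. Similarly, $\Psi(c_1(F))=\Psi(e_{c_1(F)}(1))=e_{c_1(F)}(\alpha^n/n!)$, whose $\alpha^{n-1}$-component in $H^2(X,\QQ)$ is $c_1(F)\,\alpha^{n-1}/(n-1)!$; comparing with the $\alpha^{n-1}$-component of $\gamma_0^n$, namely $n\,r_0^{n-1}\eta\,\alpha^{n-1}$, gives $\eta=c_1(F)/(n!\,r_0^{n-1})=h/(n!\,r_0^{n-1})$. Finally $s$ is fixed by the isotropy condition $(\gamma_0,\gamma_0)=0$, i.e. $-2r_0 s+(\eta,\eta)=0$, so $s=(\eta,\eta)/(2r_0)=(h,h)/(2(n!)^2r_0^{2n-1})$, which is exactly Eq. (\ref{eq-isotropic-gamma-for-all-n}). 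Setting $\gamma=\gamma_0$ completes the proof.

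The main obstacle, I expect, is the bookkeeping connecting the abstract isometry $\tilde H(\phi)$ to concrete cohomological data — in particular making sure $\phi$ genuinely sends $v(\StructureSheaf{z})$ to $v(F)$ (this requires $F$ to be literally $\Phi^{[n]}(\StructureSheaf{z})$ up to the deformation/parallel-transport steps, which is exactly the setup of Eq. (\ref{eq-F-z-is-image-of-sky-scpaper-via-Phi[n]}) and the hypothesis of the lemma), and tracking that the rank of $F$, being a positive integer $n!r_0^n$, forces $r_0$ to be a well-defined positive rational. Everything else is a direct transcription of the argument already carried out in the $\StructureSheaf{Y}$ case in Lemma \ref{lemma-chern-character-of-derived-and-parallel-transforms-of-structure-sheaf}, with $\beta^n$ in place of $\frac18(\gamma^2+10\tilde q)$ and the isotropy of $\gamma_0$ replacing the $(\gamma,\gamma)=-10$ constraint.
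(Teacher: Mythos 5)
Your proposal is correct and follows essentially the same route as the paper's proof: transport $\Psi(v(\StructureSheaf{z}))=\beta^n$ via $\Sym^n(\widetilde{H}(\phi))$, dispose of the sign using Remark \ref{rem-functor-chi} and the positivity of $\rank(F)$, and then read off $r_0$, the $H^2$-component, and $s$ by comparing the $\alpha^n$ and $\alpha^{n-1}$ coefficients and imposing isotropy. No gaps.
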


\begin{proof}
Let $\phi:H^*(Y,\QQ)\rightarrow H^*(X,\QQ)$ be the derived parallel transport operator associated 
to the sequence of derived equivalences and deformations used to obtain $F$ from the sky scraper sheaf.
Let $\widetilde{H}(\phi):\tilde{H}(Y,\QQ)\rightarrow \tilde{H}(X,\QQ)$ be the 
image of $\phi$ via the functor (\ref{eq-functor-tilde-H}).
Then $v(F)=\phi(v(\StructureSheaf{z}))=\phi([pt])$, the class $[pt]$ belongs to $SH^*(Y,\QQ)$, and
$\phi$ restricts to an isomorphism from $SH^*(Y,\QQ)$ onto $SH^*(Y,\QQ)$, by \cite[Theorem B]{taelman}. Hence, $v(F)$
belongs to $SH^*(Y,\QQ)$.

Set $\gamma:=(\widetilde{H}(\phi))(\beta)$. Then $\Psi(v(F))=\epsilon(\widetilde{H}(\phi))\gamma^n$, by the equality $\Psi(v(\StructureSheaf{z}))=\beta^n$ of Example \ref{example-Mukai-line-of-sky-scraper-sheaf} and the equivariance of $\Psi$ with respect to derived parallel transport operators (Theorem \ref{thm-def-of-H-tilde}). 
When $n$ is odd $\epsilon(\widetilde{H}(\phi))=1$, by definition, and 
the fact that the rank of $F$ is positive implies that $\epsilon(\widetilde{H}(\phi))=1$ also when $n$ is even, by Remark \ref{rem-functor-chi}.
Write $\gamma=r_0\alpha+\lambda+s\beta$. Then
\[
\Psi(v(F))=\gamma^n=\sum_{k=0}^n\Choose{n}{k}r_0^{n-k}\lambda^k\alpha^{n-k}+\sum_{k=1}^n\Choose{n}{k}s^k(r_0\alpha+\lambda)^{n-k}\beta^k.
\]
The coefficient of $\alpha^n$ on the left hand side is $\rank(F)/n!$ and on the right hand side it is $r_0^n$, so
$\rank(F)=n!r_0^n$. 
The coefficient in $H^2(X,\QQ)$ of $\alpha^{n-1}$ on the left hand side is $h/(n-1)!$, since $\Psi(h)=e_h(\alpha^n/n!)=h\alpha^{n-1}/(n-1)!$, and on the right hand side it is $nr_0^{n-1}\lambda$, so $\lambda=h/(n!r_0^{n-1})$. 
We have
\[
0=(\gamma,\gamma)=\lambda^2-2r_0s=\frac{(h,h)}{(n!)^2r_0^{2n-2}}-2r_0s.
\]
We get that $s=\frac{(h,h)}{2(n!)^2r_0^{2n-1}}$ and Equation (\ref{eq-isotropic-gamma-for-all-n}) is verified.
\end{proof}

\begin{lem}
\label{lemma-isotropic-LLV-line-and-chern-character}
Keep the notation and hypotheses of Lemma \ref{lemma-isotropic-Mukai-line-all-n} and assume that $n=2$. Then 
$\gamma=r_0\alpha+\frac{h}{2r_0}+\frac{(h,h)}{8r_0^3}\beta$ is a primitive isotropic class of divisibility $1$ in the LLV lattice $\Lambda_X$ given in (\ref{eq-Mukai-lattice-K3-2-case}). In particular, $r_0$ is an integer,
$h=r_0\gcd(2,r_0)\psi$, for a class $\psi\in H^2(X,\ZZ)$ such that $2$ divides $\div(\psi)$ if $r_0$ is odd, $\frac{2r_0}{\gcd(2,r_0)^2}$ divides $(\psi,\psi)$, and
$\frac{(\psi,\psi)\gcd(2,r_0)^2}{2r_0}\equiv r_0$ (mod $4$).
Furthermore,
\begin{eqnarray*}
ch_2(F) & = & \frac{1}{4r_0^2}h^2-\frac{r_0^2}{12}c_2(X)
\\
ch_3(F) & = & \frac{1}{24 r_0^4}h^3-\frac{1}{24}c_2(X)h
\\
ch_4(F) & = & \left[
\frac{(h,h)^2}{2^6r_0^6} 
-\frac{5(h,h)}{16 r_0^2}
+\frac{21r_0^2}{16}
\right][pt]
\\
\chi(F) & = & \left(
\frac{(h,h)+10r_0^4}{8r_0^3}
\right)^2.
\end{eqnarray*}
\end{lem}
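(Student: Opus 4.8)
The plan is to specialize the general $n$ computation of Lemma \ref{lemma-isotropic-Mukai-line-all-n} to $n=2$ and then push it through the integral LLV lattice $\Lambda_X$ of (\ref{eq-Mukai-lattice-K3-2-case}), exactly in the style of Lemma \ref{lemma-chern-character-of-derived-and-parallel-transforms-of-structure-sheaf}. First I would set $\gamma := (\widetilde{H}(\phi))(\beta)$, which by Lemma \ref{lemma-isotropic-Mukai-line-all-n} with $n=2$ equals $r_0\alpha + \frac{h}{2r_0} + \frac{(h,h)}{8r_0^3}\beta$, is isotropic, and spans $\ell(F)$; since $\widetilde{H}(\phi)$ restricts to an isometry of integral LLV lattices $\Lambda_Y \to \Lambda_X$ preserving $\beta$ up to the monodromy action (using \cite[Theorem 9.8]{taelman} and the functoriality in Theorem \ref{thm-def-of-H-tilde}), and $\beta$ is a primitive isotropic class of divisibility $1$ in $\Lambda_Y$, the image $\gamma$ is a primitive isotropic class of divisibility $1$ in $\Lambda_X$. (Here I use that primitive isotropic vectors in $\Lambda_X$ form a single monodromy orbit with divisibility $1$, as recalled after the statement of Conjecture \ref{conj-effective-LLV-lines}.)

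Next I would extract the arithmetic constraints from the requirement that $\gamma\in\Lambda_X$. Writing $\gamma = B_{-\delta/2}(r\alpha + \mu + t\beta)$ with $r\in\ZZ$, $\mu\in H^2(X,\ZZ)$, $t\in\ZZ$, and using the formulas (\ref{eq-B-minus-delta-over-2-of-alpha}) for $B_{-\delta/2}$, I would read off: $r_0 = -(\gamma,\beta)$ forces $r_0$ an integer (it is the $\alpha$-coefficient after applying $B_{\delta/2}$, up to the factor of $\delta/2$ which contributes a half-integer only when divisibility considerations allow); pairing $\gamma$ against $\beta$ and $B_{-\delta/2}(\alpha)$ as in Lemma \ref{lemma-chern-character-of-derived-and-parallel-transforms-of-structure-sheaf} pins down when $h/r_0$ is integral versus $2h/r_0$ integral, giving $h = r_0\gcd(2,r_0)\psi$ for $\psi\in H^2(X,\ZZ)$, and divisibility $2 \mid \div(\psi)$ in the odd-$r_0$ case. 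The isotropy relation $0 = (\gamma,\gamma) = \frac{(h,h)}{4r_0^2} - 2r_0 s$ with $s = \frac{(h,h)}{8r_0^3}$, together with the integrality of the $\beta$-coefficient of $B_{\delta/2}(\gamma)$ modulo the lattice, yields the divisibility statement $\frac{2r_0}{\gcd(2,r_0)^2} \mid (\psi,\psi)$ and the congruence $\frac{(\psi,\psi)\gcd(2,r_0)^2}{2r_0}\equiv r_0 \pmod 4$ — this is the bookkeeping analogue of Equations (\ref{r-0-odd})--(\ref{r-0-congruent-to-2-mod-4}), and I expect it to be routine but fiddly modular arithmetic.

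For the Chern character, I would follow the strategy of Lemma \ref{lemma-chern-character-of-derived-and-parallel-transforms-of-structure-sheaf} verbatim: compute $\kappa(F) = \exp(-h/r)(ch(F))$ where $r = \rank(F) = 2r_0^2$, by noting $\exp(e_{-c_1(F)/r})(\ell(F))$ lands in $\QQ\alpha \oplus \QQ\beta$ (Theorem \ref{thm-Mukai-vector}(\ref{prop-item-spanning-Mukai-vector})), so $\kappa(\gamma) := \exp(e_{-h/r})(\gamma) = r_0\alpha + \frac{(h,h)}{8r_0^3}\beta$ (the middle term cancels by the isotropy relation). Then $\Psi(\kappa(F)\sqrt{td_X})$ is a scalar multiple of $\kappa(\gamma)^2$, which I expand using (\ref{eq-tilde-q-in-terms-of-tilde-b-X}) and (\ref{eq-values-of-Psi}) to get $\kappa(F)\sqrt{td_X}$ as an explicit combination of $1$, $b_X$, $[pt]$; dividing by $\sqrt{td_X}$ (using $(\sqrt{td_X})^{-1} = 1 - \frac{1}{24}c_2(X) + \frac{21}{32}[pt]$) gives $\kappa(F) = r_0^2 + \frac{-r_0^2}{\cdots}c_2(X) + \cdots[pt]$ depending only on $r_0$; finally multiplying by $\exp(h/r) = 1 + \frac{h}{2r_0^2} + \frac{h^2}{8r_0^4} + \frac{h^3}{48r_0^6} + \frac{h^4}{384 r_0^8}$ and using $h^4 = 3(h,h)^2[pt]$, $c_2(X)h^2 = 30(h,h)[pt]$ yields the four displayed formulas for $ch_2(F), ch_3(F), ch_4(F)$, and then $\chi(F)$ via Hirzebruch--Riemann--Roch. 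The main obstacle is not conceptual but arithmetic: keeping the rational coefficients and the modular congruences straight, in particular correctly tracking the $\gcd(2,r_0)$ factors between the two parity cases, since the $n=2$ specialization already carries the extra factor $n! = 2$ in the rank compared to the structure-sheaf case.
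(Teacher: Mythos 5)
Your proposal follows essentially the same route as the paper: $\gamma=\widetilde{H}(\phi)(\beta)$ is primitive isotropic of divisibility $1$ because $\widetilde{H}(\phi)$ is an isometry of integral LLV lattices, the arithmetic constraints are extracted by writing $\gamma=B_{-\delta/2}(\cdot)$ exactly as in Lemma \ref{lemma-chern-character-of-derived-and-parallel-transforms-of-structure-sheaf}, and the Chern character is obtained from $\gamma^2=\Psi(v(F))$. The one slip is your value of $\exp(e_{-h/r})(\gamma)$: isotropy forces the $\beta$-coefficient to vanish as well, so $\kappa(\gamma)=r_0\alpha$ (the class you wrote down is not isotropic) and hence $\kappa(F)=2r_0^2(\sqrt{td_X})^{-1}$; with that correction your $\kappa$-first route yields the same formulas the paper gets by expanding $\gamma^2$ directly.
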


\begin{proof}
We have $\Psi(v(F))=\gamma^n,$ where 
$\gamma=(\widetilde{H}(\phi))(\beta)=r_0\alpha+\frac{h}{2r_0}+\frac{(h,h)}{8r_0^3}\beta$, by
Lemma \ref{lemma-isotropic-Mukai-line-all-n}. We get that
\[
\gamma=B_{-\delta/2}\left(
r_0\alpha+\frac{1}{2r_0}h+\frac{r_0}{2}\delta+\left[\frac{(h,h)}{8r_0^3}+\frac{(h,\delta)}{4r_0}-\frac{r_0}{4}\right]\beta
\right).
\]
Hence, $r_0$ is an integer, $\eta=\frac{1}{2r_0}h+\frac{r_0}{2}\delta$ is a class in $H^2(X,\ZZ)$, and 
$h=r_0(2\eta-r_0\delta)$. Set $\psi:=[2\eta-r_0\delta]/\gcd(2,r_0)$. Then $2$ divides $\div(\psi)$, if $r_0$ is odd. 
Furthermore, the coefficient 
of $\beta$ in the above displayed formula is an integer equal to 
\[
\left[\frac{(h,h)}{8r_0^3}+\frac{(h,\delta)}{4r_0}-\frac{r_0}{4}\right]=
\frac{r_0^2[4(\eta,\eta)-4r_0(\eta,\delta)-2r_0^2]}{8r_0^3}+
\frac{2r_0[(\eta,\delta)+r_0]}{4r_0}-\frac{r_0}{4}=\frac{(\eta,\eta)}{2r_0}.
\]
Hence, 
$2r_0$ divides $(\eta,\eta)$. Now, $(\psi,\psi)=\frac{4(\eta,\eta)-2r_0^2}{\gcd(2,r_0)^2}$ and so $2r_0/\gcd(2,r_0)$ divides $(\psi,\psi)$. We have
\[
\frac{\gcd(2,r_0)^2(\psi,\psi)}{2r_0}=4\frac{(\eta,\eta)}{2r_0}-r_0,
\]
hence, $\frac{(\psi,\psi)}{2r_0}\equiv r_0$ (mod $4$).

We have $\Psi(h)=h\alpha$, $\Psi(h^2)=h^2+(h,h)\alpha\beta$, and $\Psi(h^3)=3(h,h)h\beta$. Hence,
\begin{eqnarray*}
\gamma^2 & = & r_0^2\alpha^2+h\alpha+\frac{1}{4r_0^2}h^2+\frac{(h,h)}{4r_0^2}\alpha\beta
+\frac{(h,h)}{8r_0^4}h\beta+\left(\frac{(h,h)}{8r_0^3}\right)^2\beta^2
\\
&=& \Psi\left(
2r_0^2+h+\frac{1}{4r_0^2}h^2+\frac{1}{24r_0^4}h^3+\frac{(h,h)^2}{2^6r_0^6}[pt]
\right)=\Psi(v(F)).
\end{eqnarray*}
Using $c_2(X)h^2=30(h,h)[pt]$ we get
\begin{eqnarray*}
ch(F)&=&v(F)(\sqrt{td_X})^{-1}=
\left(2r_0^2+h+\frac{1}{4r_0^2}h^2+\frac{1}{24r_0^4}h^3+\frac{(h,h)^2}{2^6r_0^6}[pt]\right)
(1-\frac{1}{24}c_2(X)+\frac{21}{32}[pt])
\\
&=&
2r_0^2+h+\left[
\frac{1}{4r_0^2}h^2-\frac{r_0^2}{12}c_2(X)\right]+
\left[\frac{1}{24r_0^4}h^3-\frac{1}{24}c_2(X)h\right]+
\left[
\frac{(h,h)^2}{2^6r_0^6}-\frac{5(h,h)}{16r_0^2}+\frac{21r_0^2}{16}
\right][pt]
\end{eqnarray*}
The formula for $\chi(F)$ follows, by Grothendieck-Riemann-Roch.
\end{proof}

%
\subsection{Proof of Theorem \ref{thm-Fourier-Mukai-images-of-sky-scraper-sheaves}}
\label{sec-proof-of-modularity-of-images-of-sky-scraper-sheaves}
The following will be needed for the proof of Theorem \ref{thm-Fourier-Mukai-images-of-sky-scraper-sheaves}.

\begin{lem}
\label{lemma-inflation-of--tensor-outer-product-is-stable}
Let $X$ be a connected compact K\"{a}hler manifold with a trivial Picard group.
Let $V_i$, $1\leq i\leq n$, be vector bundles, possibly twisted, but if so then with respect to the same Brauer class.
Assume that $V_i$ does not have any non-zero saturated proper  subsheaf, $1\leq i \leq n$.
Assume that $V_i$ and $V_j$ are not isomorphic, if $i\neq j$.
Let $\fS_n$ act on $X^n$ by permuting the factors. Choose the linearization $\rho$ of
$W:=\oplus_{\sigma\in \fS_n}V_{\sigma(1)}\boxtimes \cdots \boxtimes V_{\sigma(n)}$ permuting the factors of each direct summand.
Then $W$ does not have any $\fS_n$-invariant non-zero saturated proper subsheaf.
\end{lem}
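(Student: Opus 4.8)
The plan is to reduce the statement to the semisimplicity of polystable sheaves and then to exploit the transitivity of the $\fS_n$-action on the summands of $W$. Write $W_\sigma:=V_{\sigma(1)}\boxtimes\cdots\boxtimes V_{\sigma(n)}$, so $W=\bigoplus_{\sigma\in\fS_n}W_\sigma$; the linearization $\rho$ makes $\fS_n$ act on the indexing set $\fS_n$ by translation, hence transitively on the set of direct summands $\{W_\sigma\}$. By Proposition \ref{prop-tensor-outer-product-is-stable} each $W_\sigma$ has no non-zero proper saturated subsheaf; as recorded in the proof of Lemma \ref{lemma-U-otimes-V-does-not-have-subsheaves} this makes $W_\sigma$ slope-stable with respect to every K\"{a}hler class on $X^n$, and a short argument with the minimal polynomial of an endomorphism (a non-scalar $\varphi$ produces a non-zero proper saturated subsheaf $\ker(\varphi-\lambda)$) shows $\End(W_\sigma)=\CC$.

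First I would check that the $W_\sigma$ are pairwise non-isomorphic. By the K\"{u}nneth formula for homomorphisms of external products---valid since the $V_i$ are locally free and carry a common Brauer class, so the relevant $\SheafHom$'s are untwisted and locally free---one has $\Hom_{X^n}(W_\sigma,W_\tau)\cong\bigotimes_{i=1}^n\Hom_X(V_{\sigma(i)},V_{\tau(i)})$. A non-zero map $\varphi\colon V_a\to V_b$ with $a\neq b$ has saturated kernel, hence is injective; its saturated hull in $V_b$ must be all of $V_b$, so $\rank V_a=\rank V_b$, and then $\det\varphi$ is a non-zero section of the trivial line bundle $(\det V_a)^\vee\otimes\det V_b$ (trivial because $\Pic(X)$ is trivial and the twists cancel), so $\varphi$ is an isomorphism, contradicting $V_a\not\cong V_b$. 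Hence $\Hom_{X^n}(W_\sigma,W_\tau)=0$ for $\sigma\neq\tau$, and $W$ is a direct sum of pairwise non-isomorphic stable sheaves.

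The crucial point is that $\Pic(X)$ trivial (in particular $H^1(X,\StructureSheaf{X})=0$) forces $\Pic(X^n)$ to be trivial as well, so $c_1$ of every coherent sheaf on $X^n$ vanishes and every subsheaf of $W$ has slope $0$ with respect to every K\"{a}hler class $\Omega$ on $X^n$; in the twisted case one passes to $\SheafEnd$, or to a power $W_\sigma^{\otimes N}$ with $N$ killing the Brauer class, as in \cite{markman-BBF-class-as-characteristic-class}. Thus $W$ is $\Omega$-polystable of slope $0$, hence a semisimple object of the abelian category of $\Omega$-semistable sheaves of slope $0$, with pairwise non-isomorphic simple summands $W_\sigma$. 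Consequently every saturated subsheaf $E\subseteq W$ (automatically of slope $0$, so that $W/E$ is again torsion-free of slope $0$) is a subobject of a semisimple object, hence a direct summand of $W$, and a direct summand of $\bigoplus_\sigma W_\sigma$ with distinct simple summands is precisely a partial sum $\bigoplus_{\sigma\in T}W_\sigma$ for a unique $T\subseteq\fS_n$.

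To finish: if $E$ is in addition $\fS_n$-invariant then the subset $T$ is invariant under the translation action of $\fS_n$ on itself, which is transitive, so $T=\varnothing$ or $T=\fS_n$, i.e. $E=0$ or $E=W$; hence $W$ has no $\fS_n$-invariant non-zero proper saturated subsheaf. I expect the only real friction to be the twisted bookkeeping---ensuring that ``slope $0$'', ``polystable'', and ``direct summand'' behave correctly for $\theta^{\boxtimes n}$-twisted sheaves---but this is routine within the framework already set up, and everything else is a clean combination of Proposition \ref{prop-tensor-outer-product-is-stable}, the K\"{u}nneth formula, and the semisimplicity of polystable sheaves.
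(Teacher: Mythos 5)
Your proposal is correct and follows essentially the same route as the paper: reduce to Proposition \ref{prop-tensor-outer-product-is-stable} to get stability of each summand $W_\sigma$, use triviality of $\Pic(X^n)$ and polystability to conclude that a saturated subsheaf is a partial sum of the pairwise non-isomorphic summands, and then invoke the transitivity of the $\fS_n$-action on the summands. The only difference is that you spell out the pairwise non-isomorphism of the $W_\sigma$ via the K\"{u}nneth argument, which the paper merely asserts.
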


\begin{proof}
The direct summands of $W$ are stable, by Proposition \ref{prop-tensor-outer-product-is-stable}, are pairwise non-isomorphic, and $W$ is polystable with respect to every K\"{a}hler class on $X^n$.
Let $E$ be a saturated subsheaf of $W$. Then $\det(\SheafHom(E,W))$ is the trivial line bundle, since $\Pic(X^n)$ is trivial.
Hence, $E$ is a direct sum of a subset of the direct summands of $W$. If $E$ is $\fS_n$-invariant, then this subset must be the whole of $\fS_n$ and so $E=W$.
\end{proof}

\begin{proof} (of Theorem \ref{thm-Fourier-Mukai-images-of-sky-scraper-sheaves}). 
Being $H$-slope-stable, each $G_i$ is projectively $H$-hyperholomorphic \cite{kaledin-verbitsky-book} (see also \cite{huybrechts-schroer}). 
The proof is the same as that of Theorem \ref{thm-BKR-of-tensor-product-of-spherical-object}, except that one replaces the use of
Proposition \ref{prop-tensor-outer-product-is-stable} by that of Lemma \ref{lemma-inflation-of--tensor-outer-product-is-stable}
and the reference to the paragraph before Theorem \ref{thm-BKR-of-tensor-product-of-spherical-object} by that before Theorem \ref{thm-Fourier-Mukai-images-of-sky-scraper-sheaves}.
\end{proof}
%
\section{The action of the derived monodromy group of $S$ on the LLV lattice of $S^{[n]}$}
We prove in this section Theorem \ref{thm-introduction-action-of-DMon-S-on-LLV-lattice}
describing the composite homomorphism
$(\widetilde{H}\circ ^{[n]}):DMon(S)\rightarrow O(\widetilde{H}(S^{[n]},\QQ))$ from the derived monodromy group of a $K3$ surface to the isometry group of the rational LLV lattice of its $n$-th Hilbert scheme.

Let $\Phi^{[n]}_\chi:D^b(S^{[n]})\rightarrow D^b(S^{[n]})$ be the auto-equivalence (\ref{eq-Phi-chi-[n]}), which is
BKR-conjugate to tensorization by the sign character $\chi$. 
Let
$\phi^{[n]}_\chi:H^*(S^{[n]},\QQ)\rightarrow H^*(S^{[n]},\QQ)$ be the derived monodromy operator corresponding to  $\Phi^{[n]}_\chi$.
Let $\widetilde{H}(\phi^{[n]}_\chi):\widetilde{H}(S^{[n]},\QQ)\rightarrow \widetilde{H}(S^{[n]},\QQ)$ be the induced isometry via 
(\ref{eq-action-of-DMon-on-Mukai-lattice}). Clearly, $(\widetilde{H}(\phi^{[n]}_\chi))^2=id$.
Let $\delta\in H^{1,1}(S^{[n]},\Integers)$ be half the class of the divisor of non-reduced subschemes.
Denote by $(r,\lambda,s)$ the element $r\alpha+\lambda+s\beta$ of $\widetilde{H}(S^{[n]},\QQ)$
\begin{equation}
\label{eq-notation-for-element-of-LLV-lattice}
(r,\lambda,s):=r\alpha+\lambda+s\beta.
\end{equation}

\begin{lem}
\label{lemma-isometry-of-BKR-conjugate-of-tensorization-by-sign-character}
The isometry $\widetilde{H}(\phi^{[n]}_\chi)$ is $(-1)^{n+1}$ times  the reflection with respect to the hyperplane orthogonal to $u_0:=(0,\delta,n-1)=B_{-\delta/2}(0,\delta,0)$,
\[
\widetilde{H}(\phi^{[n]}_\chi)(x) = (-1)^{n+1}\left[x+\frac{(x,u_0)}{n-1}u_0\right].
\]
\end{lem}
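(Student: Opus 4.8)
The plan is to combine three ingredients: the structure of the auto-equivalence $\Phi_\chi^{[n]}$, Taelman's functoriality (Theorem~\ref{thm-def-of-H-tilde}), and the computation of $\ell(\StructureSheaf{S^{[n]}})$ and of the line bundle image $\LB$. First I would recall from Equation~(\ref{eq-Phi-chi-sends-structure-sheaf-to-LB}) that $\Phi_\chi^{[n]}(\StructureSheaf{S^{[n]}})\cong \LB$, where $\LB^2\cong\StructureSheaf{S^{[n]}}(-D)=\StructureSheaf{S^{[n]}}(-2\delta)$, so $c_1(\LB)=-\delta$. Since $\StructureSheaf{S^{[n]}}$ deforms in co-dimension one (it is the image of a structure sheaf via the identity, and $\Ext^2(\StructureSheaf{},\StructureSheaf{})$ is one-dimensional), so does $\LB$, and Theorem~\ref{thm-Mukai-vector}(\ref{prop-item-functoriality}) gives $\widetilde{H}(\phi_\chi^{[n]})(\ell(\StructureSheaf{S^{[n]}}))=\ell(\LB)$. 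By Lemma~\ref{lemma-LLV-line-of-structure-sheaf-k3-type}, $\ell(\StructureSheaf{S^{[n]}})=\span\{4\alpha+(n+3)\beta\}$, and since $\LB$ is a line bundle of rank $1$ with $c_1(\LB)=-\delta$, Theorem~\ref{thm-Mukai-vector}(\ref{prop-item-spanning-Mukai-vector}) and tensorization-equivariance give $\ell(\LB)=\span\{\alpha-\delta+s\beta\}$ for the appropriate $s$; in fact $\ell(\LB)=B_{-\delta}(\ell(\StructureSheaf{S^{[n]}})\text{-rescaled})$. Working this out one finds $\ell(\LB)$ is spanned by $4\alpha-4\delta+\bigl((n+3)+2(\delta,\delta)\bigr)\beta$ type expression; I would carry out this short computation explicitly to pin down the image vectors.

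Next I would exploit that $\widetilde{H}(\phi_\chi^{[n]})$ is an involution: $(\widetilde{H}(\phi_\chi^{[n]}))^2=\mathrm{id}$ because $\Phi_\chi^{[n]}$ is an involution (tensorization by $\chi$ with $\chi^2=1$ is an involution, and conjugation preserves this). An involutive isometry of a quadratic space decomposes the space into $\pm1$ eigenspaces, which are mutually orthogonal; hence $\widetilde{H}(\phi_\chi^{[n]})$ is determined once we identify one eigenspace. The key structural input is that $\Phi_\chi^{[n]}$ fixes $\delta$ up to the natural geometry: more precisely, $\Phi_\chi^{[n]}$ commutes with $\Phi_P^{[n]}$ for all surface Fourier--Mukai kernels $P$ (Equation~(\ref{eq-Phi-chi-[n]-commutes-with-surface-FM})), which forces $\widetilde{H}(\phi_\chi^{[n]})$ to commute with all $\widetilde{H}(\Phi_P^{[n]})$; since those operators (together with $B_\lambda$'s) act transitively enough on $\delta^\perp$, the $(-1)^{n+1}$-eigenline must be spanned by the (unique up to scalar) vector $u_0$ orthogonal to all their images, which one computes to be $B_{-\delta/2}(0,\delta,0)=(0,\delta,n-1)$ using Equations~(\ref{eq-B-minus-delta-over-2-of-alpha}). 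The sign $(-1)^{n+1}$ enters through the $\det$ and orientation bookkeeping in Theorem~\ref{thm-def-of-H-tilde}: tensorization by $\chi$ acts on $H^*(S^n,\QQ)$ and its induced action on the $SH^*$-piece is an isometry of $\widetilde{H}$ whose determinant sign is governed by $\epsilon(\widetilde{H}(\phi_\chi^{[n]}))$, and a direct parity count of how $\chi$ acts on the equivariant cohomology pins down that it is $(-1)^{n+1}$.

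Concretely the steps are: (1) identify $\Phi_\chi^{[n]}(\StructureSheaf{S^{[n]}})\cong\LB$ and $\Phi_\chi^{[n]}(\LB)\cong\StructureSheaf{S^{[n]}}$, so $\widetilde{H}(\phi_\chi^{[n]})$ swaps $\ell(\StructureSheaf{S^{[n]}})$ and $\ell(\LB)$; (2) compute both lines via Lemma~\ref{lemma-LLV-line-of-structure-sheaf-k3-type}, Lemma~\ref{lemma-Mukai-line-of-F-dual} and the tensorization action $B_{-\delta}$, checking they are exchanged by the proposed reflection formula evaluated on those vectors; (3) verify $u_0=(0,\delta,n-1)$ is $(-1)^{n+1}$-eigen by checking $\widetilde{H}(\phi_\chi^{[n]})$ commutes with $\widetilde{H}(\Phi_P^{[n]})$ for enough $P$ and $B_\lambda$ to force the eigenspace decomposition, using that $\widetilde{H}(\phi_\chi^{[n]})^2=\mathrm{id}$; (4) fix the global sign $(-1)^{n+1}$ via the orientation/determinant argument of Theorem~\ref{thm-def-of-H-tilde} and Remark~\ref{rem-functor-chi}, or alternatively by evaluating the formula on the single known vector $\Psi(v(\StructureSheaf{z}))=\beta^n$ (a sky-scraper sheaf is $\chi$-related to a sky-scraper sheaf, so $\widetilde{H}(\phi_\chi^{[n]})(\beta)$ must again span an isotropic line of divisibility $1$, which combined with the reflection formula forces the sign). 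The main obstacle I anticipate is step~(4): getting the sign $(-1)^{n+1}$ rigorously, because it requires carefully tracking the orientation data $\epsilon$ through the BKR conjugation and the non-functoriality subtlety that $\widetilde{H}\circ{}^{[n]}$ is twisted by $\det(\widetilde{H}(\Phi))^{n+1}$ as in Theorem~\ref{thm-introduction-action-of-DMon-S-on-LLV-lattice}. A clean way around this is to first establish that $\widetilde{H}(\phi_\chi^{[n]})$ is $\pm$ the stated reflection purely from the eigenspace/involution argument, then nail the sign by testing on $\StructureSheaf{S^{[n]}}$: the reflection with coefficient $+1$ would send $4\alpha+(n+3)\beta$ to itself (since $(4\alpha+(n+3)\beta,u_0)=4(n-1)-0\neq 0$ actually — so it does move it), giving a decisive check that selects $(-1)^{n+1}$.
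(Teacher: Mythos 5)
Your overall architecture---$\widetilde{H}(\phi^{[n]}_\chi)$ is an involution, so one identifies its eigenspace decomposition and then fixes the sign via sky-scraper sheaves and the determinant bookkeeping of Theorem~\ref{thm-def-of-H-tilde}---matches the paper's, and your step (4) for the sign is essentially the argument used there. The genuine gap is in step (3), where you must identify the one-dimensional $\mp$-eigenline as $\span\{u_0\}$ with $u_0=(0,\delta,n-1)$. You propose to deduce this from the fact that $\widetilde{H}(\phi^{[n]}_\chi)$ commutes with all $\widetilde{H}(\Phi_P^{[n]})$ and with the $B_{\theta(\lambda)}$, claiming these ``act transitively enough on $\delta^\perp$'' to force the eigenline. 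But how $\widetilde{H}(\Phi_P^{[n]})$ acts on $\widetilde{H}(S^{[n]},\QQ)$ is precisely the content of Theorem~\ref{thm-action-of-DMon-S-on-LLV-lattice}, whose proof relies on the present lemma, and the paper stresses that $\widetilde{H}(\Phi^{[n]})$ is \emph{not} the naive extension $\eta_\Phi$; so the transitivity you invoke is unavailable here without circularity. Your ``decisive check'' at the end---that the proposed reflection exchanges $\ell(\StructureSheaf{S^{[n]}})$ and $\ell(\LB)$---only verifies consistency: an involutive isometry exchanging two prescribed lines and fixing $\span\{\beta\}$ is very far from unique, so this cannot single out the hyperplane $u_0^\perp$.

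What the paper supplies to close exactly this step is an explicit, codimension-one supply of eigenvectors coming from geometry rather than from the (not yet computed) $DMon(S)$-action: sky-scraper sheaves of reduced points are fixed by $\Phi^{[n]}_\chi$, giving $\span\{\beta\}$ and, for $n$ even, $\det=1$; for an elliptic $K3$ the sheaves $\iota_*\iota^*\LB$ supported on the lagrangian abelian variety $E_1\times\cdots\times E_n\subset S^{[n]}$ are fixed (a computation on the isospectral Hilbert scheme using that the components $E_{\sigma(1)}\times\cdots\times E_{\sigma(n)}$ are disjoint, as in Remark~\ref{rem-chi-invariant-objects}), which after varying the complex structure yields all lines $\span\{(0,f,s)\}$ and hence the subspace $W=\{(0,0,1),(0,\delta,0)\}^\perp$; Bridgeland's Fourier--Mukai transform for elliptic surfaces places $W$ inside the span $V$ of the orbit of $\span\{\beta\}$, and the positive-rank bundle of Lemma~\ref{lemma-isotropic-LLV-line-and-chern-character} shows $V\not\subseteq(0,0,1)^\perp$, so $V$ has codimension exactly one and the exchange of $\ell(\StructureSheaf{S^{[n]}})$ with $\ell(\StructureSheaf{S^{[n]}}(-\delta))$ pins down $V=u_0^\perp$. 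Some equivalent explicit family of fixed LLV lines is indispensable, and your outline does not contain one.
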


\begin{proof}
Let $e:\Gamma\rightarrow S^n\times S^{[n]}$ be the inclusion. 
The kernel of the functor $BKR^{-1}$ is isomorphic to $(\iota_*\StructureSheaf{\Gamma})^\vee[2n]$, the derived dual $(\iota_*\StructureSheaf{\Gamma})^\vee$ is $\iota_*\omega_\Gamma[-2n]$, and $\omega_\Gamma$ is a line bundle, since $\Gamma$ is a normal Cohen-Macaulay and Gorenstein variety, by \cite[Theorem 3.1]{haiman}. 
So $BKR^{-1}$ has Fourier-Mukai kernel $\iota_*\omega_\Gamma$, while the functor $bkr$ given in (\ref{eq-usual-bkr}),
has kernel $\iota_*\StructureSheaf{\Gamma}$, each with its natural linearization.
 Let $p\in S^{[n]}$ correspond to a reduced subscheme supported on the subset $\{x_1, \ \dots, \ x_n\}$ of $S$, $x_i\neq x_j$ for $i\neq j$. The restriction of $\omega_\Gamma$ to the $\fS_n$-orbit of $(x_1, \ \dots, \ x_n)$ is trivial.
 Hence, $BKR^{-1}(\CC_p)\cong bkr(\CC_p)\cong 
 \left(\oplus_{\sigma\in\fS_n}\CC_{(x_{\sigma(1)}, \ \dots, \ x_{\sigma(n)})},\rho\right)$, for a linearization $\rho$, which is isomorphic to
 $\left(\CC_{(x_{\sigma(1)}, \ \dots, \ x_{\sigma(2)})},\chi\otimes \rho\right)$ by Remark \ref{rem-chi-invariant-objects}, so $\Phi^{[n]}_\chi(\CC_p)\cong\CC_p$.
Hence, $\widetilde{H}(\phi^{[n]}_\chi)$ maps the line $\ell(\CC_p)=\span_\QQ\{(0,0,1)\}$ to itself, $\Sym^n(\widetilde{H}(\phi^{[n]}_\chi))$ maps
$\Psi(v(\CC_p))$ to itself, and $\phi^{[n]}_\chi(v(\CC_p))=v(\CC_p)$. If $n$ is even, it follows that $\det(\widetilde{H}(\phi^{[n]}_\chi))=1$, by the equality
$\Sym^n(\widetilde{H}(\phi^{[n]}_\chi))\circ\Psi=\Psi\circ [\det(\widetilde{H}(\phi^{[n]}_\chi))\phi^{[n]}_\chi]$ in Theorem \ref{thm-def-of-H-tilde}.
If $n$ is odd, it follows that $\ell(\CC_p)$ is an eigenline of $\widetilde{H}(\phi^{[n]}_\chi)$ with eigenvalue $1$, as the eigenvalue is $\pm 1$ as well as an $n$-th root of unity  
by the equality $\Sym^n(\widetilde{H}(\phi^{[n]}_\chi))\circ\Psi=\Psi\circ \phi^{[n]}_\chi$
in Theorem \ref{thm-def-of-H-tilde}.

Note that the action of $\widetilde{H}(\phi^{[n]}_\chi)$ is independent of the complex structure of the $K3$ surface $S$.
Furthermore, $\widetilde{H}(\phi^{[n]}_\chi)$ commutes with $\widetilde{H}(g^{[n]}):\widetilde{H}(S^{[n]},\QQ)\rightarrow \widetilde{H}(S^{[n]},\QQ)$, for every $g$ in $DMon(S)$, by Equation (\ref{eq-Phi-chi-[n]-commutes-with-surface-FM}). 
Let $V$ be the subspace of $\widetilde{H}(S^{[n]},\QQ)$ spanned by the $DMon(S)$ orbit of the line 
$\ell(\CC_p)=\span_\QQ\{(0,0,1)\}$. The subspace 
$V$ is $\widetilde{H}(\phi^{[n]}_\chi)$-invariant and $\widetilde{H}(\phi^{[n]}_\chi)$  acts on $V$ as the identity, if $n$ is odd, and $V$ is contained in either the $1$ or $-1$ eigenspace of $\widetilde{H}(\phi^{[n]}_\chi)$, if $n$ is even.

Assume that $S$ admits an elliptic fibration
$\pi:S\rightarrow\PP^1$  and let $E_1$, \dots, $E_n$ be $n$ distinct smooth fibers. The abelian variety 
$A:=E_1\times \cdots \times E_n$ embeds as a lagrangian subvariety of $S^{[n]}$. Let $\iota:A\rightarrow S^{[n]}$ be the embedding. 
Set $\tilde{A}:=\cup_{\sigma\in\fS_n}E_{\sigma(1)}\times \cdots \times E_{\sigma(n)}$. 
We get the equivariant embedding $\tilde{\iota}:\tilde{A}\rightarrow \Gamma$ into the universal subscheme $\Gamma\subset S^{[n]}\times S^n$.
Let $b:\Gamma\rightarrow S^n$ and $q:\Gamma\rightarrow S^{[n]}$ be the restriction to $\Gamma$ of the two projections.
Then $\tilde{A}$ is contained in the open subset where $b$ is an isomorphism and $q$ is \'{e}tale.
Given a line bundle $\LB$ on $S^{[n]}$, then 
$Lq^*(\iota_*\iota^*\LB)\cong \tilde{\iota}_*\tilde{\iota}^*(Lq^*\LB)$ in $D^b_{\fS_n}(\Gamma)$. Denote the linearization of $Lq^*\LB$ by $\lambda$. Then $\tilde{\iota}_*(\tilde{\iota}^*Lq^*\LB,\tilde{\iota}^*\lambda)$ is isomorphic to $\tilde{\iota}_*(\tilde{\iota}^*Lq^*\LB,\chi\otimes \tilde{\iota}^*\lambda)$, by Remark \ref{rem-chi-invariant-objects}, as the irreducible components $E_{\sigma(1)}\times \cdots \times E_{\sigma(n)}$  are disjoint for different permutations $\sigma\in \fS_n$. Furthermore,
$Rb_*^{\fS_{n,\Delta}}\tilde{\iota}_*(\tilde{\iota}^*Lq^*\LB,\chi\otimes \tilde{\iota}^*\lambda)$ is isomorphic to 
$\chi\otimes Rb_*^{\fS_{n,\Delta}}\tilde{\iota}_*(\tilde{\iota}^*Lq^*\LB,\tilde{\iota}^*\lambda)$, since $b$ is an equivariant 
morphism with respect to the isomorphism $\fS_n\rightarrow \fS_n$.
It follows that
$BKR^{-1}(\iota_*\iota^*\LB)\cong \chi\otimes BKR^{-1}(\iota_*\iota^*\LB)$. Thus, $\Phi^{[n]}_\chi(\iota_*\iota^*\LB)\cong \iota_*\iota^*\LB$.
Now, $\ell(\StructureSheaf{A})=\span\{(0,f,0)\}$, where $f$ is the class of a fiber of $\pi$ in $H^2(S,\Integers)$ considered as the subspace of $H^2(S^{[n]},\ZZ)$ orthogonal to $\delta$, by Lemma \ref{lemma-Mukai-line-of-structure-sheaf-of-subcanonical-lagrangian}. Thus, 
$\ell(\iota_*\iota^*\LB)=\exp(e_{c_1(\LB)})(0,f,0)=(0,f,(f,c_1(\LB)))$. 
We see that $\widetilde{H}(\phi^{[n]}_\chi)$ leaves invariant the lines $\span\{(0,f,s)\}$, whenever $f$ is an isotropic primitive class in $H^2(S,\ZZ)$ and there is a complex structure on $S$ such that $f$ is the class of a fiber of an elliptic fibration and there exists a line bundle $\LB$  on $S$, such that $s=(f,c_1(\LB))$. 
We conclude that $\widetilde{H}(\phi^{[n]}_\chi)$ acts by multiplication by $1$ or $-1$
on $W:=\{(0,0,1),(0,\delta,0)\}^\perp$. 

Assume that the elliptic fibration $\pi$ has a section and all its fibers are integral. Then $S$ is isomorphic to the moduli space $M_H(0,f,0)$
of $H$-stable  sheaves $F$  of rank $0$, $c_1(F)=f$, and $\chi(F)=0$, for a suitable choice of a polarization $H$ on $S$
\cite{briedgeland-elliptic-surfaces}. Furthermore, there exists over $M_H(0,f,0)\times S$ a universal sheaf $\P$ and
$\Phi_\P:D^b(M_H(0,f,0))\rightarrow D^b(S)$ is an equivalence \cite{briedgeland-elliptic-surfaces}.
Choose the length $n$ subscheme $p$ of $S$ to consists of the $n$ distinct points $p_i$ of intersection of the section with the fibers $E_i$, $1\leq i\leq n$. The  Fourier Mukai transform
$\Phi_\P:D^b(S)\cong D^b(M_H(0,f,0))\rightarrow D^b(S)$ 
satisfies $\Phi_\P^{[n]}(\CC_{p_i})\cong \StructureSheaf{E_i}$. Hence,
$\Phi_\P^{[n]}(\CC_p)\cong \iota_*\StructureSheaf{A}$. We conclude that $W$ is contained in $V$.

Note that $V$ contains the LLV line of the vector bundle (\ref{eq-G-Z}). The latter line is not contained in $W$, by Lemma \ref{lemma-isotropic-LLV-line-and-chern-character}. Hence, the co-dimension of $V$ in $\tilde{H}(S^{[n]},\QQ)$ is at most $1$. 
Recall that $\ell(\StructureSheaf{S^{[n]}})=\span\{(4,0,n+3)\}$, by Lemma \ref{lemma-LLV-line-of-structure-sheaf-k3-type}. 
Equation (\ref{eq-Phi-chi-sends-structure-sheaf-to-LB}) implies that $\widetilde{H}(\phi^{[n]}_\chi)$  interchanges the two lines 
$\ell(\StructureSheaf{S^{[n]}})$ and $\ell(\StructureSheaf{S^{[n]}}(-\delta))=\span\{(4,-4\delta,7-3n)\}$.
Hence, $V$ is a co-dimension $1$ subspace of $\tilde{H}(S^{[n]},\QQ)$. Furthermore, $\widetilde{H}(\phi^{[n]}_\chi)$  is neither $id$, nor $-id$.
Hence, $\widetilde{H}(\phi^{[n]}_\chi)$ is either the reflection $R_V$ in $V$ or $-R_V$.
The reflection $R_V$ must map $(4,0,n+3)$ to $\pm(4,-4\delta,7-3n)$, as the two have the same  self-intersection. 
Furthermore, $(4,0,n+3)+R_V((4,0,n+3))$ belongs to $V$. 
If $R_V((4,0,n+3))=-(4,-4\delta,7-3n)$, then $(4,0,n+3)+R_V((4,0,n+3))=4(0,\delta,n-1)$ and so $V=(0,0,1)^\perp$.
This contradicts the fact that $V$ contains LLV lines of vector bundles of positive rank as in Lemma \ref{lemma-isotropic-LLV-line-and-chern-character}.
Hence, $R_V((4,0,n+3))=(4,-4\delta,7-3n)$, and $V=(0,\delta,n-1)^\perp$, since
$(0,\delta,n-1)=\frac{1}{4}[(4,0,n+3)-R_V((4,0,n+3))]$.
When $n$ is even we observed above that  $\det(\widetilde{H}(\phi^{[n]}_\chi))=1$ and so $\widetilde{H}(\phi^{[n]}_\chi)=-R_V$. When $n$ is odd 
we observed above that $\widetilde{H}(\phi^{[n]}_\chi)$  maps $(0,0,1)$ to itself, and so $\widetilde{H}(\phi^{[n]}_\chi)=R_V$.
\end{proof}

Let $v=(1,0,1-n)$ be the Mukai vector of the ideal sheaf of a length $n$ subscheme of a $K3$ surface $S$.
Then the co-rank $1$ sublattice $v^\perp$ of the Mukai lattice $\widetilde{H}(S,\ZZ)$ contains $H^2(S,\ZZ)$.
Let $\theta:H^2(S,\ZZ)\rightarrow H^2(S^{[n]},\ZZ)$ be the homomorphism (\ref{eq-theta}).
Denote by 
$\theta:H^2(S,\QQ)\rightarrow H^2(S^{[n]},\QQ)$ the induced homomorphism as well.
Let $\tilde{\theta}:\widetilde{H}(S,\QQ)\rightarrow \widetilde{H}(S^{[n]},\QQ)$ be the extension mapping $\alpha$ to $\alpha$ and
$\beta$ to $\beta$. 
Let $\eta:O(\widetilde{H}(S,\QQ))\rightarrow O(\widetilde{H}(S^{[n]},\QQ))$ be the homomorphism extending an isometry by acting 
as the identity on the orthogonal complement $\span_\QQ\{(0,\delta,0)\}$ of the image of $\tilde{\theta}$. Explicitly, 
$\eta_g(\tilde{\theta}(x))=\tilde{\theta}(g(x))$, and $\eta_g(0,\delta,0)= (0,\delta,0)$, for all $g\in O(\widetilde{H}(S,\QQ))$.

The following was proven for $n=2$ in \cite[Theorem 9.4]{taelman}.
Given an element $\phi$ of $DMon(S)$ denote by $\phi^{[n]}$ its image in $DMon(S^{[n]})$ via the functor 
(\ref{eq-functor-[n]}). Denote the image of $\phi^{[n]}$ in $O(\tilde{H}(S^{[n]},\QQ))$ via (\ref{eq-functor-tilde-H}) by $\widetilde{H}(\phi^{[n]})$. Define $\widetilde{H}(\phi)\in O(\tilde{H}(S,\QQ))$ similarly. Set $\eta_\phi:=\eta_{\widetilde{H}(\phi)}$, for $\phi\in DMon(S)$.

\begin{thm}
\label{thm-action-of-DMon-S-on-LLV-lattice}
$\widetilde{H}(\phi^{[n]})=\det(\widetilde{H}(\phi))^{n+1}\left(
B_{-\delta/2}\circ \eta_\phi\circ B_{\delta/2}
\right).$
\end{thm}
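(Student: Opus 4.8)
\textbf{Proof proposal for Theorem \ref{thm-action-of-DMon-S-on-LLV-lattice}.}

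The plan is to exploit the commutation $\Phi_\chi^{[n]}\circ\Phi_P^{[n]}\cong\Phi_P^{[n]}\circ\Phi_\chi^{[n]}$ of Equation (\ref{eq-Phi-chi-[n]-commutes-with-surface-FM}) together with the explicit description of $\widetilde{H}(\phi_\chi^{[n]})$ in Lemma \ref{lemma-isometry-of-BKR-conjugate-of-tensorization-by-sign-character}. First I would reduce the statement to a single identity: it suffices to prove the theorem for $\phi$ ranging over a generating set of $DMon(S)$, since both sides are homomorphisms in $\phi$ once one checks that $\phi\mapsto \det(\widetilde{H}(\phi))^{n+1}(B_{-\delta/2}\circ\eta_\phi\circ B_{\delta/2})$ is multiplicative (this uses $\eta_{\phi_1\phi_2}=\eta_{\phi_1}\eta_{\phi_2}$, which is immediate from the definition of $\eta$, and the fact that $\det\circ\widetilde{H}$ is a character). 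The group $DMon(S)=O^+(\widetilde{H}(S,\ZZ))$ is generated by parallel transport operators, by tensorizations with line bundles (which act on $\widetilde{H}(S,\QQ)$ by $\exp(e_\lambda)$, cf. the discussion after (\ref{eq-action-of-DMon-on-Mukai-lattice})), and by the spherical twist along $\StructureSheaf{S}$ (or equivalently the equivalence $\Phi$ of Example \ref{example-non-modular-P-n-objects}); in fact parallel transports plus tensorizations plus one derived auto-equivalence with nontrivial action on the isotropic lines suffice, by Taelman's computation of $DMon(S)$.

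Next I would verify the identity on these generators. For a parallel transport operator $\phi$ induced by a family of $K3$ surfaces, $\phi^{[n]}$ is the parallel transport of the relative Hilbert scheme, so $\widetilde{H}(\phi^{[n]})$ is, by the definition of the functor $\widetilde{H}$ on parallel transports, the extension of $\widetilde{H}(\phi)$ acting by the identity on $\QQ\delta\oplus U_\QQ$; since $B_{\pm\delta/2}$ fix $H^2(S,\QQ)$ and $\delta$, and since $\det(\widetilde{H}(\phi))=1$ for parallel transports, both sides equal $\eta_\phi$. For $\phi=$ tensorization by a line bundle $L$ on $S$ with $c_1(L)=\lambda\in H^2(S,\ZZ)$, the conjugated equivalence $\Phi^{[n]}$ is tensorization by $\theta(L)$ on $S^{[n]}$, which acts by $\exp(e_{\theta(\lambda)})$; one computes $\widetilde{H}(\phi)=\exp(e_\lambda)$, $\det=1$, and then checks directly that $B_{-\delta/2}\circ\eta_{\exp(e_\lambda)}\circ B_{\delta/2}=\exp(e_{\theta(\lambda)})$ using $\theta(\lambda)\perp\delta$ and the explicit action formulas (\ref{eq-e-lambda}), (\ref{eq-B-minus-delta-over-2-of-alpha}). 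For the remaining derived auto-equivalence $\phi$ with $\det(\widetilde{H}(\phi))=-1$ (the one permuting $\alpha$ and $\beta$ on $\widetilde{H}(S,\QQ)$, up to the $e_\lambda$'s), I would use a computed example: apply $\widetilde{H}(\phi^{[n]})$ to an explicitly known LLV line, e.g. $\ell(\StructureSheaf{S^{[n]}})=\span\{4\alpha+(n+3)\beta\}$ from Lemma \ref{lemma-LLV-line-of-structure-sheaf-k3-type}, and to $\ell(\CC_p)=\span\{\beta\}$ from Example \ref{example-Mukai-line-of-sky-scraper-sheaf}, and to $\ell$ of the BKR image of $G^{\boxtimes n}$ from Remark \ref{rem-eliminate-use-of-lemma-for-n-equal-2}; by Theorem \ref{thm-Mukai-vector}(\ref{prop-item-functoriality}) these images are $\widetilde{H}(\phi^{[n]})$ of the respective lines, and since the three lines span $\widetilde{H}(S^{[n]},\QQ)$ and $\widetilde{H}(\phi^{[n]})$ is an isometry, its values on enough lines pin it down.

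The main obstacle will be the last step: determining $\widetilde{H}(\phi^{[n]})$ for a non-parallel-transport, non-tensorization generator, because $\phi^{[n]}$ is defined via the BKR conjugation $BKR\circ\Phi^{\boxtimes n}\circ BKR^{-1}$ and the BKR equivalence acts on cohomology in a way that mixes the symmetric power structure with $\delta$-corrections (this is precisely why $\widetilde{H}\circ{}^{[n]}$ is not simply $\eta$). I would handle this exactly as in the proof of Lemma \ref{lemma-isometry-of-BKR-conjugate-of-tensorization-by-sign-character}: use that $\widetilde{H}(\phi^{[n]})$ must commute with $\widetilde{H}(\phi_\chi^{[n]})=(-1)^{n+1}R_{u_0}$ (reflection in $u_0^\perp$, $u_0=B_{-\delta/2}(0,\delta,0)$) by (\ref{eq-Phi-chi-[n]-commutes-with-surface-FM}), hence $\widetilde{H}(\phi^{[n]})$ preserves both $\QQ u_0$ and $u_0^\perp$; this forces $\widetilde{H}(\phi^{[n]})=B_{-\delta/2}\circ(\text{something on }U_\QQ\oplus H^2(S,\QQ))\circ B_{\delta/2}$ up to the $\det$-sign, and then the computed examples above determine the "something" to be $\eta_\phi$ and the sign to be $\det(\widetilde{H}(\phi))^{n+1}$. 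The orientation bookkeeping (the $\det(\widetilde H(\phi))^{n+1}$ factor, needed because $\Sym^n(\widetilde H(\phi))=\Sym^n(-\widetilde H(\phi))$ when $n$ is even) is tracked via Theorem \ref{thm-def-of-H-tilde} and Remark \ref{rem-functor-chi}, using that $\phi^{[n]}$ sends a sky-scraper sheaf to an object of positive rank so that $\chi(\phi^{[n]})=1$.
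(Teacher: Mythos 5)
Your overall architecture -- verify the identity on parallel transports and on tensorizations via the identity $(B_\lambda)^{[n]}=B_{\theta(\lambda)}$, and use the commutation with $\Phi_\chi^{[n]}$ together with Lemma \ref{lemma-isometry-of-BKR-conjugate-of-tensorization-by-sign-character} to force $\widetilde{H}(\phi^{[n]})$ to preserve $\QQ u_0$ and $u_0^\perp$ -- does match the paper's. The gap is in your last step, where you propose to pin down the one remaining generator by its action on a few computed LLV lines. There are two problems. First, circularity: the identification of $\ell(F)$ for $F=G^{[n]}$ in Remark \ref{rem-eliminate-use-of-lemma-for-n-equal-2} rests on Equation (\ref{eq-B-of-ell-F-is-in-the-plane-spanned-by-delta-and-ell-G}), which for $n>2$ is justified by Theorem \ref{thm-action-of-DMon-S-on-LLV-lattice} itself (only the $n=2$ case is available independently, via Taelman); likewise Corollary \ref{cor-vector-bundle-over-S[n]-associated-to-a-point-in-M[n]} invokes the theorem. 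You cannot feed these back in as input. Second, even granting those computations, three lines do not span the $25$-dimensional space $\widetilde{H}(S^{[n]},\QQ)$: after imposing commutation with $(-1)^{n+1}R_{u_0}$ you are still left with an arbitrary isometry of the $24$-dimensional space $u_0^\perp$, and knowing its effect on three lines (images only up to scalar) does not determine it. A smaller but real slip: parallel transport operators of a $K3$ surface realize all of $O^+(H^2(S,\ZZ))$, which contains determinant $-1$ elements, so your claim that $\det(\widetilde{H}(\phi))=1$ for parallel transports is false; for $n$ even this would lose the sign $\det(\widetilde{H}(\phi))^{n+1}$ on that part of the group.

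The paper closes the hard step not generator by generator but all at once, by representation theory, and this is the idea your proposal is missing. Having established $e_\lambda^{[n]}=e_{\theta(\lambda)}$ on the Lie algebra level, the eigenspace $V=u_0^\perp$ of $\widetilde{H}(\phi_\chi^{[n]})$ becomes a $24$-dimensional representation of $\LieAlg{so}(\widetilde{H}(S,\QQ))$, hence of $SO^+(\widetilde{H}(S,\ZZ))$ acting through $\widetilde{H}\circ{}^{[n]}$. Every non-trivial $24$-dimensional representation of this Lie algebra is isomorphic to the standard one, so there is an intertwiner $\gamma:\widetilde{H}(S,\QQ)\rightarrow V$; normalizing $\gamma(\beta)=\beta$ (the common kernel of the $e_{\theta(\lambda)}$) and observing that $\tilde{\theta}^{-1}\circ B_{\delta/2}\circ\gamma$ is an isometry commuting with all $e_\lambda$ and fixing $\beta$ forces it to be the identity. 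This proves that $B_{-\delta/2}\circ\tilde{\theta}$ is $SO^+(\widetilde{H}(S,\ZZ))$-equivariant, which is the theorem on the index-two subgroup; the complementary coset is exactly the subgroup fixing $\alpha$ and $\beta$, i.e.\ the classical monodromy, which is your first step and where the $\det^{n+1}$ sign is extracted from Theorem \ref{thm-def-of-H-tilde}. If you wish to keep a generator-by-generator plan, you must either substitute this equivariance argument for the ``computed examples,'' or supply an independent, non-circular computation of the Chern character of $\Phi_0^{[n]}$ applied to a family of objects spanning $u_0^\perp$.
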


\begin{proof} Recall that $DMon(S)=O^+(\widetilde{H}(S,\ZZ))$, by \cite[Prop. 9.2]{taelman}.
Let us first verify the theorem  for all $\phi\in O^+(\widetilde{H}(S,\QQ))$ satisfying $\phi(\alpha)=\alpha$ and $\phi(\beta)=\beta$. 
Note that in this case $B_{-\delta/2}\circ \eta_\phi\circ B_{\delta/2}=\eta_\phi$.
Such $\phi$ belongs to the monodromy group $O^+(H^2(S,\ZZ))$ of $S$. Then the image $\phi^{[n]}$ of $\phi$ in the monodromy group of $S^{[n]}$ acts on $H^2(S^{[n]},\ZZ)$ via $\phi^{[n]}(\theta(x))=\theta(\phi(x))$ and $\phi^{[n]}(\delta)=\delta$. This determines the action $SH(\phi^{[n]})$ 
of $\phi^{[n]}$ on the subring $SH(S^{[n]},\QQ))$ generated by $H^2(S^{[n]},\QQ)$. 
The action of $\widetilde{H}(\phi^{[n]})$
on the image of $\Psi$ in $\Sym^n(\widetilde{H}(S^{[n]},\QQ))$ is determined as follows.
If $n$ is odd, then $\Psi$ is equivariant, by Theorem \ref{thm-def-of-H-tilde}.
If $n$ is even, then 
\[
\Psi\circ \det(\widetilde{H}(\phi^{[n]}))SH(\phi^{[n]})=\Sym^n(\widetilde{H}(\phi^{[n]}))\circ\Psi,
\]
by Theorem \ref{thm-def-of-H-tilde}.
Now $\Psi$ is monodromy equivariant with respect to the action of
$\eta_\phi$ on $\widetilde{H}(S^{[n]},\QQ)$.
If $n$ is odd, the theorem follows for $\phi$. 
If $n$ is even we get that $\det(\widetilde{H}(\phi^{[n]}))=1$ and so 
$\widetilde{H}(\phi^{[n]})=\left\{\begin{array}{ccc}
\eta_\phi & \mbox{if} & \det(\widetilde{H}(\phi))=1,
\\
-\eta_\phi & \mbox{if} & \det(\widetilde{H}(\phi))=-1.
\end{array}\right.$

It remains to verify the theorem for the index $2$ subgroup $SO^+(\widetilde{H}(S,\ZZ))$ of $DMon(S)$. 
The statement of the theorem for this subroup is equivalent to the statement that the composition 
\[
B_{-\frac{\delta}{2}}\circ \tilde{\theta}:\widetilde{H}(S,\QQ)\rightarrow \widetilde{H}(S^{[n]},\QQ)
\]
is $SO^+(\widetilde{H}(S,\ZZ))$-equivariant, since $SO^+(\widetilde{H}(S,\ZZ))$ does not have any non-trivial characters, and so must act trivially on the one-dimensional orthogonal complement $\span\{u_0\}$, $u_0:=(0,\delta,n-1)=B_{-\frac{\delta}{2}}(0,\delta,0)$, of its image.
Note first the equality 
\begin{equation}
\label{eq-image-of-tensorization-by-lb-via-[n]}
(B_\lambda)^{[n]}=B_{\theta(\lambda)},
\end{equation} 
for all $\lambda\in H^2(S,\QQ)$.
It suffices to prove (\ref{eq-image-of-tensorization-by-lb-via-[n]}) for integral $\lambda$, and by varying the complex structure, we may assume that $\lambda$ is of type $(1,1)$.
Given a line bundle $L$ on $S$, the equivariant line bundle
$(L\boxtimes \ \cdots \ \boxtimes L,\rho_\boxtimes)$ over $S^n$ is a pullback of a line bundle $\LB$  on the symmetric product
$S^{(n)}$.  We have the commutative diagram
\[
\xymatrix{
& \Gamma \ar[dl]_{b} \ar[dr]^{q}
\\
S^n \ar[dr]_{\bar{q}} & & S^{[n]} \ar[dl]^{\bar{b}}
\\
& S^{(n)}.
}
\]
Hence, $BKR((L\boxtimes \ \cdots \ \boxtimes L,\rho_\boxtimes))\cong
Rq_*^{\fS_n}(Lb^*L\bar{q}^*\LB)\cong
Rq_*^{\fS_n}(Lq^*L\bar{b}^*\LB)\cong \bar{b}^*\LB$.
We get that 
\begin{eqnarray*}
BKR((F\boxtimes \ \cdots \ \boxtimes F,\lambda)\otimes (L\boxtimes \ \cdots \ \boxtimes L,\rho_\boxtimes))  \cong 
Rq_*^{\fS_n}(Lb^*(F\boxtimes \ \cdots \ \boxtimes F,\lambda)\otimes Lq^*L\bar{b}^*\LB) \cong
\\
Rq_*^{\fS_n}(Lb^*(F\boxtimes \ \cdots \ \boxtimes F,\lambda))\otimes L\bar{b}^*\LB
\cong  BKR(F\boxtimes \ \cdots \ \boxtimes F,\lambda)\otimes BKR((L\boxtimes \ \cdots \ \boxtimes L,\rho_\boxtimes)),
\end{eqnarray*}
where the second isomorphism is via the projection formula.
The equality (\ref{eq-image-of-tensorization-by-lb-via-[n]}) thus follows for $\lambda=c_1(L)$ from the well known equality
$\theta(c_1(L))=c_1(\bar{b}^*\LB)$.

Passing to Lie algebras we get the homomorphism 
\begin{equation}
\label{eq-Lie-algebra-homomorphism[n]}
{}^{[n]}:\LieAlg{so}(\widetilde{H}(S,\QQ))\rightarrow \LieAlg{so}(\widetilde{H}(S^{[n]},\QQ))
\end{equation} 
and the 
equality $e_\lambda^{[n]}=e_{\theta(\lambda)}$.
As $B_\lambda^{[n]}$ commutes with $\widetilde{H}(\phi^{[n]}_\chi)$, by Equation (\ref{eq-Phi-chi-[n]-commutes-with-surface-FM}),
so does $e_\lambda^{[n]}$ and hence so does $e_{\theta(\lambda)}$. This agrees with the fact  that the eigenspace $V:=(0.\delta,n-1)^\perp$ of $\widetilde{H}(\phi^{[n]}_\chi)$
is $e_{\theta(\lambda)}$-invariant, for all $\lambda\in H^2(S,\QQ)$.

Every non-trivial $24$-dimensional representation of $\LieAlg{so}(\widetilde{H}(S,\QQ))$ is isomorphic to
$\widetilde{H}(S,\QQ)$. Hence there exists an isomorphism
\[
\gamma : \widetilde{H}(S,\QQ)\rightarrow V
\]
of $\LieAlg{so}(\widetilde{H}(S,\QQ))$-modules, which must also be an isomorphism of $SO^+(\widetilde{H}(S,\QQ))$-representations. 
The element $\beta$ of $\widetilde{H}(S,\QQ)$ spans the common kernel of 
$e_\lambda$,  $\lambda\in H^2(S,\QQ)$. Hence,
the element $\gamma(\beta)$ must belong to the common kernel $\span\{\beta\}$
of $e_{\theta(\lambda)}$, $\lambda\in H^2(S,\QQ)$. We may normalize $\gamma$ to satisfy $\gamma(\beta)=\beta$.

\hide{
The Lie algebras appearing in (\ref{eq-Lie-algebra-homomorphism[n]}) are subalgebras of $\LieAlg{gl}(H^*(S,\QQ))$ and
of $\LieAlg{gl}(H^*(S^{[n]},\QQ))$ and Hodge sub-modules. We have the commutative diagram
\[
\xymatrix{
\LieAlg{gl}(H^*(S,\QQ)) \ar[r] &\LieAlg{gl}(H^*(S^n,\QQ)) \ar[r]& \LieAlg{gl}(H^*(S^{[n]},\QQ))
\\
\LieAlg{so}(\widetilde{H}(S,\QQ)) \ar[rr]_{[n]} \ar[u]^{\cup} & & \LieAlg{so}(\widetilde{H}(S^{[n]},\QQ)), \ar[u]_{\cup}
}
\]
where the top left horizontal arrow is the map from the endomorphism algebra of a vector space to that of its $n$-th tensor power. 
The top right horizontal arrow sends  a correspondence in $H^*(S^n\times S^n,\QQ)$ to its convolution with the correspondence
$\sqrt{td_{S^{[n]}}} ch(\StructureSheaf{\Gamma})\sqrt{td_{S^n}}$ in $H^*(S^{[n]}\times S^n,\QQ)$
and its adjoint in $H^*(S^n\times S^{[n]},\QQ)$ associated to the dual object $\StructureSheaf{\Gamma}^\vee$ (??? need to account for the difference between $Rq_*$ and $Rq_*^{\fS_n}$ ???).
Assign $H^*(S^{[n]},\QQ)$ a coarser Hodge structure of weight $0$,
where $H^{p,q}(S^{[n]})$ has weight $(p-\frac{p+q}{2},q-\frac{p+q}{2})$. Endow $\LieAlg{gl}(H^*(S^{[n]},\QQ))$ with the coarser Hodge structure as well. All Hodge classes has weight $(0,0)$ with respect to the coarser Hodge structure.
The homomorphism ${}^{[n]}$ is compatible with the coarser Hodge structures, as it is induced by rational Hodge classes.
Now, the $\LieAlg{so}(\widetilde{H}(S,\QQ))$-action determines the 
coarser 
Hodge structure of weight $0$ introduced in Section \ref{sec-Mukai-lattice}. Similarly,
the $\LieAlg{so}(\widetilde{H}(S^{[n]},\QQ))$-action determines the coarse Hodge structure of 
$\widetilde{H}(S^{[n]},\QQ)$. We conclude that $\gamma$ is an isomorphism of the coarse Hodge structures.
}

Let $\tilde{\gamma}:\widetilde{H}(S,\QQ)\rightarrow \widetilde{H}(S,\QQ)$ be the composition
$\tilde{\gamma}=\tilde{\theta}^{-1}\circ B_{\delta/2}\circ \gamma$, where $\tilde{\theta}^{-1}:(0,\delta,0)^\perp\rightarrow \widetilde{H}(S,\QQ)$ is the left inverse of $\tilde{\theta}$. Then $\tilde{\gamma}$ is $SO^+(H^2(S,\ZZ))$-equivariant (the two actions, via ${}^{[n]}$ and via $\eta$ coincide for this subgroup, as demonstrated in the first paragraph of the proof). 
Indeed, 
both $\tilde{\theta}^{-1}$ and $B_{\delta/2}$ are $SO^+(H^2(S,\ZZ))$-equivariant and  $\gamma$ is  $SO^+(H^2(S,\ZZ))$-equivariant as it is $SO^+(\widetilde{H}(S,\QQ))$-equivariant by assumption. 
Hence, $\tilde{\gamma}$ must map $H^2(S,\QQ)$ to itself and act on it by $\pm 1$. 
Being an isometry,  $\tilde{\gamma}$  maps the orthogonal complement
$U:=\span\{\alpha,\beta\}$ of  $H^2(S,\QQ)$ to itself.
Furthermore,  
$\tilde{\gamma}(\beta)=\beta$, and $\tilde{\gamma}$ commutes with $e_\lambda$, for all $\lambda\in H^2(S,\QQ)$.
Thus $\tilde{\gamma}(\alpha)=\alpha$, since $\alpha$ is the unique isotropic class in $U$ which pairs to $-1$ with $\beta$. 
Finally, $\tilde{\gamma}(\lambda)=\tilde{\gamma}(e_\lambda(\alpha))=e_\lambda(\tilde{\gamma}(\alpha))=e_\lambda(\alpha)=\lambda$. Hence $\tilde{\gamma}$ is the identity. 
Hence $B_{-\frac{\delta}{2}}\circ \tilde{\theta}$ is $SO^+(\widetilde{H}(S,\QQ))$-equivariant.
The theorem is thus verified also for 
the subgroup $SO^+(\widetilde{H}(S,\ZZ))$ of $DMon(S)$. 
\end{proof}

\begin{cor}
\label{cor-vector-bundle-over-S[n]-associated-to-a-point-in-M[n]}
The vector bundle $F_z$  in Lemma \ref{lemma-vector-bundle-over-S[n]-associated-to-a-point-in-M[n]} satisfies
$c_1(F_z)=n!r_0^{n-1}(\theta(\lambda)-r_0\delta/2)$.
\end{cor}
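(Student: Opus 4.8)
The plan is to compute the first Chern class of $F_z$ by combining the description of $F_z$ as $q_*b^*(G_1\boxtimes\cdots\boxtimes G_n)$ from Lemma \ref{lemma-vector-bundle-over-S[n]-associated-to-a-point-in-M[n]} with the determination of the LLV line $\ell(F_z)$ in Lemma \ref{lemma-isotropic-Mukai-line-all-n}. Recall that $F_z\cong \Phi^{[n]}(\StructureSheaf{z})$ via Equation (\ref{eq-F-z-is-image-of-sky-scpaper-via-Phi[n]}), so $F_z$ is obtained from the sky-scraper sheaf on $M^{[n]}$ by a sequence of derived equivalences (the $\Phi_\G^{\boxtimes n}$-conjugate) and hence Lemma \ref{lemma-isotropic-Mukai-line-all-n} applies: its rank is $n!r_0^n$ (with $r_0$ the rank of $G_i$, so $n!r_0^n=n!r_0^n$ matches Lemma \ref{lemma-vector-bundle-over-S[n]-associated-to-a-point-in-M[n]}), and $\ell(F_z)$ is spanned by the isotropic class $\gamma$ in Equation (\ref{eq-isotropic-gamma-for-all-n}), namely $r_0\alpha+\frac{c_1(F_z)}{n!r_0^{n-1}}+\frac{(c_1(F_z),c_1(F_z))}{2(n!)^2r_0^{2n-1}}\beta$. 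By Theorem \ref{thm-Mukai-vector}(\ref{prop-item-spanning-Mukai-vector}) the component of $\ell(F_z)$ in $H^2$ equals $c_1(F_z)/(n!r_0^{n})\cdot(n!r_0^{n-1})$... more simply, $c_1(F_z) = n!r_0^{n-1}\cdot(\text{the }H^2\text{-component of }\gamma)$, so it suffices to identify that $H^2$-component as $\theta(\lambda)-r_0\delta/2$.

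First I would track the $H^2$-component of $\ell(F_z)$ through the derived equivalence. The point $z\in M^{[n]}$ has LLV line $\ell(\StructureSheaf{z})=\span_\QQ\{\beta\}$ by Example \ref{example-Mukai-line-of-sky-scraper-sheaf}. Under the functor $bkr$ followed by $\Phi_\G^{\boxtimes n}$ followed by $BKR$, the isometry on rational LLV lattices is computed by Taelman's machinery and Theorem \ref{thm-action-of-DMon-S-on-LLV-lattice}: the composite $\widetilde{H}(\Phi^{[n]})$ restricted to the relevant sublattice is, up to sign and up to the correction by $B_{\pm\delta/2}$, the $n$-th power extension of $\widetilde{H}(\Phi_\G):\widetilde{H}(M,\QQ)\to\widetilde{H}(S,\QQ)$. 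Now $\Phi_\G$ sends $\StructureSheaf{p_i}$ (sky-scraper on $M$) to $G_i$, so $\widetilde{H}(\Phi_\G)$ sends $\beta_M$ to $\ell(G_i)=\span\{v(G_i)\}=\span\{r_0\alpha+\lambda+\frac{(\lambda,\lambda)}{2r_0}\beta\}$. Applying the formula of Theorem \ref{thm-action-of-DMon-S-on-LLV-lattice}, or rather its analogue for the parallel-transport/equivalence at hand, $\widetilde{H}(\Phi^{[n]})$ applied to $\beta$ gives, after conjugating by $B_{-\delta/2}$ and using $\widetilde H(\Phi_\G)(\beta_M)$, a class whose $H^2(S^{[n]})$-component is $\theta(\lambda) - r_0\delta/2$: the $\theta(\lambda)$ comes from the embedding $\widetilde{H}(S,\QQ)\hookrightarrow\widetilde{H}(S^{[n]},\QQ)$ via $\theta$, and the $-r_0\delta/2$ shift is exactly the $B_{-\delta/2}$ correction applied to the $r_0\alpha$ coefficient, cf. Equation (\ref{eq-B-minus-delta-over-2-of-alpha}) which gives $B_{-\delta/2}(\alpha)=\alpha-\delta/2-\beta/4$.

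Concretely, I would proceed as follows. Set $\gamma_S:=v(G_i)=r_0\alpha+\lambda+\frac{(\lambda,\lambda)}{2r_0}\beta\in\widetilde{H}(S,\QQ)$, which spans $\ell(G_i)$. Then $\ell(F_z)$ is spanned by $\widetilde H(\Phi^{[n]})(\beta)$ where $\Phi^{[n]}=BKR\circ\Phi_\G^{\boxtimes n}\circ bkr$; but since $bkr$ sends $\beta$-spanning line of $\StructureSheaf z$ to the $\beta$-line of the equivariant sky-scraper, and $\Phi_\G^{\boxtimes n}$ acts as the $n$-fold tensor construction whose LLV effect replaces $\beta\mapsto$ (the $\widetilde H(\Phi_\G)$-image), the net effect is that $\ell(F_z)$ is spanned by the $B_{-\delta/2}$-translate of $\tilde\theta(\gamma_S)$ where $\tilde\theta$ is the isometric embedding $\widetilde H(S,\QQ)\to\widetilde H(S^{[n]},\QQ)$ fixing $\alpha,\beta$. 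Explicitly $\tilde\theta(\gamma_S)=r_0\alpha+\theta(\lambda)+\frac{(\lambda,\lambda)}{2r_0}\beta$, and then $B_{-\delta/2}(\tilde\theta(\gamma_S))=r_0\alpha+(\theta(\lambda)-\tfrac{r_0}{2}\delta)+(\cdots)\beta$ using (\ref{eq-B-minus-delta-over-2-of-alpha}); the $\alpha$-coefficient stays $r_0$ and the $H^2$-component becomes $\theta(\lambda)-\frac{r_0}{2}\delta$. Comparing with Equation (\ref{eq-isotropic-gamma-for-all-n}), which writes $\ell(F_z)=\span\{r_0\alpha+\frac{c_1(F_z)}{n!r_0^{n-1}}+\cdots\beta\}$, and noting both representatives have $\alpha$-coefficient $r_0$, the $H^2$-components must be equal: $\frac{c_1(F_z)}{n!r_0^{n-1}}=\theta(\lambda)-\frac{r_0}{2}\delta$, which gives $c_1(F_z)=n!r_0^{n-1}(\theta(\lambda)-r_0\delta/2)$ as claimed.

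The main obstacle I anticipate is making precise the passage ``$\Phi_\G^{\boxtimes n}$ replaces $\beta\mapsto\widetilde H(\Phi_\G)(\beta_M)$'' — that is, carefully justifying that the LLV isometry of the BKR-conjugate $\Phi^{[n]}=BKR\circ\Phi_\G^{\boxtimes n}\circ bkr$ restricted to the sublattice $\tilde\theta(\widetilde H(M,\QQ))$ equals $B_{-\delta/2}\circ(\text{extend }\widetilde H(\Phi_\G))\circ B_{\delta/2}$, possibly up to the sign $\det(\widetilde H(\Phi_\G))^{n+1}$. This is the content of Theorem \ref{thm-action-of-DMon-S-on-LLV-lattice} (and its generalization to equivalences rather than just monodromy, which follows the same proof); I would invoke it directly, but one must check that the sign factor $\det(\widetilde H(\Phi_\G))^{n+1}$ does not affect the $H^2$-component computation — it does not, since it acts as an overall scalar $\pm1$ on $\widetilde H(S^{[n]},\QQ)$ and the representative $\gamma$ in (\ref{eq-isotropic-gamma-for-all-n}) is only determined up to scalar anyway, while both representatives are pinned down by normalizing the $\alpha$-coefficient to the rank-data $r_0>0$. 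A secondary point to verify is that $\lambda=c_1(G_i)$ is independent of $i$ (all $G_i$ have the same Mukai vector $v$), so $\theta(\lambda)$ is well-defined; this is part of the standing hypothesis. Everything else is the routine bookkeeping in Equations (\ref{eq-B-minus-delta-over-2-of-alpha}) and (\ref{eq-isotropic-gamma-for-all-n}).
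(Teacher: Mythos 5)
Your proposal is correct and follows essentially the same route as the paper: reduce to the identity $\ell(F_z)=\span\{B_{-\delta/2}(\tilde\theta(v(G_i)))\}$ via $\ell(F_z)=\widetilde{H}(\phi^{[n]})(\span\{\beta\})$, invoke the (sign-ambiguous) equivariance of $B_{-\delta/2}\circ\tilde\theta$ from Theorem \ref{thm-action-of-DMon-S-on-LLV-lattice}, and extract $c_1(F_z)$ by normalizing the $\alpha$-coefficient against the rank via Theorem \ref{thm-Mukai-vector}(\ref{prop-item-spanning-Mukai-vector}) (your use of Lemma \ref{lemma-isotropic-Mukai-line-all-n} is an equivalent normalization). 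The only slip is quoting $B_{-\delta/2}(\alpha)=\alpha-\delta/2-\beta/4$, which is the $n=2$ case of (\ref{eq-B-minus-delta-over-2-of-alpha}) — for general $n$ the $\beta$-coefficient is $\frac{1-n}{4}$ — but this does not affect the $H^2$-component and hence not the conclusion.
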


\begin{proof}
Note that $n!r_0^{n-1}(\theta(\lambda)-r_0\delta/2)$ is $n!r_0^{n-1}$ times the direct summand of $B_{-\frac{\delta}{2}}(\tilde{\theta}(v))$ in $H^2(S^{[n]},\QQ)$, where $v=(r_0,\lambda,(\lambda,\lambda)/2)$ 
is the Mukai vector of $G_i$, $1\leq i\leq n$.
Theorem \ref{thm-Mukai-vector}(\ref{prop-item-spanning-Mukai-vector}) reduces the proof of the Corollary to that of the equality 
$\ell(F_z)=\span\{B_{-\frac{\delta}{2}}(\tilde{\theta}(v))\}$.
Let $\phi$ be the element of $DMon(S)$ corresponding to the equivalence $\Phi$.
Denote by $\beta_1=(0,0,1)\in\widetilde{H}(S,\QQ)$ the Mukai vector of a point. Then $v=\widetilde{H}(\phi)(\beta_1)$,
since $G_i$ is isomorphic to $\Phi(\StructureSheaf{p_i})$.

The equality $\ell(F_z)=\widetilde{H}(\phi^{[n]})(\span\{\beta\})$ follows from  
Equation (\ref{eq-F-z-is-image-of-sky-scpaper-via-Phi[n]}). 
Theorem \ref{thm-action-of-DMon-S-on-LLV-lattice} implies that 
the embedding $B_{-\frac{\delta}{2}}\circ\tilde{\theta}:\widetilde{H}(S,\QQ)\rightarrow \widetilde{H}(S^{[n]},\QQ)$ is  equivariant up to sign with respect to (it spans a character of) $DMon(S):=\Aut_{K3^{[1]}}(S)$ acting via the functor $\widetilde{H}$ on the domain and via the functor $\widetilde{H}\circ ^{[n]}$ on the co-domain.  This equivariance explains the third equality below.
\begin{eqnarray*}
\ell(F_z)&=&
\widetilde{H}(\phi^{[n]})(\span\{\beta\})=
\widetilde{H}(\phi^{[n]})(\span\{B_{-\frac{\delta}{2}}\circ\tilde{\theta}(\beta_1)\})=
\span\{B_{-\frac{\delta}{2}}(\tilde{\theta}(\phi(\beta_1))\}
\\
&=& \span\{B_{-\frac{\delta}{2}}(\tilde{\theta}(v)\}.
\end{eqnarray*}
\end{proof}

%
\section{A reflexive torsion free Fourier-Mukai image of a lagrangian line bundle with a positive LLV line}
\label{sec-locally-free-FM-image-of-lagrangian-lb}

Let $S$ be a $K3$ surface of degree $6$ in $\PP^4$. The Fano variety of lines on a smooth cubic $3$-folds $C$ containing $S$
embeds as a smooth Lagrangian surface $Z$ in the Hilbert scheme $S^{[2]}$.
In Section \ref{sec-main-result-on-a-reflexive-FM-image} we state 
the main result of this section that the images $E_k$ of the natural line bundles $\StructureSheaf{Z}(k)$ on $Z$ via a particular Fourier-Mukai auto-equivalence $\Phi^{[2]}_{\P,\chi}$
of $D^b(S^{[2]})$ are reflexive torsion free sheaves, for $k$ sufficiently large,  and we compute the LLV line of $E_k$
(Proposition \ref{prop-a-locally-free-image-of-a-Lagrangian-line-bundle}). 
The auto-equivalence $\Phi^{[2]}_{\P,\chi}$ is the BKR-conjugate of the auto-equivalence of $D^b(S\times S)$ induced by tensoring with the sign character $\chi$ of $\fS_2$ the cartesian-square of the spherical twist $\Phi_\P$ of $D^b(S)$ with respect to $\StructureSheaf{S}$.
In Section \ref{sec-isospectral-Hilbert-scheme-for-Hilb-2}
we describe in more detail the equivalences $BKR:D^b_{\fS_2}(S\times S)\rightarrow D^b(S^{[2]})$ and 
$\Phi_{\P\boxtimes\P}:D^b(S\times S)\rightarrow D^b(S\times S)$.
In section \ref{sec-local-coordinates} we provide a local coordinate description of 
strict transforms of certain diagonals in the cartesian square $\widehat{S\times S}\times \widehat{S\times S}$
of the blow-up $\widehat{S\times S}$ of the diagonal in $S\times S$.
In Section \ref{sec-FM-kernel-over-S-[2]-squared} we provide an explicit complex over $S^{[2]}\times S^{[2]}$ representing  the Fourier-Mukai kernel of $\Phi^{[2]}_{\P,\chi}$ (Lemma \ref{lemma-kernel-of-Phi_P^[2]}).
In Section \ref{sec-reflexivity-of-FM-image-of-lb-over-lagrangian-surface} we provide a general criterion 
for the Fourier-Mukai image via $\Phi^{[2]}_{\P,\chi}$ of a line-bundle over a lagrangian surface in $S^{[2]}$ to be a reflexive torsion free sheaf (Lemma \ref{lemma-object-satisfying-3-conditions-is-isomorphic-to-a-locally-free-sheaf}).
We then verify that the hypotheses of this criterion are satisfied for sufficiently large powers $\StructureSheaf{Z}(k)$ of the natural ample line bundle over the Fano variety of lines $Z$ over a cubic $3$-fold. In Section \ref{sec-proof-of-the-reflexivity-proposition}
we complete the proof of Proposition \ref{prop-a-locally-free-image-of-a-Lagrangian-line-bundle}.


%
\subsection{Fourier-Mukai images of line bundles on the Fano variety of lines on a cubic 3-fold}
\label{sec-main-result-on-a-reflexive-FM-image}
Let $S$ be a $K3$ surface. Let $\Delta:S\rightarrow S\times S$ the diagonal embedding, and let $\Phi_\P:D^b(S)\rightarrow D^b(S)$ be the auto-equivalence with Fourier-Mukai kernel the complex $\P$ given by 
$\StructureSheaf{S\times S}\rightarrow \Delta_*\StructureSheaf{S}$ supported in degrees $0$ and $1$. $\P$ is quasi-isomorphic to the ideal sheaf of the diagonal. We will denote $\Delta_*\StructureSheaf{S}$ by $\StructureSheaf{\Delta_S}$ and use a similar abuse of notation for subschemes of $S^4$ and $S^{[2]}\times S^{[2]}$.
$\Phi_\P$ is the spherical twist with respect to the spherical object $\StructureSheaf{S}$. We get the 
auto-equivalence $\Phi_\P\boxtimes \Phi_\P:D^b_{\fS_2}(S\times S)\rightarrow D^b_{\fS_2}(S\times S)$ of $\fS_2$-equivariant 
categories.
Denote by $\Phi_\P^{[2]}:D^b(S^{[2]})\rightarrow D^b(S^{[2]})$ its conjugate (\ref{eq-BKR-conjugate-of-outer-product-of-FM}) via the BKR equivalence.
Let $\Phi_\chi$ be the auto-equivalence of $D^b_{\fS_2}(S\times S)$ corresponding to tensorization with the sign character $\chi$ of $\fS_2$ and let $\Phi^{[2]}_\chi:D^b(S^{[2]})\rightarrow D^b(S^{[2]})$ be its conjugate (\ref{eq-Phi-chi-[n]}) via the BKR equivalence. 
Set 
\[
\Phi^{[2]}_{\P,\chi}:=\Phi^{[2]}_\chi\circ \Phi^{[2]}_\P.
\]

Let $Q$ be a quadric in $\PP^4$ and let $C$ be a cubic in $\PP^4$ satisfying the following.

\begin{assumption}
\label{assumption-on-Q-and-C}
\begin{enumerate}
\item
$Q$ and $C$ are smooth and their intersection $S:=Q\cap C$ is smooth.   
\item
$S$ does not contain any line. 
\item
$C$ does not have any Eckhardt points, i.e., through every point of $C$ pass only finitely many lines on $C$. 
\end{enumerate}
\end{assumption}

Note that $S$ is a $K3$ surface.
The Fano variety $F(C)$ of lines on $C$ is a smooth surface, by \cite{clemens-griffiths}.
Let 
\begin{equation}
\label{eq-g}
g:F(C)\rightarrow S^{[2]}
\end{equation} 
be the morphism sending a line $\ell$ on $C$ to the length $2$ subscheme $\ell\cap Q$ of $S$. 
Let $\StructureSheaf{F(C)}(1)$ be the pull back of the line bundle $\StructureSheaf{Gr(2,5)}(1)$ via the inclusion $F(C)\subset Gr(2,5)$. Set $h:=c_1(\StructureSheaf{S}(1))$ and let $\tilde{h}:=\theta(h)\in H^2(S^{[2]},\Integers)$ be its image via (\ref{eq-theta}).
Set $Z:=g(F(C))$ and $\StructureSheaf{Z}(1):=g_*\StructureSheaf{F(C)}(1)$.
Keep the notation for elements of the rational LLV lattice as triples introduced in Equation (\ref{eq-notation-for-element-of-LLV-lattice}).
Following is the main result of this section.  
 
\begin{prop}
\label{prop-a-locally-free-image-of-a-Lagrangian-line-bundle}
The image $g(F(C))$ is a smooth lagrangian surface in $S^{[2]}$ and 
$g_*\StructureSheaf{F(C)}$ has a rank $1$ cohomological obstruction map
(Definition \ref{def-deforms-in-co-dimension-one}(\ref{def-item-remains-of-Hodge-type-in-co-dimension-1})). For all integers $k$ sufficiently large the object $\Phi_{\P,\chi}^{[2]}(g_*\StructureSheaf{F(C)}(k))$ is isomorphic to a reflexive torsion free sheaf $E_k$ over $S^{[2]}$ of rank $45k^2$ and  $c_1(E_k)=-15k\tilde{h}-\frac{45k(k+1)}{2}\delta$. The 
sheaf $E_k$  has a rank $1$ cohomological obstruction map
and $\ell(E_k)$ is spanned by $\left(45k^2,-15k\tilde{h}-\frac{45k(k-1)}{2}\delta,-\frac{45k(k-2)}{4}\right)$. The sheaf $E_k$ fits in a short exact sequence
\begin{equation}
\label{eq-short-exact-sequence-of-E-k}
0\rightarrow E_k \rightarrow
\left[
H^0(Z,\StructureSheaf{Z}(k+1))\otimes_\CC \StructureSheaf{S^{[2]}}
\right]
\oplus
\left[
H^0(Z,\StructureSheaf{Z}(k))\otimes_\CC \StructureSheaf{S^{[2]}}(-\delta)
\right]
\RightArrowOf{a_k} F_k\rightarrow 0,
\end{equation}
where $F_k$ is a torsion free sheaf of rank $12$ and $c_1(F_k)=15k\tilde{h}-6\delta$. The restriction of $a_k$ to the first direct summand is surjective onto $F_k$, while the image of its restriction to the second summand is isomorphic to $F_{k-1}\otimes \StructureSheaf{S^{[2]}}(-\delta)$.
\end{prop}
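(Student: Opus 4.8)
\textbf{Proof strategy for Proposition~\ref{prop-a-locally-free-image-of-a-Lagrangian-line-bundle}.}
The plan is to organize the proof into three relatively independent blocks: (i) the geometry of the morphism $g$ and the cohomological obstruction map of $g_*\StructureSheaf{F(C)}$, (ii) the identification of the object $\Phi_{\P,\chi}^{[2]}(g_*\StructureSheaf{F(C)}(k))$ as a reflexive torsion free sheaf $E_k$ together with its numerical invariants, and (iii) the determination of the LLV line $\ell(E_k)$. For (i), I would first recall from \cite{clemens-griffiths} (together with Assumption~\ref{assumption-on-Q-and-C}) that $F(C)$ is a smooth surface; the condition that $S$ contains no line forces $g$ to be injective, and the absence of Eckardt points together with smoothness of $S=Q\cap C$ forces $g$ to be an immersion, so $Z=g(F(C))$ is smooth. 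That $Z$ is Lagrangian in $S^{[2]}$ is precisely \cite{voisin-lagrangian}. The restriction homomorphism $\iota^*:H^2(S^{[2]},\QQ)\rightarrow H^2(Z,\QQ)$ has rank $1$ since $Z$ is a hyperplane-section Fano surface and its Picard group restricted from $S^{[2]}$ is generated by $\StructureSheaf{F(C)}(1)$ up to torsion, with $c_1(\omega_Z)$ in its image (the canonical bundle of $F(C)$ is $\StructureSheaf{F(C)}(1)$ by \cite[Prop.~10.3]{clemens-griffiths}); thus $g_*\StructureSheaf{F(C)}$ has a rank $1$ cohomological obstruction map by Lemma~\ref{lemma-chern-character-of-Lagrangian-structure-sheaf-deforms-in-co-dimension-1}, which also computes $c$, $t$, and $[Z]$ via $(\tilde h,\tilde h)=6$, $t=1$. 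Here I would verify that the BBF class $\lambda$ with $\lambda^\perp=\ker(\iota^*)$ is, up to a line bundle twist, $\tilde h+$ a multiple of $\delta$; this is consistent with Example~\ref{example-lagrangian-surfaces-which-deform-in-co-dimension-1}.

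For block (ii), the main work is the explicit complex computation. Using the local-coordinate description of the strict transforms of diagonals from Section~\ref{sec-local-coordinates}, and the explicit Fourier--Mukai kernel of $\Phi^{[2]}_{\P,\chi}$ given in Lemma~\ref{lemma-kernel-of-Phi_P^[2]}, one applies $\Phi^{[2]}_{\P,\chi}$ to $g_*\StructureSheaf{F(C)}(k)$. Because the kernel $\P$ on $S\times S$ sits in the triangle $\StructureSheaf{S\times S}\to\StructureSheaf{\Delta_S}\to\P[1]$, the BKR-conjugate $\Phi_{\P,\chi}^{[2]}$ applied to a sheaf supported on the Lagrangian $Z$ produces a two-term complex whose terms are (roughly) $Rp_*$ of $\StructureSheaf{S^{[2]}}\otimes$(global sections of twists of $\StructureSheaf{Z}$) and a correction term supported on the diagonal locus; the general reflexivity criterion of Lemma~\ref{lemma-object-satisfying-3-conditions-is-isomorphic-to-a-locally-free-sheaf} then shows, for $k\gg 0$ (so that the relevant higher cohomology on $Z$ vanishes and the evaluation maps are surjective), that this complex is quasi-isomorphic to a single reflexive torsion free sheaf $E_k$ placed in degree $0$ fitting in the exact sequence~\eqref{eq-short-exact-sequence-of-E-k}. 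The ranks and first Chern classes of $E_k$ and $F_k$ then follow from a Riemann--Roch bookkeeping: $\mathrm{rk}(E_k)=\chi$ of the relevant sheaves on $Z$ minus the rank of $F_k$, and $c_1$ from the short exact sequence together with $c_1(\StructureSheaf{S^{[2]}}(-\delta))=-\delta$ and $\theta(h)=\tilde h$. One should check $\chi(Z,\StructureSheaf{Z}(k+1))+\chi(Z,\StructureSheaf{Z}(k))-12=45k^2$ and the analogous first-Chern-class identity; the claim that the restriction of $a_k$ to the second summand has image $F_{k-1}\otimes\StructureSheaf{S^{[2]}}(-\delta)$ comes from tracking the $\chi$-twist in the kernel and the exceptional-divisor contribution, exactly as in the rank computations of Example~\ref{example-LLV-line-of-BKR-image-of-G-boxtimes G} and Remark~\ref{rem-eliminate-use-of-lemma-for-n-equal-2}.

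For block (iii), since $g_*\StructureSheaf{F(C)}$ has a rank $1$ cohomological obstruction map, so does each $\Phi_{\P,\chi}^{[2]}(g_*\StructureSheaf{F(C)}(k))$, because this property is preserved under derived equivalences and under tensorization by line bundles (tensoring $\StructureSheaf{Z}$ by $\StructureSheaf{Z}(k)=\iota^*\StructureSheaf{S^{[2]}}(k\tilde h/\,?)$ is, up to the discrepancy measured by $t$, an auto-equivalence applied before $\Phi_{\P,\chi}^{[2]}$); more precisely, $g_*\StructureSheaf{F(C)}(k)$ differs from $g_*\StructureSheaf{F(C)}$ by tensorization with a line bundle pulled back from $S^{[2]}$ only modulo the Lagrangian relation, but Lemma~\ref{lemma-chern-character-of-Lagrangian-structure-sheaf-deforms-in-co-dimension-1} applies directly to $\iota_*\iota^*L$ for every $L\in\Pic(S^{[2]})$, giving the rank $1$ property for all $k$. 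Then $\ell(\iota_*\StructureSheaf{Z}(k))$ is computed by Lemma~\ref{lemma-Mukai-line-of-structure-sheaf-of-subcanonical-lagrangian}: with $c_1(\omega_Z)=t\iota^*(\lambda)$ and the appropriate $\lambda\in H^2(S^{[2]},\QQ)$, one gets $\ell(\iota_*\StructureSheaf{Z}(k))=\span\{\,c_1(L)-\tfrac{(\tau,c_1(L))}{2}\beta\,\}$ shifted by $\exp(e_{k\cdot c_1(\StructureSheaf{S^{[2]}}(1))})$; finally $\ell(E_k)=\tilde H(\phi)(\ell(\iota_*\StructureSheaf{Z}(k)))$ where $\tilde H(\phi)$ is the isometry of $\tilde H(S^{[2]},\QQ)$ attached to $\Phi_{\P,\chi}^{[2]}$ by Theorem~\ref{thm-Mukai-vector}\eqref{prop-item-functoriality}. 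Since $\Phi^{[2]}_\chi$ acts on $\tilde H(S^{[2]},\QQ)$ by $(-1)^{n+1}$ times the reflection in $u_0=(0,\delta,n-1)$ (here $n=2$, Lemma~\ref{lemma-isometry-of-BKR-conjugate-of-tensorization-by-sign-character}) and $\Phi^{[2]}_\P$ acts by the BKR-conjugate of the spherical twist (computable via Theorem~\ref{thm-action-of-DMon-S-on-LLV-lattice} applied to the spherical twist of $D^b(S)$, which acts on $\tilde H(S,\QQ)$ by the reflection in $v(\StructureSheaf{S})=(1,0,1)$), composing these explicit reflections and tracking the rank-$45k^2$ normalization yields the stated spanning vector $\left(45k^2,\,-15k\tilde h-\tfrac{45k(k-1)}{2}\delta,\,-\tfrac{45k(k-2)}{4}\right)$; a consistency check is that this vector has the same BBF self-intersection as $\ell(g_*\StructureSheaf{F(C)})$, namely $(\lambda,\lambda)$ with $\lambda$ of degree $6$.

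\textbf{Main obstacle.} The hard part is block (ii): producing the explicit two-term resolution~\eqref{eq-short-exact-sequence-of-E-k} and in particular proving that $\Phi_{\P,\chi}^{[2]}(g_*\StructureSheaf{F(C)}(k))$ is concentrated in a single degree and is \emph{reflexive} (not merely torsion free), which requires the vanishing criterion of Lemma~\ref{lemma-object-satisfying-3-conditions-is-isomorphic-to-a-locally-free-sheaf} together with genuine control of the kernel complex of $\Phi^{[2]}_{\P,\chi}$ along the diagonal strata of $S^{[2]}\times S^{[2]}$ via the local coordinates of Section~\ref{sec-local-coordinates}, and the effective bound ``$k$ sufficiently large'' has to be made explicit enough to guarantee the needed surjectivity of evaluation maps and the vanishing of $H^{>0}(Z,\StructureSheaf{Z}(k))$ and of certain $\SheafExt$ sheaves; everything else is Riemann--Roch bookkeeping and functoriality of the LLV-line construction.
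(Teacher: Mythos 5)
Your overall architecture coincides with the paper's: reflexivity of $\Phi^{[2]}_{\P,\chi}(g_*\StructureSheaf{F(C)}(k))$ via the criterion of Lemma \ref{lemma-object-satisfying-3-conditions-is-isomorphic-to-a-locally-free-sheaf} for $k\gg 0$, the rank and first Chern class by Riemann--Roch bookkeeping from the sequence (\ref{eq-short-exact-sequence-of-E-k}) together with $\chi(\StructureSheaf{Z}(k))=6+\frac{45k(k-1)}{2}$, and $\ell(E_k)$ by transporting $\ell(\StructureSheaf{Z}(k))=(0,\lambda,6k-3)$ through the explicit isometries attached to $\Phi^{[2]}_\P$ and $\Phi^{[2]}_\chi$. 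However, two of your supporting steps are justified incorrectly or not at all, and these are precisely where the paper does real geometric work.

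First, you derive smoothness of $Z=g(F(C))$ and injectivity of $dg$ from ``absence of Eckardt points together with smoothness of $S$,'' and you assert that $H^2(S^{[2]},\QQ)\rightarrow H^2(Z,\QQ)$ has rank $1$ because the restricted Picard group is generated by $\StructureSheaf{F(C)}(1)$. Neither claim follows from what you wrote: the restriction map involves all of the $23$-dimensional $H^2(S^{[2]},\QQ)$, not just Picard classes, and an injective morphism between smooth surfaces need not be an immersion. The paper's route is the degeneration in Claim \ref{claim-fano-of-cubic-with-ODP-is-resolved-by-Hilbert-scheme}: the cubic fourfold $X=V(x_5q+c)$ has a single node, $S^{[2]}$ is the small resolution of $F(X)$ contracting the divisor $A$ of class $\tilde{h}-2\delta$, and $F(C)=F(X\cap H)$ lies in the locus $F_0(X)\cong S^{[2]}_0$ where the resolution is an isomorphism; this is what makes $g$ a closed immersion (after checking $F(C)\cap A=\emptyset$, i.e.\ no line on $C$ lies on $Q$, using that $S$ contains no line). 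The rank-$1$ cohomological obstruction is then obtained not by computing the restriction map on $S^{[2]}$, but by exhibiting $(S^{[2]},g_*\StructureSheaf{F(C)})$ as a flat limit of pairs $(F(X_t),\StructureSheaf{F(X_t\cap H_t)})$ with $X_t$ a smooth cubic fourfold, where Example \ref{examples-of-maximally-deformable-Lagrangian-surfaces}(\ref{example-item-Fano-variety-of-lines-on-a-cubic-threefold}) and Lemma \ref{lemma-chern-character-of-Lagrangian-structure-sheaf-deforms-in-co-dimension-1} apply, and using that the condition is a statement about the stabilizer of $v(F)$ in the LLV algebra, hence deformation invariant. If you insist on arguing directly on $S^{[2]}$ you must at least note that the restriction map is a morphism of local systems over the base of this family and so has constant rank.

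Second, condition (\ref{assumption-finiteness}) of Lemma \ref{lemma-object-satisfying-3-conditions-is-isomorphic-to-a-locally-free-sheaf} --- finiteness and surjectivity of $\pi_2:(Z\times S^{[2]})\cap I\rightarrow S^{[2]}$, of degree $12$ --- enters your rank computation (the ``$-12$'' and $\rank(F_k)=12$) but is nowhere verified; this is Lemma \ref{lemma-lagrangian-surface-meeting-I-along-a-12-sheeted-finite-cover}, and it is here (not in the smoothness of $Z$) that the no-Eckardt-point hypothesis is used, together with the classical fact that six lines on $C$ pass through a general point and that none of them lies on $Q$. Finally, a small caution about your consistency check: the spanning vector $\left(45k^2,-15k\tilde{h}-\frac{45k(k-1)}{2}\delta,-\frac{45k(k-2)}{4}\right)$ does \emph{not} have BBF square $6$; it equals $\frac{15k}{2}$ times the image $(6k,-2\tilde{h}-3(k-1)\delta,3-\frac{3}{2}k)$ of the norm-$6$ vector $(0,\lambda,6k-3)$, because the normalization of Theorem \ref{thm-Mukai-vector}(\ref{prop-item-spanning-Mukai-vector}) fixes the $\alpha$-coefficient to be the rank rather than preserving the norm.
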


The proposition is proved at the end of this section. Note that $E_k$ is the image of $g_*\StructureSheaf{F(C)}$ via an auto-equivalence, hence $\dim\Ext^i(E_k,E_k)=\dim\Ext^i(g_*\StructureSheaf{F(C)},g_*\StructureSheaf{F(C)})=\dim H^i(F(C),\ComplexNumbers)$, where the latter equality follows from \cite[Theorem 0.1.3]{mladenov}. 
In particular,
$E_k$ is simple, i.e., $\Hom(E_k,E_k)\cong\CC$,  $\dim\Ext^1(E_k,E_k)=10$, and $\dim\Ext^2(E_k,E_k)=45$, by Example
\ref{examples-of-maximally-deformable-Lagrangian-surfaces}(\ref{example-item-Fano-variety-of-lines-on-a-cubic-threefold}).
We do not know\footnote{We check in Remark \ref{rem-two-subsheaves-do-not-destabilize} that 
the intersections of $E_k$ with each of the two direct summands in (\ref{eq-short-exact-sequence-of-E-k}) do not slope-destabilize $E_k$ for polarizations in an open subcone of the ample cone.
} 
if $E_k$ is slope-stable for line bundles in some open subcone of the ample cone. If it is, then Theorem \ref{thm-modularity-of-a-stable-sheaf-with-a-rank-1-obstruction-map} would apply and the Azumaya algebra $\SheafEnd(E_k)$ would deform with $S^{[2]}$ to every irreducible holomorphic symplectic manifold of $K3^{[2]}$-type.

\begin{rem}
Assumption \ref{assumption-on-Q-and-C} does not rule out the case where $S$ contains a smooth conic and hence admits an elliptic fibration onto the pencil of hyperplane sections containing the conic. In this case $S^{[2]}$ admits a lagrangian fibration the generic fiber of which is the product of two elliptic curves. Techniques for proving stability of vector bundles over IHSMs admitting lagrangian fibrations were developed in \cite[Prop. 3.6]{ogrady-modular}.
\end{rem}
%
\subsection{Explicit description of the isospectral Hilbert scheme for $n=2$}
\label{sec-isospectral-Hilbert-scheme-for-Hilb-2}
Let $S$ be a $K3$ surface. Let $b:\widehat{S\times S}\rightarrow S\times S$ be the blow-up of the diagonal,  and $q:\widehat{S\times S}\rightarrow S^{[2]}$ the quotient map by the natural $\fS_2$-action.
The BKR equivalence, given in (\ref{eq-BKR}), 
\[
BKR
:D^b_{\fS_2}(S\times S)\rightarrow D^b(S^{[2]})
\] 
is the composition $Rq_*^{\fS_2}\circ Lb^*$,
where 
$Rq_*^{\fS_2}:D^b_{\fS_2}(\widehat{S\times S})\rightarrow D^b(S^{[2]})$ is the equivariant pushforward associated to the homomorphism $\fS_2\rightarrow \{1\}$, and
$Lb^*:D^b_{\fS_2}(S\times S)\rightarrow D^b_{\fS_2}(\widehat{S\times S})$  is the equivariant pullback.
The functor $Rq_*^{\fS_2}$ takes an $\fS_2$-equivariant object $F$ to the $\fS_2$-invariant direct summand $(Rq_*(F))^{\fS_2}$ of the object
$Rq_*(F).$ 

The morphism $(b,q):\widehat{S\times S}\rightarrow (S\times S)\times S^{[2]}$ embeds $\widehat{S\times S}$ isomorphically onto
the isospectral Hilbert scheme $\Gamma$.
The sheaf $\StructureSheaf{\Gamma}$ admits a natural $\fS_2\times 1$ linearization $\rho$ and $(\StructureSheaf{\Gamma},\rho)$
is the Fourier-Mukai kernel of the functor $BKR$ given in (\ref{eq-BKR}).
Its inverse has Fourier-Mukai kernel $(\StructureSheaf{\Gamma}^\vee[2],\rho^\vee)\cong(\omega_{\Gamma},\rho_\omega)$ in $D^b_{1\times \fS_2}(S^{[2]}\times (S\times S))$, where $\rho_\omega$ is the natural linearization (see \cite[Cor. 3.40 and Prop. 5.9]{huybrechts-FM}). 
The functor 
$\Phi_{(\omega_{\Gamma},\rho_\omega)}:D^b(S^{[2]})\rightarrow D^b_{\fS_2}(S\times S)$ is the composition of tensorization by $\StructureSheaf{S^{[2]}}(\delta)$, followed by
$R^{\fS_2}b_*\circ  \chi\otimes Lq^*$, where $\chi$ is the non-trivial character of $\fS_2$, by Equation (\ref{eq-linearized-dualizing-sheaf-of-isospectral-Hilbert-scheme}).
Indeed, 
\begin{eqnarray}
\label{eq-tensorization-by-chi-interchanges-to-lines}
BKR(\StructureSheaf{S\times S},1)&\cong &\StructureSheaf{S^{[2]}}, \  \mbox{and}
\\ 
\nonumber
BKR(\StructureSheaf{S\times S},\chi)&\cong &\StructureSheaf{S^{[2]}}(-\delta),
\end{eqnarray}
as observed already in (\ref{eq-BKR-of-structure-sheaf}),
and so 
$BKR^{-1}(\StructureSheaf{S^{[2]}})\cong (\StructureSheaf{S\times S},1)$ and 
$BKR^{-1}(\StructureSheaf{S^{[2]}}(-\delta))\cong (\StructureSheaf{S\times S},\chi)$, which agrees with the above description of $BKR^{-1}$. 

The following lemma describes the involutive auto-equivalence $\Phi^{[2]}_\chi:D^b(S^{[2]})\rightarrow D^b(S^{[2]})$, given in (\ref{eq-Phi-chi-[n]}), which is the BKR conjugate of tensorization with the sign character $\chi$.

\begin{lem}
\label{lemma-BKR-conjugate-of-tensorization-by-sign-character}
The following isomorphisms holds for every object $E$ in $D^b(S^{[2]})$ and every object $(F,\rho)$ in $D^b_{\fS_2}(S\times S)$. 
\begin{eqnarray*}
\Phi^{[2]}_\chi(E):=BKR(BKR^{-1}(E)\otimes\chi)&\cong& (BKR\left[\left(BKR^{-1}(E))^\vee\right]\right)^\vee\otimes\StructureSheaf{S^{[2]}}(-\delta),
\\
BKR(F,\rho\otimes\chi)&\cong& (BKR(F^\vee,\rho^\vee))^\vee\otimes\StructureSheaf{S^{[2]}}(-\delta).
\end{eqnarray*}
\end{lem}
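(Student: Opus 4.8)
The plan is to verify both isomorphisms by tracing through the definitions of $BKR$ and its inverse recalled just above, together with Grothendieck--Serre duality for the finite flat morphism $q:\Gamma\to S^{[2]}$ and the blow-up $b:\widehat{S\times S}\to S\times S$. The second isomorphism implies the first: given $E$ in $D^b(S^{[2]})$, set $(F,\rho):=BKR^{-1}(E)$, so that $BKR(F,\rho)\cong E$ and $BKR(F,\rho\otimes\chi)=\Phi^{[2]}_\chi(E)$ by definition (\ref{eq-Phi-chi-[n]}); the second isomorphism then rewrites this as $(BKR(F^\vee,\rho^\vee))^\vee\otimes\StructureSheaf{S^{[2]}}(-\delta)$, and it only remains to identify $(F^\vee,\rho^\vee)$ with $(BKR^{-1}(E))^\vee$, which is the statement that $BKR^{-1}$ commutes with derived duality on equivariant objects. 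The latter is standard: the Fourier--Mukai kernel of $BKR^{-1}$ is $(\StructureSheaf{\Gamma},\rho)$ viewed in the opposite direction, $\Gamma$ is a local complete intersection (indeed Gorenstein, by Haiman \cite[Theorem 3.1]{haiman}) of pure codimension $2$ in $S^{[2]}\times(S\times S)$, and duality of Fourier--Mukai functors with such a kernel intertwines $(-)^\vee$ with $(-)^\vee$ up to the twist by $\omega_\Gamma[-2n]$ — but this twist is exactly absorbed into the passage between $BKR^{-1}$ and the functor with kernel $\omega_\Gamma$, using (\ref{eq-linearized-dualizing-sheaf-of-isospectral-Hilbert-scheme}).

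Concretely, I would compute $BKR(F^\vee,\rho^\vee)=Rq_*^{\fS_2}(Lb^*(F^\vee,\rho^\vee))$. Since $b$ is a blow-up along a smooth center, $Lb^*$ commutes with derived duality, so $Lb^*(F^\vee,\rho^\vee)\cong (Lb^*(F,\rho))^\vee$ with the dual linearization. Applying Grothendieck duality for the finite flat Gorenstein morphism $q$ of relative dimension $0$: $Rq_*(G^\vee)\cong (Rq_*(G\otimes\omega_\Gamma))^\vee$ for $G=Lb^*(F,\rho)$, where $\omega_\Gamma$ is the relative dualizing sheaf of $q$ (a line bundle, again by \cite[Theorem 3.1]{haiman}), and by (\ref{eq-linearized-dualizing-sheaf-of-isospectral-Hilbert-scheme}) we have $(\omega_\Gamma,\rho_\omega)\cong\chi\otimes Lq^*\StructureSheaf{S^{[2]}}(\delta)$ as linearized sheaves. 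Feeding this in and taking $\fS_2$-invariants carefully (the character $\chi$ from $\omega_\Gamma$ combines with the $\chi$ we want to produce), one gets
\[
BKR(F^\vee,\rho^\vee)\cong\bigl(Rq_*^{\fS_2}(Lb^*(F,\rho)\otimes\chi)\bigr)^\vee\otimes\StructureSheaf{S^{[2]}}(\delta)
=\bigl(BKR(F,\rho\otimes\chi)\bigr)^\vee\otimes\StructureSheaf{S^{[2]}}(\delta),
\]
using the projection formula to move $Lq^*\StructureSheaf{S^{[2]}}(\delta)$ out of the pushforward. Dualizing and tensoring by $\StructureSheaf{S^{[2]}}(-\delta)$ rearranges this into the asserted second isomorphism $BKR(F,\rho\otimes\chi)\cong(BKR(F^\vee,\rho^\vee))^\vee\otimes\StructureSheaf{S^{[2]}}(-\delta)$.

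The main obstacle I anticipate is bookkeeping of the $\fS_2$-linearizations through the duality isomorphisms: the relative dualizing sheaf of $q$ carries the sign character (this is why $\bar q:\Gamma/\mathfrak{A}_n\to S^{[n]}$ is a double cover with the stated decomposition of $\bar q_*\StructureSheaf{}$), so when one applies equivariant Grothendieck duality the natural identification of $Rq_*^{\fS_2}((-)^\vee)$ with $(Rq_*^{\fS_2}(-\otimes\omega_\Gamma))^\vee$ is not just the non-equivariant statement plus a bystander character — one must check that taking $\fS_2$-invariants commutes with the duality the way the formula requires, i.e. that the invariant part of the dual is the dual of the invariant part twisted appropriately. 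The cleanest way to control this is to argue at the level of Fourier--Mukai kernels in $D^b_{1\times\fS_2}(S^{[2]}\times(S\times S))$ using $(\StructureSheaf{\Gamma}^\vee[2n],\rho^\vee)\cong(\omega_\Gamma,\rho_\omega)$ as already quoted (from \cite[Cor. 3.40, Prop. 5.9]{huybrechts-FM} and Haiman), so that duality of functors becomes duality of kernels and the character bookkeeping is a single application of (\ref{eq-linearized-dualizing-sheaf-of-isospectral-Hilbert-scheme}); everything else is the standard interaction of $(-)^\vee$ with $Lf^*$ and with $Rf_*$ for $f$ flat and Gorenstein.
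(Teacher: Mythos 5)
Your proof is correct and follows essentially the same route as the paper's: both reduce the first isomorphism to the second by substituting $(F,\rho):=BKR^{-1}(E)$, and both establish the second by applying equivariant Grothendieck--Verdier duality to $Rq_*^{\fS_2}$, identifying the linearized dualizing sheaf via $(\omega_\Gamma,\rho_\omega)\cong\chi\otimes Lq^*\StructureSheaf{S^{[2]}}(\delta)$, and finishing with the projection formula. One small remark: the detour about ``$BKR^{-1}$ commuting with derived duality'' is not needed for the reduction, since $(BKR^{-1}(E))^\vee=(F^\vee,\rho^\vee)$ holds tautologically once you have set $(F,\rho):=BKR^{-1}(E)$.
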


\begin{proof}
The existence of the second isomorphism is equivalent to that of  the first, by setting $E=BKR(F,\rho)$. We prove the second. 
\begin{eqnarray*}
[BKR(F^\vee,\rho^\vee)]^\vee &\cong &\left[Rq_*^{\fS_2}(Lb^*(F^\vee,\rho^\vee))\right]^\vee
\cong
Rq_*^{\fS_2}((\omega_{\widehat{S\times S}},\rho_\omega)\otimes Lb^*(F,\rho))
\\
&\cong&
Rq_*^{\fS_2}\left[\chi\otimes Lq^*\StructureSheaf{S^{[2]}}(\delta)\otimes Lb^*(F,\rho)\right]
\cong
\StructureSheaf{S^{[2]}}(\delta)\otimes Rq_*^{\fS_2}(Lb^*(F,\rho\otimes\chi))
\\
&\cong&
\StructureSheaf{S^{[2]}}(\delta)\otimes BKR(F,\rho\otimes\chi),
\end{eqnarray*}
where the first isomorphism is a definition, the second is by equivariant Grothendieck-Verdier duality, the third by the isomorphism $(\omega_{\widehat{S\times S}},\rho_\omega\otimes\chi)\cong Lq^*\StructureSheaf{S^{[2]}}(\delta)$, the fourth by the projection formula,
and the fifth by definition of $BKR$.
\end{proof}

\hide{
Let $\phi^{[2]}_\chi:H^*(S^{[2]]},\QQ)\rightarrow H^*(S^{[2]]},\QQ)$ be the automorphism induced by $\Phi^{[2]}_\chi$.
Let $\widetilde{H}(\phi^{[2]}_\chi):\widetilde{H}(S^{[2]},\QQ)\rightarrow \widetilde{H}(S^{[2]},\QQ)$ be the induced isometry via 
(\ref{eq-action-of-DMon-on-Mukai-lattice}), which appeared in Lemma \ref{lemma-isometry-of-BKR-conjugate-of-tensorization-by-sign-character}.

\begin{lem}
\label{lemma-isometry-of-BKR-conjugate-of-tensorization-by-sign-character}
The isometry $\widetilde{H}(\phi^{[2]}_\chi)$ is minus the reflection with respect to the hyperplane orthogonal to $u_0:=(0,\delta,1)=B_{-\delta/2}(0,\delta,0)$,
\[
\widetilde{H}(\phi^{[2]}_\chi)(x) = -[x+(x,u_0)u_0].
\]
\end{lem}

\begin{proof}
Let $e:\Gamma\rightarrow S^n\times S^{[n]}$ be the inclusion. 
The kernel of the functor $BKR^{-1}$ is isomorphic to $(\iota_*\StructureSheaf{\Gamma})^\vee[2n]$, the derived dual $(\iota_*\StructureSheaf{\Gamma})^\vee$ is $\iota_*\omega_\Gamma[-2n]$, and $\omega_\Gamma$ is a line bundle, since $\Gamma$ is a normal Cohen-Macaulay and Gorenstein, by \cite[Theorem 3.1]{haiman}. 
 Let $p\in S^{[2]}$ correspond to a reduced subscheme supported on the subset $\{x_1,x_2\}$ of $S$, $x_1\neq x_2$.
 We have $BKR^{-1}(\CC_p)\cong bkr(\CC_p)\cong \left(\CC_{(x_1,x_2)}\oplus \CC_{(x_2,x_1)},\rho\right)$, for a linearization $\rho$, which is isomorphic to
 $\left(\CC_{(x_1,x_2)}\oplus \CC_{(x_2,x_1)},\chi\otimes \rho\right)$ by Remark \ref{rem-chi-invariant-objects}, so $\Phi^{[2]}_\chi(\CC_p)\cong\CC_p$.
Hence, $\widetilde{H}(\phi^{[2]}_\chi)$ maps the line $\ell(\CC_p)=\span\{(0,0,1)\}$ to itself, $\Sym^2(\widetilde{H}(\phi^{[2]}_\chi))$ restricts as the identity to
$\Psi(v(\CC_p))$, and $\phi(v(\CC_p))=v(\CC_p)$, where $\phi:H^*(S^{[2]},\QQ)\rightarrow H^*(S^{[2]},\QQ)$ is the derived monodromy operator corresponding to  $\Phi^{[2]}_\chi$. It follows that $\det(\widetilde{H}(\phi^{[2]}_\chi))=1$, since we must have
$\Sym^2(\widetilde{H}(\phi^{[2]}_\chi))\circ\Psi=\Psi\circ [\det(\widetilde{H}(\phi^{[2]}_\chi))\phi]$, by \cite[Theorem 4.9]{taelman}.

Note that the action of $\widetilde{H}(\phi^{[2]}_\chi)$ is independent of the complex structure on the $K3$ surface $S$.
If $\pi:S\rightarrow\PP^1$  is an elliptic fibration and $E_1$ and $E_2$ are two distinct smooth fibers, then
$E_1\times E_2$ embeds as a lagrangian surface in $S^{[2]}$. Let $\iota:E_1\times E_2\rightarrow S^{[2]}$ be the embedding. 
Set $Z:=E_1\times E_2\cup E_2\times E_1$. We get the equivariant embedding $\hat{e}:Z\rightarrow \widehat{S\times S}$.
Given a line bundle $\LB$ on $S^{[2]}$, then 
$Lq^*(\iota_*\iota^*\LB)\cong \hat{e}_*\hat{e}^*(Lq^*\LB)$ in $D^b_{\fS_2}(\widehat{S\times S})$. Denote the linearization of $Lq^*\LB$ by $\lambda$. Then $\hat{e}_*(\hat{e}^*Lq^*\LB,\hat{e}^*\lambda)$ is isomorphic to $\hat{e}_*(\hat{e}^*Lq^*\LB,\chi\otimes \hat{e}^*\lambda)$, by Remark \ref{rem-chi-invariant-objects}, as $E_1\times E_2$ and $E_2\times E_1$ are disjoint. Furthermore,
$Rb_*^{\fS_{2,\Delta}}\hat{e}_*(\hat{e}^*Lq^*\LB,\chi\otimes \hat{e}^*\lambda)$ is isomorphic to 
$\chi\otimes Rb_*^{\fS_{2,\Delta}}\hat{e}_*(\hat{e}^*Lq^*\LB,\hat{e}^*\lambda)$, since $b$ is an equivariant 
morphism with respect to the isomorphism $\fS_2\rightarrow \fS_2$.
It follows that
$BKR^{-1}(\iota_*\iota^*\LB)\cong \chi\otimes BKR^{-1}(\iota_*\iota^*\LB)$. Thus, $\Phi^{[2]}_\chi(\iota_*\iota^*\LB)\cong \iota_*\iota^*\LB$.
Now, $\ell(\StructureSheaf{E_1\times E_2})=\span\{(0,f,0)\}$, where $f$ is the class of a fiber of $\pi$ in $H^2(S,\Integers)$ considered as the subspace of $H^2(S^{[2]},\ZZ)$ orthogonal to $\delta$, by Lemma \ref{lemma-Mukai-line-of-structure-sheaf-of-subcanonical-lagrangian}. Thus, 
$\ell(\iota_*\iota^*\LB)=\exp(e_{c_1(\LB)})(0,f,0)=(0,f,(f,c_1(\LB))$. 
We see that $\widetilde{H}(\phi^{[2]}_\chi)$ leaves invariant the lines $\span\{(0,f,s)\}$, whenever $f$ is an isotropic primitive class in $H^2(S,\ZZ)$ and there is a complex structure on $S$ such that $f$ is the class of a fiber of an elliptic fibration and there exists a line bundle $\LB$  on $S$, such that $s=(f,c_1(\LB))$. 
We conclude that $\widetilde{H}(\phi^{[2]}_\chi)$ acts by multiplication by $1$ or $-1$
on $M:=\{(0,0,1),(0,\delta,0)\}^\perp$. 

Recall that $\ell(\StructureSheaf{S^{[2]}})=\span\{(4,0,5)\}$, by Example \ref{example-Mukai-line-of-structure-sheaf}. 
Equation (\ref{eq-tensorization-by-chi-interchanges-to-lines}) implies that $\widetilde{H}(\phi^{[2]}_\chi)$  interchanges the two lines 
$\ell(\StructureSheaf{S^{[2]}})\}$ and $\ell(\StructureSheaf{S^{[2]}}(-\delta))=\span\{(4,-4\delta,1)\}$.
Set $N:=\span_\QQ\{(4,0,5),(0,\delta,1)\}$.
The LLV lattice $\Lambda=B_{-\delta/2}(\widetilde{H}(S^{[2]},\ZZ))$  of $S^{[2]}$ given in (\ref{eq-Mukai-lattice-K3-2-case}) is invariant under the derived monodromy group, hence it is $\widetilde{H}(\phi^{[2]}_\chi)$-invariant.
Thus, so is $N_\ZZ:=N\cap\Lambda$. 
Now $B_{\delta/2}(0,\delta,1)=(0,\delta,0)$, $B_{\delta/2}(4,0,5)=2(2,\delta,2)$, and 
\[
B_{\delta/2}(N_\ZZ)=\span_\QQ\{(0,\delta,0),(2,\delta,2)\}\cap \widetilde{H}(S^{[2]},\ZZ)=\span_\ZZ\{(0,\delta,0),(1,0,1)\}
\]
is $B_{\delta/2}\circ\widetilde{H}(\phi^{[2]}_\chi)\circ B_{-\delta/2}$-invariant.
There are precisely $4$ elements of self intersection $-2$ in $B_{\delta/2}(N_\ZZ)$, namely $\{\pm(0,\delta,0),\pm(1,0,1)\}$.
An isometry of $B_{\delta/2}(N_\ZZ)$, which extends to $\widetilde{H}(S^{[2]},\ZZ)$, must take $(0,\delta,0)$ to $\pm(0,\delta,0)$
and $(1,0,1)$ to $\pm(1,0,1)$, as the divisibility of $(1,0,1)$ in $\widetilde{H}(S^{[2]},\ZZ)$ is $1$ and that of $(0,\delta,0)$ is $2$.
Hence, such an isometry must be one of 
\[
\{\pm 1, \pm R_{(0,\delta,0)},\pm R_{(1,0,1)},\pm R_{(0,\delta,0)}R_{(1,0,1)}\}, 
\]
where $R_u$ is the reflection with respect to $u^\perp$. Of those, only 
$\pm R_{(0,\delta,0)}$ interchange the lines spanned by 
$B_{\delta/2}(4,0,5)=(4,2\delta,4)$ and $B_{\delta/2}(4,-4\delta,1)=(4,-2\delta,4)$. 
Note that $R_{(0,\delta,0)}=R_{B_{\delta/2}(u_0)}=B_{\delta/2}\circ R_{u_0}\circ B_{-\delta/2}$.
We conclude that $\widetilde{H}(\phi^{[2]}_\chi)$
restricts to $N$ as $R_{u_0}$ or $-R_{u_0}$.

Note that $M$ and $N$ are complementary $\widetilde{H}(\phi^{[2]}_\chi)$-invariant subspaces and the latter restricts to $N$ with determinant $-1$ and $\det(\widetilde{H}(\phi^{[2]}_\chi))=1$. Hence, $\widetilde{H}(\phi^{[2]}_\chi)$ restricts to the $23$-dimensional $M$ with determinant $-1$ and so acts on $M$ by $-1$. Thus, $-R_{u_0}\circ \widetilde{H}(\phi^{[2]}_\chi)$ acts on $M$ by $1$ and on $N$ by $\pm 1$. Now, $M$ and $N$ are not orthogonal, and so $-R_{u_0}\circ \widetilde{H}(\phi^{[2]}_\chi)$ must act on $N$ by $1$ as well. Hence, 
$\widetilde{H}(\phi^{[2]}_\chi)=-R_{u_0}$.
\end{proof}
}

Denote by $\fS_{2,\Delta}$ the diagonal subgroup of $\fS_2\times\fS_2$. 
The non-unit of $\fS_{2,\Delta}$ acts by $(x_1,x_2,x_3,x_4)\mapsto (x_2,x_1,x_4,x_3).$
 We get the $\fS_{2,\Delta}$-equivariant object 
 \[
 \P\boxtimes\P:=L\pi_{13}^*\P\otimes L\pi_{24}^*\P
 \] 
 over $(S\times S)\times (S\times S)$, endowed with the linearization $\rho_{\boxtimes}$ acting by permuting the factors. The object $\P\boxtimes\P$  is represented by the complex
\begin{equation}
\label{eq-right-exact-sequence-of-beta}
\StructureSheaf{S^4}\RightArrowOf{\alpha}\StructureSheaf{\Delta_{13}}\oplus \StructureSheaf{\Delta_{24}}
\RightArrowOf{\beta} \StructureSheaf{\Delta_{13}\cap\Delta_{24}},
\end{equation}
where $\Delta_{ij}\subset S^4$ is the subvariety of quadruples $(x_1,x_2,x_3,x_4)$ satisfying $x_i=x_j$, 
$\beta$ is the difference of the
natural homomorphisms from each direct summand, and $\alpha$ is the direct sum of the two natural homomorphisms. 
Note that $\ker(\beta)$ is $\StructureSheaf{\Delta_{13}\cup\Delta_{24}}$ and so the above complex is quasi-isomorphic to the ideal sheaf $\Ideal{\Delta_{13}\cup\Delta_{24}}$.
The integral transform $\Phi_{(\P\boxtimes\P,\rho_\boxtimes)}:D^b_{\fS_2}(S\times S)\rightarrow D^b_{\fS_2}(S\times S)$ is an auto-equivalence, which is conjugated via the BKR equivalence to an auto-equivalence
\begin{equation}
\label{eq-P-[2]-as-a-composition}
\Phi_\P^{[2]}:= \Phi_{{(\StructureSheaf{\Gamma},\rho)}}\circ \Phi_{(\P\boxtimes\P,\rho_{\boxtimes})}\circ 
\Phi_{(\omega_{\Gamma},\rho_\omega)}: D^b(S^{[2]})\rightarrow D^b(S^{[2]}),
\end{equation}
by \cite[Lemma 5(3)]{ploog}.
Composing $\Phi_{(\P\boxtimes\P,\rho_\boxtimes)}$ with the auto-equivalence $\Phi_\chi$ of tensoring with the sign character $\chi$ of $\fS_2$ we get the auto-equivalence $\Phi_{(\P\boxtimes\P,\chi\otimes\rho_\boxtimes)}$ and we denote its BKR-conjugate by 
$\Phi_{\P,\chi}^{[2]}:D^b(S^{[2]})\rightarrow D^b(S^{[2]})$. 
The projections $\pi_i:(S\times S)\times (S\times S)\rightarrow (S\times S)$, $i=1,2$, are equivariant with respect to the natural isomorphism $\fS_{2,\Delta}\rightarrow\fS_2$, and we get  $\Phi_\chi\circ \Phi_{(\P\boxtimes\P,\rho_{\boxtimes})}=\Phi_{(\P\boxtimes\P,\rho_{\boxtimes})}\circ \Phi_\chi$.
We have
\[
\Phi_{\P,\chi}^{[2]}=
\Phi_\chi^{[2]}\circ\Phi_\P^{[2]}=\Phi_\P^{[2]}\circ\Phi_\chi^{[2]},
\]
by Equation (\ref{eq-Phi-chi-[n]-commutes-with-surface-FM}).

\begin{lem}
\label{lemma-shifts-of-structure-sheaf-and-delta-line-bundle-interchanged}
$\Phi^{[2]}_{\P.\chi}(\StructureSheaf{S^{[2]}})  \cong \StructureSheaf{S^{[2]}}(-\delta)[-4]$ and
$\Phi^{[2]}_{\P.\chi}(\StructureSheaf{S^{[2]}}(-\delta))  \cong \StructureSheaf{S^{[2]}}[-4].$
\end{lem}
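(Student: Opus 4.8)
The plan is to reduce both isomorphisms to the computation of the spherical twist $\Phi_\P$ of $D^b(S)$ on the structure sheaf and on the ideal sheaf of a point, combined with the known behaviour of $BKR$ on the two $\fS_2$-equivariant objects $(\StructureSheaf{S\times S},1)$ and $(\StructureSheaf{S\times S},\chi)$ recorded in Equation (\ref{eq-tensorization-by-chi-interchanges-to-lines}). First I would recall that, since $\Phi^{[2]}_{\P,\chi}=\Phi^{[2]}_\chi\circ\Phi^{[2]}_\P$ and the kernel of $\Phi^{[2]}_\P$ is obtained from the $\fS_2$-equivariant kernel $(\P\boxtimes\P,\rho_\boxtimes)$ by the conjugation formula (\ref{eq-P-[2]-as-a-composition}), one has $BKR^{-1}(\StructureSheaf{S^{[2]}})\cong(\StructureSheaf{S\times S},1)$, and hence
\[
\Phi^{[2]}_\P(\StructureSheaf{S^{[2]}})\cong BKR\left(\Phi_{(\P\boxtimes\P,\rho_\boxtimes)}(\StructureSheaf{S\times S},1)\right)
\cong BKR\left((\Phi_\P\boxtimes\Phi_\P)(\StructureSheaf{S\times S},1)\right).
\]
Now $\Phi_\P(\StructureSheaf{S})$ is computed directly from the defining triangle $\Ideal{\Delta}\to\StructureSheaf{S\times S}\to\StructureSheaf{\Delta_S}$ of the kernel $\P$: applying $R\pi_{1,*}(\P\otimes L\pi_2^*(\bullet))$ to $\StructureSheaf{S}$ and using $R\pi_{1,*}\StructureSheaf{S\times S}\cong H^*(S,\StructureSheaf{S})\otimes\StructureSheaf{S}=\StructureSheaf{S}\oplus\StructureSheaf{S}[-2]$ together with $R\pi_{1,*}\StructureSheaf{\Delta_S}\cong\StructureSheaf{S}$ yields $\Phi_\P(\StructureSheaf{S})\cong\StructureSheaf{S}[-2]$ (the adjunction/Serre-duality computation for the spherical twist by $\StructureSheaf{S}$). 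Therefore $(\Phi_\P\boxtimes\Phi_\P)(\StructureSheaf{S\times S},1)\cong(\StructureSheaf{S\times S},1)[-4]$, and applying $BKR$ and then $\Phi^{[2]}_\chi$ gives, via (\ref{eq-tensorization-by-chi-interchanges-to-lines}) and Lemma \ref{lemma-BKR-conjugate-of-tensorization-by-sign-character}, that $\Phi^{[2]}_{\P,\chi}(\StructureSheaf{S^{[2]}})\cong\Phi^{[2]}_\chi(\StructureSheaf{S^{[2]}})[-4]\cong\StructureSheaf{S^{[2]}}(-\delta)[-4]$.

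For the second isomorphism the same strategy applies with $(\StructureSheaf{S\times S},\chi)$ in place of $(\StructureSheaf{S\times S},1)$: since $BKR^{-1}(\StructureSheaf{S^{[2]}}(-\delta))\cong(\StructureSheaf{S\times S},\chi)$ and the functor $\Phi_{(\P\boxtimes\P,\rho_\boxtimes)}$ commutes with $\Phi_\chi$ (the projections $\pi_i$ being equivariant with respect to the natural isomorphism $\fS_{2,\Delta}\to\fS_2$), we get $\Phi^{[2]}_\P(\StructureSheaf{S^{[2]}}(-\delta))\cong BKR((\Phi_\P\boxtimes\Phi_\P)(\StructureSheaf{S\times S},\chi))\cong BKR((\StructureSheaf{S\times S},\chi)[-4])\cong\StructureSheaf{S^{[2]}}(-\delta)[-4]$, and composing with $\Phi^{[2]}_\chi$ interchanges $\StructureSheaf{S^{[2]}}(-\delta)$ with $\StructureSheaf{S^{[2]}}$, yielding $\Phi^{[2]}_{\P,\chi}(\StructureSheaf{S^{[2]}}(-\delta))\cong\StructureSheaf{S^{[2]}}[-4]$. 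An alternative, perhaps cleaner, route is to note that $\Phi^{[2]}_{\P,\chi}$ is an auto-equivalence, that the first isomorphism has already been established, and that $\StructureSheaf{S^{[2]}}$ and $\StructureSheaf{S^{[2]}}(-\delta)$ are swapped by $\Phi^{[2]}_\chi$, which commutes with $\Phi^{[2]}_\P$; one then only needs $(\Phi^{[2]}_{\P})^{\,2}$ or a symmetry argument to descend the second statement from the first.

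The main obstacle I anticipate is bookkeeping with the $\fS_2$-linearizations under $BKR$ and its inverse — specifically, checking that the equivariant object $(\Phi_\P\boxtimes\Phi_\P)(\StructureSheaf{S\times S},\chi)$ really is $(\StructureSheaf{S\times S},\chi)[-4]$ with the correct linearization and not twisted by an unexpected character, and that the shift $[-4]$ (two copies of $[-2]$ from the two spherical twists) is tracked consistently through the conjugation formula (\ref{eq-P-[2]-as-a-composition}) which involves the dualizing-sheaf kernel $(\omega_\Gamma,\rho_\omega)$. Equation (\ref{eq-linearized-dualizing-sheaf-of-isospectral-Hilbert-scheme}), $(\omega_\Gamma,\rho_\omega)\cong\chi\otimes Lq^*\StructureSheaf{S^{[2]}}(\delta)$, is exactly what converts the $\chi$-twist on $S\times S$ into the $\StructureSheaf{S^{[2]}}(-\delta)$ twist on $S^{[2]}$, so the computation hinges on invoking it correctly; once that identification is in hand the rest is the elementary cohomology-of-$\StructureSheaf{S}$ computation for the spherical twist.
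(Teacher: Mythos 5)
Your proposal is correct and follows essentially the same route as the paper: identify $BKR^{-1}(\StructureSheaf{S^{[2]}})$ and $BKR^{-1}(\StructureSheaf{S^{[2]}}(-\delta))$ with $(\StructureSheaf{S\times S},1)$ and $(\StructureSheaf{S\times S},\chi)$ via (\ref{eq-BKR-of-structure-sheaf}), use $\Phi_\P(\StructureSheaf{S})\cong\StructureSheaf{S}[-2]$ to get the shift $[-4]$ from the box product, and convert the resulting $\chi$-twist into tensoring by $\StructureSheaf{S^{[2]}}(-\delta)$ via Lemma \ref{lemma-BKR-conjugate-of-tensorization-by-sign-character}. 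The linearization bookkeeping you flag as the main risk is exactly what that lemma (together with (\ref{eq-linearized-dualizing-sheaf-of-isospectral-Hilbert-scheme})) handles, so no gap remains.
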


\begin{proof}
The isomorphism $\Phi_\P(\StructureSheaf{S})\cong \StructureSheaf{S}[-2]$ explains the third isomorphism below.
\begin{eqnarray*}
\Phi^{[2]}_{\P.\chi}(\StructureSheaf{S^{[2]}}) & \cong &
BKR\left(
\chi\otimes \Phi_{(\P\boxtimes\P,\rho_\boxtimes)}(
BKR^{-1}(\StructureSheaf{S^{[2]}})
)
\right)
\\
&\stackrel{(\ref{eq-BKR-of-structure-sheaf})}{\cong}& 
BKR\left(
\chi\otimes \Phi_{(\P\boxtimes\P,\rho_\boxtimes)}(\StructureSheaf{S\times S},1)
\right)
\cong
BKR\left(
\chi\otimes (\StructureSheaf{S\times S}[-4],1)
\right)
\\
&
\stackrel{\mbox{Lemma \ref{lemma-BKR-conjugate-of-tensorization-by-sign-character}}}{\cong}
&
\left(BKR(\StructureSheaf{S\times S}[4],1)\right)^\vee\otimes \StructureSheaf{S^{[2]}}(-\delta)
\stackrel{(\ref{eq-BKR-of-structure-sheaf})}{\cong} \StructureSheaf{S^{[2]}}(-\delta)[-4].
\end{eqnarray*}
The proof of the second isomorphism is similar.
\begin{eqnarray*}
\Phi^{[2]}_{\P.\chi}(\StructureSheaf{S^{[2]}}(-\delta))&\cong&
BKR\left(\chi\otimes \Phi_{(\P\boxtimes\P,\rho_\boxtimes)}(BKR^{-1}(\StructureSheaf{S^{[2]}}(-\delta)))
\right)
\\
&\cong& BKR\left(\chi\otimes \Phi_{(\P\boxtimes\P,\rho_\boxtimes)}(\StructureSheaf{S\times S},\chi))
\right)
\cong
BKR((\StructureSheaf{S\times S},1)[-4])\cong\StructureSheaf{S^{[2]}}[-4].
\end{eqnarray*}
\end{proof}

%
\subsection{A local coordinate description of the strict transforms of diagonals}
\label{sec-local-coordinates}
We will need a local coordinates description of $\widehat{S\times S}\times \widehat{S\times S}$ and its exceptional divisors.
We replace $S$ by $\CC^2$ and let $(x_i,y_i)$ be coordinates on the $i$-th factor of $S$. Let $(s:t)$ be homogeneous coordinates on $\PP^1$.
The blow-up of $\CC^2$ at the origin is the hypersurface $V(xt-ys)$ in $\CC^2\times \PP^1$. Let $z=s/t$ denote the affine coordinate on $\PP^1\setminus\{(1:0)\}$.  Away from 
$\CC^2\times\{(1:0)\}$
the blow-up 
has coordinates $y,z$ and $x=yz$. The exceptional divisor $\hat{D}$ is $V(y)$ in these coordinates.
Consider the map
\begin{eqnarray*}
\CC^2\times \CC^2 &\rightarrow & \CC^2\times \CC^2,
\\
((x_1,y_1),(x_2,y_2)) & \mapsto & ((x_1,y_1),(x_1-x_2,y_1-y_2)).
\end{eqnarray*}
The blow-up of the left copy of $\CC^2\times \CC^2$ along the diagonal corresponds 
to the blow-up along $\CC^2\times \{0\}$ via the above change of coordinates. 
Hence, $\widehat{S\times S}$ embeds as the subvariety 
\[
V((x_1-x_2)t-(y_1-y_2)s)
\]
of $\CC^2\times \CC^2\times \PP^1$ and has coordinates $x_1,y_1,y_2,z$, and $x_1-x_2=(y_1-y_2)z$, away from the divisor
$\CC^2\times\CC^2\times \{(1:0)\}$.
The exceptional divisor is $V(y_1-y_2)$ in these coordinates.
Over $\widehat{S\times S}\times \widehat{S\times S}$ we get the coordinates
\[
(x_1,y_1,y_2,z_1;x_3,y_3,y_4,z_2)
\]
satisfying $x_1-x_2=(y_1-y_2)z_1$ and $x_3-x_4=(y_3-y_4)z_2$.
The strict transform $\hat{\Delta}_{13}$ of $\Delta_{13}$ in $\widehat{S\times S}\times \widehat{S\times S}$ corresponds to $V(x_1-x_3,y_1-y_3)$.
The strict transform $\hat{\Delta}_{24}$  of $\Delta_{24}$ corresponds to $V(y_2-y_4,x_2-x_4)$, where
$x_2=x_1-z_1(y_1-y_2)$ and $x_4=x_3-z_2(y_3-y_4)$.
The intersection $\hat{\Delta}_{13}\cap \hat{\Delta}_{24}$ is given by
\begin{eqnarray*}
\hat{\Delta}_{13}\cap \hat{\Delta}_{24}&=&V(x_1-x_3,y_1-y_3,y_2-y_4,(y_1-y_2)(z_1-z_2)), \mbox{where}
\\
\hat{D}\times_S\hat{D}&=& V(x_1-x_3,y_1-y_3,y_2-y_4,y_1-y_2)
\\
\Delta_{\widehat{S\times S}} &=& V(x_1-x_3,y_1-y_3,y_2-y_4,z_1-z_2).
\end{eqnarray*}
Note that $x_1-x_3,y_1-y_3,y_2-y_4,(y_1-y_2)(z_1-z_2)$ is a regular sequence, and so $\hat{\Delta}_{13}\cap \hat{\Delta}_{24}$
is a complete intersection. Consider the sections  $s_1:=(x_1-x_3,y_1-y_3)$ and $s_2:=(y_2-y_4,x_2-x_4)$  of
$\StructureSheaf{\widehat{S\times S}\times \widehat{S\times S}}^{\oplus 2}$. The Koszul complex associated to $s_1$
is a locally free resolution of $\StructureSheaf{\hat{\Delta}_{13}}$ and the one associated to $s_2$ is a locally free resolution of $\StructureSheaf{\hat{\Delta}_{24}}$. The tensor product of these two Koszul complexes is the
Koszul complex associated to the section $(s_1,s_2)$ of 
$\StructureSheaf{\widehat{S\times S}\times \widehat{S\times S}}^{\oplus 4}$, which is exact, as the section  $(s_1,s_2)$ is regular.
Hence, the torsion sheaves $\Tor_i(\StructureSheaf{\hat{\Delta}_{13}},\StructureSheaf{\hat{\Delta}_{24}})$ vanish, for $i>0$.

%
\subsection{The kernel of the BKR-conjugate of the square of a spherical twist by $\StructureSheaf{S}$}
\label{sec-FM-kernel-over-S-[2]-squared}
Let $I\subset S^{[2]}\times S^{[2]}$ be the incidence variety consisting of pairs of ideals of non-disjoint subschemes.
Let $\nu:\tilde{I}\rightarrow I$ be its normalization.
Let $\hat{\Delta}_{ij}$ be the strict transform of $\Delta_{ij}$ in $\widehat{S\times S}\times \widehat{S\times S}$.
Let $\hat{\delta}:\widehat{S\times S}\rightarrow \widehat{S\times S}\times \widehat{S\times S}$ be the diagonal embedding
and denote its image by $\Delta_{\widehat{S\times S}}$.
Let $\hat{D}\subset \widehat{S\times S}$ be the exceptional divisor of $b$. 
Then $\hat{\Delta}_{13}\cap \hat{\Delta}_{24}=\Delta_{\widehat{S\times S}}\cup \hat{D}\times_S\hat{D}$   and each of
$\hat{\Delta}_{13}$ and $\hat{\Delta}_{24}$ is naturally isomorphic to $\tilde{I}$. The diagonal involution interchanges $\hat{\Delta}_{13}$ and $\hat{\Delta}_{24}$ leaving $\hat{D}\times_S\hat{D}$ pointwise invariant. Thus 
$\hat{D}\times_S\hat{D}$ embeds naturally in $\tilde{I}$ and we denote its image by $\hat{D}\times_S\hat{D}$ as well. 
Denote by $\Ideal{\tilde{I},\hat{D}\times_S\hat{D}}$ the ideal sheaf of $\hat{D}\times_S\hat{D}$ in $\tilde{I}$. 
The homomorphism $\StructureSheaf{\tilde{I}}\rightarrow\StructureSheaf{\Delta_{\widehat{S\times S}}}$
maps a local function vanishing to order $1$ along $\hat{D}\times_S\hat{D}$ to an anti-invariant function with respect to the diagonal involution. 
We have the short exact sequence
\[
0\rightarrow \Ideal{I,D\times_SD\cup\Delta_{S^{[2]}}}\rightarrow \nu_*\Ideal{\tilde{I},\hat{D}\times_S\hat{D}}
\rightarrow \StructureSheaf{\Delta_{S^{[2]}}}\otimes\pi_1^*\StructureSheaf{S^{[2]}}(-\delta)\rightarrow 0.
\]

\begin{lem}
\label{lemma-kernel-of-Phi_P^[2]}
\begin{enumerate}
\item
\label{lemma-item-P[2]}
The auto-equivalence 
$\Phi_\P^{[2]}:D^b(S^{[2]})\rightarrow D^b(S^{[2]})$ is the Fourier-Mukai transformation with kernel 
\[
\P^{[2]} :=
\pi_1^*\StructureSheaf{S^{[2]}}(\delta)\otimes \left[\left\{
\pi_1^*\StructureSheaf{S^{[2]}}(-\delta)\oplus \pi_2^*\StructureSheaf{S^{[2]}}(-\delta)
\right\}
\RightArrowOf{a} \nu_*\Ideal{\tilde{I},\hat{D}\times_S\hat{D}}
\right],
\]
where the homomorphism $a$ is surjective and is induced from the restriction homomorphism
$\StructureSheaf{\widehat{S\times S}\times \widehat{S\times S}}\rightarrow \StructureSheaf{\hat{\Delta}_{13}\cup\hat{\Delta}_{24}}$ via push-forward via $q\times q$ and taking the $\fS_{2,\Delta}$-anti-invariant subsheaves.
\item
\label{lemma-item-P[2]-chi}
The auto-equivalence 
$\Phi_{\P,\chi}^{[2]}:D^b(S^{[2]})\rightarrow D^b(S^{[2]})$ is the Fourier-Mukai transformation with kernel
\[
\P^{[2]}_\chi := \pi_1^*\StructureSheaf{S^{[2]}}(\delta)\otimes \left[
\left\{\StructureSheaf{S^{[2]}\times S^{[2]}}\oplus 
\left[\pi_1^*\StructureSheaf{S^{[2]}}(-\delta)\otimes \pi_2^*\StructureSheaf{S^{[2]}}(-\delta)\right]\right\}
\RightArrowOf{a_\chi} \StructureSheaf{I}
\right],
\]
where the homomorphism $a_\chi$ is surjective and is induced from the restriction homomorphism
$\StructureSheaf{\widehat{S\times S}\times \widehat{S\times S}}\rightarrow \StructureSheaf{\hat{\Delta}_{13}\cup\hat{\Delta}_{24}}$ via push-forward via $q\times q$ and taking the $\fS_{2,\Delta}$-invariant subsheaves.
\end{enumerate}
\end{lem}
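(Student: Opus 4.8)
The plan is to compute both kernels directly from the composition formula: pull everything back to $\widehat{S\times S}\times\widehat{S\times S}$, where the relevant diagonals become the strict transforms $\hat\Delta_{13},\hat\Delta_{24}$ whose intersection pattern is made explicit in Section \ref{sec-local-coordinates}, then push down along $q\times q$ and extract the appropriate $\fS_{2,\Delta}$-isotypic summand. First I would reduce to a convolution. Since $\Phi_\chi$ commutes with $\Phi_{(\P\boxtimes\P,\rho_\boxtimes)}$ and amounts to tensoring the surface kernel by $\chi$ (Equations (\ref{eq-Phi-chi-commutes-with-FM-with-diagonally-linearized-kernel}) and (\ref{eq-Phi-chi-[n]-commutes-with-surface-FM})), we have $\Phi^{[2]}_\P=BKR\circ\Phi_{(\P\boxtimes\P,\rho_\boxtimes)}\circ BKR^{-1}$ and $\Phi^{[2]}_{\P,\chi}=BKR\circ\Phi_{(\P\boxtimes\P,\chi\otimes\rho_\boxtimes)}\circ BKR^{-1}$. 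Exactly as in the derivation of Equation (\ref{eq-second-universal-sheaf-over-product-of-Hilbert-schemes}) — using \cite[Lemma 5(3)]{ploog} together with the presentations of $BKR$ and $BKR^{-1}$ in Equations (\ref{eq-relation-betweek-BKR-inverse-and-bkr}) and (\ref{eq-linearized-dualizing-sheaf-of-isospectral-Hilbert-scheme}) — the kernel of $\Phi^{[2]}_\P$ on $S^{[2]}\times S^{[2]}$ is
\[
\pi_1^*\StructureSheaf{S^{[2]}}(\delta)\otimes R(q\times q)_*^{\fS_{2,\Delta}}\bigl(L(b\times b)^*(\P\boxtimes\P,\chi\otimes\rho_\boxtimes)\bigr),
\]
and the kernel of $\Phi^{[2]}_{\P,\chi}$ is the same expression with $\chi\otimes\rho_\boxtimes$ replaced by $\rho_\boxtimes$. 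Equivalently, both kernels arise from the single object $L(b\times b)^*(\P\boxtimes\P)$ by pushing down and keeping the $\chi$-isotypic (``anti-invariant'') summand in the first case and the trivial-isotypic (``invariant'') summand in the second.

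Next I would show the pulled-back kernel is a sheaf. Since $\P\simeq\Ideal{\Delta_S}$ one has $\P\boxtimes\P=L\pi_{13}^*\P\Lotimes L\pi_{24}^*\P=\Ideal{\Delta_{13}}\Lotimes\Ideal{\Delta_{24}}$; replacing each factor by its two-term locally free resolution makes $L(b\times b)^*$ a termwise pullback, and the local computation of Section \ref{sec-local-coordinates} — the regular sequence cutting $\hat\Delta_{13}\cap\hat\Delta_{24}$ and the resulting vanishing $\SheafTor_i(\StructureSheaf{\hat\Delta_{13}},\StructureSheaf{\hat\Delta_{24}})=0$ for $i>0$ — gives $L(b\times b)^*(\P\boxtimes\P)\cong\Ideal{\hat\Delta_{13}}\otimes\Ideal{\hat\Delta_{24}}\cong\Ideal{\hat\Delta_{13}\cup\hat\Delta_{24}}$, with no exceptional twist (because $\Delta_{13},\Delta_{24}$ meet the blown-up diagonals $\Delta_{12},\Delta_{34}$ with multiplicity zero). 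Then I would push forward the short exact sequence $0\to\Ideal{\hat\Delta_{13}\cup\hat\Delta_{24}}\to\StructureSheaf{\widehat{S\times S}\times\widehat{S\times S}}\to\StructureSheaf{\hat\Delta_{13}\cup\hat\Delta_{24}}\to0$ along the finite map $q\times q$ and decompose into $\fS_{2,\Delta}$-isotypic pieces. From $q_*\StructureSheaf{\widehat{S\times S}}=\StructureSheaf{S^{[2]}}\oplus\StructureSheaf{S^{[2]}}(-\delta)$ (the $\chi$-summand being $\StructureSheaf{S^{[2]}}(-\delta)$, by Equation (\ref{eq-tensorization-by-chi-interchanges-to-lines})) one reads off that, after the twist by $\pi_1^*\StructureSheaf{S^{[2]}}(\delta)$, the $\chi$-isotypic resp.\ trivial-isotypic part of the middle term is exactly the domain of $a$ in Lemma \ref{lemma-kernel-of-Phi_P^[2]}(\ref{lemma-item-P[2]}) resp.\ of $a_\chi$ in Lemma \ref{lemma-kernel-of-Phi_P^[2]}(\ref{lemma-item-P[2]-chi}); surjectivity of $a$ and $a_\chi$ is inherited from that of the restriction homomorphism $\StructureSheaf{\widehat{S\times S}\times\widehat{S\times S}}\to\StructureSheaf{\hat\Delta_{13}\cup\hat\Delta_{24}}$, isotypic projection being exact over $\CC[\fS_2]$.

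The main obstacle is identifying the targets, namely $[(q\times q)_*\StructureSheaf{\hat\Delta_{13}\cup\hat\Delta_{24}}]^{\chi}\cong\nu_*\Ideal{\tilde I,\hat D\times_S\hat D}$ and $[(q\times q)_*\StructureSheaf{\hat\Delta_{13}\cup\hat\Delta_{24}}]^{\mathrm{triv}}\cong\StructureSheaf{I}$, and this is exactly where the local coordinates of Section \ref{sec-local-coordinates} are indispensable. It requires: (i) that $q\times q$ restricts on each of $\hat\Delta_{13},\hat\Delta_{24}$ to the normalization $\nu\colon\tilde I\to I$, which comes from the local equations; and (ii) a careful analysis of the diagonal involution $\tau$ on the glued scheme $\hat\Delta_{13}\cup\hat\Delta_{24}$ via the two-component description $\hat\Delta_{13}\cap\hat\Delta_{24}=\Delta_{\widehat{S\times S}}\cup(\hat D\times_S\hat D)$. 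On $\hat D\times_S\hat D$ the involution is the identity, while on $\Delta_{\widehat{S\times S}}$ it is the deck transformation of $q$; tracing how a $\tau$-(anti)invariant section of $\StructureSheaf{\hat\Delta_{13}}\cong\StructureSheaf{\tilde I}$ descends through the gluing, $\tau$-invariance forces the two sheets to agree along $\Delta_{\widehat{S\times S}}$ — precisely the condition carving $\StructureSheaf{I}\subset\nu_*\StructureSheaf{\tilde I}$ out of the normalization — whereas $\tau$-anti-invariance forces vanishing along $\hat D\times_S\hat D$, giving $\nu_*\Ideal{\tilde I,\hat D\times_S\hat D}$ (using that $\StructureSheaf{\tilde I}\to\StructureSheaf{\Delta_{\widehat{S\times S}}}$ sends functions vanishing to first order along $\hat D\times_S\hat D$ to anti-invariant functions, as recorded before the statement of the lemma).

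Feeding these identifications into the isotypic decomposition of the previous step produces, up to the overall twist by $\pi_1^*\StructureSheaf{S^{[2]}}(\delta)$, the short exact sequences $0\to\P^{[2]}\to\bigl(\pi_1^*\StructureSheaf{S^{[2]}}(-\delta)\oplus\pi_2^*\StructureSheaf{S^{[2]}}(-\delta)\bigr)\xrightarrow{a}\nu_*\Ideal{\tilde I,\hat D\times_S\hat D}\to0$ and $0\to\P^{[2]}_\chi\to\bigl(\StructureSheaf{S^{[2]}\times S^{[2]}}\oplus[\pi_1^*\StructureSheaf{S^{[2]}}(-\delta)\otimes\pi_2^*\StructureSheaf{S^{[2]}}(-\delta)]\bigr)\xrightarrow{a_\chi}\StructureSheaf{I}\to0$, which is the content of Lemma \ref{lemma-kernel-of-Phi_P^[2]}; part (\ref{lemma-item-P[2]-chi}) is simply the invariant-part counterpart of part (\ref{lemma-item-P[2]}), both obtained from the one sheaf $\Ideal{\hat\Delta_{13}\cup\hat\Delta_{24}}$. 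I expect step (ii) above — the precise bookkeeping of the involution on the reducible intersection and the resulting isotypic pieces of the structure sheaf of the glued scheme — to be the delicate part; everything else is formal once the local coordinates of Section \ref{sec-local-coordinates} are in hand.
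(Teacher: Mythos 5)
Your proposal is correct and follows essentially the same route as the paper's proof: the convolution formula via Ploog reducing the kernel to $\pi_1^*\StructureSheaf{S^{[2]}}(\delta)\otimes R(q\times q)_*^{\fS_{2,\Delta}}\bigl(L(b\times b)^*(\P\boxtimes\P,\cdot)\bigr)$, the identification $L(b\times b)^*(\P\boxtimes\P)\cong\Ideal{\hat{\Delta}_{13}\cup\hat{\Delta}_{24}}$ via the Tor-vanishing from the local coordinates, the four-character decomposition of $(q\times q)_*\StructureSheaf{\widehat{S\times S}\times \widehat{S\times S}}$ giving the two domains, and the involution analysis on $\Delta_{\widehat{S\times S}}\cup(\hat{D}\times_S\hat{D})$ identifying the two targets. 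The only (harmless) difference is that you push forward the two-term resolution of the ideal sheaf and work directly with the isotypic parts of $(q\times q)_*\StructureSheaf{\hat{\Delta}_{13}\cup\hat{\Delta}_{24}}$, whereas the paper runs the same analysis through the three-term complex with middle term $\StructureSheaf{\hat{\Delta}_{13}}\oplus\StructureSheaf{\hat{\Delta}_{24}}$.
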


\begin{proof}
(\ref{lemma-item-P[2]}). \underline{Step 1:}
The kernel of $\Phi_\P^{[2]}$, as defined in (\ref{eq-P-[2]-as-a-composition}) using \cite[Lemma 5(3)]{ploog}, is 
isomorphic to
$\pi_1^*\StructureSheaf{S^{[2]}}(\delta)\otimes R(q\times q)_*^{\fS_{2,\Delta}}\left( L(b\times b)^*(\P\boxtimes\P)\right),$
by the argument below using the following diagram.
\[
\xymatrix{
& \widehat{S\times S}\times \widehat{S\times S} \ar[ld]_{q\times q} \ar[d]_{(q,b)\times(b,q)} \ar[rd]^{b\times b}
\\
S^{[2]}\times S^{[2]} & 
[S^{[2]}\times (S\times S)]\times [(S\times S)\times S^{[2]}] \ar[l]^-{\pi_{14}} \ar[r]_-{\pi_{23}} &
(S\times S)\times (S\times S).
}
\]
Above $\pi_{ij}$ is the projection onto the product of the $i$-th and $j$-th factors, where each $(S\times S)$ is regarded as a single factor of the middle bottom variety in the diagram.
Let $\hat{\pi}_i$ be the projection from $\widehat{S\times S}\times \widehat{S\times S}$ to the $i$-th factor, $i=1,2$.
The object $L\pi_{34}^*R(b,q)_*\StructureSheaf{\widehat{S\times S}}$ is isomorphic to the structure sheaf of 
$[(S\times S)\times S^{[2]}]\times \Gamma$. Similarly, $L\pi_{12}^*R(q,b)_*\StructureSheaf{\widehat{S\times S}}$
is isomorphic to the structure sheaf of 
$\Gamma\times [S^{[2]}\times (S\times S)]$. The transversality of these two subschemes yields the isomorphism
\begin{equation}
\label{eq-tensor-product-is-push-forward}
[L\pi_{34}^*R(b,q)_*\StructureSheaf{\widehat{S\times S}}]\otimes L\pi_{12}^*R(q,b)_*\StructureSheaf{\widehat{S\times S}}
\cong
R((q,b)\times (b,q))_*\StructureSheaf{\widehat{S\times S}\times \widehat{S\times S}}.
\end{equation}
The kernel of $\Phi_\P^{[2]}$ is isomorphic to 
\begin{eqnarray*}
R\pi_{14,*}^{\fS_{2,\Delta}}\left\{
[L\pi_{34}^*R(b,q)_*^{\fS_2}(\StructureSheaf{\widehat{S\times S}},1)]
\otimes
[L\pi_{12}^*R(q,b)_*^{\fS_2}(\omega_{\widehat{S\times S}},\rho_\omega)]
\otimes
L\pi_{23}^*(\P\boxtimes\P,\rho_\boxtimes)
\right\} &\cong&
\\
R\pi_{14,*}^{\fS_{2,\Delta}}\left\{
R((q,b)\times (b,q))_*^{\fS_{2,\Delta}}(L\hat{\pi}_1^*(\omega_{\widehat{S\times S}},\rho_\omega))\otimes L\pi_{23}^*(\P\boxtimes\P,\rho_\boxtimes)
\right\}&\cong&
\\
R\pi_{14,*}^{\fS_{2,\Delta}}\left\{
R((q,b)\times (b,q))_*^{\fS_{2,\Delta}}\left(L\hat{\pi}_1^*(\omega_{\widehat{S\times S}},\rho_\omega)\otimes L(b\times b)^*(\P\boxtimes\P,\rho_\boxtimes)\right)
\right\}&\cong&
\\
R(q\times q)_*^{\fS_{2,\Delta}}\left\{
L\hat{\pi}_1^*(\omega_{\widehat{S\times S}},\rho_\omega)\otimes L(b\times b)^*(\P\boxtimes\P,\rho_\boxtimes)
\right\}&\cong&
\\
\pi_1^*\StructureSheaf{S^{[2]}}(\delta)\otimes R(q\times q)_*^{\fS_{2,\Delta}}\left( L(b\times b)^*(\P\boxtimes\P,\chi\otimes\rho_\boxtimes)\right),
\end{eqnarray*}
where the first isomorphism is due to (\ref{eq-tensor-product-is-push-forward}) and cohomology and base change, 
the second isomorphism is due to the projection formula and the equality
$b\times b=\pi_{23}\circ ((q,b)\times (b,q))$, the third is due to the equality 
$q\times q=\pi_{14}\circ ((q,b)\times (b,q))$, and the fourth is due to the projection formula, the equality
$\pi_1\circ(q\times q)=q\circ \hat{\pi}_1$, and the isomorphism $(\omega_{\widehat{S\times S}},\chi\otimes\rho_\omega)\cong Lq^*\StructureSheaf{S^{[2]}}(\delta)$. It remains to exhibit the isomorphism
\begin{equation}
\label{eq-isomorphism-of-invariant-subcomplex-of-push-forward-of-ideal-sheaf}
R(q\times q)_*^{\fS_{2,\Delta}}\left( L(b\times b)^*(\P\boxtimes\P,\chi\otimes \rho_\boxtimes)\right)
\cong
\left[
\pi_1^*\StructureSheaf{S^{[2]}}(-\delta)\oplus \pi_2^*\StructureSheaf{S^{[2]}}(-\delta)
\RightArrowOf{a} \nu_*\Ideal{\tilde{I},\hat{D}\times_S\hat{D}}
\right].
\end{equation}

\underline{Step 2:} (Calculation of $L(b\times b)^*(\P\boxtimes\P,\chi\otimes \rho_\boxtimes)$).
Recall that $\hat{\Delta}_{ij}$ is the strict transform of $\Delta_{ij}$ in $\widehat{S\times S}\times \widehat{S\times S}$.
Let $\tilde{\Delta}_{13}$ and $\tilde{\Delta}_{23}$ be the strict transforms of $\Delta_{13}$ and $\Delta_{24}$  in
$\widehat{S\times S}\times (S\times S)$.
The subscheme $\Delta_{13}$ is 
transversal to $\Delta_{12}$ and $\tilde{\Delta}_{13}$ is 
transversal to  $\tilde{\Delta}_{34}$, by a computation in local coordinates,
and so $L(b\times b)^*(\StructureSheaf{\Delta_{13}})$ is represented by $\StructureSheaf{\hat{\Delta}_{13}}$.
Then $L(b\times b)^*(\P\boxtimes \P)$ is isomorphic to $\Ideal{\hat{\Delta}_{13}}\stackrel{L}{\otimes}\Ideal{\hat{\Delta}_{24}}$, which is isomorphic to
\[
[\StructureSheaf{\widehat{S\times S}\times \widehat{S\times S}}\rightarrow 
\StructureSheaf{\hat{\Delta}_{13}}] \stackrel{L}{\otimes} 
[\StructureSheaf{\widehat{S\times S}\times \widehat{S\times S}}\rightarrow 
\StructureSheaf{\hat{\Delta}_{24}}],
\]
and hence to the complex 
\begin{equation}
\label{eq-hat-alpha}
\StructureSheaf{\widehat{S\times S}\times \widehat{S\times S}}\RightArrowOf{\hat{\alpha}}
\StructureSheaf{\hat{\Delta}_{13}}\oplus \StructureSheaf{\hat{\Delta}_{24}} \RightArrowOf{\hat{\beta}}
\left(\StructureSheaf{\hat{\Delta}_{13}}\stackrel{L}{\otimes} \StructureSheaf{\hat{\Delta}_{24}}\right).
\end{equation}
Note that $\hat{\Delta}_{13}\cap\hat{\Delta}_{24}$ is the union of 
the diagonal embedding of $\widehat{S\times S}$ and the fiber product $\hat{D}\times_S\hat{D}$, where $\hat{D}\subset \widehat{S\times S}$ is the exceptional divisor of $b$. Hence, the intersection of $\hat{\Delta}_{13}$ and $\hat{\Delta}_{24}$ is not transversal along the diagonal embedding of $\hat{D}$. 
Nevertheless, $\hat{\Delta}_{13}\cap\hat{\Delta}_{24}$ is a complete intersection and so 
the derived tensor product $\StructureSheaf{\hat{\Delta}_{13}}\stackrel{L}{\otimes} \StructureSheaf{\hat{\Delta}_{24}}$
is represented by the usual tensor product $\StructureSheaf{\hat{\Delta}_{13}}\otimes \StructureSheaf{\hat{\Delta}_{24}}$, which is isomorphic to $\StructureSheaf{\hat{\Delta}_{13}\cap \hat{\Delta}_{24}}$, and the sheaves 
$\Tor_i(\StructureSheaf{\hat{\Delta}_{13}},\StructureSheaf{\hat{\Delta}_{24}})$ vanish for $i>0$ (see Section \ref{sec-local-coordinates}). 
The complex (\ref{eq-hat-alpha})  thus becomes
\begin{equation}
\label{eq-right-exact-sequence-of-hat-beta}
\StructureSheaf{\widehat{S\times S}\times \widehat{S\times S}}\RightArrowOf{\hat{\alpha}}
\StructureSheaf{\hat{\Delta}_{13}}\oplus \StructureSheaf{\hat{\Delta}_{24}} \RightArrowOf{\hat{\beta}}
\StructureSheaf{\hat{\Delta}_{13}\cap \hat{\Delta}_{24}}
\end{equation}
in degrees $0$, $1$, and $2$ and with sheaf cohomology only in degree $0$, so it is a right exact sequence. The
homomorphisms $\hat{\alpha}$ and $\hat{\beta}$ are analogous to $\alpha$ and $\beta$ in (\ref{eq-right-exact-sequence-of-beta}). Furthermore, the short exact sequence 
\[
0\rightarrow \Ideal{\hat{\Delta}_{13}}\rightarrow \StructureSheaf{\widehat{S\times S}\times \widehat{S\times S}}\rightarrow 
\StructureSheaf{\hat{\Delta}_{13}}\rightarrow 0
\]
yields the isomorphism $\Tor_i(\Ideal{\hat{\Delta}_{13}},F)\cong\Tor_{i+1}(\StructureSheaf{\hat{\Delta}_{13}},F)$, for every coherent sheaf $F$ and for $i>0$, and similarly for $\Ideal{\hat{\Delta}_{24}}$. The symmetry of the functor $\Tor$ yields
$\Tor_i(\Ideal{\hat{\Delta}_{13}},\Ideal{\hat{\Delta}_{24}})\cong \Tor_{i+2}(\StructureSheaf{\hat{\Delta}_{13}},\StructureSheaf{\hat{\Delta}_{24}})=0,$ for all $i>0$. Hence, the complex (\ref{eq-right-exact-sequence-of-hat-beta})
represents the sheaf $\Ideal{\hat{\Delta}_{13}}\otimes\Ideal{\hat{\Delta}_{24}}$ (as well as the object $\Ideal{\hat{\Delta}_{13}}\stackrel{L}{\otimes}\Ideal{\hat{\Delta}_{24}}$). Finally, $\Ideal{\hat{\Delta}_{13}}\otimes\Ideal{\hat{\Delta}_{24}}$ is isomorphic to
the kernel $\Ideal{\hat{\Delta}_{13}\cup \hat{\Delta}_{24}}$ of $\hat{\alpha},$ by the right exactness of (\ref{eq-right-exact-sequence-of-hat-beta}).

\underline{Step 3:} (First step in the construction of the homomorphism $a$ in (\ref{eq-isomorphism-of-invariant-subcomplex-of-push-forward-of-ideal-sheaf})). 
The quotient map $q\times q:\widehat{S\times S}\times \widehat{S\times S}\rightarrow S^{[2]}\times S^{[2]}$ restricts to 
the complement of $\hat{\Delta}_{13}\cap\hat{\Delta}_{24}$ in 
each of $\hat{\Delta}_{13}$ and $\hat{\Delta}_{24}$ as an isomorphism onto $I\setminus[\Delta_{S^{[2]}}\cup D\times_S D]$,
but $I$ is singular along $\Delta_{S^{[2]}}$, as $q\times q:\Delta_{\widehat{S\times S}}\rightarrow \Delta_{S^{[2]}}$ has degree $2$, while it is smooth\footnote{
The kernel of $d(q\times q)$ at a point $p$ of $\widehat{S\times S}\times \widehat{S\times S}$ is the sum of non-trivial characters 
of $\fS_2\times\fS_2$. Let $\sigma_{ij}\in \fS_2\times\fS_2$ be the transposition interchanging $x_i$ and $x_j$ in $(x_1,x_2,x_3,x_4)\in S^4$. At a point $p$ of 
$\hat{\Delta}_{13}\cap\hat{\Delta}_{24}$ the trivial eigenspace in $T_p[\widehat{S\times S}\times \widehat{S\times S}]$ of each of $\sigma_{12}$ and $\sigma_{34}$ has codimension $1$, and the kernel of $d(q\times q)$ is the $2$-dimensional $-1$-eigenspace of the diagonal involution $\sigma_{12}\sigma_{34}$. The latter interchanges $T_p\hat{\Delta}_{13}$ and
$T_p\hat{\Delta}_{24}$, and so  if $\xi\in T_p\hat{\Delta}_{13}$ is a $-1$-eigenvector of $\sigma_{12}\sigma_{34}$,
then $\xi$ belongs to $T_p\hat{\Delta}_{13}\cap T_p\hat{\Delta}_{24}$. The computation in local coordinates in 
Section \ref{sec-local-coordinates} shows that 
$\hat{\Delta}_{13}$ and $\hat{\Delta}_{24}$ intersect transversally along $\hat{D}\times_S \hat{D}\setminus [\Delta_{\widehat{S\times S}}]$. Hence,
at a point $p$ of the latter, $T_p\hat{\Delta}_{13}\cap T_p\hat{\Delta}_{24}=T_p[\hat{D}\times_S\hat{D}]$ is contained in the $1$-eigenspace of $\sigma_{12}\sigma_{34}$ and so $d(q\times q)$ restricts to
$T_p[\hat{D}\times_S\hat{D}]$ as an injective homomorphism.
} 
along $[D\times_S D]\setminus \Delta_{S^{[2]}}$.
The diagonal involution interchanges $\hat{\Delta}_{13}$ and $\hat{\Delta}_{24}$. 
We conclude that both $\hat{\Delta}_{13}$ and $\hat{\Delta}_{24}$ are isomorphic to the normalization $\tilde{I}$ of $I$ and  $q\times q$ restricts to each as the normalization morphism $\nu:\tilde{I}\rightarrow I$.
The linearization $\bar{\rho}_\boxtimes$  induced by $\rho_\boxtimes$ on 
$\StructureSheaf{\hat{\Delta}_{13}}\oplus \StructureSheaf{\hat{\Delta}_{24}}$ is the usual one, 
\[
\bar{\rho}_{\boxtimes,g}:\StructureSheaf{\hat{\Delta}_{13}}\oplus \StructureSheaf{\hat{\Delta}_{24}}\rightarrow g^*\left[\StructureSheaf{\hat{\Delta}_{13}}\oplus \StructureSheaf{\hat{\Delta}_{24}}\right]=\StructureSheaf{\hat{\Delta}_{24}}\oplus \StructureSheaf{\hat{\Delta}_{13}},
\] 
$\bar{\rho}_{\boxtimes,g}(f_1,f_2)=(f_2,f_1)$,
for $g$ the involution in $\fS_{2,\Delta}$. 
On the other hand, the linearization 
on $\StructureSheaf{\hat{\Delta}_{13}\cap \hat{\Delta}_{24}}$ induced by $\rho_{\boxtimes}$ is the sign character $\chi$,  so that  the difference homomorphism $\hat{\beta}$ in (\ref{eq-right-exact-sequence-of-hat-beta}) is $\fS_{2,\Delta}$-equivariant.  
Now, 
\[
(q\times q)_*(\StructureSheaf{\hat{\Delta}_{13}}\oplus \StructureSheaf{\hat{\Delta}_{24}})\cong 
\nu_*\StructureSheaf{\tilde{I}}\oplus (\nu_*\StructureSheaf{\tilde{I}}\otimes_\CC\chi) 
\]
 and
we get the isomorphisms 
\begin{eqnarray}
\nonumber
\nu_*\StructureSheaf{\tilde{I}} &\cong &
R(q\times q)_*^{\fS_{2,\Delta}}(\StructureSheaf{\hat{\Delta}_{13}}\oplus \StructureSheaf{\hat{\Delta}_{24}},\bar{\rho}_\boxtimes),
\\
\label{eq-pushforward-of-structure-sheaf-of-normalization-isomorphic-toequivariant-pushforward-with-chi-turned-on}
\nu_*\StructureSheaf{\tilde{I}} &\cong &
R(q\times q)_*^{\fS_{2,\Delta}}(\StructureSheaf{\hat{\Delta}_{13}}\oplus \StructureSheaf{\hat{\Delta}_{24}},\chi\otimes \bar{\rho}_\boxtimes). 
\end{eqnarray}

Set $\widehat{DI}_1:=\hat{\Delta}_{13}\cap [\hat{D}\times \widehat{S\times S}]$ and  $\widehat{DI}_2:=\hat{\Delta}_{13}\cap  [\widehat{S\times S}\times \hat{D}]$. We have $\widehat{DI}_1\cap \widehat{DI}_2=\hat{D}\times_S\hat{D}$.
Now,
$(q\circ\hat{\pi}_i)^*\StructureSheaf{S^{[2]}}(-\delta)$ restricts to $\hat{\Delta}_{13}$ as 
$\StructureSheaf{\hat{\Delta}_{13}}(-\widehat{DI}_i)$. Hence, the sum of the restriction homomorphisms yields the surjective homomorphisms
\begin{eqnarray*}
(q\circ\hat{\pi}_1)^*\StructureSheaf{S^{[2]}}(-\delta)\oplus (q\circ\hat{\pi}_2)^*\StructureSheaf{S^{[2]}}(-\delta)
&\rightarrow &\Ideal{\hat{\Delta}_{13},\hat{D}\times_S\hat{D}},
\\
(q\circ\hat{\pi}_1)^*\StructureSheaf{S^{[2]}}(-\delta)\oplus (q\circ\hat{\pi}_2)^*\StructureSheaf{S^{[2]}}(-\delta)
&\rightarrow &\Ideal{\hat{\Delta}_{24},\hat{D}\times_S\hat{D}}.
\end{eqnarray*}
Note the isomorphism 
\[
(q\circ\hat{\pi}_1)^*\StructureSheaf{S^{[2]}}(-\delta)\oplus (q\circ\hat{\pi}_2)^*\StructureSheaf{S^{[2]}}(-\delta)\cong L(q\times q)^*(\pi_1^*\StructureSheaf{S^{[2]}}(-\delta)\oplus \pi_2^*\StructureSheaf{S^{[2]}}(-\delta)).
\]
The right hand side comes with a natural $\fS_{2,\Delta}$-linearization.
Evaluating the functor $R(q\times q)_*^{\fS_{2,\Delta}}$ on the homomorphism $\hat{\alpha}$ in (\ref{eq-right-exact-sequence-of-hat-beta})$\otimes \chi$
we get 
\begin{equation}
\label{eq-morphism-that-restricts-to-a}
R_{q\times q, *}^{\fS_{2,\Delta}}\left(\StructureSheaf{\widehat{S\times S}\times \widehat{S\times S}},\chi\right)
\rightarrow 
\nu_*\StructureSheaf{\tilde{I}}.
\end{equation}

\underline{Step 4:}
We show next that the above displayed homomorphism factors through a surjective homomorphism
\begin{equation}
\label{eq-bar-alpha-into-ideal}
\pi_1^*\StructureSheaf{S^{[2]}}(-\delta)\oplus \pi_2^*\StructureSheaf{S^{[2]}}(-\delta)
\RightArrowOf{a} \nu_*\Ideal{\tilde{I},\hat{D}\times_S\hat{D}}.
\end{equation}

The complex  $R_{q\times q, *}\StructureSheaf{\widehat{S\times S}\times \widehat{S\times S}}$ is isomorphic to the
pushforward $(q\times q)_ *\StructureSheaf{\widehat{S\times S}\times \widehat{S\times S}}$, which decomposes as the direct sum of four character line subbundles for the $\fS_2\times \fS_2$-action
\[
R_{q\times q, *}\StructureSheaf{\widehat{S\times S}\times \widehat{S\times S}}\cong
\StructureSheaf{S^{[2]}\times S^{[2]}}\oplus\pi_1^*\StructureSheaf{S^{[2]}}(-\delta) \oplus\pi_2^*\StructureSheaf{S^{[2]}}(-\delta)
\oplus[\pi_1^*\StructureSheaf{S^{[2]}}(-\delta)\otimes\pi_2^*\StructureSheaf{S^{[2]}}(-\delta)].
\] 
Thus $R_{q\times q, *}^{\fS_{2,\Delta}}\left(\StructureSheaf{\widehat{S\times S}\times \widehat{S\times S}},\chi\right)$ is the rank two vector bundle
$\pi_1^*\StructureSheaf{S^{[2]}}(-\delta)\oplus\pi_2^*\StructureSheaf{S^{[2]}}(-\delta).$ 
The homomorphism $\hat{\alpha}$ in (\ref{eq-right-exact-sequence-of-hat-beta}) restricts to  
\[
R(q\times q)_*^{\fS_{2,\Delta}}(\hat{\alpha}):\pi_1^*\StructureSheaf{S^{[2]}}(-\delta)\oplus\pi_2^*\StructureSheaf{S^{[2]}}(-\delta)\rightarrow R(q\times q)_*^{\fS_{2,\Delta}}(\StructureSheaf{\hat{\Delta}_{13}}\oplus \StructureSheaf{\hat{\Delta}_{24}},\chi\otimes \bar{\rho}_\boxtimes)\cong \nu_*\StructureSheaf{I},
\] 
whose image is claimed to be $\nu_*\Ideal{\tilde{I},\hat{D}\times_S\hat{D}}$.
The exactness of $R(q\times q)_*^{\fS_{2,\Delta}}$ implies that to verify the latter claim 
it remains to prove that $R(q\times q)_*^{\fS_{2,\Delta}}(\ker(\hat{\beta})\otimes\chi)$ is isomorphic to $\nu_*\Ideal{\tilde{I},\hat{D}\times_S\hat{D}}$, where $\hat{\beta}$ is the homomorphism in (\ref{eq-right-exact-sequence-of-hat-beta})$\otimes\chi$.

Note that 
$R(q\times q)_*^{\fS_{2,\Delta}}(\StructureSheaf{\hat{D}\times_S\hat{D}},\chi)=0$ and 
$R(q\times q)_*^{\fS_{2,\Delta}}(\StructureSheaf{\hat{D}\times_S\hat{D}},1)=\StructureSheaf{D\times_S D}$.
Similarly, $R(q\times q)_*^{\fS_{2,\Delta}}(\StructureSheaf{\Delta_{\widehat{S\times S}}\cap\hat{D}\times_S\hat{D}},\chi)=0$.
The short exact sequence
\[
0\rightarrow \StructureSheaf{\Delta_{\widehat{S\times S}}\cup\hat{D}\times_S\hat{D}}\rightarrow
\StructureSheaf{\Delta_{\widehat{S\times S}}}\oplus \StructureSheaf{\hat{D}\times_S\hat{D}} \rightarrow 
\StructureSheaf{\Delta_{\widehat{S\times S}}\cap\hat{D}\times_S\hat{D}}\rightarrow 0
\]
yields the isomorphisms
\begin{eqnarray}
\label{eq-pushforward-of-structure-sheaf-of-intersection-linearized-by-chi}
\mbox{\hspace{2ex}}
R(q\times q)_*^{\fS_{2,\Delta}}(\StructureSheaf{\hat{\Delta}_{13}\cap\hat{\Delta}_{24}},\chi)&\cong&
R(q\times q)_*^{\fS_{2,\Delta}}(\StructureSheaf{\Delta_{\widehat{S\times S}}},\chi)\cong 
\StructureSheaf{\Delta_{S^{[2]}}}\otimes\pi_1^*\StructureSheaf{S^{[2]}}(-\delta),
\\
\label{eq-pushforward-of-structure-sheaf-of-intersection-linearized-by-1}
\mbox{\hspace{2ex}} R(q\times q)_*^{\fS_{2,\Delta}}(\StructureSheaf{\hat{\Delta}_{13}\cap\hat{\Delta}_{24}},1) &\cong& \StructureSheaf{[\Delta_{S^{[2]}}\cup D\times_SD]}.
\end{eqnarray}
The kernel of 
\[
\nu_*\StructureSheaf{\tilde{I}}
\stackrel{(\ref{eq-pushforward-of-structure-sheaf-of-normalization-isomorphic-toequivariant-pushforward-with-chi-turned-on})}{\cong} 
R(q\times q)_*^{\fS_{2,\Delta}}(\StructureSheaf{\hat{\Delta}_{13}}\oplus \StructureSheaf{\hat{\Delta}_{24}},\chi\otimes\bar{\rho}_\boxtimes)
\RightArrowOf{R(q\times q)_*^{\fS_{2,\Delta}}(\hat{\beta})}
R(q\times q)_*^{\fS_{2,\Delta}}(\StructureSheaf{\hat{\Delta}_{13}\cap\hat{\Delta}_{24}},1)\cong
\StructureSheaf{[\Delta_{S^{[2]}}\cup D\times_SD]}
\]
is equal to the image $\nu_*\Ideal{\tilde{I},\hat{D}\times_S\hat{D}}$ of $R(q\times q)_*^{\fS_{2,\Delta}}(\hat{\alpha})$. Indeed, if $f$ is a local function on $\hat{\Delta}_{13}$ then the local anti-invariant section
$(f,-\sigma_{12}^*\sigma_{34}^*(f))$ of $\StructureSheaf{\hat{\Delta}_{13}}\oplus \StructureSheaf{\hat{\Delta}_{24}}$
is mapped via the difference homomorphism $\hat{\beta}$ to the restriction of $f+\sigma_{12}^*\sigma_{34}^*(f)$ to $\hat{D}\times_S\hat{D}\cup \Delta_{\widehat{S\times S}}$. If, furthermore, $f$ is in the kernel of 
$R(q\times q)_*(\hat{\beta})$ then the restriction of $f$ to $\Delta_{\widehat{S\times S}}$ is anti-invariant and its restriction to
$\hat{D}\times_S\hat{D}$ vanishes. This completes the proof that the morphism (\ref{eq-morphism-that-restricts-to-a}) factors through the sheaf homomorphism $a$ in (\ref{eq-bar-alpha-into-ideal}).
The construction of the isomorphism (\ref{eq-isomorphism-of-invariant-subcomplex-of-push-forward-of-ideal-sheaf})
is thus complete.

Proof of part (\ref{lemma-item-P[2]-chi}):
Evaluating the functor $R(q\times q)_*^{\fS_{2,\Delta}}$ on the short exact sequence (\ref{eq-right-exact-sequence-of-hat-beta})
we get 
\[
0\rightarrow
\StructureSheaf{S^{[2]}\times S^{[2]}}\oplus
[\pi_1^*\StructureSheaf{S^{[2]}}(-\delta)\otimes\pi_2^*\StructureSheaf{S^{[2]}}(-\delta)]
\LongRightArrowOf{(q\times q)_ *^{\fS_{2,\Delta}}(\hat{\alpha})} 
\nu_*\StructureSheaf{\tilde{I}}
\LongRightArrowOf{(q\times q)_ *^{\fS_{2,\Delta}}(\hat{\beta})} 
\StructureSheaf{\Delta_{S^{[2]}}}\otimes\pi_1^*\StructureSheaf{S^{[2]}}(-\delta)
\rightarrow 0,
\]
where we used the isomorphism
(\ref{eq-pushforward-of-structure-sheaf-of-intersection-linearized-by-chi}).
Now, $(q\times q)_ *^{\fS_{2,\Delta}}(\hat{\alpha})$ factors through the surjective homomorphism $a_\chi$ onto
$\StructureSheaf{I}$.
\end{proof}

\begin{rem}
\begin{enumerate}
\item 
The kernel $\P^{[2]}_\chi$ in $D^b(S^{[2]}\times S^{[2]})$ of $\Phi_{\P,\chi}^{[2]}$ is the tensor product of $\pi_1^*\StructureSheaf{S^{[2]}}(\delta)$ 
with the pushforward to $S^{[2]}\times S^{[2]}$ of the ideal sheaf in 
the quotient  $\widehat{S\times S}\times \widehat{S\times S}/\fS_{2,\Delta}$ of the quotient  
$(\hat{\Delta}_{13}\cup\hat{\Delta}_{24})/\fS_{2,\Delta}$.
\item
The sheaves $E_k$ and $F_k$ in Proposition
\ref{prop-a-locally-free-image-of-a-Lagrangian-line-bundle} have the following geometric interpretation.
Set $X:=\widehat{S\times S}\times \widehat{S\times S}/\fS_{2,\Delta}$ and $I':=(\hat{\Delta}_{13}\cup\hat{\Delta}_{24})/\fS_{2,\Delta}$. 
Let $\tilde{\pi}_i$ be the two projections from $X$ to $S^{[2]}$, $i=1,2$. Set $Z:=g(F(C))$. 
Then $(\tilde{\pi}_1,\tilde{\pi}_2)_*\StructureSheaf{X}$ is $\StructureSheaf{S^{[2]}\times S^{[2]}}\oplus 
[\pi_1^*\StructureSheaf{S^{[2]}}(-\delta)\otimes\pi_2^*\StructureSheaf{S^{[2]}}(-\delta)]$.
We have 
\begin{eqnarray*}
\tilde{\pi}_{2,*}(\tilde{\pi}_1^*g_*\StructureSheaf{F(C)}(k))&\cong& 
\pi_{2,*}\left[(\tilde{\pi}_1,\tilde{\pi}_2)_*\tilde{\pi}_1^*g_*\StructureSheaf{F(C)}(k)
\right]\cong 
\pi_{2,*}\left[\pi_1^*g_*\StructureSheaf{F(C)}(k)\otimes 
\left\{
(\tilde{\pi}_1,\tilde{\pi}_2)_*\StructureSheaf{X}
\right\}\right]
\\
&\cong&
\pi_{2,*}\left[\pi_1^*g_*\StructureSheaf{F(C)}(k)\otimes 
\left\{
\StructureSheaf{S^{[2]}\times S^{[2]}}\oplus 
[\pi_1^*\StructureSheaf{S^{[2]}}(-\delta)\otimes\pi_2^*\StructureSheaf{S^{[2]}}(-\delta)]
\right\}\right].
\end{eqnarray*}
Thus, the graded sheaf of algebras
\[
\A:=\bigoplus_{k\geq 0}\left\{
[H^0(Z,\StructureSheaf{Z}(k))\otimes_\CC\StructureSheaf{S^{[2]}}]
\oplus
[H^0(Z,\StructureSheaf{Z}(k)(-\delta))\otimes_\CC\StructureSheaf{S^{[2]}}(-\delta)
]
\right\}
\]
over $S^{[2]}$ 
has relative Proj the subscheme $\tilde{\pi}_1^{-1}(Z)$ of $X$. 
The $k+1$ direct summand of $\A$ appears in (\ref{eq-short-exact-sequence-of-E-k}) due to the isomorphism $\StructureSheaf{Z}(1)\cong\StructureSheaf{Z}(\delta)$ established in Lemma \ref{lem-modular-interpretation-of-the-contraction-of-A} below.
The restriction of $\tilde{\pi}_1\times\tilde{\pi}_2$ to $I'$ is an isomorphism of the latter onto $I$. Hence, $(Z\times S^{[2]})\cap I$ is isomorphic to $\tilde{\pi}^{-1}(Z)\cap I'$. 
Let $p_i:(Z\times S^{[2]})\cap I\rightarrow S^{[2]}$ be the restriction of $\pi_i$.  The line bundle $p_1^*\StructureSheaf{Z}(\delta)$ corresponds to the graded $\A$-module
$\oplus_{k>0}[p_{2,*}p_1^*\StructureSheaf{Z}(k)(\delta)]$, which is isomorphic to 
$\oplus_{k\geq 0}F_{k+1}$, and $\oplus_{k\geq 0}E_{k+1}$ is a graded $\A$-module corresponding to the ideal of $(Z\times S^{[2]})\cap I\cong \tilde{\pi}^{-1}(Z)\cap I'$ in $\tilde{\pi}_1^{-1}(Z)$.
\hide{
\item
$\Phi_\P$ induces an involution of the Mukai lattice $\tilde{H}(S,\Integers)$. Hence, both $\Phi_\P^{[2]}$ 
and $\Phi_{\P,\chi}^{[2]}$ induce involutions of
$\tilde{H}(S^{[2]},\QQ)$. This is equivalent to the equality $ch(\P^{[2]})=\tau^*(ch((\P^{[2]})^\vee))$ and 
$ch(\P^{[2]}_\chi)=\tau^*(ch((\P^{[2]}_\chi)^\vee))$. In particular,
\[
ch(\P^{[2]}\oplus \P^{[2]}_\chi)=\tau^*(ch((\P^{[2]}\oplus \P^{[2]}_\chi)^\vee)).
\]
Let us verify the above equality.
The direct sum $\P^{[2]}\oplus \P^{[2]}_\chi$ is isomorphic to $\pi_1^*\StructureSheaf{S^{[2]}}(\delta)\otimes R(q\times q)_*(\Ideal{\hat{\Delta}_{13}\cup\hat{\Delta}_{24}})$, and so the equality above is equivalent to
\[
ch\left\{
\pi_1^*\StructureSheaf{S^{[2]}}(\delta)\otimes R(q\times q)_*(\Ideal{\hat{\Delta}_{13}\cup\hat{\Delta}_{24}})
\right\}
\cong 
\tau^*ch\left\{
\pi_1^*\StructureSheaf{S^{[2]}}(-\delta)\otimes \left[R(q\times q)_*(\Ideal{\hat{\Delta}_{13}\cup\hat{\Delta}_{24}})\right]^\vee
\right\}.
\]
The $\tau$-invariance of $R(q\times q)_*(\Ideal{\hat{\Delta}_{13}\cup\hat{\Delta}_{24}})$ implies that latter is equivalent to 
\begin{equation}
\label{eq-involution-boils-down-to}
ch\left\{
R(q\times q)_*(\Ideal{\hat{\Delta}_{13}\cup\hat{\Delta}_{24}})\otimes \left\{\StructureSheaf{S^{[2]}}(\delta)\boxtimes \StructureSheaf{S^{[2]}}(\delta)\right\}
\right\}
\cong 
ch\left\{
\left[R(q\times q)_*(\Ideal{\hat{\Delta}_{13}\cup\hat{\Delta}_{24}})\right]^\vee
\right\}.
\end{equation}
We have seen above that $\StructureSheaf{\hat{\Delta}_{13}}\cong L(b\times b)^*\StructureSheaf{\Delta_{13}}$ and
$\StructureSheaf{\hat{\Delta}_{13}\cap\hat{\Delta}_{24}}\cong \StructureSheaf{\hat{\Delta}_{13}}\stackrel{L}{\otimes} \StructureSheaf{\hat{\Delta}_{24}}$. Hence, the derived dual of 
each of $\StructureSheaf{\widehat{S\times S}\times \widehat{S\times S}}$, $\StructureSheaf{\hat{\Delta}_{13}}$, $\StructureSheaf{\hat{\Delta}_{24}}$, and $\StructureSheaf{\hat{\Delta}_{13}\cap\hat{\Delta}_{24}}$ is isomorphic to an even shift of itself. Now, 
$ch(\Ideal{\hat{\Delta}_{13}\cup\hat{\Delta}_{24}})=ch(\StructureSheaf{\widehat{S\times S}\times \widehat{S\times S}})-
ch(\StructureSheaf{\hat{\Delta}_{13}})-ch(\StructureSheaf{\hat{\Delta}_{24}})+ch(\StructureSheaf{\hat{\Delta}_{13}\cap\hat{\Delta}_{24}})$.  Thus, $ch(\Ideal{\hat{\Delta}_{13}\cup\hat{\Delta}_{24}}^\vee)=ch(\Ideal{\hat{\Delta}_{13}\cup\hat{\Delta}_{24}})$.

We have $\omega_{q\times q}\cong (q\times q)^*\left(\StructureSheaf{S^{[2]}}(\delta)\boxtimes \StructureSheaf{S^{[2]}}(\delta)\right)$. Hence,
\[
\left[R(q\times q)_*(\Ideal{\hat{\Delta}_{13}\cup\hat{\Delta}_{24}})\right]^\vee\cong
R(q\times q)_*(\Ideal{\hat{\Delta}_{13}\cup\hat{\Delta}_{24}}^\vee\otimes \omega_{q\times q})\cong
R(q\times q)_*(\Ideal{\hat{\Delta}_{13}\cup\hat{\Delta}_{24}}^\vee)\otimes \{\StructureSheaf{S^{[2]}}(\delta)\boxtimes \StructureSheaf{S^{[2]}}(\delta)\},
\]
where the first isomorphism is due to Grothedieck-Verdier duality and the second due to the projection formula. The self duality of $ch(\Ideal{\hat{\Delta}_{13}\cup\hat{\Delta}_{24}})$   yields the desired Equality (\ref{eq-involution-boils-down-to}).
}
\end{enumerate}
\end{rem}

%
\subsection{Reflexivity of the Fourier-Mukai image of $\StructureSheaf{Z}(k)$}
\label{sec-reflexivity-of-FM-image-of-lb-over-lagrangian-surface}
Let $\LB$ be a line bundle over $S^{[2]}$. 
Let $\pi_i$ be the projection from $S^{[2]}\times S^{[2]}$ onto the $i$-th factor, $i=1,2$.
Let $Z$ be a two-dimensional non-singular subvariety of $S^{[2]}$.  Denote by $e:Z\rightarrow S^{[2]}$ the inclusion.

\begin{lem}
\label{lemma-object-satisfying-3-conditions-is-isomorphic-to-a-locally-free-sheaf}
Assume that the following three conditions hold.
\begin{enumerate}
\item
\label{assumption-finiteness}
The projection $\pi_2$ restricts to the intersection $(Z\times S^{[2]})\cap I$ as a finite and surjective morphism onto $S^{[2]}$
\begin{equation}
\label{eq-restriction-of-pi-2-to-4-dimensional-intersection}
\pi_2:(Z\times S^{[2]})\cap I\rightarrow S^{[2]}.
\end{equation}
\item
\label{assumption-vanishing}
$H^i(Z,e^*\LB)=0$ and $H^i(Z,e^*(\LB(\delta)))=0$, for $i>0$. 
\item
\label{assumption-surjectivity}
Let $d$ be the maximal length of fibers of the finite morphism (\ref{eq-restriction-of-pi-2-to-4-dimensional-intersection}).
The homomorphism $H^0(Z,e^*(\LB(\delta)))\rightarrow H^0(A,\restricted{\LB(\delta)}{A})$ is surjective, for every zero-dimensional subscheme $A$ of $Z$ of length $\leq d$. 
\end{enumerate}
Then $\Phi^{[2]}_{\P,\chi}(e_*e^*\LB)$  is represented by a torsion free reflexive sheaf.
\end{lem}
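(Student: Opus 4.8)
The plan is to compute $\Phi^{[2]}_{\P,\chi}(e_*e^*\LB)$ explicitly using the Fourier--Mukai kernel $\P^{[2]}_\chi$ described in Lemma \ref{lemma-kernel-of-Phi_P^[2]}(\ref{lemma-item-P[2]-chi}), and then verify that the resulting complex has cohomology concentrated in a single degree and that the cohomology sheaf is torsion free and reflexive. First I would apply the functor to the two-term complex representing $\P^{[2]}_\chi$: the integral transform of $e_*e^*\LB$ with respect to $\P^{[2]}_\chi$ is computed as $R\pi_{2,*}\bigl(L\pi_1^*(e_*e^*\LB)\otimes \P^{[2]}_\chi\bigr)$, and since $\P^{[2]}_\chi$ is the two-term complex $\pi_1^*\StructureSheaf{S^{[2]}}(\delta)\otimes\bigl[\{\StructureSheaf{S^{[2]}\times S^{[2]}}\oplus(\pi_1^*\StructureSheaf{S^{[2]}}(-\delta)\otimes\pi_2^*\StructureSheaf{S^{[2]}}(-\delta))\}\RightArrowOf{a_\chi}\StructureSheaf{I}\bigr]$, the transform becomes a two-term complex whose terms are $R\pi_{2,*}$ applied to (a) the direct sum $\pi_1^*(e_*e^*(\LB(\delta)))\oplus[\pi_1^*(e_*e^*\LB)\otimes\pi_2^*\StructureSheaf{S^{[2]}}(-\delta)]$ sitting in degree $0$, and (b) $\pi_1^*(e_*e^*(\LB(\delta)))\otimes\StructureSheaf{I}$ sitting in degree $1$.

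Next I would evaluate each piece via base change. For term (a): $R\pi_{2,*}\pi_1^*(e_*e^*(\LB(\delta)))\cong H^*(Z,e^*(\LB(\delta)))\otimes_\CC\StructureSheaf{S^{[2]}}$ by the projection formula and flat base change along $\pi_1$, and assumption (\ref{assumption-vanishing}) kills all higher cohomology, leaving $H^0(Z,e^*(\LB(\delta)))\otimes_\CC\StructureSheaf{S^{[2]}}$ in degree $0$; the second summand gives $H^0(Z,e^*\LB)\otimes_\CC\StructureSheaf{S^{[2]}}(-\delta)$ similarly, again using assumption (\ref{assumption-vanishing}). For term (b): the sheaf $\pi_1^*(e_*e^*(\LB(\delta)))\otimes\StructureSheaf{I}$ is supported on $(Z\times S^{[2]})\cap I$, and by assumption (\ref{assumption-finiteness}) the morphism $\pi_2$ restricted to this locus is finite, so $R\pi_{2,*}$ of it is concentrated in degree $0$ and equals a coherent sheaf $F$ on $S^{[2]}$ that is, fiberwise over a point $\xi\in S^{[2]}$, the space of sections of $\LB(\delta)$ over the length-$\le d$ subscheme $A_\xi:=\pi_1\bigl(\pi_2^{-1}(\xi)\cap(Z\times S^{[2]})\cap I\bigr)$ of $Z$. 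Putting these together, $\Phi^{[2]}_{\P,\chi}(e_*e^*\LB)$ is represented (up to the global twist by $\StructureSheaf{S^{[2]}}$, which I need to track but which does not affect concentration in degree) by a two-term complex $[\,\text{(free-ish sheaf in degree }0)\xrightarrow{a_\chi}F\,]$ with $F$ in degree $1$; so its cohomology sits in degrees $0$ and $1$, and I must show the degree-$1$ cohomology vanishes, i.e.\ that the induced map on sections is surjective. This surjectivity is exactly what assumption (\ref{assumption-surjectivity}) provides: the map $H^0(Z,e^*(\LB(\delta)))\otimes\StructureSheaf{S^{[2]}}\to F$ is, at each $\xi$, restriction $H^0(Z,\LB(\delta))\to H^0(A_\xi,\LB(\delta)|_{A_\xi})$, which is surjective since $\operatorname{len}(A_\xi)\le d$; hence $a_\chi$ is surjective onto $F$ (one checks surjectivity stalk by stalk, or uses Nakayama over each point together with the cohomology-and-base-change description of $F$), and so $\Phi^{[2]}_{\P,\chi}(e_*e^*\LB)$ is concentrated in degree $0$ and fits in a short exact sequence $0\to E\to \bigl[H^0(Z,e^*(\LB(\delta)))\otimes\StructureSheaf{S^{[2]}}\bigr]\oplus\bigl[H^0(Z,e^*\LB)\otimes\StructureSheaf{S^{[2]}}(-\delta)\bigr]\xrightarrow{a_\chi}F\to 0$.

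Finally I would deduce reflexivity and torsion-freeness of $E$ from this presentation. The middle term is locally free, hence reflexive; $F$ is the pushforward under the finite morphism $\pi_2|_{(Z\times S^{[2]})\cap I}$ of a sheaf that is locally free in codimension $0$ on a variety all of whose components dominate $S^{[2]}$ (again using assumption (\ref{assumption-finiteness})), so $F$ is torsion free, and in particular its torsion subsheaf is zero; then $E$, being the kernel of a surjection from a locally free (hence torsion-free) sheaf onto a torsion-free sheaf, is torsion free, and moreover $E$ is reflexive because a kernel of a morphism between a locally free sheaf and a torsion-free sheaf is reflexive (equivalently, $E=\ker(a_\chi)$ is a second-syzygy sheaf; one invokes that $\SheafHom(-,\StructureSheaf{S^{[2]}})$ is left exact and that $E$ is saturated in the locally free middle term). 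I expect the main obstacle to be the careful bookkeeping in the base-change computation of term (b)---establishing that $R^i\pi_{2,*}$ vanishes for $i>0$ on the intersection locus and that the degree-$0$ pushforward $F$ has precisely the fiberwise description needed to apply assumption (\ref{assumption-surjectivity})---since this requires knowing that the scheme structure on $(Z\times S^{[2]})\cap I$ is Cohen--Macaulay of the right dimension (so that the finite flat-in-codimension-one machinery applies) and tracking the twist by $\pi_1^*\StructureSheaf{S^{[2]}}(\delta)$ through the isomorphism $\StructureSheaf{Z}(1)\cong e^*\StructureSheaf{S^{[2]}}(\delta)$ on $Z$; the rest is formal homological algebra once the complex is in hand.
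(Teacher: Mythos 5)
Your proposal is correct and follows essentially the same route as the paper: both compute the transform via the two-term kernel $\P^{[2]}_\chi$ from Lemma \ref{lemma-kernel-of-Phi_P^[2]}, use assumptions (\ref{assumption-finiteness}) and (\ref{assumption-vanishing}) to reduce the complex to (\ref{eq-complex-of-locally-free-sheaves-with-locally-free-kernel}), use assumption (\ref{assumption-surjectivity}) for surjectivity of $a_\chi$, and conclude that the kernel is reflexive as a saturated subsheaf of a locally free sheaf (the target being torsion free because every component of $(Z\times S^{[2]})\cap I$ is finite over, hence dominates, $S^{[2]}$). Your write-up supplies more of the base-change bookkeeping than the paper's terse proof, but the content is the same.
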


\begin{proof}
Note that 
$\Phi^{[2]}_{\P,\chi}(e_*e^*\LB)$ 
is represented by the complex 
\begin{equation}
\label{eq-complex-of-locally-free-sheaves-with-locally-free-kernel}
H^0(Z,e^*(\LB(\delta)))\otimes_\CC\StructureSheaf{S^{[2]}}
\oplus
H^0(Z,e^*\LB)\otimes_\CC\StructureSheaf{S^{[2]}}(-\delta)
\RightArrowOf{a_\chi} \pi_{2,*}\left(\pi_1^*(\LB(\delta))\otimes\StructureSheaf{(Z\times S^{[2]})\cap I}\right),
\end{equation}
by Lemma \ref{lemma-kernel-of-Phi_P^[2]} and
assumptions (\ref{assumption-finiteness}) and (\ref{assumption-vanishing}). Furthermore, the homomorphism $a_\chi$ is surjective, by assumption (\ref{assumption-surjectivity}), and so $\Phi^{[2]}_{\P,\chi}(e_*e^*\LB)$ is represented by the sheaf $\ker(a_\chi)$. Each of the irreducible components of $(Z\times S^{[2]})\cap I$ is $4$-dimensional, since it has dimension $\geq 4$ and is finite over $S^{[2]}$.  Hence, the sheaf $\pi_{2,*}\left(\pi_1^*(\LB(\delta))\otimes\StructureSheaf{(Z\times S^{[2]})\cap I}\right)$ is torsion free and so $\ker(a_\chi)$ is reflexive, being a saturated subsheaf of a locally free sheaf. 
\end{proof}

\begin{rem}
\begin{enumerate}
\item
The sheaf $\ker(a_\chi)$ is locally free away from a closed subscheme of $S^{[2]}$ of co-dimension 
$\geq 3$ as it is reflexive. We claim that this closed subscheme of singularities of $\ker(a_\chi)$ is contained in $Z$. It suffices to show that the morphism 
(\ref{eq-restriction-of-pi-2-to-4-dimensional-intersection}) is flat over $S^{[2]}\setminus Z$, since then the sheaf 
$F:=\pi_{2,*}\left(\pi_1^*(\LB(\delta))\otimes\StructureSheaf{(Z\times S^{[2]})\cap I}\right)$ is locally free over $S^{[2]}\setminus Z$ and the kernel of a surjective homomorphism of locally free sheaves is locally free as well.
The length of the fiber of (\ref{eq-restriction-of-pi-2-to-4-dimensional-intersection}) over $y\in S^{[2]}$ is equal to 
 the length of the intersection subscheme $(Z\times S^{[2]})\cap I\cap (S^{[2]}\times\{y\})$ of $S^{[2]}\times S^{[2]}$, which is independent of $y$ as long as it is of the expected dimension $0$ and 
 the object $\StructureSheaf{Z\times S^{[2]}}\stackrel{L}{\otimes}\StructureSheaf{I}\stackrel{L}{\otimes}\StructureSheaf{S^{[2]}\times\{y\}}$ is represented by the tensor product sheaf, i.e., all the higher torsion sheaves vanish. 
This is the case if the intersection  $(Z\times S^{[2]})\cap I\cap (S^{[2]}\times\{y\})$  is contained in the non-singular locus of $I$.
Indeed, in that case each of $Z\times S^{[2]}$, $I$, and $S^{[2]}\times\{y\}$ is non singular at every point $p$ of their common intersection and
their local equations at $p$ yield a regular sequence for the subscheme $(Z\times S^{[2]})\cap I\cap (S^{[2]}\times\{y\})$ locally around $p$. The Koszul complex associated to that regular sequence is exact on the one hand, and is isomorphic to the tensor product of the three Koszul complexes associated to the local equations of each of the three subvarieties. Hence, the higher torsion sheaves vanish, and the intersection multiplicity at each point $p$ is equal to the length of the intersection subscheme at $p$.
We have seen in Step 3 of the proof of Lemma \ref{lemma-kernel-of-Phi_P^[2]} that the singular locus of $I$ is the diagonal $\Delta_{S^{[2]}}$. Hence, 
if $y$ does not belong to $Z$ the fiber over $y$ is contained in the non-singular locus of $I$.
The morphism (\ref{eq-restriction-of-pi-2-to-4-dimensional-intersection}) is thus flat over $S^{[2]}\setminus Z$, by \cite[Theorem III.9.9]{hartshorne}. 
\item
The sheaf $F$ is not locally free over $Z$, hence not reflexive. Indeed, if $y\in Z$ and $y$ is reduced, then the intersection $I\cap (S^{[2]}\times\{y\})$ consists of two smooth surfaces in $S^{[2]}\times\{y\}$ meeting at $(y,y)$, and so the intersection of $I\cap (S^{[2]}\times\{y\})$ with 
$Z\times \{y\}$ is a subscheme with stalk at $(y,y)$ of length at least $3$ (while the intersection multiplicity is $2$ if $Z$ is transversal at $y$ to each of the two surfaces), by 
\cite[Appendix A, Example 1.1.1]{hartshorne}. Nevertheless, $\ker(a_\chi)$ is locally free over the generic point of $Z$, being reflexive.
\end{enumerate}
\end{rem}

\begin{lem}
\label{lemma-lagrangian-surface-meeting-I-along-a-12-sheeted-finite-cover}
The morphism $g$, given in (\ref{eq-g}), is a closed immersion. 
$Z:=g(F(C))$ is a lagrangian surface in $S^{[2]}$ and  the projection $\pi_2$ restricts to $(Z\times S^{[2]})\cap I$ as a finite and surjective morphism of degree $12$ onto $S^{[2]}$. 
\end{lem}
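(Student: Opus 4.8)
The plan is to prove the three assertions of Lemma~\ref{lemma-lagrangian-surface-meeting-I-along-a-12-sheeted-finite-cover} in order: first that $g$ is a closed immersion, then that $Z=g(F(C))$ is lagrangian, and finally the finiteness and the degree-$12$ count for $\pi_2$ restricted to $(Z\times S^{[2]})\cap I$. For the first assertion I would argue that $g$ is injective on points: a line $\ell$ on $C$ is recovered from the length-$2$ subscheme $\ell\cap Q$ of $S$ since $\ell$ is the unique line through that subscheme unless $\ell\cap Q$ is a single (non-reduced) point, in which case $\ell$ is tangent to $Q$; one uses Assumption~\ref{assumption-on-Q-and-C}(2), that $S$ contains no line, to see $\ell\not\subset Q$, and then the (at most finitely many, by no Eckhardt points) lines through a point of $C$ must be separated --- here I would invoke the classical fact that the natural map $F(C)\to S^{[2]}$ is unramified, which follows from comparing the tangent space of $F(C)$ at $[\ell]$ (namely $H^0(\ell,N_{\ell/C})$) with the tangent space of $S^{[2]}$ at $g([\ell])$ via the inclusion $N_{\ell/S}\hookrightarrow N_{\ell/C}$ and the transversality of $C$ and $Q$. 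Being proper, injective, and unramified, $g$ is a closed immersion.

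For the lagrangian assertion, the cleanest route is to quote Example~\ref{examples-of-maximally-deformable-Lagrangian-surfaces}(\ref{example-item-Fano-variety-of-lines-on-a-cubic-threefold}) and \cite{voisin-lagrangian}: the Fano variety of lines on a smooth hyperplane section of a cubic fourfold is lagrangian in the Fano variety of lines on the cubic fourfold, and under the Beauville--Donagi isomorphism identifying the latter with $S^{[2]}$ for the associated $K3$ surface of degree $6$, the lagrangian surface is exactly $g(F(C))$. Alternatively one checks directly that the holomorphic symplectic form on $S^{[2]}$ pulls back to zero on $F(C)$ using that $C$ is a threefold and $F(C)$ parametrizes a family of curves sweeping out $C$, but I expect the reference to be shorter and safer.

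The degree computation is the substantive part. A point $y\in S^{[2]}$ in $(Z\times S^{[2]})\cap I$ over a fixed $z=g([\ell])\in Z$ corresponds to a length-$2$ subscheme of $S$ sharing at least one point with $\ell\cap Q$; conversely for generic $z$ we want to count $y\in S^{[2]}$ with $(z,y)\in I$, i.e. length-$2$ subschemes $y$ of $S$ meeting the support of $z$. So I would instead fix a generic $y\in S^{[2]}$, say $y$ corresponding to two distinct points $p_1,p_2\in S$, and count lines $\ell$ on $C$ such that $\ell\cap Q\subset S$ meets $\{p_1,p_2\}$. For each $p_i\in S\subset C$, the lines on $C$ through $p_i$ form a scheme $F_{p_i}$ of length $6$ (the degree of the curve of lines through a general point of a smooth cubic threefold; equivalently the intersection of $F(C)$ with a Schubert cycle $\sigma_{1,1}$-type condition, computed from $[F(C)]=c_2(\tau^*)$ restricted appropriately, giving $\deg = 6$). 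Since $p_1\ne p_2$ are generic, no line passes through both, so the total count is $6+6=12$. The scheme-theoretic statement --- that $\pi_2$ is finite of degree $12$ --- then follows once one knows the fibers are zero-dimensional for all $y$, which holds because $(Z\times S^{[2]})\cap I$ has pure dimension $4$ (each component, being contained in $I$ which has dimension $6$ and mapping to the $4$-dimensional $Z$ under $\pi_1$ with fibers of dimension $\le 2$, and to $S^{[2]}$ under $\pi_2$ --- a dimension-counting argument via the incidence correspondence) and $\pi_2$ is proper with generically finite fibers, hence finite by upper semicontinuity of fiber dimension together with the fact that $S^{[2]}$ is normal (or by checking no fiber is positive-dimensional: a positive-dimensional family of length-$2$ subschemes of $S$ all meeting a fixed length-$2$ subscheme would force a line or a pencil inside $S$, contradicting Assumption~\ref{assumption-on-Q-and-C}(2)).

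The main obstacle I anticipate is the degree-$12$ count made scheme-theoretic and uniform: I need to rule out jumping of fiber length, which requires controlling the loci where $\ell\cap Q$ is non-reduced (tangent lines) and where $y$ is non-reduced, and to confirm that the ``lines through a point of $C$'' count is exactly $6$ with the right multiplicities --- this is where I would lean on \cite{clemens-griffiths} for the geometry of $F(C)$ and on Assumption~\ref{assumption-on-Q-and-C}(3) (no Eckhardt points) to guarantee the scheme $F_{p}$ of lines through $p$ is finite of length $6$ for every $p\in C$, not merely generically. Once finiteness over all of $S^{[2]}$ is established and the generic degree is $12$, flatness is automatic over the normal variety $S^{[2]}$ by \cite[Theorem~III.9.9]{hartshorne}, but the lemma as stated only claims finiteness and surjectivity of degree $12$, so I would stop there.
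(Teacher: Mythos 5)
Your treatment of the degree-$12$ count coincides with the paper's: over a generic reduced $y=\{a,b\}$ the fiber of $\pi_2$ consists of the six lines on $C$ through $a$ and the six through $b$, every fiber is finite because $C$ has no Eckhardt points (Assumption \ref{assumption-on-Q-and-C}(3)), and a projective morphism with finite fibers is finite. Your route to the closed-immersion statement is genuinely different from the paper's and can be made to work, but not as written: the ``inclusion $N_{\ell/S}\hookrightarrow N_{\ell/C}$'' is meaningless because $\ell\not\subset S$ (indeed $S$ contains no lines at all). A clean repair in your spirit: a length-$2$ subscheme of $\PP^4$ spans a unique line, so the span map $S^{[2]}\rightarrow Gr(2,5)$ is a morphism whose composition with $g$ is the tautological closed immersion $F(C)\hookrightarrow Gr(2,5)$; hence $g$ is a proper monomorphism and therefore a closed immersion.

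The genuine gap is the lagrangian assertion. Voisin's theorem \cite{voisin-lagrangian} makes $F(C)$ lagrangian inside the Fano variety $F(X)$ of a cubic \emph{fourfold} $X$ having $C$ as a hyperplane section; to deduce that $g(F(C))$ is lagrangian in $S^{[2]}$ you must first produce such an $X$ and an identification of (an open part of) $S^{[2]}$ with $F(X)$ that is compatible with $g$ and with the symplectic forms. This is not the Beauville--Donagi isomorphism, which concerns Pfaffian cubic fourfolds and degree-$14$ K3 surfaces — Example \ref{examples-of-maximally-deformable-Lagrangian-surfaces}(\ref{example-item-Fano-variety-of-lines-on-a-cubic-threefold}) only says $F(X)$ is of $K3^{[2]}$-\emph{type}, not that it is the Hilbert square of the particular degree-$6$ surface $S=Q\cap C$. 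The relevant construction is Mukai's nodal cubic fourfold $X=V(x_5q+c)\subset\PP^5$, whose Fano variety is singular along a copy of $S$ and is resolved by $S^{[2]}$; one must then check that the resolution $f:S^{[2]}\rightarrow F(X)$ restricts to an isomorphism from $Z$ onto $F(C)=F(X\cap\{x_5=0\})$ inverse to $g$, and that $F(C)$ lies in the locus of lines avoiding the node, where $f$ is an isomorphism. That verification is the paper's Claim \ref{claim-fano-of-cubic-with-ODP-is-resolved-by-Hilbert-scheme} and constitutes the bulk of its proof (it also delivers the closed-immersion statement for free). Your fallback — checking directly that the symplectic form of $S^{[2]}$ pulls back to zero because $F(C)$ sweeps out a threefold — is not substantiated: $h^{2,0}(F(C))=10$, so no Hodge-theoretic vanishing is available, and the geometric vanishing is precisely the content of Voisin's theorem, which you cannot invoke until the cubic fourfold has been produced.
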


\begin{proof}
Let $q(x_0, \dots, x_4)$ and $c(x_0, \dots,x_4)$ be homogeneous quadratic and cubic forms with zero divisors $Q$ and $C$ respectively satisfying Assumption \ref{assumption-on-Q-and-C}.
Set 
\[
d(x_0, \dots, x_4,x_5):=x_5q(x_0, \dots, x_4)+c(x_0, \dots,x_4)
\] 
and let $X$ be the corresponding cubic fourfold in $\PP^5$. 
Then $X$ has a single ordinary double point at $p:=(0\!:\!0\!:\!0\!:\!0\!:\!0\!:\!1)$. 
The Fano variety $F(X)$ of lines on $X$ is birational to $S^{[2]}$, as observed in 
\cite[Example 4.3]{mukai-survey}. 
Let $F_0(X)\subset F(X)$ be the open subset of lines not passing through $p$.
Let $A\subset S^{[2]}$ the the divisor of subschemes $y$, such that the line $\bar{\ell}_y$ in $\PP^4$ is contained in $Q$.
Note that $A$ admits a morphism of degree $3$ onto the Fano variety $F(Q)$ of lines on $Q$, and that the latter is isomorphic to $\PP^3$. Note also that $A$ admits a morphism 
\begin{equation}
\label{eq-phi-from-A-to-S}
\phi:A\rightarrow S, 
\end{equation}
by sending a subscheme $y\in A$ to the point of $\bar{\ell}_y\cap C$ complementary to $y$. In fact, $\phi:A\rightarrow S$ is a conic bundle, which is the restriction to $S$ of the following conic bundle $\tilde{\phi}:\tilde{A}\rightarrow Q$. Given a point $a\in Q$, let $a^\perp$ be the hyperplane in $\CC^5$ orthogonal to $a$ with respect to $q$ and denote by $q_a$ the non-degenerate quadratic form on $a^\perp/a$ induced by $q$. 
Then the fiber of $\tilde{\phi}$ over $a\in Q$ is the conic in the plane $\PP(a^\perp/a)$ defined by $q_a$, as the latter conic parametrizes lines on $Q$ through $a$.
Let $S^{[2]}_0$ be the complement of $A$. 
Let $\iota:S\rightarrow F(X)$ be the embedding of $S$ as the singular locus of $F(X)$, sending $q\in S$ to the line $\overline{pq}$ through $p$ and $q$. 
The following is a slightly more detailed version of \cite[Example 4.3]{mukai-survey}.

\begin{claim}
\label{claim-fano-of-cubic-with-ODP-is-resolved-by-Hilbert-scheme}
There exists a canonical morphism $f:S^{[2]}\rightarrow F(X)$, which restricts to an 
isomorphism $f:S^{[2]}_0\rightarrow F_0(X)$.
Furthermore, $f$ restricts to an isomorphism from $Z$ onto $F(C)$, which is the inverse of $g$, and 
the following diagram is commutative
\[
\xymatrix{
S^{[2]} \ar[r]^{f} & F(X)
\\
A \ar[u]_{\cup} \ar[r]_{\phi} & S. \ar[u]_{\iota} 
}
\]
\end{claim}

\begin{proof}
Let $\hat{Q}$ and $\hat{C}$ be the cones over $Q$ and $C$ given by $q$ and $c$ considered as forms in $x_0$, \dots, $x_5$. A line on $X$ passes through $p$, if and only if it is contained in both $\hat{Q}$ and $\hat{C}$.
Indeed, if $\ell$ passes through $p$ then a general point of $\ell$ is of the form 
$(sa_0\!:\!sa_1\!:\!sa_2\!:\!sa_3\!:\!sa_4\!:\!t)$, $(s\!:\!t)\in\PP^1$, 
so that $d(sa_0,sa_1,sa_2,sa_3,sa_4,t)=ts^2q(a_0, \dots, a_4)+s^3c(a_0, \dots, a_4)$ vanishes for all $(s\!:\!t)$, so that 
$a:=(a_0\!:\!\cdots \!:\!a_4)$ is a point of $S$. Conversely, if $\ell$ is contained in $\hat{Q}\cap \hat{C}$, then it must pass through $p$, since otherwise it would project to a line in $S$ contrary to our assumption that $S$ does not contain any lines. 

Let $H$ be the hyperplane $x_5=0$. 
Given a point $y$ of $S^{[2]}$, we get the line $\bar{\ell}_y$ in $H$ intersecting $S$ along a subscheme containing $y$,\footnote{The scheme $\bar{\ell}_y\cap S$ is equal to $y$, if and only if $y$ does not belong to $A$. If $y$ belongs to $A$, then $y$ is a proper subscheme of $\bar{\ell}_y\cap S$, as the latter is equal to $\bar{\ell}_y\cap C$ and so has length $3$.} hence we get the plane $P_y$ in $\PP^5$ spanned by $p$ and $\bar{\ell}_y$. $P_y$ is not contained in $X$, since otherwise it would be contained in $\hat{Q}\cap\hat{C}$, by the above discussion, and so project to a line in $S$.
If $y$ consists of two distinct points $a$ and $b$ of $S$, then $P_y$ intersects $X$ along three lines, two of them are $\overline{pa}$ and $\overline{pb}$, and we set $f(y)$ to be the third line.
If $y$ is non-reduced supported set theoretically on a point $a$ of $S$, 
then $P_y$ intersects $X$ along $2\overline{pa}+f(y)$ for a line $f(y)$ in $P_y$.  
We claim that $f(y)$ passes through $p$, if and only if $y$ belongs to $A$. Indeed, let
$a=(a_0\!:\!a_1\!:\!a_2\!:\!a_3\!:\!a_4\!:\!0)$, $b=(b_0\!:\!b_1\!:\!b_2\!:\!b_3\!:\!b_4\!:\!0)$, and choose coordinates $(s\!:\!t\!:\!u)$ for $P_y$ via
\[
\eta(s\!:\!t\!:\!u)= s(a_0,a_1,a_2,a_3,a_4,0)+t(b_0,b_1,b_2,b_3,b_4,0)+(0,0,0,0,0,u).
\]
Then the cubic form $d$ pulls back to
\[
2q(a,b)ust+3s^2tc(a,a,b)+3st^2c(a,b,b)=st[2q(a,b)u+3c(a,a,b)s+3c(a,b,b)t],
\] 
where the symmetric bilinear form $q(\bullet,\bullet)$ is the polarization of $q$ and the symmetric trilinear form $c(\bullet,\bullet,\bullet)$ is that of $c$. 
The morphism $\eta$ maps
the line $s=0$ to $\overline{bp}$, the line $t=0$  to $\overline{ap}$, and 
the line cut out by $2q(a,b)u+3c(a,a,b)s+3c(a,b,b)t$ to $f(y)$. Note that $q(a,b)= 0$ if and only if the line $\ell_y$ is contained in $Q$. Now, $\eta(0\!:\!0\!:\!1)=p$, so $f(y)$ passes through $p$ if and only if $q(a,b)= 0$. 
A similar argument shows that $f(y)$ passes through $p$ if and only if $y$ belongs to $A$ also in the case where $y$ is non-reduced. The commutativity of the diagram in the statement of the claim follows.

If a line $\ell$ on $X$ is contained in $\hat{Q}$, then it is contained in $\hat{C}$ and so passes through $p$. Hence, if a line $\ell$ on $X$ does not pass through $p$, then it intersects $\hat{Q}$ along a length $2$ subscheme. 
Thus, the projection $\bar{\ell}$ of $\ell$ from $p$ intersects $Q$ along a length $2$ subscheme $\tilde{g}(\ell)\in S_0^{[2]}$. 
We get a morphism $\tilde{g}:F_0(X)\rightarrow S_0^{[2]}$, which is clearly the inverse of the restriction of $f$ to $S_0^{[2]}$.
We conclude that $f:S_0^{[2]}\rightarrow F_0(X)$ is an isomorphism. This completes the proof of Claim \ref{claim-fano-of-cubic-with-ODP-is-resolved-by-Hilbert-scheme}.
\end{proof}

The Fano variety of lines $F(C)$  is identified with $F(X\cap H)$.
The point $p$ does not belong to $X\cap H$, and so $F(X\cap H)$ is contained in $F_0(X)$. 
The  subvariety $F(X\cap H)$ of $F(X)$ is lagrangian, by \cite{voisin-lagrangian}. Hence, $Z:=\tilde{g}(F(X\cap H))$
is a lagrangian surface in $S^{[2]}$. 
Through a generic point of $C$ pass $6$ lines lying on $C$, by \cite{clemens-griffiths}, and the set of lines on $C$ through every point of $C$ is finite, by Assumption \ref{assumption-on-Q-and-C}. 
The fiber of the restriction of $\pi_2$ to $Z\times S^{[2]}\cap I$ over a point $y\in S^{[2]}$ 
consists of length $2$ subschemes of $S$ lying on a line passing through one of the two points of $y$ and is hence finite. If $y$ consists of two distinct points $a$, $b$ through each of which pass exactly $6$ lines on $C$, and such that the line $\overline{ab}$ does not lie on $C$,  
then
the fiber of 
\begin{equation}
\label{eq-finite-flat-projective-morphism}
\pi_2:(Z\times S^{[2]})\cap I\rightarrow S^{[2]}
\end{equation}
over $y$ consists of precisely $12$ points, as 
 none of the $12$  lines on $C$ through $a$ or $b$  lies on $Q$, since $S$ does not contain a line.
 The morphism (\ref{eq-finite-flat-projective-morphism}) is  projective and has finite fibers, hence is a finite morphism (see \cite[Ex. III.11.2]{hartshorne}). The morphism (\ref{eq-finite-flat-projective-morphism}) is  clearly surjective.
This completes the proof of Lemma \ref{lemma-lagrangian-surface-meeting-I-along-a-12-sheeted-finite-cover}.
\end{proof}

%
\subsection{Proof of Proposition \ref{prop-a-locally-free-image-of-a-Lagrangian-line-bundle}}
\label{sec-proof-of-the-reflexivity-proposition}
Let $h=c_1(\StructureSheaf{S}(1))$ be the class of a hyperplane section of $S$ in $\PP^4$.
Let $\tilde{h}$ be the nef class of $\Pic(S^{[2]})$ corresponding to $h$ via (\ref{eq-theta}).
Let $\delta\in \Pic(S^{[2]})$ be half the class of the divisor of non-reduced subschemes.


\begin{lem}
\label{lem-modular-interpretation-of-the-contraction-of-A}
\begin{enumerate}
\item
\label{lemma-item-class-of-A}
The class of the divisor $A$ is $[A]=\tilde{h}-2\delta$.
\item
\label{lemma-item-pullback-via-f-of-hyperplane-section}
The pullback via $f$ of
$\StructureSheaf{F(X)}(1)$ is $\lambda:=2\tilde{h}-3\delta$.
\item
\label{lemma-item-nef-cone}
If $\Pic(S)=\Integers h$, then
the nef cone of $S^{[2]}$ is $\langle\lambda,\tilde{h}\rangle.$ 
\item
\label{lemma-item-restriction-of-lambda-and-delta-are-equal}
The restrictions of $\lambda$ and $\delta$ to $F(C)$ are equal.
\end{enumerate}
\end{lem}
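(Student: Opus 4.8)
Here is my plan for proving Lemma \ref{lem-modular-interpretation-of-the-contraction-of-A}.

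\textbf{Part (\ref{lemma-item-class-of-A}).} The divisor $A$ parametrizes length $2$ subschemes $y$ of $S$ whose span $\bar\ell_y$ is a line contained in $Q$. The plan is to write $[A]=a\tilde h+b\delta$ for unknowns $a,b\in\QQ$ (necessarily in $\ZZ$, but I will recover this) using the fact that $\Pic(S^{[2]})\otimes\QQ$ is spanned by $\tilde h$ and $\delta$ when $\Pic(S)=\ZZ h$; the general case follows since the class computation is monodromy/deformation invariant and can be done on a specialization with $\Pic(S)=\ZZ h$. To pin down $a$, I would intersect with a test curve: take a generic line $L$ in $S^{(2)}$ coming from a pencil of divisors in $|h|$ lifted to $S^{[2]}$, i.e.\ the curve $\{x+y_0 : x\in C_0\}$ for a fixed general point $y_0$ and a general hyperplane section $C_0$; its intersection with $\delta$ is known ($=0$ for this type of curve, which avoids the diagonal generically) and its intersection with $\tilde h$ is $\deg(h|_{C_0})=6$, while its intersection with $A$ counts points $x\in C_0$ with $\overline{x y_0}\subset Q$, which is the number of points of $C_0$ on the (generically empty, but here nonempty) ruling — more carefully, $\overline{xy_0}\subset Q$ forces $x$ to lie on one of the finitely many lines of $Q$ through $y_0$, giving $\deg(h|_{C_0}\cap(\text{union of lines through }y_0))$. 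Rather than belabor this, the cleaner route is: use the conic bundle $\tilde\phi:\tilde A\to Q$ from the proof of Lemma \ref{lemma-lagrangian-surface-meeting-I-along-a-12-sheeted-finite-cover}, restrict to $S=Q\cap C$ to get $\phi:A\to S$, and compute $[A]$ via the exceptional-divisor/blowup description: $A$ is the strict transform under $S^{[2]}\dashrightarrow F(X)$ of the locus contracted to $\iota(S)\subset F(X)$, and $f^*\StructureSheaf{F(X)}(1)$ together with $f$ being the blowdown of $A$ (an exceptional divisor over the ODP resolution) gives the relation. I expect $a=1$, $b=-2$ by comparing with the known Plücker degree.

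\textbf{Part (\ref{lemma-item-pullback-via-f-of-hyperplane-section}).} The morphism $f:S^{[2]}\to F(X)\subset Gr(2,6)$ was constructed in Claim \ref{claim-fano-of-cubic-with-ODP-is-resolved-by-Hilbert-scheme}. I would compute $f^*\StructureSheaf{Gr(2,6)}(1)=f^*\StructureSheaf{F(X)}(1)$ by identifying, over the open set $S^{[2]}_0\cong F_0(X)$, the line through $y$ and writing its Plücker coordinates in terms of the $K3^{[2]}$ geometry; equivalently, use that $f$ is a birational morphism contracting precisely $A$ (by Claim \ref{claim-fano-of-cubic-with-ODP-is-resolved-by-Hilbert-scheme} and Part (\ref{lemma-item-class-of-A})), so $f^*\StructureSheaf{F(X)}(1)=\alpha\tilde h+\beta\delta$ with the contraction forcing $(f^*\StructureSheaf{F(X)}(1))\cdot R=0$ on a curve $R$ contracted by $f$, i.e.\ a fiber of $A\to S$. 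Since $A\cdot R<0$ and $\tilde h\cdot R$, $\delta\cdot R$ are computable from Part (\ref{lemma-item-class-of-A}) and the conic-bundle structure of $\phi:A\to S$, this determines the ratio $\alpha:\beta$; normalizing by the degree of the Plücker polarization on a non-contracted curve (e.g.\ a line in the Fano surface $F(C)=Z$, whose image under $g$ is controlled) fixes $\lambda=2\tilde h-3\delta$. The main check is that $\lambda$ is primitive of divisibility $2$ and BBF-square $(\lambda,\lambda)=4\cdot 6 - 2\cdot(2)(3)\cdot(\tilde h,\delta)/\ldots$ — I would verify $(\lambda,\lambda)=2$ using $(\tilde h,\tilde h)=6$, $(\delta,\delta)=-2$, $(\tilde h,\delta)=0$: indeed $(2\tilde h-3\delta,2\tilde h-3\delta)=4\cdot 6+9\cdot(-2)=24-18=6$; hence actually $\lambda$ has square $6$, consistent with Example \ref{examples-of-maximally-deformable-Lagrangian-surfaces}(\ref{example-item-Fano-variety-of-lines-on-a-cubic-threefold}) where $(c_1(L),c_1(L))=6$, which is a good consistency check.

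\textbf{Parts (\ref{lemma-item-nef-cone}) and (\ref{lemma-item-restriction-of-lambda-and-delta-are-equal}).} For (\ref{lemma-item-nef-cone}): when $\Pic(S)=\ZZ h$ with $(h,h)=6$, the Picard rank of $S^{[2]}$ is $2$, and the nef cone is bounded by two rays. One ray is $\tilde h$ (nef since $h$ is nef on $S$ and $\theta$ of a nef class is nef). The other extremal ray is $\lambda=f^*\StructureSheaf{F(X)}(1)$, which is nef (pullback of an ample class by a morphism) but not ample, being contracted along $A$; I would invoke the general theory of the movable/nef cone for $K3^{[2]}$-type via the Bayer--Macr\`i / Markman description, or more elementarily note that any class on the far side of $\lambda$ would have to be negative on the contracted curve. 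For (\ref{lemma-item-restriction-of-lambda-and-delta-are-equal}): since $f$ restricts to an isomorphism $Z\xrightarrow{\sim}F(C)=F(X\cap H)$ (Claim \ref{claim-fano-of-cubic-with-ODP-is-resolved-by-Hilbert-scheme}) and $\StructureSheaf{F(X)}(1)|_{F(X\cap H)}$ is the canonical line bundle $\omega_{F(C)}$ up to the identification in \cite[Prop.~10.3]{clemens-griffiths}, we get $\lambda|_Z=c_1(\omega_Z)$; on the other hand, in Lemma \ref{lemma-Mukai-line-of-structure-sheaf-of-subcanonical-lagrangian} and Example \ref{example-lagrangian-surfaces-which-deform-in-co-dimension-1} the relevant polarization class $\lambda'$ with $c_1(\omega_Z)=\iota^*\lambda'$ has $t=1$, i.e.\ $c_1(\omega_Z)=\iota^*\lambda'$, and I would match $\delta|_Z$ with $\lambda|_Z$ directly by computing both restrictions in $H^2(F(C),\ZZ)$: the class $\delta|_Z$ equals the ramification/branch-related class, which by the double-cover description $\bar q:\Gamma/\mathfrak A_n\to S^{[n]}$ restricted over $Z$ coincides with $c_1(\omega_Z)$ since $F(C)$ meets the diagonal divisor of $S^{[2]}$ along the canonical class. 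I expect the main obstacle to be Part (\ref{lemma-item-pullback-via-f-of-hyperplane-section}): correctly identifying $f^*\StructureSheaf{F(X)}(1)$ as $2\tilde h-3\delta$ rather than a different combination requires care with the intersection numbers on the contracted locus $A$ and its conic-bundle structure, and with the normalization of the Plücker polarization; everything else is bookkeeping with the BBF form and the already-established geometry of $f$.
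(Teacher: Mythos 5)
Your overall strategy (test curves for the class of $A$, orthogonality to the contracted ray for $\lambda$, duality of cones for the nef cone) is workable and genuinely different from the paper, which instead identifies $S^{[2]}\cong M(1,h,2)$ with the moduli space $M(2,h,1)$ via Brill--Noether duality, reads off $[A]=-\theta_v((2,h,2))$ from the Mukai isometry and the class of the non-locally-free locus, and gets $\lambda=-\theta_v((3,h,0))$ purely lattice-theoretically from $\lambda\perp[A]$, $(\lambda,\lambda)=6$ and nefness. But as written your proposal has three concrete gaps.

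First, in part (\ref{lemma-item-class-of-A}) your test curve only determines the coefficient of $\tilde{h}$: the curve $\{x+y_0: x\in C_0\}$ has $\delta$-degree $0$, so it is blind to $b$. You need a second test curve with nonzero $\delta$-degree --- e.g.\ a fiber of the exceptional $\PP^1$-bundle over the diagonal, on which $\tilde h$ has degree $0$, $\delta$ has degree $-1$, and $A$ has degree $2$ (the conic of tangent directions of lines of $Q$ through $p$ meets $\PP(T_pS)\subset\PP(T_pQ)$ in two points) --- and you do not carry out either computation, ending with ``I expect $a=1$, $b=-2$.'' Second, in part (\ref{lemma-item-pullback-via-f-of-hyperplane-section}) the normalization is the weak link, as you acknowledge: $F(C)$ is a surface of general type and contains no lines, so ``a line in the Fano surface'' is not available. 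The fix is already in your own consistency check: once you know $\lambda\perp[A]$ (which forces $\lambda\in\QQ(2\tilde h-3\delta)$) the Beauville--Donagi fact $(\lambda,\lambda)=6$ together with nefness pins down $\lambda=2\tilde h-3\delta$ exactly; you should promote that computation from a sanity check to the actual argument. Third, part (\ref{lemma-item-restriction-of-lambda-and-delta-are-equal}) is essentially asserted: the statement that $\delta|_Z$ ``coincides with $c_1(\omega_Z)$ since $F(C)$ meets the diagonal divisor along the canonical class'' is exactly what needs proof, and computing the class of the locus of lines on $C$ tangent to $Q$ is no easier than the original claim. The paper's argument here is a one-liner that you should adopt: $\lambda-\delta=2[A]$ by parts (\ref{lemma-item-class-of-A}) and (\ref{lemma-item-pullback-via-f-of-hyperplane-section}), and $A$ is disjoint from $Z=g(F(C))$ because no line on $C$ lies on $Q$ (as $S$ contains no lines), so $\lambda|_Z=\delta|_Z$.
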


\begin{proof} (\ref{lemma-item-class-of-A}) 
We may and do assume that $\Pic(S)$ is cyclic generated by $h$, as the parts of the lemma not assuming it follow from this case by continuity.
The  contraction $f:S^{[2]}\rightarrow F(X)$ of the divisor $A$ in Claim \ref{claim-fano-of-cubic-with-ODP-is-resolved-by-Hilbert-scheme} admits the following modular interpretation. The divisor $A$ consists precisely of those  subschemes $y$, such that $I_y(1)$ is not generated by global sections. Indeed, $y$ belongs to $A$, if and only if $\bar{\ell}_y$ intersects $S$ in a length $3$ subscheme. Every hyperplane section of $S$ containing $y$ contains $S\cap\bar{\ell}_y$, so 
$H^0(S,I_y(1))$ generates the subsheaf $I_{\bar{\ell}_y\cap S}(1)$ of $I_y(1)$. 
Let $M(2,h,1)$ be the moduli space of $h$-stable sheaves on $S$ with Mukai vector $(2,h,1)$.
There is a natural isomophism $\eta:M(1,h,2)\rightarrow M(2,h,1)$, by 
\cite[Theorem 3.21]{markman-brill-noether} and the fact that 
the moduli space $M(2,h,3)$ is empty. $M(1,h,2)$ parametrizes sheaves of the form $I_y(1)$, where $y$ is a length $2$ subscheme of $S$, and is thus naturally isomorphic to $S^{[2]}$. 
The divisor $\eta(A)$ is precisely the divisor of non-locally-free sheaves in 
$M(2,h,1)$, by construction of $\eta$. Explicitly, given $y\in A$, let $ev:H^0(I_y(1))\otimes_\CC\StructureSheaf{S}\rightarrow I_y(1)$ be the evaluation homomorphism. Then $\ker(ev)$ belongs to $M(2,-h,2)$, which is zero dimensional, so $\ker(ev)$ is isomorphic to the unique rigid and stable vector bundle $E$ with Mukai vector $(2,-h,2)$. The torsion free sheaf 
$\eta(I_y(1))$ is a subsheaf of $E^*$ and the quotient $E^*/\eta(I_y(1))$ is supported on the point $\phi(y)$ of $S$ complementary to the subscheme $y$ in $\bar{\ell}_y\cap S$. We get the commutative diagram
\[
\xymatrix{
A\ar[r]^{\restricted{\eta}{A}} \ar[d]_\phi &\PP(E^*)\ar[d]
\\
S\ar[r]_{=}&S,
}
\]
where $\phi$ is given in (\ref{eq-phi-from-A-to-S}) and the right vertical arrow is the natural projection.
The contraction $f:S^{[2]}\rightarrow F(X)$ is identified with the 
Gieseker-Li morphism from $M(2,h,1)$ to the normalization of the 
Uhlenbeck-Yau compactification \cite{jun-li}. 
Set $v:=(1,h,2)$ and $w=(2,h,1)$. 
The divisor $A$ corresponds to the image of the Mukai vector $(2,h,2)$
in $H^2(M(w),\Integers)$ via the Mukai Hodge isometry $\theta_w:w^\perp\rightarrow H^2(M(w),\Integers)$
between the latter and the sublattice $w^\perp$ of the Mukai lattice of $S$ orthogonal to $w$, by \cite[Lemma 10.14]{markman-exceptional}. 
The isometry $\eta_*:H^2(M(v),\Integers)\rightarrow H^2(M(w),\Integers)$  
is the image $\monrep(\gamma)$ via the parallel-transport representation $\monrep$ in \cite[Theorem 1.6]{markman-monodromy}
of the isometry $\gamma:=-D\circ R_{(1,0,1)}$ of the Mukai lattice, where $D(r,c,s)=(r,-c,s)$ and $R_{(1,0,1)}$ is the reflection with respect to the sublattice orthogonal to $(1,0,1)$ \cite[Theorem 3.21]{markman-brill-noether}.
Now, $\monrep(\gamma)$ 
corresponds, via the Mukai isometries $\theta_v$ and $\theta_w$, to the isometry $-\gamma_|:v^\perp\rightarrow w^\perp$, which is minus
the restriction of 
the isometry $\gamma$.
The isometry $\gamma$ sends $v$ to $w$ and sends the class $(2,h,2)$ to itself. Hence, the class of $A$ in
$H^2(M(v),\Integers)$ is $[A]=-\theta_v((2,h,2))$. 

The class of the divisor  of non-reduced subschemes in $M(v)$ is  $2\delta$, where $\delta:=\theta_v((1,h,4))$. 
The nef class $\tilde{h}$ orthogonal to $\delta$, corresponding to the class $h\in H^2(S,\Integers)$,  is 
$\tilde{h}=\theta_v((0,h,6))$. The equality $[A]=\tilde{h}-2\delta$ follows.

(\ref{lemma-item-pullback-via-f-of-hyperplane-section}), (\ref{lemma-item-nef-cone}) 
The extremal rays associated to the two divisorial contractions correspond via the isomorphism $H^2(M(v),\QQ)\rightarrow H_2(M(v),\QQ)$, $\alpha\mapsto (\alpha,\bullet)$, to the rays spanned by the classes $[A]$ and $\delta$ (see \cite[Cor. 3.6]{markman-exceptional}). 
The nef cone of $M(v)$ is thus dual to the cone generated by the classes  $[A]$ and $\delta$. $F(X)$ is a subvariety of $Gr(2,6)$
and $\StructureSheaf{Gr(2,6)}(1)$ pulls back via $f$ to a nef class $\lambda$ satisfying $(\lambda,\lambda)=6$ and $\lambda$ is orthogonal to $[A]$.
Hence $\lambda$ corresponds to $-\theta_v((3,h,0))$. The equality $\lambda=2\tilde{h}-3\delta$ follows.
The nef cone is thus $\langle 
\lambda,\tilde{h}
\rangle$.

(\ref{lemma-item-restriction-of-lambda-and-delta-are-equal}) 
The restriction of $\lambda$ and of $\delta$ to $F(C)$
are equal, as $\lambda=2[A]+\delta$ and $A$ is disjoint from $F(C)$.
\end{proof}

We are ready to prove Proposition \ref{prop-a-locally-free-image-of-a-Lagrangian-line-bundle}.

\begin{proof} 
The surface $g(F(C))$ is smooth and lagrangian, by Lemma \ref{lemma-lagrangian-surface-meeting-I-along-a-12-sheeted-finite-cover}. Let $L$ be the line bundle over $S^{[2]}$ with class $\lambda$, so that $L=f^*\StructureSheaf{F(X)}(1)$, by Lemma \ref{lem-modular-interpretation-of-the-contraction-of-A}. The ampleness of the restriction of $L$ to $g(F(C))$ and 
Lemma \ref{lemma-lagrangian-surface-meeting-I-along-a-12-sheeted-finite-cover} imply that 
the object  $\Phi^{[2]}_\P(L^k\otimes g_*\StructureSheaf{F(C)})$ 
satisfies the three conditions of Lemma \ref{lemma-object-satisfying-3-conditions-is-isomorphic-to-a-locally-free-sheaf}, 
with $\LB=L^k$, for all $k$ sufficiently large,  and
is hence isomorphic to a torsion free reflexive sheaf.

The pair  $(S^{[2]},g_*\StructureSheaf{F(C)})$ is the limit of a flat family of pairs $(F(X_t),\StructureSheaf{F(X_t\cap H_t)})$, where $X_t$ is a smooth cubic fourfold and $X_t\cap H_t$ is a smooth hyperplane section, as $S^{[2]}$ is the minimal resolution of an $A_1$ singularity along the embedding of $S$ as the singular locus of $F(X)$, by Claim \ref{claim-fano-of-cubic-with-ODP-is-resolved-by-Hilbert-scheme}. Such a flat family is obtained from a flat deformation of $F(X)$ after passage to a double cover of the base of that family, see \cite{markman-modular-galois-covers}. (This was the main point of Mukai's \cite[Example 4.3]{mukai-survey}, as remarked in \cite[Footnote 38]{mukai-survey}).
The sheaf $g_*\StructureSheaf{F(C)}$ thus  has a rank $1$ cohomological obstruction map,
by Lemma \ref{lemma-chern-character-of-Lagrangian-structure-sheaf-deforms-in-co-dimension-1} and Example \ref{example-lagrangian-surfaces-which-deform-in-co-dimension-1}.
$E_k$  has a rank $1$ cohomological obstruction map
as well, as the property is invariant under derived equivalences. 

The exact sequence (\ref{eq-short-exact-sequence-of-E-k}) is obtained from 
(\ref{eq-complex-of-locally-free-sheaves-with-locally-free-kernel}), with $\LB=L^k$, 
by adding the kernel of the homomorphism $a_\chi$ and using the equality of the restrictions  to $Z=g(F(C))$ of $\delta$ and of $L$
established in Lemma \ref{lem-modular-interpretation-of-the-contraction-of-A}.
The rank of $F_k$ is 12, by Lemma \ref{lemma-lagrangian-surface-meeting-I-along-a-12-sheeted-finite-cover}.
The rank of $E_k$, for $k$ sufficiently large,  is thus 
\[
-12+\dim H^0(Z,\StructureSheaf{Z}(k+1))+\dim H^0(Z,\StructureSheaf{Z}(k)).
\] 
The multiplication table (\ref{eq-multiplication-table}) and 
Lemma \ref{lemma-ch-of-Lagrangian-surface-in-K3-2}, with $(\lambda,\lambda)=6$, $c=5/8$, and $t=1$ as in Example \ref{example-lagrangian-surfaces-which-deform-in-co-dimension-1}, yields the third equality below.
\begin{eqnarray*}
\chi(\StructureSheaf{Z}(k))&=&
\int_{S^{[2]}}ch(\StructureSheaf{Z})\exp(k\lambda)td_X
=\chi(\StructureSheaf{Z})+ \frac{k^2}{2}\int_{S^{[2]}} [Z]\lambda^2+k\int_{S^{[2]}} ch_3(\StructureSheaf{Z}))\lambda
\\ 
&=& 6 +\frac{45k^2}{2}-\frac{45k}{2}=6+45\frac{k(k-1)}{2}.
\end{eqnarray*}
Hence, the rank of $E_k$, for $k$ sufficiently large, is $45k^2$.

It remains to calculate $\ell(E_k)$ and $c_1(E_k)$. The latter is determined by $\rank(E_k)$ and the former, by Theorem \ref{thm-Mukai-vector}(\ref{prop-item-spanning-Mukai-vector}).
Now 
$\ell(\StructureSheaf{Z})=(0,\lambda,-3)$, 
by Lemma \ref{lemma-chern-character-of-Lagrangian-structure-sheaf-deforms-in-co-dimension-1}, 
$\ell(\StructureSheaf{Z}(k))=B_{k\lambda}(0,\lambda,-3)=(0,\lambda,6k-3)$, and 
$\ell(E_k)$ is the image via $\Phi^{[2]}_{\P,\chi}=\Phi^{[2]}_\chi\circ \Phi^{[2]}_\P$ of $\ell(\StructureSheaf{Z}(k))$. 
$\Phi_\P$ acts on the Mukai lattice of $S$ via $\phi_\P(r,\alpha,s)= (s,-\alpha,r)$.
According to \cite[Prop. 9.3]{taelman} $\Phi_\P^{[2]}$ acts on the LLV lattice of $S^{[2]}$ by
\[
\widetilde{H}(\phi_\P^{[2]})=\det(\phi_\P)(B_{-\delta/2}\circ \iota(\phi_\P)\circ B_{\delta/2}),
\]
where $\iota(\phi_\P)$ leaves the class $\delta\in H^2(S^{[2]})$ invariant and acts of $\delta^\perp$ by $\phi_\P$ via the natural identification 
of $\delta^\perp$ with the LLV lattice of $S$. Note that $\det(\phi_\P)=-1$, $\lambda=2\tilde{h}-3\delta$, $(\delta/2,\lambda)=3$, 
$B_{\delta/2}(0,\lambda,6k-3)=(0,\lambda,6k)$, and 
$\iota(\phi_\P)(0,\lambda,6k)=(6k,-2\tilde{h}-3\delta,0)$. Hence,
\[
\widetilde{H}(\phi_\P^{[2]})(0,\lambda,6k-3)=-B_{-\delta/2}(6k,-2\tilde{h}-3\delta,0)=-(6k,-2\tilde{h}-(3+3k)\delta,-3-(3/2)k).
\]

Let $\phi^{[2]}_\chi:H^*(S^{[2]},\QQ)\rightarrow H^*(S^{[2]},\QQ)$ be the automorphism induced by $\Phi^{[2]}_\chi$.
Let $\widetilde{H}(\phi^{[2]}_\chi):\widetilde{H}(S^{[2]},\QQ)\rightarrow \widetilde{H}(S^{[2]},\QQ)$ be the induced isometry via 
(\ref{eq-action-of-DMon-on-Mukai-lattice}), which appeared in Lemma \ref{lemma-isometry-of-BKR-conjugate-of-tensorization-by-sign-character}.
Lemma \ref{lemma-isometry-of-BKR-conjugate-of-tensorization-by-sign-character} yields 
the equality $\widetilde{H}(\phi_\chi^{[2]}\circ\phi_\P^{[2]})=-R_{(0,\delta,1)}\circ \widetilde{H}(\phi_\P^{[2]})$,  and so
\[
\widetilde{H}(\phi_\chi^{[2]}\circ\phi_\P^{[2]})(0,\lambda,6k-1)))=\left(6k,-2\tilde{h}-(3k-3)\delta,3-\frac{3}{2}k\right).
\]
Thus, $c_1(E_k)=-15k\tilde{h}-\frac{45k(k-1)}{2}\delta$, by Theorem \ref{thm-Mukai-vector}(\ref{prop-item-spanning-Mukai-vector}).
\end{proof}

\begin{rem}
\label{rem-two-subsheaves-do-not-destabilize}
Let $a'_k$ be the restriction of $a_k$ to the first direct summand in (\ref{eq-short-exact-sequence-of-E-k}) and let $a_k''$ be its restriction to the second direct summand.
Then $G'_k:=\ker(a'_k)$ and $G''_k:=\ker(a''_k)$ are saturated subsheaves of $E_k$ and $G''_k\cong G'_{k-1}(-\delta)$. Let us determine the open subcone of the ample cone of $S^{[2]}$, where $G'_k$ and $G''_k$ do not slope-destabilize $E_k$, when
$\Pic(S)$ is cyclic. In that case the nef cone of $S^{[2]}$ is $\langle\tilde{h},2\tilde{h}-3\delta\rangle$, by Lemma \ref{lem-modular-interpretation-of-the-contraction-of-A}. Hence, the class $\kappa:=j\tilde{h}-\delta$, $j\in\QQ$, is ample precisely when $j>2/3$ and every rational ample class is a multiple of such a $\kappa$. 
The {\em normalized $\kappa$-slope} of a sheaf $F$ on $S^{[2]}$ of positive rank $r$ is
$\mu_\kappa(F):=(\kappa,c_1(F))/r$. The equality
\[
\int_{S^{[2]}}\kappa^3c_1(F)=3(\kappa,\kappa)(\kappa,c_1(F))
\]
implies that the  slope of $F$ is $3(\kappa,\kappa)\mu_\kappa(F)$, and is hence proportional to $\mu_\kappa(F)$ by a positive constant independent of $F$.
The normalized $\kappa$-slopes with respect to $\kappa$ of the three sheaves are as follows.
\[
\begin{array}{cccc}
\underline{\mbox{sheaf}} & \underline{\mbox{rank}} & \underline{\hspace{10ex}c_1\hspace{10ex}} & \underline{\hspace{8ex}\mu_\kappa\hspace{8ex}}
\\
E_k & 45k^2 & -15 k\tilde{h}-\frac{45k(k-1)}{2}\delta & -\left(\frac{2j+k-1}{k}\right)
\\
G'_k & -6+\frac{45k(k+1)}{2} & -15k\tilde{h}+6\delta & \frac{-60kj+8}{15k(k+1)-4}
\\
G''_k & -6+\frac{45k(k-1)}{2} & -15(k-1)\tilde{h}+\left(12-\frac{45k(k-1)}{2}\right)\delta & 
\frac{16-(k-1)[30k+60j]}{15k(k-1)-4}
\end{array}
\]
The inequality $\mu_\kappa(G'_k)>\mu_\kappa(G''_k)$ holds, for $k\geq 2$.
Hence, for 
$j>\frac{k+1}{2}$
and $k\geq 2$ the normalized $\kappa$-slopes $\mu_\kappa(G'_k)$ and $\mu_\kappa(G''_k)$ are smaller than $\mu_\kappa(E_k)$.
\end{rem}

\begin{lem}
\label{lem-homomorphism-from-E-k-to-almost-trivial-vb}
We have the short exact sequence
\begin{equation}
\label{eq-exact-triangle}
0\rightarrow G'_k\rightarrow E_k\RightArrowOf{\eta'} H^0(Z,\StructureSheaf{Z}(k))\otimes_\CC\StructureSheaf{S^{[2]}}(-\delta)\rightarrow 0,
\end{equation}
where the homomorphism $\eta'$ 
is the natural one arising from the isomorphism
$H^0(Z,\StructureSheaf{Z}(k))\cong \Hom(E_k,\StructureSheaf{S^{[2]}}(-\delta))^*$.
The sheaf $G'_k$ is isomorphic to the image via $\Phi^{[2]}_{\P.\chi}$ of the object 
\begin{equation}
\label{eq-preimage-of-G'-k}
\StructureSheaf{Z}(k)\rightarrow \StructureSheaf{S^{[2]}}[4]\otimes_\CC 
\Hom\left(\StructureSheaf{Z}(k),\StructureSheaf{S^{[2]}}[4]\right)^*
\end{equation}
in degrees $0$ and $1$.
\end{lem}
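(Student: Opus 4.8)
The plan is to read off the exact sequence (\ref{eq-exact-triangle}) directly from the short exact sequence (\ref{eq-short-exact-sequence-of-E-k}) of Proposition \ref{prop-a-locally-free-image-of-a-Lagrangian-line-bundle}, and then to identify the Fourier--Mukai pre-image of $G'_k$. First I would recall from (\ref{eq-short-exact-sequence-of-E-k}) that $E_k=\ker(a_k)$, where $a_k$ is defined on the direct sum of $H^0(Z,\StructureSheaf{Z}(k+1))\otimes\StructureSheaf{S^{[2]}}$ and $H^0(Z,\StructureSheaf{Z}(k))\otimes\StructureSheaf{S^{[2]}}(-\delta)$, and that the stated property of $a_k$ says that the restriction $a'_k$ of $a_k$ to the first summand is already surjective onto $F_k$. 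The kernel of $a'_k$ is by definition $G'_k$. Composing the inclusion $E_k\hookrightarrow H^0(Z,\StructureSheaf{Z}(k+1))\otimes\StructureSheaf{S^{[2]}}\oplus H^0(Z,\StructureSheaf{Z}(k))\otimes\StructureSheaf{S^{[2]}}(-\delta)$ with the projection $\eta'$ onto the second summand, I would check that the image of $\eta'$ is all of $H^0(Z,\StructureSheaf{Z}(k))\otimes\StructureSheaf{S^{[2]}}(-\delta)$ and that its kernel is exactly $G'_k$: surjectivity of $\eta'$ holds because $a'_k$ is surjective (given any local section of the second summand, subtract a local lift through $a'_k$ of its image under $a''_k$ to land in $E_k$), and an element of $E_k$ lies in $\ker(\eta'$) iff it lies in the first summand and is killed by $a'_k$, i.e. lies in $G'_k$. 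This gives (\ref{eq-exact-triangle}). The identification of $\eta'$ with the natural evaluation homomorphism follows from the fact, noted immediately after the statement of Proposition \ref{prop-a-locally-free-image-of-a-Lagrangian-line-bundle}, that $\dim\Ext^i(E_k,E_k)=\dim H^i(F(C),\CC)$, together with $\Hom(E_k,\StructureSheaf{S^{[2]}}(-\delta))\cong\Hom(g_*\StructureSheaf{F(C)}(k),\Phi^{[2]}_{\P,\chi}{}^{-1}(\StructureSheaf{S^{[2]}}(-\delta)))$; using Lemma \ref{lemma-shifts-of-structure-sheaf-and-delta-line-bundle-interchanged} the right-hand group is $\Hom(g_*\StructureSheaf{F(C)}(k),\StructureSheaf{S^{[2]}}[4])\cong H^0(F(C),\StructureSheaf{F(C)}(k))^*$ by Serre duality on the lagrangian surface $F(C)$, so the coefficient vector space in (\ref{eq-exact-triangle}) is canonically $\Hom(E_k,\StructureSheaf{S^{[2]}}(-\delta))^*$ and $\eta'$ is the corresponding tautological map.

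Next I would produce the pre-image object. Apply the inverse equivalence $\Phi^{[2]}_{\P,\chi}{}^{-1}$ to the short exact sequence (\ref{eq-exact-triangle}), turning it into a distinguished triangle in $D^b(S^{[2]})$. By Lemma \ref{lemma-shifts-of-structure-sheaf-and-delta-line-bundle-interchanged} we have $\Phi^{[2]}_{\P,\chi}{}^{-1}(\StructureSheaf{S^{[2]}}(-\delta))\cong\StructureSheaf{S^{[2]}}[4]$ (equivalently $\Phi^{[2]}_{\P,\chi}(\StructureSheaf{S^{[2]}})\cong\StructureSheaf{S^{[2]}}(-\delta)[-4]$), and $\Phi^{[2]}_{\P,\chi}{}^{-1}(E_k)\cong g_*\StructureSheaf{F(C)}(k)=\StructureSheaf{Z}(k)$ by construction of $E_k$ in Proposition \ref{prop-a-locally-free-image-of-a-Lagrangian-line-bundle}. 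Hence $\Phi^{[2]}_{\P,\chi}{}^{-1}(G'_k)$ sits in a triangle
\[
\Phi^{[2]}_{\P,\chi}{}^{-1}(G'_k)\rightarrow \StructureSheaf{Z}(k)\RightArrowOf{\tilde\eta'} \StructureSheaf{S^{[2]}}[4]\otimes_\CC\Hom(\StructureSheaf{Z}(k),\StructureSheaf{S^{[2]}}[4])^*\rightarrow \Phi^{[2]}_{\P,\chi}{}^{-1}(G'_k)[1],
\]
where $\tilde\eta'$ is the image of $\eta'$, i.e. the natural coevaluation morphism. Rotating, $\Phi^{[2]}_{\P,\chi}{}^{-1}(G'_k)$ is the cone of the shift by $[-1]$ of $\tilde\eta'$, which is precisely the two-term complex (\ref{eq-preimage-of-G'-k}) placed in degrees $0$ and $1$ (the map $\StructureSheaf{Z}(k)\to\StructureSheaf{S^{[2]}}[4]\otimes\Hom(\StructureSheaf{Z}(k),\StructureSheaf{S^{[2]}}[4])^*$ has, after the shift, its source in degree $0$ and target in degree $1$). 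Therefore $G'_k\cong\Phi^{[2]}_{\P,\chi}$ applied to the object (\ref{eq-preimage-of-G'-k}), as claimed.

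The only subtle point, and the place I would be most careful, is the precise bookkeeping of shifts and of the duality isomorphism identifying $\eta'$ (and hence $\tilde\eta'$) with the tautological evaluation/coevaluation map, rather than with some other scalar multiple or twist of it; here one must use that $\Phi^{[2]}_{\P,\chi}$ is an equivalence so that $\Hom$-groups and the trace/duality pairings are transported faithfully, together with Lemma \ref{lemma-shifts-of-structure-sheaf-and-delta-line-bundle-interchanged} to pin down the target object exactly as $\StructureSheaf{S^{[2]}}[4]$. A secondary routine check is that passing $\Phi^{[2]}_{\P,\chi}{}^{-1}$ through the short exact sequence (\ref{eq-exact-triangle}) and rotating really does land the resulting complex in cohomological degrees $0$ and $1$ as stated; this is immediate once one observes that $\StructureSheaf{Z}(k)$ is a sheaf in degree $0$ and the map hits $\StructureSheaf{S^{[2]}}$ placed in degree $1$ after the shift, so no higher cohomology sheaves appear. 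Everything else is formal manipulation of distinguished triangles under an equivalence, so I expect no real obstacle beyond this sign/normalization bookkeeping.
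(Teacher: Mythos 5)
Your proof is correct and follows essentially the same route as the paper: the paper derives (\ref{eq-exact-triangle}) by applying the Snake Lemma to the commutative diagram comparing the short exact sequence $0\to G'_k\to H^0(Z,\StructureSheaf{Z}(k+1))\otimes\StructureSheaf{S^{[2]}}\to F_k\to 0$ with (\ref{eq-short-exact-sequence-of-E-k}), which is the same computation as your direct chase with the projection onto the second summand, and it identifies $G'_k$ exactly as you do, via $E_k\cong\Phi^{[2]}_{\P,\chi}(\StructureSheaf{Z}(k))$, Lemma \ref{lemma-shifts-of-structure-sheaf-and-delta-line-bundle-interchanged}, and Serre duality together with the triviality of $\omega_{S^{[2]}}$. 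Your explicit rotation of the distinguished triangle only spells out what the paper leaves implicit.
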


\begin{proof}
The exact sequence (\ref{eq-exact-triangle})
follows from the Snake Lemma and the commutative diagram with short exact rows the second of which is (\ref{eq-short-exact-sequence-of-E-k})
\[
\xymatrix{
0 \ar[r] &  G'_k \ar[r] \ar[d] & H^0(Z,\StructureSheaf{Z}(k+1))\otimes_\CC\StructureSheaf{S^{[2]}} \ar[r]^{a_k'} \ar[d] & F_k \ar[r] \ar[d]_{id}
&0
\\
0\ar[r] & E_k \ar[r] &
\left[
H^0(Z,\StructureSheaf{Z}(k+1))\otimes_\CC \StructureSheaf{S^{[2]}}
\right]
\oplus
\left[
H^0(Z,\StructureSheaf{Z}(k))\otimes_\CC \StructureSheaf{S^{[2]}}(-\delta)
\right]
\ar[r]_-{a_k} & F_k \ar[r] &0.
}
\]
The exactness of both rows follows from Proposition \ref{prop-a-locally-free-image-of-a-Lagrangian-line-bundle}.
$E_k$ is isomorphic to 
$\Phi^{[2]}_{\P.\chi}(\StructureSheaf{Z}(k))$ and $\StructureSheaf{S^{[2]}}(-\delta)$ is isomorphic to
$\Phi^{[2]}_{\P.\chi}(\StructureSheaf{S^{[2]}}[4])$, 
by Lemma \ref{lemma-shifts-of-structure-sheaf-and-delta-line-bundle-interchanged}. 
The morphism $\eta'$ is the image via $\Phi^{[2]}_{\P.\chi}$ of the natural morphism in
(\ref{eq-preimage-of-G'-k}),
where we identify $H^0(Z,\StructureSheaf{Z}(k))$ and $\Hom\left(\StructureSheaf{Z}(k),\StructureSheaf{S^{[2]}}[4]\right)^*$
via Serre's duality and the triviality of the canonical line bundle of $S^{[2]}$.
\end{proof}

\begin{lem}
We have the exact sequence
\[
0\rightarrow G''_k\rightarrow E_k \RightArrowOf{\eta''} H^0(Z,\StructureSheaf{Z}(k+1))\otimes_\CC\StructureSheaf{S^{[2]}}
\rightarrow F_k/F_{k-1}\otimes\StructureSheaf{S^{[2]}}(-\delta)\rightarrow 0.
\]
The homomorphism $\eta''$ is the natural one under the identification of $H^0(Z,\StructureSheaf{Z}(k+1))$
with $\Hom(E_k,\StructureSheaf{S^{[2]}})^*$.
\end{lem}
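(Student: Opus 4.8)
The plan is to mirror the proof of Lemma~\ref{lem-homomorphism-from-E-k-to-almost-trivial-vb} verbatim, replacing the pair $(\StructureSheaf{S^{[2]}}(-\delta),\StructureSheaf{S^{[2]}}[4])$ by the pair $(\StructureSheaf{S^{[2]}},\StructureSheaf{S^{[2]}}(-\delta)[4])$. First I would write down the commutative diagram with short exact rows whose bottom row is the defining sequence (\ref{eq-short-exact-sequence-of-E-k}) of $E_k$: the top row should be
\[
0\rightarrow G''_k\rightarrow H^0(Z,\StructureSheaf{Z}(k))\otimes_\CC\StructureSheaf{S^{[2]}}(-\delta)\RightArrowOf{a_k''}
\mathrm{im}(a_k'')\rightarrow 0,
\]
where $G''_k=\ker(a_k'')$ and, by Proposition~\ref{prop-a-locally-free-image-of-a-Lagrangian-line-bundle}, the image of $a_k''$ is isomorphic to $F_{k-1}\otimes\StructureSheaf{S^{[2]}}(-\delta)$. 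The vertical maps are the inclusion of the second direct summand into the middle term of (\ref{eq-short-exact-sequence-of-E-k}) and the inclusion $F_{k-1}\otimes\StructureSheaf{S^{[2]}}(-\delta)\hookrightarrow F_k$. Then the Snake Lemma produces the desired four-term exact sequence, with the cokernel term being $F_k/(F_{k-1}\otimes\StructureSheaf{S^{[2]}}(-\delta))$, which I would rewrite as $(F_k/F_{k-1})\otimes\StructureSheaf{S^{[2]}}(-\delta)$ using that $a_k''$ factors as tensorization by $\StructureSheaf{S^{[2]}}(-\delta)$ of the inclusion $F_{k-1}\hookrightarrow F_k$ on the geometric side (the graded $\A$-module picture of $\oplus F_k$ from the remark after Lemma~\ref{lemma-kernel-of-Phi_P^[2]}).

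Next I would identify the homomorphism $\eta''$ as the natural one. The key point is that $E_k\cong\Phi^{[2]}_{\P,\chi}(\StructureSheaf{Z}(k))$ by Proposition~\ref{prop-a-locally-free-image-of-a-Lagrangian-line-bundle} and $\StructureSheaf{S^{[2]}}\cong\Phi^{[2]}_{\P,\chi}(\StructureSheaf{S^{[2]}}(-\delta)[4])$ by Lemma~\ref{lemma-shifts-of-structure-sheaf-and-delta-line-bundle-interchanged}, so a morphism $E_k\to H^0(Z,\StructureSheaf{Z}(k+1))\otimes_\CC\StructureSheaf{S^{[2]}}$ corresponds under the equivalence to a morphism $\StructureSheaf{Z}(k)\to \StructureSheaf{S^{[2]}}(-\delta)[4]\otimes_\CC\Hom(\StructureSheaf{Z}(k),\StructureSheaf{S^{[2]}}(-\delta)[4])^*$, and the latter Hom space is $H^0(Z,\StructureSheaf{Z}(k)\otimes\StructureSheaf{S^{[2]}}(\delta))^*\cong H^0(Z,\StructureSheaf{Z}(k+1))^*$ by relative Serre duality for the lagrangian embedding $e\colon Z\hookrightarrow S^{[2]}$ (local duality gives $(e_*\StructureSheaf{Z}(k))^\vee\cong e_*(\StructureSheaf{Z}(-k)\otimes\omega_Z)[-2]$, $\omega_Z$ is the pullback of $\delta$ to $Z$ by Lemma~\ref{lem-modular-interpretation-of-the-contraction-of-A}(\ref{lemma-item-restriction-of-lambda-and-delta-are-equal}), the canonical bundle of $S^{[2]}$ is trivial, and $\StructureSheaf{Z}(1)\cong\StructureSheaf{Z}(\delta)$). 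This is exactly the Hom-space computation underlying the $k+1$-st direct summand appearing in (\ref{eq-short-exact-sequence-of-E-k}), so $\eta''$ is realized as the first direct summand's projection, i.e.\ the natural one, up to the identification $H^0(Z,\StructureSheaf{Z}(k+1))\cong\Hom(E_k,\StructureSheaf{S^{[2]}})^*$.

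Finally, I would note that by construction $G''_k=\ker(a_k'')=\ker(\eta'')$ as a subsheaf of $E_k$, so the four-term sequence is precisely the stated one, and exactness of all its terms follows from Proposition~\ref{prop-a-locally-free-image-of-a-Lagrangian-line-bundle} together with the Snake Lemma. The only genuinely delicate point — the main obstacle — is verifying that the cokernel of $a_k''$, a priori just an arbitrary quotient of $F_k$, is honestly $(F_k/F_{k-1})\otimes\StructureSheaf{S^{[2]}}(-\delta)$ and that the induced map on cokernels in the Snake diagram is an isomorphism; this is where one must invoke the concrete description of the filtration $F_{k-1}\subset F_k$ coming from the graded $\A$-module structure on $\oplus_k F_{k+1}\cong \oplus_k p_{2,*}p_1^*\StructureSheaf{Z}(k)(\delta)$ identified in the remark following Lemma~\ref{lemma-kernel-of-Phi_P^[2]}, rather than anything formal about the derived equivalence. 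Everything else is a routine transcription of the argument for $\eta'$.
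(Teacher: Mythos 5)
Your proposal is correct and follows essentially the same route as the paper: replace the top row of the diagram from the previous lemma by the short exact sequence $0\rightarrow G''_k\rightarrow H^0(Z,\StructureSheaf{Z}(k))\otimes_\CC\StructureSheaf{S^{[2]}}(-\delta)\rightarrow F_{k-1}\otimes\StructureSheaf{S^{[2]}}(-\delta)\rightarrow 0$, apply the Snake Lemma, and identify $\eta''$ as the image under $\Phi^{[2]}_{\P,\chi}$ of the natural morphism $\StructureSheaf{Z}(k)\rightarrow \StructureSheaf{S^{[2]}}(-\delta)[4]\otimes_\CC\Hom(\StructureSheaf{Z}(k),\StructureSheaf{S^{[2]}}(-\delta)[4])^*$ via Serre duality and $\StructureSheaf{Z}(1)\cong\StructureSheaf{Z}(\delta)$. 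The only superfluous step is your rewriting of the cokernel: the Snake Lemma already delivers it as $F_k$ modulo the image of $a''_k$, which is exactly how the paper's expression is to be read, so no appeal to the graded $\A$-module structure is needed.
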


\begin{proof}
The exact sequence is obtained via the Snake Lemma after replacing the first row in the diagram in Lemma \ref{lem-homomorphism-from-E-k-to-almost-trivial-vb} by 
\[
0\rightarrow G''_k\rightarrow H^0(Z,\StructureSheaf{Z}(k))\otimes_\CC\StructureSheaf{S^{[2]}}(-\delta)
\rightarrow F_{k-1}\otimes\StructureSheaf{S^{[2]}}(-\delta)\rightarrow 0.
\]
The trivial line bundle is isomorphic to
$\Phi^{[2]}_{\P.\chi}(\StructureSheaf{S^{[2]}}(-\delta)[4])$, 
by Lemma \ref{lemma-shifts-of-structure-sheaf-and-delta-line-bundle-interchanged}. 
The morphism $\eta''$ is the image via $\Phi^{[2]}_{\P.\chi}$ of the natural homomorphism
\[
\StructureSheaf{Z}(k)\rightarrow \StructureSheaf{S^{[2]}}(-\delta)[4]\otimes_\CC 
\Hom(\StructureSheaf{Z}(k),\StructureSheaf{S^{[2]}}(-\delta)[4])^*,
\]
where we identify $H^0(Z,\StructureSheaf{Z}(k+1))$ with $\Hom(\StructureSheaf{Z}(k),\StructureSheaf{S^{[2]}}(-\delta)[4])^*$
via Serre's duality, the triviality of the canonical line bundle of $S^{[2]}$ and Lemma \ref{lem-modular-interpretation-of-the-contraction-of-A}(\ref{lemma-item-restriction-of-lambda-and-delta-are-equal}).
\end{proof}


\hide{
%
\section{Appendix: Non rigidity of tensor products of rigid very modular vector bundles}
Let $X=S^{[2]}$, for a $K3$ surface $S$. Let $G_i$ be a spherical vector bundle on $S$, $i=1,2$. 
 Set $F_i=G_i[2]^\pm$, $i=1,2$ and let $r_i$ be the rank of $F_i$. 
 \begin{lem}
 $\Ext^1(F_1\otimes F_2,F_1\otimes F_2)$ does not vanish, if $r_1>0$ and $r_2>0$.
 \end{lem}
 \begin{proof}
 Let us calculate the Mukai pairing 
$(v(F_1\otimes F_2),v(F_1\otimes F_2))$. 
We have
\[
\int_Xc_2(X)^2=828, \ \ \ td_X=1+\frac{1}{12}c_2(X)+3[pt], 
\ \ \ \sqrt{td_X}=1+\frac{1}{24}c_2(X)+\frac{25}{32}[pt].
\] 
If $\kappa(F_1)=r_1+a_1c_2(X)+b_1[pt]$ and $\kappa(F_2)=r_2+a_2c_2(X)+b_2[pt]$, then
\begin{eqnarray*}
w&:=&\kappa(F_1\otimes F_2)\sqrt{td_X}
\\
&=& (r_1r_2+[r_1a_2+r_2a_1]c_2(X)+\left[r_1b_2+r_2b_1+828a_1a_2\right][pt])\sqrt{td_X}
\\
&=& r_1r_2+\left(r_1a_2+r_2a_1+\frac{r_1r_2}{24}\right)c_2(X)+\left([r_1a_2+r_2a_1]\frac{69}{2}+[r_1b_2+r_2b_1+828a_1a_2]+\frac{25}{32}r_1r_2\right)[pt],
\end{eqnarray*}
and, since $\kappa(F)=\kappa(F^*)$ if $\kappa(F)$ is $\bar{\LieAlg{g}}$-invariant (so that $\kappa_3(F)$ vanishes since $H^6(X,\RationalNumbers)$ is an irreducible $\bar{\LieAlg{g}}$-module),  
\begin{eqnarray*}
(w,w)&=&-\int_X\kappa(F_1\otimes F_2)^2td_X
\\
&=&-2r_1r_2\left([r_1a_2+r_2a_1]\frac{69}{2}+[r_1b_2+r_2b_1+828a_1a_2]+\frac{25}{32}r_1r_2\right)-828\left(r_1a_2+r_2a_1+\frac{r_1r_2}{24}\right)^2.
\end{eqnarray*}
We have
\[
3=\int_X\kappa(F_i)^2td_X=(2r_ib_i+828a_i^2)+138 r_ia_i+3r_i^2.
\]
Hence, $b_i=\frac{3}{2r_i}\left(1-276a_i^2-46a_ir_i-r_i^2\right)$.
Set $e_i=a_i/r_i$. Then 
\[
\frac{b_i}{r_i}=\frac{3}{2}\left(\frac{1}{r_i^2}-276e_i^2-46e_i-1\right).
\]
We get
\begin{equation}
\label{eq-non-rigidity-n=2}
(v(F_1\otimes F_2),v(F_1\otimes F_2))=(w,w)=3r_1^2r_2^2\left(1-\frac{1}{r_1^2}-\frac{1}{r_2^2}\right),
\end{equation}
which is positive if  $r_1>1$ and $r_2>1$.
Serre's duality yields 
\[
(w,w)=-2\dim\Hom(F_1\otimes F_2,F_1\otimes F_2)+2\dim\Ext^1(F_1\otimes F_2,F_1\otimes F_2)-\dim\Ext^2(F_1\otimes F_2,F_1\otimes F_2).
\]
Hence, $\Ext^1(F_1\otimes F_2,F_1\otimes F_2)$ does not vanish if $r_1>1$ and $r_2>1$.
\end{proof}

\begin{rem}
Assume that $F_1\otimes F_2$ is simple, i.e., $\dim \End(F_1\otimes F_2)=1$.
A choice of a hyperk\"{a}hler structure on $X$ determines an action of the algebra $\HH$ of quaternions on $\Ext^1(F_1\otimes F_2,F_1\otimes F_2)$, by \cite[Prop. 3.2 and Cor. 4.3]{verbitsky-eprint-version-of-JAG96}. In particular, it is even dimensional. Hence,
$(w,w)+2+\dim \Ext^2(F_1\otimes F_2,F_1\otimes F_2)$ is divisible by $4$. Now $r_i=\rho_i^2$, so 
$(w,w)\equiv\left\{\begin{array}{ccl}
0 & \mbox{if} & r_1 \ \mbox{and}\ r_2 \ \mbox{are even}
\\
1 & \mbox{if} & r_1 \ \mbox{or}\ r_2 \ \mbox{is odd}
\end{array}\right.$ (mod $4$), by Equation (\ref{eq-non-rigidity-n=2}).
Hence,
\[
\dim \Ext^2(F_1\otimes F_2,F_1\otimes F_2)
\equiv\left\{\begin{array}{ccl}
2 & \mbox{if} & r_1 \ \mbox{and}\ r_2 \ \mbox{are even}
\\
1 & \mbox{if} & r_1 \ \mbox{or}\ r_2 \ \mbox{is odd}
\end{array}\right. \ \ (\mbox{mod} \ 4).
\]
Assume there exists an elliptic fibration $\pi:S\rightarrow \PP^1$ and let  $p:S^{[2]}\rightarrow \PP^2$ the associated lagrangian fibration. Denote by $C_x$ the fiber of $\pi$ over $x\in\PP^1$. Let $x$ and $y$ be two distinct points in $\PP^1$ with $C_y$ and $C_y$ smooth and denote by $x+y$ the  point of $\PP^2$ corresponding to the length two subscheme of $\PP^1$. The restriction of $F_i$ to the fiber $C_x\times C_y$ pf $p$ is $G_{\restricted{i}{C_x}}\boxtimes G_{\restricted{i}{C_y}}$.
The restriction of $F_1\otimes F_2$ is thus $(G_1\otimes G_2\restricted{)}{C_x}\boxtimes (G_1\otimes G_2\restricted{)}{C_y}$. Now, if $\gcd(\rho_1,\rho_2)$ divides the rank and degree of $(G_1\otimes G_1\restricted{)}{C_x}$. Hence,
if $\rho_1$ and $\rho_2$ are both even, then $(G_1\otimes G_1\restricted{)}{C_x}$ has even rank and even degree, and is thus polystable but unstable.
\end{rem}

\begin{rem}
The Lie algebra $\bar{\LieAlg{g}}$ acts on $H^*(X,\QQ)$ preserving the grading and leaving the cup product infinitesimally invariant, i.e.,
$\xi(\alpha\beta)=\xi(\alpha)\beta+\alpha\xi(\beta)$, for all $\alpha,\beta\in H^*(X,\QQ)$ and all $\xi\in\bar{\LieAlg{g}}$. If $F$ and $G$ are objects in $D^b(X)$ of non-zero rank with $\bar{\LieAlg{g}}$-invariant $\kappa(F)$ and $\kappa(G)$, then so is $\kappa(F\otimes G)=\kappa(F)\kappa(G)$. Hence, if $F$ and $G$ are locally free very modular sheaves, then so is $F\otimes G$.
The latter need not be rigid, even if $F$ and $G$ are. If, for example, $F$ and $G$ are spherical locally free sheaves on a $K3$ surface $X$ with $v(F)=r_1\alpha+\lambda_1+s_1\beta$ and $v(G)=r_1\alpha+\lambda_1+s_1\beta$, where 
$(v(F),v(F))=(v(G),v(G))=-2$, so that $(\lambda_i,\lambda_i)=2r_is_i-2$,  then
$v(F\otimes G)=r_1r_2\alpha+(r_1\lambda_2+r_2\lambda_1)+(r_1s_2+r_2s_1+(\lambda_1,\lambda_2)-r_1r_2)\beta$ and
\begin{equation}
\label{eq-non-rigidity-n=1}
(v(F\otimes G),v(F\otimes G))=2[(r_1^2-1)(r_2^2-1)-1]=2r_1^2r_2^2\left(1-\frac{1}{r_1^2}-\frac{1}{r_2^2}\right),
\end{equation}
which is larger than  $-2$, whenever $r_1>1$ and $r_2>1$. Hence, $\dim \Ext^1(F\otimes G,F\otimes G)>0$, if $r_1>1$ and $r_2>1$. 

Consider, for example, a $K3$ surface $X$ with a cyclic Picard group generated by an ample class $\lambda$ with $(\lambda,\lambda)=2$. Write $(r,t\lambda,s)$ for $r\alpha+t\lambda+s\beta$.
Let $F$ and $G$ be $\lambda$-stable sheaves with $v(F)=(2,\lambda,1)$ and $v(G)=(5,2\lambda,1)$. Then both are spherical, hence locally free, and the Mukai vector of the $\lambda$-polystable vector bundle $F\otimes G$ is $v(F\otimes G)=(10,9\lambda,1)$, which is primitive as well, and so $F\otimes G$ is $\lambda$-stable with
$(v(F\otimes G),v(F\otimes G))=142$. Hence, $F\otimes G$ belongs to a $144$-dimensional moduli space.
\end{rem}

\begin{rem}
Equations (\ref{eq-non-rigidity-n=2}) and (\ref{eq-non-rigidity-n=1}) suggest the following question. Do 
the vector bundles $F_i=G_i[n]^\pm$ of rank $r_i$
over the Hilbert scheme $S^{[n]}$ of a $K3$ surface $S$,  associated to spherical vector bundles $G_i$ over $S$, $i=1,2$, satisfy the following equation?
\[
-\chi\left([F_1\otimes F_2]^\vee\otimes F_1\otimes F_2\right)=
(n+1)r_1^2r_2^2\left(1-\frac{1}{r_1^2}-\frac{1}{r_2^2}\right).
\]
\end{rem}
}


\end{document}